\DeclareSymbolFont{stmry}{U}{stmry}{m}{n}
\DeclareMathDelimiter\llbracket{\mathopen}{stmry}{"4A}{stmry}{"71}
\DeclareMathDelimiter\rrbracket{\mathclose}{stmry}{"4B}{stmry}{"79}
\theoremstyle{definition}
\newtheorem{definition}{Definition}[section]
\newtheorem{example}[definition]{Example}
\newtheorem{remark}[definition]{Remark}
\newtheorem{defn}[definition]{Definition}
\newtheorem{expl}[definition]{Example}
\newtheorem{rem}[definition]{Remark}
\newtheorem{ax}[definition]{Axiom}
\theoremstyle{theorem}
\newtheorem{theorem}[definition]{Theorem}
\newtheorem{lemma}[definition]{Lemma}
\newtheorem{proposition}[definition]{Proposition}
\newtheorem{corollary}[definition]{Corollary}
\newtheorem{lem}[definition]{Lemma}
\newtheorem{prop}[definition]{Proposition}
\newtheorem{cor}[definition]{Corollary}
\newtheorem{obs}[definition]{Observation}
\title{\textbf{\HUGE A Synthetic Perspective on $\inftyone$-Category Theory}}
\begin{document}

			
	\frontmatter
	
	\newgeometry{letterpaper,hmargin=1in,vmargin=1in} 
	\pagenumbering{Roman}

	\begin{titlingpage}
		\date{}
		\maketitle
		{
				\thispagestyle{empty}
			\noindent
			\emph{A Synthetic Perspective on $\inftyone$-Category Theory: Fibrational and Semantic Aspects}\\ ~ \\
			Accepted doctoral thesis by Jonathan Maximilian Lajos Weinberger, MSc. \\ ~ \\
		\begin{tabular}{ll}
				\textit{Referee:} & Prof.~Dr.~Thomas Streicher\\
				\textit{1st~co-referee:} & Prof.~Emily~Riehl,~Ph.D.\\
				\textit{2nd~co-referee:} & Prof.~Benno~van~den~Berg,~Ph.D.\\
			\end{tabular} \\ ~ \\
			Darmstadt, Technische Universität Darmstadt \\  ~ \\
			Date of submission: October 20, 2021  \\
			Date of thesis defense: December 9, 2021 \\ ~ \\
			Darmstadt -- D17 \\ 
			
			\vspace{4ex}
			\noindent Bitte zitieren Sie dieses Dokument als / Please cite this document as: \\
			URN: \href{https://nbn-resolving.org/html/urn:nbn:de:tuda-tuprints-207163}{\texttt{urn:nbn:de:tuda-tuprints-207163}} \\
			URL: \url{http://tuprints.ulb.tu-darmstadt.de/20716} \\
			\vspace{2ex}

			\noindent Dieses Dokument wird bereitgestellt von tuprints, E-Publishing-Service der TU Darmstadt / \\
			This document is provided by tuprints, e-publishing service of TU Darmstadt \\
			\url{http://tuprints.ulb.tu-darmstadt.de} \\
			\href{mailto:tuprints@ulb.tu-darmstadt.de}{\texttt{tuprints@ulb.tu-darmstadt.de}} \\
			
			\vspace{4ex}
			\noindent Veröffentlicht von tuprints im Jahr 2022 \\
			Published by tuprints in 2022 \\
			
			\vspace{4ex}
			\noindent Die Veröffentlichung steht unter folgender Creative Commons Lizenz: \\ 
			Namensnennung 4.0 International \\
			CC BY 4.0 \\
			\url{https://creativecommons.org/licenses/by/4.0/deed.de} \\
			
			\vspace{2ex}
			\noindent This work is licensed under a Creative Commons License: \\
			Attribution 4.0 International \\
			CC BY 4.0 \\
			\url{https://creativecommons.org/licenses/by/4.0/}
		}
	\end{titlingpage} 

	\restoregeometry
	\pagenumbering{roman}

	\cleardoublepage
	\begingroup
	\pagestyle{empty}
	\null
	\newpage
	\null
	\newpage
	\endgroup
	
	\thispagestyle{empty}
	\par
	\vspace*{.35\textheight}{\begin{flushright}{\Large \emph{In Liebe meinen Eltern Andrea und Anton.}}\par\end{flushright}}
	
	\cleardoublepage

	\settocdepth{section}
	\tableofcontents*

	\chapter{Abstract}
	\label{ch:abstract}
	Reasoning about weak higher categorical structures constitutes a challenging task, even to the experts. One principal reason is that the language of set theory is not invariant under the weaker notions of equivalence at play, such as homotopy equivalence. From this point of view, it is natural to ask for a different foundational setting which more natively supports these notions.

A possible approach along these lines has been given by Riehl--Shulman in 2017 where they have developed a theory of synthetic $\inftyone$-categories. For this purpose they have introduced an extension of \emph{homotopy type theory/univalent foundations (HoTT/UF)}. Pioneered by Voevodsky, this logical system is designed to develop homotopy theory in a \emph{synthetic} way, meaning that its basic entities can essentially be understood as topological spaces, or more precisely \emph{homotopy types}. As per Voevodsky's \emph{Univalence Axiom}, homotopy equivalence between types coincides with logical equivalence. As a consequence, a lot of the conceptual ideas from classical homotopy theory can be imported to reason more intrinsically about homotopical structures.

In fact, in 2019 Shulman achieved to prove the long-standing conjecture that any higher topos (in the sense of Grothendieck--Rezk--Lurie) gives rise to a model of homotopy type theory. This gives a precise technical sense in which HoTT can be regarded as a kind of internal language of $\inftyone$-toposes $\mathscr E$. By analogy, Riehl--Shulman's extension, called \emph{simplicial homotopy type theory (sHoTT)} can be interpreted in diagram $\inftyone$-toposes of the form $\mathscr E^{\Simplex^{\Op}}$. Syntactic additions make it possible to reason type-theoretically about internal $\inftyone$-categories implemented as (complete) Segal objects.

Based on Riehl--Shulman's work about synthetic $\inftyone$-categories and discrete covariant fibrations, we present a theory of co-/cartesian fibrations including sliced and two-sided versions. The study is informed by Riehl--Verity's work on model-independent $\infty$-category theory, and transfers results of their $\infty$-cosmoses to the type-theoretic setting. We prove characterization theorems for cocartesian fibrations and their generalizations, given as existence conditions of certain adjoint functors~(\emph{Chevalley criteria}). Extending the author's previous joint work with Buchholtz, we prove several closure properties of two-sided cartesian fibrations as well as a two-sided Yoneda Lemma. Furthermore, we discuss so-called Beck--Chevalley (bi)fibrations and prove a synthetic $\inftyone$-categorical version of Moens' Theorem, after Streicher.

Finally, we show how to interpret Riehl--Shulman's strict extension types in the intended higher topos model, in such a way that makes them strictly stable under substitution. This uses a method, originally due to Voevodsky, which has previously been employed in the works of~\eg~Kapulkin, Lumsdaine, Warren, Awodey, Streicher, and Shulman.

Our work takes up on suggestions in the original article by Riehl--Shulman to further develop synthetic $\inftyone$-category theory in simplicial HoTT, including in particular the study of cocartesian fibrations. Together with a collection of analytic results, notably due to Riehl--Verity and Rasekh, it follows that our type-theoretic account constitutes a synthetic theory of fibrations of internal $\inftyone$-categories.
	
	\chapter{Zusammenfassung}
	\label{ch:abstract-ger}
	Der Umgang mit schwachen unendlichdimensionalen Kategorien stellt selbst für Expert*innen eine Herausforderung dar. Dies begründet sich schon auf grundsätzlicher Ebene damit, dass die Sprache der Mengenlehre nicht invariant unter den Äquivalenzbegriffen dieser schwachen, höherdimensionalen Strukturen ist, wie z.B.~Homotopieäquivalenz. Unter diesem Gesichtspunkt ist es naheliegend, nach einer alternativen Grundlagentheorie zu suchen, die mit den homotopietheoretischen Begriffen besser kompatibel ist.

Ein möglicher Ansatz wurde 2017 von Riehl--Shulman in Form einer synthetischen Theorie von $\inftyone$-Kategorien vorgeschlagen. Zu diesem Zweck haben sie eine Erweiterung der sogenannten \emph{Homotopie-Typentheorie} bzw.~\emph{univalenten Grundlagen} (engl.~\emph{homotopy type theory/univalent foundations (HoTT/UF)}) eingeführt. Hierbei handelt es sich um ein logisches System, entscheidend geprägt durch Wojewodski, das eine \emph{synthetische} Grundlage für Homotopietheorie bieten soll: Die grundlegenden Objekte tragen bereits eine topologische Struktur. Genauer gesagt, handelt es sich um \emph{Homotopietypen}. Wojewodskis \emph{Univalenz-Axiom} bewirkt, dass in diesem System Homotopieäquivalenz mit logischer Äquivalenz übereinstimmt. Dies hat zur Folge, dass viele konzeptionelle Ideen aus der klassischen Homotopietheorie benutzt werden können, um eine synthetische Theorie aufzubauen.

Ein entscheidender Beitrag wurde 2019 von Shulman geleistet, der bewies, dass jeder höherdimensionale Topos (im Sinne von Grothendieck--Rezk--Lurie) Anlass zu einem Modell von Homotopie-Typentheorie gibt. Diese Vermutung war lange Zeit offen. Nach diesem Resultat kann man HoTT in einem genauen technischen Sinne als eine Art interne Sprache beliebiger $\inftyone$-Topoi $\mathscr E$ verstehen. Analog dazu lässt sich Riehl--Shulmans Erweiterung, die sog.~\emph{simpliziale Homotopietypentheorie (sHoTT)}, in Diagramm-$\inftyone$-Topoi der Form $\mathscr E^{\Simplex^{\Op}}$ interpretieren. Vermöge syntaktischer Erweiterungen erlaubt die Theorie die Behandlung interner $\inftyone$-Kategorien, modelliert durch sogenannte (vollständige) Segal-Objekte.

Ausgehend von Riehl--Shulmans Arbeit über synthetische $\inftyone$-Kategorien und diskrete kovariante Fibrationen entwickeln wir eine Theorie ko-/kartesischer Fibrationen, die auch gefaserte und zweiseitige Verallgemeinerungen erfasst. Dabei dienen Riehl--Veritys Arbeiten über modellunabhängige $\inftyone$-Kategorientheorie als wichtige Grundlage, deren Resultate für $\infty$-Kosmoi wir auf den typentheoretischen Kontext übertragen. Wir geben Charakterisierungssätze für kokartesische Fibrationen und ihre Verallgemeinerungen an, die als Existenzsätze bestimmter adjungierter Funktoren formuliert sind \emph{(Chevalley-Kriterien)}. Gemeinsame Vorarbeiten mit Buchholtz verallgemeinernd zeigen wir eine Auswahl von Abschlusseigenschaften zweiseitiger kartesischer Fibrationen sowie ein zweiseitiges Yoneda-Lemma. Desweiteren diskutieren wir sogenannte Beck--Chevalley-(Bi-)Fibrationen und beweisen eine synthetische $\inftyone$-kategorielle Version des Satzes von Moens, basierend auf einem Beweis von Streicher.

Abschließend führen wir eine Kohärenzkonstruktion für Riehl--Shulmans strikte Erweiterungstypen durch, sodass diese auch in den homotopietheoretischen Topos-Modellen strikt substitutionsstabil interpretiert werden können. Diese Methode geht auf Wojewodski zurück und fand Anwendung in Arbeiten von z.B.~Kapulkin, Lumsdaine, Warren, Awodey, Streicher und Shulman.

Unsere Arbeit folgt Vorschlägen aus dem ursprünglichen Artikel von Riehl--Shulman, die synthetische $\inftyone$-Kategorientheorie in simplizialer Homotopie-Typentheorie weiterzuentwickeln, insbesondere bezüglich kokartesischer Fibrationen. Zusammen mit einer Reihe analytischer Resultate, v.a.~von Riehl--Verity und Rasekh, folgt aus den genannten Betrachtungen, dass unser typentheoretischer Zugang eine synthetische Theorie von Fibrationen interner $\inftyone$-Kategorien darstellt.

	\chapter{Acknowledgments}
	\label{ch:ack}
	First and foremost, I am deeply indebted to my doctoral advisor Thomas Streicher. Ever since the early days of my undergraduate studies you have been an outstandingly supportive and passionate mentor to me. I am most grateful that you have been sharing your unique perspectives and intuitions with me. You have taught me to cut through the fog. Thank You, Thomas, for all your guidance, inspiration, compassion, patience, and for always being there for me in every way throughout the past twelve years.

Next, I owe the most profound gratitude to Ulrik Buchholtz. I express my highest gratefulness to you for acting as an unofficial doctoral advisor with all the more dedication, and for engaging in our comprehensive joint project. Thank You, Ulrik, for all your effort, energy, and commitment in working with and supporting me. I am greatly indebted to you for sharing your outstandingly vast horizon and perspectives with me, and for your steady and tireless involvement in our project for the past five years. Thank you for proof-reading a draft of this thesis.

Furthermore, I am exceedingly grateful to Emily Riehl. Thank You, Emily, for all the steady guidance, extremely insightful and inspiring discussions, and enthusiastic support in many ways. This thesis is built on your work, and I am immensely thankful for you sharing your ideas, suggestions, and feedback. Your engagement has been decisive for this thesis and our project in simplicial HoTT. I deeply appreciate you hosting me so welcomingly and dedicatedly on several visits.

Without the three of you this thesis would not have been possible.

I am furthermore substantially thankful to Mike Shulman and Dominic Verity, whose joint work with Emily Riehl constitutes the foundations of this thesis in the first place.

Special gratitude goes to Emily Riehl and Benno van den Berg for acting as referees to this thesis, and to Torsten Wedhorn and Steffen Roch for being available as examiners.

Distinguished acknowledgement also goes to Steve Awodey. Thank You, Steve, for lots of highly engaging discussions, eager support---both mathematical and non-mathematical---and for hosting my visit at CMU.

I am furthermore particularly greatful to Mathieu Anel. Thank You, Mathieu, for your great effort, support and numerous enlightening conversations. I highly appreciate to learn your perspectives on higher topos/logos and category theory.

Special thanks is owed as well to Paolo Capriotti. Thank You, Paolo, for all the enlightening discussions which always have been fruitful to me, and for great times here in Darmstadt and in Nottingham.

With deep gratitude I acknowledge the generous financial support of the Centre~for~Advanced~Study~(CAS) at the Norwegian~Academy~of~Science~and~Letters in Oslo, 
Norway, which funded and hosted the research project Homotopy Type Theory and Univalent Foundations during the academic~year~2018/19. I am indebted to Bj{\o}rn~Ian~Dundas and Marc Bezem for their hospitality on the occasion of several guest visits of the HoTT-UF project during which integral parts of the work in simplicial HoTT jointly with Ulrik Buchholtz has been carried out.

I also greatly appreciate the generous support of Thorsten Altenkirch which allowed me to visit the FP Lab at the University of Nottingham. Thank You, Thorsten, for your great hospitality, the many fruitful discussions, and the opportunity to give presentations of my work.

Great acknowledgment is owed to the mentors of the MIT Talbot Workshop 2018, Emily Riehl and Dominic Verity, to the organizers Eva Belmont, Calista Bernard, Inbar Klang, Morgan Opie, and Sean Poherence, and to all the participants. I am thankful to the MIT and Northwestern for providing the frame of this series, and for ample financial support. This highly memorable workshop was marked by a unique excellent and engaging atmosphere, and was of major importance in my work towards this thesis.

I am very thankful to Jonathon Funk and Noson Yanofsky for inviting to me to speak at the NYC Category Theory Seminar at CUNY, and for fruitful discussions.

Furthermore, I greatly thank Nima Rasekh for inviting me to speak at the EPFL Topology Seminar to present work of this thesis, and the Laboratory for Topology and Neuroscience for generous financial support.

I am highly grateful to Dimitri Ara and Andrea Gagna for letting me speak at the Aix--Marseille logic seminar, and for compelling ensuing discussions.

Further acknowledgment goes to Mathieu Anel and Jonas Frey for inviting me to speak about my work on sHoTT at the CMU HoTT Seminar and Graduate Student Workshop.

Through the stages of my doctoral studies, I was very lucky to enjoy many fruitful mathematical conversations from which I have benefited a lot. For those, I am additionally very thankful to Anthony Agwu, Benedikt Ahrens, Carlo Angiuli, Peter Arndt, David Ayala, Reid Barton, Gershom Bazerman, Benno van den Berg, Martin Bidlingmaier, Auke Booij, Pierre Cagne, Alexander Campbell, Tim Campion, Evan Cavallo, R\'{e}my Cerda, Felix Cherubini, Denis-Charles Cisinski, tslil clingman, Bastiaan Cnossen, Johan Commelin, Thierry Coquand, Shanna Dobson, Daniel Fuentes-Keuthan, Jonas Frey, Nicola Gambino, Daniel Gratzer, Sina Hazratpour, Simon Henry, Simon Huber, Andr\'{e} Joyal, Chris Kapulkin, Nicolai Kraus, Nikolai Kudasov, Edoardo Lanari, Dan Licata, Fosco Loregian, Catrin Mair, Aaron Mazel-Gee, Anders Mörtberg, Lyne Moser, David Jaz Myers, Paige Randall North, Andreas Nuyts, Ian Orton, Anja Petkovi\'{c} Komel, Gun Pinyo, Andrew Pitts, Moritz Rahn, Nima Rasekh, Jakob von Raumer, Timo Richarz, Egbert Rijke, Mitchell Riley, Martina Rovelli, Christian Sattler, Jay Shah, Mike Shulman, Bas Spitters, Raffael Stenzel, Jonathan Sterling, Chaitanya Leena Subramaniam, Andrew Swan, Dominic Verity, Matthew Weaver, Torsten Wedhorn, Chuangjie Xu, and Colin Zwanziger.

For further non-mathematical support I am highly grateful to Benedikt Ahrens, Kord Eickmeyer, Anton Freund, Simon Henry, Martin Otto, Viktoriya Ozornova, Thomas Powell, Martina Rovelli, Sam Sanders, and Mike Shulman.

I will forever remember my time as a member of the logic group at TU~Darmstadt. I owe essential gratitude to Betina Schubotz. Thank You, Betina, for your kind, warm and excellent help and support in all the ways.  

Times would not have been the same without my dear colleagues, especially Julian Bitterlich, Ulrik Buchholtz, Felix Canavoi, Paolo Capriotti, Anton Freund, Angeliki Koutsouko-Argyraki, Pedro Pinto, Thomas Powell, Sam Sanders, Matthias Schröder, Andrei Sipoș, and Florian Steinberg. Thank You all for unforgettable days of logic and beers!

Furthermore, I thank all the participants of the logic group's Doktorandentreff for your participation in whatever way.

My deepest gratitude is owed to my dear friends and family. Thank You for being who you are, and for standing by my side. Without you, I would have not made it this far. 

To my parents Andrea and Anton: \emph{Danke für alles.}

~ \\ ~ \\ ~ \\
\emph{Jonathan Weinberger} \\
\emph{Darmstadt, October 2021 and February 2022}

	\chapter{Summary and overview}
	\label{ch:summ}
	\section{Summary of the results}

The \emph{synthetic perspective on $\inftyone$-categories} alluded to in the title is not new to this thesis, but has been established previously in~\cite{RS17}---with the central idea of using as a model the $\inftyone$-topos of simplicial spaces, in order to later in the type theory carve out the (complete) Segal types independently suggested by Joyal as well. The thesis presents further developments along these lines, relying strongly on aspects of the highly expansive foundational work in~\cite{RV21}, transferring it to the type-theoretic setting.

For the most part, this dissertation concerns the synthetic development of \emph{fibered $\inftyone$-category theory}. Despite the discussion of the actual semantics making up only a small portion of the text, we still have chosen to mention the \emph{semantics} in the title, to emphasize the immediate semantical analogies. Unlike other possibly more syntactically motivated approaches to synthetic higher category theory, simplicial HoTT is clearly semantically motivated. With the right ambient  logical and ``analytic'' theory preestablished, one readily sees that the (complete) Rezk types are clearly interpreted as (complete) Segal objects. Particularly, using the $\infty$-cosmological, formal higher catgeorical perspective, more intricate notions such as adjunctions, cartesian fibrations~\etc.~also translate to their correct semantic counterparts.

In the following, we summarize the results of each thesis chapter. For the general narrative we refer to the non-technical introduction~\Cref{ch:intro}.

\subsection{Co-/cartesian families}

In~\cite{RS17}, Riehl and Shulman have developed a synthetic account to left fibrations of $\inftyone$-categories. A left fibration $E \fibarr B$ presents a (covariant) presheaf $B \to \Space$, valued in the $\inftyone$-category of (small) $\infty$-groupoids (\aka~spaces)---the $\infty$-categorical analogue of the category of sets. We introduce a notion of \emph{cocartesian families},~\ie~covariant presheaves $B \to \Cat$ valued in the $\inftyone$-category of (small) $\inftyone$-categories.\footnote{Note that this is to be read \emph{cum grano salis}: At that point, we do not have a satisfactory internal treatment of the classifiers $\Space$ and $\Cat$ yet. This means, a type-theoretic account of these categorical universes is still under way. But these objects exist externally as is well known from ``analytic'' higher-category theory,~\cf~\cite{Ras18model} specifically for the case of (complete) Segal spaces. In fact, for the ``naive'' universes defined as $\Sigma$-types such as $\sum_{X:\UU} \isRezk(X)$ or $\sum_{X:\UU} \isDisc(X)$ it is \emph{not derivable} that they be (complete) Segal. This has been observed in simplicial spaces initially by Shulman, and later verified for a few low-dimensional special cases in unpublished work by Buchholtz and the author~\cite{BW18a}. Already for discrete types in simplicial reflexive graphs, the naive universe contains ``too many arrows'' to be Segal.} This is based on previous joint work of the author together with Buchholtz~\cite{BW21}, which contains some parts that also occur in this thesis. However, the proofs presented here have been established by the author of the thesis. To provide an essentially self-contained treatise, we also have included an expository section from~\cite[Section~2]{BW21} in the thesis at hand as well as, occasionally, some other expository paragraphs from~\emph{loc.~cit.} These have originally been written by the thesis author as well.

Generalizing classical results, and paralleling the study of~\cite[Chapter~5]{RV21}, cocartesian type families are defined as type families where one can lift arrows in the base type to dependent cocartesian arrows in the family (or, equivalently in the total type). We prove that this, as expected, can be equivalently expressed as a criterion involving the existence of a certain left adjoint right inverse (abbrev.~\emph{LARI}), \aka~\emph{Chevalley criterion}, and a different condition postulating the existence of a fibered left adjoint. These results are extended to cocartesian functors between cocartesian fibrations. These characterizations have immediate important consequences, since they entail various closure properties of cocartesian families and functors, as detailed in~\cite{BW21}. In particular, these are a type-theoretic version of the closure properties of cocartesian fibrations in $\infty$-cosmos theory.

Moreover, compared to~\cite{BW21}, we present an alternative account to \emph{cocartesian arrows}, more along the lines of~\cite[Section~5.1]{RV21}. Namely, as recently discovered by Riehl and Verity, cocartesian \emph{arrows} can also be characterized in terms of a Chevalley criterion in terms of \emph{relative adjunctions} \`{a} la Ulmer (which are briefly treated in the appendix). This yields a more formal proof of the closure properties of cocartesian arrows, and ties in with the discussion of ``LARI fibrations'' as formal generalizations of cocartesian fibrations.

This treatment of cocartesian arrows has been suggested to us by Riehl.

Note that analogous results for cartesian fibrations,~\ie~contravariant $\Cat$-valued $\infty$-presheaves, follow by (manual) dualization. Since at present, the type theory at hand lacks a type former for the opposite category, contravariant maps over $B$ still have to be encoded as families over $B$, with contravariance produced by the dual Chevalley criterion. In particular, this gives rise to a notion of cartesian and simultaneously cocartesian \aka~\emph{bi-cartesian} family.

\subsection{Beck--Chevalley families and Moens' Theorem}

As an application of our theory of synthetic cocartesian families, we provide an account of Beck--Chevalley fibrations and Moens' Theorem by adapting Streicher's methods~\cite[Section~15]{streicher2020fibered} from the $1$-categorical case to our type-theoretic setting. A \emph{Beck--Chevalley fibration} is a bicartesian fibration which moreover satisfies a fibrational version of the traditional Beck--Chevalley condition, crucial~\eg~in categorical logic. If, moreover a Beck--Chevalley fibration is lex and satisfies some further conditions, generalizing the so-called \emph{extensivity property} for categories, then it is called a \emph{lextensive} or \emph{Moens fibration}. By Moens' Theorem, lextensive fibrations over a given base $B$ are classified by lex functors \emph{from} $B$ to some lex category.

Although the Beck--Chevalley condition has had some appearance in $\inftyone$-category theory~\cite{LurAmbi}, this, to our knowledge, has not been the case for Moens' Theorem so far. In the classical theory, Moens' Theorem plays an important role for the so-called \emph{fibered view of geometric morphisms}~\cite{StrFVGM,streicher2020fibered}. Our generalization to the case of Rezk types externalizes to a version of Moens' Theorem for internal $\infty$-categories in an arbitrary $\infty$-topos.\footnote{in the sense of Grothendieck--Rezk--Lurie} We hope that this yields a starting point for further investigations along these lines in higher topos theory---be they synthetic or analytic.

\subsection{Two-sided cartesian families}

We generalize our study of co-/cartesian families $E \fibarr A$ to the $2$-variable case of spans $A \stackrel{\xi}{\twoheadleftarrow} E \stackrel{\pi}{\fibarr} B$ with \emph{mixed variance}. This constitutes a theory of \emph{two-sided cartesian fibrations}, where one leg of such a span $\xi: E \fibarr A$ is cocartesian, $\pi:E \fibarr B$ is cartesian, and some further compatibility conditions between the lifts are required. Morally,\footnote{Recall, that we do not currently have an ``$\mathrm{op}$'' in our theory.} such a two-sided fibration encodes a functor $B^{\Op} \times A \to \Cat$. This also has a discrete version, called \emph{distributor} or \emph{(bi-)module},\footnote{Common synonyms also include \emph{profunctor} or \emph{relator}, since they can be seen as relations between categories, \cf~\cite{StrDist}.} where the legs are \emph{discrete} co-/cartesian fibrations, resp. Accordingly, a distributor can be presented as a functor $B^{\Op} \times A \to \Space$ or as a \emph{presheaf of copresheaves} $B^{\Op} \to A \to \Space$. The prime example of a distributor is given by the hom-bifunctor $\hom_A: A^{\Op} \times A \to \Space$.

Analogously to our study of cocartesian families, \cf~also \cite{BW21}, we develop the theory of two-sided cartesian families and their discrete versions in simplicial type theory, after Riehl--Verity's installment for $\infty$-cosmoses~\cite[Chapter~7]{RV21}. To achieve a similar analysis, we have to give some rather explicit discussions of fibered or \emph{sliced} notions of synthetic co-/cartesian fibrations.\footnote{This is particularly owed to the fact that we currently do not have the appropriate categorical universes at hand, so we cannot reason about fibrations as literally being objects of some $\infty$-category, which is in contrast to $\infty$-comos theory.} The payoff will be a systematic analysis of the notions of two-sided fibrations and functors \`{a} la \cite{RV21}, encompassing namely Chevalley-like characterization theorems and closure properties. Furthermore, we generalize the synthetic Yoneda Lemmas from \cite{RS17,BW21} from the non-/discrete, one-sided case to a synthetic version of the two-sided case, as given in \cite[Section~7.3]{RV21}.

This involves some results about fibered (LARI) fibrations between Rezk types, generalizing~\cite[Section~11]{RS17} and \cite[Appendix~B]{BW21} which are developed in the appendix.

In our setting, we allude to internal versions of the operations of this (externally present) double-category.

\subsection{Semantics: Strict stability of extension type formers}

By the results of Riehl--Shulman~\cite[Appendix~A]{RS17}, simplicial type theory can be modeled in suitable model structures equipped with an appropriate shape theory. However, their study technically does not include universe types. On the other hand, building on previous work, Shulman~\cite{Shu19} established positively the long-standing conjecture that any Grothendieck $\inftyone$-topos gives rise to a model of homotopy type theory with universes strictly \`{a} la Tarski.

Let $\mathscr E$ be an $\inftyone$-topos. Then it follows that the $\inftyone$-topos $\sE := \mathscr E^{{\Simplex}^{\Op}}$ of simplicial objects in $\mathscr E$ \emph{almost} gives rise to a model of simplicial homotopy type theory. In light of the previous work, what's missing\footnote{setting aside questions about initiality of syntax} is a construction interpreting the extension types \`{a} la Riehl--Shulman in such a way that they are strictly stable under substitution.

We provide such a construction generalizing a splitting method originally due to Voevodsky~\cite{VVTySys,VVPi}, and later well investigated for HoTT by~\cite{KL18,LW15,Awo18}.

Since in the models $\sE$, all the shapes are fibrant, we simplify our setting so as to work completely inside the model structure presenting $\sE$. The advantage is that we can work inside an ordinary comprehension category, rather than deal with extra fibered, possibly non-fibrant structure, and separate classifiers for the ``cofibrations''. This should make the construction as transparent as possible, and also ready for adaptation to technically more intricate settings. 

We conclude by discussing how our synthetic type-theoretic notions translate in the models $\sE$. In particular, any $\infty$-topos $\mathscr E$ gives rise to an $\infty$-cosmos of \emph{Rezk objects} internal to $\mathscr E$, which can be understood as the \emph{internal $\inftyone$-categories in $\mathscr E$}.\footnote{\cf~ \cite[Example~A.14]{RS17}} Thus, simplicial HoTT captures (some of) the synthetic theory of internal $\inftyone$-categories in an $\infty$-topos, similarly to how standard HoTT captures (some of) the synthetic theory of homotopy types in an $\infty$-topos.

\section{Structure of the thesis}
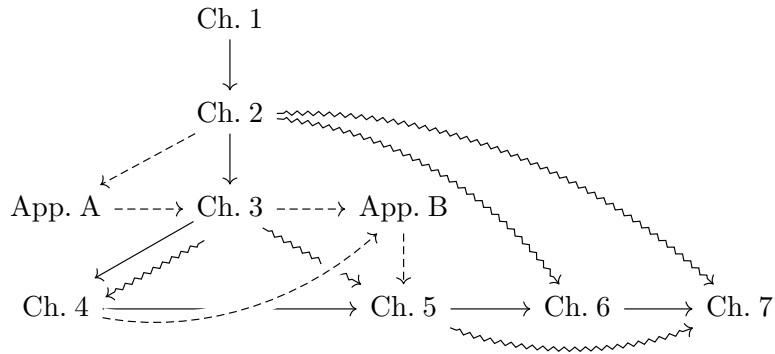
\begin{figure}
\[\begin{tikzcd}
	&& {\mathrm{Ch.~1}} \\
	&& {\mathrm{Ch.~2}} \\
	{} & {\mathrm{App.~A}} & {\mathrm{Ch.~3}} & {\mathrm{App.~B}} & {} \\
	& {\mathrm{Ch.~4}} && {\mathrm{Ch.~5}} & {\mathrm{Ch.~6}} & {\mathrm{Ch.~7}} \\
	&&&&& {} && {} \\
	&&&&&& {}
	\arrow[from=1-3, to=2-3]
	\arrow[from=2-3, to=3-3]
	\arrow[shift left=2, squiggly, from=3-3, to=4-2]
	\arrow[squiggly, from=3-3, to=4-4]
	\arrow[curve={height=-30pt}, squiggly, from=2-3, to=4-6]
	\arrow[from=4-2, to=4-4]
	\arrow[shift right=1, from=3-3, to=4-2]
	\arrow[dashed, from=2-3, to=3-2]
	\arrow[dashed, from=3-2, to=3-3]
	\arrow[dashed, from=3-3, to=3-4]
	\arrow[dashed, from=3-4, to=4-4]
	\arrow[from=4-4, to=4-5]
	\arrow[curve={height=-24pt}, squiggly, from=2-3, to=4-5]
	\arrow[curve={height=18pt}, squiggly, from=4-4, to=4-6]
	\arrow[from=4-5, to=4-6]
	\arrow[curve={height=24pt}, dashed, from=4-2, to=3-4, crossing over]
\end{tikzcd}\]
	\caption{Structure of the thesis}
	\label{fig:struct-thesis}
\end{figure}
In~\Cref{fig:struct-thesis}, we suggest a few paths to take reading this thesis. A ``complete tour'' is suggested by the regular arrows ``$\rightarrow$''. One can take  detours over the appendices as indicated via ``$\dashrightarrow$'', or simply consult them as needed. It is also possible to do straight ``jumps'' via the squiggly arrows ``$\rightsquigarrow$'', depending on individual interest. However, in any case we do recommend reading~\Cref{ch:intro} (and~\Cref{ch:prelim}) to set the stage. \Cref{ch:bicart,ch:2scart} are not dependent on each other, but they both rely on~\Cref{ch:cocart} where co-/cartesian fibrations are introduced. \Cref{ch:sem} is the part primarily concerned with semantics. As such it can be read on its own right. A conclusion and an outlook can be found in~\Cref{ch:concl-outlook}. The appendices contain mainly technical results. Namely, \Cref{app:reladj} provides a treatise of a version of relative adjunctions after Ulmer~\cite{UlmDense}. This is also interesting from a conceptual point of view because it gives a more general perspective on cocartesian arrows. Finally, in \Cref{app:fibconstr} we collect several results on fibered (LARI) adjunctions and equivalences, as well as some sliced constructions that will be used often in the main text.

\section{List of central results}

\subsection{Cocartesian families}

\begin{itemize}
	\item \Cref{thm:cocart-fams-intl-char}, p.~\pageref{thm:cocart-fams-intl-char}:~Characterization of cocartesian families (Chevalley criterion)
	\item \Cref{thm:cocartfams-via-transp}, p.~\pageref{thm:cocartfams-via-transp}:~Characterization for cocartesian families (fibered adjunction criterion)
	\item \Cref{thm:char-cocart-fun}, p.~\pageref{thm:char-cocart-fun}:~Characterization of cocartesian functors
\end{itemize}
\subsection{Bicartesian families}
\begin{itemize}
	\item \Cref{thm:moens-thm}, p.~\pageref{thm:moens-thm}:~Moens'~Theorem
\end{itemize}
\subsection{Two-sided families}
\begin{itemize}
	\item \Cref{thm:char-two-sid}, p.~\pageref{thm:char-two-sid}:~Characterization of two-sided cartesian families
	\item \Cref{thm:2scart-cosm-closure}, p.~\pageref{thm:2scart-cosm-closure}:~Closure properties of two-sided cartesian families and functors
	\item \Cref{thm:dep-yon-2s}, p.~\pageref{thm:dep-yon-2s}:~Two-sided Yoneda Lemma
	\item \Cref{prop:2s-disc}, p.~\pageref{prop:2s-disc}: Two-sided discrete cartesian families as discrete objects
\end{itemize}
\subsection{Semantics}
\begin{itemize}
	\item \Cref{thm:coh-ext}, p.~\pageref{thm:coh-ext}:~Coherence for extension types
\end{itemize}

\section{Related work}

Apart from the original paper by Riehl--Shulman~\cite{RS17} there is follow-up work on directed univalence in simplicial HoTT by Cavallo--Riehl--Sattler~\cite{CRS}. Directed Univalence in a bi-\emph{cubical} type theory has been studied by Weaver--Licata~\cite{WL19} where they prove directed univalence for the classifier of covariant discrete families, with ongoing work towards directed HITs~\cite{WeaDHIT}.

Inspired by Voevodsky's~\emph{homotopy type system (HTS)}, higher categories have been studied type-theoretically in \emph{2-level type theory (2LTT)}~\cite{2ltt,CapriottiPhD}. In this approach, one considers simultaneously a layer for HoTT and a layer for extensional type theory validating UIP, together with a cocercion function from the former to the latter. 

Directed versions of (homotopy) type theory have been studied by several people~\cite{War-DTT-IAS,LH2DTT,LicPhD,NuyMSc,NuyPhD,NorthDHoTT,ASNatFree}. Kavvos discusses ideas towards directed type theory capturing two-sided fibrations in~\cite{KavQuant}. On the semantic side, there are ongoing investigations by Cisinski and Nguyen~\cite{CisHoTTEST,NguyPhD,NguyCoContra} about directed univalence. There is also ongoing joint work by North, van den Berg, and McCloskey about directed (algebraic) weak factorization systems~\cite{NorDirectedTalk21}.

As an overarching theory, model-independent foundations of higher category theory have been established in Riehl--Verity's $\infty$-cosmos theory~\cite{RV21}.

Preceding work to this thesis has been done by Buchholtz in joint work with the thesis author~\cite{BW21}, parts of which make up~\Cref{ch:cocart} of this thesis.

Furthermore, a proof assistant for simplicial type theory is being developed by Kudasov~\cite{KudRzk}. 

\section{Declaration of authorship}

The technical chapter with the technical preliminaries~\Cref{ch:prelim} has essentially been carried over from the thesis author's joint work with Ulrik Buchholtz~\cite[Section~2]{BW21}. All the parts of this in the present text have originally been written by the author of this thesis.

Occasionally present in the text are expository and some technical parts that are also taken from~\cite{BW21} and have, again, originally been written by the thesis author.

\Cref{ch:cocart} to a large extent consists of material from~\cite{BW21}, esp.~Section~5~\emph{ibid.} The theorems stated and proven here have been stated and proven by the thesis author.

The \LaTeX\,template of this thesis uses the \emph{memoir} document class and is based on Egbert Rijke's PhD thesis~\cite{RijPhd}. Most if not all commutative diagrams have been typeset using the brillant \href{https://q.uiver.app/}{quiver} web editor developed by Nathanael Arkor. The author is most indebted for this free and open tool which makes typesetting diagrams exceedingly efficient and comfortable. The sketches of fibrations in TikZ are due to Ulrik Buchholtz. Some macros for extension types have been taken from Emily Riehl and Mike Shulman's paper~\cite{RS17}.

	\mainmatter
		
	\chapter{Introduction}
	\label{ch:intro}
	\section{Higher structures in mathematics}

Higher homotopical structures have been gaining relevance in modern-day mathematics. For example, in classical algebraic topology, the fundamental group of a pointed space is a very important invariant. But this construction loses information about the higher homotopies. Furthermore, it can be argued that its construction is not particularly natural: it relies on choosing a basepoint, parametrizations of loops, and involves taking a quotient. An improvement over this is given by the \emph{fundamental $\infty$-groupoid} or \emph{singular Kan complex} of a space, whose $n$-simplices are given by the $n$-dimensional paths in the space simply by definition.

The increasing complexity of mathematical objects studied also adds to the complexity of the structures formed by these objects. This is already visible at the level of bare sets: the collection of (small) sets naturally forms a (large) groupoid rather than a (large) set again, since the preferred notion of identity is \emph{isomorphism} rather than plain equality on the nose. This is true even more so for more complicated objects. Many objects in areas such as topology, algebraic geometry, differential geometry or even theoretical computer science naturally form \emph{weak higher-dimensional category}. In this case, contrasting ordinary $1$-dimensional categories, composition is defined in some weaker sense than as an operation on sets, and the respective laws do not hold equationally but up to coherence as witnessed by some higher-dimensional morphisms---possibly in arbitrarily high dimensions.

But the problem when reasoning about weak structures of this kind in traditional set-theoretic mathematics is that the definitions tend to become very involved. Since at the fundamental level set-theory is dealing with honest equality, these weak higher-dimensional structures have to be built out of rigid ones, often producing very involved combinatorics and the necessity to keep track of higher coherences in an explicit way. In addition, for most of these higher structures of interest there do exist various different mathematical implementations which are equivalent in some precise technical sense, but hard to compare ``directly''---not least because they do live in different categories. A remarkable amount of traditional lower-dimensional category and topos theory has been successfully generalized to weak higher-dimensional categories, producing new ramifications not present in the lower-dimensional case. But because of the different ``competing'' models\footnote{in the sense of homotopy theory, not logic/model theory} it has always been a challenge to develop the theory in a ``uniform'' manner: indeed, it is often the case that those arguments involve switching back and forth between different models---or, despite coming from traditional $1$-categorical ideas initially, one gets caught up in the challenging technicalities of the chosen model when making things fully precise.

\section{Model-independent $\infty$-category theory}

On the one hand, this situation does have the advantage that one can deliberately exploit the different practical advantages of one model over the other. On the other hand, for both technical and philosophical reasons, it seems desirable to aim for unification in the theory of higher categories and homotopical structures, at least to the extent of making the parallels to the classical theory as transparent as possible. One approach to this has been the long-standing program of Riehl and Verity, who have developed a vastly comprehensive theory of higher categories via \emph{$\infty$-cosmoses}. The idea is to work \emph{synthetically}: instead of trying to axiomatize the definition of a higher category, they do so for the supposed \emph{structures} the higher categories should live an. An \emph{$\infty$-category}, of whatever flavor,\footnote{\eg~$(\infty,n)$-categories for $0 \le n \le \infty$} is then simply an object of the respective $\infty$-cosmos. \Eg~for the very prominent case of $\inftyone$-categories this provides a unified, model-independent treatment encompassing simultaneously their different incarnations as quasi-categories, complete Segal spaces, Segal categories, and $1$-complicial sets. A crucial aspect of this theory is that a lot of the arguments (sometimes with extra care) can even be carried out in the \emph{homotopy $2$-category} of the $\infty$-cosmos---a strict $2$-categorical quotient, comparable by analogy to the homotopy ($1$-)category of a model category. For instance, remarkably the data of a $2$-adjunction in the homotopy $2$-category lifts to define a fully homotopy coherent adjunctions in the $\infty$-cosmos~\cite{RVCohAdjMnd}. Furthermore, there is a notion of cosmological functor and cosmological biequivalence between $\infty$-cosmoses which allows to transfer constructions and theorems across different models. The slogan is that equivalent $\infty$-cosmoses have equivalent $\infty$-category theories (\emph{Model Independence Theorem},~\cite[Theorem~11.1.6]{RV21}). Again, this is analogous to how model categories encode the same homotopy theories in case they are Quillen equivalent.

An $\infty$-cosmos is defined as an $\inftytwo$-category with certain $\inftytwo$-categorical limits.\footnote{In technical terms, it is presented as a (sufficiently complete) fibration category, enriched over the Joyal-model structure of quasi-categories.} They have succesfully been used to develop a synthetic $\infty$-category theory, however, using structures that are based on set-theoretic mathematics and previously known ``analytic'' results about concrete models of $\inftyone$-categories. This development has evolved quite far, but one can still ask: is there an approach to $\infty$-category theory \emph{within} a ``homotopy-invariant'' theory? An approach in this direction is \emph{homotopy type theory (HoTT)}, a logical system, which has developed in parallel to the rapidly expanding study of higher categories in the early 2000s.

\section{Homotopy type theory and univalent foundations}

Going back to Bertrand Russell, type theory was devised in the early 1900s as a logical system to avoid logical paradoxes that at shook the foundations of mathematics at the time. The basic entities in a type theory are called \emph{types} $A,B,C,\ldots$ and they can have \emph{elements} (also: \emph{inhabitants} or \emph{terms}). Basic judgments consist~\eg~of declarations of the form
\[ x:A \]
saying that $x$ is an element of type $A$. The logical content of a type theory is given by a set of rules governing standard procedures such as weakening, substitution~\emph{etc}. Out of given types, new types can be formed, again through given rules that capture the (universal) properties that the newly constructed types should have. \Eg~given two types $A,B$ one might form their product $A \times B$, coproduct $A + B$ or function type $A \to B$. This is different to the more \emph{materialist} spirit of set theory, since it captures a more \emph{structuralist} style of doing reasoning and constructions.  

Of central importance is a variant due to Martin-Löf~\cite{MLTT}, named after its creator. Martin-Löf type theory MLTT is a constructive dependent type theory (\ie~it captures \emph{type families} depending on other types) that internalizes proof-relevant equality using so-called \emph{identity types} $\Id_A$. In fact, when developing even simple fragments of arithmetic inside MLTT, one will encounter that a lot of expected equalities between elements $a,b:A$ do not hold \emph{judgmentally} (\ie~on the nose), but only \emph{propositionally}, as witnessed by a proof term $p:\Id_A(a,b)$. Compared to a strict equality judgment $a \jdeq b$, the identity type $\Id_A(a,b)$ in general contains much richer information since two elements may be equal or isomorphic to each other in more than one way. In particular, this makes MLTT an \emph{intensional} type theory.\footnote{It can be made \emph{extensional} by reflecting propositional equality into judgmental equality, but this renders type-checking undecidable.}

Any type $A$ admits a family of identity types indexed over $A \times A$. Since any $\Id_A(a,b)$ is again a type, this construction can be iterated to yield a hierarchy of types capturing identity proofs between identity proofs (between identity proofs~\emph{etc.}):
\[ A, \Id_A, \Id_{\Id_A}, \Id_{\Id_{\Id_A}}, \ldots\]
One can now ask: In MLTT, given elements $a,b:A$ and two proofs $p,q:\Id_A(a,b)$, is it derivable that there is an identity proof of identity proofs in the next dimension, \ie~a term $\alpha:\Id_{\Id_A(a,b)}(p,q)$? This was answered negatively in 1994 by Hofmann and Streicher who constructed the groupoid model~\cite{HS94} which refuted the claim. In this model, types were interpreted as groupoids, families of types as fibrations of groupoids, and identity types as the sets of isomorphisms. In fact, a motivation to come up with this model was the observation that, inside MLTT, the identity types give rise to define a groupoid-like structure on every type. The units are given by the reflexivity proofs $\refl_{A,a} : \Id_A(a,a)$, and one can define maps $\mathrm{comp}_{A,a,b,c}:\Id_A(b,c) \to \Id_A(a,b) \to \Id_A(a,c)$ and $\mathrm{inv}_{A,a,b}: \Id_A(a,b) \to \Id_A(b,a)$, \resp, for composition and inversion, \resp~However, the expected groupoid laws only hold up to higher propositional equalities.

All these considerations suggested the slogan that ``types are (weak) higher-dimensional groupoids''. This is also reminiscent to Grothendieck's \emph{Homotopy Hypothesis} stating that ``spaces are higher-dimensional groupoids''. Around 2006, independently Voevodsky and Streicher suggested to model intensional type theory in \emph{Kan complexes} since those were known from categorical homotopy theory as a notion of $\infty$-groupoids. This connected well to previous work by Awodey--Warren~\cite{AW05} who had shown that MLTT can be seen as an internal language of any model category, interpreting type families as fibrations, and interpreting identity types as factorizations of diagonals. Furthermore, van den Berg and Garner showed in 2008 that, internally in MLTT, each type can be regarded as an $\omega$-groupoid in the sense of Batanin. Also in 2008, Gambino and Garner showed that every identity type gives rise to a weak factorization system on the syntactic category, \cf~also later work by Emmenegger~\cite{EmmIdWFS}.

Further work by Voevodsky and Kapulkin--Lumsdaine up to 2012 led to establishing a model of \emph{Homotopy Type Theory (HoTT)}, an extended version of MLTT, in the Kan model structure of simplicial sets. A crucial addition due to Voevodsky is the so-called \emph{Univalence Axiom} which roughly can be subsumed under the slogan \emph{Isomorphic types are equal}.

Since the Kan model structure is also a presentation of the $\inftyone$-topos of spaces, Awodey had generalized the internal language correspondence for MLTT and model categories to the following conjecture: Any $\inftyone$-topos admits a model of homotopy type theory.

A lot of work about the semantics of HoTT has been done in the past decade, and is still ongoing. This very notably includes Shulman's positive confirmation of Awodey's conjecture in~\cite{Shu19}. Statements of these kind prove particularly challenging, since to achieve a model of type theory, the interpretation has to be given in a coherent way that is stable under substitution up to honest set-theoretic equality. This is nontrivial for model categories or similar structures, because these typically do not come with distinguished, coherent choices.

On the internal or synthetic side, a lot of progress has been going on working internally in HoTT to develop synthetic accounts to homotopy theory. Many results have successfully been formalized and verified in a computer proof assistant.

\section{Synthetic $\inftyone$-category theory in simplicial homotopy type theory}

Given the discussion up to this point, one might ask if HoTT was suited as a homotopy-invariant language for reasoning about higher categories.

However, reasoning about higher category theories in HoTT is somehow underdeveloped to this date. A related problem is defining homotopy-coherent structures directly in HoTT, which has still been proven hard in general over the years. There do exist approaches, \eg~in \emph{Two-level type theory} (going back to Voevodsky's \emph{Homotopy Type System (HTS)}). There one adds another \emph{extensional/non-fibrant layer} to the theory, allowing to work partially ``classically'' as needed, as opposed to solely up to homotopy.

Another approach has been given in~\cite{RS17} under the name of \emph{Simplicial Homotopy Type Theory (sHoTT)}. Somewhat similarly, one also adds extra layers of non-fibrant pre-types to the theory. These shall capture the \emph{shapes} known from simplicial homotopy theory, generated by the standard simplices $\Delta^n$ and their subpolytopes (such as the boundaries $\partial \Delta^n$ or the horns $\Lambda_k^n$). As a further gadget, one adds to the theory type formers which capture \emph{strict} extensions of (partial) sections in a family along a shape inclusion. For instance, this allows for defining the hom-types of a type $A$ as
\[ \hom_A(x,y) \defeq \ndexten{\Delta^1}{A}{\partial \Delta^1}{[x,y]},\]
whose terms are directed arrows $f:\Delta^1 \to A$ that coincide \emph{strictly} with $x$ and $y$ on the boundary, $f\,0 \jdeq x$ and $f\,1 \jdeq y$. Such a type, with strict computational behavior, is not definable in standard HoTT, where instead these equalities would only hold up to paths $p:\Id_A(f\,0,x)$ and $q:\Id_A(f\,1,y)$, as a consequence producing unwieldy higher coherences.\footnote{This setup also has some parallels with developments of \emph{Cubical Type Theories}~\cite{BCH,CCHM2018} due to Coquand \emph{et al.}, \cf~\cite[Remark~3.2]{RS17}.}

In this setting, the way to reason about synthetic $\infty$-categories is as follows. Using the extension types, one can form in particular function types such as $A^\Phi$ for any (non-fibrant) tope $\Phi$. Using Voevodsky's notion of \emph{type-theoretic weak equivalence} $f: X  \stackrel{\equiv}{\to} Y$, Riehl--Shulman define a (simplicial) type $A$ to be a \emph{synthetic pre-$\inftyone$-category} or \emph{Segal type} if the induced map
\[A^{\Delta^2} \to  A^{\Lambda_1^2} \]
is a weak equivalence. This map restricts any $2$-simplex (triangle with filled interior) in $A$ to the sub-diagram of shape $(\bullet \to \bullet \to \bullet)$ (\emph{$(2,1)$-horn}). Hence, this restriction being a weak equivalence means that, in $A$, any such pair of composable arrows possesses a composite, uniquely up to contractibility~(\ie~the space of all possible such composition data is homotopy-equivalent to the point).

This constitutes a synthetic version of \emph{Segal spaces}. Adding the so-called \emph{Rezk-completeness} condition\footnote{Namely, this says that ``categorical isomorphism'' (defined through the hom-types) coincides with ``homotopy equivalence'' (given through the identity type). For this reason, the condition can also be called ``local unvialence'', after Voevodsky's Univalence Axiom.} gives rise to \emph{synthetic $\inftyone$-categories} or \emph{complete Segal types}, or \emph{Rezk types}.

Analytically, Rezk spaces are known to also present $\inftyone$-categories, just as quasi-categories do, but based on \emph{bisimplicial} sets, rather than simplicial sets. This idea was also independently suggested by Joyal.

Recall that HoTT can be modeled in the Kan model structure on simplicial sets $\sSet = \mathbf{Set}^{{\Simplex}^{\Op}}$, presenting the $\inftyone$-topos of spaces $\mathscr S$. Analogously, simplicial HoTT can be modeled in the so-called \emph{Reedy model structure} on bisimplicial sets $\mathbf{Set}^{{\Simplex}^{\Op} \times {\Simplex}^{\Op}}  \cong  \sSet^{{\Simplex}^{\Op}}$, presenting the $\inftyone$-topos $\mathscr S^{{\Simplex}^{\Op}}$ of simplicial spaces. The simplicial types, interpreted by Reedy fibrant simplicial spaces, do not have intrinsic meaning for us per se. But importantly, they are presented by a model structure interpreting all of HoTT.\footnote{Interpreting directly in the model structure presenting Rezk spaces is not possible, because it does not support general $\Pi$-types, as is also the case with the Joyal model structure on simplicial sets, capturing the quasi-categories.} In our consideration, we will often restrict to (complete) Segal types, which can be done internally in the newly extended theory, since (complete) Segal-ness becomes a definable predicate.

In~\cite{RS17} Riehl--Shulman have developed a lot of basic synthetic $\inftyone$-category theory in this setting, including a study of $\infty$-(co-)presheaves and adjunctions. We present here extensions of this development, on the one hand taking place in the type-theoretic setting, on the other hand adapting and aiming to parallel parts of the theory in $\infty$-cosmoses from Riehl--Verity~\cite{RV21}.

	\chapter[Preliminaries on synthetic \texorpdfstring{$\inftyone$}{(∞,1)}-categories]{Preliminaries on synthetic \texorpdfstring{$\inftyone$}{(∞,1)}-categories}
	\label{ch:prelim}
	This section is essentially taken from the author's joint work with Buchholtz~\cite[Section~2]{BW21}, to make the presentation of the thesis as self-contained as possible \wrt~to the variations and peculiarities of sHoTT~\cite{RS17} that are relevant for our treatise.

The parts present here have originally been written by the author of this thesis as well.

\section{Exposition of Riehl--Shulman's synthetic
	\texorpdfstring{$\inftyone$}{(∞,1)}-category theory}\label{sec:expo}

We recall some basic features and results from Riehl--Shulman's synthetic $\inftyone$-category theory~\cite{RS17}, at a very brief and informal level. A significantly more thorough treatment is provided in the original paper. For a general introduction to homotopy type theory,~\cf~\emph{The Book}, collaboratively authored by the \emph{Univalent Foundations Project}~\cite{hottbook}, or Rijke's excellent book in progress~\cite{RijIntro}. In particular, the latter emphasizes new perspectives informed by categorical homotopy theory and higher topos theory in the synthetic setting.

\subsection{Shapes}
In terms of homotopy theory, the shape layer enable us to reason about generating anodyne cofibrations using strict equalities.

In simplicial HoTT, next to the familiar layer of (univalent) intensional Martin-L\"of type theory, there are new ``non-fibrant'' layers added that provide a logical calculus of geometric shapes. We start of from the \emph{cube layer}, \ie, a Lawvere theory generated by a single bi-pointed object $0,1:\I$, the \emph{standard $1$-cube}. A cube context
\[ \Xi \jdeq[ I_1, \ldots,  I_k] \]
is thus a finite list of cubes
\[ I_m ~\mathrm{cube}\]
for $1 \le m \le k$.

On top of the cube layer, we can form \emph{topes} through logical comprehension via (intuitionistic) conjunction $\land$, disjunction $\lor$, and equality $\jdeq$.\footnote{but \emph{no} negation/reversals!} The \emph{tope layer} hence captures sub-polytopes of $n$-cubes (with explicit embedding). A tope formula $\varphi(t_1,\ldots,t_k)$ together with a cube context $\Xi \jdeq [I_1, \ldots, I_k, \vec{J}]$ gives rise to a tope
\[ \Xi \vdash \varphi ~\mathrm{tope}.\]

The interval $\I$ under consideration shall also come equipped with an \emph{inequality} tope
\[ x,y: \I \vdash x \le y ~ \mathrm{tope}\]
making it a total order (\wrt~the \emph{strict} equality tope $\jdeq$) with $0$ and $1$ as the bottom and top element, respectively.\footnote{For a comparison with the setup of \emph{cubical type theory}~\cite{CCHM2018} cf.~\cite[Remark~3.2]{RS17}.}
In particular, we also sometimes make use of \emph{connections} on the cube terms as discussed in~\cite[Proposition~3.5]{RS17}.

A cube together with a tope is called a \emph{shape}:
\[
\frac{I\,\cube \qquad t : I \vdash \varphi\,\tope}%
{\set{t : I}{\varphi}\,\type}
\]

As an addition to the original theory by Riehl--Shulman, we will moreover coerce all shapes to be types, cf.~\Cref{ssec:fib-shapes}. This is still in accordance with the intended class of models.

\subsection{Extension types}

In addition to the strict layers, the other new feature of simplicial type theory is a new type former called the \emph{extension type}, the idea of which originally was due to Lumsdaine and Shulman. Given a shape $\Psi$ and a type family $P: \Gamma \to \Psi \to  \UU$ together with a \emph{partial} section $a: \prod_{\Gamma \times \Phi} P$, where $\Phi \subseteq \Psi$ denotes a subshape, we can form the corresponding family of extension types
\[ \Gamma \vdash \exten{t:\Psi}{P(t)}{\Phi}{a}\]
which is interpreted as a (strict) pullback, \cf~\cite[Theorem~A.16]{RS17}:
\[\begin{tikzcd}
	{\exten{t:\Psi}{P(t)}{\Phi}{a}} && {\widetilde{P}^\Psi} \\
	\Gamma && {\widetilde{P}^\Phi \times_{(\Gamma \times \Psi)^\Phi} (\Gamma \times \Psi)^\Psi} & {} \\
	&&& {}
	\arrow[two heads, from=1-1, to=2-1]
	\arrow[from=1-1, to=1-3]
	\arrow[two heads, from=1-3, to=2-3]
	\arrow["{\pair{\overline{a}}{\overline{\id_{\Gamma \times \Psi}}}}"' swap, from=2-1, to=2-3]
	\arrow["\lrcorner"{anchor=center, pos=0.125}, draw=none, from=1-1, to=2-3]
\end{tikzcd}\]
This means, the elements of $\exten{t:\Psi}{P(t)}{\Phi}{a}$ are total sections $b: \prod_{\Gamma \times \Psi} P$ such that $b|_\Phi \jdeq a$ holds judgmentally:
\[\begin{tikzcd}
	{\Phi^*\widetilde{P}} && {\widetilde{P}} \\
	{\Gamma \times \Phi} && {\Gamma \times \Psi}
	\arrow[two heads, from=1-1, to=2-1]
	\arrow[hook, from=2-1, to=2-3]
	\arrow[from=1-1, to=1-3]
	\arrow[two heads, from=1-3, to=2-3]
	\arrow["\lrcorner"{anchor=center, pos=0.125}, draw=none, from=1-1, to=2-3]
	\arrow["b"', curve={height=18pt}, dotted, from=2-3, to=1-3]
	\arrow["a", curve={height=-18pt}, dotted, from=2-1, to=1-1]
\end{tikzcd}\]
The type-theoretic rules are analogous to the familiar rules of $\Pi$-types, but with the desired judgmental equalities added, \cf~\cite[Figure~4]{RS17}.

In particular, non-dependent instances give rise to function types $A^\Phi$ where $\Phi$ is a shape rather than a type (even though, later on all of our shapes are assumed to be fibrant, cf.~Subsection~\ref{ssec:fib-shapes}). Semantically, this reflects the fact that the intended model is cotensored over simplicial sets, cf.~also the discussion in~\cite[Appendix~A]{RS17}.

From the given rules one can show that the extension types interact well with the usual $\Pi$- and $\Sigma$-types, as shown in~\cite[Subsections~4.1, 4.2]{RS17}. In particular, there is a version of the type-theoretic principle of choice\footnote{Sometimes this is referred to as the ``type-theoretic \emph{axiom} of choice'' even though it is derivable.} involving extension types that will be used a lot.

\begin{theorem}[Type-theoretic principle of choice for extension types, {\protect\cite[Theorem~4.2]{RS17}}]\label{thm:choice}
	Let $\Phi \subseteq \Psi$ be a shape inclusion. Suppose we are given families $P: \Psi \to \UU$, $Q: \prod_{t:\Psi} (P(t) \to \UU)$ and sections $a:\prod_{t:\Phi} P(t)$, $b:\prod_{t:\Phi} Q(t,a(t))$. Then there is an equivalence
	\[ \exten{t:\Psi}{\sum_{x:P(t)} Q(t,x)}{\Phi}{\lambda t.\pair{a(t)}{b(t)}} \equiv \sum_{f:\exten{t:\Psi}{P(t)}{\Phi}{a}} \exten{t:\Psi}{Q(t,f(t))}{\Phi}{b}.\]
\end{theorem}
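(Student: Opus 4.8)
The plan is to exhibit explicit maps in both directions and check that they are mutually inverse, mirroring the usual derivation of the ``type-theoretic axiom of choice'' as the distributivity of $\prod$ over $\sum$, but carefully tracking the strict boundary conditions imposed by the extension types at every step.

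First I would define the map from left to right. An element of the left-hand side is a total section $c : \prod_{t:\Psi} \sum_{x:P(t)} Q(t,x)$ whose restriction to $\Phi$ is judgmentally $\lambda t.\pair{a(t)}{b(t)}$. Setting $f \defeq \lambda t.\mathrm{pr}_1(c(t))$ and $g \defeq \lambda t.\mathrm{pr}_2(c(t))$, the $\beta$-rule for $\Sigma$-types together with the boundary condition on $c$ gives $\mathrm{pr}_1(c(t)) \jdeq \mathrm{pr}_1\pair{a(t)}{b(t)} \jdeq a(t)$ and likewise $g(t) \jdeq b(t)$ for $t:\Phi$. Hence $f$ typechecks as an element of the first extension type and $g$ as an element of the (dependent) second one, producing $\pair{f}{g}$ on the right. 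Conversely, given $\pair{f}{g}$, I would set $c \defeq \lambda t.\pair{f(t)}{g(t)}$; for $t:\Phi$ we have $\pair{f(t)}{g(t)} \jdeq \pair{a(t)}{b(t)}$, which is exactly the boundary datum required for $c$ to inhabit the left-hand extension type.

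The two round-trips are then identities up to judgmental equality. Starting from $c$ and re-pairing its projections returns $c$ by the $\eta$-rule for $\Sigma$-types; starting from $\pair{f}{g}$, projecting the re-paired section returns $f$ and $g$ pointwise by $\beta$, and then the congruence rule for $\lambda$-abstraction together with $\eta$ for extension types (which behave formally like $\Pi$-types, by the rules of \cite[Figure~4]{RS17}) upgrades the pointwise judgmental equalities to $\lambda t.f(t) \jdeq f$ and $\lambda t.g(t) \jdeq g$, so the composite is judgmentally the identity. A pair of mutually inverse maps, a fortiori judgmentally inverse ones, constitutes an equivalence in Voevodsky's sense, so this suffices.

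The main obstacle is not the construction but the bookkeeping of strictness: one must verify at each step that the relevant restrictions to $\Phi$ agree \emph{on the nose}, since extension types record their boundaries definitionally rather than up to a path. This forces reliance on the strict computation rules for $\Sigma$-types and on the fact that extension types support $\lambda$-abstraction, application, and $\beta$/$\eta$ with strictly specified boundaries. Granting those rules, no transport or coherence correction is ever needed, and the asserted equivalence is in fact a definitional isomorphism.
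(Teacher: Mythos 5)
Your proof is correct and is essentially the argument given in Riehl--Shulman's original paper: the thesis merely quotes this result from \cite[Theorem~4.2]{RS17} without reproving it, and the proof there is exactly this projection/pairing construction, with the strict boundary conditions tracked via the judgmental $\beta$/$\eta$ rules for $\Sigma$-types and extension types. Nothing further is needed.
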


A further important principle is \emph{relative function extensionality}, which is added as an axiom:
\begin{ax}[Relative function extensionality, {\protect\cite[Axiom 4.6]{RS17}}]\label{ax:relfunext}
	Let $\Phi \subseteq \Psi$ be a shape inclusion.
	Given a family $P: \Psi \to \UU$ such that each $P(t)$ is contractible,
	and a partial section $a:\prod_{t: \Phi} P(t)$, then the extension type $\exten{t:\Psi}{P(t)}{\Phi}{a}$ is contractible.
\end{ax}
An important consequence is the \emph{homotopy extension property (HEP)}:
\begin{prop}[Homotopy extension property (HEP), {\protect\cite[Proposition 4.10]{RS17}}]\label{prop:hep}
	Fix a shape inclusion $\Phi \subseteq \Psi$. Let $P: \Psi \to \UU$ be family, $b:\prod_{t:\Psi} P(t)$ a total section, and $a:\prod_{t:\Phi} A(t)$ a partial section. Then, given a homotopy $H:\prod_{t:\Phi} a(t) = b(t)$, there exist totalizations $a':\exten{t:\Psi}{P(t)}{\Phi}{a}$ and $H':\exten{t:\Psi}{a'(t)=b(t)}{\Phi}{H}$.
\end{prop}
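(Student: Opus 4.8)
The plan is to reduce the statement to a formal combination of relative function extensionality (\Cref{ax:relfunext}) and the type-theoretic principle of choice (\Cref{thm:choice}). The key observation is that the pair $(a',H')$ we are looking for is exactly a global section of the family of based path spaces
\[ Q(t) \defeq \sum_{x:P(t)} (x = b(t)), \]
which is contractible at every $t:\Psi$, with center of contraction $\pair{b(t)}{\refl_{b(t)}}$. Here $b$ is the given total section, so $Q$ is a genuine family over all of $\Psi$, not just over $\Phi$.

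First I would record the partial section of $Q$ supplied by the hypotheses: over the subshape $\Phi$, the assignment $\lambda t.\pair{a(t)}{H(t)}$ is a section of $Q$, since $a(t):P(t)$ and $H(t):a(t)=b(t)$ exhibit $H(t)$ as an element of the fiber $Q(t)$ over $a(t)$. Because each $Q(t)$ is contractible, \Cref{ax:relfunext} applies directly and shows that the extension type
\[ \exten{t:\Psi}{Q(t)}{\Phi}{\lambda t.\pair{a(t)}{H(t)}} \]
is itself contractible, and in particular inhabited by some total section extending $\lambda t.\pair{a(t)}{H(t)}$ strictly along $\Phi \subseteq \Psi$.

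Next I would invoke \Cref{thm:choice}, instantiated with the family $P$, the family $Q(t,x)\defeq (x = b(t))$, and the partial sections $a$ and $H$. This yields an equivalence between the extension type just shown to be inhabited and the $\Sigma$-type
\[ \sum_{a':\exten{t:\Psi}{P(t)}{\Phi}{a}} \exten{t:\Psi}{a'(t)=b(t)}{\Phi}{H}. \]
Transporting the inhabitant across this equivalence produces precisely a totalization $a'$ of $a$ together with a totalization $H'$ of $H$ witnessing $a'(t) = b(t)$ for all $t:\Psi$, which is the required data.

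I do not expect a substantial obstacle once the based-path-space family $Q$ has been identified; the argument is then purely formal. The one point that warrants care is matching the strict boundary data on the nose: one must check that the chosen boundary section $\lambda t.\pair{a(t)}{H(t)}$ is exactly the datum into which \Cref{thm:choice} decomposes the pair $(a,H)$, so that the computed $a'$ and $H'$ restrict judgmentally to $a$ and $H$ along $\Phi$. This is immediate from the definition of $Q(t,x)$ together with the judgmental computation rules for extension types, but it is where the \emph{strictness} of the totalizations---as opposed to a mere up-to-homotopy extension---is secured.
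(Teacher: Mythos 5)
Your proof is correct and is precisely the argument behind the cited result: the paper states the HEP as a consequence of relative function extensionality (\Cref{ax:relfunext}) without reproducing the proof, and the proof in \cite[Proposition~4.10]{RS17} runs exactly as you describe---contract the based path-space family via relative function extensionality and then split the resulting section with the choice principle of \Cref{thm:choice}. The only cosmetic point is that you overload the symbol $Q$ for both the total family $\sum_{x:P(t)}(x=b(t))$ and the dependent family $(x=b(t))$ fed into \Cref{thm:choice}, which is harmless but worth disambiguating.
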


\subsubsection{Semantics in simplicial spaces}

A model of simplicial type theory is given by the Reedy model structure on bisimplicial sets, which presents the $\inftyone$-topos of simplicial spaces. The main steps in proving this are discussed in~\cite[Appendix A]{RS17}, with previous work done in~\cite{ShuReedy,CisUniv}. In fact, one can replace the base by an arbitrary (Grothendieck--Rezk--Lurie) $\inftyone$-topos $\E$ so that the results developed synthetically will hold for Rezk objects (\ie, internal $\inftyone$-categories) in $\E$. This is detailed later in~\Cref{ch:sem}.

In particular, following a guiding principle of Riehl--Verity's $\infty$-cosmos theory~\cite{RV21} the definitions and constructions we are presenting fall in the frame of (synthetic) formal higher category theory: they have characterizations in terms of basic notions, such as (fibered) weak equivalences, (LARI) adjunctions, representability of distributors etc. See for instance the characterizations of cocartesian fibrations via \Cref{thm:cocart-fams-intl-char,thm:cocartfams-via-transp}.
By Riehl--Verity's results on model-independence and notably Rasekh's work on simplicial and (complete) Segal spaces one can systematically argue that, in essence, all of our internal notions externalize to their intended semantic counterparts---at least when restricting to the Rezk types, which are our objects of primary interest after all.

\subsection{Synthetic higher categories}
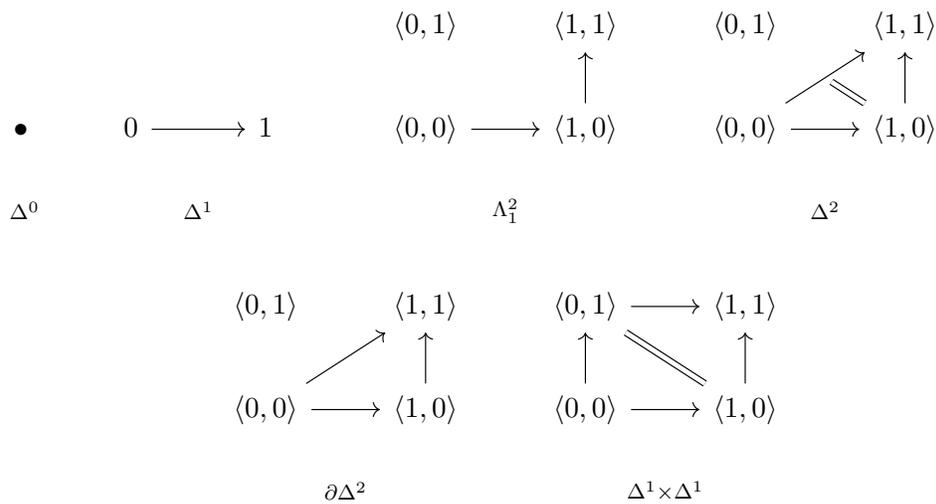
\begin{figure}
	\[\begin{tikzcd}
		&&&& {\langle 0,1 \rangle} & {\langle 1,1 \rangle} & {\langle 0,1 \rangle} & {\langle 1,1 \rangle} \\
		& \bullet & 0 & 1 & {\langle 0,0 \rangle} & {\langle 1,0 \rangle} & {\langle 0,0 \rangle} & {\langle 1,0 \rangle} \\
		{} && {} & {} & {} & {} & {} & {} \\
		&&& {\langle 0,1 \rangle} & {\langle 1,1 \rangle} & {\langle 0,1 \rangle} & {\langle 1,1 \rangle} \\
		&&& {\langle 0,0 \rangle} & {\langle 1,0 \rangle} & {\langle 0,0 \rangle} & {\langle 1,0 \rangle} \\
		&&& {} & {} & {} & {}
		\arrow[from=2-7, to=2-8]
		\arrow[from=2-8, to=1-8]
		\arrow[""{name=0, anchor=center, inner sep=0}, from=2-7, to=1-8]
		\arrow[from=2-6, to=1-6]
		\arrow[from=2-5, to=2-6]
		\arrow[from=2-3, to=2-4]
		\arrow["{\Delta^1}"{description}, draw=none, from=3-3, to=3-4]
		\arrow["{\Lambda_1^2}"{description}, draw=none, from=3-5, to=3-6]
		\arrow["{\Delta^2}"{description}, draw=none, from=3-7, to=3-8]
		\arrow["{\Delta^1 \times \Delta^1}"{description}, draw=none, from=6-6, to=6-7]
		\arrow[Rightarrow, no head, from=4-6, to=5-7]
		\arrow[from=5-7, to=4-7]
		\arrow[from=5-6, to=5-7]
		\arrow[from=5-6, to=4-6]
		\arrow[from=4-6, to=4-7]
		\arrow[from=5-4, to=5-5]
		\arrow[from=5-5, to=4-5]
		\arrow["{\partial\Delta^2}"{description}, draw=none, from=6-4, to=6-5]
		\arrow[from=5-4, to=4-5]
		\arrow["{\Delta^0}"{description}, draw=none, from=3-1, to=3-3]
		\arrow[shorten <=2pt, Rightarrow, no head, from=0, to=2-8]
	\end{tikzcd}\]
	\caption{Some important shapes}
	\label{fig:sample-shapes}
\end{figure}

Via the inequality tope of the interval $\I$ we can define simplices and subshapes familiar from simplicial homotopy theory. The first few low-dimensional simplices are given by
\begin{align*}
	\Delta^0 \defeq \{ t:\unit \; | \; \top\}, ~~ 	\Delta^1 \defeq \{ t:\I  \; | \; \top\}, ~~ \Delta^2 \defeq \{ \! \pair{t}{s} : \I \times \I \; | \;  s \le t \}.
\end{align*}
The logical connectives of the tope layer enable us to carve out subshapes, such as boundaries and horns, \eg
\begin{align*}
	& \partial \Delta^1 \defeq \{ t:\I \; | \;  t \jdeq 0 \lor t \jdeq 1\}, \quad \Lambda_1^2 \defeq \{ \pair{t}{s} : \I \times \I  \; | \;  s \jdeq 0 \lor t \jdeq 1\}, \\
	& \partial \Delta^2 \defeq \{ \pair{t}{s} : \I \times \I  \; | \;  s \jdeq t \lor s \jdeq 0 \lor t \jdeq 1\}.
\end{align*}
Cf.~\Cref{fig:sample-shapes} for an illustration and~\cite[Section 3.2]{RS17} for a detailed discussion.

We then can define, for any type $B$ and fixed elements $b,b':B$ the type of arrows from $b$ to $b'$ as
\[
\hom_B(b,b') \defeq \ndexten{\Delta^1}{B}{\partial \Delta^1}{[b,b']}.
\]
Given a type family $P:B \to \UU$ and an arrow $u:\hom_B(b,b')$ in the base, the type of arrows lying over $u$, from $e:P\,b$ to $e':P\,b'$, is given by
\[ \dhom^P_u(e,e') \defeq \exten{t:\Delta^1}{P(u(t))}{\partial \Delta^1}{[e,e']}.\]
Such an arrow is also called a \emph{dependent arrow} or \emph{dependent homomomorphism}.

We will also be considering types of $2$-cells, defined by\footnote{The boundary here is given by
	\[ [u,v,w]: \partial \Delta^2 \to B, \quad [u,v,w](t,s) \defeq
	\begin{cases}
		u & s \jdeq 0 \\
		v & t \jdeq 1 \\
		w & s \jdeq t
	\end{cases}
	\]
	and similarly for the dependent case.}
\[ \hom_B^2(u,v;w) \jdeq \ndexten{\Delta^2}{B}{\partial \Delta^2}{[u,v,w]}, \quad \hom^{2,P}_\sigma(f,g;h) \jdeq \exten{\langle t,s \rangle : \Delta^2}{P(\sigma(t,s))}{\partial \Delta^2}{[f,g,h]}.\]
We abbreviate
\[ \hom_B(b,b') \jdeq  (b \to_B b') \jdeq  (b \to b')\]
when the intent is clear from the context. For families $P : B \to \UU$, we write
\[ \hom_u^P(e,e') \jdeq (e \to^P_u e').\]
For a type $B$, we have two projections from the arrow type, given by evaluation
\[ \partial_k: B^{\Delta^1} \to B, \quad \partial_k \defeq \lambda u.u(k).\]
Similarly to the notation introduced above, for the type of natural transformations between a fixed pair of functors we abbreviate
\[ \nat{A}{B}(f,g) \jdeq  (f \Rightarrow g).\]
Sometimes, we also denote the type of $2$-simplices by
\[ \hom_B^2(u,v;w) \jdeq (u,v \Rightarrow w),\]
and likewise for the dependent version.
With these prerequisites, \cite{RS17} define a type $B$ to be a \emph{Segal type} such that the proposition
\[ \isSegal(B) \defeq \prod_{b,b',b'':B} \prod_{\substack{u:b \to b' \\ v: b' \to b''}} \isContr\Big( \sum_{w:b \to b''} \hom_B^2(u,v;w)\Big) \]
is true. This means that $B$ has \emph{weak composition of directed arrows}. After Joyal, the Segal condition can be stated as
\[
\isSegal(B) \simeq \isEquiv \Big( (\Delta^2 \to B) \to (\Lambda_1^2 \to B) \Big). \]
Segal types can be thought of as synthetic \emph{pre}-$\inftyone$-categories,\footnote{Informally, for $-2 \le k \le \infty$ and $0 \le n \le k+1$ an \emph{$(k,n)$-category} has $r$-dimensional morphisms for $r<k$, and for $r > n$, every $r$-morphism is invertible. Our study deals with $\inftyone$-categories, even though some of the ambient $\inftytwo$-categorical structure will shine through in the type theory.} which here in simplicial homotopy type theory is expressed as a \emph{property} rather than structure, echoing the familiar situation from the semantics in simplicial spaces. As discussed in~\cite[Section 5]{RS17}, Segal types can be endowed with a weak composition \emph{operation} which is weakly associative. For arrows $f:a \to b$, $g:b \to c$ in some Segal type $B$, one writes $g \circ f$ for the chosen composite arrow. The identity arrow of an element $b:B$ is given by the constant map $\id_b \defeq \lambda t.b:\Delta^1 \to B$.

Often, naturality \wrt~directed arrows comes ``for free''. In particular, any function~$f:A \to B$ between Segal types is a \emph{functor} in the sense that it preserves composition and identities up to propositional equality, as proven in~\cite[Section 6.1]{RS17}. The action of a functor on points already determines its actions on arrows as discussed in \emph{loc.~cit}.

Although the semantics is given by a structure presenting an $\inftyone$-topos---a certain kind of $\inftyone$-category---we, in fact, have access to portions of the $2$-dimensional structure present in the model as well. Since Segal types form an exponential ideal, the type $A \to B$ is Segal if $B$ is, and this allows us to study natural transformations between functors and lax diagrams of types, cf.\ Appendix~\cite[Appendix~A]{BW21} and the groundwork in~\cite[Section~6]{RS17}. This enables us to adapt several developments from Riehl--Verity's model-independent higher category theory from $\infty$-cosmoses to type theory.

Segal types come with two possible notions of isomorphism (analogous to the semantic situation for Segal spaces), the ``spatial'' one given by propositional equality, and the ``categorical'' one derived from the directed arrows. Namely, an arrow $f:a \to b$ in a Segal type $B$ is a (categorical) \emph{isomorphism} if the type
\[ \isIso(f) \defeq \sum_{g,h:b \to a} (gf = \id_a) \times (fh = \id_b) \]
is inhabited. As discussed in~\cite[Section 10]{RS17}, the type $\isIso(f)$ turns out to be a proposition, so we can define the subtypes
\[ \iso_B(a,b) \defeq \sum_{f:\hom_A(a,b)} \isIso(a,b).\]

By path induction we define the comparison map
\[ \idtoiso_B: \prod_{a,b:B} (a=_Bb) \to \iso_B(a,b), \quad \idtoiso_{B,a,a}(\refl_a) \defeq \id_a, \]
and demanding that this be an equivalence leads to the notion of a \emph{complete Segal type}, \aka~\emph{Rezk type}:
\[ \isRezk(B) \defeq \isSegal(B) \times \isEquiv(\idtoiso_B).\]
The Rezk-completeness condition can be understood as a local version of the Univalence Axiom. In the simplicial space model, Rezk types are interpreted as Rezk spaces, which justifies viewing Rezk types as \emph{synthetic $\inftyone$-categories}. Even though a lot of the development in~\cite{RS17} actually already works well on the level of (not necessarily complete) Segal types, our study of cocartesian families mostly restricts to complete Segal types, which is in line with preexisting studies of (co-)cartesian fibrations in the higher-categorical context \cite{JoyNotesQcat,LurHTT,RVyoneda,Ras17Cart,dB16segal,AFfib,BarwickShahFib}.

Among the synthetic $\inftyone$-categories, we can also consider types that are synthetic $\inftyzero$-categories, \ie, $\infty$-groupoids. These are called \emph{discrete types}, which refers to the condition that all directed arrows be invertible, namely the comparison map defined inductively by
\[ \idtoarr_B: \prod_{a,b:B} (a=_B b) \to \hom_B(a,b), \quad \idtoarr_{B,a,a}(\refl_a) \defeq \id_a,\]
be an equivalence:
\[ \isDisc(B) \defeq \prod_{a,b:B} \isEquiv(\idtoarr_{B,a,b})\]
In fact, this discreteness condition entails Rezk-ness as shown by Riehl--Shulman. Furthermore, if $B$ is a Segal type, for any $a,b:B$, the hom-type $\hom_B(a,b)$ is discrete by~\cite[Proposition~8.13]{RS17}.

Discrete Rezk spaces are also called complete Bousfield--Segal spaces and have been studied \eg~in~\cite{BergnerInvDiagII,Stenzel-CBS}.

\subsection{Covariant families}

Riehl--Shulman have introduced the notion of \emph{covariant family}, \ie, families of discrete types varying functorially \wrt~directed arrows in the base. A type family $P:B \to \UU$ over a (Segal or Rezk) type $B$ is \emph{covariant} if
\[ \isCovFam(P) \defeq \prod_{\substack{a,b:B \\ u:a \to b}} \prod_{d:P\,a} \isContr\Big( \sum_{e:P\,b} (d \to^P_u e) \Big), \]
\ie, arrows in the base can be uniquely lifted w.r.t.~a given source vertex.

Semantically, these correspond to \emph{left fibrations}, which encode $\inftyone$-copresheaves. Hence, as expected, an example is given, for any $a:B$ by the family
\[ \lambda b.\hom_B(a,b): B \to \UU.\]

The central topic of our work is to generalize this study to synthetic \emph{cocartesian fibrations}, \ie, the case where the fibers are Rezk rather than discrete.\footnote{Everything dualizes to the case of cartesian fibrations, of course, but we don't spell this out.}

\section{Fibrant shapes}\label{ssec:fib-shapes}

In our intended models, the shapes will arise as (spatially-discrete) fibrant objects. In fact, it is crucial for our treatment to have this reflected in the type theory, so we add the following rule:
\[
\frac{I\,\cube \qquad t : I \vdash \varphi\,\tope}%
{\shapety{t : I}{\varphi}\,\type}
\]
In fact, we take the interval $\Delta^1$ as a type, and the inequality relation ${\le} : \Delta^1 \to \Delta^1 \to \Prop$ as a type family, and then all shapes are types using the ordinary type formers.

Using relative function extensionality \cite[Section~4.4, Axiom~4.6]{RS17}, \ref{ax:relfunext}, one can show that every strict extension type is equivalent to its ``weak'' counterpart, where the latter (essentially) is definable in Standard HoTT, \ie~we always have an equivalence as follows.
\begin{proposition}[De-/strictification of extension types,~\cf~\protect{\cite[Section~2.4]{BW21}}]
	In cube context $I$, let $\varphi \hookrightarrow \psi$ be a shape inclusion. Let $A: \shapety I\psi \to \UU$ be a type family, and $a:\prod_{x:\shapety I\varphi}$ a partial section. Then there is an equivalence between the ensuing types of ``strict'' (judgmental) and ``weak'' (propositional) extensions:
	\[ \exten{x:\shapety I\psi}{A(x)}{\varphi}{a} \equiv \sum_{f : \prod_{x:\shapety I\psi}A(x)}\prod_{x:\shapety I\varphi}(a\,x = f\,x), \]
\end{proposition}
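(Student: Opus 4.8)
The plan is to exhibit a pair of mutually inverse maps between the strict extension type $L \defeq \exten{x:\shapety I\psi}{A(x)}{\varphi}{a}$ and the weak one $R \defeq \sum_{f : \prod_{x:\shapety I\psi}A(x)}\prod_{x:\shapety I\varphi}(a\,x = f\,x)$, with the homotopy extension property (\Cref{prop:hep}) and relative function extensionality (\Cref{ax:relfunext}) supplying the coherences. Conceptually, $L$ is the \emph{strict} fibre of the restriction map $\prod_{x:\shapety I\psi}A(x) \to \prod_{x:\shapety I\varphi}A(x)$ over $a$, whereas $R$ is its \emph{homotopy} fibre (after rewriting the pointwise homotopy $\prod_{x:\shapety I\varphi}(a\,x = f\,x)$ as a single identification $a = f|_\varphi$ via function extensionality). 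The statement is thus the internal shadow of the fact that restriction along a shape inclusion is a fibration, which is exactly what \Cref{prop:hep} encodes.

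First I would define the forward map $\Phi : L \to R$ by $\Phi(g) \defeq \pair{g}{\lambda x.\refl_{a\,x}}$; this is well typed because a strict extension satisfies $g\,x \jdeq a\,x$ for $x:\shapety I\varphi$, so $\refl_{a\,x}$ inhabits $a\,x = g\,x$ on the nose. For the backward map $\Psi : R \to L$, given $\pair{f}{H}$ I would feed the total section $f$, the partial section $a$, and the homotopy $H : \prod_{x:\shapety I\varphi}(a\,x = f\,x)$ into \Cref{prop:hep}. This returns a genuine strict extension $a' : L$ together with a filler $H' : \exten{x:\shapety I\psi}{a'\,x = f\,x}{\varphi}{H}$, i.e.\ a total homotopy $a' \sim f$ restricting judgmentally to $H$ on $\varphi$; I set $\Psi\pair{f}{H} \defeq a'$.

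Next I would verify the two round-trips. For $\Phi \circ \Psi$, the filler $H'$ yields, by function extensionality, a path $a' = f$ in $\prod_{x:\shapety I\psi}A(x)$; transporting the reflexivity witness $\lambda x.\refl$ along this path and using $H'|_\varphi \jdeq H$ should identify $\Phi(\Psi\pair{f}{H}) = \pair{a'}{\lambda x.\refl}$ with $\pair{f}{H}$ in $R$. For $\Psi \circ \Phi$, applying \Cref{prop:hep} to $g$ with the reflexivity homotopy produces a strict extension $a''$ together with a homotopy $a'' \sim g$ that is \emph{constant} (judgmentally $\refl$) on $\varphi$; such a boundary-trivial homotopy is precisely a path in the extension type $L$ by relative function extensionality, giving $\Psi(\Phi(g)) = g$.

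I expect the main obstacle to be the bookkeeping in the $\Phi\circ\Psi$ round-trip: one must compute the transport of $\lambda x.\refl$ along the identification coming from $H'$ inside the dependent pair and check it lands on $H$, rather than merely something homotopic to it, using the judgmental restriction $H'|_\varphi \jdeq H$ together with the characterization of identities in the $\Sigma$-type $R$. All of this is routine manipulation of transport in identity types; the only genuinely non-formal inputs are \Cref{prop:hep} (path lifting) and \Cref{ax:relfunext} (both to promote boundary-trivial homotopies to paths in an extension type and to rewrite the pointwise homotopy as a single identification).
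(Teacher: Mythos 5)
Your proof is correct. Note that the thesis itself does not spell out an argument for this proposition --- it defers to the proof due to Buchholtz in the cited joint work --- so there is no in-paper proof to match against; but your construction is sound. The standard argument in the reference is a chain of equivalences: decompose $\prod_{x:\shapety I\psi}A(x)$ as $\sum_{b:\prod_{x:\shapety I\varphi}A(x)}\exten{x:\shapety I\psi}{A(x)}{\varphi}{b}$ (the extension-type analogue of \Cref{thm:choice}), rewrite the pointwise homotopy as a single identification $a=b$ by function extensionality, and contract the based path space $\sum_{b}(a=b)$ onto $\pair{a}{\refl}$, leaving exactly the strict extension type. That route is shorter and needs no round-trip bookkeeping. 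Your route instead builds explicit quasi-inverses, using \Cref{prop:hep} for the backward map; it is more hands-on but relies on two derived facts that you correctly flag: the judgmental restriction $H'|_\varphi \jdeq H$ supplied by the HEP (which is what makes the transport computation in the $\Phi\circ\Psi$ round trip land on $H$ on the nose rather than merely up to homotopy), and the characterization of identity types of extension types as boundary-trivial homotopies (a consequence of \Cref{ax:relfunext}) for the $\Psi\circ\Phi$ round trip. Both of these are legitimate consequences of relative function extensionality in the Riehl--Shulman framework, so the argument goes through; the singleton-contraction proof simply packages the same inputs more economically.
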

A proof due to Buchholtz is in~\cite[Section~2.4]{BW21}.

In particular, this gives the following formulation of (diagrammatic, weak) lifting problems in terms of (formulaic, strict) contractibility statements.

\begin{obs}\label{obs:orth-maps}
	Consider a family $P:B \to \UU$ and a shape inclusion $\Phi \subseteq \Psi$. Then, given a total diagram $\sigma:\Psi \to B$ with a partial diagram $\kappa:\prod_\Phi \sigma^*P$ lying over,
	the diagram
	\[\begin{tikzcd}
		\Phi && {\widetilde{P}} & {} \\
		\Psi && B & {}
		\arrow[hook, from=1-1, to=2-1]
		\arrow["\sigma"{description}, from=2-1, to=2-3]
		\arrow["\pi", two heads, from=1-3, to=2-3]
		\arrow["\kappa"{description}, from=1-1, to=1-3]
		\arrow[dashed, from=2-1, to=1-3]
	\end{tikzcd}\]
	possesses a diagonal filler uniquely up to homotopy if and only if the proposition
	\[ 
	\mathrm{isContr} \Big( \Big\langle\prod_{t:\Psi} P(\sigma(s))\Big|^\Phi_\kappa \Big \rangle\Big)
	\]
	is inhabited.
\end{obs}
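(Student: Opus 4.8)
The plan is to read the two sides of the claimed equivalence as statements about one and the same type --- namely the type of weak diagonal fillers --- and then invoke the preceding de-/strictification result. First I would make precise what ``possesses a diagonal filler uniquely up to homotopy'' means: it is the assertion that the type of (weak) solutions to the lifting problem is contractible. So the whole Observation reduces to exhibiting an equivalence between this solution type and the strict extension type $\exten{t:\Psi}{P(\sigma(t))}{\Phi}{\kappa}$, after which the claim follows because $\isContr(-)$ is a proposition that transports along equivalences.

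The core identification proceeds in two moves, one for each triangle of the square. For the lower triangle, I would use the standard correspondence between the total type $\widetilde P \to B$ of a family and its sections: a map $\Psi \to \widetilde P$ equipped with a witness that it lies over $\sigma$ is the same datum as a section $f : \prod_{t:\Psi} P(\sigma(t))$ of the pullback family $\sigma^*P$, i.e.\ $\sum_{d:\Psi \to \widetilde P}(\pi \circ d = \sigma) \simeq \prod_{t:\Psi}P(\sigma(t))$, by function extensionality and transport. Crucially, under this identification the lower triangle becomes \emph{strictly} commutative (the section $f$ lies judgmentally over $\sigma$), which trivializes the coherence datum between the two triangle-homotopies that a general homotopy-coherent filler would otherwise carry. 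For the upper triangle, the remaining condition that such an $f$ restrict to the given partial section $\kappa : \prod_{\Phi}\sigma^*P$ up to homotopy is precisely a family of paths $\prod_{t:\Phi}(\kappa(t) = f(t))$. Hence the type of weak diagonal fillers is equivalent to $\sum_{f:\prod_{t:\Psi}P(\sigma(t))}\prod_{t:\Phi}(\kappa(t)=f(t))$.

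This last type is exactly the \emph{weak} extension type, so by the de-/strictification of extension types recalled just above (using relative function extensionality), it is equivalent to the \emph{strict} extension type $\exten{t:\Psi}{P(\sigma(t))}{\Phi}{\kappa}$. Composing the equivalences, the solution type of the lifting problem is equivalent to the strict extension type, and therefore the former is contractible if and only if $\isContr\big(\exten{t:\Psi}{P(\sigma(t))}{\Phi}{\kappa}\big)$ is inhabited, as claimed.

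The main obstacle is purely bookkeeping: getting the identification of the ``type of diagonal fillers up to homotopy'' with the displayed $\Sigma$-type exactly right, including the coherence between the lower- and upper-triangle homotopies over $\Phi$. The key simplification, which I would emphasize, is that interpreting $\widetilde P \to B$ as a genuine type family lets us strictify the lower triangle by passing to sections of $\sigma^*P$, so that no residual coherence survives and only the single family of paths $\prod_{t:\Phi}(\kappa(t)=f(t))$ remains. With that in hand, the rest is a direct appeal to de-/strictification and to the invariance of contractibility under equivalence.
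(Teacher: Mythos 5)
Your proposal is correct and follows exactly the route the paper intends: the Observation is stated without proof as an immediate consequence of the preceding de-/strictification proposition, namely identifying the type of weak diagonal fillers with the weak extension type $\sum_{f:\prod_{t:\Psi}P(\sigma(t))}\prod_{t:\Phi}(\kappa(t)=f(t))$ and then transporting contractibility along the equivalence with the strict extension type. Your extra care about the lower-triangle strictification and the vanishing coherence datum is a welcome filling-in of detail the paper leaves implicit.
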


\begin{expl}
	Recall from~\cite{RS17} that a type $A$ is Segal precisely if $A \to \unit$ is right orthogonal to $\Lambda_1^2 \hookrightarrow \Delta^2$. Another example is given by the class of covariant families, namely $P:B \to \UU$ is covariant if and only if $\totalty{P} \to B$ is right orthogonal to the initial vertex inclusion $i_0: \unit \hookrightarrow \Delta^1$.
\end{expl}

We can also type-theoretically express the \emph{Leibniz construction} familiar from categorical homotopy theory~\cite[Definitions C2.8, C2.10, C3.8]{RV21}, \cite{RieCatHtpy} as follows. We remark that Leibniz cotensor maps will be ubiquitous in our treatise since the fibrations of interest are defined by conditions on them.

\begin{defn}[Leibniz cotensor]
	Let $j:Y \to X$ be a type map or shape inclusion, and $\pi :E \to B$ a map between types. The \emph{Leibniz cotensor of $j$ and $\pi$} (\aka~\emph{Leibniz exponential of $\pi$ by $j$} or \emph{pullback hom}) is defined as the following gap map:
	\[\begin{tikzcd}
		&&& {E^X} \\
		{Y} & {E} & {E^X} && {\cdot} && {E^Y} \\
		{X} & {B} & {B^X \times_{B^Y} E^Y} && {B^X} && {B^Y}
		\arrow["{j}"{name=0, swap}, from=2-1, to=3-1]
		\arrow["{j \widehat{\pitchfork} \pi}"{name=1}, from=2-3, to=3-3]
		\arrow[from=2-5, to=3-5]
		\arrow[from=3-5, to=3-7]
		\arrow[from=2-5, to=2-7]
		\arrow[from=2-7, to=3-7]
		\arrow["{j\widehat{\pitchfork} \pi}" description, from=1-4, to=2-5, dashed]
		\arrow[from=1-4, to=2-7, curve={height=-12pt}]
		\arrow[from=1-4, to=3-5, curve={height=12pt}]
		\arrow["\lrcorner"{very near start, rotate=0}, from=2-5, to=3-7, phantom]
		\arrow["{\pi}"{name=2, swap}, from=2-2, to=3-2, swap]
		\arrow[Rightarrow, "{\defeq}"{description,pos=0.25}, from=2, to=1, shorten <=7pt, shorten >=7pt, phantom, no head]
		\arrow[Rightarrow, "{\widehat{\pitchfork}}" description, from=0, to=2, shorten <=5pt, shorten >=5pt, phantom, no head]
	\end{tikzcd}\]
\end{defn}

The map $\pi$ is right orthogonal to $j$, meaning that for any square as below there exists a filler uniquely up to homotopy
\[\begin{tikzcd}
	{Y} & {E} \\
	{X} & {B}
	\arrow["{j}"', from=1-1, to=2-1]
	\arrow[from=2-1, to=2-2]
	\arrow[from=1-1, to=1-2]
	\arrow["{\pi}", from=1-2, to=2-2]
	\arrow[from=2-1, to=1-2, dotted]
\end{tikzcd}\]
if and only if the Leibniz cotensor map is an equivalence
\[ j \widehat{\pitchfork} \pi: E^X \stackrel{\equiv}{\longrightarrow} B^X \times_{B^Y} E^Y, \]
\cf~\Cref{obs:orth-maps}.

Though sparsely explicitly present in the text, we will also mention the dual operation.

\begin{defn}[Pushout product]\label{def:po-prod}
	Let $j: Y \to X$ and $k: T \to S$ each be type maps or shape inclusions. The \emph{Leibniz tensor of $j$ and $k$} (or \emph{pushout product}) is defined as the following cogap map:
	\[\begin{tikzcd}
		{Y} & {T} & {Y \times S\bigsqcup_{Y \times T} X \times T} && {Y \times T} && {Y \times S} \\
		{X} & {S} & {X \times S} && {X \times T} && {\cdot} \\
		&&&&&&& {X \times S}
		\arrow["{j}"{name=0, swap}, from=1-1, to=2-1]
		\arrow["{k}"{name=1, swap}, from=1-2, to=2-2, swap]
		\arrow["{j\widehat{\otimes} k}"{name=2}, from=1-3, to=2-3]
		\arrow[from=1-5, to=2-5]
		\arrow[from=2-5, to=2-7]
		\arrow[from=1-5, to=1-7]
		\arrow[from=1-7, to=2-7]
		\arrow[from=2-5, to=3-8, curve={height=12pt}]
		\arrow[from=1-7, to=3-8, curve={height=-12pt}]
		\arrow["{j \widehat{\otimes}k}" description, from=2-7, to=3-8, dashed]
		\arrow["\lrcorner"{very near start, rotate=180}, from=2-7, to=1-5, phantom]
		\arrow[Rightarrow, "{\widehat{\otimes}}" description, from=0, to=1, shorten <=5pt, shorten >=5pt, phantom, no head]
		\arrow[Rightarrow, "{\defeq}" {description,pos=0.25}, from=1, to=2, shorten <=9pt, shorten >=9pt, phantom, no head]
	\end{tikzcd}\]
\end{defn}

In particular, recall from~\cite[Theorem 4.2]{RS17}, the explicit formula for the pushout product of two shape inclusions:
\[\begin{tikzcd}
	{\{t:I\,|\,\varphi\}} & {\{s:J\,|\,\chi\}} & {\{ \langle t,s\rangle : I \times J \, | \, (\varphi \land \zeta) \lor (\psi \land \chi)\}} \\
	{\{t:I\,|\,\psi\}} & {\{s:J\,|\,\zeta\}} & {\{ \langle t,s\rangle : I \times J \, | \, \psi \land \zeta\}} \\
	{}
	\arrow[""{name=0, inner sep=0}, from=1-1, to=2-1, hook]
	\arrow[""{name=1, inner sep=0}, from=1-2, to=2-2, hook]
	\arrow[""{name=2, inner sep=0}, from=1-3, to=2-3, hook]
	\arrow[Rightarrow, "{\widehat{\otimes}}" description, from=0, to=1, shorten <=7pt, shorten >=7pt, phantom, no head]
	\arrow[Rightarrow, "{\defeq}"  {description,pos=0.25}, from=1, to=2, shorten <=13pt, shorten >=13pt, phantom, no head]
\end{tikzcd}\]

\section{Families vs.~fibrations}

Recall from~\cite{hottbook} that in presence of the univalence axiom, there is an equivalence between type families and fibrations.\footnote{Assuming universes with better structural properties---such as Segalness or directed univalence---would be fruitful for further considerations, but this is part of future work.}

Consider the types
\[ \Fib(\UU) \defeq \sum_{A,B:\UU} A \to B, \qquad \Fam(\UU) \defeq \sum_{B:\UU} (B \to \UU)  \]
of functions in $\UU$ (viewed as type-theoretic fibrations\footnote{The inhabitants of $\Fib(\UU)$ are just maps between arbitrary $\UU$-small types, but viewed as ``$\UU$-small type theoretic fibrations over a $\UU$-small base''.}), and families with $\UU$-small fibers, resp. Both these types naturally are fibered over $\UU$ via the following maps:
\[\begin{tikzcd}
	{\Fib(\UU)} && \UU && {\Fam(\UU)}
	\arrow["{\partial_1 \defeq \lambda A,B,f.B}", from=1-1, to=1-3]
	\arrow["{\pr_1 \defeq \lambda B,P.B}"', from=1-5, to=1-3]
\end{tikzcd}\]
Over a type $B:\UU$, we obtain the type of \emph{maps into (or fibrations over) $B$} as the fiber:
\[\begin{tikzcd}
	{\UU/B} && {\mathrm{Fib}(\UU)} \\
	\unit && \UU
	\arrow[from=1-1, to=2-1]
	\arrow["B"', from=2-1, to=2-3]
	\arrow[from=1-1, to=1-3]
	\arrow["{\partial_1}", from=1-3, to=2-3]
	\arrow["\lrcorner"{anchor=center, pos=0.125}, draw=none, from=1-1, to=2-3]
\end{tikzcd}\]

\begin{theorem}[Typal Grothendieck construction, cf.~{\protect\cite[Theorem 4.8.3]{hottbook}}]
	There is a fiberwise quasi-equivalence
	\[\begin{tikzcd}
		{\Fib(\UU)} && {\Fam(\UU)} \\
		& \UU
		\arrow["{\partial_1}"', from=1-1, to=2-2]
		\arrow["{\pr_1}", from=1-3, to=2-2]
		\arrow["\Un", shift left=1, from=1-3, to=1-1]
		\arrow["\St", shift left=2, from=1-1, to=1-3]
	\end{tikzcd}\]
	at stage $B:\UU$ given by a pair
	\[\begin{tikzcd}
		{\UU/B} && {(B\to \UU)}
		\arrow["{\St_B}", shift left=2, from=1-1, to=1-3]
		\arrow["{\Un_B}", shift left=1, from=1-3, to=1-1]
	\end{tikzcd}\]
	with \emph{straightening}
	\[ \St_B(\pi) \defeq \lambda b.\fib_b(\pi)\]
	and \emph{unstraightening}
	\[ \Un_B(P) \defeq \pair{\totalty{P}}{\pi_P}\]
	($\pi_P: \totalty{P} \defeq \sum_{b:B} P\,b \to B$ the total space projection).
\end{theorem}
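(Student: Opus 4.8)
The plan is to prove the equivalence stagewise. For each fixed $B:\UU$ I will exhibit mutually inverse maps $\St_B$ and $\Un_B$ between $\UU/B$ and $B \to \UU$, and then let $B$ vary to obtain the fiberwise statement; the global maps $\St$ and $\Un$ preserve the base component by fiat, so compatibility with the projections $\partial_1$ and $\pr_1$ to $\UU$ is immediate from the construction. The essential inputs are the contractibility of based path spaces together with univalence and function extensionality.

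First I would record the key computational lemma in both directions. For any map $\pi: E \to B$, reassociating the iterated $\Sigma$-type and contracting the based path space $\sum_{b:B}(\pi\,e = b)$ yields an equivalence
\[ \totalty{\St_B(\pi)} \defeq \sum_{b:B} \fib_b(\pi) = \sum_{b:B}\sum_{e:E}(\pi\,e = b) \simeq \sum_{e:E}\sum_{b:B}(\pi\,e = b) \simeq E, \]
realized concretely by $e \mapsto \pair{\pi\,e}{\pair{e}{\refl}}$ with inverse the projection $\pair{b}{\pair{e}{p}} \mapsto e$. Dually, for any family $P:B \to \UU$ and any $b:B$, contracting the based path space $\sum_{b':B}(b' = b)$ gives $\St_B(\Un_B(P))(b) \defeq \fib_b(\pi_P) = \sum_{b':B}\sum_{x:P\,b'}(b' = b) \simeq P\,b$.

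Next I would assemble the two round-trip homotopies. For $\St_B \circ \Un_B \simeq \id$, the fiberwise equivalence above, passed through univalence and then function extensionality, produces an identification $\St_B(\Un_B(P)) = P$ in $B \to \UU$. For $\Un_B \circ \St_B \simeq \id$, univalence turns the equivalence $\totalty{\St_B(\pi)} \simeq E$ into a path $p : \totalty{\St_B(\pi)} =_\UU E$; it then remains to exhibit an identity in $\UU/B$ between $\pair{\totalty{\St_B(\pi)}}{\pr_1}$ and $\pair{E}{\pi}$. Since $\pr_1\,\pair{\pi\,e}{\pair{e}{\refl}} \jdeq \pi\,e$, the projection $\pr_1$ corresponds under the equivalence exactly to $\pi$, and unwinding the computation rule for univalence (which reduces transport of a map along such a path to composition with the underlying equivalence) supplies the required coherence between the two projections.

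The main obstacle I anticipate is precisely this last coherence in the $\Un_B \circ \St_B$ direction: matching the projection after transport along a univalence-generated path. Packaging an equivalence of total spaces into a genuine identity of objects of $\UU/B$ forces one to track how $\pr_1$ behaves under transport, and the cleanest route is to reduce to the univalence computation rule rather than manipulating transport directly. This is routine but bookkeeping-heavy; all the remaining steps—the reassociation of $\Sigma$-types, the contractibility of singletons, and the appeals to function extensionality—are standard and present no essential difficulty.
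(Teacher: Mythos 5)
Your proposal is correct and is exactly the standard argument: contracting the based path spaces $\sum_{b':B}(b'=b)$ and $\sum_{b:B}(\pi\,e=b)$ to get the two round-trip equivalences, then promoting them to identifications via univalence and function extensionality, with the transport-of-the-projection coherence handled by the computation rule for univalence. The thesis does not reprove this result but defers to the cited HoTT book Theorem 4.8.3, whose proof is the one you have reconstructed.
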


The spirit of dependent type theory somewhat favors type families over fibrations, but we will often resort to the fibrational viewpoint because it allows us to replay familiar categorical arguments. For instance, Riehl--Shulman's covariant type families are a type-theoretic version of left fibrations, and we want to be able to conveniently make use of both incarnations of the same concept which motivates the following:

\begin{defn}[Notions of families and fibrations]
	A \emph{notion of family (or notion of fibration)} is a family
	\[ \mathcal F : \Fam(\UU) \to \Prop \]
	of propositions on the type of $\UU$-small fibrations. For a notion of family $\mathcal F$, we say that a family $P:B\to \UU$ is an \emph{$\mathcal F$-family}\footnote{In practice, the name of $\mathcal F$ often will be a linguistic predicate such as ``covariant'', ``cocartesian'' etc.~in which case we drop the hyphen and treat it as part of the natural meta-language, \eg~we will simply speak of ``cocartesian'' or ``covariant fibrations''.} if and only if the proposition
	\[ \isFam_{\mathcal F}(P) \defeq \mathcal F(P)\]
	holds. A map $\pi:E \to B$ is called an \emph{$\mathcal F$-fibration} if its family of fibers $\St_B(\pi)$ is an $\mathcal F$-family.
\end{defn}
By univalence and the Grothendieck construction, this definition is well-behaved, \ie, a~($\UU$-small) map is an~$\mathcal F$-fibration if and only if it is (equivalent to) a projection associated to an $\mathcal F$-family (valued in $\UU$).

In particular, we observe the following. Considering
\[ \Fib_\mathcal F(\UU) \defeq \sum_{\substack{E,B:\UU \\ \pi:E \to B}} \isFam_{\mathcal F}(\St(\pi)), \quad \Fam_{\mathcal F}(\UU) \defeq \sum_{P:\Fam(\UU)} \isFam_{\mathcal F}(P), \]
the Grothendieck construction descends to a fiberwise equivalence, for any notion of fibration/family $\mathcal F$:

\[\begin{tikzcd}
	{\mathrm{Fib}_{\mathcal F}(\mathcal U)} && {\mathrm{Fam}_{\mathcal F}(\mathcal U)} \\
	& {\mathrm{Fib}(\mathcal U)} && {\mathrm{Fam}(\mathcal U)} \\
	& {\mathcal U}
	\arrow[hook, from=1-1, to=2-2]
	\arrow[""{name=0, anchor=center, inner sep=0}, "{\mathrm{St}}", shift left=2, from=1-1, to=1-3]
	\arrow[hook, from=1-3, to=2-4]
	\arrow[curve={height=6pt}, from=1-1, to=3-2]
	\arrow[from=1-3, to=3-2]
	\arrow[from=2-4, to=3-2]
	\arrow[from=2-2, to=3-2]
	\arrow[""{name=2, anchor=center, inner sep=0}, "{\mathrm{Un}}", shift left=2, from=1-3, to=1-1]
	\arrow[""{name=3, anchor=center, inner sep=0}, "{\mathrm{Un}}", shift left=2, from=2-4, to=2-2, crossing over]
	\arrow[""{name=1, anchor=center, inner sep=0}, "{\mathrm{St}}", shift left=2, from=2-2, to=2-4, crossing over]
	\arrow["\simeq"{description}, shorten <=1pt, shorten >=1pt, Rightarrow, from=1, to=3]
	\arrow["\simeq"{description}, shorten <=1pt, shorten >=1pt, Rightarrow, no head, from=0, to=2]
\end{tikzcd}\]

\begin{rem}
	As a convention, we will always state the definitions of the various notions of fibration in terms of \emph{families}, and the above definition schema immediately yields the respective corresponding notion in fibrational terms.
	
	We will also often denote a map which satisfies such a fibration condition (or possibly even just a usual map which is to be regarded as a type-theoretic fibration) by a double hooked arrow $\pi:E \fibarr B$, as is customary in homotopical algebra or categorical homotopy theory.\footnote{We use here the predicate \emph{typal} to indicate that the universe considered is merely a general type, rather than Rezk or Segal. A (complete) Segal universe would give rise to a \emph{categorical Grothendieck construction}, hence a synthetic version of straightening/unstraightening~\cite{LurHTT,RVComp,Ras18model,dB16segal}. But this is beyond the scope of this thesis.}
\end{rem}

Furthermore, as a consequence of univalence, any such propositionally defined notion of family/fibration is invariant under equivalence.

\begin{prop}[Homotopy invariance of notions of fibrations]
	Let $\mathcal F$ be a notion of fibration. When given a commutative square
	\[\begin{tikzcd}
		F && E \\
		A && B
		\arrow["\xi"', from=1-1, to=2-1]
		\arrow["\simeq"', from=2-1, to=2-3]
		\arrow["\simeq", from=1-1, to=1-3]
		\arrow["\pi", from=1-3, to=2-3]
	\end{tikzcd}\]
	the map $\xi$ is an $\mathcal F$-fibration if and only if $\pi$ is.
\end{prop}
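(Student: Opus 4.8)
The plan is to reduce everything to the single fact that $\mathcal F$, being a function $\Fam(\UU) \to \Prop$, respects identifications of its argument, together with the observation that a commuting square whose horizontal maps are equivalences is \emph{exactly} the data of an identification in $\Fib(\UU)$. Write $e : A \xrightarrow{\simeq} B$ and $\bar e : F \xrightarrow{\simeq} E$ for the two horizontal equivalences, and let $H : \pi \circ \bar e = e \circ \xi$ witness commutativity of the square.

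First I would record that the triples $(F, A, \xi)$ and $(E, B, \pi)$ are elements of $\Fib(\UU) = \sum_{X,Y:\UU}(X \to Y)$. By univalence the equivalences $\bar e$ and $e$ furnish identifications $F = E$ and $A = B$; by the characterization of paths in $\Sigma$-types, upgrading these to an identification of the whole triples requires precisely a proof that $\xi$ is carried to $\pi$ under the induced transport in the mapping component. Computing that transport in the family $(X,Y) \mapsto (X \to Y)$ yields $e \circ \xi \circ \bar e^{-1}$, so the required condition is $e \circ \xi \circ \bar e^{-1} = \pi$, which (post-composing with $\bar e$) is exactly the commutativity datum $H$. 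Hence the square furnishes an identification $p : (F, A, \xi) =_{\Fib(\UU)} (E, B, \pi)$.

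Next I would apply the straightening equivalence $\St : \Fib(\UU) \xrightarrow{\simeq} \Fam(\UU)$ from the Typal Grothendieck construction to $p$. Since $\St(E, B, \pi) = (B, \St_B(\pi))$ and $\St(F, A, \xi) = (A, \St_A(\xi))$, this produces an identification $(A, \St_A(\xi)) =_{\Fam(\UU)} (B, \St_B(\pi))$. Finally, because $\mathcal F$ is a function into $\Prop$, it sends this identification to an identification $\mathcal F(\St_A(\xi)) = \mathcal F(\St_B(\pi))$ of propositions, so in particular the two are logically equivalent. Unfolding the definition of $\mathcal F$-fibration, this says exactly that $\xi$ is an $\mathcal F$-fibration if and only if $\pi$ is.

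The step demanding the most care is the construction of $p$: one must verify, under univalence and the $\Sigma$-path characterization, that the transport of the morphism component along the identifications $F = E$ and $A = B$ really matches $\pi$ when the square commutes, keeping careful track of the direction (inverse) conventions for transport in a mapping type. If one prefers to sidestep this coherence bookkeeping, there is an equivalent pointwise route: from $H$ and the invertibility of $e$ and $\bar e$ one shows $\fib_a(\xi) \simeq \fib_{e(a)}(\pi)$ for every $a:A$ (using that pre-composing the total map with an equivalence, and the base point with an equivalence on identity types, each preserve fibers), and then assembles these fiberwise equivalences into the identification $(A, \St_A(\xi)) = (B, \St_B(\pi))$ via univalence and function extensionality. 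Either route concludes the argument once $\mathcal F$ is applied.
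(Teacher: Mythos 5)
Your argument is correct and is exactly the expansion of the paper's own (one-line) justification, which simply remarks that the proposition is "a consequence of univalence" for any propositionally defined notion of fibration: you turn the square into an identification in $\Fib(\UU)$ via univalence and the $\Sigma$-path characterization, push it through $\St$, and apply $\mathcal F$. The transport computation for the mapping component and the final application of $\mathcal F$ as a map into $\Prop$ are both handled correctly, so nothing further is needed.
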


\section{Comma and co-/cone types}\label{ssec:commas-cones}
\begin{defn}[Comma types]\label{def:comma-obj}
	Consider a cospan of types
	\[\begin{tikzcd}
		C && A && B
		\arrow["g", from=1-1, to=1-3]
		\arrow["f"', from=1-5, to=1-3]
	\end{tikzcd}\]
	The \emph{comma type} $f \downarrow g$ is given by the following pullback:
	\[\begin{tikzcd}
		{f \downarrow g} && {A^{\Delta^1}} \\
		{C \times B} && {A \times A}
		\arrow[from=1-1, to=2-1]
		\arrow["g \times f", from=2-1, to=2-3]
		\arrow[from=1-1, to=1-3]
		\arrow["{\langle \partial_1, \partial_0 \rangle}", from=1-3, to=2-3]
		\arrow["\lrcorner"{anchor=center, pos=0.125}, draw=none, from=1-1, to=2-3]
	\end{tikzcd}\]
	In the case that $f$ is the identity $\id_A$, we write shorthand $\comma{A}{g}$ for $\comma{f}{g}$, and dually $\comma{f}{A}$ if $g$ is the identity.
\end{defn}

\begin{defn}[Co-/cone types]
	Let $X$ be a type or a shape and $A$ a type. In a setting such as the present one, a map $u:X \to A$ is sometimes referred to as an \emph{$X$-shaped diagram in $A$}. The cospans
	\[\begin{tikzcd}
		A && {A^X} && \unit && \unit && {A^X} && A
		\arrow["{\mathrm{cst}_A}", from=1-1, to=1-3]
		\arrow["u"', from=1-5, to=1-3]
		\arrow["u", from=1-7, to=1-9]
		\arrow["{\mathrm{cst}_A}"', from=1-11, to=1-9]
	\end{tikzcd}\]
	give rise to the type $u/A \defeq \comma{u}{\cst_A}$ of \emph{cocones in $A$ under $X$}, and, dually  $A/u \defeq \comma{\cst_A}{u}$ of \emph{cones in $A$ over $X$}, resp., defined as comma objects:
	\[\begin{tikzcd}
		{u/A} && {\big(A^X)^{\Delta^1}} && {A/u} && {\big(A^X)^{\Delta^1}} \\
		{A \times \unit} && {A^X \times A^X} && {\unit \times A} && {A^X \times A^X}
		\arrow["{\cst_A \times u}", from=2-1, to=2-3]
		\arrow[from=1-1, to=1-3]
		\arrow["{\langle \partial_1, \partial_0 \rangle}", from=1-3, to=2-3]
		\arrow[from=1-1, to=2-1]
		\arrow["\lrcorner"{anchor=center, pos=0.125}, draw=none, from=1-1, to=2-3]
		\arrow[from=1-5, to=2-5]
		\arrow["{u \times \cst_A}", from=2-5, to=2-7]
		\arrow[from=1-5, to=1-7]
		\arrow["{\langle \partial_1, \partial_0 \rangle}", from=1-7, to=2-7]
		\arrow["\lrcorner"{anchor=center, pos=0.125}, draw=none, from=1-5, to=2-7]
	\end{tikzcd}\]
\end{defn}

\begin{expl}[Co-/slice types]
	For a fixed point $b:B$, the \emph{co-/slice types} are defined as $b/B$ and $B/b$, resp. Note that $b/B \equiv \comma{b}{B}$, and similarly for the slice types.
\end{expl}

\section{Orthogonal and LARI families}\label{sec:orth-lari}

An important part in our study of synthetic fibered $(\infty,1)$-categories is to provide proofs of certain closure properties, which are chosen to parallel those of $\infty$-cosmoses~\cite[Definition~1.2.1]{RV21}. Recall from~\cite{RV21}, that any $\infty$-cosmos $\mathcal K$ provides an intrinsic notion of cocartesian fibrations, which themselves form an $\infty$-cosmos ${co\mc{C}art}({\mc{K}})$. The discrete cocartesian fibrations form an embedded $\infty$-cosmos $\mc{D}iscco\mc{C}art({\mc{K}}) \hookrightarrow {co\mc{C}art}({\mc{K}})$. We prove type-theoretic analogues of the $\infty$-cosmological closure properties, formulated internally to the type theory of the ``ambient'' $\inftyone$-topos of simplicial objects.

First, we consider maps which are, more generally, defined by a unique right lifting property against an arbitrary map. Next, we discuss $j$-LARI maps which are defined by a left adjoint right inverse condition on a Leibniz cotensor map.

Specifically, let $j:Y \to X$ be some type map.\footnote{This is often a shape inclusion; recall from Subsection~\ref{ssec:fib-shapes} that we have coercion of (strict) shapes into types.} A map $\pi:E \to B$ is called \emph{$j$-orthogonal} if any square as below has a contractible space of fillers:
\[\begin{tikzcd}
	Y && E \\
	X && B
	\arrow["j"', from=1-1, to=2-1]
	\arrow[from=2-1, to=2-3]
	\arrow[from=1-1, to=1-3]
	\arrow["\pi", from=1-3, to=2-3]
	\arrow[dashed, from=2-1, to=1-3]
\end{tikzcd}\]
If a map $\pi:E \to B$ is right orthogonal to a map $j:Y \to X$, we write $j \bot \pi$. Similarly, for families $P:B \to \UU$, we write $j \bot P$ if $j \bot \Un_B(P)$.

Classes of maps defined by such lifting conditions play an important role in categorical homotopy theory and have been extensively studied in various contexts. In particular, classes defined by right orthogonal lifting conditions necessarily satisfy certain closure properties. We are giving type theoretic proofs which will later apply for the specific kinds of $j$-orthogonal maps that we are interested in, namely (iso-)inner fibrations and left fibrations \aka~discrete covariant fibrations. For instance, a map $\pi:E \to B$ (over a Segal type $B$) is a covariant fibration if and only if it is right orthogonal to the initial vertex inclusion $i_0: \unit \to \Delta^1$.

In general, $\pi:E \to B$ being $j$-orthogonal can be rephrased as the condition that the gap map in the following diagram be an equivalence:
\[\begin{tikzcd}
	{E^X} \\
	& {B^X \times_{B^Y} E^Y} && {E^Y} \\
	& {B^X} && {B^Y}
	\arrow[from=2-2, to=3-2]
	\arrow[from=3-2, to=3-4]
	\arrow[from=2-2, to=2-4]
	\arrow[from=2-4, to=3-4]
	\arrow["\lrcorner"{anchor=center, pos=0.125}, draw=none, from=2-2, to=3-4]
	\arrow[curve={height=-12pt}, from=1-1, to=2-4]
	\arrow["\equiv"{description}, dashed, from=1-1, to=2-2]
	\arrow[curve={height=12pt}, from=1-1, to=3-2]
\end{tikzcd}\]
Weakening this condition by requiring the gap map to only have a \emph{left adjoint right inverse (LARI)} leads to the notion of \emph{$j$-LARI} map, \ie, $\pi:E \to B$ is a $j$-LARI map if and only if the induced map $E^X \to B^X \times_{B^Y} E^Y$ has a LARI:
\[\begin{tikzcd}
	{E^X} \\
	&& {B^X \times_{B^Y} E^Y} && {E^Y} \\
	&& {B^X} && {B^Y}
	\arrow[from=2-3, to=3-3]
	\arrow[from=3-3, to=3-5]
	\arrow[from=2-3, to=2-5]
	\arrow[from=2-5, to=3-5]
	\arrow["\lrcorner"{anchor=center, pos=0.125}, draw=none, from=2-3, to=3-5]
	\arrow[curve={height=-24pt}, from=1-1, to=2-5]
	\arrow[curve={height=12pt}, from=1-1, to=3-3]
	\arrow[""{name=0, anchor=center, inner sep=0}, curve={height=12pt}, dotted, from=2-3, to=1-1]
	\arrow[""{name=1, anchor=center, inner sep=0}, curve={height=6pt}, from=1-1, to=2-3]
	\arrow["\dashv"{anchor=center, rotate=-110}, draw=none, from=0, to=1]
\end{tikzcd}\]
Between Rezk types, a map $\pi:E \to B$ is a cocartesian fibration if and only if it is an $i_0$-LARI map.

Both $j$-orthogonal and $j$-LARI maps are closed under dependent products, composition, and pullback. In addition, $j$-orthogonal maps are closed under sequential limits and Leibniz cotensoring. They also satisfy left canceling.

\section{(Iso-)inner families}

Since at the most general level types are not Segal, as an intermediate step to defining cocartesian families, we have to deal with families of (complete) Segal types that are not necessarily functorial. \emph{Inner families} are those type families for which the associated projection is right orthogonal to the horn inclusion $\Lambda_1^2 \hookrightarrow \Delta^2$. Hence, between Segal types, inner families correspond to fibrations in the Segal model structure. Bringing in Rezk-completeness motivates our definition of \emph{isoinner family}, which in addition to innerness requires all fibers to be Rezk-complete. Over Rezk types, this can be expressed by requiring the associated projection to be right orthogonal to the terminal projection from the free bi-invertible arrow $\walkBinv$. In the thesis, the running assumption is that the (anonymous) types considered are Rezk, so maps $E \to B$ are iso-inner automatically. For a finer analysis of more general settings~\cf~\cite[Section~4]{BW21}.

\section{Sliced constructions}
	
Sometimes, we will also make use of sliced constructions, \cf~\cite[Proposition~1.2.22]{RV21}.

\begin{defn}[Sliced cotensor, \protect{\cite[Proposition~1.2.22(vi)]{RV21}}]
	Let $\pi:E\fibarr B$ be a map, and $X$ be a type or shape. The \emph{sliced exponential (over $B$) of $\pi$ by $X$}  is given by the map $X \iexp E \to B$ defined as:
	\[\begin{tikzcd}
		{X \boxtimes E} && {E^X} \\
		B && {B^X}
		\arrow[two heads, from=1-1, to=2-1]
		\arrow["{\mathrm{cst}}"', from=2-1, to=2-3]
		\arrow[from=1-1, to=1-3]
		\arrow["{\pi^X}", two heads, from=1-3, to=2-3]
		\arrow["\lrcorner"{anchor=center, pos=0.125}, draw=none, from=1-1, to=2-3]
	\end{tikzcd}\]
This means $X \boxtimes E \simeq \sum_{b:B} X \to P\,b$.
In particular, for $X \jdeq \Delta^1$, we obtain the \emph{vertical arrow} object $\VertArr_\pi(E) \to B$.
\end{defn}

\begin{defn}[Sliced product, \protect{\cite[Proposition~1.2.22(vi)]{RV21}}]
 	Let $I$ and $B$ be types, and consider maps $\pi_i:E_i \fibarr B$ for $i:I$. The \emph{sliced product} over the $\pi_i$ is defined by pullback:
	\[\begin{tikzcd}
		{\times_{i:I}^B E_i} && {\prod_{i:I} E_i} \\
		B && {B^I}
		\arrow["{\times_{i:I}^B \pi_i}"', two heads, from=1-1, to=2-1]
		\arrow[from=2-1, to=2-3]
		\arrow[from=1-1, to=1-3]
		\arrow["{\prod_{i:I} \pi_i}", two heads, from=1-3, to=2-3]
		\arrow["\lrcorner"{anchor=center, pos=0.125}, draw=none, from=1-1, to=2-3]
	\end{tikzcd}\]
\end{defn}

\begin{defn}[Sliced comma, \protect{\cite[Proposition~1.2.22(vi)]{RV21}}]
Consider a cospan $\psi:F \to_B G \leftarrow_B E: \varphi$ of fibered functors, giving rise to the sliced comma type $\relcomma{B}{\varphi}{\psi}$:
\[\begin{tikzcd}
	{\varphi \downarrow_B \psi} && {\VertArr(G)} \\
	{F \times E} && {G \times G} \\
	& B
	\arrow[from=1-1, to=2-1]
	\arrow["{\psi \times \varphi}"{description}, from=2-1, to=2-3]
	\arrow[from=1-1, to=1-3]
	\arrow["{\langle \partial_1,\partial_0\rangle}", from=1-3, to=2-3]
	\arrow[two heads, from=2-1, to=3-2]
	\arrow[two heads, from=2-3, to=3-2]
	\arrow["\lrcorner"{anchor=center, pos=0.125}, draw=none, from=1-1, to=2-3]
\end{tikzcd}\]
\end{defn}

	\chapter{Cocartesian families of synthetic \texorpdfstring{$\inftyone$}{(∞,1)}-categories}
	\label{ch:cocart}		
			Cocartesian families $P:B \to \UU$ encode copresheaves of $\inftyone$-categories. All fibers $P\,b$ are Rezk types, and $P$ is (covariantly) \emph{functorial} in the sense that an arrow $u:a \to b$ in $B$ induces a functor $u_!: P\,a \to P\,b$, and this transport operation is natural w.r.t.~directed arrows in $B$, \ie,~it respects composition and identities. In fact, we will often reason about cocartesian families $P:B \to \UU$ in terms of their associated projection $\pi \defeq \pi_P: E \to B$. Our study is informed by~\cite[Chapter 5]{RV21} and~\cite{RVyoneda} in an essential way, where Riehl--Verity develop a model-independent theory of cocartesian fibrations intrinsic to an arbitrary $\infty$-cosmos. While this constitutes more generally a fibrational theory of $\inftyn$-categories, for $0 \le n \le \infty$, our present study restricts to $\inftyone$-categories, and at the same time extends Riehl--Shulman's treatment of synthetic $\inftyone$-categories fibered in $\infty$-groupoids.

Reminiscent to the classical (1-categorical) definition, we introduce cocartesian families in terms of the existence of enough cocartesian liftings. However, we also give alternative characterizations, such as the \emph{Chevalley criterion} which allows us to develop the theory in the style of formal category theory, as done by Riehl and Verity~\cite{RV21} for $\infty$-cosmoses. Their work in particular constitutes a vast generalization of the historic results of Gray~\cite{GrayFib} and Street~\cite{StrYon,StrBicat,StrBicatCorr} to the model-independent higher case.

Specifically, over Rezk types cocartesian families are exactly the isoinner families that are $i_0$-LARI families in the sense of Subsection~\ref{sec:orth-lari}, for $i_0: \unit \to \Delta^1$. Spelled out, this means that the gap map in the pullback
\[\begin{tikzcd}
	{E^{\Delta^1}} & {} \\
	&& {\pi \downarrow B} && E \\
	&& {B^{\Delta^1}} && B
	\arrow[two heads, from=2-3, to=3-3]
	\arrow["{\partial_0}"', from=3-3, to=3-5]
	\arrow[from=2-3, to=2-5]
	\arrow["\pi", two heads, from=2-5, to=3-5]
	\arrow["\lrcorner"{anchor=center, pos=0.125}, draw=none, from=2-3, to=3-5]
	\arrow["{\partial_0}", shift left=2, curve={height=-18pt}, from=1-1, to=2-5]
	\arrow[""{name=0, anchor=center, inner sep=0}, "\chi"', curve={height=12pt}, dashed, from=2-3, to=1-1]
	\arrow[""{name=1, anchor=center, inner sep=0}, "{i_0\widehat{\pitchfork}\pi}"', curve={height=12pt}, from=1-1, to=2-3]
	\arrow["{\pi^{\Delta^1}}"', shift right=2, curve={height=18pt}, two heads, from=1-1, to=3-3]
	\arrow["\dashv"{anchor=center, rotate=-118}, draw=none, from=0, to=1]
\end{tikzcd}\]
has a left adjoint right inverse $\chi: \comma{\pi}{B} \to E^{\Delta^1}$ which yields the up-to-homotopy uniquely determined cocartesian lifts.

We find a similar characterization for \emph{cocartesian functors} between cocartesian families (incarnated as fibrations). In~\cite[Subsection~5.2.3 and 5.3.3]{BW21}, from this we prove type-theoretic versions of the $\infty$-cosmological closure properties of cocartesian fibrations, which in our case means that the $\inftyone$-category of cocartesian fibrations is complete \wrt~to certain $\inftyone$-limits.\footnote{In more technical terms, our results can be interpreted as type-theoretic proofs of the completeness of the $\inftyone$-categorical core of the $\infty$-cosmos $\cosCocart(\cosRezk)$, itself presenting an $\inftytwo$-category (cf.~\cite[Definition 12.1.10, Remark~12.1.11]{RV21}).} Note that, ideally, these would be statements involving universe types which themselves are Rezk. These are beyond the scope of the current discussion, but nevertheless we can ``externalize'' these completeness statements to our univalent universe $\UU$ of arbitrary simplicial types, yielding \Cref{prop:cocart-cosm-closure}.

We then prove characterizations of cocartesian functors extending the ones for cocartesian functors, \eg~the Chevalley criterion for cocartesian functors says that a fibered functor
\[\begin{tikzcd}
	F && E \\
	A && B
	\arrow["\xi"', two heads, from=1-1, to=2-1]
	\arrow[from=2-1, to=2-3]
	\arrow[from=1-1, to=1-3]
	\arrow["\pi", two heads, from=1-3, to=2-3]
\end{tikzcd}\]
between cocartesian fibrations is a cocartesian functor if and only if the mate of the induced square
\[\begin{tikzcd}
	{F^{\Delta^1}} && {E^{\Delta^1}} \\
	{\xi \downarrow A} && {\pi \downarrow B}
	\arrow["i_0 \cotens \xi", swap, two heads, from=1-1, to=2-1]
	\arrow[from=2-1, to=2-3]
	\arrow[from=1-1, to=1-3]
	\arrow["i_0 \cotens \pi", two heads, from=1-3, to=2-3]
	\arrow["{=}"{description}, Rightarrow, from=2-1, to=1-3]
\end{tikzcd}\]
is invertible.\footnote{A development of the required results about adjunctions in simplicial type theory is given in \cite[Appendix~A and B]{BW21}.}
				
			\section{Cocartesian arrows}\label{ssec:cocart-arr}
			
			\subsection{Definition and properties}

The starting point are \emph{cocartesian arrows}, which are dependent arrows in a fibration characterized by a certain initial universal property. Our type-theoretic definition generalizes the classical $1$-categorical picture, and is a synthetic version of the $\inftyone$-categorical one. Thanks to the extension types, cocartesian lifts of an arrow will lie \emph{strictly} over their base arrow.

\begin{definition}[Cocartesian arrow]
	Let $B$ be a type and $P: B \to \UU$ be an inner family.
	Let $b,b': B$, $u:\hom_B(b,b')$, and $e:P\,b$, $e':P\,b'$. An arrow $f : \hom_{P\,u}(e,e')$ is a \emph{($P$-)cocartesian morphism} or \emph{($P$-)cocartesian arrow} iff
	\[
	\isCocartArr_P f :\jdeq \prod_{\sigma:\ndexten{\Delta^2}{B}{\Delta_0^1}{u}}
	\prod_{h:\prod_{t:\Delta^1} P\,\sigma(t,t)} \isContr \Big( \exten{\pair{t}{s}:\Delta^2}{P \sigma(t,s)}{\Lambda_0^2}{[f,h]}  \Big).
	\]
\end{definition}
This is illustrated in~\Cref{fig:cocart-arr}. Notice that being a cocartesian arrow is a homotopy proposition.
\begin{figure}
	\centering
	\includegraphics{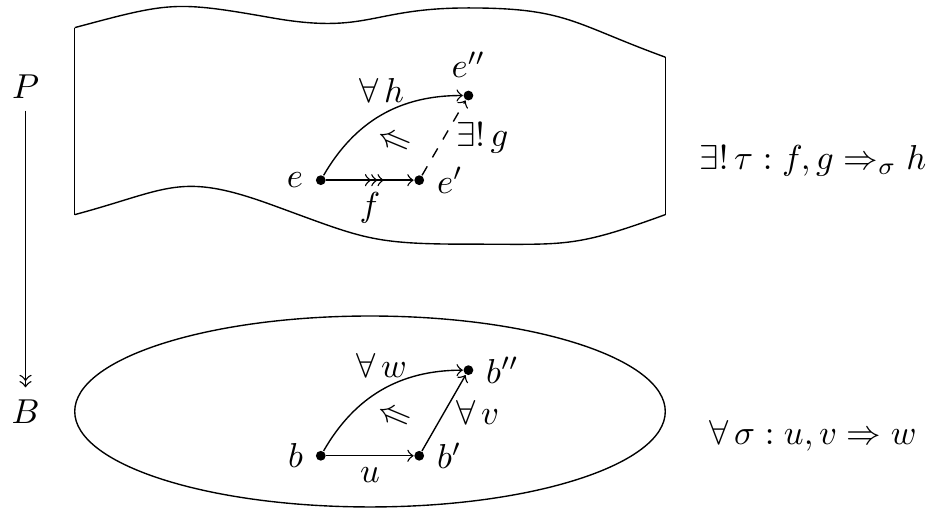}
	\caption{The universal property of cocartesian arrows.}
	\label{fig:cocart-arr}
\end{figure}
By expressing the functions on simplices in terms of
objects, morphisms and composition, we obtain an equivalent type:
\begin{equation}\label{eq:isCocartArrEquiv}
	\begin{split}
		\isCocartArr_P f \equiv
		&\prod_{b'':B} \prod_{v:\hom_B(b',b'')} \prod_{w:\hom_B(b,b'')}
		\prod_{\sigma:\hom^2_B(u,v;w)} \prod_{e'':P\,b''} \prod_{h:\dhom_{P\,w}(e,e'')} \\
		&\isContr\big( \sum_{g:\dhom_{P\,v}(e',e'')} \dhom^2_{P\,\sigma}(f,g;h) \big)
	\end{split}
\end{equation}

Diagrammatically, this is expressed as the existence of a filler, uniquely up to homotopy, as in a diagram of the following form, cf.~\Cref{ssec:fib-shapes}:
\[\begin{tikzcd}
	{\Delta^1} & {\Lambda_2^2} & {\widetilde{P}} \\
	& {\Delta^2} & {B}
	\arrow["{\{0,1\}}"', from=1-1, to=1-2]
	\arrow[from=1-2, to=2-2]
	\arrow[from=1-2, to=1-3]
	\arrow[from=2-2, to=2-3]
	\arrow["{\pi}", from=1-3, to=2-3]
	\arrow["{f}", from=1-1, to=1-3, curve={height=-18pt}]
	\arrow[from=2-2, to=1-3, dashed]
\end{tikzcd}\]

\begin{definition}[Cocartesian lift]
	Let $B$ be a type and $P: B \to \UU$ be an inner family.
	For $b,b': B$, $u:\hom_B(b,b')$, and $e:P\,b$,
	we define the type of \emph{($P$-)cocartesian lifts of $u$
		starting at $e$} to be
	\[
	\CocartLift_P(u,e) \defeq
	\sum_{e':P\,b'} \sum_{f : \hom^P_u(e,e')}
	\isCocartArr_P f.
	\]
\end{definition}
For Segal types, where composites are uniquely determined,
we can further rewrite \eqref{eq:isCocartArrEquiv}:
\footnote{Here, composition in the family is to be understood as
	\emph{dependent composition} in the sense of \cite[Remark~8.11.]{RS17}.}
\[
\isCocartArr_P f \equiv \prod_{b'':B}~\prod_{v:b'\to_B b''}~
\prod_{e'':P\,b''}~ \prod_{h:e\to^P_{v \circ u} e''}
\isContr\Bigl( \sum_{g:e'\to^P_v e''} g \circ^P f = h \Bigr).
\]

In fact, the cocartesian cells in an isoinner family are exactly the $i_0$-LARI cells in the sense of~\Cref{ssec:lari-cells}, where $i_0: \unit \hookrightarrow \Delta^1$ denotes the inclusion of the initial vertex:\footnote{We thank Emily Riehl for pointing out this perspective and its consequences---newly found in $\infty$-cosmos theory~\cite[Theorem~5.1.7]{RV21}---and for suggesting to use it in simplicial type theory, which has led to the new treatise in~\Cref{ssec:reladj,ssec:lari-stuff}. In fact, for the concrete case of cocartesian arrows, the other fibered Chevalley criterion~\cite[Theorem~5.1.7(iii)]{RV21} also is accessible in sHoTT, using a fibered version relative adjunctions. But this is omitted in this thesis.}
\[\begin{tikzcd}
	&& {E^{\Delta^1}} \\
	\unit && {B^{\Delta^1} \times_{B} E}
	\arrow[""{name=0, anchor=center, inner sep=0}, "{\langle b,v,e\rangle}"', from=2-1, to=2-3]
	\arrow["g", from=2-1, to=1-3]
	\arrow["{i_0 \cotens \pi}", two heads, from=1-3, to=2-3]
	\arrow[shorten >=7pt, Rightarrow, no head, from=1-3, to=0]
\end{tikzcd}\]
The relative adjointness says that $g$ mediates a fibered equivalence
\[ \comma{g}{E^{\Delta^1}} \equiv_E \comma{\angled{b,u,e}}{i_0 \cotens \pi}.\]

 This unfolds to the following: An arrow $g$ in $E$ over $v$ in $B$ is cocartesian if and only if the ``cubical'' version of cocartesianess property is satisfied. We will not detail on this here, but geometrically it is a routine proof using the usual equivalences between extension types. The cubical cocartesianness property is depicted as follows, where the ``short arrows'' from the previous picture have now been collapsed into points (at the initial vertex):\footnote{Note that both definitions are propositions. The cubical implies the traditional ``simplicial'' one by degenerating the right upper horizontal arrows to identities. Conversely, by composition the cubical formulation is implied by the simplicial one by composing the arrows making up right upper halves of the diagram.}
\[\begin{tikzcd}
		&& \cdot && \cdot \\
		E && \cdot && \cdot \\
		\\
		B && \cdot && \cdot \\
		&& \cdot && \cdot
		\arrow["v"', from=4-3, to=5-3]
		\arrow[from=4-3, to=4-5]
		\arrow[from=5-3, to=5-5]
		\arrow[from=4-5, to=5-5]
		\arrow["g"', from=1-3, to=2-3]
		\arrow[from=1-3, to=1-5]
		\arrow[from=1-5, to=2-5]
		\arrow[dashed, from=2-3, to=2-5]
		\arrow[two heads, from=2-1, to=4-1]
\end{tikzcd}\]

This is summarized as follows:
\begin{prop}[Chevalley Criterion for cocartesian arrows,~\cf\protect{\cite[Theorem~5.1.7]{RV21}}]
	In an isoinner family $P:B \to \UU$ over a Rezk type $B$, a dependent arrow $f$ over $u:\comma{b}{B}$ with starting vertex $e:P\,b$ is cocartesian if and only if $f$ together with the identity homotopy makes the following triangle an absolute left lifting diagram:
	\[\begin{tikzcd}
		&& {E^{\Delta^1}} \\
		\unit && {B^{\Delta^1} \times_{B} E}
		\arrow[""{name=0, anchor=center, inner sep=0}, "{\langle b,u,e\rangle}"', from=2-1, to=2-3]
		\arrow["f", from=2-1, to=1-3]
		\arrow["{i_0 \cotens \pi}", two heads, from=1-3, to=2-3]
		\arrow[shorten >=7pt, Rightarrow, no head, from=1-3, to=0]
	\end{tikzcd}\]
\end{prop}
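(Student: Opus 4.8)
The plan is to prove the two logical implications separately while exploiting that both the simplicial predicate $\isCocartArr_P f$ and the absolute-left-lifting condition on the displayed triangle are homotopy propositions; consequently a bi-implication upgrades automatically to an equivalence of types, and it suffices to argue at the level of inhabitation.

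First I would make the absolute-left-lifting condition explicit. By the theory of LARI cells and relative adjunctions (\Cref{ssec:lari-cells,app:reladj}), the displayed triangle equipped with the identity $2$-cell — which is available precisely because $f$ lies \emph{strictly} over $u$ with source $e$, so that $(i_0 \cotens \pi)(f)$ agrees judgmentally with $\angled{b,u,e}$ — is an absolute left lifting diagram exactly when $f$ mediates the fibered equivalence of comma types over $E$,
\[ \comma{f}{E^{\Delta^1}} \equiv_E \comma{\angled{b,u,e}}{i_0 \cotens \pi}. \]
I would then unfold the right-hand comma type as a pullback and, via \Cref{obs:orth-maps} together with the de-/strictification of extension types, rephrase this equivalence as a unique-filling condition for squares in $E$ lying strictly over prescribed squares in $B$ whose left edge is $u$ with chosen source $e$. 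This is the \emph{cubical} cocartesianness property depicted above: the contractibility of the extension type of fillers of the associated $\Delta^1 \times \Delta^1$-shaped lifting problem, with the source arrows collapsed to the initial vertex.

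Next I would match the cubical and simplicial formulations. For cubical $\Rightarrow$ simplicial I would degenerate the right-hand horizontal arrows of the square to identities; under this specialization the $\Delta^1 \times \Delta^1$-lifting problem collapses onto the horn inclusion $\Lambda_0^2 \hookrightarrow \Delta^2$ appearing in the definition of $\isCocartArr_P f$, so the contractible space of cubical fillers restricts to the contractible space of simplicial ones. For simplicial $\Rightarrow$ cubical I would triangulate the square and reassemble a cubical filler out of simplicial horn fillers: using the Segal structure of $B$ to compose the prescribed base arrows and the innerness of $P$ to compose the prescribed lifts, each constituent $2$-simplex is filled uniquely up to contractibility by the simplicial cocartesian property, and the total space of compatible fillers is contractible because it decomposes as an iterated $\Sigma$-type of contractible fibers. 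The type-theoretic principle of choice for extension types~(\Cref{thm:choice}) and relative function extensionality~(\Cref{ax:relfunext}) are the main tools for reorganizing and contracting these nested extension types.

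I expect the main obstacle to be the bookkeeping in the simplicial $\Rightarrow$ cubical direction: one must triangulate the prism/square, fill its simplices in a compatible order, and check that the iterated fillings glue judgmentally along their common faces into a single contractible space. Keeping the strict boundary data aligned so that the successive extension types compose on the nose, and then applying \Cref{thm:choice} to contract the resulting tower, is the delicate step; the reverse degeneracy direction is comparatively routine.
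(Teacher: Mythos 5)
Your proposal follows essentially the same route as the paper: the paper likewise reduces the absolute-left-lifting condition (via the relative adjunction $\comma{f}{E^{\Delta^1}} \equiv_E \comma{\angled{b,u,e}}{i_0 \cotens \pi}$) to the cubical cocartesianness property, and then notes that cubical implies simplicial by degenerating the upper-right horizontal arrows and simplicial implies cubical by composing the arrows making up the right upper halves of the squares. The paper explicitly declines to write out the extension-type bookkeeping you flag as the delicate step, so your elaboration is consistent with, and somewhat more detailed than, what the text provides.
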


We find several formal properties of cocartesian arrows:

\begin{prop}[Uniqueness of cocartesian lifts (in isoinner families); \cite{RV21}, Lem.~5.1.3]\label{prop:cocart-lifts-unique-in-isoinner-fams}
	Let $B$ be a Rezk type and $P: B \to \UU$ be an isoinner family. Then $P$-cocartesian lifts of arrows of $B$ are unique up to homotopy.
\end{prop}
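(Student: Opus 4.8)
The plan is to show that the type $\CocartLift_P(u,e)$ is a proposition, which is exactly the assertion that $P$-cocartesian lifts of a given arrow $u : \hom_B(b,b')$ at a given source $e : P\,b$ are unique up to homotopy. The two ingredients are the universal property of cocartesian arrows in its Segal form (available since the fibers are Segal) together with the Rezk-completeness of the fibers guaranteed by isoinnerness. One could alternatively deduce the claim from the essential uniqueness of absolute left liftings via the Chevalley criterion just established, but I would give the direct argument.

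First I would take two putative cocartesian lifts $\ell_1 \defeq (e_1', f_1, c_1)$ and $\ell_2 \defeq (e_2', f_2, c_2)$, where $f_i : e \to^P_u e_i'$ and $c_i$ witnesses $\isCocartArr_P f_i$. Applying the universal property of $f_1$ to the instance $b'' \defeq b'$, $v \defeq \id_{b'}$, $e'' \defeq e_2'$, and $h \defeq f_2$ (using the unit law $\id_{b'} \circ u = u$ in the Segal type $B$) yields a dependent arrow $g_1 : e_1' \to^P_{\id_{b'}} e_2'$ lying in the fiber $P\,b'$, together with a witness $g_1 \circ^P f_1 = f_2$; moreover, by contractibility, this pair is essentially unique. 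Symmetrically, cocartesianness of $f_2$ produces $g_2 : e_2' \to^P_{\id_{b'}} e_1'$ with $g_2 \circ^P f_2 = f_1$.

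Next I would show that $g_1$ and $g_2$ are mutually inverse in $P\,b'$. We compute $(g_2 \circ^P g_1) \circ^P f_1 = g_2 \circ^P (g_1 \circ^P f_1) = g_2 \circ^P f_2 = f_1 = \id_{e_1'} \circ^P f_1$, so both $g_2 \circ^P g_1$ and $\id_{e_1'}$ solve the factorization problem for $f_1$ with target $e_1'$; invoking once more the \emph{uniqueness} clause of the universal property of $f_1$ forces $g_2 \circ^P g_1 = \id_{e_1'}$, and symmetrically $g_1 \circ^P g_2 = \id_{e_2'}$. Hence $g_1$ is a categorical isomorphism in the fiber $P\,b'$. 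Since $P$ is isoinner, $P\,b'$ is a Rezk type, so $\idtoiso_{P\,b'}$ is an equivalence and $g_1$ is the image of a unique identification $p : e_1' = e_2'$. Transporting the residual data along $p$ then identifies $\ell_1$ with $\ell_2$: the witness components agree automatically because $\isCocartArr_P(-)$ is a proposition, and the arrow components agree because $g_1 = \idtoiso(p)$ together with the triangle $g_1 \circ^P f_1 = f_2$ exhibits $f_2$ as the transport of $f_1$ along $p$. This shows $\CocartLift_P(u,e)$ is a proposition.

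The main obstacle I expect is precisely this last coherence: verifying that the identification $p$ extracted from Rezk-completeness transports $f_1$ to $f_2$, and not merely that it identifies the endpoints $e_1'$ and $e_2'$. This amounts to relating transport of a \emph{dependent} arrow along $p$ with post-composition by $\idtoiso(p)$, which rests on the interaction of $\idtoarr$/$\idtoiso$ with dependent composition in the family. The cleanest route is path induction on $p$, reducing to the case $p \jdeq \refl$ where $\idtoiso(\refl) = \id$ and the triangle degenerates to the unit law; the bookkeeping of the dependent transports is where the technical effort concentrates.
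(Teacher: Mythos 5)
Your argument is correct, but it takes a genuinely different route from the one the text indicates. The paper does not spell out a proof at all: it remarks immediately after the statement that uniqueness ``is also implied by their characterization as relative left adjoint,'' i.e.\ it leans on the Chevalley criterion for cocartesian arrows (a cocartesian lift of $\langle b,u,e\rangle$ is an absolute left lifting of $\langle b,u,e\rangle$ through $i_0\mathbin{\widehat{\pitchfork}}\pi$) together with the essential uniqueness of relative left adjoints, \Cref{cor:unique-left-rel-adj}. You instead give the classical element-level argument: extract comparison arrows $g_1,g_2$ over $\id_{b'}$ from the two universal properties, show they are mutually inverse in the fiber, and then use Rezk-completeness of the fiber (this is exactly where isoinnerness enters) to promote the fiberwise isomorphism to an identification $p:e_1'=e_2'$, finishing with the transport-versus-composition coherence by path induction on $p$. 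Both work; the formal route buys brevity and reusability (the same corollary disposes of uniqueness for all LARI cells at once), while your direct proof is self-contained and makes transparent which hypothesis does what --- Segalness of the fibers for the composition calculus, Rezk-completeness for turning the isomorphism into a path. Your flagged subtlety (that $p$ must transport $f_1$ to $f_2$, not merely identify endpoints) is real but is the standard lemma relating transport of dependent arrows along identifications with postcomposition by $\idtoiso$, and your reduction to $p\jdeq\refl$ handles it. One further piece of bookkeeping worth making explicit: when you instantiate the universal property of $f_1$ at $v\jdeq\id_{b'}$ and $h\jdeq f_2$, the arrow $f_2$ lies over $u$ rather than over $\id_{b'}\circ u$, so it must first be transported along the (propositional) unit law in the Segal type $B$; you gesture at this and it causes no difficulty, but it is part of the same coherence layer as the final transport step.
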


Note that this is also implied by their characterization as relative left adjoint.

\begin{cor}\label{cor:cocart-trivfill}
	Let $P:B \to \UU$ be an isoinner family over a Rezk type.
	\begin{enumerate}
		\item\label{it:cocart-fill-comp} If $f:e \to e'$ is a $P$-cocartesian arrow, and $h:e' \to e''$ is an arbitrary arrow, then $\tyfill_f(h) \circ f = h$.
		\item\label{it:cocart-comp-fill} If $f:e \to e'$ is a $P$-cocartesian arrow, and $g:e' \to e''$ is an arbitrary arrow, then $\tyfill_f(gf) = g$.
	\end{enumerate}
\end{cor}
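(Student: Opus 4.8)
The plan is to read both identities straight off the cocartesian universal property in its Segal-reformulated shape. Since $B$ is Rezk, hence Segal, and $P$ is isoinner, I would work with the equivalent form
\[
\isCocartArr_P f \equiv \prod_{b'':B}~\prod_{v:b'\to_B b''}~\prod_{e'':P\,b''}~\prod_{h:e\to^P_{v\circ u} e''} \isContr\Bigl( \sum_{g:e'\to^P_v e''} g \circ^P f = h \Bigr),
\]
recorded just before the corollary. Throughout I treat the given arrow $h$ as lying over a composite $v\circ u$, so that $\tyfill_f(h)$ is the induced factorization over $v$; the dependent composite $g\circ^P f$ is well-defined because $B$ is Segal and $P$ is inner, in the sense of dependent composition after Riehl--Shulman.

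First I would fix the defining data and name the filler. For $h:e\to^P_{v\circ u}e''$, let $(\tyfill_f(h),\beta_h)$ be the center of contraction of $\sum_{g:e'\to^P_v e''}(g\circ^P f = h)$; thus $\tyfill_f(h):e'\to^P_v e''$ is the chosen factorization and $\beta_h$ witnesses $\tyfill_f(h)\circ^P f = h$. With this naming, part~\ref{it:cocart-fill-comp} is literally $\beta_h$: the commutativity identity $\tyfill_f(h)\circ f = h$ is the second component of the center of contraction, so there is nothing further to prove.

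For part~\ref{it:cocart-comp-fill} I would invoke contractibility in its uniqueness guise. Given an arbitrary $g:e'\to^P_v e''$, set $h\defeq g\circ^P f$. Then $(g,\refl_h)$ inhabits $\sum_{g':e'\to^P_v e''}(g'\circ^P f = h)$, while $(\tyfill_f(h),\beta_h)$ is its center of contraction; since the type is contractible the two elements are equal, and projecting to the first component gives $\tyfill_f(gf) = g$. Equivalently, one may phrase both parts uniformly by noting that the cocartesian condition says exactly that precomposition $(-)\circ^P f \colon (e'\to^P_v e'')\to(e\to^P_{v\circ u}e'')$ is an equivalence with inverse $\tyfill_f$, whereupon the two displayed identities are precisely its two round-trip composites.

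I do not expect a genuine obstacle: everything is a direct unwinding of the contractibility packaged in $\isCocartArr_P f$. The only care needed is bookkeeping of the base arrows---ensuring $h$ is taken over the composite $v\circ u$ and that $\tyfill_f(h)$ is registered over $v$---and the tacit appeal to dependent composition in inner families, which is what makes $g\circ^P f$ meaningful in the first place.
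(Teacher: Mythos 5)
Your argument is correct and is exactly the intended one: the paper states this as an immediate corollary of the universal property (in its Segal-reformulated contractibility form) together with uniqueness of fillers, and gives no separate proof. Part~\ref{it:cocart-fill-comp} is the witness carried by the center of contraction and part~\ref{it:cocart-comp-fill} is the uniqueness clause applied to $(g,\refl)$, just as you say.
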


\begin{prop}\label{prop:cocart-arr-closure} Let $P: B \to \UU$ be a cocartesian family over a Rezk type $B$. For arrows $u:\hom_B(b,b')$, $v:\hom_B(b',b'')$, with $b,b',b'':B$, consider dependent arrows $f:\dhom_{P,u}(e,e')$, $g:\dhom_{P,v}(e',e'')$ lying over, for $e:P\,b$, $e':P\,b'$, $e'':P\,b''$.
	\begin{enumerate}
		\item\label{it:cocart-arr-comp} If both $f$ and $g$ are are cocartesian arrows, then so is their composite $g \circ f$.
		\item\label{it:cocart-arr-cancel} If $f$ and $g \circ f$ are cocartesian arrows, then so is $g$.
	\end{enumerate}
\end{prop}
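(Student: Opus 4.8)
The plan is to use the Segal-type reformulation of cocartesianness displayed just above the statement. For a cocartesian arrow $f:\dhom_{P,u}(e,e')$ it says precisely that for every composable $v:\hom_B(b',b'')$ and every $e'':P\,b''$ the map $(-)\circ^P f\colon(e'\to^P_v e'')\to(e\to^P_{v\circ u}e'')$ has contractible fibres, i.e.\ is an equivalence of dependent hom-types. Both claims then reduce to manipulating these contractible factorisation spaces. Since everything is packaged as a contractibility statement, I would avoid explicit path algebra and instead build equivalences of $\Sigma$-types, so that the only genuinely homotopical input is the weak associativity of composition in the Segal type $B$ and in the family $P$ (dependent composition in the sense of \cite[Remark~8.11]{RS17}).

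For the first claim, to show $g\circ^P f$ is cocartesian over $v\circ u$ starting at $e$, I must show that for each $w:\hom_B(b'',b''')$ and each $h:e\to^P_{w\circ(v\circ u)}e'''$ the type $\sum_{k:e''\to^P_w e'''}\bigl(k\circ^P(g\circ^P f)=h\bigr)$ is contractible. I would exhibit a chain of equivalences. First, transport along the associativity witness $(k\circ^P g)\circ^P f = k\circ^P(g\circ^P f)$ to rewrite the condition as $(k\circ^P g)\circ^P f=h$. Next, contract the based path space $\sum_m(k\circ^P g=m)$ to turn the total space into $\sum_{m}\sum_{k}(k\circ^P g=m)\times(m\circ^P f=h)$. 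Reordering the $\Sigma$'s, the inner $\sum_k(k\circ^P g=m)$ is contractible by cocartesianness of $g$ applied to the lift $m$ over $w\circ v$, collapsing the whole expression to $\sum_{m:e'\to^P_{w\circ v}e'''}(m\circ^P f=h)$, which is contractible by cocartesianness of $f$ applied to $h$ (after transporting $h$ along the base associativity $(w\circ v)\circ u=w\circ(v\circ u)$). The center of contraction is exactly the lift obtained by factoring $h$ through $f$ and then through $g$.

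For the second claim, assuming $f$ and $g\circ^P f$ cocartesian, I fix $w:\hom_B(b'',b''')$ and $h:e'\to^P_{w\circ v}e'''$ and must prove $\sum_{k:e''\to^P_w e'''}(k\circ^P g=h)$ contractible. The idea is to precompose with $f$. Since $f$ is cocartesian, $(-)\circ^P f\colon(e'\to^P_{w\circ v}e''')\to(e\to^P_{(w\circ v)\circ u}e''')$ is an equivalence of hom-types, hence induces an equivalence on identity types; so for each $k$ the map $(k\circ^P g=h)\to\bigl((k\circ^P g)\circ^P f=h\circ^P f\bigr)$ is an equivalence, and concatenating with the associativity witness identifies it with $\bigl(k\circ^P(g\circ^P f)=h\circ^P f\bigr)$. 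Taking total spaces over $k$ yields $\sum_k(k\circ^P g=h)\simeq\sum_k\bigl(k\circ^P(g\circ^P f)=h\circ^P f\bigr)$, and the right-hand side is contractible by cocartesianness of $g\circ^P f$ applied to the lift $h\circ^P f$ over $w\circ(v\circ u)$. This simultaneously supplies the factorisation of $h$ through $g$ and its uniqueness.

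The main obstacle is the coherence bookkeeping: because composition in a Segal type is only weakly associative and unital, each rewriting step above is secretly a transport along a propositional equality, and one must check that the $2$-cell data witnessing the factorisations (the dependent $2$-simplices $\dhom^2$, respectively the identifications $k\circ^P f=h$) are carried along coherently rather than merely their underlying arrows. Packaging each step as an equivalence of $\Sigma$-types and invoking contractibility, rather than chasing individual witnesses, is what keeps this under control. I note that both closure properties also admit a more formal proof via the Chevalley criterion above: a cocartesian arrow is an absolute left lifting diagram (equivalently a relative left adjoint), and such diagrams are closed under composition and left cancellation by general $2$-categorical arguments as in \cite[Section~5.1]{RV21}; this is the route alluded to in the chapter introduction.
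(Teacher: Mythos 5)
Your proof is correct. Note that the paper does not spell out an argument for this proposition at all: it is stated bare, and the surrounding text (the chapter introduction and the Chevalley criterion for cocartesian arrows given just above, following \cite[Section~5.1]{RV21}) makes clear that the intended justification is the formal one you mention only in passing at the end --- a cocartesian arrow is an absolute left lifting diagram, equivalently a relative left adjoint, and such diagrams compose and left-cancel by general $2$-categorical reasoning, with uniqueness of relative left adjoints doing the rest. Your primary argument is a genuinely different, more elementary route: you work directly from the Segal-type reformulation $\isCocartArr_P f \simeq \prod_{w,e'''}\prod_{h}\isContr\bigl(\sum_{k}k\circ^P f=h\bigr)$ and manipulate the factorisation spaces as $\Sigma$-types. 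Both steps check out: in (1) the singleton contraction over $m$ followed by reordering and two applications of the hypothesis is exactly right, and in (2) the observation that cocartesianness of $f$ makes $(-)\circ^P f$ an equivalence of dependent hom-types, hence an equivalence on identity types, cleanly transfers contractibility from the $g\circ^P f$-factorisation space to the $g$-factorisation space. Your packaging of each associativity rewrite as a transport inside an equivalence of $\Sigma$-types is the correct way to discharge the coherence worry. What the direct route buys is self-containedness and an explicit view of where weak associativity of dependent composition enters; what the paper's formal route buys is brevity and the fact that the same argument immediately generalises from cocartesian arrows to arbitrary $j$-LARI cells.
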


\begin{lem}[\cite{RV21}, Lem.~5.1.4]\label{lem:cocart-arrows-isos}
	Let $P: B \to \UU$ be an inner family over a Segal type $B$.
	\begin{enumerate}
		\item\label{it:depisos-are-over-isos} If $f$ is a dependent isomorphism in $P$ over some morphism $u$ in $B$, then $u$ is itself an isomorphism.
		\item\label{it:depisos-are-cocart} Any dependent isomorphism in $P$ is cocartesian.
		\item\label{it:cocart-arrows-over-ids-are-isos} If $f$ is a cocartesian arrow in $P$ over an identity in $B$, then $f$ is an isomorphism.
	\end{enumerate}
\end{lem}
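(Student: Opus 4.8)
The plan is to reduce all three claims to the Segal structure of the total type together with the fact that the projection is a functor. First I would record that, since $P$ is inner and $B$ is Segal, the projection $\pi_P \colon \totalty{P} \to B$ is right orthogonal to $\Lambda_1^2 \hookrightarrow \Delta^2$ and $B \to \unit$ is too; as right-orthogonal maps are closed under composition, $\totalty{P} \to \unit$ is right orthogonal to the inner horn inclusion, so $\totalty{P}$ is itself a Segal type. A dependent arrow $f \colon \dhom^P_u(e,e')$ is the same datum as an arrow $\bar f$ from $(b,e)$ to $(b',e')$ in $\totalty{P}$ lying strictly over $u$, and ``$f$ is a dependent isomorphism'' means $\bar f \in \iso_{\totalty{P}}$. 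Since $\pi_P$ is a map between Segal types it is a functor, hence preserves composites, identities, and therefore isomorphisms up to propositional equality. For the first claim I would simply apply $\pi_P$ to a witness of $\isIso(\bar f)$: if $\bar g$ is a two-sided inverse of $\bar f$, then $\pi_P(\bar g)$ is a two-sided inverse of $\pi_P(\bar f) = u$, so $u$ is an isomorphism in $B$.

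For the second claim I would use the composition form of $\isCocartArr_P$, valid over the Segal total type: $f$ is cocartesian iff for all $v \colon b' \to b''$, all $e'' \colon P\,b''$, and all $h \colon \dhom^P_{v\circ u}(e,e'')$ the type $\sum_{g \colon \dhom^P_v(e',e'')} (g \circ f = h)$ is contractible. Writing $\bar f^{-1}$ for the inverse of $\bar f$ (which lies over $u^{-1}$, the inverse of $u$ supplied by the first claim), I claim the precomposition map $(-)\circ f \colon \dhom^P_v(e',e'') \to \dhom^P_{v\circ u}(e,e'')$ is an equivalence, with homotopy inverse $(-)\circ \bar f^{-1}$. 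Indeed, the two round-trips reduce to $g \circ (f \circ \bar f^{-1}) = g \circ \id_{e'} = g$ and $h \circ (\bar f^{-1} \circ f) = h \circ \id_{e} = h$ by associativity and the unit laws of the Segal structure, using $f \circ \bar f^{-1} = \id_{e'}$ and $\bar f^{-1} \circ f = \id_{e}$. The type in question is exactly the fibre of this equivalence over $h$ (with centre $h \circ \bar f^{-1}$), hence contractible, so $f$ is cocartesian.

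For the third claim, $f \colon \dhom^P_{\id_b}(e,e')$ is cocartesian, and I would invoke its universal property twice. Taking $v \defeq \id_b$, $e'' \defeq e$, and $h \defeq \id_e$ yields a unique $g_0 \colon \dhom^P_{\id_b}(e',e)$ with $g_0 \circ f = \id_e$, a retraction. Taking instead $e'' \defeq e'$ and $h \defeq f$ yields uniqueness of arrows $k$ with $k \circ f = f$; both $\id_{e'}$ and $f \circ g_0$ satisfy this (the latter since $(f\circ g_0)\circ f = f \circ (g_0 \circ f) = f \circ \id_e = f$ up to the Segal coherences), so $f \circ g_0 = \id_{e'}$. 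Together with $g_0 \circ f = \id_e$ this exhibits $\bar f$ as an isomorphism, which is what the claim asserts.

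The main obstacle I anticipate is bookkeeping rather than conceptual: in the second claim one must genuinely establish contractibility (the fibre of an equivalence) and not merely a unique centre, since the dependent hom-types are not in general propositions; this is precisely what forces the equivalence $(-)\circ f$ rather than an ad hoc uniqueness argument. A secondary point of care is to keep every composite graded over the correct base morphism, for instance that $(-)\circ \bar f^{-1}$ lands over $v$ because $v\circ u \circ u^{-1} = v$, and to rely only on Segal-ness together with the $\isIso$ data, never on Rezk-completeness, which is not among the hypotheses here.
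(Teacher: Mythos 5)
The paper states this lemma without proof, citing \cite{RV21}, Lemma~5.1.4, so there is no in-text argument to compare against; judged on its own terms, your proof is correct and is the standard direct argument. Part (1) via functoriality of $\pi_P$ between Segal types, part (2) by exhibiting precomposition with $f$ as an equivalence (so that the cocartesianness condition becomes contractibility of the fibres of an equivalence), and part (3) by two applications of the universal property are exactly the expected steps, and you correctly flag the only delicate points: that contractibility, not mere uniqueness of a centre, must be established, and that composites like $h\circ \bar f^{-1}$ live over $(v\circ u)\circ u^{-1}$ and must be transported along the path to $v$. One remark worth adding: the paper develops, just before this lemma, a Chevalley-style characterization of cocartesian arrows as relative left adjoints (absolute left lifting diagrams), and notes that uniqueness of cocartesian lifts follows formally from that description; parts (2) and (3) also admit more ``formal'' proofs along those lines (isomorphisms induce equivalences of comma types, and a relative left adjoint over an identity is computed by an isomorphism), which is the route \cite{RV21} favours. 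Your elementary argument is shorter and self-contained, at the cost of the transport bookkeeping you mention.
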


\subsection{Characterizations of cocartesian arrows}

\begin{prop}
	Let $B$ be a Rezk type, and $P:B \to \UU$ be an isoinner family with total type $E\defeq \totalty{P}$. Let $u:\hom_B(b,b')$, $b,b':B$, be a morphism in $B$ and $f:\dhom_{P,u}(e,e')$, $e:P\,b$, $e':P\,b'$ a dependent morphism.
	
	The morphism $f$ is cocartesian if and only if, for any $b'':B$, $e'':P\,b''$, the diagram
	\[
	\begin{tikzcd}
		\hom_E(\pair{b'}{e'}, \pair{b''}{e''}) \ar[r, "{-\circ \pair{u}{f}}"] \ar[d] & \hom_E(\pair{b}{e}, \pair{b'}{e'})  \ar[d] \\
		\hom_B(b',b'') \ar[r, "{-\circ u}"] & \hom_B(b,b'')
	\end{tikzcd}
	\]
	is a pullback.
\end{prop}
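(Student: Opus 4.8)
The plan is to unfold both the domain of the comparison map and the pullback of the cospan into $\Sigma$-types fibered over the base hom-type $\hom_B(b',b'')$, and then to observe that the resulting pullback condition is, fiberwise, literally the composition-formulation of cocartesianness recalled above. First I would invoke the decomposition of the arrow type of the total type $E \simeq \totalty{P}$: for the relevant endpoints there is an equivalence
\[ \hom_E(\pair{b}{e},\pair{b''}{e''}) \simeq \sum_{w:\hom_B(b,b'')} \dhom_{P,w}(e,e''), \]
and likewise for $\hom_E(\pair{b'}{e'},\pair{b''}{e''})$. Under this equivalence the projection $\hom_E \to \hom_B$ induced by $\pi$ becomes the first projection $\mathrm{pr}_1$, and postcomposition in $E$ with the arrow $\pair{u}{f}$ becomes, componentwise, base composition $-\circ u$ in the first coordinate together with dependent composition $-\circ^P f$ in the fibers. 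Here we use that $B$ is Segal and $P$ isoinner, so that all composites involved exist uniquely up to homotopy.

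Next I would compute the pullback of the cospan $\hom_E(\pair{b}{e},\pair{b''}{e''}) \xrightarrow{\pi} \hom_B(b,b'') \xleftarrow{-\circ u} \hom_B(b',b'')$. Substituting the decomposition and reordering the $\Sigma$'s presents it as $\sum_{v:\hom_B(b',b'')} \sum_{w:\hom_B(b,b'')} (w = v\circ u)\times \dhom_{P,w}(e,e'')$; since $\sum_{w}(w = v\circ u)$ is contractible, singleton contraction collapses this to $\sum_{v:\hom_B(b',b'')}\dhom_{P,v\circ u}(e,e'')$. Tracing through the identifications, the gap map out of $\sum_{v}\dhom_{P,v}(e',e'')$ into this pullback is exactly the fiberwise map over $\hom_B(b',b'')$ which sends, for each $v:\hom_B(b',b'')$, a dependent arrow $g:\dhom_{P,v}(e',e'')$ to $g\circ^P f:\dhom_{P,v\circ u}(e,e'')$.

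By definition the square is a pullback iff this gap map is an equivalence, and by the fiberwise equivalence criterion for $\Sigma$-types over a common base this holds iff for every $v$ the postcomposition map $-\circ^P f:\dhom_{P,v}(e',e'')\to \dhom_{P,v\circ u}(e,e'')$ is an equivalence. Finally, each such map is an equivalence iff all its fibers are contractible, i.e.\ iff for all $v$ and all $h:\dhom_{P,v\circ u}(e,e'')$ the type $\sum_{g:\dhom_{P,v}(e',e'')}(g\circ^P f = h)$ is contractible---which is precisely $\isCocartArr_P f$ in its composition form. This yields both implications at once.

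The hard part will be the first step: setting up the total-type hom decomposition together with the \emph{precise} identification of the gap map as fiberwise dependent postcomposition, i.e.\ verifying that composition in $E$ corresponds under the decomposition to base composition in the first coordinate and to $-\circ^P f$ in the second, coherently and compatibly with $\pi$. Once this bookkeeping is in place, the remainder is routine $\Sigma$-calculus (reordering, singleton contraction, and the fiberwise equivalence criterion).
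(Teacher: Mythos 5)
Your argument is correct, and at the level of strategy it matches the paper's: both proofs replace the square by an equivalent one built from $\Sigma$-types fibered over $\hom_B(b,b'')$ and then read off the pullback condition as a fiberwise contractibility statement. The difference lies in which presentation of the hom-types and of cocartesianness you route through. The paper replaces $\hom_E(\pair{b'}{e'},\pair{b''}{e''})$ (with its precomposition map) by $\sum_{w:b\to b''}\sum_{\sigma:\ndexten{\Delta^2}{B}{\Lambda_0^2}{[u,w]}}\sum_{h:e\to^P_w e''}\exten{\pair{t}{s}:\Delta^2}{P(\sigma(t,s))}{\Lambda_0^2}{[f,h]}$, so that the gap map into the pullback is literally the projection forgetting the filler, and the pullback condition is, on the nose, the primitive $\Delta^2/\Lambda_0^2$ form of $\isCocartArr_P f$; this sidesteps the need to identify composition in $E$ with componentwise composition. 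You instead decompose into pairs (base arrow, dependent arrow), identify the gap map as the fiberwise map $g \mapsto g\circ^P f$, and invoke the Segal-type composition form $\prod_{v,e'',h}\isContr\bigl(\sum_{g}\,g\circ^P f = h\bigr)$ of cocartesianness (available here since $B$ Rezk and $P$ isoinner make $E$ Segal). The bookkeeping you flag as the hard part --- that postcomposition with $\pair{u}{f}$ corresponds under the decomposition to $(-\circ u, -\circ^P f)$ compatibly with $\pi$ --- is exactly the cost of your route and exactly what the paper's extension-type replacement avoids paying; in exchange your version makes the fiberwise-equivalence structure of the argument more transparent. Both are complete proofs.
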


\begin{proof}
	By fibrant replacement, de-/strictification, and choice for extension types, we can replace the square in question by:
	\[
	\hspace{4cm}
	\begin{tikzcd}
		{} \\
		&& {\mathllap{\sum_{w:b \to b''} \sum_{\sigma:\ndexten{\Delta^2}{B}{\Lambda_0^2}{[u,w]}} \sum_{h:e \to^P_w e''}} \exten{\pair{t}{s}:\Delta^2}{P(\sigma(t,s))}{\Lambda_0^2}{[f,h]}}& {\sum_{w:b \to b''} (e \to_w^P e'')} && {} \\
		&& {\sum_{w:b\to b''} \ndexten{\Delta^2}{B}{\Lambda_0^2}{[u,w]}} & {\hom_B(b,b'')} \\
		&& {} &&& {}
		\arrow[from=2-3, to=3-3]
		\arrow[from=2-3, to=2-4]
		\arrow[from=2-4, to=3-4]
		\arrow[from=3-3, to=3-4]
	\end{tikzcd}\]
	Undwinding what it means for this square to be a pullback precisely recovers the condition that $f$ be cocartesian.
\end{proof}

We prove a characterization for cartesian edges, recovering the definition established by Joyal and Lurie, and transferred to complete Segal spaces by Rasekh.

Let $B$ be a
Segal type and $P: B \to \UU$ be an isoinner family. Consider its total space $E :\jdeq \sum_{b:B} P\,b$. For $b,b':B$, let $u:\hom_B(b,b')$ an arrow with a dependent arrow $f:\dhom_{P,f}(e,e')$ above it, where $e:P\,b, e':P\,b'$.

There is an induced commutative square involving comma objects, each of which can be described using extension types:
\[\begin{tikzcd}
	{} & {\mathllap{\sum_{\sigma:u \downarrow B} \exten{\pair{t}{s}:\Delta^1}{P(\sigma(t))}{\Delta_1^1}{f}} \simeq f/E} && {u/B \simeq \mathrlap{\ndexten{\Delta^2}{B}{\Delta_1^1}{u}}} & {} \\
	{} &&& {} & {} \\
	& {\mathllap{\sum_{u:b\downarrow B} \exten{t:\Delta^1}{P(u(t))}{0}{e}} \simeq e/E} && {b/B}
	\arrow[from=1-2, to=1-4]
	\arrow["{\partial_0}"', from=1-2, to=3-2]
	\arrow[from=3-2, to=3-4]
	\arrow["{\partial_0}", from=1-4, to=3-4]
\end{tikzcd}\]

\begin{prop}[Joyal's Criterion]
	Let $B$ be Segal and $P: B \to \UU$ be an inner family. Write $E\defeq \totalty{P}$ and consider the canonical projection $\pi : E \to B$. A dependent arrow $f:\dhom_{P,u}(e,e')$, $e:P\,b, e':P\,b'$, $u:\hom_B(b,b')$, $b,b':B$, is cocartesian if and only if the mediating map $f/E \to e/E \times_{b/B} u/B$ occurring in the pullback~\Cref{fig:joyals-criterion}
	is an equivalence.
\end{prop}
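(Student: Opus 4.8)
The plan is to recognize the comparison map $f/E \to e/E \times_{b/B} u/B$ as a map of types \emph{fibered over $E$} and to show it is an equivalence by checking it fiberwise, thereby reducing the statement exactly to the pointwise pullback condition of the preceding proposition. First I would observe that all four comma types carry a terminal-vertex (tip) projection: $f/E$ and $e/E$ project to $E$, while $u/B$ and $b/B$ project to $B$. The two vertical maps in the defining square are the initial-vertex restrictions $\partial_0$, which preserve tips, and the horizontal maps are induced by $\pi$; hence the comparison map commutes with the projections to $E$ (the target $e/E \times_{b/B} u/B$ inheriting its projection from $e/E \to E$). Thus the comparison is a map over $E$.

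Next I would invoke the standard fiberwise criterion for equivalences: a map over a fixed base is an equivalence if and only if it induces an equivalence on every fiber. It therefore suffices to analyze the fiber over an arbitrary point $\pair{b''}{e''}:E$. Here I would use that $E \defeq \totalty{P}$ is itself Segal---which holds since $P$ is an inner family over the Segal type $B$---together with de-/strictification and the principle of choice for extension types (\Cref{thm:choice}), exactly as in the proof of the preceding proposition. Computing fibers over $\pair{b''}{e''}$: by Segalness of $E$ a cocone under the arrow $f$ with fixed tip is determined by its second leg, so $f/E$ has fiber $\hom_E(\pair{b'}{e'},\pair{b''}{e''})$; likewise $e/E$ has fiber $\hom_E(\pair{b}{e},\pair{b''}{e''})$, while $u/B$ and $b/B$ have fibers $\hom_B(b',b'')$ and $\hom_B(b,b'')$ respectively.

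Under these identifications the fiber of the target pullback over $\pair{b''}{e''}$ becomes $\hom_E(\pair{b}{e},\pair{b''}{e''}) \times_{\hom_B(b,b'')} \hom_B(b',b'')$, where the structure maps are $\pi$ on the left and $-\circ u$ on the right (the identification in $b/B$ forcing the tip of the $u/B$-component to be $b''$), and the fiber of the comparison map is precisely the gap map
\[ g \mapsto \pair{g \circ f}{\pi(g)}. \]
This is exactly the comparison map whose being an equivalence expresses that the square of the preceding proposition is a pullback. Consequently the comparison map is a fiberwise equivalence over all $\pair{b''}{e''}$ if and only if that square is a pullback for every $b''$ and $e''$, which by the preceding proposition holds if and only if $f$ is cocartesian.

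The main obstacle I anticipate is the bookkeeping in the fiber computation: verifying carefully that the comparison genuinely lives over $E$, that the fiber of the pullback decomposes as stated, and that the induced fiber map coincides \emph{on the nose} with the gap map of the preceding square rather than merely being abstractly equivalent to it. The passage between the extension-type presentations of the commas and the hom-type presentations via choice and strictification, though routine in this setting, must be carried out compatibly with the tip projections; once this compatibility is secured, the fiberwise criterion together with the preceding proposition closes the argument.
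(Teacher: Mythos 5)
Your proof is correct, but it is organized differently from the paper's. The paper's proof of Joyal's Criterion is a direct unfolding: it observes that the gap map $\varphi$ is an equivalence precisely when its fibers over points of $e/E \times_{b/B} u/B$ are contractible, writes this out as the proposition
$\prod_{b''}\prod_{v}\prod_{e''}\prod_{h}\isContr\big(\sum_{g} g \circ f = h\big)$,
and identifies that formula with the Segal-type reformulation of $\isCocartArr_P f$; it does not pass through the earlier hom-square proposition at all (that one is proved independently, also by direct unfolding via fibrant replacement, de-/strictification, and choice). You instead exhibit the comma-square gap map as a map over $E$ via the tip projections and check it fiberwise, identifying each fiber map with the gap map of the hom-square from the preceding proposition. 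This buys a cleaner conceptual picture --- the two criteria become literally fiberwise versions of one another, and the Segal-type bookkeeping is concentrated in the identification of fibers --- at the cost of one extra layer (the fiberwise-equivalence reduction) that the paper's one-line unfolding avoids. One small point you should address explicitly: the hom-square proposition you invoke is stated for a \emph{Rezk} base and an \emph{isoinner} family, whereas Joyal's Criterion assumes only a Segal base and an inner family. Your reduction is still fine, because the proof of the hom-square criterion uses nothing beyond Segalness of $B$ and $E$, but as literally stated the cited proposition does not cover the generality you need, so you should either note this or re-prove the fiberwise statement under the weaker hypotheses.
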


\begin{figure}
	\[\begin{tikzcd}
		{f/E} \\
		& { e/E \times_{b/B} u/B} && {u/B} \\
		& {e/E} && {b/B}
		\arrow[from=2-2, to=3-2]
		\arrow[from=3-2, to=3-4]
		\arrow[from=2-2, to=2-4]
		\arrow["{\partial_0}", from=2-4, to=3-4]
		\arrow[curve={height=-18pt}, from=1-1, to=2-4]
		\arrow["{\partial_0}"', curve={height=18pt}, from=1-1, to=3-2]
		\arrow["\lrcorner"{anchor=center, pos=0.125}, draw=none, from=2-2, to=3-4]
		\arrow[dashed, from=1-1, to=2-2]
	\end{tikzcd}\]
\caption{Joyal's Criterion for cocartesian edges}
\label{fig:joyals-criterion}
\end{figure}
\begin{proof}
	Note that by precondition both $B$ and $E$ are Segal types. The map $\varphi$ is an equivalence if and only if
	\[
	\prod_{b'':B} \, \prod_{v:\hom_B(b',b'')} \, \prod_{e'':P \, b''} \, \prod_{h:\dhom_{P,v \circ u}(e,e'')} \, \isContr \Big( \sum_{g:\dhom_{P.v}(e',e'')} g \circ f = h \Big)
	\]
	which is equivalent to $f$ being cocartesian.
\end{proof}

			\section{Cocartesian families}

Cocartesian families are families of Rezk types such that every map in the base has a cocartesian lift w.r.t.~a choice of the source vertex. After showing elementary properties such as functoriality we prove the Chevalley criterion which exhibits cocartesian families as LARI fibrations (w.r.t.~to the initial vertex inclusion $i_0:\unit \to \Delta^1$). It then follows that cocartesian fibrations are closed under pullback, composition, and dependent products. We proceed by giving three kinds of examples of cocartesian families: the domain projection, the codomain projection in case the base category has all pushouts, and the cocartesian replacement of an arbitrary map between Rezk types. Relating to the latter, we show that cocartesian replacement really is a left adjoint.

Independently, definitions of cocartesian families have also been given in an unpublished section of~\cite{RS17}, and in the formalization~\cite{LicataMoreFibs}. The definition presented here is due to the author's joint work with Buchholtz~\cite{BW21}.

\subsection{Definition and basic properties}
Cocartesian type families are those (isoinner) families that have all cocartesian lifts, in the sense of~\Cref{def:enough-lari-lifts}:

\begin{defn}[Cocartesian lifting property]
	A family $P: B \to \UU$ is said to \emph{have (all) cocartesian lifts} if
	\[
	\hasCocartLifts \, P :\jdeq \prod_{b,b':B} \prod_{u:\hom_B(b,b')} \prod_{e:P\,b} \sum_{e':P\,b'} \sum_{f:\hom_{P\,u}(e,e')} \isCocartArr_P \, f.
	\]
\end{defn}

\begin{defn}[Cocartesian family]
	For any type $B$, we call a family $P : B \to \UU$ a \emph{cocartesian family} if
	\[
	\isCocartFam \, P :\jdeq \isIsoInnerFam \, P \times \hasCocartLifts \, P.
	\]
\end{defn}


If $B$ is a Rezk type and $P:B \to \UU$ is a cocartesian family, then any arrow $u:\hom_B(a,b)$ induces a map $\coliftptfammap{P}{u}: P\,a \to P\,b$ defined by
\[ \coliftptfammap{P}{u}: \jdeq \lambda d. \partial_1 \coliftarr{P}{u}{d}.\]
We will often omit the superscript if the family is clear from the context.

\begin{defn}[Vertical arrow]\label{def:vert-arr}
	Let $\pi:E \to B$ be an inner fibration over a Segal type. A dependent arrow $f: e \to e'$ is called \emph{vertical} if $\pi \, f$ is an isomorphism. We also write $f:e \rightsquigarrow e'$ to indicate that $f$ is vertical.
\end{defn}

Observe that, since being an isomorphism is a proposition in a Segal type by Proposition~1.10, \cite{RS17}, being a vertical arrow is a proposition when in an inner family over a Segal type. In a cocartesian family one recovers the classically well-known fact that any dependent arrow factors as a cocartesian arrow followed by a vertical arrow. Furthermore, in a cocartesian family, vertical arrows are stable under pullback.

Cocartesian families, generalizing $1$-categorical Grothendieck opfibrations,\footnote{Note that the liftings really are strict up to honest equality thanks to the extension types. This matches with the situation of cocartesian fibrations of quasi-categories or Rezk spaces.} implement the idea of a \emph{functorial} family of Rezk types, \ie~in addition to transport along paths---which exists for arbitrary type families---there is also a notion of transport along \emph{directed} arrows, which turns out to be compatible with the familiar path transport.

\begin{prop}[Functoriality]\label{prop:cocart-functoriality}
	Let $B$ be a Rezk type and $P:B \to \UU$ a cocartesian family. For any $a:B$ and $x:P\,a$ there is an identity
	\[ \coliftarr{P}{\id_b}{x} = \id_x, \]
	and for any $u:\hom_B(a,b)$, $v:\hom_B(b,c)$, there is an identity
	\[ \coliftarr{P}{v \circ u}{x} = \coliftarr{P}{v}{\coliftpt{u}{x}} \circ \coliftarr{P}{u}{x}. \]
\end{prop}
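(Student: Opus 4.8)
The plan is to reduce both identities to the uniqueness of cocartesian lifts, \Cref{prop:cocart-lifts-unique-in-isoinner-fams}. Since $P$ is a cocartesian family it is in particular isoinner over the Rezk type $B$, so for every arrow $u$ of $B$ and every source vertex $e$ the type $\CocartLift_P(u,e)$ is contractible: it is inhabited by the chosen lift, and a proposition by uniqueness. The strategy in each case is therefore to exhibit the right-hand side as a second element of the relevant lift type and then invoke contractibility; the equation follows by projecting the resulting identification to the arrow component (the cocartesianness witness may be ignored throughout, being an element of a proposition).

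For the identity law I would first note that the dependent identity $\id_x : \hom_{P\,\id_a}(x,x)$ lies over $\id_a$ and is a dependent isomorphism, hence cocartesian by \Cref{lem:cocart-arrows-isos} (dependent isomorphisms are cocartesian). Thus both $\id_x$ and the chosen lift $\coliftarr{P}{\id_a}{x}$ represent elements of $\CocartLift_P(\id_a,x)$. By contractibility of $\CocartLift_P(\id_a,x)$ these elements are identified, which yields in particular $\coliftpt{\id_a}{x} = x$ and, over this identification, $\coliftarr{P}{\id_a}{x} = \id_x$.

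For the composition law I would set $y :\jdeq \coliftpt{u}{x}$ and consider the two cocartesian arrows $f :\jdeq \coliftarr{P}{u}{x}$ over $u$ and $g :\jdeq \coliftarr{P}{v}{y}$ over $v$. Their dependent composite $g \circ f$ is again cocartesian by the closure of cocartesian arrows under composition (\Cref{prop:cocart-arr-closure}); it starts at $x$, and since the total space projection $\pi_P$ preserves composites and the lifts sit strictly over their base arrows, it lies over $v \circ u$. Hence $g \circ f$ represents an element of $\CocartLift_P(v \circ u, x)$, as does the chosen lift $\coliftarr{P}{v \circ u}{x}$. Contractibility of $\CocartLift_P(v \circ u, x)$ identifies the two, giving the asserted equality.

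The hard part is bookkeeping rather than conceptual. The comparison genuinely lives in the $\Sigma$-type over the a priori distinct target vertices $\coliftpt{v\circ u}{x}$ and $\coliftpt{v}{\coliftpt{u}{x}}$, so the stated equality of arrows is really an equality in the total lift type, becoming an equality in a fixed hom-type only after transporting along the induced identification of targets; the same caveat applies to the identity case with its identification $\coliftpt{\id_a}{x}=x$. A second point requiring care is that ``$g \circ f$ lies over $v \circ u$'' uses that $\pi_P$ is a functor between Segal types, so the underlying base arrow agrees with $v \circ u$ only up to a path. Since the entire argument takes place up to homotopy and inside a proposition, neither subtlety obstructs the conclusion.
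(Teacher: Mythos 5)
Your proof is correct and follows essentially the same route as the paper, which reduces both identities to the uniqueness of cocartesian lifts (\Cref{prop:cocart-lifts-unique-in-isoinner-fams}) together with \Cref{lem:cocart-arrows-isos} for the identity law and \Cref{prop:cocart-arr-closure} for the composition law. The only cosmetic difference is that you invoke the ``dependent isomorphisms are cocartesian'' direction of \Cref{lem:cocart-arrows-isos} where the paper cites the converse item; your choice is the one that directly exhibits $\id_x$ as a competing cocartesian lift, and your remarks on where the identification lives are sound.
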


\begin{proof}
	The first claim follows from~\Cref{lem:cocart-arrows-isos}(\ref{it:cocart-arrows-over-ids-are-isos}), in combination with~\Cref{prop:cocart-lifts-unique-in-isoinner-fams}.
	
	The second claim follows from~\Cref{prop:cocart-arr-closure}(\ref{it:cocart-arr-comp}).
\end{proof}

\begin{prop}
	Let $B$ be a Rezk type and $P:B \to \UU$ be a cocartesian family. For any arrow $u:\hom_B(a,b)$ and terms $d:P\,a$, $e:P\,b$, we have equivalences between the types of (cocartesian) lifts of arrows from $d$ to $e$ and maps (equivalences) from $u_!d$ to $e$:
	\[\begin{tikzcd}
		{(d} & {e)} && {(u_!d} & {e)} \\
		{(d} & {e)} && {(u_!d} & {e)}
		\arrow["\equiv", from=1-2, to=1-4]
		\arrow["\equiv", from=2-2, to=2-4]
		\arrow[""{name=0, anchor=center, inner sep=0}, "u"', from=1-1, to=1-2, cocart]
		\arrow[""{name=1, anchor=center, inner sep=0}, "{P\,b}" below, from=2-4, to=2-5]
		\arrow[""{name=2, anchor=center, inner sep=0}, "u"', from=2-1, to=2-2]
		\arrow[""{name=3, anchor=center, inner sep=0}, "{P\,b}" below,equals,  from=1-4, to=1-5]
		\arrow[shorten <=9pt, shorten >=6pt, hook, from=0, to=2]
		\arrow[shorten <=6pt, shorten >=9pt, hook, from=3, to=1]
	\end{tikzcd}\]
\end{prop}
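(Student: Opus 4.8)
The plan is to realize both horizontal equivalences of the diagram simultaneously as \emph{precomposition with the chosen cocartesian lift}. Write $\chi \defeq \coliftarr{P}{u}{d} : \dhom_{P,u}(d, u_! d)$ for the cocartesian lift of $u$ at $d$, so that by definition $u_! d \jdeq \partial_1 \chi : P\,b$. I would set up the comparison maps by composition and filling: the map $\hom_{P\,b}(u_! d, e) \to \dhom_{P,u}(d,e)$ sends a fibre arrow $g$ (lying over $\id_b$) to the composite $g \circ \chi$ (lying over $\id_b \circ u = u$, hence over $u$), and the map in the opposite direction sends $f : \dhom_{P,u}(d,e)$ to the unique vertical filler $\tyfill_\chi(f) : \hom_{P\,b}(u_! d, e)$ provided by cocartesianness of $\chi$.

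First I would check that these two maps are mutually inverse, which is exactly the content of \Cref{cor:cocart-trivfill}: one roundtrip gives $\tyfill_\chi(f) \circ \chi = f$ by \Cref{cor:cocart-trivfill}(\ref{it:cocart-fill-comp}), and the other gives $\tyfill_\chi(g \circ \chi) = g$ by \Cref{cor:cocart-trivfill}(\ref{it:cocart-comp-fill}). Conceptually this is just the cocartesian universal property of $\chi$ from \eqref{eq:isCocartArrEquiv} specialized to $v \jdeq \id_b$, which asserts precisely that precomposition with $\chi$ has contractible fibres over every $h : \dhom_{P,u}(d,e)$; equivalently, it is an equivalence onto the fibre arrows. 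This establishes the bottom equivalence of the diagram.

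It then remains to verify that this equivalence restricts to the top row, i.e.\ carries cocartesian lifts onto fibrewise isomorphisms. For the forward direction, suppose $f = g \circ \chi$ is cocartesian; since $\chi$ is cocartesian as well, the cancellation property \Cref{prop:cocart-arr-closure}(\ref{it:cocart-arr-cancel}) forces $g$ to be cocartesian, and a cocartesian arrow over the identity $\id_b$ is an isomorphism by \Cref{lem:cocart-arrows-isos}(\ref{it:cocart-arrows-over-ids-are-isos}). Conversely, if $g$ is a fibrewise isomorphism then it is in particular a dependent isomorphism, hence cocartesian by \Cref{lem:cocart-arrows-isos}(\ref{it:depisos-are-cocart}), and so $f = g \circ \chi$ is a composite of cocartesian arrows, which is cocartesian by \Cref{prop:cocart-arr-closure}(\ref{it:cocart-arr-comp}). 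Since $\isCocartArr_P(-)$ and $\isIso(-)$ are both propositions, the bottom equivalence restricts along the displayed forgetful inclusions to yield the top equivalence, and the resulting square commutes by construction.

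The main obstacle is not conceptual but bookkeeping: one must keep careful track of which base arrow each dependent composite lies over, so as to confirm that $\tyfill_\chi(f)$ really is vertical, and one must match the Segal-simplified filler operation $\tyfill_\chi$ with the extension-type formulation of cocartesianness (via de-/strictification and choice for extension types). Once the chosen lift $\chi$ is fixed and these identifications are made, the statement reduces entirely to the closure and isomorphism-recognition lemmas already available.
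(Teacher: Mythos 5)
Your proposal is correct and follows essentially the same route as the paper: both realize the equivalence as precomposition with the chosen cocartesian lift $P_!(u,d)$ inverse to the filler operation, verified via the two filler identities, and then restrict to the top row. The only cosmetic difference is in justifying the restriction — the paper invokes uniqueness of cocartesian lifts to see that $\Phi$ of a cocartesian arrow is a path, whereas you go through right cancellation plus the fact that a cocartesian arrow over an identity is an isomorphism (and implicitly Rezk-completeness of the fibre to pass from isomorphisms to paths) — but these are interchangeable within the paper's toolkit.
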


\begin{proof}
	Consider the maps
	\begin{align*}
		\Phi & :  (d \to_u e) \longrightarrow (u_!d \to^{P\,b} e), \quad \Phi(f):\jdeq \tyfill_{P_!(u,d)}(f), \\
		\Psi & : (u_!d \to^{P\,b} e) \longrightarrow (d \to_u e), \quad \Psi(p):\jdeq p \circ P_!(u,d).
	\end{align*}
	From the universal property of cocartesian fillers, we find
	\begin{align*}
		\Phi(\Psi(g)) & = \tyfill_{P_!(u,d)}(g \circ P_!(u,d)) = g, \\
		\Psi(\Phi(h)) & = \tyfill_{P_!(u,d)}(h) \circ P_!(u,d) = h.
	\end{align*}
	
	Now, if $u:d \cocartarr_u e$ is a ccoartesian arrow by uniqueness of cocartesian lifts the filler $\Phi(u)$ must be a path. Thus $\Phi$ restricts to a map $\Phi':(d \cocartarr_u e) \longrightarrow (u_!d =_{P\,b} e)$.
	
	If $p:u_!d = e$ is a path, it is in particular cocartesian, so $\Psi(p)$ is as well since cocartesian arrows are closed under composition. Hence, $\Psi$ restricts to a map
	\[ \Psi':(u_!d =_{P\,b} e) \longrightarrow (d \cocartarr_u e).\]
\end{proof}

Thus, just as in the classical case, our cocartesian families capture the notion of covariantly functorial families of categories.

Due to \Cref{prop:cocart-lifts-unique-in-isoinner-fams}, over Rezk types being a cocartesian family is a proposition, and indeed this is the setting that we are interested in. In particular, cocartesian families over Rezk types are thus ``cloven up to homotopy''. Given an arrow $u:\hom_B(b,b')$ together with $e:P\,b$, we write $P_!(u,e)$ for homotopically unique cocartesian lift of $u$. Even more, from the point of view of homotopy type theory, these cleavages are automatically ``\emph{split}''.\footnote{in the sense analogous to \cite{streicher2020fibered}, Definition~3.1} 

\subsection{Characterizations of cocartesian families}

	\begin{figure}
	\centering
	\[\begin{tikzcd}
		&& e & d && e & d \\
		e & d & b & c && b & c & e & d \\
		{u_!e} & d & {b'} & {c'} && {b'} & {c'} & {u_!e} & d
		\arrow[from=2-1, to=3-1]
		\arrow["m"', from=3-1, to=3-2]
		\arrow["k", from=2-1, to=2-2]
		\arrow[""{name=0, anchor=center, inner sep=0}, "g"', from=2-2, to=3-2]
		\arrow[""{name=1, anchor=center, inner sep=0}, "u", from=2-3, to=3-3]
		\arrow["{\pi\,m}"', from=3-3, to=3-4]
		\arrow["{\pi\,k}", from=2-3, to=2-4]
		\arrow["v"', from=2-4, to=3-4]
		\arrow["k", from=1-3, to=1-4]
		\arrow["u", from=2-6, to=3-6]
		\arrow["r"', from=3-6, to=3-7]
		\arrow["w", from=2-6, to=2-7]
		\arrow[""{name=2, anchor=center, inner sep=0}, "v"', from=2-7, to=3-7]
		\arrow["k", from=1-6, to=1-7]
		\arrow[dashed, from=3-8, to=3-9]
		\arrow["k", from=2-8, to=2-9]
		\arrow["g", from=2-9, to=3-9]
		\arrow[""{name=3, anchor=center, inner sep=0}, from=2-8, to=3-8]
		\arrow[Rightarrow, dotted, no head, from=1-3, to=2-3]
		\arrow[Rightarrow, dotted, no head, from=1-4, to=2-4]
		\arrow[Rightarrow, dotted, no head, from=1-6, to=2-6]
		\arrow[Rightarrow, dotted, no head, from=1-7, to=2-7]
		\arrow["\Phi", shorten <=6pt, shorten >=6pt, maps to, from=0, to=1]
		\arrow["\Psi", shorten <=6pt, shorten >=6pt, maps to, from=2, to=3]
	\end{tikzcd}\]
	\caption{Transposing maps of the adjunction $\chi \dashv i_0 \cotens \pi$}\label{fig:transp-chevalley}
\end{figure}

\paragraph{Cocartesian families via lifting}

We find that cocartesian families are exactly the $i_0$-LARI families, for $i_0:\unit \hookrightarrow \Delta^1$ the inclusion of the initial vertex.

\begin{theorem}[Chevalley criterion: Cocartesian families via lifting, \cite{RV21}, ~Prop.~5.1.11(ii)]\label{thm:cocart-fams-intl-char}
	Let $B$ be a Rezk type, $P: B \to \UU$ be an isoinner family, and denote by $\pi: E \to B$ the associated projection map. The family $P$ is cocartesian if and only if the Leibniz cotensor map $i_0 \cotens \pi: E^{\Delta^1} \to \comma{\pi}{B}$ has a left adjoint right inverse:
	\[\begin{tikzcd}
		{E^{\Delta^1}} & {} \\
		&& {\pi \downarrow B} && E \\
		&& {B^{\Delta^1}} && B
		\arrow[two heads, from=2-3, to=3-3]
		\arrow["{\partial_0}"', from=3-3, to=3-5]
		\arrow[from=2-3, to=2-5]
		\arrow["\pi", two heads, from=2-5, to=3-5]
		\arrow["\lrcorner"{anchor=center, pos=0.125}, draw=none, from=2-3, to=3-5]
		\arrow["{\partial_0}", shift left=2, curve={height=-18pt}, from=1-1, to=2-5]
		\arrow[""{name=0, anchor=center, inner sep=0}, "\chi"', curve={height=12pt}, dashed, from=2-3, to=1-1]
		\arrow[""{name=1, anchor=center, inner sep=0}, "{i_0\widehat{\pitchfork}\pi}"', curve={height=12pt}, from=1-1, to=2-3]
		\arrow["{\pi^{\Delta^1}}"', shift right=2, curve={height=18pt}, two heads, from=1-1, to=3-3]
		\arrow["\dashv"{anchor=center, rotate=-118}, draw=none, from=0, to=1]
	\end{tikzcd}\]
\end{theorem}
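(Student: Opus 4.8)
The plan is to unwind the Leibniz cotensor map $i_0 \widehat{\pitchfork} \pi \colon E^{\Delta^1} \to \pi \downarrow B$ concretely and then read off both implications from the universal property of cocartesian arrows. An object of $\pi \downarrow B$ is a triple $\langle b, e, v \rangle$ consisting of a point $b : B$, a lift $e : P\,b$, and an arrow $v : \hom_B(b,b')$ emanating from $b$; the map $i_0 \widehat{\pitchfork}\pi$ sends a dependent arrow $g : E^{\Delta^1}$ to the pair of its projection $\pi\,g$ and its source $\partial_0\,g$. Thus a \emph{right inverse} $\chi$ of $i_0 \widehat{\pitchfork}\pi$ is precisely an operation assigning to each such triple a dependent arrow $\chi\langle b,e,v\rangle$ that lifts $v$ strictly and starts at $e$, while the additional \emph{left adjoint} structure encodes exactly the initial universal property that makes these lifts cocartesian. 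This is the content I would match against $\hasCocartLifts\,P$ and the preceding Chevalley criterion for cocartesian \emph{arrows}.

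For the forward direction, assume $P$ is cocartesian, so $\hasCocartLifts\,P$ holds. For each triple $\langle b,e,v\rangle$ the type $\CocartLift_P(v,e)$ is inhabited, and by \Cref{prop:cocart-lifts-unique-in-isoinner-fams} (uniqueness of cocartesian lifts in an isoinner family over a Rezk base) it is in fact contractible. Projecting out of these contractible types, using the principle of choice for extension types (\Cref{thm:choice}) together with relative function extensionality, yields a genuine section $\chi : \pi \downarrow B \to E^{\Delta^1}$ with $(i_0 \widehat{\pitchfork}\pi)\circ\chi$ the identity up to homotopy. It then remains to promote $\chi$ to a left adjoint with identity unit. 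Here I would feed the cocartesianness of each lift $\chi\langle b,e,v\rangle = P_!(v,e)$ into the preceding Chevalley criterion for cocartesian arrows: being cocartesian is exactly the statement that $P_!(v,e)$ is an absolute left lifting of $\langle b,e,v\rangle$ through $i_0 \widehat{\pitchfork}\pi$, and a pointwise family of such absolute left liftings assembles into the desired adjunction. The counit and triangle identities are supplied by the filler operation $\tyfill$ together with \Cref{cor:cocart-trivfill} and the composition/cancellation closure of \Cref{prop:cocart-arr-closure}, which guarantee the required compatibilities.

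Conversely, suppose $\chi \dashv i_0 \widehat{\pitchfork}\pi$ is a LARI. Restricting the adjunction along any point $\langle b,e,v\rangle : \unit \to \pi\downarrow B$ exhibits $\chi\langle b,e,v\rangle$ as an absolute left lifting of $\langle b,e,v\rangle$ through $i_0 \widehat{\pitchfork}\pi$; by the Chevalley criterion for cocartesian arrows this says precisely that $\chi\langle b,e,v\rangle$ is a cocartesian lift of $v$ starting at $e$. Since $\langle b,e,v\rangle$ was arbitrary, every arrow of $B$ admits a cocartesian lift from any chosen source, i.e.\ $\hasCocartLifts\,P$ holds; together with the standing isoinner hypothesis this gives $\isCocartFam\,P$.

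The main obstacle is the passage between the \emph{pointwise} universal property of cocartesian lifts and the \emph{global} adjunction $\chi \dashv i_0 \widehat{\pitchfork}\pi$ in the forward direction, that is, verifying that the fibrewise-chosen cocartesian fillers cohere into an honest left adjoint satisfying the triangle identities, rather than merely a pointwise-initial section. This is where the synthetic formal theory of adjunctions (the fibered and LARI-adjunction machinery developed in \Cref{app:reladj,app:fibconstr}) does the real work: contractibility of the spaces of cocartesian lifts means the assembling choices are drawn from contractible types, so no coherence is lost, and the Segal condition on the relevant function types renders the adjunction data automatically natural. Once this general ``pointwise left lifting implies adjoint'' statement is in hand, both implications fall out of the already-established equivalence between cocartesianness and the absolute left lifting property.
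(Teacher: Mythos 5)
Your proof is correct and is essentially the paper's argument: the paper disposes of this theorem in one line by citing the general $j$-LARI criterion (\Cref{thm:lari-fams-lifting}) at $j = i_0$, whose proof is exactly your unwinding --- $\chi$ is the (strict) section picking out the contractibly unique cocartesian lifts, and the transposing maps of the adjunction are the projection and the cocartesian filler, quasi-inverse precisely because of the contractibility in the universal property. Your routing of the ``pointwise-to-global'' step through the Chevalley criterion for cocartesian arrows, rather than through the direct transposing-map computation of the appendix, is a cosmetic difference only, and you correctly identify that the contractibility of the lifting spaces is what makes the assembly coherence-free.
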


\begin{proof}
This follows formally from~\Cref{thm:lari-fams-lifting}.	
\end{proof}

\subsubsection{Cocartesian families via transport}

There is another characterization of cocartesian families in terms of an adjointness condition. Any map $\pi:E \to B$ between Rezk types is exhibited as a retract of the pullback map $\partial_0^*\pi:\comma{\pi}{B} \to B^{\Delta^1}$ in the following way:
\[\begin{tikzcd}
	E \\
	&& {\pi\downarrow B} &&& E \\
	&& {} \\
	B && {B^{\Delta^1}} &&& B
	\arrow["\pi", from=2-6, to=4-6]
	\arrow[curve={height=-24pt}, no head, from=1-1, to=2-6, equals]
	\arrow["{\mathrm{cst}}"', from=4-1, to=4-3]
	\arrow["{\partial_0^*\pi}", from=2-3, to=4-3]
	\arrow[from=2-3, to=2-6]
	\arrow["{\partial_0}"', from=4-3, to=4-6]
	\arrow["\pi"{description},from=1-1, to=4-1]
	\arrow["{\iota_\pi}"{description}, dashed, from=1-1, to=2-3]
	\arrow["\lrcorner"{anchor=center, pos=0.125}, draw=none, from=2-3, to=4-6]
	\arrow[from=1-1, to=4-3]
	\arrow[curve={height=26pt}, from=4-1, to=4-6, equals]
\end{tikzcd}\]
The mediating map
\[ \iota \defeq \iota_\pi \defeq \lambda \pair{b}{e}.\pair{\id_b}{e}: E \to \comma{\pi}{B} \]
is a fibered functor from $\pi:E \to B$ to its cocartesian replacement,\footnote{Explicitly, $\partial'_1 \defeq \lambda u,e.u(1):\comma{\pi}{B} \fibarr B$, cf.~\Cref{def:free-cocart}.} \ie~there is a commutative triangle:
\[\begin{tikzcd}
	E && {\comma{\pi}{B}} \\
	& B
	\arrow["{\iota_\pi}", from=1-1, to=1-3]
	\arrow["\pi"', two heads, from=1-1, to=2-2]
	\arrow["{\partial'_1 }", two heads, from=1-3, to=2-2]
\end{tikzcd}\]
Denote the family of fibers of $\pi$ by $P:B \to \UU$. If $P$ is cocartesian, it has ``directed transport''
\[ \tau \defeq \tau_P \defeq \transp_P\defeq \lambda u,e.u^P_!e:\comma{\pi}{B} \to E.\]
This transport map $\tau$ is easily checked to be a fibered functor from $\partial'_1$ to $\pi$. We will show that it is a fibered left adjoint of $\iota_\pi$, and conversely, the existence of a fibered left adjoint $\tau_\pi$ to $\iota_\pi$ will imply that $\pi$ is cocartesian.

\begin{theorem}[Cocartesian families via transport, \cite{RV21}, ~Prop.~5.1.11(ii)]\label{thm:cocartfams-via-transp}
	Let $B$ be a Rezk type, and $P:B \to \UU$ an isoinner family with associated total type projection $\pi:E \to B$.
	
	Then, $P$ is cocartesian if and only if the map
	\[ \iota \defeq \iota_P : E \to \comma{\pi}{B}, \quad \iota \, \pair{b}{e} :\jdeq \pair{\id_b}{e}  \]
	has a fibered left adjoint $\tau \defeq \tau_P: \comma{\pi}{B} \to E$ as indicated in the diagram:
	\[\begin{tikzcd}
		{} && E && {\pi \downarrow B} \\
		\\
		&&& B
		\arrow[""{name=0, anchor=center, inner sep=0}, "\iota"', curve={height=12pt}, from=1-3, to=1-5]
		\arrow["\pi"', two heads, from=1-3, to=3-4]
		\arrow["{\partial_1'}", two heads, from=1-5, to=3-4]
		\arrow[""{name=1, anchor=center, inner sep=0}, "\tau"', curve={height=12pt}, from=1-5, to=1-3]
		\arrow["\dashv"{anchor=center, rotate=-90}, draw=none, from=1, to=0]
	\end{tikzcd}\]
\end{theorem}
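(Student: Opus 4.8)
The plan is to deduce this from the Chevalley criterion already established in \Cref{thm:cocart-fams-intl-char}, by relating the two adjointness conditions through a composite‑of‑adjoints argument. The bridge is the observation that $\iota$ factors through the Leibniz cotensor map as $\iota = (i_0 \cotens \pi) \circ c$, where $c : E \to E^{\Delta^1}$ is the degeneracy map sending $e$ to the constant arrow $\id_e$. Indeed, $\pi^{\Delta^1}(\id_e) \jdeq \id_{\pi\,e}$ and $\partial_0(\id_e) \jdeq e$, so $(i_0 \cotens \pi)(\id_e) = \pair{\id_{\pi\,e}}{e} = \iota(e)$, and this holds strictly. First I would record that the degeneracy $c$ sits in an adjoint string $\partial_1 \dashv c \dashv \partial_0$ of functors over $B$; this is the type‑theoretic incarnation of the standard source/target/identity adjunctions on an arrow object, and follows from the groundwork on fibered adjunctions and equivalences collected in \Cref{app:fibconstr}. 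In particular $\partial_1$ is a left adjoint to $c$.

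For the forward direction, assume $P$ is cocartesian. By \Cref{thm:cocart-fams-intl-char} the map $i_0 \cotens \pi$ admits a LARI $\chi : \comma{\pi}{B} \to E^{\Delta^1}$, which picks out the homotopically unique cocartesian lift $P_!(u,e)$. Since $\iota = (i_0 \cotens \pi) \circ c$ is a composite of right adjoints, it is itself a right adjoint, with left adjoint given by the composite of the left adjoints in the reverse order, namely $\tau \defeq \partial_1 \circ \chi$. Unwinding the definitions gives $\tau\pair{u}{e} = \partial_1(P_!(u,e)) = u_! e$, which is exactly the directed‑transport map from the statement. The counit $\tau\iota \Rightarrow \id_E$ is then the identity up to the functoriality isomorphism $(\id_b)_! e \simeq e$ of \Cref{prop:cocart-functoriality}, and the unit $\id_{\comma{\pi}{B}} \Rightarrow \iota\tau$ at $\pair{u}{e}$ repackages the cocartesian lift $P_!(u,e)$ as a morphism $\pair{u}{e} \to \pair{\id_{b'}}{u_! e}$ in $\comma{\pi}{B}$.

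For the converse, assume $\iota$ has a fibered left adjoint $\tau$ over $B$. Running the composite‑adjunction argument backwards, I would transpose along $\partial_1 \dashv c$: since $\iota = (i_0 \cotens \pi) \circ c$ and $c$ has left adjoint $\partial_1$, a fibered left adjoint of $\iota$ yields a fibered left adjoint $\chi$ of $i_0 \cotens \pi$, which is moreover a right inverse—right‑invertibility coming from $c$ being a fully faithful section together with the fact that the unit of $\tau \dashv \iota$ lies over identities—hence a LARI. By \Cref{thm:cocart-fams-intl-char} this shows $P$ is cocartesian. Alternatively, and more directly, one reads off the cocartesian lifts from the adjunction itself: the unit $\eta$ evaluated at $\pair{u}{e}$ unpacks to a dependent arrow over $u$ with source $e$, whose universal property as an adjunction unit is precisely the initiality expressed by $\isCocartArr_P$.

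The main obstacle will be the bookkeeping of fiberedness throughout: ensuring that the adjoint string $\partial_1 \dashv c \dashv \partial_0$ and the LARI $\chi$ are compatible as fibered functors over the correct base, as one switches between $B$, $B^{\Delta^1}$, and the two projections $\pi$ and $\partial'_1$, and that composing fibered adjunctions preserves fiberedness (the relevant closure statements are exactly those proved in \Cref{app:fibconstr}, and parallel the reduction by which \Cref{thm:cocart-fams-intl-char} follows from \Cref{thm:lari-fams-lifting}). The one genuinely content‑bearing step is identifying the unit of $\tau \dashv \iota$ with the cocartesian universal property—equivalently, checking right‑invertibility of the transposed $\chi$—and it is here that the results on fibered (LARI) adjunctions do the real work.
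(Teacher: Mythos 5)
Your forward direction takes a genuinely different, more formal route than the paper's. You factor $\iota = (i_0 \cotens \pi) \circ c$ through the Leibniz cotensor and compose the Chevalley LARI $\chi \dashv i_0 \cotens \pi$ of \Cref{thm:cocart-fams-intl-char} with the adjunction $\partial_1 \dashv c$, obtaining $\tau = \partial_1 \circ \chi$ for free. The paper instead constructs the transposing quasi-equivalence $\Phi, \Psi$ by hand from the cocartesian filler identities of \Cref{cor:cocart-trivfill} and exhibits the unit explicitly to check verticality. Your approach buys modularity and makes the relation to the Chevalley criterion transparent; what it costs is that the adjoint string $\partial_1 \dashv c \dashv \partial_0$ for the arrow type, \emph{as a fibered adjunction over $B$ via $\pi\partial_1$}, is nowhere actually proved in \Cref{app:fibconstr} --- the appendix gives closure properties of fibered adjunctions, not this specific instance. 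It is true (the unit $f \Rightarrow \id_{f(1)}$ is $\pi\partial_1$-vertical, and $\hom_{E^{\Delta^1}}(f, c\,e) \simeq \hom_E(f(1),e)$ follows from Segal-type extension-type manipulations), but you would have to supply that lemma.

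The converse as you primarily state it contains a genuine gap: from $\tau \dashv \iota = (i_0\cotens\pi)\circ c$ and $\partial_1 \dashv c$ one \emph{cannot} formally conclude that $i_0\cotens\pi$ has a left adjoint --- adjoints do not cancel across a composite, even when the inner right adjoint $c$ is fully faithful, and the naive candidate $c\circ\tau$ transposes to $\hom(x,\iota(f(0)))$ rather than $\hom(x,(i_0\cotens\pi)(f))$. The content-bearing work cannot be outsourced to transposition. Your fallback, however, is exactly the paper's argument and is correct: fiberedness pins the unit component at $\pair{u}{e}$ down to a dependent arrow $f_{u,e}: e \to \widehat\tau(u,e)$ lying over $u$, and the statement that the transposing map $\lambda g.\iota(g)\circ\eta_{u,e}$ is an equivalence unfolds, after identifying morphisms $\pair{u}{e} \to \pair{\id_c}{d}$ in $\comma{\pi}{B}$ with pairs $(v, h:e\to_{vu} d)$, to precisely $\isCocartArr_P(f_{u,e})$. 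So the proof goes through provided you drop the transposition claim and run the direct unit argument, as the paper does.
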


\begin{proof}
	Assume $P$ is cocartesian. For the candidate left adjoint we take the map given by cocartesian transport
	\[ \tau:\comma{\pi}{B} \to E, \quad \tau(u,e)\defeq \pair{\partial_1 \, u}{u_!e}.\]
	Then $\pi(\tau(u,e)) \jdeq \partial_1 \,u \jdeq \partial_1'(u,e)$, so $\tau$ is a fibered functor from $\partial_1'$ to $\pi$. We show that for any $\pair{u:b \to b'}{e:P\,b}$ and $\pair{c:B}{d:P\,c}$ in $E$ the maps
	\[\begin{tikzcd}
		{\hom_{E}(\langle b',u_!e \rangle, \langle c,d \rangle)} && {\hom_{\pi \downarrow B}(\langle u, e \rangle,\langle \mathrm{id}_c, d \rangle)}
		\arrow["{\Phi_{\langle u,e \rangle, \langle c,d\rangle}}", shift left=2, from=1-1, to=1-3]
		\arrow["{\Psi_{\langle u,e \rangle, \langle c,d\rangle}}", shift left=2, from=1-3, to=1-1]
	\end{tikzcd}\]
	defined by
	\[ \Phi_{\langle u,e \rangle, \langle c,d\rangle}(v,g) \defeq \angled{vu,v,g \circ P_!(u,e)}, \quad \Psi_{\langle u,e \rangle, \langle c,d\rangle}(w,v,h) \defeq \pair{v}{\tyfill_{P_!(u,e)}(h)}\]
	form a quasi-equivalence (cf.~\Cref{fig:transp-fibadj} for illustration).
	We have
	\[ \Phi(\Psi(w,v,h)) = \Phi(v, \tyfill_{P_!(u,e)}(h)) = \angled{vu,v,\tyfill_{P_!(u,e)}(h) \circ P_!(u,e)} = \angled{w,v,h}\]
	by \Cref{cor:cocart-trivfill}(\ref{it:cocart-fill-comp}), and noting that for any square $\pair{w}{v} : u \to \id_{c}$ there is an identification $w=vu$.
	Next, we find
	\[ \Psi(\Phi(v,g)) = \Psi(vu,v,g \circ P_!(u,e)) = \pair{v}{\tyfill_{P_!(u,e)}(g \circ P_!(u,e))} = \pair{v}{g}.\]
	using again the properties of the fillers defined by the cocartesian lifts, cf.~\Cref{cor:cocart-trivfill}(\ref{it:cocart-comp-fill}).
	
	So indeed $\tau$ is left adjoint to $\iota$. Moreover, it is a fibered left adjoint as can be seen as follows. The unit is defined by
	\[ \eta_{\pair{u}{e}} \defeq \Phi(\id_{b'},\id_{u_!e}) = \angled{u, \id_{b'}, P_!(u,e)}:\hom_{\comma{\pi}{B}}(\pair{u}{e},\pair{\id_{b'}}{u_!e}).\]
	Since the second component is an identity this is a vertical arrow in $\partial_1': \comma{\pi}{B} \fibarr B$ which proves the fiberedness of the adjunction.
	
	Suppose on the converse that $\tau \dashv_B \iota$ is some fibered left adjunction. Since $\tau$ is a fibered functor, for $\pair{u:b\to b'}{e:P\,b}$ we can assume
	\[ \tau(u,e) \jdeq \pair{b'}{\widehat{\tau}(u,e)}. \]
	Next, $\eta$ being a \emph{fibered} natural transformation fixes its part in $B$, \ie~since in the square the lower horizontal edge has to be an identity the upper horizontal edge must be $u$ (up to identification), so the only degree of freedom is the dependent arrow $f_{u,e}$ as indicated:
	\[\begin{tikzcd}
		& e & {\widehat{\tau}(u,e)} \\
		{\eta_{u,e}:} & b & {b'} \\
		& {b'} & {b'}
		\arrow["{f_{u,e}}", from=1-2, to=1-3]
		\arrow["u"', from=2-2, to=3-2]
		\arrow[Rightarrow, no head, from=3-2, to=3-3]
		\arrow["u", from=2-2, to=2-3]
		\arrow[Rightarrow, no head, from=2-3, to=3-3]
	\end{tikzcd}\]
	Hence, we can assume
	\[ \eta_{u,e} \jdeq \angled{u,\id_{b'}, f_{u,e}}.\]
	By assumption the transposing map induced by the unit
	\[ \Phi \defeq \Phi_\eta \defeq \lambda v,g.\iota(v,g) \circ \eta_{u,e} \jdeq \angled{vu,v,g \circ f_{u,e}}:\hom_E(\iota(u,e),\pair{c}{d}) \longrightarrow \hom_{\comma{\pi}{B}}(\pair{u}{e},\pair{\id_c}{d})\]
	is an equivalence.	Spelled out, this means for any $v:b' \to c$, $h:e \to^P_{vu} d$ there exists an arrow $g_h: \widehat{\tau}(u,e) \to_v d$, uniquely up to homotopy, s.t.~$h = g_h \circ f_{u,e}$. This says exactly that $f_{u,e}:e \to \widehat{\tau}(u,e)$ is a cocartesian lift of $u$ \wrt~$e$.
\end{proof}

\begin{figure}[ht]
	\centering
	\[\begin{tikzcd}
		&& e & d && e & d \\
		{u_!e} & d & b & c && b & c & {u_!e} & d \\
		{b'} & c & {b'} & c && {b'} & c & {b'} & c
		\arrow["v", from=3-1, to=3-2]
		\arrow["g", from=2-1, to=2-2]
		\arrow[""{name=0, anchor=center, inner sep=0}, "u", from=2-3, to=3-3]
		\arrow["v"', from=3-3, to=3-4]
		\arrow["vu", from=2-3, to=2-4]
		\arrow["{\mathrm{id}_c}", Rightarrow, no head, from=2-4, to=3-4]
		\arrow["u"', from=2-6, to=3-6]
		\arrow["v"', from=3-6, to=3-7]
		\arrow["w", from=2-6, to=2-7]
		\arrow[""{name=1, anchor=center, inner sep=0}, "{\mathrm{id}_c}"', Rightarrow, no head, from=2-7, to=3-7]
		\arrow["{g\circ P_!(u,e)}", from=1-3, to=1-4]
		\arrow["h", from=1-6, to=1-7]
		\arrow[Rightarrow, dotted, no head, from=2-1, to=3-1]
		\arrow[""{name=2, anchor=center, inner sep=0}, Rightarrow, dotted, no head, from=2-2, to=3-2]
		\arrow[Rightarrow, dotted, no head, from=1-3, to=2-3]
		\arrow[Rightarrow, dotted, no head, from=1-4, to=2-4]
		\arrow[Rightarrow, dotted, no head, from=1-6, to=2-6]
		\arrow[Rightarrow, dotted, no head, from=1-7, to=2-7]
		\arrow["{\mathrm{fill}_{P_!(u,e)}(h)}", dashed, from=2-8, to=2-9]
		\arrow["v", from=3-8, to=3-9]
		\arrow[Rightarrow, dotted, no head, from=2-9, to=3-9]
		\arrow[""{name=3, anchor=center, inner sep=0}, Rightarrow, dotted, no head, from=2-8, to=3-8]
		\arrow["\Phi", shorten <=6pt, shorten >=6pt, maps to, from=2, to=0]
		\arrow["\Psi", shorten <=6pt, shorten >=6pt, maps to, from=1, to=3]
	\end{tikzcd}\]
	\caption{Transposing maps of the fibered adjunction $\tau_P \dashv_B \iota_P$}\label{fig:transp-fibadj}
\end{figure}

\subsubsection{Examples of cocartesian families}

\begin{prop}
	Let $g:C \to A \leftarrow B:f$ be a cospan of Rezk types. Then the codomain projection from the comma object
	\[\begin{tikzcd}
		{f \downarrow g} && {A^{\Delta^1}} \\
		{C \times B} && {A \times A} \\
		C
		\arrow[from=1-1, to=2-1]
		\arrow["{g \times f}"', from=2-1, to=2-3]
		\arrow[from=1-1, to=1-3]
		\arrow["{\langle \partial_1, \partial_0 \rangle}", from=1-3, to=2-3]
		\arrow["\lrcorner"{anchor=center, pos=0.125}, draw=none, from=1-1, to=2-3]
		\arrow[from=2-1, to=3-1]
		\arrow["{\partial_1}"', curve={height=22pt}, from=1-1, to=3-1]
	\end{tikzcd}\]
	is a cocartesian fibration.
\end{prop}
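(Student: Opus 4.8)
The codomain projection in question is the map $\pi \colon \comma{f}{g} \to C$ sending a triple $\pair{c}{\pair{b}{\alpha}}$, with $\alpha \colon \hom_A(f\,b, g\,c)$, to its codomain vertex $c$. My plan is to avoid checking the cocartesian lifting property entirely by hand, and instead to exhibit $\pi$ as a pullback of a simpler codomain fibration so that most of the work is absorbed by the closure of cocartesian fibrations under pullback. Unfolding the iterated-pullback description of the comma type (cf.~\Cref{def:comma-obj}), one obtains a pasting
\[ \comma{f}{g} \;\simeq\; \comma{f}{\id_A} \times_A C, \]
realizing $\pi$ as the pullback along $g \colon C \to A$ of the codomain projection $\mathrm{cod} \colon \comma{f}{\id_A} \to A$, $\pair{b}{\pair{a}{\alpha}} \mapsto a$. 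Since $A$ and $C$ are Rezk and comma types of Rezk types are again Rezk (being built by pullback and cotensor with $\Delta^1$ from Rezk types), and since cocartesian fibrations between Rezk types are $i_0$-LARI maps (\Cref{thm:cocart-fams-intl-char}) which are closed under pullback (\Cref{sec:orth-lari}), it suffices to treat the case $g \jdeq \id_A$ and $C \jdeq A$.

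First I would reduce $\comma{f}{\id_A} \to A$ as far as possible to the bare codomain fibration $\partial_1 \colon A^{\Delta^1} \to A$. The latter is cocartesian: the cocartesian lift of an arrow $\gamma \colon a \to a'$ at an object $\alpha \colon a'' \to a$ is given by post-composition $\gamma \circ \alpha$, the associated $2$-cell being the square whose domain edge is degenerate, and existence and contractibility of the required fillers is exactly the weak composition furnished by the Segal condition on $A$. The crucial feature is that these lifts leave the domain vertex $\partial_0 = a''$ strictly fixed. Writing $\comma{f}{\id_A} \simeq A^{\Delta^1} \times_A B$ as the pullback of $\partial_0 \colon A^{\Delta^1} \to A$ along $f$, this domain-degeneracy means precisely that the cocartesian cells lie in the image of the restriction, so that they also serve as cocartesian lifts for $\mathrm{cod} \colon \comma{f}{\id_A} \to A$, with transport $\pair{b}{\alpha} \mapsto \pair{b}{\gamma \circ \alpha}$ keeping the $B$-component constant. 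Equivalently, one may argue through the transport characterization \Cref{thm:cocartfams-via-transp}, taking as fibered left adjoint of $\iota$ the functor induced by $\pair{b}{\alpha} \mapsto \pair{b}{\gamma \circ \alpha}$, whose unit is visibly vertical.

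The main obstacle is making the interaction between these two clauses precise: the cocartesian lifts of the codomain fibration live on the $A^{\Delta^1}$-factor and must be shown to survive the pullback along $f$, which glues on the $B$-component along the \emph{domain} projection $\partial_0$ rather than along the fibration direction, so ordinary pullback-stability does not apply to this second step. I would handle this by unwinding the definition of cocartesian arrow through the de-/strictification of the relevant extension types (\Cref{ssec:fib-shapes}), reducing the universal property of the lift $\gamma \circ \alpha$ to a contractibility statement in the sense of \Cref{obs:orth-maps}; since the $B$-component contributes nothing beyond a constant term decorated by an identity arrow, this contractibility is inherited verbatim from the Segal composition in $A$. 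Finally, the isoinner hypothesis is automatic, either from the pullback presentation or because each fibre is a comma of Rezk types over a point, hence Rezk.
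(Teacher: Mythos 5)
Your argument is correct, and its computational core coincides with what the paper actually does: the printed proof merely cites \cite[Proposition~5.2.15]{BW21} and forwards to the sliced version, \Cref{prop:sliced-comma-is-cocart}, whose proof exhibits the cocartesian lift of $u$ at $\langle b,\alpha\rangle$ as the ``tautological extension'' that fixes the domain-side data by an identity and post-composes on the codomain side, with existence and uniqueness of fillers supplied by Segal composition --- exactly the lift you write down. What you do differently is organizational: you first pull back along $g$ to reduce to the reflexive comma $\comma{f}{A} \to A$ (using that $i_0$-LARI maps are closed under pullback), and only then perform a single hands-on verification, correctly flagging that the passage from $\partial_1\colon A^{\Delta^1}\to A$ to $\comma{f}{A}\to A$ is \emph{not} an instance of pullback-stability, since the $B$-component is glued along $\partial_0$ rather than along the fibration direction. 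This buys a cleaner separation of the formal closure steps from the one genuine computation; the only caveat is that in the paper's own ordering the cocartesianness of the codomain opfibration $\partial_1\colon A^{\Delta^1}\to A$ is stated as a \emph{corollary} of this proposition, so your independent direct argument for it (post-composition lifts, contractibility from the Segal condition) is what keeps your route non-circular.
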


\begin{proof}
	\Cf~\cite[Proposition~5.2.15]{BW21}, or later the version for fibered comma objects,~\Cref{prop:sliced-comma-is-cocart}.
\end{proof}

\begin{cor}[Codomain opfibration]\label{prop:cod-cocartfam}
	For any Rezk type $B$, the projection
	\[ \partial_1 : B^{\Delta^1} \to B, ~  \partial_1 :\jdeq \lambda f.f(1).\]
	is a cocartesian fibration, called the \emph{codomain opfibration}.
\end{cor}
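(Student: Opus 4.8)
The plan is to recognize the arrow type $B^{\Delta^1}$ as a special case of the comma construction appearing in the preceding Proposition, and then specialize. Concretely, I would apply that Proposition to the cospan $B \xrightarrow{\id_B} B \xleftarrow{\id_B} B$, all of whose entries are the Rezk type $B$ and both of whose legs are the identity, obtaining the comma type $\comma{\id_B}{\id_B}$.

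First I would analyze $\comma{\id_B}{\id_B}$. By \Cref{def:comma-obj}, it is the pullback of $\langle \partial_1, \partial_0 \rangle : B^{\Delta^1} \to B \times B$ along $\id_B \times \id_B : B \times B \to B \times B$. Since the bottom map is literally the identity, hence an equivalence, the induced top projection $\comma{\id_B}{\id_B} \to B^{\Delta^1}$ is an equivalence as well, because pullbacks of equivalences are equivalences. Tracing the pullback square, an element of $\comma{\id_B}{\id_B}$ is a triple $\langle c, b, u \rangle$ with $u : \hom_B(b,c)$, and the equivalence sends it to the underlying arrow $u$.

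Next I would identify the relevant projection. By the preceding Proposition, the cocartesian fibration in question is the codomain projection, i.e.\ the composite $\comma{\id_B}{\id_B} \to B \times B \to B$ onto the first factor, sending $\langle c, b, u \rangle \mapsto c$. Since $c = u(1) = \partial_1(u)$, this projection agrees, along the equivalence $\comma{\id_B}{\id_B} \simeq B^{\Delta^1}$, with $\partial_1 : B^{\Delta^1} \to B$, $\partial_1 = \lambda f. f(1)$. This yields a commuting triangle whose horizontal edge is an equivalence over $B$ and whose legs are the comma codomain projection and $\partial_1$, respectively.

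Finally I would conclude by invoking homotopy invariance of notions of fibrations: since the comma codomain projection is a cocartesian fibration by the preceding Proposition, and it is related to $\partial_1$ by an equivalence compatibly over $B$, the map $\partial_1 : B^{\Delta^1} \to B$ is itself a cocartesian fibration. There is essentially no deep obstacle here, as the corollary is a direct specialization; the only point demanding care is the bookkeeping of the boundary evaluations $\partial_0, \partial_1$ in the comma pullback, so that projection onto the first factor really corresponds to evaluation at the terminal vertex. Once this orientation is checked, no further argument is required.
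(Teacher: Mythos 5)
Your proposal is correct and is exactly the argument the paper intends: the corollary carries no separate proof in the text because it is the immediate specialization of the preceding proposition to the cospan $B \xrightarrow{\id_B} B \xleftarrow{\id_B} B$, identifying $\comma{\id_B}{\id_B}$ with $B^{\Delta^1}$ over $B$ and transferring cocartesianness along that equivalence. Your bookkeeping of $\partial_0,\partial_1$ in the comma pullback and the appeal to homotopy invariance of notions of fibrations are both the right closing steps.
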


Indeed the domain projection of a Rezk type is a cocartesian fibration given that the base has pushouts.
\begin{prop}[Domain opfibration]
	If $B$ is a Rezk type that has all pushouts, then the domain projection
	\[ \partial_0 : B^{\Delta^1} \to B, \quad \partial_0(u) :\jdeq u(0)\]
	is a cocartesian fibration.
\end{prop}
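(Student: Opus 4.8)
The plan is to verify directly that $\partial_0$ satisfies the defining condition $\isCocartFam$. Since the running assumption makes every type Rezk, the family $\partial_0 : B^{\Delta^1} \to B$ is automatically isoinner, so it remains only to produce cocartesian lifts, i.e.\ to establish $\hasCocartLifts$. First I would unwind the relevant fibers and dependent arrows. The fiber of $\partial_0$ over $a:B$ is the coslice $a/B$, whose objects are arrows $f:\hom_B(a,x)$ with source $a$; and a dependent arrow of $\partial_0$ over a base arrow $u:\hom_B(a,b)$ from $f:\hom_B(a,x)$ to $g:\hom_B(b,y)$ unwinds, via the currying equivalence $B^{\Delta^1 \times \Delta^1} \simeq (B^{\Delta^1})^{\Delta^1}$ and the Segal condition on $B$, to a commutative square in $B$ whose top edge is $u$, whose left edge is the source object $f$, whose right edge is the target object $g$, and whose bottom edge is some $w:\hom_B(x,y)$.

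Next I would construct the candidate lift using the hypothesis that $B$ has pushouts. Given $u:\hom_B(a,b)$ and $f:\hom_B(a,x)$ in the fiber over $a$, I form the pushout $y \defeq x \sqcup_a b$ of the span $x \xleftarrow{f} a \xrightarrow{u} b$, with coprojections $\iota_x:\hom_B(x,y)$ and $\iota_b:\hom_B(b,y)$ and the filled square witnessing $\iota_x \circ f = \iota_b \circ u$. I would take $g \defeq \iota_b$ as the target object in the fiber over $b$, and the pushout square itself as the dependent arrow $\Box$ over $u$ from $f$ to $g$: its top edge is $u$, its left edge is $f$, its right edge is $\iota_b$, and its bottom edge is $\iota_x$.

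It then remains to show $\Box$ is a cocartesian arrow. Using the Segal-type reformulation of $\isCocartArr$ recorded above, this amounts to checking that for every $b'':B$, every $v:\hom_B(b,b'')$, every object $k:\hom_B(b'',z)$ in the fiber over $b''$, and every dependent arrow $h$ over $v\circ u$ from $f$ to $k$, the type of factorizations of $h$ through $\Box$ is contractible. I would unfold such a factorization as a dependent arrow $S$ over $v$ from $\iota_b$ to $k$ whose horizontal paste $S \circ \Box$ equals $h$; the only genuine datum in $S$ is its bottom edge $\beta:\hom_B(y,z)$ together with the filler identifying the paste with $h$. Pasting of squares in the Segal type $B$ then exhibits $\beta$ as precisely a map out of the pushout restricting to the bottom edge of $h$ along $\iota_x$ and to $k\circ v$ along $\iota_b$, the required compatibility over $a$ being exactly the commutativity of $h$ over $v\circ u$. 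Hence the universal property of the pushout $x\sqcup_a b$ makes the space of such $\beta$ contractible, which is the cocartesianness of $\Box$.

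I expect the main obstacle to be this last translation: carefully matching the cocartesian universal property --- a contractibility statement about filler-squares subject to a compatibility equation --- with the mapping-out universal property of the pushout. Making this precise requires threading the $2$-cell fillers through the horizontal pasting and repeatedly applying the type-theoretic principle of choice for extension types (\Cref{thm:choice}) together with de-/strictification, so as to turn the strict extension-type description of squares into the weak cocone data that the pushout classifies. An alternative, more structural route would be to instead invoke the transport characterization (\Cref{thm:cocartfams-via-transp}) by exhibiting pushout-along-$u$ as a fibered left adjoint $\tau$ to $\iota_{\partial_0}$; this packages the same universal property, but at the cost of first establishing the functoriality of the pushout construction in the base arrow.
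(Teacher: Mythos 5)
Your proof is correct and is essentially the argument the paper relies on: the thesis gives no proof of its own here but defers to \cite[Proposition~3.2.10]{BW21}, where the cocartesian lift of $u:a\to b$ at $f:a\to x$ is likewise taken to be the pushout square for $x \leftarrow a \to b$ with target $\iota_b : b \to x\sqcup_a b$, and cocartesianness is extracted from the mapping-out universal property of the pushout. The one step you flag as delicate---converting the contractibility of filler squares (after de-/strictification and the choice principle for extension types) into the contractibility statement classified by the pushout---is indeed where all the work sits, and your outline of how to thread the $2$-cell data through the horizontal pasting is the right way to close it.
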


\begin{proof}
	\Cf~\Cite[Proposition~3.2.10]{BW21}.
\end{proof}

\subsubsection{Towards monadicity: the free cocartesian family}

As discussed in \cite{AFfib,GHT17,RVexp} cocartesian fibrations are monadic over general functors (over a fixed base). This means that for any functor $\pi:E \fibarr B$ there is a \emph{free cocartesian fibration} $L(\pi) : L(E) \fibarr B$. Due to the current lack of categorical universes in our type theory we postpone a discussion with emphasis on a global perspective similar to the cited works. However, we can still state and prove the universal property for this construction, so that later on, in the presence of the desired universes the actual monadicity statement will easily follow. Namely, we define a ``unit map'' $\iota_\pi \defeq \iota: \pi \to_B L(\pi)$, and prove that precomposition constitutes an equivalence of types\footnote{In general, $\CocartFun_B(\pi,\xi)$ is the $\Sigma$-type of fiberwise maps from $\pi$ to $\xi$ which preserve cocartesian lifts. Cf.~\Cref{sec:cocart-fun} for a more thorough treatment.}
\[ -\circ \iota_P : \CocartFun_{B}(L(\pi),\xi) \stackrel{\simeq}{\longrightarrow} \Fun_B(\pi,\xi),\]
for any \emph{cocartesian} fibration $\xi:F \to B$. 

\begin{defn}[Free cocartesian family]\label{def:free-cocart}
	Let $B$ be a Rezk type and $P: B \to \UU$ be an isoinner family. Then the family
	\[ L(\pi) \defeq \lambda b. \sum_{u:\comma{B}{b}} P(\partial_0u)  : B \to \UU \]
	is the \emph{free cocartesian family associated to $P$}, or the \emph{cocartesian replacement of $P$}.
\end{defn}

In more categorical terms, the free cocartesian family---in its incarnation as a fibration---is constructed by first pulling back the map $\pi:E \fibarr B$ along the domain projection, and then postcomposing with the \emph{codomain} projection:
\[\begin{tikzcd}
	{L(\pi)} && E \\
	{B^{\Delta^1}} && B \\
	B
	\arrow[from=1-1, to=1-3]
	\arrow[two heads, from=1-1, to=2-1]
	\arrow["{\partial_0}"', from=2-1, to=2-3]
	\arrow["\pi", two heads, from=1-3, to=2-3]
	\arrow["{\partial_1}", two heads, from=2-1, to=3-1]
	\arrow["\lrcorner"{anchor=center, pos=0.125}, draw=none, from=1-1, to=2-3]
	\arrow["{\partial_1'}"', curve={height=24pt}, two heads, from=1-1, to=3-1]
\end{tikzcd}\]
Morphisms in the cocartesian replacement can be depicted as follows:
\[\begin{tikzcd}
	e && {e'} \\
	a && {a'} \\
	b && {b'} \\
	b && {b'}
	\arrow["w", from=2-1, to=2-3]
	\arrow["u", from=3-1, to=3-3]
	\arrow["{v'}", from=2-3, to=3-3]
	\arrow["v"', from=2-1, to=3-1]
	\arrow["h", from=1-1, to=1-3]
	\arrow["u", from=4-1, to=4-3]
	\arrow[Rightarrow, dotted, no head, from=1-1, to=2-1]
	\arrow[Rightarrow, dotted, no head, from=1-3, to=2-3]
\end{tikzcd}\]

We will see that, indeed the free cocartesian family is a cocartesian family.
\begin{theorem}
	If $B$ is a Rezk type, and $P:B \to \UU$ is an isoinner family then the family $L(P) :B \to \UU$  is cocartesian.
\end{theorem}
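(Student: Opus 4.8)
The plan is to recognize the total-space projection of $L(P)$ as the \emph{codomain projection of a comma object}, and then simply invoke the already-established fact that such projections are cocartesian fibrations. Writing $\pi : E \to B$ for the projection of the isoinner family $P$ (so $E := \totalty{P}$), I claim that the fibration $\partial_1' : \comma{\pi}{B} \to B$ associated to the family $L(P)$ (equivalently, the fibration $L(\pi)$) coincides with the codomain projection of the comma object of the cospan $B \xrightarrow{\id_B} B \xleftarrow{\pi} E$.

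First I would unfold the two pullbacks and check that they agree on the nose. By \Cref{def:free-cocart} together with the accompanying ``categorical'' construction, $L(\pi)$ is obtained by pulling $\pi$ back along the domain projection $\partial_0 : B^{\Delta^1} \to B$ and then composing with $\partial_1$; hence its total type is $\{(\alpha, x) : \alpha \in B^{\Delta^1},\, x \in E,\, \partial_0 \alpha = \pi\, x\}$ with $L(\pi)(\alpha, x) = \partial_1 \alpha$. On the other hand, unfolding \Cref{def:comma-obj} for the cospan $(\id_B, \pi)$ gives $\comma{\pi}{B} = \{(c, x, \alpha) : \alpha : \pi\, x \to c\}$, projecting to $c$ via the codomain leg. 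Reparametrizing $c := \partial_1 \alpha$ identifies the two pullback squares verbatim, and taking fibers (straightening) recovers precisely $\lambda b.\, \sum_{u : \comma{B}{b}} P(\partial_0 u) = L(P)$, as required.

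Second, to apply the comma proposition (the codomain projection of a comma object over a cospan of Rezk types is cocartesian, of which \Cref{prop:cod-cocartfam} is the special case $f = g = \id_B$) I need the cospan $(\id_B, \pi)$ to consist of Rezk types. Here $B$ is Rezk by hypothesis, and $E = \totalty{P}$ is Rezk because it is the total space of an isoinner family over a Rezk type. With this in hand the proposition yields that $\partial_1' : \comma{\pi}{B} \to B$ is a cocartesian fibration, and by homotopy invariance of notions of fibration together with the typal Grothendieck correspondence, the family $L(P)$ is cocartesian.

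The main obstacle is essentially bookkeeping rather than anything deep: one must pin down the variance of the comma object so that $L(\pi)$ is the \emph{codomain} projection---the cocartesian one, whose lifts are given by post-composition and are degenerate in the domain direction---rather than the domain projection, which is only cocartesian when $B$ admits the relevant pushouts. The single genuine input I would cite is that the total space of an isoinner family over a Rezk base is again Rezk, so that the comma proposition is applicable. If one instead wanted a self-contained argument, one could verify the cocartesian lifting property by hand: at $(u : a \to b,\, e : P\,a)$ the candidate lift of $k : b \to b'$ is $(k u,\, e)$, with domain-component held fixed at $a$ and dependent component the reflexivity on $e$, and one checks the universal property via the simplicial/extension-type characterization of cocartesian arrows. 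The work there is exactly the unwinding of the relevant contractibility statement---which is precisely what the comma proposition already packages for us.
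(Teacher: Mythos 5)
Your proof is correct, but it takes a genuinely different route from the thesis. The thesis argues directly: it first notes that $L(P)$ is isoinner by the closure properties, then exhibits the candidate cocartesian lift of $u:b\to b'$ at $\pair{v:a\to b}{e:P\,a}$ explicitly as $\angled{\id_a,u,\id_e}:\pair{v}{e}\to\pair{vu}{e}$ and verifies the universal property by constructing the filler by hand (exactly the ``self-contained argument'' you sketch at the end, with the same lift). You instead observe that $L(\pi)$ \emph{is} the codomain projection $\partial_1':\comma{\pi}{B}\fibarr B$ of the comma object of the cospan $(\id_B,\pi)$ --- an identification the thesis itself makes in \Cref{def:free-cocart} and the accompanying footnote --- and then invoke the proposition that codomain projections of comma objects over cospans of Rezk types are cocartesian. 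This is legitimate and non-circular: that proposition precedes the theorem and is proved independently (by reference to the earlier joint work, or to \Cref{prop:sliced-comma-is-cocart}, whose proof is a direct verification). Your only extra input, that $\totalty{P}$ is Rezk when $P$ is isoinner over a Rezk base, is a standing fact in this framework. What each approach buys: yours is shorter and more modular, delegating all the contractibility-checking to a single general lemma; the thesis's hands-on proof is self-contained within the chapter and, more importantly, produces the explicit description of the cocartesian lifts in $L(P)$, which is used immediately afterwards in the proof of the universal property of cocartesian replacement (\Cref{prop:univ-prop-cocart-repl}). If you adopt your route, you would still want to record that explicit lift separately for later use.
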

\begin{proof}
	By the closure properties of isoinner families, since $P$ is an isoinner family, so is $L(P)$.
	
	Let $u:\hom_B(b,b')$ be an arrow in $B$, and $\pair{v}{e}:LP(b)$ a point over $b$, where $v:\hom_B(a,b)$ and $e:P\,a$.
	
	We define the candidate lift to be $\pair{\id_a, u}{\id_e}:\dhom_{LP,u}(\pair{v}{e},\pair{vu}{e})$, \ie:
	\[
	\begin{tikzcd}
		e \ar[r, equal] & e \\
		a \ar[r, equal]
		\ar[d, "v" swap] & a \ar[d, "vu"] \\
		b \ar[r, "u" swap] & b'
	\end{tikzcd}
	\]
	Cocartesianness is readily verified.\footnote{Compare cf.~\Cref{prop:cod-cocartfam}.} Namely, for $u':\hom_B(b',b'')$, let $t:\hom_B(a',b'')$, $e':P\,a'$, together with $w:\hom_B(a,a')$ and $f:\dhom_{P,u'u}(e,e')$ s.t.~$t \circ w = (u'u) \circ v$. We find the ensuing filler over $v$ as indicated:
	\[
	\begin{tikzcd}
		L(\pi) \ar[dddddd, two heads] &
		e \ar[r, equal]
		\ar[rr, bend left = 40, "f"] & e \ar[r, dashed, "f" swap] & e' \\
		& & & \\
		& a \ar[r, equal]
		\ar[rr, bend left = 40, "w"] \ar[d, "v" swap] & a \ar[d, "vu"] \ar[r, dashed, "w"] & a' \ar[d, "t"]\\
		& b \ar[r, "u" swap]  \ar[rr, bend right = 40, "u'u" swap]  & b' \ar[r, dashed, "u'" swap] & b''
		& & & \\
		& & & \\
		& & & \\
		B & b \ar[r, "u" swap] \ar[rr, bend left = 40, "u'u"] & b' \ar[r, "u'" swap] & b'' \\
	\end{tikzcd}
	\]
	By construction the dashed arrows are unique up to homotopy.
\end{proof}

We define the ``unit map''
\[\begin{tikzcd}
	E && {\pi \downarrow B} \\
	& B
	\arrow["\pi"', two heads, from=1-1, to=2-2]
	\arrow["\iota", from=1-1, to=1-3]
	\arrow["{\partial_1'}", two heads, from=1-3, to=2-2]
\end{tikzcd}\]
as the ``inclusion''
\[ \iota\defeq \lambda b,e.\pair{\id_b}{e}:\prod_{b:B} P\,b \to (LP)\,b.\]

\begin{prop}[Universal property of cocartesian replacement]\label{prop:univ-prop-cocart-repl}
	For a Rezk type $B$, consider an isoinner fibration $\pi:E \to B$, and a cocartesian fibration $\xi:F \to B$. Then the map
	\[ \CocartFun_{B}(L(\pi),\xi) \stackrel{\Psi \defeq -\circ \iota_P}{\longrightarrow} \Fun_B(\pi,\xi)\]
	is an equivalence of types.
\end{prop}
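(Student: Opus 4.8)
The plan is to exhibit an explicit quasi-inverse to $\Psi$ built from the directed transport of the cocartesian fibration $\xi$. Recall from the discussion preceding \Cref{thm:cocartfams-via-transp} that $L(\pi)$ is exactly the comma fibration $\partial_1' : \comma{\pi}{B} \fibarr B$, that $\iota_P$ is the fibered functor $e \mapsto \pair{\id}{e}$, and that $\xi$ being cocartesian equips $F$ with directed transport $\tau_\xi : \comma{\xi}{B} \to F$ sending $\pair{u}{y}$ to $u_! y$. Given a fibered functor $\phi : \pi \to_B \xi$, I would define $L(\phi) : \comma{\pi}{B} \to F$ by applying $\phi$ fiberwise and then transporting, i.e.\ on points
\[ L(\phi)\pair{u}{e} \defeq u_!(\phi\, e), \qquad u : \hom_B(a,b),\; e : P\,a. \]
This is visibly a fibered functor over $B$, since both $\partial_1'\pair{u}{e}$ and $\xi\big(u_!(\phi\,e)\big)$ compute to $\partial_1 u \jdeq b$.

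First I would verify that $L(\phi)$ is a \emph{cocartesian} functor, i.e.\ that it preserves cocartesian arrows. Using the explicit cocartesian lifts exhibited in the proof that $L(P)$ is cocartesian—whose fiber component is an identity—the image under $L(\phi)$ of the cocartesian lift of $u : \hom_B(b,b')$ at $\pair{v}{e}$ has endpoints $v_!(\phi\,e)$ and $(v'\!\circ\!v)_!(\phi\,e)$ for the appropriate composite $v'\circ v$ in $B$; by functoriality of transport in $\xi$ (\Cref{prop:cocart-functoriality}) the latter is identified with $u_!\big(v_!(\phi\,e)\big)$, and tracing the definition shows the image arrow is precisely the $\xi$-cocartesian lift of $u$ at $v_!(\phi\,e)$. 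Alternatively, one can feed $L(\phi)$ into the Chevalley criterion for cocartesian functors, \Cref{thm:char-cocart-fun}, and check invertibility of the associated mate.

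It then remains to check that $L$ is a two-sided inverse of $\Psi$ on objects, after which function extensionality, together with the fact that being a cocartesian functor is a proposition, promotes the pointwise identities to homotopies of fibered functors. For $\Psi \circ L$: given $\phi$, we have $\Psi(L(\phi)) = L(\phi) \circ \iota_P$, and on $e : P\,a$ this is $L(\phi)\pair{\id_a}{e} = (\id_a)_!(\phi\,e) = \phi\,e$ by the identity law of transport (\Cref{prop:cocart-functoriality}). For $L \circ \Psi$: given a cocartesian functor $\psi : L(\pi) \to_B \xi$ with $\phi \defeq \psi \circ \iota_P$, the key observation is that every point $\pair{u}{e} : L(E)$ is the cocartesian transport $u_!\big(\iota_P\,e\big)$ of a point in the image of $\iota_P$; since $\psi$ preserves cocartesian arrows, \Cref{prop:cocart-lifts-unique-in-isoinner-fams} forces $\psi$ to commute with directed transport, whence $\psi\pair{u}{e} = u_!\big(\psi(\iota_P\,e)\big) = u_!(\phi\,e) = L(\phi)\pair{u}{e}$.

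The main obstacle I anticipate is the careful verification that $L(\phi)$ preserves cocartesian arrows, and more generally the bookkeeping needed to upgrade the pointwise identities to coherent homotopies of \emph{fibered} functors rather than merely of underlying maps. The pointwise computations are routine given \Cref{prop:cocart-functoriality,prop:cocart-lifts-unique-in-isoinner-fams}, but the promotion step relies on the fact that, over a Rezk base, a fibered functor is determined up to homotopy by its action on objects, so that the homotopies constructed above automatically respect the fibered and cocartesian structure.
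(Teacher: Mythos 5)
Your proposal is correct and follows essentially the same route as the paper: the quasi-inverse $\phi \mapsto \big(\pair{u}{e} \mapsto u_!(\phi\,e)\big)$ is exactly the paper's $\varphi'_b(v,e) \defeq v_!^Q(\varphi_a\,e)$, cocartesianness of this functor is checked via functoriality of transport and right cancellation of cocartesian arrows, and both roundtrips are verified by the same computations (identity transport for one direction, commutation of the cocartesian functor $\psi$ with transport for the other).
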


\begin{proof}
	We aim to give a quasi-inverse of the precomposition map. Let
	\[ \Psi \defeq \lambda \varphi.\varphi': \Fun_B(\pi, \xi) \to \CocartFun_{B}(L(\pi),\xi)\]
	where
	\[ \varphi'_b(v,e) \defeq v_!^Q(\varphi_a \,e),\]
	for $v:a \to b$, $e:P\,a$.
	First, we are to show that this operation is really valued in cocartesian functors. 
	For this, we have to show that, for any $v:a \to b$, $e:P\,a$, the arrow
	\[ \lambda t.(u(t) \circ v)_!^Q(\varphi_b \,e) : v_!Q(\varphi_b\,e) \longrightarrow^P_u (uv)_!^Q(\varphi_b\,a)\]
	is $Q$-cocartesian. To that end, we observe the following. Let $a:B$, $d:Q\,a$ be fixed. Consider the maps $\cst(d), \tau_Q(-,d): \comma{a}{B} \to E$ defined by
	\[ \cst(d)(v:a \to b) \defeq  d, \quad \tau_Q(-,d)(v:a \to b) \defeq v^Q_!(d):d . \]
	We define the natural transformation
	\[ Q_!(-,d):\nat{\comma{\pi}{B}}{E}(\cst(d),\tau_Q(-,d)), \quad Q_!(-,d)(v:a \to b) \defeq Q_!(v,d) : d \cocartarr_v v^Q_!(d).\]
	Morphisms in $\comma{a}{B}$ are given by commutative triangles $u:v \to w$, so for fixed $v$ the type of morphisms in $\comma{a}{B}$ starting at $v$ is equivalent to the type $\comma{(\partial_1 \, v)}{B}$. Hence, any morphism in $\comma{a}{B}$ can be taken to be of the form $u:v \to uv$, for $v:a \to b$, $u:b \to b'$. The naturality squares of $Q_!(-,d)$ thus are of the following form:
	\[\begin{tikzcd}
		d && d \\
		{v_!^Q(d)} && {(uv)_!^Q(d)} \\
		a && a \\
		b && {b'}
		\arrow[Rightarrow, no head, from=3-1, to=3-3]
		\arrow["u"', from=3-1, to=4-1]
		\arrow["v"', from=4-1, to=4-3]
		\arrow["vu", from=3-3, to=4-3]
		\arrow[from=1-1, to=2-1, cocart]
		\arrow["{(u:v \to uv)^Q_!(d)}"', from=2-1, to=2-3]
		\arrow[Rightarrow, no head, from=1-1, to=1-3]
		\arrow[from=1-3, to=2-3, cocart]
	\end{tikzcd}\]
	Note that the lower vertical arrow is given by
	\[ \lambda t.(u(t) \circ v)^Q_!(d) = (u:v \to uv)^Q_!(d).\]
	By right cancelation, $(u:v \to uv)^Q_!(d)$ is cocartesian, and hence we have an identity of arrows:
	\[\begin{tikzcd}
		&& {(uv)_!^Q(d)} \\
		{v^Q_!(d)} && {u^Q_!(v_!^Q(d))}
		\arrow["{Q_!(u,v_!^Q(d))}", from=2-1, to=2-3, cocart, swap]
		\arrow["{(u:v \to uv)^Q_!(d)}", from=2-1, to=1-3, cocart]
		\arrow[Rightarrow, no head, from=1-3, to=2-3]
	\end{tikzcd}\]
	In the cocartesian replacement $\partial_1': \comma{\pi}{B} \fibarr B$, the cocartesian lift of $u:b \to b'$ w.r.t.~$\angled{v:a \to b,e:P\,a}$ is given by $\angled{\id_a,u,\id_e}$. Now, by the previous discussion we have
	\begin{align*}
		\varphi'_u(\id_a,u,\id_e)  & = \lambda t.(u(t) \circ v)_!^Q(\varphi_a\,e) \\
		& =  (u:v \to uv)^Q_!(\varphi_a\,e) \\
		& = Q_!(u,v^Q_!(\varphi_a\,e)) \\
		& = Q_!(u, \varphi'_a(v,e))
	\end{align*}
	which shows that $\varphi': L(\pi) \to_B \xi$ is a cocartesian functor, as desired.
	
	We now turn to showing that precomposing with $\iota$ gives an equivalence
	\[ \Fun_B(\pi, \xi) \simeq \CocartFun_{B}(L(\pi),\xi).\]
	We define
	\[ \Phi \defeq \lambda \psi. \iota^*\psi \defeq \lambda \psi.\psi \circ \iota : \Fun_B(\pi, \xi) \to \CocartFun_{B}(L(\pi),\xi),\]
	and recall that in the converse direction
	\[ \Psi \defeq \lambda \varphi. \varphi' : \CocartFun_{B}(L(\pi),\xi) \to  \Fun_B(\pi, \xi)\]
	with $\varphi'_b(v,e) \defeq v_!^Q(\varphi_a\,e)$.
	Let $\psi: L\,E \to_B F$ be a cocartesian functor.
	We compute
	\[ (\iota^*\psi)_b'(v,e) = v_!^Q(\iota^*\psi_a(e)) = v_!^Q(\psi_a(\id_a,e)).\]
	Since $\psi$ is cocartesian, we have $v_!^Q(\psi_a(\id_a,e)) = \psi_b(v_!^{LP}(\id_a,e))$. Now, the $LP$-cocartesian lift of $v:a \to b$ w.r.t~$\pair{\id_a}{e}$ is given by $\angled{\id_a, v,\id_e}: \pair{\id_a}{e} \to \pair{v}{e}$:
	\[\begin{tikzcd}
		e && e \\
		a && a \\
		a && b \\
		a && b
		\arrow[Rightarrow, no head, from=1-1, to=1-3]
		\arrow[Rightarrow, no head, from=2-1, to=3-1]
		\arrow["v"', from=3-1, to=3-3]
		\arrow[Rightarrow, no head, from=2-1, to=2-3]
		\arrow["v", from=2-3, to=3-3]
		\arrow["v", from=4-1, to=4-3]
	\end{tikzcd}\]
	As a dependent arrow in $LP$, the codomain of this morphism is the pair $\pair{v}{e}$. In sum, this means
	\[ v_!^Q(\psi_a(\id_a,e)) = \psi_b(v_!^{LP}(\id_a,e)) = \psi_b(v,e) = (\iota^*\psi)_b'(v,e),\]
	\ie~$\Psi(\Phi(\psi)) = \psi$.
	On the other hand, for an arbitrary fiberwise map $\varphi:E \to_B F$, we find that
	\[ (\iota^* \varphi)'(b,e)= \varphi_b'(\id_b,e) = (\id_b)_!^Q(\varphi_b\,e) = \varphi_b(e)\]
	since cocartesian lifts of identities are themselves identities. This gives an identification $\Phi(\Psi(\varphi))= \varphi$.
\end{proof}

			\section{Cocartesian functors}\label{sec:cocart-fun}
			
			We turn to the study of the right notion of morphism between cocartesian fibrations: The cocartesian functors. These are fibered functors preserving the cocartesian arrows. Again, these can be characterized in terms of Chevalley criteria, and by~\cite{BW21} several closure properties hold, capturing internally the closure properties of the $\infty$-cosmos of cocartesian fibrations of $\inftyone$-categories. 

\subsection{Definition and properties}

Let us first review fibered maps between type families.
\begin{defn}[Fiberwise maps]\label{def:fib-maps}
	Let $P: B \to \UU$ and $Q: C \to \UU$ be families. A \emph{fiberwise map $\Phi$ from $P$ to $Q$} is a pair of functions
	\begin{itemize}
		\item $j: B \to C$,
		\item $\varphi : \prod_{b:B} (P\,b \to Q\,j(b))$.
	\end{itemize}
	We call $F$ an \emph{equivalence} if $j$ and $\varphi$ are equivalences.
	
	We write $\FibMap_{B,C}(P,Q)$ for the ensuing type of fiberwise maps.
\end{defn}
Note that (by fibrant replacement) the type of commutative squares is equivalent to the type of maps between families.

Observe that given a map between families $P$ and $Q$ as above we get a strictly commutative square:
\[
\begin{xy}
	\xymatrix{
		\totalty{P} \ar[r]^{\pair{j}{\varphi}} \ar[d]_{\pi_P}\ar[r] & \totalty{Q}  \ar[d]^{\pi_Q}\\
		B \ar[r]_{j} & C
	}
\end{xy}
\]

In the above setting, for any $u:\hom_B(a,b)$, $a,b$, the fiberwise map $\Phi$ acts on arrows over $u$ in the following way. For $f:\dhom_{P,u}(d,e)$, $d:Pa$, $e:Pb$, we define
\[ \varphi_u(f):\jdeq \lambda t.\varphi_{u(t)}(f(t)) : \dhom_{Q,ju}(\varphi_a(d), \varphi_b(e)).\]
One can think of the following picture:
\[
\begin{tikzcd}
	(d \stackrel{f}{\longrightarrow} e) \ar[d, mapsto] \ar[rr, mapsto] && (\varphi_a d \stackrel{\varphi_u f}{\longrightarrow} \varphi_b e)  \ar[d, mapsto] \\
	(a \stackrel{u}{\longrightarrow} b) \ar[rr, mapsto] && (ja \stackrel{ju}{\longrightarrow} jb)
\end{tikzcd}
\]

We now turn to the desired cocartesian functors

\begin{defn}[Cocartesian functors]\label{def:cocart-fun}
	If $P$ and $Q$ are cocartesian families, and furthermore the map
	\[ \Phi: \totalty{P} \to \totalty{Q}, \quad \Phi \, b \, e :\jdeq \pair{j(b)}{\varphi_j(e) } \]
	preserves cocartesian arrows, then we call $\Phi$ a \emph{cocartesian map}:\footnote{This is a proposition because being a cocartesian arrow is a proposition.}
	\[ \isCocartFun_{P,Q}(\Phi) :\jdeq \prod_{f:\Delta^1\to \widetilde{P}} \isCocartArr_P(f) \to \isCocartArr_Q(\varphi f).\]
	In particular, if $B$ and $C$ in addition are Segal (or Rezk) types we speak of a \emph{cocartesian functor}.
	
	We define
	\[ \CocartFun_{B,C}(P,Q) :\jdeq \sum_{F:\FibMap_{B,C}(P,Q)} \isCocartFun_{P,Q}(F).\]
	
	Given families $P: B \to \UU$ to $Q: B \to \UU$, a \emph{fibered functor} from $P \to Q$ is a section $\varphi : \prod_{x:B} Px \to Qx$. It is cocartesian if
	\[ \isCocartFun_{P,Q}(\pair{\id_B}{\varphi}) \equiv \prod_{\substack{u:\Delta^1 \to B \\ f: \Delta^1 \to u^*P}} \isCocartArr_P(f) \to \isCocartArr_Q(\varphi f).\]
	
	We define
	\[ \CocartFun_{B}(P,Q) :\jdeq \sum_{\varphi:\prod_B P \to Q} \isCocartFun_{P,Q}(\pair{\id_B}{\varphi}).\]
\end{defn}

This is, again, an instance of~\Cref{def:lari-fun}, for the map $i_0:\unit \hookrightarrow \Delta^1$.

\begin{prop}[Naturality of cocartesian liftings]\label{prop:nat-cocartlift-arr}
	Let $B$ be a Rezk type, $P:B \to \UU$, $Q:C \to \UU$ cocartesian families. Then a fibered functor $\Phi \jdeq \pair{\varphi}{j}$ from $P$ to $Q$ is a cocartesian functor if and only if $\Phi$ commutes with cocartesian lifts, \ie~for any $u:\hom_B(a,b)$ there is an identification of arrows
	\[ \varphi\big(P_!(u,d)\big) =_{\Delta^1 \to (ju)^*Q} Q_!(ju,\varphi_ad) \]
	and hence of endpoints
	\[ \varphi_b(u_!^Pd) =_{Q(jb)} (ju)_!^Q(\varphi_ad). \]
	In particular there is a homotopy commutative square:
	\[
	\begin{tikzcd}
		Pa \ar[rr, "\varphi_a"] \ar[d, "\coliftptfammap{P}{u}" swap] & & Qa \ar[d, "\coliftptfammap{Q}{(ju)}"] \\
		Pb \ar[rr, "\varphi_b" swap] && Qb
	\end{tikzcd}
	\]
\end{prop}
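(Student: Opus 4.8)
The plan is to prove both implications from the essential uniqueness of cocartesian lifts in an isoinner family over a Rezk type, i.e.\ \Cref{prop:cocart-lifts-unique-in-isoinner-fams}. Recall that $\isCocartFun_{P,Q}(\Phi)$ asks precisely that $\varphi$ send $P$-cocartesian arrows to $Q$-cocartesian arrows, and that $\isCocartArr_Q$ is a proposition; these two facts do most of the work.

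For the forward direction I would assume $\Phi$ is a cocartesian functor, fix $u:\hom_B(a,b)$ and $d:P\,a$, and consider the chosen lift $P_!(u,d):\dhom_{P,u}(d,u_!^P d)$. It is $P$-cocartesian by construction, so $\isCocartFun_{P,Q}(\Phi)$ makes its image $\varphi\bigl(P_!(u,d)\bigr)$ a $Q$-cocartesian arrow lying strictly over $ju$ with source vertex $\varphi_a d$. But $Q_!(ju,\varphi_a d)$ is, by definition, the cocartesian lift of $ju$ with source $\varphi_a d$. Both are $Q$-cocartesian arrows over $ju$ starting at $\varphi_a d$, so \Cref{prop:cocart-lifts-unique-in-isoinner-fams} identifies them, which is exactly the asserted identity in $\Delta^1 \to (ju)^*Q$. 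Evaluating at the terminal vertex via $\partial_1$ yields the identification $\varphi_b(u_!^P d) = (ju)_!^Q(\varphi_a d)$ of target vertices; since $\coliftptfammap{P}{u}\,d$ is by definition $u_!^P d$, and likewise $\coliftptfammap{Q}{(ju)}\,(\varphi_a d)$ is $(ju)_!^Q(\varphi_a d)$, this is precisely the commutativity of the displayed square.

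For the converse I would assume the arrow identity $\varphi\bigl(P_!(u,d)\bigr)=Q_!(ju,\varphi_a d)$ holds for all $u$ and $d$, and show that $\varphi$ preserves cocartesian arrows. Let $f:\Delta^1 \to \widetilde{P}$ be an arbitrary $P$-cocartesian arrow, and set $u\defeq \pi_P f$ and $d\defeq f(0)$. Since $P$ is a cocartesian family over a Rezk type, the type $\CocartLift_P(u,d)$ is contractible by \Cref{prop:cocart-lifts-unique-in-isoinner-fams}, so the datum of $f$ together with its cocartesianness witness is identified with the chosen lift $P_!(u,d)$. Transporting along this path and using functoriality of the action of $\varphi$ on arrows gives $\varphi f = \varphi\bigl(P_!(u,d)\bigr)$, which by hypothesis equals $Q_!(ju,\varphi_a d)$, a $Q$-cocartesian arrow. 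As $\isCocartArr_Q$ is a proposition, cocartesianness transports along this identification, so $\varphi f$ is $Q$-cocartesian; hence $\isCocartFun_{P,Q}(\Phi)$ holds.

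The one point that needs care — and the only real obstacle — is the converse: the hypothesis is stated as an identity of the \emph{chosen} lifts, so to conclude cocartesianness for an \emph{arbitrary} cocartesian $f$ one must first contract $f$ to $P_!(u,d)$. This is exactly where contractibility of $\CocartLift_P(u,d)$ is used, and therefore where the standing assumption that $P$ is a cocartesian family (not merely isoinner) enters. Everything else is formal: the forward direction is uniqueness of lifts followed by endpoint evaluation, and the final homotopy-commutative square is just the restatement of the target-vertex identity.
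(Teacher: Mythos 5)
Your proof is correct and is essentially the argument the paper uses: the forward direction is uniqueness of cocartesian lifts (\Cref{prop:cocart-lifts-unique-in-isoinner-fams}) followed by evaluation at the terminal vertex, and the converse identifies an arbitrary cocartesian arrow with the chosen lift via contractibility of $\CocartLift_P(u,d)$ and transports the propositional witness $\isCocartArr_Q$. The only cosmetic difference is that the paper delegates the biconditional to the general $j$-LARI naturality lemma (\Cref{prop:nat-larifun}), whose proof is exactly your uniqueness-plus-``every cocartesian arrow occurs as a lift'' argument specialized to $i_0$, and then spells out the forward direction for the naturality square just as you do.
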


\begin{proof}
	The first claim follows formally from~\cref{prop:nat-larifun}.
	Specifically, for the naturality square, we find the following. For $u:\hom_B(a,b)$ and $d:P\,a$, consider the $P$-cocartesian lift $\coliftarr{P}{u}{d}:\dhom_{P,u}(d,\coliftptfam{P}{u}{d})$. Since $\varphi$ is a cocartesian functor the arrow $\varphi_u(\coliftarr{P}{u}{d}):\dhom_{Q,ju}(\varphi_ad,\varphi_b(\coliftptfam{P}{u}{d}))$ is $Q$-cocartesian. On the other hand, $\coliftarr{Q}{ju}{\varphi_ad}:\dhom_{Q,ju}(\varphi_ad, \coliftptfam{Q}{ju}{\varphi_ad})$ is as well a $Q$-cocartesian lift of $ju$ with domain $\varphi_ad$, thus coincides with $\varphi_u(\coliftarr{P}{u}{d})$ up to a path, in particular this gives an identification $\varphi_b(\coliftptfam{P}{u}{d}) = \coliftptfam{Q}{ju}{\varphi_ad}$.
\end{proof}

\begin{cor}[Naturality over a common base (discrete case: \cite{RS17}, Prop.~8.17)]\label{prop:nat-cocartlift-pt}
	Consider a Rezk type $B$, cocartesian families $P,Q: B \to \UU$, and a cocartesian functor $\varphi:\CocartFun_{B}(P,Q)$. Then $\varphi$ commutes with the actions of arrows, \ie~for any $a,b:B$, $u:\hom_B(a,b)$, $d:P(a)$, we get an
	identification
	\[ \varphi_b(\coliftptfam{P}{u}{d}) =_{Qb} \coliftptfam{Q}{u}{\varphi_a(d)},\]
	thus a homotopy commutative square:
	\[
	\begin{tikzcd}
		Pa \ar[rr, "\varphi_a"] \ar[d, "\coliftptfammap{P}{u}" swap] & & Qa \ar[d, "\coliftptfammap{Q}{u}"] \\
		Pb \ar[rr, "\varphi_b" swap] && Qb
	\end{tikzcd}
	\]
\end{cor}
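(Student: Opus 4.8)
The plan is to obtain this statement as the special case of \Cref{prop:nat-cocartlift-arr} in which the two base types coincide and the base functor is the identity. Concretely, a cocartesian functor $\varphi : \CocartFun_B(P,Q)$ over the common base $B$ is precisely a fibered functor $\pair{\id_B}{\varphi}$ in the sense of \Cref{def:cocart-fun}, so \Cref{prop:nat-cocartlift-arr} applies verbatim with $C \defeq B$ and $j \defeq \id_B$.

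In that proposition the conclusion, for any $u : \hom_B(a,b)$ and $d : P\,a$, is the identification of endpoints $\varphi_b(u_!^P d) =_{Q(jb)} (ju)_!^Q(\varphi_a d)$. Since here $j \jdeq \id_B$, we have $ju \jdeq u$ and $jb \jdeq b$ judgmentally, so this identification reads
\[ \varphi_b(\coliftptfam{P}{u}{d}) =_{Qb} \coliftptfam{Q}{u}{\varphi_a(d)}, \]
which is exactly the claimed equality. The asserted homotopy commutative square is then just the diagrammatic restatement of this family of identifications: its horizontal edges are $\varphi_a$ and $\varphi_b$, its vertical edges are the transport maps $\coliftptfammap{P}{u}$ and $\coliftptfammap{Q}{u}$, and the identification above is precisely the witness that the square commutes up to a path, evaluated at each $d : P\,a$.

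I expect no real obstacle here, as the mathematical content is entirely carried by the naturality of cocartesian liftings established in \Cref{prop:nat-cocartlift-arr}. The only bookkeeping to check is that the endpoint notation $\coliftptfam{P}{u}{d}$ agrees with $u_!^P d \jdeq \partial_1 \coliftarr{P}{u}{d}$ used in the cited proposition, and that passing to the fibered-over-$B$ setting (via $j \jdeq \id_B$) leaves the cocartesian-functor hypothesis intact, which is immediate by unfolding \Cref{def:cocart-fun}.
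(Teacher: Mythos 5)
Your proposal is correct and matches the paper's intent exactly: the corollary carries no proof in the text because it is precisely the specialization of \Cref{prop:nat-cocartlift-arr} to a common base with $j \jdeq \id_B$, which is the reduction you spell out.
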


\subsection{Characterization of cocartesian functors}

\begin{theorem}[{\protect\cite[Theorem~5.3.4]{RV21}}]\label{thm:char-cocart-fun}
	Let $A$ and $B$ be Rezk types, and consider cocartesian families $P:B \to \UU$ and $Q:A \to \UU$ with total types $E\defeq \totalty{P}$ and $F\defeq \totalty{F}$, resp.
	
	For a fibered functor $\Phi\defeq \pair{j}{\varphi}$ giving rise to a square
	\[
	\begin{tikzcd}
		F \ar[r, "\varphi"] \ar[d, "\xi" swap] & E \ar[d, "\pi"] \\
		A \ar[r, "j" swap] & B
	\end{tikzcd}
	\]
	the following are equivalent:
	\begin{enumerate}
		\item The fiberwise map $\Phi$ is a cocartesian functor.
		\item The mate of the induced canonical fibered natural isomorphism is invertible, too:
		\[\begin{tikzcd}
			{F} && {E} & {} & {F} && {E} \\
			{\xi \downarrow A} && {\pi \downarrow B} & {} & {\xi \downarrow A} && {\pi \downarrow B}
			\arrow["{i}"', from=1-1, to=2-1]
			\arrow["{\varphi}", from=1-1, to=1-3]
			\arrow["{i'}", from=1-3, to=2-3]
			\arrow[Rightarrow, "{=}", from=2-1, to=1-3, shorten <=7pt, shorten >=7pt]
			\arrow["{\rightsquigarrow}" description, from=1-4, to=2-4, phantom, no head]
			\arrow["{\kappa}", from=2-5, to=1-5]
			\arrow["{\varphi}", from=1-5, to=1-7]
			\arrow["{\varphi \downarrow j}"', from=2-5, to=2-7]
			\arrow["{\kappa'}"', from=2-7, to=1-7]
			\arrow[Rightarrow, "{=}"', from=2-7, to=1-5, shorten <=7pt, shorten >=7pt]
			\arrow["{\varphi \downarrow j}"', from=2-1, to=2-3]
		\end{tikzcd}\]
		
		\item The mate of the induced canonical natural isomorphism is invertible, too:
		\[\begin{tikzcd}
			{F^{\Delta^1}} && {E^{\Delta^1}} & {} & {F^{\Delta^1}} && {E^{\Delta^1}} \\
			{\xi \downarrow A} && {\pi \downarrow B} & {} & {\xi \downarrow A} && {\pi \downarrow B}
			\arrow["{r}"', from=1-1, to=2-1]
			\arrow["{\varphi \downarrow j}"', from=2-1, to=2-3]
			\arrow["{\varphi^{\Delta^1}}", from=1-1, to=1-3]
			\arrow["{r'}", from=1-3, to=2-3]
			\arrow["{\rightsquigarrow}" description, from=1-4, to=2-4, phantom, no head]
			\arrow[Rightarrow, "{=}", from=2-1, to=1-3, shorten <=7pt, shorten >=7pt]
			\arrow["{\ell}", from=2-5, to=1-5]
			\arrow["{\varphi \downarrow j}"', from=2-5, to=2-7]
			\arrow["{\varphi^{\Delta^1}}", from=1-5, to=1-7]
			\arrow["{\ell'}"', from=2-7, to=1-7]
			\arrow[Rightarrow, "{=}"', from=2-7, to=1-5, shorten <=7pt, shorten >=7pt]
		\end{tikzcd}\]
	\end{enumerate}
	
\end{theorem}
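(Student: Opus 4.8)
The plan is to derive the three equivalences from the two Chevalley characterizations of cocartesian families already available, via the mate calculus for LARI adjunctions between Rezk types (\cite[Appendix~A, B]{BW21}). Since $P$ and $Q$ are cocartesian, \Cref{thm:cocart-fams-intl-char} supplies LARIs $\chi$ and $\chi^{\xi}$ of the Leibniz cotensor maps $i_0 \cotens \pi$ and $i_0 \cotens \xi$ (whose action is to form cocartesian lifts), while \Cref{thm:cocartfams-via-transp} supplies fibered left adjoints $\tau_{\pi}$ and $\tau_{\xi}$ of the comma-inclusions $\iota_{\pi}$ and $\iota_{\xi}$ (given by directed transport). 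These are exactly the adjoints appearing on the right-hand (mate) squares of conditions~(2) and~(3), so those squares are well-typed. First I would check that the two left-hand squares commute up to a canonical natural isomorphism: the square relating $\varphi^{\Delta^1}$ and $\varphi\downarrow j$ through the cotensor maps, and the square relating $\varphi$ and $\varphi\downarrow j$ through the inclusions, both commute because every map involved is defined by whiskering the diagram shapes with $\varphi$ and $j$. These canonical $2$-cells are the inputs to the mate correspondence, and the mates are obtained by the usual pasting with the units and counits of the respective adjunctions.

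For $(1)\Leftrightarrow(3)$ I would identify the mate $2$-cell of~(3) with the arrow-level comparison of \Cref{prop:nat-cocartlift-arr}. Evaluated at a point $\langle u,d\rangle$ of $\xi\downarrow A$, the composite $\varphi^{\Delta^1}\circ\chi^{\xi}$ returns $\varphi\big(P_!(u,d)\big)$, whereas $\chi\circ(\varphi\downarrow j)$ returns $Q_!(ju,\varphi_a d)$; unwinding the adjunction unit exhibits the mate component as the map induced between these by the universal property of the cocartesian lift $Q_!(ju,\varphi_a d)$. Hence the mate is invertible exactly when $\varphi\big(P_!(u,d)\big)$ is itself cocartesian, in which case it agrees with $Q_!(ju,\varphi_a d)$ by uniqueness of cocartesian lifts (\Cref{prop:cocart-lifts-unique-in-isoinner-fams}). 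By \Cref{prop:nat-cocartlift-arr} this is precisely the assertion that $\Phi$ is a cocartesian functor. The converse direction upgrades preservation of \emph{lifts} to preservation of \emph{all} cocartesian arrows using that, by \Cref{lem:cocart-arrows-isos} and the Chevalley criterion for cocartesian arrows, every cocartesian arrow factors as a cocartesian lift followed by a vertical isomorphism.

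For $(1)\Leftrightarrow(2)$ I would run the parallel argument at the level of endpoints. The mate of~(2) compares $\varphi\circ\tau_{\xi}$ with $\tau_{\pi}\circ(\varphi\downarrow j)$; evaluated at $\langle u,d\rangle$ these compute the points $\varphi_b(u_! d)$ and $(ju)_!\,\varphi_a d$, so the mate is the endpoint comparison of \Cref{prop:nat-cocartlift-arr}, and its (pointwise-detected) invertibility is exactly the identity $\varphi_b(u_! d) = (ju)_!\,\varphi_a d$ characterizing cocartesian functors. This already yields the full equivalence $(1)\Leftrightarrow(2)\Leftrightarrow(3)$ through~(1). Alternatively one sees $(2)\Leftrightarrow(3)$ directly from the fact that the transport adjunction is built from the lifting adjunction: $\tau$ factors as $\partial_1\circ\chi$ (evaluate a cocartesian lift at its codomain) and $\iota$ factors through $i_0\cotens\pi$ via the degenerate-arrow inclusion, so the mate square of~(2) is obtained from that of~(3) by whiskering with these maps.

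I expect the main obstacle to lie not in any single equivalence but in making the mate calculus faithful inside simplicial type theory. Concretely, the delicate point is verifying that the abstractly-defined mate $2$-cell coincides, up to the relevant homotopies, with the concrete comparison of cocartesian lifts appearing in \Cref{prop:nat-cocartlift-arr}, and confirming that all the adjunctions in play are genuinely \emph{fibered} so that the pasting identities hold over the base. The $2$-categorical mate arguments of \cite[Theorem~5.3.4]{RV21} are there carried out in a homotopy $2$-category; here they must instead be replayed using the explicit adjunction infrastructure of \cite[Appendix~A, B]{BW21}, which is where the bulk of the careful bookkeeping will be required.
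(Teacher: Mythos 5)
Your proposal is correct and follows essentially the same route as the paper: the paper likewise identifies the mate of square~(2) with the canonical filler $(ju)_!(\varphi\,d)\to\varphi(u_!\,d)$ induced by the universal property of the cocartesian lift $P_!(ju,\varphi\,d)$, and concludes via \Cref{prop:nat-cocartlift-arr} that its invertibility is equivalent to $\Phi$ preserving cocartesian lifts, while the equivalence with~(3) is obtained from the general Chevalley criterion for $j$-LARI functors (\Cref{thm:char-lari-fun}), whose proof is exactly the arrow-level mate computation you sketch by hand. The only cosmetic difference is that you carry out~(3) directly rather than citing that appendix theorem, and you add an optional direct comparison $(2)\Leftrightarrow(3)$ by factoring $\tau$ through $\chi$, which the paper does not spell out.
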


\begin{proof}
	Consider the situation of the first (fibered) adjunction, where the mate of the canonical isomorphism cell is constructed through the following pasting diagram:
	\[\begin{tikzcd}
		{\xi \downarrow A} & {F} && {E} \\
		& {\xi \downarrow A} && {\pi \downarrow B} & {E}
		\arrow["{i}", from=1-2, to=2-2]
		\arrow["{\kappa}", from=1-1, to=1-2]
		\arrow["{\varphi}", from=1-2, to=1-4]
		\arrow["{\varphi\downarrow j}"', from=2-2, to=2-4]
		\arrow["{i'}"', from=1-4, to=2-4]
		\arrow[""{name=0, inner sep=0}, from=1-4, to=2-5, no head, equals]
		\arrow["{\kappa'}"', from=2-4, to=2-5]
		\arrow[""{name=1, inner sep=0}, from=1-1, to=2-2, no head, equals]
		\arrow[Rightarrow, "{=}", from=2-2, to=1-4, shorten <=7pt, shorten >=7pt]
		\arrow[Rightarrow, "{\eta}", from=1, to=1-2, shorten <=2pt, shorten >=2pt]
		\arrow[Rightarrow, "{=}", from=2-4, to=0, shorten <=1pt, shorten >=1pt]
	\end{tikzcd}\]
	The unit $\eta: \hom_{\xi \downarrow A}(\id_F, \kappa i)$ at $\pair{u:a \to a'}{d:P_a}$ is given as follows:
	\[\begin{tikzcd}
		{d} && {u_!d} \\
		{a} && {a'} \\
		{a'} && {a'}
		\arrow["{P_!(u,d)}", from=1-1, to=1-3]
		\arrow["{u}"', from=2-1, to=3-1]
		\arrow[from=3-1, to=3-3, no head, equals]
		\arrow["{u}", from=2-1, to=2-3]
		\arrow[from=2-3, to=3-3, no head, equals]
	\end{tikzcd}\]
	The pasting $2$-cell can be identified with the natural transformation
	\[ \alpha: \hom_{F \to \pi \downarrow B}(\kappa' \circ \varphi \downarrow j, \varphi \circ \kappa) \]
	whose components at $\pair{u:a\to a'}{d:Pa}$ are given by the fillers
	\[ \alpha_{\pair{u}{d}}:\jdeq (\kappa' \circ \varphi \downarrow j)\eta_{\pair{u}{d}}: \pair{jd'}{(ju)_!(\varphi d)} \to \pair{jd'}{\varphi(u_!d)} \]
	\[\begin{tikzcd}
		&&&&&& {\varphi(u_!d)} \\
		{d} && {u_!d} & {} & {\varphi(d)} && {(ju)_!(\varphi d)} \\
		{a} && {a'} & {} & {j(a)} && {j(a')}
		\arrow["{Q_!(u,d)}", from=2-1, to=2-3]
		\arrow["{u}", from=3-1, to=3-3]
		\arrow["{\rightsquigarrow}", from=2-4, to=3-4, phantom, no head]
		\arrow["{j(u)}", from=3-5, to=3-7]
		\arrow["{P_!(ju,\varphi d)}"', from=2-5, to=2-7,  cocart]
		\arrow["{\alpha_{\langle u,d \rangle}}"', from=2-7, to=1-7, dashed]
		\arrow["{\varphi\big(P_!(u,d)\big)}", from=2-5, to=1-7]
	\end{tikzcd}\]
	If $\Phi \jdeq \pair{j}{\varphi}$ is a cocartesian functor there is an identification $\varphi(Q_!(u,d)) = P_!(ju, \varphi d)$ in $E^{\Delta^1}$, hence $\alpha_{\pair{u}{d}}$ is an identity.
	
	On the other hand, if the induced filler $\alpha_{\pair{u}{d}}$ happens to be an isomorphism, and thus an identity, we obtain an identification $\varphi(Q_!(u,d)) = P_!(ju, \varphi d)$ rendering $\Phi$ a cocartesian functor.
	
	In case of the second adjunction, this is an instance of~\Cref{thm:char-lari-fun}.
\end{proof}

\subsection{Closure properties of cocartesian functors}

In \cite{BW21}, we have shown that one obtains the following list of closure properties, capturing the structure of the $\infty$-cosmos internally:
\begin{prop}[Cosmological closure properties of cocartesian families, \protect{\cite[Proposition~5.3.17]{BW21}}]\label{prop:cocart-cosm-closure}
	Over Rezk bases, it holds that:
	
	Cocartesian families are closed under composition, dependent products, pullback along arbitrary maps, and cotensoring with maps/shape inclusions. Families corresponding to equivalences or terminal projections are always cocartesian.
	
	Between cocartesian families over Rezk bases, it holds that:
	Cocartesian functors are closed under (both horizontal and vertical) composition, dependent products, pullback, sequential limits,\footnote{all three objectwise limit notions satisfying the expected universal properties \wrt~to cocartesian functors} and Leibniz cotensors.
	
	Fibered equivalences and fibered functors into the identity of $\unit$ are always cocartesian.
\end{prop}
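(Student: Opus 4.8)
The plan is to reduce every closure statement to the fibrational characterizations already established, namely the Chevalley criterion \Cref{thm:cocart-fams-intl-char}, which exhibits cocartesian families over a Rezk base $B$ as precisely the isoinner families $P$ whose associated projection $\pi\colon E\to B$ is an $i_0$-LARI map (for $i_0\colon\unit\hookrightarrow\Delta^1$), together with the mate characterization of cocartesian functors in \Cref{thm:char-cocart-fun}. Once a cocartesian family is split into its two conjuncts $\isIsoInnerFam\,P$ and $\hasCocartLifts\,P$—the latter equivalently phrased as the $i_0$-LARI condition on $i_0\cotens\pi$—the task becomes systematic bookkeeping against the generic closure properties of $j$-orthogonal and $j$-LARI maps collected in \Cref{sec:orth-lari}.

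First I would treat the closure properties of cocartesian families themselves. For composition, dependent products, and pullback along arbitrary maps, both conjuncts are handled at once: isoinner-ness is an orthogonality condition (right lifting against $\Lambda_1^2\hookrightarrow\Delta^2$ together with fiberwise Rezk-completeness, i.e.\ against $\walkBinv\to\unit$), hence stable under all three operations by the closure of $j$-orthogonal maps, while the $i_0$-LARI condition is stable under the same three operations by the closure of $j$-LARI maps. The degenerate cases—equivalences and terminal projections $E\to\unit$ from Rezk types—are immediate, since such maps are trivially orthogonal to every $j$ and carry a trivial LARI. For cotensoring with a map or shape inclusion, the orthogonality conjunct is covered by the additional closure of $j$-orthogonal maps under Leibniz cotensoring, whereas the LARI conjunct requires genuine input: one invokes the fibered-LARI constructions of \Cref{app:fibconstr} to show that cotensoring a LARI adjunction by a shape again yields a LARI adjunction, using that the Leibniz cotensor carries $i_0\cotens\pi$ to a map of the same shape.

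Next I would address cocartesian functors. Using \Cref{thm:char-cocart-fun}, a fibered functor between cocartesian fibrations is cocartesian exactly when the mate of its canonical comparison square is invertible, which recasts the whole question as the invertibility of one specific cell. I would then verify closure under vertical and horizontal composition, dependent products, pullback, sequential limits, and Leibniz cotensors by checking in each case that the relevant limit operation is computed objectwise and that mates are formed compatibly, so that an objectwise family of invertible mates assembles into an invertible mate for the limit; this is the functor-level analogue of the $\infty$-cosmological closure of cocartesian functors, with the enabling facts about fibered LARI functors and their mates supplied by \Cref{app:fibconstr}. The base cases—fibered equivalences and fibered functors into the identity of $\unit$—again follow immediately, as their comparison cells are invertible on the nose.

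The main obstacle is the LARI half rather than the orthogonality half. Orthogonality is an equivalence condition, so it is automatically inherited by objectwise limits (sequential limits, cotensors), whereas the LARI condition carries the extra datum of a left adjoint right inverse that must be produced afresh and shown to persist. The crux is therefore to establish that cotensoring and sequential limits preserve fibered LARI adjunctions and, for the functor statements, that the mate of a limit of squares is the corresponding limit of mates; both reduce to the fibered adjunction calculus of \Cref{app:fibconstr}, and this is where the real content of the proposition resides, the remainder being a direct application of \Cref{sec:orth-lari}.
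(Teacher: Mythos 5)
Your proposal matches the paper's intended argument: the thesis states this proposition without proof, citing \cite[Proposition~5.3.17]{BW21}, and the surrounding text makes clear that the closure properties are meant to follow exactly as you describe—from the Chevalley/LARI characterization of \Cref{thm:cocart-fams-intl-char}, the generic closure properties of $j$-orthogonal and $j$-LARI maps in \Cref{sec:orth-lari}, the mate criterion of \Cref{thm:char-cocart-fun}, and the fibered LARI machinery of \Cref{app:fibconstr}. You also correctly isolate the one place where the generic $j$-LARI closure list does not suffice (cotensors and sequential limits), which is where the cited reference supplies the additional work.
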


The following proposition is not contained in~\cite{BW21}, and is recorded here for later use in the chapter on two-sided cartesian fibrations.
\begin{proposition}[Pullback of fibered cocartesian sections]\label{prop:cocart-sect-pb}
	For a Rezk type $B$, consider cocartesian families $P:B \to \UU$, $Q:\totalty{P} \to \UU$, and a fiberwise map $\varphi : \prod_{b:B} P\,b \to Q\,b$. We write the unstraightenings as $\pi: E \defeq \totalty{P} \fibarr B$, $\xi: F \defeq \totalty{Q} \fibarr B$. Consider the following diagram, induced by a section $\ell$ of (the totalization of) $\varphi$ over $B$, and $A$ and a map $k:A \to B$ between Rezk types:
	\[\begin{tikzcd}
		{F'} &&& F \\
		& {E'} &&& E \\
		A &&& B
		\arrow["\varphi"{description}, from=1-4, to=2-5]
		\arrow[from=1-1, to=1-4]
		\arrow["{\varphi'}"{description}, from=1-1, to=2-2]
		\arrow["\xi"{description, pos=0.3}, two heads, from=1-4, to=3-4]
		\arrow["\pi"{description}, two heads, from=2-5, to=3-4]
		\arrow["{\xi'}"{description, pos=0.3}, two heads, from=1-1, to=3-1]
		\arrow["{\pi'}"{description}, two heads, from=2-2, to=3-1]
		\arrow["k", from=3-1, to=3-4]
		\arrow["\ell"{description}, curve={height=12pt}, dotted, from=2-5, to=1-4]
		\arrow[from=2-2, to=2-5, crossing over]
		\arrow["\lrcorner"{anchor=center, pos=0.125}, draw=none, from=2-2, to=3-4]
		\arrow["\lrcorner"{anchor=center, pos=0.125}, shift right=5, draw=none, from=1-1, to=3-4]
		\arrow["{\ell'}"{description, pos=0.3}, curve={height=18pt}, dotted, from=2-2, to=1-1, crossing over]
	\end{tikzcd}\]
	If $\ell$ is a cocartesian functor, then the induced section $\ell'$ is, too.
\end{proposition}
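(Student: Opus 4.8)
The plan is to recognize the induced section $\ell'$ as the base change $k^*\ell$ of the cocartesian functor $\ell$ along $k$, and then read off its cocartesianness from the stability of cocartesian functors under pullback recorded in \Cref{prop:cocart-cosm-closure}. Since $A$ and $B$ are Rezk and $\pi,\xi$ are cocartesian fibrations, once the identification $\ell' = k^*\ell$ is established, the closure of cocartesian functors under pullback along arbitrary maps between Rezk types applies verbatim and exhibits $\ell'$ as a cocartesian functor from $\pi'$ to $\xi'$.

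The bulk of the work is thus in making this identification precise. First I would observe that both nested squares are pullbacks, so $E' \simeq A \times_B E$ and $F' \simeq A \times_B F$. Viewing $\ell$ as a morphism $\pi \to \xi$ in the slice over $B$, the universal property of $F' = A \times_B F$ produces a unique $\ell'\colon E' \to F'$ satisfying $\xi' \circ \ell' = \pi'$ and compatible with the projections to $F$; this is exactly $k^*\ell$. Applying the (functorial) base change $k^*$ to the identity $\varphi \circ \ell = \id_E$ then gives $\varphi' \circ \ell' = \id_{E'}$, confirming that $\ell'$ is again a section of $\varphi' = k^*\varphi$, so the diagram has the asserted shape.

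Should one prefer to avoid the black-box appeal, I would instead unfold the preservation condition directly. The key input is that the pullback projection $E' \to E$ creates $\pi'$-cocartesian arrows: an arrow $g'$ in $E'$ over some $w\colon a \to a'$ in $A$ is $\pi'$-cocartesian precisely when its image $g$ in $E$ is $\pi$-cocartesian. This follows from the uniqueness of cocartesian lifts in isoinner families (\Cref{prop:cocart-lifts-unique-in-isoinner-fams}) together with the pullback property, since cocartesian lifts in $E'$ are computed from those in $E$ along $w$. Granting this together with its analogue for $F' \to F$, one takes a $\pi'$-cocartesian $g'$, notes that its image $g$ is $\pi$-cocartesian, invokes the hypothesis to conclude $\ell(g)$ is $\xi$-cocartesian, and finally observes that $\ell'(g')$ maps to $\ell(g)$ under $F' \to F$ while lying over $w$, so $\ell'(g')$ is $\xi'$-cocartesian by the creation property for $F'$.

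The main obstacle is not the concluding deduction, which is formal, but the bookkeeping of the first step: one must verify that the two nested pullbacks are correctly identified and that the canonically induced $\ell'$ coincides simultaneously with the section of $\varphi'$ and with the base change $k^*\ell$. Once these two descriptions of $\ell'$ are reconciled, the statement is a direct instance of the already-established closure machinery.
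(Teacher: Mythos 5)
Your proposal is correct, and your second, ``unfolded'' route is essentially the paper's own proof: the paper carries out exactly that argument concretely, by fibrant replacement and the identities $P'_!(v,d) = P_!(kv,d)$ and $Q'_!(v,-) = Q_!(kv,-)$, which amount to your observation that the pullback projections $E' \to E$ and $F' \to F$ both preserve and reflect cocartesian arrows (both directions follow from uniqueness of cocartesian lifts, \Cref{prop:cocart-lifts-unique-in-isoinner-fams}, because arrows of $E'$ over $w$ correspond to arrows of $E$ over $kw$). Your primary route --- identifying $\ell'$ with $k^*\ell$ and invoking a closure property --- is also viable, but the correct reference is base-change stability of cocartesian functors along a map of bases (the one-sided analogue of \Cref{prop:2s-cart-fun-pb}, i.e.\ \cite[Proposition~5.3.18]{BW21}), not the ``pullback'' item of \Cref{prop:cocart-cosm-closure}, which concerns pullback cones over cospans of cocartesian functors; the paper chooses to record and prove the present statement by hand rather than derive it from that list. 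The bookkeeping you flag --- the pasting identification $F' \simeq A \times_B F$ and the compatibility $\varphi' \circ \ell' = \mathrm{id}_{E'}$ --- is exactly what the paper discharges by writing out the fibrant replacements, so nothing is missing.
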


\begin{proof}
	First, fibrant replacement yields:
	\[E \defeq \sum_{b:B} P\,b, \quad F \defeq \sum_{\substack{b:B \\ e:P\,b}} Q\,b\,e,
	E' \defeq \sum_{a:A} P\,ka, \quad F' \defeq \sum_{\substack{a:B \\ d:P\,ka}} Q\,ka\,d.  \]
	The section $\ell$ is then taken to be
	\[ \ell(b,e) \defeq \angled{b,e,\widehat{\ell}(b,e)}\]
	for $b:B$, $e:P\,b$.
	Cocartesianness means that there is a path
	\begin{align*}
		\ell(u,P_!(u,e)) & \jdeq \angled{u,P_!(u,e),\widehat{\ell}(u,P_!(u,e))} \\
		& \jdeq \angled{u,P_!(u,e),Q_!(u, \pair{u^P_!e}{\widehat{\ell}(b,e)})}
	\end{align*}
	for $u:b\to_B b'$, $e:P\,b$.
	The induced section $\ell'$ arises as $\ell'(a,d) \jdeq \ell(ka,d) \jdeq \angled{ka,d \widehat{\ell}(ka,d)}$ for $a:A$, $d:P'\,a \simeq P\,ka$. Applying this to the $P'$-cocartesian lift of $v:a \to a'$ \wrt~$d:Q\,a$ yields
	\begin{align*}
		\ell'(v,P'_!(v,d)) & \jdeq \ell(kv,P'_!(kv,d)) = \ell(kv,P_!(v,d)) \defeq \angled{kv,P_!(v,d),\widehat{\ell}(kv,P_!(v,d))} \\
		& = \angled{kv,P_!(kv,d),Q_!(kv, \pair{(kv)^P_!(d)}{\widehat{\ell}(kv,P_!(kv,d))})} \\
		& = \angled{kv,P'_!(v,d),Q'_!(v, \pair{v^{P'}_!(d)}{\widehat{\ell'}(v,P'_!(v,d))})} \\
	\end{align*}
	confirming the claim.
\end{proof}

			\section{Cartesian families}
			\label{ch:cartfin}
			Completely dually, one can formulate a theory of \emph{cartesian families} which are \emph{contravariantly functorial} \wrt~to directed paths. \emph{I.e.}, for $P:B \to \UU$ a cartesian family, for any arrow $u:b \to a$ and $d:P\,a$ there exists a \emph{cartesian} lift $f \defeq P^*(u,d): u^*d \cartarr d$, satisfying the dual universal property: For any $v:c \to a$, and any $h:e \to^P_{uv} d$ there exists a filler $g:e \to^P_v u^*\,d$, uniquely up to homotopy, \st~$h=P^*(u,d) \circ g$  In particular, this induces a map
\[ u^*:P\,a \to P\,b.\]
Likewise, we have a notion of \emph{cartesian functor}. The Chevalley condition(s) turn out to be \emph{right adjoint} right inverse (RARI) conditions instead. Furthermore, the cartesian arrows are pullback stable, and any dependent arrow factors as $(\cdot \rightsquigarrow \cdot \cartarr)$.

Sometimes, we will distinguish in the notation between cartesian and cocartesian filling by writing $\cartFill_{\ldots}(\ldots)$ or $\cocartFill_{\ldots}(\ldots)$, resp. Also, especially in~\Cref{ch:bicart}, we will denote vertical arrows (resp.~their types) by a squigglyarrow $\cdot \rightsquigarrow \cdot$.

\subsection{Lex families}

\begin{figure}
	\[\begin{tikzcd}
		E \\
		& e \\
		& {\zeta_b} && r & {\zeta_b} && {\zeta_a} && r \\
		B & b && z & b && a && z
		\arrow["{!_b}"', from=4-2, to=4-4]
		\arrow[two heads, from=1-1, to=4-1]
		\arrow[""{name=0, anchor=center, inner sep=0}, from=3-2, to=3-4, cart]
		\arrow["{!_e}", from=2-2, to=3-4]
		\arrow["{!^b_{e}}"', dashed, from=2-2, to=3-2]
		\arrow[""{name=1, anchor=center, inner sep=0}, "{!_{\zeta_b}}"', curve={height=12pt}, from=3-2, to=3-4]
		\arrow["u", from=4-5, to=4-7]
		\arrow["{!_a}", from=4-7, to=4-9]
		\arrow[from=3-7, to=3-9, cart]
		\arrow[curve={height=-24pt}, from=3-5, to=3-9, cart]
		\arrow["h"', dashed, from=3-5, to=3-7]
		\arrow["{!_b}"', curve={height=18pt}, from=4-5, to=4-9]
		\arrow[shorten <=2pt, shorten >=2pt, Rightarrow, no head, from=0, to=1]
	\end{tikzcd}\]
	\caption{Construction of fiberwise terminal objects}
	\label{fig:fib-term-obj}	
\end{figure}
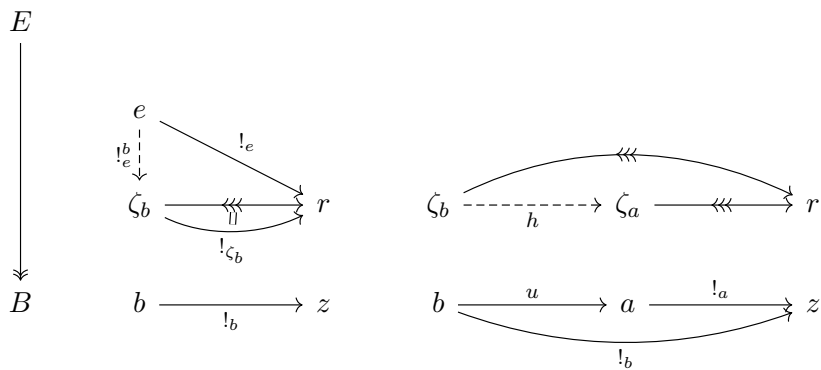

A specific class that becomes important in the next chapter are the so-called \emph{lex families}, which have fibered terminal elements and pullbacks.

Note that, \eg~for a functor $f:A \to B$ preserving a terminal object $z:A$ is a propositional condition. If $z'$ denotes the terminal object in $B$ there is a path $f(z) = z'$ if and only if $f(z):B$ is terminal,~\ie~$\prod_{b:B}\isContr(b \to_B f(z))$ if and only if the homotopically unique arrow $!_{f(z)}: f(z) \to z'$ is an isomorphism.

We will not discuss this further here, but similar considerations hold for limits in general, by their defining universal property as terminal objects of the respective Rezk types of cones.

In principle, we also think in the synthetic setting there could be a more uniform and abstract treating of ``$X$-shaped limit fibrations'', for a given shape or type $X$, after~\cite[Definition~8.5.1]{BorHandb2}, but we do not develop this here.

Instead we follow the account of~\cite[Section~8]{streicher2020fibered}, adapting it to the synthetic setting.

The aim is to recover the standard characterization of lex cartesian fibrations: Fix a base with the desired limits. Then the total type has those limits and they are preserved by the fibration if and only if the fibers each have the respective limits, and the reindexing functors preserve them.

First, we consider the case of terminal elements.

\begin{proposition}\label{prop:term-obj-fib}
	Let $P:B \to \UU$ be a cartesian family and $B$ be a Rezk type with terminal object $z:B$. Denote by $\pi:E \fibarr B$ the unstraightening of $P$. Then the following are equivalent:
	\begin{enumerate}
		\item\label{it:term-obj-fib-i} The total Rezk type $E$ has a terminal object $r$, and $\pi$ preserves it, \ie~$\pi(r):B$ is terminal.
		\item\label{it:term-obj-fib-ii} For all $b:B$, the fiber $P\,b$ has a terminal object, and for all arrows $u:b \to_B a$ the functor $u^*: P\,a \to P\,b$ preserves the terminal object.
	\end{enumerate}
\end{proposition}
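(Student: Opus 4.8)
The plan is to prove both implications by exhibiting explicit (fiberwise) terminal objects and using the cartesian universal property to transport contractibility statements between the total type and the individual fibers. Throughout I write $z$ for the terminal object of $B$ and, for each $b:B$, let $!_b : \hom_B(b,z)$ denote the essentially unique arrow into $z$; terminality of $z$ says exactly that each $\hom_B(b,z)$ is contractible with center $!_b$. Since $E \jdeq \totalty P \simeq \sum_{b:B} P\,b$, every element of $E$ is of the form $\langle b,e\rangle$ with $e:P\,b$, and for $e:P\,b$, $r:P\,c$ the arrow type of the total Rezk type decomposes over the base as $\hom_E(\langle b,e\rangle,\langle c,r\rangle) \simeq \sum_{w:\hom_B(b,c)} \dhom^P_w(e,r)$, which is the fact I would record first.

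For the implication \ref{it:term-obj-fib-i} $\Rightarrow$ \ref{it:term-obj-fib-ii}, I would assume $r:E$ terminal with $z \defeq \pi(r)$ terminal in $B$, and set $\zeta_b \defeq {!_b}^* r : P\,b$, the source of the cartesian lift $P^*(!_b,r) : \zeta_b \cartarr r$, exactly as in Figure~\ref{fig:fib-term-obj}. To see $\zeta_b$ is terminal in the fiber, fix $e:P\,b$. Terminality of $r$ gives $\isContr(\hom_E(\langle b,e\rangle,r))$; decomposing this hom over the base and contracting the factor $\hom_B(b,z)$ onto its center $!_b$ identifies it with $\dhom^P_{!_b}(e,r)$, which is therefore contractible. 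The universal property of $P^*(!_b,r)$ instantiated at $v \jdeq \id_b$ then exhibits post-composition as an equivalence $\hom_{P\,b}(e,\zeta_b) \simeq \dhom^P_{!_b}(e,r)$, so $\hom_{P\,b}(e,\zeta_b)$ is contractible and $\zeta_b$ is terminal. Preservation under reindexing is then purely formal: for $u:\hom_B(b,a)$, contravariant functoriality of cartesian transport (the dual of Proposition~\ref{prop:cocart-functoriality}) together with $!_a \circ u = !_b$ (forced by contractibility of $\hom_B(b,z)$) yields $u^*(\zeta_a) = u^*({!_a}^* r) = (!_a \circ u)^* r = {!_b}^* r = \zeta_b$; since $\zeta_b$ is terminal, $u^*$ sends the terminal object of $P\,a$ to a terminal object of $P\,b$, i.e.\ preserves it.

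For the converse \ref{it:term-obj-fib-ii} $\Rightarrow$ \ref{it:term-obj-fib-i}, I would set $r \defeq \zeta_z$, the terminal object of the fiber $P\,z$ over the terminal $z:B$; then $\pi(r) \jdeq z$ is terminal by construction, so only terminality of $r$ in $E$ remains. Fixing $e:P\,b$, the same base decomposition gives $\hom_E(\langle b,e\rangle,r) \simeq \dhom^P_{!_b}(e,\zeta_z)$, and the cartesian universal property of $P^*(!_b,\zeta_z)$ at $v \jdeq \id_b$ rewrites this as $\hom_{P\,b}(e,{!_b}^*\zeta_z)$. Since $\zeta_z$ is terminal in $P\,z$ and the reindexing functor ${!_b}^*$ preserves terminal objects by hypothesis, ${!_b}^*\zeta_z$ is terminal in $P\,b$, so $\hom_{P\,b}(e,{!_b}^*\zeta_z)$ is contractible. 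Hence $\hom_E(\langle b,e\rangle,r)$ is contractible for every $e$, and $r$ is terminal.

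I expect the main obstacle to be the careful bookkeeping around the equivalence $\dhom^P_{!_b}(e,r) \simeq \hom_{P\,b}(e,\zeta_b)$ extracted from the cartesian universal property, and in particular making precise that the base decomposition $\hom_E(\langle b,e\rangle,r) \simeq \sum_w \dhom^P_w(e,r)$ followed by contracting the terminal base factor is genuinely compatible with that equivalence — i.e.\ that ``$\hom$ into the terminal object of $E$'' really agrees with ``vertical $\hom$ into the fiberwise terminal object''. The functoriality and preservation steps for the reindexing functors are routine once the dual of Proposition~\ref{prop:cocart-functoriality} is in hand.
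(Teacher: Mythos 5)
Your proof is correct and follows essentially the same route as the paper's: set $\zeta_b \defeq (!_b)^*r$, use the universal property of the cartesian lift $P^*(!_b,r)$ to identify vertical homs into $\zeta_b$ with dependent homs into $r$ over $!_b$, and use terminality of $z$ in $B$ to contract the base component of $\hom_E(\langle b,e\rangle,r)$. The only cosmetic difference is in the preservation step, where you invoke functoriality of reindexing via $u^*(!_a)^*r = (!_a\circ u)^*r = (!_b)^*r$, while the paper constructs the comparison arrow $\zeta_b \to_u \zeta_a$ directly from cartesian fillers and left cancelation — both are fine.
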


\begin{proof}
	\begin{description}
		\item[$\ref{it:term-obj-fib-i}\implies\ref{it:term-obj-fib-ii}$:] For the visualization of both parts, \cf~\Cref{fig:fib-term-obj}. Denote by $\pair{z}{r}:E$ the terminal object of $E$, with $z:B$ terminal. For $b:B$, consider the point $\zeta_b \defeq (!_b)^*r:P\,b$. We claim that this is the terminal object of the fiber $P\,b$. Indeed, consider the canonical arrow $!_r:e \to_{!_b} r$. Then there is a unique arrow $!^b_e:e \to_{P\,b} \zeta_b$ s.t.~$P^*(!_b,r) \circ !^b_e = !_r$. But by terminality of $r$, the cartesian lift $P^*(!_b,r)$ also is propositionally equal to the terminal projection $!_{\zeta_b}: \zeta_b \to r$. Now, for any given map $g:e \to_{P\,b} \zeta_b$ we have that $!_{\zeta_b} \circ g = !_e$, but by the universal property of $!_{\zeta_b}$ there is only a unique such arrow $g$ up to homotopy. Hence, $\zeta_b$ is terminal in $P\,b$.
		
		Let $u:b \to_B a$. We will show that there is a path~$u^*\,\zeta_a = \zeta_b$. As we have just seen, we have $\zeta_a = (!_a)^*(r)$, and similarly for $\zeta_b$. Consider their terminal projections to $r$, which are necessarily cartesian arrows. From this and the identification $!_b = !_a \circ u$ in $B$, we get a unique filler $h:\zeta_b \to_u \zeta_a$. Moreover, $h$ is cartesian by left cancelation, so $h = P^*(u,\zeta_a): \zeta_b \cartarr_u^P \zeta_a$. This establishes the desired path.
		\item[$\ref{it:term-obj-fib-i}\implies\ref{it:term-obj-fib-ii}$:] Conversely, consider the section $\zeta:\prod_{b:B} P\,b$ choosing the terminal element in each fiber. Let $b:B$. By assumption, the cartesian lift of the terminal map $!_b:b \to z$ has $\zeta_a$ as its source vertex, up to a path. Let $e:P\,b$ be some point. Since $\zeta_b$ is terminal in $P\,b$, there exists a unique morphism $!_e^b: e \to_{P\,b} \zeta_b$, and post-composition with the cartesian lift $P^*(!_b,\zeta_z): \zeta_b \cartarr \zeta_z$ gives a morphism $t_e: e \to \zeta_z$:
		\[\begin{tikzcd}
			E & e \\
			& {\zeta_b} && {\zeta_z} \\
			B & b && z
			\arrow["{!_b}", from=3-2, to=3-4]
			\arrow[from=2-2, to=2-4, cart]
			\arrow[two heads, from=1-1, to=3-1]
			\arrow["{t_e}", from=1-2, to=2-4]
			\arrow["{!_e^b}"', dashed, from=1-2, to=2-2]
		\end{tikzcd}\]
		Finally, any morphism $f:e \to \zeta_z$, up to homotopy, lies over $!_b: b \to z$, and necessarily has the same factorization again, hence is identified with $t_e$. Therefore, $\pair{z}{\zeta_z}$ defines the (``global'') terminal element of $E$, and we have $\pi(z,\zeta_z) \defeq z$.
	\end{description}
\end{proof}

We are now turning to the analogous statement for pullbacks, which requires more preparation.

First, we give two conditions on dependent squares being pullbacks.
\begin{lemma}[\protect{\cite[Lemma~8.1(1)]{streicher2020fibered}}]\label{lem:cart-arr-pb}
	Let $P:B \to \UU$ be a cartesian family. Writing $E$ for the total type, then any a square in $E$ all of whose sides are cartesian arrows, is a pullback.
\end{lemma}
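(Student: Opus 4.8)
The plan is to test the square against an arbitrary object and reduce the claim to a pasting of pullbacks of hom-types. Write the square in $E$ as having corners $e_{00}, e_{01}, e_{10}, e_{11}$ with sides $f_0 : e_{00} \to e_{01}$, $g_0 : e_{00} \to e_{10}$, $g_1 : e_{01} \to e_{11}$, $f_1 : e_{10} \to e_{11}$, all $P$-cartesian, and projecting under $\pi$ to a square in $B$ with edges $u_0, v_0, v_1, u_1$. Since these lemmas are deployed in the lex setting, I take this underlying square to be a \emph{pullback} in $B$; this hypothesis is essential, since for the trivial family $P \equiv \unit$ every arrow is cartesian, so without it the statement would force every commuting square in $B$ to be a pullback. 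Recall that for a Rezk type $E$ a square is a pullback exactly when, for each test object $c$ with $b_c \defeq \pi(c)$, the comparison map $\hom_E(c, e_{00}) \to \hom_E(c, e_{01}) \times_{\hom_E(c, e_{11})} \hom_E(c, e_{10})$ is an equivalence; it therefore suffices to produce such an equivalence naturally in $c$.

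The key input is the cartesian analogue (dual to the cocartesian hom-square characterisation and to Joyal's Criterion established above): each cartesian side exhibits a pullback of hom-types over the corresponding base hom. Abbreviating $H_{ij} \defeq \hom_E(c, e_{ij})$ and $K_{ij} \defeq \hom_B(b_c, b_{ij})$, cartesianness of $f_1$ gives $H_{10} \simeq H_{11} \times_{K_{11}} K_{10}$ and cartesianness of $f_0$ gives $H_{00} \simeq H_{01} \times_{K_{01}} K_{00}$. I then compute the fibre product of hom-types by pasting:
\[ H_{01} \times_{H_{11}} H_{10} \simeq H_{01} \times_{H_{11}} (H_{11} \times_{K_{11}} K_{10}) \simeq H_{01} \times_{K_{11}} K_{10} \simeq H_{01} \times_{K_{01}} (K_{01} \times_{K_{11}} K_{10}), \]
where in the final step I refactor the structure map $H_{01} \to K_{11}$ as $\pi : H_{01} \to K_{01}$ followed by postcomposition with $v_1$.

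To assemble the result I invoke the base pullback: its defining universal property is precisely that $K_{00} \simeq K_{01} \times_{K_{11}} K_{10}$ over $K_{01}$, i.e.\ the representable $\hom_B(b_c, -)$ preserves this limit. Substituting yields
\[ H_{01} \times_{K_{01}} (K_{01} \times_{K_{11}} K_{10}) \simeq H_{01} \times_{K_{01}} K_{00} \simeq H_{00}, \]
the last equivalence again by cartesianness of $f_0$. Tracing the maps shows the composite equivalence $H_{00} \simeq H_{01} \times_{H_{11}} H_{10}$ is exactly the comparison map, and every step is natural in $c$, so the square is a pullback. Note that only the two horizontal sides $f_0, f_1$ and the base pullback were strictly needed; the full ``all sides cartesian'' hypothesis merely makes the two cancellation/pasting inputs available symmetrically (and dually via $g_0, g_1$). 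The main obstacle is conceptual rather than computational: recognising that the base square must itself be a pullback for the claim to be true, and then organising the nested hom-pullbacks so that the pasting law applies cleanly. Once the cartesian characterisation is invoked, the remainder is bookkeeping in the slice of types over $K_{11}$, with homotopy coherence handled automatically by arguing at the level of hom-type equivalences rather than by hand-building an inverse functor.
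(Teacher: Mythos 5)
Your proof is correct, but it takes a genuinely different route from the one in the thesis. The thesis argues by a direct cone-and-filler diagram chase in the style of Streicher's $1$-categorical proof: given a cone $(h,h')$ over the cospan, it produces two candidate fillers $m$ and $m'$ from the cartesianness of the two sides adjacent to the initial corner, and then identifies them by cancelling the cartesian diagonal $fg'$. You instead probe the square with a test object $c$ and reduce everything to a pasting of pullbacks of hom-types, using the (dual of the) hom-square characterisation of cartesian arrows established earlier in the chapter together with the fact that the representable $\hom_B(b_c,-)$ preserves the base pullback. Your version buys two things: the homotopy coherence is handled once and for all at the level of equivalences of hom-types rather than by hand-identifying fillers, and it isolates exactly which hypotheses do work --- as you note, only one pair of opposite cartesian sides plus the base pullback is used, which simultaneously recovers \Cref{lem:op-sides-pb} as the special case where the base square is degenerate. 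The cost is that you must verify at the end that the composite equivalence is the canonical comparison map, which is the one step you assert rather than carry out; it does go through (the check uses the commutativity of the square and uniqueness in the cartesian lifting property of $f_1$), but it deserves a line.

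Your insistence that the underlying square in $B$ be a pullback is not a cosmetic deviation: the statement as printed omits this hypothesis, and your counterexample with the constant family $\lambda b.\unit$ (where every arrow is a dependent isomorphism, hence cartesian) shows it cannot be dropped. The thesis's own proof silently uses it twice --- once for the mere existence of the fillers $m$ and $m'$ (lifting against a cartesian arrow requires a factorisation of the underlying base arrow), and once in the final cancellation step, since a cartesian arrow is only left-cancellable among arrows lying over a \emph{fixed} base factorisation. In the one place the lemma is applied, namely the proof of \Cref{prop:pb-fib}, the square $\tau$ does lie over a pullback in $B$, so nothing downstream is affected; but your reading matches Streicher's original Lemma~8.1(1) and is the statement that is actually true and actually used.
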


\begin{proof}
	Consider a square in $E$ together with a cone, and the fillers induced by cartesianness of $f'$ and $g'$, resp.:
	\[\begin{tikzcd}
		x \\
		& {d'} && d \\
		& {e'} && e
		\arrow["{f'}"', from=2-2, to=3-2, cart]
		\arrow["g"', from=3-2, to=3-4, cart]
		\arrow["{g'}", from=2-2, to=2-4, cart]
		\arrow["f", from=2-4, to=3-4, cart]
		\arrow[""{name=0, anchor=center, inner sep=0}, "h", curve={height=-18pt}, from=1-1, to=2-4]
		\arrow[""{name=1, anchor=center, inner sep=0}, "{h'}"', curve={height=24pt}, from=1-1, to=3-2]
		\arrow["{m'}"', shift right=2, dashed, from=1-1, to=2-2]
		\arrow["m", shift left=2, dashed, from=1-1, to=2-2]
		\arrow[shorten >=6pt, Rightarrow, no head, from=2-2, to=1]
		\arrow[shorten >=4pt, Rightarrow, no head, from=2-2, to=0]
	\end{tikzcd}\]
	We have $fh = (fg')m$ and $gh'=g(f'm') = (fg')m'$. But also $fg=gh'$, so $(fg')m = (fg')m'$. But since $fg'$ is cartesian as the composition of two cartesian arrows, $m=m'$ as desired.
\end{proof}

\begin{lemma}[\protect{\cite[Lemma~8.1(2)]{streicher2020fibered}}]\label{lem:op-sides-pb}
	Let $P:B \to \UU$ be a fibration. Then any dependent square in $P$ of the form
	\[\begin{tikzcd}
		{e'''} && {e''} \\
		{e'} && e
		\arrow["g"', squiggly, from=1-1, to=2-1]
		\arrow["f"', from=2-1, to=2-3, cart]
		\arrow["{f'}", from=1-1, to=1-3, cart]
		\arrow["{g'}", squiggly, from=1-3, to=2-3]
	\end{tikzcd}\]
	is a pullback.
\end{lemma}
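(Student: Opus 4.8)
The plan is to verify the universal property of the categorical pullback directly, mirroring the cone-and-filler argument of \Cref{lem:cart-arr-pb}. Writing $E \defeq \totalty{P}$ for the total type and $\pi:E \to B$ for the projection, a cone over the cospan given by $f:e' \cartarr e$ and $g':e'' \rightsquigarrow e$ consists of a test object $x:E$ together with arrows $p:\hom_E(x,e')$ and $q:\hom_E(x,e'')$ satisfying $f \circ p = g' \circ q$; I must show that the type of fillers $m:\hom_E(x,e''')$ with $g \circ m = p$ and $f' \circ m = q$ is contractible.

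First I would descend to the base. Since $g$ and $g'$ are vertical, their projections $\pi\,g$ and $\pi\,g'$ are isomorphisms, so the image square in $B$ has two parallel invertible sides and is therefore a pullback in $B$. Applied to the projected cone $(\pi\,p,\pi\,q)$, this yields a base arrow $w:\hom_B(\pi\,x,\pi\,e''')$, unique up to homotopy, with $\pi\,g \circ w = \pi\,p$ and $\pi\,f' \circ w = \pi\,q$. As $f':e''' \cartarr e''$ is cartesian over $\pi\,f'$ and $q$ lies over $\pi\,f' \circ w = \pi\,q$, the cartesian universal property then produces a filler $m:\hom_E(x,e''')$ over $w$, unique up to homotopy, with $f' \circ m = q$.

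It remains to verify the other leg $g \circ m = p$. Both $g \circ m$ and $p$ are arrows $x \to e'$ lying over $\pi\,g \circ w = \pi\,p$, and since $f:e' \cartarr e$ is cartesian their equality may be tested after postcomposition with $f$. Using commutativity of the square and the cone condition I compute $f \circ g \circ m = g' \circ f' \circ m = g' \circ q = f \circ p$, so cartesianness of $f$ forces $g \circ m = p$. For uniqueness, any competing filler $m'$ projects to some $w'$ with $\pi\,g \circ w' = \pi\,p$, whence $w' = w$ because $\pi\,g$ is invertible; since moreover $f' \circ m' = q$, the uniqueness clause for the cartesian arrow $f'$ gives $m' = m$. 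Hence the type of fillers is contractible and the square is a pullback.

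The step I expect to demand the most care is the base-level bookkeeping: one must arrange the identifications coming from the verticality of $g,g'$ and from commutativity of the dependent square so that both cartesian universal properties—for $f'$ and then for $f$—apply to arrows lying over exactly the intended base morphisms. Note finally that verticality is used only to make the projected square a pullback in $B$, so the same argument establishes the more general statement that a commutative square in $E$ with cartesian horizontal sides is a pullback whenever its image in $B$ is.
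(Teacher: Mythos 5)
Your proof is correct and follows essentially the same route as the paper's: obtain the filler from the cartesian universal property of $f'$, then verify the remaining leg by postcomposing with the cartesian arrow $f$ and using the chain $f(gm) = (g'f')m = g'q = fp$, with uniqueness inherited from $f'$. The only difference is that you spell out the base-level bookkeeping (and note the generalization to squares whose projection is a pullback in $B$) that the paper leaves implicit.
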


\begin{proof}
	Consider a point $d$ and maps $h:d \to 'e$, $h':d \to e''$ s.t.~ $g'h' = fh$. By cartesianness of $f'$, there uniquely exists $k:d \to e'''$ s.t.~$f'k = h'$:
	\[\begin{tikzcd}
		d \\
		& {e'''} && {e''} \\
		& {e'} && e
		\arrow["g"', squiggly, from=2-2, to=3-2]
		\arrow["f"', from=3-2, to=3-4, cart]
		\arrow["{f'}", from=2-2, to=2-4, cart]
		\arrow["{g'}", squiggly, from=2-4, to=3-4]
		\arrow["k"{description}, dashed, from=1-1, to=2-2]
		\arrow[""{name=0, anchor=center, inner sep=0}, "{h'}", curve={height=-12pt}, from=1-1, to=2-4]
		\arrow[""{name=1, anchor=center, inner sep=0}, "h"', curve={height=12pt}, from=1-1, to=3-2]
		\arrow[shorten <=10pt, shorten >=10pt, Rightarrow, no head, from=3-2, to=2-4]
		\arrow[shorten >=4pt, Rightarrow, no head, from=2-2, to=0]
		\arrow["{(?)}"{description}, Rightarrow, draw=none, from=1, to=2-2]
	\end{tikzcd}\]
	To show that also $h= gk$, it suffices to show that $f(gk) = g'h'$, since also $fh = g'h'$, which taken together implies $h=gk$ by cartesianness of $f$. Indeed, by the above we have a chain of paths
	\[ f(gk) = (fg)k = (g'f')k = g'(f'k') = g'h',\]
	which implies the claim that $h=gk$. Furthermore, $k$ is already unique with the property $f'k= h'$, so we are done.
\end{proof}

The next lemma presents a sufficient condition for the ``local'' pullbacks being ``global'' pullbacks.
\begin{lemma}[\cf~\protect{\cite[Lemma~8.2]{streicher2020fibered}}]\label{lem:loc-pb-is-pb}
	Let $P:B \to \UU$ be a cartesian family and $B$ be a Rezk type where all pullbacks exist. Assuming that all fibers have pullbacks, and these are preserved by the reindexing functors, we have: A pullback in a fiber $P\,b$ is also a pullback in $\totalty{P}$.
\end{lemma}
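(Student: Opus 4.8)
The plan is to verify the universal property of the given fiber pullback directly in $\totalty{P}$, reducing every test cone to a single fiber by means of cartesian lifts and then invoking the hypothesis that reindexing preserves pullbacks. Write $\pi: E \defeq \totalty{P} \fibarr B$, and let the pullback square in the fiber $P\,b$ be given by vertical arrows $p: w \to x$, $q: w \to y$, $g: x \to z$, $f: y \to z$ (all lying over $\id_b$) with $g \circ p = f \circ q$. To show this is a pullback in $E$, fix a test cone: a point $d: E$ over $c \defeq \pi\,d$ together with arrows $h: d \to x$, $k: d \to y$ in $E$ satisfying $g \circ h = f \circ k$.

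First I would identify the base arrow of the cone. Since $\pi$ is functorial and $g, f$ lie over $\id_b$, applying $\pi$ to $g \circ h = f \circ k$ yields $\pi\,h = \pi\,k \eqdef u : \hom_B(c,b)$ (up to a path). Using the cartesian lifts $P^*(u,x): u^*x \cartarr x$ and $P^*(u,y): u^*y \cartarr y$, the universal property of cartesian arrows factors $h$ and $k$ uniquely up to homotopy as $h = P^*(u,x) \circ \bar{h}$ and $k = P^*(u,y) \circ \bar{k}$, with vertical fillers $\bar{h}: d \to u^*x$, $\bar{k}: d \to u^*y$ over $c$.

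Next I would reindex the original square along $u$. By hypothesis $u^*: P\,b \to P\,c$ preserves pullbacks, so the square with objects $u^*w, u^*x, u^*y, u^*z$ and arrows $u^*p, u^*q, u^*g, u^*f$ is a pullback in the fiber $P\,c$. The key compatibility check is that $(\bar{h},\bar{k})$ is a cone for this reindexed square, i.e.\ $u^*g \circ \bar{h} = u^*f \circ \bar{k}$: using the naturality identities $g \circ P^*(u,x) = P^*(u,z) \circ u^*g$ and $f \circ P^*(u,y) = P^*(u,z) \circ u^*f$ for cartesian lifts, both $u^*g \circ \bar{h}$ and $u^*f \circ \bar{k}$ are vertical fillers of the cartesian arrow $P^*(u,z)$ for the common arrow $g \circ h = f \circ k$ over $u$, hence agree by uniqueness of cartesian fillers (the dual of \Cref{prop:cocart-lifts-unique-in-isoinner-fams}). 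The fiber pullback then supplies a vertical mediator $\bar{m}: d \to u^*w$, unique up to homotopy, with $u^*p \circ \bar{m} = \bar{h}$ and $u^*q \circ \bar{m} = \bar{k}$. Setting $m \defeq P^*(u,w) \circ \bar{m}: d \to w$ and applying the analogous naturality squares for $p$ and $q$ gives $p \circ m = P^*(u,x) \circ u^*p \circ \bar{m} = P^*(u,x) \circ \bar{h} = h$ and likewise $q \circ m = k$, so $m$ is the desired mediating arrow.

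For uniqueness, any competing mediator $m': d \to w$ lies over $u$ (apply $\pi$ to $p \circ m' = h$ and use $\pi\,p = \id_b$), hence factors as $m' = P^*(u,w) \circ \bar{m}'$ with $\bar{m}'$ vertical; the relations $p \circ m' = h$ and $q \circ m' = k$ force $u^*p \circ \bar{m}' = \bar{h}$ and $u^*q \circ \bar{m}' = \bar{k}$ (again cancelling cartesian fillers), so $\bar{m}'$ is a cone map for the reindexed pullback and $\bar{m}' = \bar{m}$, whence $m' = m$. The main obstacle is the bookkeeping around the naturality squares for the cartesian lifts together with the fact that in the synthetic setting all these equalities are propositional: the cancellations rest throughout on uniqueness of cartesian fillers, and one should phrase the cone-compatibility and mediator clauses as contractibility statements to keep the coherence of the chosen paths under control. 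A cleaner but essentially equivalent packaging would note that each naturality square has two cartesian and two vertical opposite sides, hence is already a pullback in $E$ by \Cref{lem:op-sides-pb}, and then paste these against the reindexed fiber pullback; I would fall back on this pasting only if the direct filler computation becomes unwieldy.
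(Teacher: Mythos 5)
Your proposal is correct and follows essentially the same route as the paper's proof: factor the test cone through cartesian lifts of the common base arrow $u$, use left-cancellability of the cartesian lift $P^*(u,z)$ (uniqueness of vertical fillers) to see that the vertical components form a cone over the reindexed square, obtain the gap map from the hypothesis that $u^*$ preserves pullbacks, and compose back with the cartesian lift of $u$ at the pullback vertex. Your uniqueness argument is spelled out in a bit more detail than the paper's (which simply appeals to the gap map of the fiber pullback being determined up to homotopy), but the content is the same.
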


\begin{proof}
	Let $\pi:E \fibarr B$ be the unstraightening of $P$.
	
	For some $b:B$, consider a pullback square in $P\,b$, together with a cone in $E$, as follows:
	\begin{equation}
	\label{cd:local-pb}
	\begin{tikzcd}
		d \\
		& {e'} && {e_2} \\
		& {e_1} && e
		\arrow["{f_1}"', squiggly, from=3-2, to=3-4]
		\arrow["{f_2}", squiggly, from=2-4, to=3-4]
		\arrow["{g_2}", squiggly, from=2-2, to=2-4]
		\arrow["{g_1}", squiggly, from=2-2, to=3-2]
		\arrow["{h_2}", curve={height=-12pt}, from=1-1, to=2-4]
		\arrow["{h_1}"', curve={height=12pt}, from=1-1, to=3-2]
	\end{tikzcd}
	\end{equation}
	Projecting down we find that $\pi(h_1) = u = \pi(h_2)$ for some $u:b \to_B a$. Consider the factorizations
	\[\begin{tikzcd}
		d && {u^*\,e_n} && {e_n}
		\arrow["{m_n}"', squiggly, from=1-1, to=1-3]
		\arrow["{k_n}"', from=1-3, to=1-5, cart]
		\arrow["{h_n}", curve={height=-24pt}, from=1-1, to=1-5]
	\end{tikzcd}\]
	for $n=1,2$.
	We claim that $(u^*\,f_1) \circ m_1 = (u^*\,f_2) \circ m_2$. To see this, consider the following induced diagram:
	\[\begin{tikzcd}
		d && {u^*\,e_2} && {e_2} \\
		{u^*\,e_1} && {u^*\,e} && e \\
		{e_1}
		\arrow["{f_2}", squiggly, from=1-5, to=2-5]
		\arrow[from=1-3, to=1-5, cart, "k_2"]
		\arrow[from=2-3, to=2-5, cart, "k"]
		\arrow[squiggly, from=1-1, to=1-3, "m_2"]
		\arrow[from=2-1, to=2-3, squiggly, "{u^*\,f_1}"]
		\arrow[from=2-1, to=3-1, cart, "k_1"]
		\arrow["{h_1}"', curve={height=24pt}, from=1-1, to=3-1]
		\arrow["{f_1}"', squiggly, from=3-1, to=2-5]
		\arrow[squiggly, from=1-3, to=2-3, "{u^*\,f_2}"]
		\arrow["{h_2}", curve={height=-24pt}, from=1-1, to=1-5]
		\arrow["m_1"' swap, squiggly, from=1-1, to=2-1]
		\arrow[shorten <=10pt, shorten >=10pt, Rightarrow, no head, from=3-1, to=2-3]
		\arrow[shorten <=10pt, shorten >=10pt, Rightarrow, no head, from=2-3, to=1-5]
	\end{tikzcd}\]
	Now, in fact the remaining sub-square commutes as well because both sides are equalized by the cartesian arrow $k$: By assumption we have $(f_1k_1)m_1 = (f_2k_2)m_2$, \ie~$(k (u^*f_1))m_1 = (k (u^*f_2))m_2$, hence $(u^*f_1)m_1 = (u^*f_2)m_2$ as claimed.
	
	Now, by assumption~the square of vertical arrows in~(\ref{cd:local-pb}), is a pullback in $P\,b$, and gets preserved by $u^*:P\,a\to P\,b$. Then the gap map $\ell:d \rightsquigarrow u^*\,e'$ as indicated below is vertical:
	\[\begin{tikzcd}
		d \\
		& {u^*\,e'} && {u^*\,e_2} \\
		& {u^*\,e_1} && {u^*\,e}
		\arrow[squiggly, from=2-2, to=3-2]
		\arrow[squiggly, from=2-2, to=2-4]
		\arrow[squiggly, from=2-4, to=3-4]
		\arrow["{m_2}"', curve={height=-12pt}, squiggly, from=1-1, to=2-4]
		\arrow["{m_1}"', curve={height=12pt}, squiggly, from=1-1, to=3-2]
		\arrow["\ell", squiggly, from=1-1, to=2-2, dashed]
		\arrow[squiggly, from=3-2, to=3-4]
	\end{tikzcd}\]
	Then, for $k'\defeq P^*(u,e'): u^*\,e' \cartarr_u e'$ we claim that the mediating arrow for the original diagram~\ref{cd:local-pb} is given by
	\[ \ell' \defeq k' \circ \ell: d \to e'.\]
	Indeed, we find
	\[ g_n \ell' = g_n(k'\ell) = (k_n \circ u^*\,g_n)\ell = k_n m_n = h_n\]
	for $n=1,2$.
	Furthermore, $\ell'$ is unique with this property because its vertical component is determined uniquely up to homotopy as a gap map of a pullback in $P\,b$.
\end{proof}

Finally, we can state the desired chracaterization.

\begin{proposition}[\cf~\protect{\cite[Theorem~8.3]{streicher2020fibered}}]\label{prop:pb-fib}
	Let $P:B \to \UU$ be a cartesian family and $B$ be a Rezk type where all pullbacks exist. Denote by $\pi:E \fibarr B$ the unstraightening of $P$. Then the following are equivalent:
	\begin{enumerate}
		\item\label{it:pb-fib-i} The total Rezk type $E$ has all pullbacks, and $\pi$ preserves them.
		\item\label{it:pb-fib-ii} For all $b:B$, the fiber $P\,b$ has all pullbacks, and for all arrows $u:b \to_B a$ the functor $u^*: P\,a \to P\,b$ preserves them.
	\end{enumerate}
\end{proposition}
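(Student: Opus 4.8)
The plan is to mimic the proof of the terminal-object case (\Cref{prop:term-obj-fib}) and of Streicher's \cite[Theorem~8.3]{streicher2020fibered}, orchestrating the three preceding lemmas together with the pasting law for pullbacks in Rezk types, the universal property of cartesian lifts, and the universal property of pullbacks in the base $B$. The two implications draw on different ingredients: \ref{it:pb-fib-ii}$\implies$\ref{it:pb-fib-i} is where \Cref{lem:loc-pb-is-pb} does the decisive work, whereas \ref{it:pb-fib-i}$\implies$\ref{it:pb-fib-ii} rests on \Cref{lem:op-sides-pb} and pasting.

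For \ref{it:pb-fib-ii}$\implies$\ref{it:pb-fib-i}, I would start with a cospan $e_1 \xrightarrow{h_1} e \xleftarrow{h_2} e_2$ in $E$ and project it to $B$, obtaining a cospan over $u_1 \defeq \pi h_1$ and $u_2 \defeq \pi h_2$, whose pullback $b$ exists by hypothesis, with legs $p_1,p_2$ and diagonal $w \defeq u_1 p_1 = u_2 p_2$. Reindexing into the fibre over $b$, I form the cartesian lifts $p_1^*e_1 \cartarr e_1$, $p_2^*e_2 \cartarr e_2$ and $w^*e \cartarr e$; the identities $u_i p_i = w$ together with the cartesian universal property of $w^*e \cartarr e$ then yield vertical arrows $p_1^*e_1 \rightsquigarrow w^*e$ and $p_2^*e_2 \rightsquigarrow w^*e$ in $P\,b$. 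Since $P\,b$ has pullbacks, I take the fibrewise pullback $q$ of this vertical cospan and invoke \Cref{lem:loc-pb-is-pb} to conclude that $q$ is simultaneously a pullback in $E$. The claim is that $q$, equipped with the composite legs to $e_1$ and $e_2$, is the pullback of the original cospan, and that $\pi q = b$, so that $\pi$ preserves it. I verify the universal property directly: a competing cone $d \to e_1$, $d \to e_2$ projects to a cone over the base cospan, which factors through $b$ by the universal property of $b$; lifting the two legs through the cartesian arrows $p_i^*e_i \cartarr e_i$ produces a cone over the fibrewise diagram (the two composites into $w^*e$ agreeing by cancelling the cartesian arrow $w^*e \cartarr e$), which factors uniquely through $q$ because $q$ is a pullback in $E$. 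Uniqueness of the mediating map is then recovered from the uniqueness clauses of the base pullback $b$ and of the cartesian lifts.

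For \ref{it:pb-fib-i}$\implies$\ref{it:pb-fib-ii}, I first record that over a vertical cospan the fibrewise and the ambient pullbacks coincide: given a vertical cospan in $P\,b$, its $E$-pullback exists by \ref{it:pb-fib-i} and, since $\pi$ preserves it and the projected base cospan consists of isomorphisms, the apex and its legs lie over $b$, so it is the fibrewise pullback. This already yields pullbacks in each fibre $P\,b$. For preservation under reindexing along $u:b \to a$, I observe that for each edge of a fibrewise pullback square in $P\,a$ the cartesian-lift square relating the $u^*$-layer to the original layer has two opposite cartesian sides and two opposite vertical sides, hence is a pullback by \Cref{lem:op-sides-pb}; the analogous square relating $u^*q \cartarr q$ to $u^*e \cartarr e$ is a pullback for the same reason, using that the diagonal $q \to e$ is a composite of verticals. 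Pasting these squares with the given global pullback square and repeatedly applying the pasting law identifies $u^*q$ with $u^*e_1 \times_{u^*e} u^*e_2$; being the pullback of a vertical cospan, this is the fibrewise pullback in $P\,b$, so $u^*$ preserves it. (\Cref{lem:cart-arr-pb} remains available should one prefer to route some auxiliary subsquares through all-cartesian boundaries.)

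The main obstacle is the universal-property bookkeeping in \ref{it:pb-fib-ii}$\implies$\ref{it:pb-fib-i}: one must track carefully which arrows are vertical and which are cartesian, so that the factorizations through $b$ and through the cartesian lifts assemble into a genuine cone over the fibrewise diagram, and so that uniqueness propagates back through these factorizations. A secondary subtlety is legitimizing the pasting-law manipulations in \ref{it:pb-fib-i}$\implies$\ref{it:pb-fib-ii} at the level of $\inftyone$-categorical rather than $1$-categorical pullbacks in the Rezk types $E$ and $P\,b$; this is standard but should be acknowledged rather than treated as purely formal.
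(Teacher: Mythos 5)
Your proposal is correct and follows essentially the same route as the paper: both directions rest on the same three preparatory lemmas, with \Cref{lem:op-sides-pb} plus the pasting law handling $\ref{it:pb-fib-i} \implies \ref{it:pb-fib-ii}$ and \Cref{lem:loc-pb-is-pb} doing the decisive work in $\ref{it:pb-fib-ii} \implies \ref{it:pb-fib-i}$. The only organizational difference is in the latter direction, where the paper assembles the global pullback by pasting a grid of intermediate pullback squares (including an all-cartesian square justified via \Cref{lem:cart-arr-pb}), whereas you reduce everything to a single fibrewise pullback of a vertical cospan over the apex of the base pullback and then verify the universal property by a direct cone-chase; both assemblies are sound.
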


\begin{proof}
	\item[$\ref{it:pb-fib-i} \implies \ref{it:pb-fib-ii}$:] Since $\pi$ preserves pullbacks, every pullback of vertical arrows in a fiber $P\,b$ is a pullback in $P\,b$, \ie~given a cone of vertical arrows, the mediating arrow is necessarily vertical as well. What is left to show is that the reindexing functors preserve the pullbacks. Let $u:b \to_B a$. Consider a pullback square in $P\,a$, together with the cartesian liftings of $u$ \wrt~to each point. Then, by~\Cref{lem:op-sides-pb} the ensuing squares are pullbacks, as indicated in:
	\[\begin{tikzcd}
		{} && E & {u^*\,e'''} &&& {e'''} \\
		&&&& {u^*\,e''} &&& {e''} \\
		&&& {u^*\,e'} &&& {e'} \\
		&&&& {u^*\,e} &&& e \\
		&& B && b &&& a
		\arrow[from=1-4, to=1-7, cart]
		\arrow[squiggly, from=1-7, to=3-7]
		\arrow[squiggly, from=1-7, to=2-8]
		\arrow[squiggly, from=2-8, to=4-8]
		\arrow[squiggly, from=3-7, to=4-8]
		\arrow[from=3-4, to=3-7, cart]
		\arrow[dashed, from=1-4, to=2-5, squiggly]
		\arrow[dashed, from=3-4, to=4-5, squiggly]
		\arrow[dashed, from=1-4, to=3-4, squiggly]
		\arrow[two heads, from=1-3, to=5-3]
		\arrow["u", from=5-5, to=5-8]
		\arrow["\lrcorner"{anchor=center, pos=0.125, rotate=-45}, draw=none, from=1-7, to=4-8]
		\arrow["\lrcorner"{anchor=center, pos=0.125}, draw=none, from=2-5, to=4-8]
		\arrow["\lrcorner"{anchor=center, pos=0.125}, shift left=2, draw=none, from=1-4, to=3-7]
		\arrow[from=2-5, to=2-8, cart, crossing over]
		\arrow[from=4-5, to=4-8, cart, crossing over]
		\arrow[dashed, from=2-5, to=4-5, crossing over, squiggly]
	\end{tikzcd}\]
	By~\cite[Remark~26.1.5(ii)]{RijIntro}, we obtain that the left hand square is a pullback, as desired.
	\item[$\ref{it:pb-fib-ii} \implies \ref{it:pb-fib-i}$:] Conversely, consider a cospan an $E$, comprised of dependent arrows $(f:d \to e \leftarrow d':f')$. First, we consider their vertical/cartesian-factorizations, $f=gk$, $f'=g'm$. This gives rise to the following situation, which we will readily explain:
	\[\begin{tikzcd}
		{d'''} && {d''} && {d'} \\
		{e''''} && {e'''} && {e''} \\
		d && {e'} && e
		\arrow[""{name=0, anchor=center, inner sep=0}, "k"{description}, squiggly, from=3-1, to=3-3]
		\arrow["g", from=3-3, to=3-5, cart]
		\arrow["{g'}"', from=2-5, to=3-5, cart]
		\arrow["m"', squiggly, from=1-5, to=2-5]
		\arrow["{m'}"', squiggly, from=1-3, to=2-3]
		\arrow["{h'}"', from=2-3, to=3-3, cart]
		\arrow[""{name=1, anchor=center, inner sep=0}, "h", from=2-3, to=2-5, cart]
		\arrow[""{name=2, anchor=center, inner sep=0}, "{h'''}", from=1-3, to=1-5, cart]
		\arrow["f"{description}, curve={height=18pt}, from=3-1, to=3-5]
		\arrow["{f'}"{description}, curve={height=-18pt}, from=1-5, to=3-5]
		\arrow["{h''}"', from=2-1, to=3-1, cart]
		\arrow[""{name=3, anchor=center, inner sep=0}, "{k'}"{description}, squiggly, from=2-1, to=2-3]
		\arrow["\lrcorner"{anchor=center, pos=0.125}, draw=none, from=1-3, to=2-5]
		\arrow["\lrcorner"{anchor=center, pos=0.125}, draw=none, from=2-3, to=3-5]
		\arrow["\lrcorner"{anchor=center, pos=0.125}, draw=none, from=2-1, to=3-3]
		\arrow["\tau"{description}, draw=none, from=2-3, to=3-5]
		\arrow["{\ell'}"', squiggly, from=1-1, to=2-1]
		\arrow["\ell", squiggly, from=1-1, to=1-3]
		\arrow["{\tau'}"{description}, "\lrcorner"{anchor=center, pos=0.125}, draw=none, from=1-1, to=2-3]
		\arrow["\sigma"{description}, Rightarrow, draw=none, from=3, to=0]
		\arrow["{\sigma'}"{description}, Rightarrow, draw=none, from=2, to=1]
	\end{tikzcd}\]
	First of all, the diagram $\tau$ is a pulback by~\Cref{lem:cart-arr-pb}. The (vertical) fillers $k'$ and $m'$, resp.~ are induced by $h'$ and $h$ being cartesian, resp. Then by~\Cref{lem:op-sides-pb}, the squares $\sigma$, $\sigma'$ are pullbacks, too. Since the fibers have pullbacks, the square $\tau'$ exists. As the reindexings preserve the local pullbacks, we can apply~\Cref{lem:loc-pb-is-pb}, so $\tau'$ is a pullback in $E$. Altogether, this yields the pullback square of $f'$ along $f$.
\end{proof}

\begin{definition}[Lex families]
	Let $B$ be a Rezk type with terminal object and all pullbacks. A cartesian fibration over $B$ is \emph{lex} if it satisfies the conditions from~\Cref{prop:term-obj-fib,prop:pb-fib},~\ie~all the fibers of $P$ have terminal objects and pullbacks, and both notions are preserved by the reindexing functors.
	
	A Rezk type is \emph{lex} if it it has all pullbacks and a terminal object. A functor between Rezk types is a \emph{lex functor} if it preserves these notions.
\end{definition}

By the above, this is equivalent to $\totalty{P}$ having terminal objects and pullbacks, and the projection to $B$ preserving them.

			\chapter{Bicartesian families of synthetic \texorpdfstring{$\inftyone$}{(∞,1)}-categories}
			\label{ch:bicart}
			In this section, we consider families that are both cartesian and concartesian, corresponding to \emph{bicartesian fibrations}. Specifically, we are interested in such fibrations satisfying a so-called Beck--Chevalley condition (BCC). This form of the BCC has its origins in the work of B\'{e}nabou--Roubaud leading to their famous chracterization of descent data of a fibration~\cite{BRMonDesc}. In the $\inftyone$-categorical context such fibrations play a role in homotopical ambidexterity~\cite{LurAmbi}.

In our context, we are eventually interested in a specific subclass of BCC fibrations, which go by the name of \emph{(l)extensive} or \emph{Moens fibrations}. These are a fibrational generalization of (l)extensive categories~\cite{CLWExt}.

Ultimately, this leads to Moens' Theorem which says that Moens fibrations over a fixed base type can be identified with lex functors \emph{from} this type into some other lex type. This is crucial to develop the \emph{fibered view of geometric morphisms}, \cf~\cite[Section~15 \emph{et seq}]{streicher2020fibered}, \cite{StrFVGM,LietzDip}. Applications in realizability have been given by Frey in his doctoral thesis~\cite{FreyPhD}.

We successively generalize Streicher's exposition and proofs~\cite[Section~15]{streicher2020fibered} to the synthetic $\inftyone$-categorical setting, also making explicit some arguments not detailed in~\emph{op.~cit.}

\section{Bicartesian families}

\subsection{Bicartesian families}

\begin{definition}
	Let $B$ be a Rezk type. A \emph{bicartesian family} is a type family $P:B \to \UU$ which is both cartesian and cocartesian.
\end{definition}

Bicartesian families are hence equipped with both co- and contravariant transport operations for directed arrows. In fact, these induce adjunctions on the fibers.

\begin{proposition}
	Let $P:B \to \UU$ be a bicartesian family. For any $a,b:B$, $u:a \to_B b$, there is an adjunction:
	\[\begin{tikzcd}
		{P\,a} && {P\,b}
		\arrow[""{name=0, anchor=center, inner sep=0}, "{u_!}"{description}, curve={height=-12pt}, from=1-1, to=1-3]
		\arrow[""{name=1, anchor=center, inner sep=0}, "{u^*}"{description}, curve={height=-18pt}, from=1-3, to=1-1]
		\arrow["\dashv"{anchor=center, rotate=-90}, draw=none, from=0, to=1]
	\end{tikzcd}\]
\end{proposition}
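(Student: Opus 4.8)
The plan is to establish the adjunction fiberwise by producing a natural equivalence of hom-types
\[ \hom_{P\,b}(u_! d, e) \simeq \hom_{P\,a}(d, u^* e) \]
for $d : P\,a$ and $e : P\,b$, and then to feed this natural hom-equivalence into the characterization of adjunctions between Rezk types (the fibers $P\,a$ and $P\,b$ are Rezk since $P$ is bicartesian, hence in particular cocartesian). Recall that $u_! d$ is the codomain of the cocartesian lift $P_!(u,d) : d \cocartarr_u u_! d$, while $u^* e$ is the domain of the cartesian lift $P^*(u,e) : u^* e \cartarr_u e$. This transposing-maps strategy directly parallels the proof of \Cref{thm:cocartfams-via-transp}, where a pair of mutually inverse transposing maps is shown to assemble into a (fibered) adjunction.

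Concretely, I would define the transposition as follows. Given a vertical arrow $g : u_! d \to_{P\,b} e$, the composite $g \circ P_!(u,d)$ lies over $u$, so cartesianness of $P^*(u,e)$ yields a filler $\Phi(g) \defeq \cartFill_{P^*(u,e)}(g \circ P_!(u,d))$; since both $g \circ P_!(u,d)$ and $P^*(u,e)$ lie over $u$, this filler lies over $\id_a$ and is therefore vertical, i.e. $\Phi(g) : d \to_{P\,a} u^* e$. Conversely, given a vertical $h : d \to_{P\,a} u^* e$, the composite $P^*(u,e) \circ h$ lies over $u$, and cocartesianness of $P_!(u,d)$ produces a vertical filler $\Psi(h) \defeq \cocartFill_{P_!(u,d)}(P^*(u,e) \circ h) : u_! d \to_{P\,b} e$. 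One then reads off the unit $\eta_d : d \to_{P\,a} u^* u_! d$ as $\Phi(\id_{u_! d})$ and the counit $\varepsilon_e : u_! u^* e \to_{P\,b} e$ as $\Psi(\id_{u^* e})$, exactly as the unit is extracted in \Cref{thm:cocartfams-via-transp}.

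That $\Phi$ and $\Psi$ are mutually inverse follows purely from the defining triangle identities of the fillers. For instance $\Psi(\Phi(g)) = \cocartFill_{P_!(u,d)}(P^*(u,e) \circ \Phi(g)) = \cocartFill_{P_!(u,d)}(g \circ P_!(u,d)) = g$ by \Cref{cor:cocart-trivfill}(\ref{it:cocart-comp-fill}), using that $\Phi(g)$ is the cartesian filler; dually $\Phi(\Psi(h)) = h$ by the cartesian analogue of the same corollary. The main obstacle will be the naturality bookkeeping: one must check that the equivalence is natural contravariantly in $d$ (via $u_!$) and covariantly in $e$ (via $u^*$), which amounts to verifying that forming the co-/cartesian filler commutes with pre- and post-composition by vertical arrows. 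This, however, introduces no new input, reducing entirely to the up-to-homotopy uniqueness of co-/cartesian lifts (\Cref{prop:cocart-lifts-unique-in-isoinner-fams} and its cartesian dual) together with the filler identities of \Cref{cor:cocart-trivfill}; the remaining step is then to invoke the adjunction characterization to obtain the unit, counit, and triangle identities from the resulting natural hom-equivalence.
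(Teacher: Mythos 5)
Your proposal is correct and follows essentially the same route as the paper: the same transposing maps $\Phi(g) \defeq \cartFill_{P^*(u,e)}(g \circ P_!(u,d))$ and $\Psi(k) \defeq \cocartFill_{P_!(u,d)}(P^*(u,e)\circ k)$, with both roundtrips collapsing to the identity via the filler identities of \Cref{cor:cocart-trivfill} and their cartesian duals. The only difference is that you spell out the extraction of unit, counit, and naturality from the resulting hom-equivalence, which the paper leaves implicit.
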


\begin{proof}
	For fixed $a,b:B$, we define a pair of maps
	\[ \Phi:(u_!\,d \to e) \rightleftarrows (d \to u^*e) : \Psi,\]
	intended to be quasi-inverse to each other, through
	\[\Phi(g) \defeq \cartFill_{P^*(u,e)}^P(g \circ P_!(u,d)), \quad \Psi(k) \defeq \cocartFill_{P_!(u,d)}^P(P^*(u,e)\circ k).\]
	We write $f \defeq P_!(u,d): d \cocartarr u_!\,d$ and $r \defeq P^*(u,e): u^*e \cartarr e$. For a dependent arrow $g:u_!\,d \to^P e$ we have $g' \defeq \Phi(g) \circ r = g \circ f$ by construction. Next, we find that $\Psi(g') \circ f = r \circ \Phi(g)$. Combining these identities it follows that $\Psi(g') = \Psi(\Phi(g)) = g$ by cocartesianness of $f$. The other roundtrip is analogous.
\end{proof}

We now explain a few important constructions producing bicartesian fibrations.

\subsection{The family fibration}

Recall the construction of the free cocartesian fibration associated to an arbitrary map $\pi:E \fibarr B$ between Rezk types.

\begin{definition}
If the base $B$ has all pullbacks and $\pi:E \fibarr B$ is assumed to be a cartesian fibration, then the free cocartesian fibration
\[\begin{tikzcd}
	{L(\pi)} && E \\
	{B^{\Delta^1}} && B \\
	B
	\arrow["\pi", two heads, from=1-3, to=2-3]
	\arrow["{\partial_0}"', from=2-1, to=2-3]
	\arrow[from=1-1, to=1-3]
	\arrow["{\partial_1}", two heads, from=2-1, to=3-1]
	\arrow["{\partial_1'}"', curve={height=24pt}, two heads, from=1-1, to=3-1]
	\arrow["\lrcorner"{anchor=center, pos=0.125}, draw=none, from=1-1, to=2-3]
	\arrow[two heads, from=1-1, to=2-1]
\end{tikzcd}\]
is itself a also cartesian fibration. This bifibration is called the \emph{family fibration associated to $P$}.
\end{definition}

\begin{proposition}[Cartesian lifts in the family fibration]\label{prop:cartlift-famfib}
	Let $\pi:E \fibarr B$ be a cartesian fibration over a Rezk type $B$ with all pullbacks. The cartesian lift of an arrow $u:a \to b$ in $B$ w.r.t.~$\pair{v:b' \to b}{e:P\,b'}$ in $L(\pi)$ is given by $\pair{\mathrm{pb}(u,v)}{P^*(v^*u,e)}$.
\end{proposition}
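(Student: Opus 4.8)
The plan is to unravel the family fibration as an explicit pullback and then verify directly the universal property of cartesian arrows, reducing it to the universal properties of pullbacks in $B$ and of cartesian lifts in $E$. Recall that, as a fibration, $L(\pi)$ is the composite $\partial_1 \circ \mathrm{pr}_1$, where $L(E) \defeq B^{\Delta^1} \times_B E$ is the pullback of $\pi$ along $\partial_0$; thus a point of $L(\pi)$ over $b:B$ is a pair $\pair{v:c \to b}{e:P\,c}$, and a dependent arrow over $u:a\to b$ between two such points consists of a morphism in $B^{\Delta^1}$ (a commuting square whose bottom edge is $u$) together with a dependent arrow in $E$ lying over its top edge. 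With this description, the candidate lift $\pair{\mathrm{pb}(u,v)}{P^*(v^*u,e)}$ is the pullback square of the cospan $a \xrightarrow{u} b \xleftarrow{v} b'$ — whose top edge is the projection $v^*u : a\times_b b' \to b'$ and whose bottom edge is $u$ — paired with the cartesian lift $P^*(v^*u,e): (v^*u)^*e \cartarr e$ of $v^*u$ with respect to $e$. Its source is $\pair{u^*v : a\times_b b' \to a}{(v^*u)^*e : P(a\times_b b')}$, a point over $a$, and since the bottom edge of the square is $u$, this morphism lies over $u$ as required.

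To establish cartesianness I would verify the (dual of the) defining universal property. Given a test point $\pair{v'':c''\to a'}{e'':P\,c''}$ over some $a':B$, an arrow $u':a'\to a$, and a morphism $H$ into the target $\pair{v}{e}$ lying over $u\circ u'$, I must produce a filler through the candidate lift over $u'$, unique up to homotopy. Unwinding the pullback description, $H$ is a square with top edge $k:c''\to b'$, left edge $v''$, right edge $v$, and bottom edge $uu'$ (so $v\circ k = u\circ u'\circ v''$), together with a dependent arrow $\ell : \dhom_{P,k}(e'',e)$. The sought filler $G$ is a square from $v''$ to $u^*v$ over $u'$, say with top edge $m : c''\to a\times_b b'$, together with a dependent arrow $g : \dhom_{P,m}(e'',(v^*u)^*e)$.

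The square component is forced by the universal property of the pullback $a\times_b b'$: requiring that the pasting of $G$'s square with the pullback square recover $H$ amounts to $v^*u\circ m = k$ and $u^*v\circ m = u'\circ v''$, and the cospan compatibility $u\circ(u'v'') = v\circ k$ is precisely the commutativity of $H$ already noted. Hence $m$ exists and is unique up to homotopy. For the dependent component, the equation $P^*(v^*u,e)\circ g = \ell$ together with $k = v^*u\circ m$ is exactly the factorisation problem solved by the universal property of the cartesian lift $P^*(v^*u,e)$ in the cartesian family $P$, so $g$ exists and is unique up to homotopy over the chosen $m$. As the space of valid $m$ is contractible, and for each such $m$ the space of valid $g$ is contractible, the total space of fillers $\pair{m}{g}$ is contractible — which is the $\isContr$ condition defining a cartesian arrow.

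The main obstacle I anticipate is organisational rather than conceptual: correctly translating the relation ``candidate lift composed with $G$ equals $H$'' across the identification $L(E) = B^{\Delta^1}\times_B E$ into its two separate components — the pasting of squares in $B^{\Delta^1}$ and the composition of dependent arrows in $E$ — and checking that the compatibility condition needed to invoke the pullback's universal property is supplied exactly by the commutativity of $H$. Once these are aligned, the two universal properties combine formally (a $\Sigma$ of a contractible base with contractible fibres), so the remaining care is the bookkeeping of the squares and of strict-versus-propositional equalities, handled as before by fibrant replacement and de-/strictification of the relevant extension types.
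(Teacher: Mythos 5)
Your proof is correct, but it takes a more hands-on route than the paper does. The paper disposes of this proposition in one line by appealing to the general closure machinery: $L(\pi)$ is the composite of the pullback fibration $\partial_0^*\pi$ (whose cartesian lifts are computed fiberwise from those of $\pi$, per the cited \cite[Subsection~5.2.3]{BW21}) with the codomain projection $\partial_1:B^{\Delta^1}\fibarr B$ (whose cartesian lifts are pullback squares, since $B$ has pullbacks), and cartesian lifts in a composite are obtained by composing the two lifting operations --- which immediately yields the formula $\pair{\mathrm{pb}(u,v)}{P^*(v^*u,e)}$. You instead verify the universal property from scratch, splitting a morphism of $L(E)=B^{\Delta^1}\times_B E$ into its square component and its dependent-arrow component, and observing that the filler problem factors as a contractible choice of mediating map $m$ into the pullback $a\times_b b'$ followed by, for each such $m$, a contractible choice of $g$ supplied by cartesianness of $P^*(v^*u,e)$; the $\Sigma$ of contractibles over a contractible base is then contractible. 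Both arguments are sound. The modular proof is shorter and reuses lemmas already needed elsewhere (closure of cartesian fibrations under pullback and composition, with explicit lift formulas), whereas yours is self-contained and makes visible exactly where the hypothesis that $B$ has pullbacks enters (namely, that pullback squares are the $\partial_1$-cartesian arrows). Your closing caveat about translating ``composite equals $H$'' into the two components is exactly the right thing to worry about, and your identification of the compatibility condition for the pullback's universal property with the commutativity of $H$ resolves it.
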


\begin{proof}
	This follows by computing the lifts fiberwisely,~\cf~\cite[Subsection~5.2.3]{BW21}.
\end{proof}

\begin{figure}
	\centering
	\[\begin{tikzcd}
		{L(\pi)} && {(u')^*e} && e \\
		&& {a \times_b b'} && b' \\
		&& {a} && b \\
		B && {a} && b
		\arrow["u", from=4-3, to=4-5]
		\arrow["{v'}"', from=2-3, to=3-3]
		\arrow["u"', from=3-3, to=3-5, swap]
		\arrow["{u'}"', from=2-3, to=2-5]
		\arrow["v", from=2-5, to=3-5]
		\arrow["{P^*(u',e)}", from=1-3, to=1-5, cart]
		\arrow[Rightarrow, dotted, no head, from=1-3, to=2-3]
		\arrow[Rightarrow, dotted, no head, from=1-5, to=2-5]
		\arrow["\lrcorner"{anchor=center, pos=0.125}, draw=none, from=2-3, to=3-5]
		\arrow["{\partial_1'}"', two heads, from=1-1, to=4-1]
	\end{tikzcd}\]
	\caption{Cartesian lifts in the family fibration}\label{fig:cart-lifts-famfib}
\end{figure}

\subsection{The Artin gluing fibration}

\begin{definition}[Artin gluing]
	Let $B,C$ be Rezk types and $F:B \to C$ a functor. Then the map $\gl(F): \comma{C}{F} \fibarr B$ constructed by pullback
	\[\begin{tikzcd}
		{C \downarrow F} && {C^{\Delta^1}} \\
		B && C
		\arrow["F"', from=2-1, to=2-3]
		\arrow["{\partial_1}", two heads, from=1-3, to=2-3]
		\arrow["{\mathrm{gl}(F)}"', two heads, from=1-1, to=2-1]
		\arrow[from=1-1, to=1-3]
		\arrow["\lrcorner"{anchor=center, pos=0.125}, draw=none, from=1-1, to=2-3]
	\end{tikzcd}\]
	is called the \emph{Artin gluing} (or simply \emph{gluing}) of $F$.
\end{definition}

Since the codomain projection $\partial_1 : C^{\Delta^1} \fibarr C$ is always a cocartesian fibration the gluing $\gl(F)$ is a cocartesian fibration as well. We will be concerned with the case that $C$ has all pullbacks. In this case $\partial_1: C^{\Delta^1} \fibarr C$ also is a cartesian fibration, hence a bifibration, and consequently the same is true for $\gl(F): \comma{C}{F} \fibarr B$.

Hence, from the description of the co-/cocartesian lifts in pullback fibrations, the respective lifts in $\gl(F)$ can be computed as illustrated in~\Cref{fig:lifts-gluing}. Given an arrow $u:b \to b'$ in $B$, a cocartesian lift w.r.t.~an arrow $v:c \to F\,b$ in $C$ is the square with boundary $[v,\id_b,F\,u,F\,u \circ v]$. A cartesian lift of $u:b\to b'$ w.r.t.~an arrow $w:b \to F\,c'$ is given by the pullback square $\mathrm{pb}(F\,u,w)$.

A vertical arrow in the gluing fibration is exactly given by a square of the form:
\[\begin{tikzcd}
	{C\downarrow F} && c && {c'} \\
	&& {F\,b} && {F\,b} \\
	B && b && b
	\arrow[from=1-3, to=2-3]
	\arrow[Rightarrow, no head, from=2-3, to=2-5]
	\arrow[from=1-3, to=1-5]
	\arrow[from=1-5, to=2-5]
	\arrow[Rightarrow, no head, from=3-3, to=3-5]
	\arrow[two heads, from=1-1, to=3-1]
\end{tikzcd}\]
\begin{figure}
	\[\begin{tikzcd}
		{C \downarrow F} && c && c && {F\,b \times_{F\,b'} c} && c \\
		&& {F\,b} && {F\,b'} && {F\,b} && {F\,b'} \\
		B && b && b && b && b
		\arrow["v"', from=1-3, to=2-3]
		\arrow["{F\,u}"', from=2-3, to=2-5]
		\arrow[Rightarrow, no head, from=1-3, to=1-5]
		\arrow["{Fu \circ v}"{pos=0.6}, from=1-5, to=2-5]
		\arrow[from=1-7, to=2-7]
		\arrow["{F\,u}"', from=2-7, to=2-9]
		\arrow[from=1-7, to=1-9]
		\arrow["w", from=1-9, to=2-9]
		\arrow["\lrcorner"{anchor=center, pos=0.125}, draw=none, from=1-7, to=2-9]
		\arrow["u", from=3-3, to=3-5]
		\arrow["u", from=3-7, to=3-9]
		\arrow[two heads, from=1-1, to=3-1]
	\end{tikzcd}\]
\caption{Cocartesian and cartesian lifts in the gluing fibration}
\label{fig:lifts-gluing}
\end{figure}

			\section{Beck--Chevalley families}

In categorical logic, Beck--Chevalley conditions say that substitution (\ie~pullback) commutes with existential quantification (\ie~dependent sums). This generalizes to the fibrational setting by considering cartesian arrows in place of substitutions (acting contravariantly), and cocartesian arrows in place of dependent sums (acting covariantly). We refer to~\cite[Section~6]{streicher2020fibered} for more explanation.

\subsection{Beck--Chevalley condition}

\begin{definition}[Beck--Chevalley condition, \protect{\cite[Definition~6.1]{streicher2020fibered}}]\label{def:bcc}
Let $P:B \to \UU$ be an isoinner family over a Rezk type. Then $P$ is said to satisfy the \emph{Beck--Chevalley condition (BCC)} if for any dependent square of the form
\[\begin{tikzcd}
	& {d'} && {e'} \\
	{\totalty{P}} & d && e \\
	& {a'} && {b'} \\
	B & a && b
	\arrow["{f'}", from=1-2, to=1-4]
	\arrow["{g'}"', from=1-2, to=2-2, cart]
	\arrow["g", from=1-4, to=2-4, cart]
	\arrow["f"', from=2-2, to=2-4, cocart]
	\arrow["{u'}", from=3-2, to=3-4]
	\arrow["u"', from=4-2, to=4-4]
	\arrow["v", from=3-4, to=4-4]
	\arrow["{v'}"', from=3-2, to=4-2]
	\arrow[two heads, from=2-1, to=4-1]
	\arrow["\lrcorner"{anchor=center, pos=0.125}, draw=none, from=3-2, to=4-4]
\end{tikzcd}\]
it holds that: if $f$ is cocartesian, and $g,g'$ are cartesian, then $f'$ is cocartesian.
\end{definition}

\begin{proposition}[Dual of the Beck--Chevalley conditions]
Let $P:B \to \UU$ be an isoinner family over a Rezk type. Then $P$ satisfies the BCC~\Cref{def:bcc} if and only if it satisfies the \emph{dual BCC}, which says: Given any dependent square
\[\begin{tikzcd}
	{d'} && {e'} \\
	d && e
	\arrow["{f'}", from=1-1, to=1-3]
	\arrow["{g'}"', from=1-1, to=2-1, cart]
	\arrow["g", from=1-3, to=2-3]
	\arrow["f"', from=2-1, to=2-3, cocart]
\end{tikzcd}\]
in $P$ over a pullback, then: If $f$ and $f'$ are cocartesian, and $g'$ is cartesian, then $g$ is cartesian as well.
\end{proposition}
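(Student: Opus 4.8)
The plan is to prove both implications by the same kind of argument, exploiting that over a Rezk type an isoinner family has \emph{unique} co-/cartesian lifts (\Cref{prop:cocart-lifts-unique-in-isoinner-fams} and its cartesian dual), together with the closure properties of co-/cartesian arrows (\Cref{prop:cocart-arr-closure}) and the fact that vertical isomorphisms are both cartesian and cocartesian (\Cref{lem:cocart-arrows-isos}). Since the two conditions concern the \emph{same} shape of dependent square sitting over the \emph{same} base pullback, each direction amounts to manufacturing one auxiliary square to which the hypothesised condition applies, and then transporting the conclusion back along a uniqueness-of-lifts identification.

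For the forward direction I would assume the BCC. Given a square with $f\colon d \cocartarr_u e$ and $f'\colon d' \cocartarr_{u'} e'$ cocartesian, $g'\colon d' \cartarr_{v'} d$ cartesian, and $g\colon e' \to_v e$, all over a pullback in $B$, I first form the cartesian lift $\bar g \defeq P^*(v,e)\colon v^*e \cartarr_v e$. Using the universal property of $\bar g$ together with the commutativity $v \circ u' = u \circ v'$ of the base, there is a unique filler $\tilde f\colon d' \to_{u'} v^*e$ with $\bar g \circ \tilde f = f \circ g'$. The square with bottom $f$, left $g'$, right $\bar g$ and top $\tilde f$ lies over the given pullback, so the BCC yields that $\tilde f$ is cocartesian. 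Now $\tilde f$ and $f'$ are both cocartesian lifts of $u'$ starting at $d'$, so by uniqueness of cocartesian lifts they are identified by a vertical isomorphism $\theta\colon v^*e \xrightarrow{\sim} e'$ with $\theta \circ \tilde f = f'$. Finally, the original square commutes, so $g \circ f' = f \circ g' = \bar g \circ \tilde f = (\bar g \circ \theta^{-1}) \circ f'$, and cancelling the cocartesian arrow $f'$ (its universal property makes two post-composites that agree after pre-composition equal) gives $g = \bar g \circ \theta^{-1}$. Since $\bar g$ is cartesian and $\theta^{-1}$ is a vertical isomorphism, hence cartesian, the composite $g$ is cartesian by closure under composition, as required.

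The converse, assuming the dual BCC and deriving the BCC, is entirely dual: starting from $f, g, g'$ one forms the cocartesian lift $\tilde f \defeq P_!(u',d')$, obtains via its universal property a unique filler $\hat g\colon (u')_!d' \to_v e$ with $\hat g \circ \tilde f = f \circ g'$, applies the dual BCC to conclude that $\hat g$ is cartesian, identifies $\hat g$ with $g$ up to a vertical isomorphism by uniqueness of cartesian lifts, and cancels the cartesian arrow $g$ to see that $f'$ factors as a vertical isomorphism followed by $\tilde f$, hence is cocartesian. The only genuinely delicate point — and the step I expect to be the main obstacle — is the bookkeeping around the uniqueness-of-lifts identifications: one must verify that the vertical comparison isomorphism supplied by \Cref{prop:cocart-lifts-unique-in-isoinner-fams} is compatible with the commuting square, so that the final cancellation is legitimate. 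This is precisely where isoinnerness over a Rezk base is used and where the strict boundary conditions of the extension types must be tracked carefully; everything else is a routine application of the closure and cancellation properties recorded above.
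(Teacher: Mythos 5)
Your proof is correct and follows essentially the same route as the paper: both arguments reduce the given square to one whose right-hand edge is the cartesian lift $P^*(v,e)$, apply the BCC over the same base pullback, and then conclude by a cancellation property of co-/cartesian arrows. The only cosmetic difference is that the paper phrases the comparison via the vertical–cartesian factorization $g = k \circ m$ and right cancellation of cocartesian arrows (so $m$ is vertical and cocartesian, hence invertible), whereas you invoke uniqueness of cocartesian lifts to produce the comparison isomorphism $\theta$ explicitly and then cancel along $f'$ — the two bookkeeping schemes are interchangeable, and your worry about compatibility of $\theta$ with the square is resolved exactly by the uniqueness clause in the universal property of $f'$ that you already cite.
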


\begin{proof}
	Assume, the BCC from~\Cref{def:bcc} is satisfied. We consider a square, factoring the arrow $g$ as a vertical arrow followed by a cartesian arrow, we obtain:
	\[\begin{tikzcd}
		{d'} && {e'} \\
		&&& {v^*e} \\
		d && e
		\arrow["{g'}"', from=1-1, to=3-1, cart]
		\arrow["f", from=3-1, to=3-3, cocart]
		\arrow["{f'}", from=1-1, to=1-3, cocart]
		\arrow["g", from=1-3, to=3-3]
		\arrow["m", squiggly, from=1-3, to=2-4]
		\arrow["k", from=2-4, to=3-3, cart]
		\arrow["{m'}"{description}, from=1-1, to=2-4, crossing over]
	\end{tikzcd}\]
	Then, applying the BCC to the ``smaller'' square (still lying over the same pullback since $m$ is vertical), $m' \defeq m \circ f'$ must be cocartesian. But then $m$ is, too, by right cancelation, as $f'$ is cocartesian. But since it is also vertical, it is an isomorphism, so $g$ is cartesian, as claimed.
	
	The converse direction is analogous.
\end{proof}

\subsection{Beck--Chevalley families}

Preparing the treatment of Moens fibrations, we state a few first results about BCC fibrations,~\aka~\emph{fibrations with internal sums}.

\begin{definition}
A map between $\pi:E \fibarr B$ is a \emph{Beck--Chevalley fibration} or a \emph{cartesian fibration with internal sums} if:
\begin{enumerate}
	\item The map $\pi$ is a bicartesian fibration, \ie~a cartesian and cocartesian fibration.
	\item The map $\pi$ satisfies the Beck--Chevalley condition.
\end{enumerate}
\end{definition}

Recall the criterion characterizing cocartesian fibrations via the existence of a fibered left adjoint which acts as the ``cocartesian transport'' functor. The Beck--Chevalley condition is equivalent to this functor being \emph{cartesian}.
\begin{theorem}[Beck--Chevalley fibrations via cartesianness of the coocartesian transport functor, \cf~\protect{\cite[Theorem~6.1]{streicher2020fibered}}]
Let $P: B \to \UU$ be a cartesian family over a Rezk type $B$ which has all pullbacks, with unstraightening $\pi:E \fibarr B$ a cartesian fibration. Then $\pi$ is a Beck--Chevalley fibration if and only if the mediating fibered functor 
\[ \iota_P: E \to_B \comma{\pi}{B}, ~ \iota_P(b,e)\defeq \angled{b, \id_b, e} \]
has a fibered left adjoint which is also a cartesian functor:
\[\begin{tikzcd}
	E && {\pi \downarrow B} \\
	& B
	\arrow["\pi"', two heads, from=1-1, to=2-2]
	\arrow["{\partial_1'}", two heads, from=1-3, to=2-2]
	\arrow[""{name=0, anchor=center, inner sep=0}, "{\tau_\pi}"{description}, curve={height=12pt}, dashed, from=1-3, to=1-1]
	\arrow[""{name=1, anchor=center, inner sep=0}, "{\iota_\pi}"{description}, curve={height=6pt}, from=1-1, to=1-3]
	\arrow["\dashv"{anchor=center, rotate=-91}, draw=none, from=0, to=1]
\end{tikzcd}\]
\end{theorem}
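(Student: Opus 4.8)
The plan is to reduce the statement to a single condition on the cocartesian transport functor by invoking the transport characterization of cocartesian families. Since $\pi$ is assumed cartesian, the content of being a \emph{bicartesian} fibration is exactly that $\pi$ is in addition cocartesian, and by \Cref{thm:cocartfams-via-transp} this is equivalent to the mediating functor $\iota_P$ admitting a fibered left adjoint, which is then, up to fibered equivalence, the cocartesian transport functor $\tau_P : \comma{\pi}{B} \to E$, $\tau_P\angled{u,e} \defeq \angled{\partial_1 u, u_! e}$. Because being a cartesian functor is invariant under fibered equivalence, the only remaining task is to show that, \emph{given} that $\pi$ is bicartesian, the functor $\tau_P$ is cartesian if and only if $\pi$ satisfies the Beck--Chevalley condition. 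Both implications of the theorem then follow: from left to right, a Beck--Chevalley fibration is cocartesian, so $\tau_P$ exists and is cartesian; from right to left, the existence of a cartesian fibered left adjoint yields cocartesianness via \Cref{thm:cocartfams-via-transp} together with the BCC.

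To analyze when $\tau_P$ preserves cartesian arrows, I would first recall from \Cref{prop:cartlift-famfib} the explicit shape of the cartesian arrows in the family fibration $\partial_1' : \comma{\pi}{B} \fibarr B$. A canonical cartesian lift of $u : a \to b$ with target $\angled{v : b' \to b, e : P\,b'}$ consists, on the base, of the pullback square $\mathrm{pb}(u,v)$ with projections $v'' : a\times_b b' \to a$ and $u' : a \times_b b' \to b'$, and, on the fibers, of the cartesian arrow $P^*(u',e) : (u')^* e \cartarr e$ over $u'$. Applying $\tau_P$ amounts to taking cocartesian transports along $v''$ and along $v$, producing a dependent square in $E$ whose sides are the cocartesian lifts $P_!(v'', (u')^* e)$ and $P_!(v, e)$, the cartesian arrow $P^*(u',e)$, and the image arrow $\tau_P(\cdots)$ over $u$, all lying over the chosen pullback square in $B$. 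Up to transposition of the square this is precisely the configuration occurring in the \emph{dual} form of the Beck--Chevalley condition proved above, with two parallel cocartesian sides and one cartesian side, and the statement that $\tau_P$ preserves this cartesian arrow is exactly the assertion that the fourth side is cartesian.

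From here the equivalence is a direct matching of hypotheses. For the direction from BCC to cartesianness of $\tau_P$, I would apply the dual Beck--Chevalley condition to the square just described to conclude that $\tau_P(\cdots)$ is cartesian; since every cartesian arrow of $\comma{\pi}{B}$ factors as such a canonical cartesian lift followed by a vertical isomorphism, and $\tau_P$ is fibered and hence sends vertical isomorphisms to vertical isomorphisms, this suffices to show that $\tau_P$ preserves all cartesian arrows. For the converse, given an arbitrary dual-Beck--Chevalley square with cocartesian horizontal sides and cartesian left side over a pullback, I would use uniqueness of co-/cartesian lifts (\Cref{prop:cocart-lifts-unique-in-isoinner-fams}) to identify it, up to vertical isomorphism, with the $\tau_P$-image of the corresponding cartesian lift in $\comma{\pi}{B}$; cartesianness of $\tau_P$ then forces the remaining side to be cartesian, which is the dual BCC and hence the BCC.

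The main obstacle I anticipate is the bookkeeping in the middle step: correctly identifying the square in $E$ produced by $\tau_P$, verifying that it lies over the pullback square $\mathrm{pb}(u,v)$ in $B$, and confirming that the co-/cartesianness labels of its four sides match the (dual) Beck--Chevalley pattern after transposition. In particular one must track the interaction between the pullback data supplied by \Cref{prop:cartlift-famfib} and the cocartesian transports introduced by $\tau_P$, and check that both a general cartesian arrow of $\comma{\pi}{B}$ and a general dual-BCC square reduce, without loss of generality and only up to vertical isomorphism, to the canonical configuration. The adjunction and functoriality inputs are all formal consequences of the preceding results, so the genuine difficulty lies in this geometric identification rather than in any additional homotopical argument.
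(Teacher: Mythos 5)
Your proposal is correct and follows essentially the same route as the paper's proof: reduce via \Cref{thm:cocartfams-via-transp} to the question of whether the transport functor $\tau_P$ is cartesian, use \Cref{prop:cartlift-famfib} to describe the cartesian lifts in the family fibration, and observe that $\tau_P$ sending these to cartesian arrows is exactly the (dual form of the) Beck--Chevalley condition. The paper states this last equivalence in one sentence, whereas you correctly make explicit that the resulting square matches the dual BCC pattern and that general cartesian arrows reduce to the canonical lifts up to vertical isomorphism.
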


\begin{proof}
Recall from \Cref{thm:cocartfams-via-transp} that the existence of the fibered left adjoint is equivalent to $\pi:E \fibarr B$ being a \emph{cocartesian} fibration (already without requiring $B$ to have pullbacks and $\pi$ to be a cartesian fibration). For $b:B$ the action of the fiberwise map $\tau \defeq \tau_\pi$ at $b:B$ is given by
\[ \tau_b(v:a \to b, e:P(a)) \defeq \partial_1 \, P_!(v,e).\]
Let $u:a \to b$ be a morphism in $B$. Over $u$, the action on arrows of $\tau$ maps a pair $\pair{\sigma}{f}$ consisting of a commutative square $\sigma$ in $B$ with boundary $[v',u',u,v]$ and a dependent arrow $f:d \to^P_{u'} e$ to the dependent arrow
\[ \tau_u(\sigma,f) = \tyfill_{P_!(v,e)\circ f}(P_!(v',e)),\]
cf.~\Cref{fig:cocart-transp}.
\begin{figure}
\[\begin{tikzcd}
	& d && e && d && e \\
	{} & {a'} && {b'} \\
	& a && b && {v'_!(d)} && {v_!(e)}
	\arrow["{v'}"', from=2-2, to=3-2]
	\arrow[""{name=0, anchor=center, inner sep=0}, "u"', from=3-2, to=3-4]
	\arrow[""{name=1, anchor=center, inner sep=0}, "{u'}", from=2-2, to=2-4]
	\arrow["v", from=2-4, to=3-4]
	\arrow["f", from=1-2, to=1-4]
	\arrow[Rightarrow, dotted, no head, from=1-2, to=2-2]
	\arrow[Rightarrow, dotted, no head, from=1-4, to=2-4]
	\arrow["f", from=1-6, to=1-8]
	\arrow[""{name=2, anchor=center, inner sep=0}, "{P_!(v',d)}"{description, pos=0.3}, from=1-6, to=3-6, cocart]
	\arrow["{\tau_u(\sigma,f)}"', dashed, from=3-6, to=3-8]
	\arrow["{P_!(v,e)}"{description, pos=0.3}, from=1-8, to=3-8, cocart]
	\arrow["\rightsquigarrow"{description}, draw=none, from=2-4, to=2]
	\arrow["\sigma"{description}, Rightarrow, draw=none, from=1, to=0]
\end{tikzcd}\]
\caption{Action on arrows of $\tau$ (over $u:a \to b$ in $B$)}
\label{fig:cocart-transp}
\end{figure} 
Recall the description of cartesian lifts in the family fibration, \Cref{prop:cartlift-famfib}. Then, $\tau$ mapping these $L(P)$-cartesian lifts to $P$-cartesian arrows is equivalent to the Beck--Chevalley condition.
\end{proof}

Next is a useful result stating that any functor (between Rezk types with pullbacks) preserves pullback if and only if its Artin gluing satisfies the Beck--Chevalley condition.

\begin{proposition}[Internal sums for gluing, \protect{\cite[Lemma~13.2]{streicher2020fibered}}]\label{prop:bcc-for-pb-pres}
	Let $A$ and $B$ be Rezk types with pullbacks and $F:A \to B$ an arbitrary functor (hence an isoinner map). Then the following are equivalent:
	\begin{enumerate}
		\item\label{it:gl-intsums-i} The functor $F$ preserves pullbacks.
		\item\label{it:gl-intsums-ii} The gluing fibration $\gl(F) \jdeq \partial_1: \comma{B}{F} \fibarr A$ is a Beck--Chevalley fibration.
	\end{enumerate}
\end{proposition}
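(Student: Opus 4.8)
The plan is to first dispense with the bicartesian-fibration part of the definition of a Beck--Chevalley fibration, which is automatic here: since $B$ has all pullbacks, the codomain projection $\partial_1 : B^{\Delta^1} \fibarr B$ is a bifibration, and hence so is its pullback $\gl(F) : \comma{B}{F} \fibarr A$ along $F$, exactly as recorded in the discussion preceding the statement. Thus the entire content of the claimed equivalence is that $F$ preserves pullbacks if and only if $\gl(F)$ satisfies the Beck--Chevalley condition of \Cref{def:bcc}. Note also that $\gl(F)$ is isoinner automatically, being a map between Rezk types.

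Next I would unwind the BCC using the explicit descriptions of the co-/cartesian lifts and the vertical arrows in the gluing fibration from \Cref{fig:lifts-gluing}. Recall that an object of $\comma{B}{F}$ over $a:A$ is an arrow $\delta : X \to F\,a$ in $B$; cocartesian transport along $u : a \to b$ in $A$ post-composes with $F\,u$ (yielding $F\,u \circ \delta : X \to F\,b$), while cartesian transport along $v' : a' \to a$ forms the pullback of $\delta$ along $F\,v'$. Fixing a pullback square in the base $A$ with horizontal legs $u, u'$ and vertical legs $v, v'$ as labelled in \Cref{def:bcc}, and fiber data with $f$ cocartesian over $u$ and $g, g'$ cartesian over $v, v'$, the object $d$ is some $\delta : X \to F\,a$, the object $e$ is $F\,u \circ \delta$, the object $e'$ is the pullback of $F\,u \circ \delta$ along $F\,v$, and $d'$ is the pullback of $\delta$ along $F\,v'$. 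The BCC for this instance asserts precisely that $f' : d' \to e'$ is cocartesian, i.e.\ that the canonical comparison vertical arrow in the fiber over $b'$, from the object $F\,a' \times_{F\,a} X$ of the cocartesian transport of $d'$ to the object $F\,b' \times_{F\,b} X$ of $e'$, is an equivalence.

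The heart of the argument is a pullback-pasting computation. Applying $F$ to the base pullback square gives a square in $B$ whose apex $F\,a'$ receives a canonical map $\kappa : F\,a' \to F\,b' \times_{F\,b} F\,a$, and \emph{$F$ preserving this pullback} means exactly that $\kappa$ is an equivalence. Factoring $F\,u \circ \delta$ as $X \xrightarrow{\delta} F\,a \xrightarrow{F\,u} F\,b$ and pasting the pullback of $\delta$ on top of the square $F\,b' \times_{F\,b} F\,a$, I would identify the object of $e'$ with the pullback of $\delta$ along $F\,b' \times_{F\,b} F\,a \to F\,a$; since $F\,v' = \pi_{F\,a} \circ \kappa$, the object of the cocartesian transport of $d'$ is then the base change of this pullback along $\kappa$, and the comparison map $f'$ is precisely the base change of $\kappa$. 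Hence if $F$ preserves the square, $\kappa$, and with it $f'$, is an equivalence for \emph{every} choice of $\delta : X \to F\,a$, which gives $(\ref{it:gl-intsums-i}) \Rightarrow (\ref{it:gl-intsums-ii})$ (by the pasting law, e.g.\ \cite[Remark~26.1.5(ii)]{RijIntro}).

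For the converse I would specialize the BCC instance to $X \defeq F\,a$ and $\delta \defeq \id_{F\,a}$. Then the cartesian transport of $d'$ degenerates to $F\,a'$ itself (pullback along an identity), the object of $e'$ becomes $F\,b' \times_{F\,b} F\,a$, and the comparison map $f'$ is literally $\kappa$; its being an equivalence is by definition preservation of the chosen base pullback, and ranging over all base pullback squares yields $(\ref{it:gl-intsums-ii}) \Rightarrow (\ref{it:gl-intsums-i})$. The main obstacle I anticipate is purely bookkeeping: organizing the nested pullbacks and the factorization of $F\,u \circ \delta$ so that the pasting lemma applies cleanly, and verifying that the arrow the BCC produces really \emph{is} the canonical pasting-induced map $f'$ rather than merely some vertical arrow between the correct objects. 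No genuinely new idea beyond the pullback pasting law is required.
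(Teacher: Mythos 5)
Your proposal is correct and follows essentially the same route as the paper: both directions come down to the pullback pasting law, with the forward implication identifying the comparison (gap) arrow $f'$ as an equivalence via pasting/cancellation of pullbacks (the paper's $r'$ is your base change of $\kappa$), and the converse obtained by specializing the fiber datum to an identity so that $f'$ becomes the canonical map $F\,a' \to F\,b' \times_{F\,b} F\,a$ itself. The paper packages the same computation as two commutative cubes over the base pullback square, but the content is identical.
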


\begin{proof}
	We note first that since $B$ has all pullbacks, $\gl(F): \comma{B}{F} \fibarr A$ is a fibration since it is a pullback of the fundamental fibration $\partial_1 : B^{\Delta^1} \fibarr B$. Hence, $\gl(F)$ is a bifibration in this case.
	
	\begin{description}
		\item[$\ref{it:gl-intsums-i} \implies \ref{it:gl-intsums-ii}$:] For a pullback square in $A$ we consider a square lying over in the gluing fibration which is a cube as in~\Cref{fig:gluing-depsqare}
		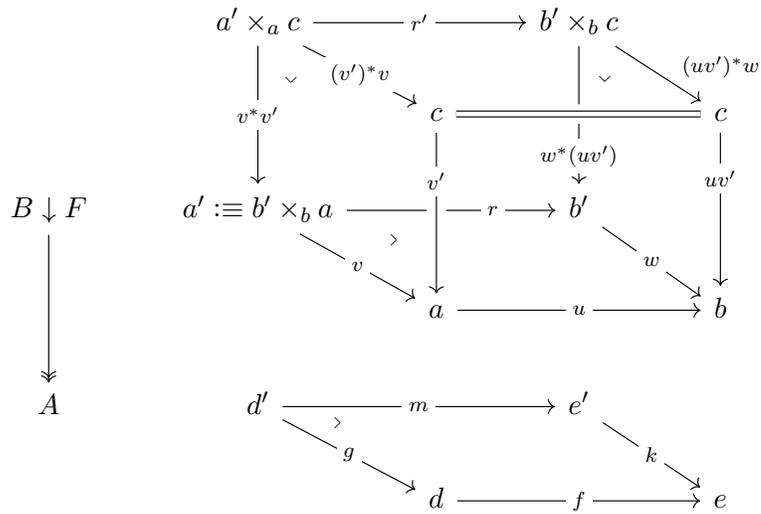
\begin{figure}
			\[\begin{tikzcd}
				& {a'\times_ac} && {b'\times_bc} \\
				&& c && c \\
				{B \downarrow F} & {a'\defeq b'\times_ba} && {b'} \\
				&& a && b \\
				A & {d'} && {e'} \\
				&& d && e
				\arrow["m"{description}, from=5-2, to=5-4]
				\arrow["g"{description}, from=5-2, to=6-3]
				\arrow["f"{description}, from=6-3, to=6-5]
				\arrow["k"{description}, from=5-4, to=6-5]
				\arrow["\lrcorner"{anchor=center, pos=0.125, rotate=45}, draw=none, from=5-2, to=6-5]
				\arrow["v"{description}, from=3-2, to=4-3]
				\arrow["u"{description}, from=4-3, to=4-5]
				\arrow["w"{description}, from=3-4, to=4-5]
				\arrow["r"{description, pos=0.7}, from=3-2, to=3-4]
				\arrow["{v^*v'}"{description}, from=1-2, to=3-2]
				\arrow["{(v')^*v}"{description}, from=1-2, to=2-3]
				\arrow["{uv'}"{description, pos=0.3}, from=2-5, to=4-5]
				\arrow["{r'}"{description}, from=1-2, to=1-4]
				\arrow["{(uv')^*w}"{pos=0.7}, from=1-4, to=2-5]
				\arrow["{w^*(uv')}"{description, pos=0.8}, from=1-4, to=3-4]
				\arrow["\lrcorner"{anchor=center, pos=0.125, rotate=-45}, draw=none, from=1-4, to=4-5]
				\arrow["\lrcorner"{anchor=center, pos=0.125, rotate=-45}, draw=none, from=1-2, to=4-3]
				\arrow["\lrcorner"{anchor=center, pos=0.125, rotate=45}, draw=none, from=3-2, to=4-5]
				\arrow[two heads, from=3-1, to=5-1]
				\arrow[Rightarrow, no head, from=2-3, to=2-5, crossing over]
				\arrow["{v'}"{description, pos=0.3}, from=2-3, to=4-3, crossing over]
			\end{tikzcd}\]
			\caption{Verifying the Beck--Chevalley condition}
			\label{fig:gluing-depsqare}
		\end{figure}
		where the pullback square at the bottom of the cube is the image of the given pullback square in $C$. By composition and right cancelation of pullbacks, the square on the left is also a pullback:
		\[\begin{tikzcd}
			{a' \times_a c} && {b'\times_b c} && {b'} \\
			c && c && b
			\arrow["{(v')^*v}"', from=1-1, to=2-1]
			\arrow[Rightarrow, no head, from=2-1, to=2-3]
			\arrow["{r'}", from=1-1, to=1-3]
			\arrow["{(uv')^*w}"', from=1-3, to=2-3]
			\arrow["{w^*(uv')}", from=1-3, to=1-5]
			\arrow["{uv'}"', from=2-3, to=2-5]
			\arrow["w", from=1-5, to=2-5]
			\arrow["\lrcorner"{anchor=center, pos=0.125}, draw=none, from=1-3, to=2-5]
			\arrow["\lrcorner"{anchor=center, pos=0.125}, draw=none, from=1-1, to=2-3]
		\end{tikzcd}\]
		Then, $r'$ turns out to be an isomorphism. By Rezk-completeness, it can be taken to be the identity $\id_c: c \to c$, exhibiting the back square of the cube as a cocartesian arrow in the gluing fibration, as desired.
		
		\item[$\ref{it:gl-intsums-ii} \implies \ref{it:gl-intsums-i}$:] From the Beck--Chevalley condition we obtain, for any pullback square in $A$ a commutative cube above as follows:
		\[\begin{tikzcd}
			& {a'} && {a'} \\
			&& c && c \\
			{B \downarrow F} & {a'} && {b'} \\
			&& a && b \\
			A & {d'} && {e'} \\
			&& d && e
			\arrow["m"{description}, from=5-2, to=5-4]
			\arrow["g"{description}, from=5-2, to=6-3]
			\arrow["f"{description}, from=6-3, to=6-5]
			\arrow["k"{description}, from=5-4, to=6-5]
			\arrow["\lrcorner"{anchor=center, pos=0.125, rotate=45}, draw=none, from=5-2, to=6-5]
			\arrow["v"{description}, from=3-2, to=4-3]
			\arrow["u"{description}, from=4-3, to=4-5]
			\arrow["w"{description}, from=3-4, to=4-5]
			\arrow["r"{description, pos=0.7}, from=3-2, to=3-4]
			\arrow[Rightarrow, no head, from=1-2, to=3-2]
			\arrow[from=1-2, to=2-3]
			\arrow["{uv'}"{description, pos=0.3}, from=2-5, to=4-5]
			\arrow[Rightarrow, no head, from=1-2, to=1-4]
			\arrow["{(uv')^*w}"{pos=0.7}, from=1-4, to=2-5]
			\arrow["{w^*(uv')}"{description, pos=0.75}, from=1-4, to=3-4]
			\arrow["\lrcorner"{anchor=center, pos=0.125, rotate=-45}, draw=none, from=1-4, to=4-5]
			\arrow["\lrcorner"{anchor=center, pos=0.125, rotate=-45}, draw=none, from=1-2, to=4-3]
			\arrow[two heads, from=3-1, to=5-1]
			\arrow[Rightarrow, no head, from=2-3, to=2-5, crossing over]
			\arrow[Rightarrow, no head, from=2-3, to=4-3, crossing over]
		\end{tikzcd}\]
		Since the right outer square is a pullback, by composition also the bottom square is. This shows that $\gl(F)$ preserves pullbacks.
	\end{description}
\end{proof}

\section{Moens families}

\subsection{(Pre-)Moens families and internal sums}

Recall from classical $1$-category theory that a category $\mathbb C$ with pullbacks and coproducts is \emph{extensive} (or \emph{lextensive} depending on convention) if and only if, for all small families $(A_i)_{i\in I}$ the induced functor $\prod_{i \in I} \mathbb C/A_i \to \mathbb C/\coprod_{i\in I} A_i$ is an equivalence. This is equivalent to the condition that injections of finite sums are stable under pullback, and for any family of squares
\[\begin{tikzcd}
	{B_k} && B \\
	{A_k} && {\coprod_{i\in I} A_i}
	\arrow["{g_k}", from=1-1, to=1-3]
	\arrow["{f_k}"', from=1-1, to=2-1]
	\arrow[from=2-1, to=2-3]
	\arrow[from=1-3, to=2-3]
\end{tikzcd}\]
all of these are pullbacks if and only if all $g_k: B_k \to B$ are coproduct cones. This generalizes fibrationally as follows.

\begin{definition}[Stable and disjoint internal sums]
	Let $P:B \to \UU$ be a lex fibration with internal sums over a Rezk type $B$. Then $P$ has \emph{stable} internal sums if cocartesian arrows are stable under arbitrary pullbacks. The internal sums of $P$ are \emph{disjoint}\footnote{In a category, a coproduct is \emph{disjoint} if the inclusion maps are monomorphisms, and the intersection of the summands is an initial object.} if for every cocartesian arrow $f:d \cocartarr^P e$ the fibered diagonal is cocartesian, too:
	\[\begin{tikzcd}
		d \\
		& {d \times_ed} && d \\
		& d && e
		\arrow["f"', from=3-2, to=3-4, cocart]
		\arrow[from=2-2, to=2-4]
		\arrow["f", from=2-4, to=3-4, cocart]
		\arrow[curve={height=-24pt}, Rightarrow, no head, from=1-1, to=2-4]
		\arrow[curve={height=18pt}, Rightarrow, no head, from=1-1, to=3-2]
		\arrow[from=2-2, to=3-2]
		\arrow["{\delta_f}", from=1-1, to=2-2, cocart]
		\arrow["\lrcorner"{anchor=center, pos=0.125}, draw=none, from=2-2, to=3-4]
	\end{tikzcd}\]
\end{definition}

\begin{definition}[(Pre-)Moens families]
Let $B$ be a lex Rezk type. A lex Beck--Chevalley family $P:B \to \UU$ is a \emph{pre-Moens family} if it has stable internal sums. We call a pre-Moens family $P:B \to \UU$ \emph{Moens family} (or \emph{extensive family}  or \emph{pre-geometric family}) if, moreover, all its (stable) internal sums are also disjoint.
\end{definition}

An immediate result is the following.
\begin{lemma}[\cite{streicher2020fibered}, Lem.~15.1]\label{lem:cocart-leg-pb}
	Let $B$ be a lex Rezk type and $\pi:E \fibarr B$ be a Moens fibration. Then, for $d,e,e':E$, and cocartesian morphisms $g:e \cocartarr e'$, in any pullback of the following form, the gap map is cocartesian, too:
\[\begin{tikzcd}
	d \\
	& {d'} && e \\
	& d && {e'}
	\arrow[from=2-2, to=3-2]
	\arrow["h"', from=3-2, to=3-4]
	\arrow[from=2-2, to=2-4]
	\arrow["g", from=2-4, to=3-4,cocart]
	\arrow["f", curve={height=-12pt}, from=1-1, to=2-4]
	\arrow[curve={height=12pt}, Rightarrow, no head, from=1-1, to=3-2]
	\arrow["\lrcorner"{anchor=center, pos=0.125}, draw=none, from=2-2, to=3-4]
	\arrow["k", dashed, from=1-1, to=2-2, cocart]
\end{tikzcd}\]
\end{lemma}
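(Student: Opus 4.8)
The plan is to realize the gap map $k$ as a pullback of the fibered diagonal of $g$ and then to combine disjointness with stability. First I would record the cone compatibility: since $k$ is the gap map of the cone with legs $\id_d \colon d \to d$ and $f \colon d \to e$ over the cospan $e \xrightarrow{g} e' \xleftarrow{h} d$ that defines the pullback $d'$, comparing the two legs after composing into $e'$ forces the identification $h = g \circ f$. (As a side remark, stability already shows that the left leg $d' \to d$, being the pullback of $g$ along $h$, is cocartesian; but this alone does not yield $k$, which is why disjointness is needed.)

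Next I would form the fibered kernel pair $e \times_{e'} e$ of $g$, together with its two projections and its fibered diagonal $\delta_g \colon e \to e \times_{e'} e$. Since $g \colon e \cocartarr e'$ is cocartesian and $P$ is a Moens family, disjointness of its internal sums applied to $g$ yields that $\delta_g$ is cocartesian. The central step is then to exhibit $k$ as a pullback of $\delta_g$ along $f$: using $h = g \circ f$ one obtains a comparison map $\bar f \colon d' \to e \times_{e'} e$ over $f$ (sending the legs $\langle x, z \rangle$ of a point of $d'$ to $\langle x, f\,z \rangle$, well defined precisely because $g\,x = h\,z = g(f\,z)$), and I would check that the square with edges $k$, $\bar f$, $f$, $\delta_g$ is a pullback — equivalently, that the pullback of $\bar f$ along $\delta_g$ is canonically $d$, with its two legs recovering $f$ and $k$. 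In the synthetic setting I would carry this out through the universal properties of the comma/pullback types and pullback pasting rather than by element chasing.

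The main obstacle is precisely this identification: verifying strictly that the constructed square is a genuine pullback in $\totalty{P}$ lying over the corresponding pullback in $B$, so that the fibrational notion of cocartesian arrow is applicable to $k$. Here I would rely on lexness of $P$ (so that $\pi$ preserves and reflects the relevant pullbacks) and on the pasting lemmas for dependent squares established earlier. Once $k$ is displayed as the pullback of the cocartesian arrow $\delta_g$, stability of the internal sums of $P$ — cocartesian arrows are stable under arbitrary pullback — immediately gives that $k$ is cocartesian, which is the claim.
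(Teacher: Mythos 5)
Your proposal is correct and follows essentially the same route as the paper: form the fibered kernel pair $e \times_{e'} e$, use disjointness to see that $\delta_g$ is cocartesian, exhibit $k$ as a pullback of $\delta_g$ (via the comparison map $d' \to e \times_{e'} e$ induced by $h = g \circ f$), and conclude by stability of cocartesian arrows under pullback. The paper records the same decomposition in a single pasted diagram of pullback squares and draws the identical conclusion.
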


\begin{proof}
Consider the following diagram, arising from canonical factorizations over the pullbacks $d' \jdeq d \times_{e'} e$ and $e \times_e e'$, resp:
	\[\begin{tikzcd}
		d && e \\
		{d'} && {e \times_{e'} e} && e \\
		d && e && {e'}
		\arrow["k"', from=1-1, to=2-1, cocart]
		\arrow[from=2-1, to=2-3]
		\arrow["f", from=1-1, to=1-3]
		\arrow["{\delta_g}"', from=1-3, to=2-3, cocart]
		\arrow[from=2-1, to=3-1]
		\arrow["f"', from=3-1, to=3-3]
		\arrow[from=2-3, to=3-3]
		\arrow[from=2-3, to=2-5]
		\arrow["g"', from=3-3, to=3-5, cocart]
		\arrow["g", from=2-5, to=3-5, cocart]
		\arrow[Rightarrow, no head, from=1-3, to=2-5]
		\arrow["\lrcorner"{anchor=center, pos=0.125}, draw=none, from=1-1, to=2-3]
		\arrow["\lrcorner"{anchor=center, pos=0.125}, draw=none, from=2-1, to=3-3]
		\arrow["\lrcorner"{anchor=center, pos=0.125}, draw=none, from=2-3, to=3-5]
		\arrow[curve={height=24pt}, Rightarrow, no head, from=1-1, to=3-1, crossing over]
		\arrow[curve={height=40pt}, Rightarrow, no head, from=1-3, to=3-3, crossing over]
	\end{tikzcd}\]
By disjointness of sums, $\delta_g: e \cocartarr e \times_{e'} e$ is cocartesian, and by general pullback stability, so is $k:d \cocartarr d'$.
\end{proof}

The preceding lemma can be used to characterize disjointness given that stable internal sums exist. 

\begin{proposition}[Characterizations of disjointness of stable internal sums, \cite{streicher2020fibered}, Lem.~15.2]\label{prop:char-disj-stable}
Let $B$ be a lex Rezk type and $P:B \to \UU$ be a pre-Moens family. Then the following are equivalent:\footnote{Streicher \cite{streicher2020fibered} points out that the first three points only require stability of cocartesian arrows along \emph{vertical} maps.}
\begin{enumerate}
	\item\label{it:disj-pre-moens-i} The family $P$ is a Moens family, \ie~internal sums are disjoint (and stable).
	\item\label{it:disj-pre-moens-ii} Cocartesian arrows in $P$ satisfy left canceling, \ie~if $g$, $g \circ f$ are cocartesian then so is $f$.
	\item\label{it:disj-pre-moens-iii} Cocartesian transport is conservative, \ie~if $k$ is vertical, and both $f$ and $f \circ k$ are cocartesian, then $k$ is an isomorphism.
	\item\label{it:disj-pre-moens-iv} Any dependent square in $P$ of the form
	\[\begin{tikzcd}
		d && e \\
		{d'} && {e'}
		\arrow["f"', squiggly, from=1-1, to=2-1]
		\arrow["g"', from=2-1, to=2-3, cocart]
		\arrow["h", from=1-1, to=1-3, cocart]
		\arrow["k", squiggly, from=1-3, to=2-3]
	\end{tikzcd}\]
where $f$, $k$ are vertical and $g$, $h$ are cocartesian is a pullback.
\end{enumerate}	
\end{proposition}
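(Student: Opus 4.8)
The plan is to establish the three biconditionals $(\ref{it:disj-pre-moens-i})\Leftrightarrow(\ref{it:disj-pre-moens-ii})$, $(\ref{it:disj-pre-moens-ii})\Leftrightarrow(\ref{it:disj-pre-moens-iii})$ and $(\ref{it:disj-pre-moens-iii})\Leftrightarrow(\ref{it:disj-pre-moens-iv})$, so that all four conditions become equivalent. Throughout I would exploit the standing pre-Moens hypotheses that cocartesian arrows are \emph{stable} under pullback and (by \Cref{prop:cocart-arr-closure}) closed under composition and right cancellation. I would also repeatedly use three basic facts recorded earlier: dependent isomorphisms, in particular identities, are cocartesian (\Cref{lem:cocart-arrows-isos}(\ref{it:depisos-are-cocart})); a cocartesian arrow lying over an isomorphism of the base is itself an isomorphism (\Cref{lem:cocart-arrows-isos}(\ref{it:cocart-arrows-over-ids-are-isos})); and every dependent arrow factors as a cocartesian arrow followed by a vertical one (as recorded after \Cref{def:vert-arr}).

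For $(\ref{it:disj-pre-moens-ii})\Rightarrow(\ref{it:disj-pre-moens-i})$ I would argue directly: given a cocartesian $f:d\cocartarr e$, the projection $p_1\colon d\times_e d\to d$ is a pullback of $f$ and hence cocartesian by stability, while $p_1\circ\delta_f=\id_d$ is cocartesian as an identity; left canceling then forces $\delta_f$ to be cocartesian, so the (already stable) internal sums are disjoint. The converse $(\ref{it:disj-pre-moens-i})\Rightarrow(\ref{it:disj-pre-moens-ii})$ is where the preceding \Cref{lem:cocart-leg-pb} does the work: for $g$ and $g\circ f$ cocartesian, form the pullback $d'\defeq d\times_{e'}e$ of $g\circ f$ along $g$; since $P$ is assumed Moens, \Cref{lem:cocart-leg-pb} shows the gap map $k\colon d\cocartarr d'$ determined by $\id_d$ and $f$ is cocartesian, while the projection $d'\to e$, being a pullback of the cocartesian $g\circ f$, is cocartesian by stability. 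Then $f$ is the composite of these two cocartesian arrows, hence cocartesian by \Cref{prop:cocart-arr-closure}(\ref{it:cocart-arr-comp}).

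For $(\ref{it:disj-pre-moens-ii})\Rightarrow(\ref{it:disj-pre-moens-iii})$: given $k$ vertical with $f$ and $f\circ k$ cocartesian, left canceling yields $k$ cocartesian, and being vertical (over an isomorphism) it is an isomorphism by \Cref{lem:cocart-arrows-isos}(\ref{it:cocart-arrows-over-ids-are-isos}). For $(\ref{it:disj-pre-moens-iii})\Rightarrow(\ref{it:disj-pre-moens-ii})$: given $g$ and $g\circ f$ cocartesian, factor $f=v\circ c$ with $c$ cocartesian and $v$ vertical; right canceling (\Cref{prop:cocart-arr-closure}(\ref{it:cocart-arr-cancel})) applied to $c$ and $g\circ f=(g\circ v)\circ c$ shows $g\circ v$ cocartesian, whence conservativity applied to the vertical $v$ with $g,\,g\circ v$ cocartesian makes $v$ an isomorphism and $f$ cocartesian. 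For $(\ref{it:disj-pre-moens-iii})\Rightarrow(\ref{it:disj-pre-moens-iv})$ I would take the square in question, form the genuine pullback $P$ of $d'\xrightarrow{g}e'\xleftarrow{k}e$, and observe that the comparison map $m\colon d\to P$ induced by $f$ and $h$ is vertical (a short base computation, using that $k,f$ are vertical so that the base square is a pullback with isomorphism legs), that its composite with the projection $P\to e$ is the cocartesian $h$, and that this projection is cocartesian by stability; conservativity then forces $m$ to be an isomorphism, so the square is a pullback. Conversely, for $(\ref{it:disj-pre-moens-iv})\Rightarrow(\ref{it:disj-pre-moens-iii})$, the square with top $f\circ k$, bottom $f$, left leg the vertical $k$ and right leg $\id$ meets the hypotheses of $(\ref{it:disj-pre-moens-iv})$, so it is a pullback; but the pullback of $f$ along $\id$ has $\id$ on the corresponding leg, forcing $k$ to be an isomorphism.

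The main obstacle I anticipate is not any single deep idea but the careful bookkeeping of variances together with the correct invocations of stability: I must verify at each step that the relevant pullback legs and gap maps genuinely inherit cocartesianness, and in particular that the comparison map $m$ in $(\ref{it:disj-pre-moens-iii})\Rightarrow(\ref{it:disj-pre-moens-iv})$ is truly vertical, which rests on the base squares being pullbacks with isomorphism legs. A secondary subtlety, flagged in the footnote to the statement, is that conditions $(\ref{it:disj-pre-moens-i})$--$(\ref{it:disj-pre-moens-iii})$ only require stability of cocartesian arrows along \emph{vertical} maps; so in those implications I would be careful to invoke only vertical-pullback stability, reserving the full stability of a pre-Moens family for the passages genuinely involving pullbacks along arbitrary arrows.
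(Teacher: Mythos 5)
Your proposal is correct and follows essentially the same route as the paper: the same chain of biconditionals, with \Cref{lem:cocart-leg-pb} plus stability driving $(\ref{it:disj-pre-moens-i})\Leftrightarrow(\ref{it:disj-pre-moens-ii})$, the vertical--cocartesian factorization plus right cancellation for $(\ref{it:disj-pre-moens-ii})\Leftrightarrow(\ref{it:disj-pre-moens-iii})$, and the pullback comparison map for $(\ref{it:disj-pre-moens-iii})\Leftrightarrow(\ref{it:disj-pre-moens-iv})$. You even complete the $(\ref{it:disj-pre-moens-iii})\Rightarrow(\ref{it:disj-pre-moens-ii})$ step (concluding that the vertical factor is invertible) more explicitly than the paper does.
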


\begin{proof}
	\begin{description}
		\item[$\ref{it:disj-pre-moens-i}  \implies \ref{it:disj-pre-moens-ii}$:] Let $f:e \to e'$ and $g:e' \cocartarr e''$ s.t.~$g \circ f: e \cocartarr e''$ is cocartesian. Since $P$ is a Moens family we can apply~\Cref{lem:cocart-leg-pb} to obtain that the gap map $\varphi: e \cocartarr e' \times_{e''} e$ as in
		\[\begin{tikzcd}
			e \\
			& {e'''} && e \\
			& {e'} && {e''}
			\arrow["{g'}", from=2-2, to=3-2]
			\arrow["g"', from=3-2, to=3-4, cocart]
			\arrow[from=2-2, to=2-4]
			\arrow["gf", from=2-4, to=3-4, cocart]
			\arrow[curve={height=-18pt}, Rightarrow, no head, from=1-1, to=2-4]
			\arrow["f"', curve={height=12pt}, from=1-1, to=3-2]
			\arrow["\varphi", dashed, from=1-1, to=2-2, cocart]
			\arrow["\lrcorner"{anchor=center, pos=0.125}, draw=none, from=2-2, to=3-4]
		\end{tikzcd}\]
		is cocartesian. By stability, $g'$ is cocartesian, too, hence so is $f= g' \circ \varphi$.
		\item[$\ref{it:disj-pre-moens-ii}  \implies \ref{it:disj-pre-moens-i}$:] By stability of sums, for a cocartesian arrow $f:d \cocartarr e$, the map $f^*f: d \times_e d \to d$ is cocartesian. Then, the gap map $\delta_d: d \to d \times_e d$ is cocartesian, since $\id_d = f^*f \circ \delta_d$.
		\item[$\ref{it:disj-pre-moens-ii}  \implies \ref{it:disj-pre-moens-iii}$:] This follows since an arrow that is vertical and cocartesian necessarily is an isomorphism.
		\item[$\ref{it:disj-pre-moens-iii}  \implies \ref{it:disj-pre-moens-ii}$:] Let $g$, $f$ be given s.t.~$gf$ exists, and both $gf$ as well as $g$ are cocartesian. Consider the factorization $f = mh$ where $m$ is vertical and $h$ is cocartesian. Then, by right cancelation of cocartesian arrows, since $gf = (gm)h$ and $h$ both are cocartesian, so must be $gm$. 
		\item[$\ref{it:disj-pre-moens-iii}  \implies \ref{it:disj-pre-moens-iv}$]: Consider the induced pullback square:
		\[\begin{tikzcd}
			d \\
			& {e''} && e \\
			& {d'} && {e'}
			\arrow["{k'}"', squiggly, from=2-2, to=3-2]
			\arrow["{g'}", from=2-2, to=2-4, cocart]
			\arrow["k", squiggly, from=2-4, to=3-4]
			\arrow["h", curve={height=-18pt}, from=1-1, to=2-4, cocart]
			\arrow["f"', curve={height=18pt}, squiggly, from=1-1, to=3-2]
			\arrow["{f'}", squiggly, from=1-1, to=2-2, dashed]
			\arrow["\lrcorner"{anchor=center, pos=0.125}, draw=none, from=2-2, to=3-4]
			\arrow["g"', from=3-2, to=3-4, cocart]
		\end{tikzcd}\]
		Since $P$ is a bifibration, the vertical arrows are stable under pullback along any arrow, and satisfy left cancelation.\footnote{This can be shown by hand. But it should also be possible to exhibit them as the left and a right class of the two ensuing orthogonal factorization systems, pending an apprioprate synthetic formulation.} Hence they  Hence, since $k$ is vertical, so is $k'$, and consequently $f'$ as well (since $f$ is). By the assumption in $(\ref{it:disj-pre-moens-iii})$ since $f'$ is vertical and both $g'$ and $h=g' \circ f'$ are cocartesian $f'$ is an isomorphism.
		\item[$\ref{it:disj-pre-moens-iv}  \implies \ref{it:disj-pre-moens-iii}$:] Any square of the form
		\[\begin{tikzcd}
			d && e \\
			{d'} && e
			\arrow["k"', squiggly, from=1-1, to=2-1]
			\arrow["f"', from=2-1, to=2-3,  cocart]
			\arrow["fk", from=1-1, to=1-3, cocart]
			\arrow[Rightarrow, no head, from=1-3, to=2-3]
			\arrow["\lrcorner"{anchor=center, pos=0.125}, draw=none, from=1-1, to=2-3]
		\end{tikzcd}\]
	is a pullback by precondition, hence $k$ is an identity.

	\end{description}
\end{proof}

\subsubsection{Extensive internal sums}

We can now provide a characterization of Moens families among the BCC families. 

In particular, we obtain a fibered version of \emph{Lawvere-extensivity} as an alternative characterization for (internal) extensivity. Classically, a category $\mathbb C$ is \emph{Lawvere-extensive} if for any small set $I$, the categories $\mathbb C^I$ and $\mathbb C/\coprod_{i\in I} \unit$ are canonically isomorphic.

To prepare, consider first the following construction.

Let $B$ be a lex Rezk type and $P:B \to \UU$ be a cocartesian family.

\begin{definition}[Terminal transport functor]
	For a terminal element $z:B$, we define the functor\footnote{In \cite{streicher2020fibered}, the functor $\omega$ is called $\mathbf{\Delta}$.}\footnote{Note that we can suppress the dependency on a specified terminal element $z:B$.}
	\[ \omega_{P,z} \defeq \omega : \widetilde{P} \to P\,z, \quad \omega \defeq \lambda b,e.(!_b)_!(e). \]
\end{definition}
\begin{figure}
	\[\begin{tikzcd}
		e && {\omega(e)} \\
		{e'} && {\omega(e')} \\
		b && z \\
		{b'}
		\arrow["f"', from=1-1, to=2-1]
		\arrow["{P_!(!_{b'},e')}"', from=2-1, to=2-3,cocart]
		\arrow["{P_!(!_b,e)}", from=1-1, to=1-3,cocart]
		\arrow["{\omega(f)}", dashed, from=1-3, to=2-3]
		\arrow["u"', from=3-1, to=4-1]
		\arrow["{!_b}", from=3-1, to=3-3]
		\arrow["{!_{b'}}"', from=4-1, to=3-3]
	\end{tikzcd}\]
	\caption{Action on morphisms of the transport functor $\omega_{P,z}$}
	\label{fig:transp-term}
\end{figure}
The action on arrows of this functor is illustrated in \ref{fig:transp-term}. The arrow $\omega_f$ is vertical over the terminal element $z:B$, for any $f: \Delta^1 \to B$. 

\begin{definition}[Choice of terminal elements]
	Let $B$ be a Rezk type and $P:B \to \UU$ be an isoinner family such that every fiber has a terminal element. Then we denote, by the Principle of Choice, the section choosing fiberwise terminal elements by
	\[ \zeta_P \defeq \zeta: \prod_{b:B} P\,b,\]
	\ie~for any $b:B$ the element $\zeta_b : P\,b$ is terminal.
\end{definition}

We define
\[ \omega' \jdeq \omega'_P: B \to P\,z, \quad \omega'(b)\defeq \omega(\zeta_b) \jdeq (!_b)_!(\zeta_b).\]

We are now ready for the promised characterization.

\begin{proposition}[Stable disjoint sums in terms of extensive sums, \cite{streicher2020fibered}, Lem.~15.3]\label{prop:ext-sums}
Let $B$ be a lex Rezk type and $P:B \to \UU$ be a Beck--Chevalley family. Then, the following are equivalent:
\begin{enumerate}
	\item\label{it:ext-sums-stable} The family $P$ is a Moens family, \ie~$P$ has stable disjoint sums.
	\item\label{it:ext-sums-ext} The bicartesian family $P$ has \emph{internally extensive} sums, \ie~for vertical arrows $f:d \to d'$, $k:e \to e'$, cocartesian arrows $g: e \cocartarr e'$, in a square

	\[\begin{tikzcd}
		d && e \\
		{d'} && {e'}
		\arrow["f"', squiggly, from=1-1, to=2-1]
		\arrow["g"', from=2-1, to=2-3, cocart]
		\arrow["h", from=1-1, to=1-3]
		\arrow["k", squiggly, from=1-3, to=2-3]
	\end{tikzcd}\]
	the arrow $h:d \to e$ is a cocartesian arrow if and only if the square is a pullback.
	\item\label{it:ext-sums-lawvere} The internal sums in $P$ are \emph{Lawvere-extensive}, \ie~in any square of the form
	\[\begin{tikzcd}
		d && e \\
		{\zeta_a} && {\omega'(a)}
		\arrow["!_d^a"', from=1-1, to=2-1, squiggly]
		\arrow["{P_!(!_a, \zeta_a)}"', from=2-1, to=2-3, cocart]
		\arrow["h", from=1-1, to=1-3]
		\arrow["k", from=1-3, to=2-3, squiggly]
	\end{tikzcd}\]
	where $k:e \vertarr \omega'(a)$ is vertical the arrow $h:d \to e$ is cocartesian if and only if the given square is a pullback.
	\item\label{it:ext-sums-transp} Let $z:B$ be a terminal element in $B$. For any $a:B$, the transport functor $(!_a)_!: P\,a \to P\,z$ reflects isomorphisms and $k^*P_!(!_a,\zeta_a)$ is cocartesian in case $k$ is vertical.
\end{enumerate}
\end{proposition}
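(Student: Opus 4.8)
The plan is to prove the four conditions equivalent by a cycle of implications $\ref{it:ext-sums-stable}\Rightarrow\ref{it:ext-sums-ext}\Rightarrow\ref{it:ext-sums-lawvere}\Rightarrow\ref{it:ext-sums-transp}\Rightarrow\ref{it:ext-sums-stable}$, reducing each step to the already-established machinery: the characterizations of disjointness in \Cref{prop:char-disj-stable}, the gap-map lemma \Cref{lem:cocart-leg-pb}, and the Beck--Chevalley condition \Cref{def:bcc}. The standing tool throughout will be the two factorization systems on dependent arrows (the cocartesian/vertical and the vertical/cartesian factorizations) together with the observation that, in a Beck--Chevalley family, cocartesian arrows are \emph{already} stable under pullback along cartesian arrows --- this is precisely \Cref{def:bcc} read over a pullback base square. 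Consequently, whenever I establish stability of cocartesian arrows under pullback along \emph{vertical} maps, I can upgrade it to stability under pullback along arbitrary dependent arrows by factoring the base arrow and pulling back in two stages.

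For $\ref{it:ext-sums-stable}\Rightarrow\ref{it:ext-sums-ext}$ I would treat the biconditional in \ref{it:ext-sums-ext} one direction at a time. If $h$ is cocartesian, then the square has $f,k$ vertical and $g,h$ cocartesian, and its being a pullback is exactly \Cref{prop:char-disj-stable}(\ref{it:disj-pre-moens-iv}). Conversely, if the square is a pullback, then since $\pi(k)$ is invertible the base square is a pullback and the total square computes the fibred pullback, so $h$ is the pullback of the cocartesian arrow $g$ along $k$; stability of cocartesian sums then makes $h$ cocartesian. The step $\ref{it:ext-sums-ext}\Rightarrow\ref{it:ext-sums-lawvere}$ is a specialization: I set $d'\defeq\zeta_a$, take $g\defeq P_!(!_a,\zeta_a)$ to be the cocartesian lift of the terminal projection $!_a\colon a\to z$, and put $e'\defeq\omega'(a)\jdeq(!_a)_!(\zeta_a)$. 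The left-hand map $!^a_d\colon d\to\zeta_a$ is the fibrewise terminal projection in $P\,a$, hence automatically vertical, so all hypotheses of \ref{it:ext-sums-ext} are met and the equivalence ``$h$ cocartesian iff pullback'' is inherited verbatim.

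For $\ref{it:ext-sums-lawvere}\Rightarrow\ref{it:ext-sums-transp}$ the conservativity clause (that $(!_a)_!$ reflects isomorphisms) I would deduce by running the Lawvere-extensive square against a would-be isomorphism and identifying the induced gap map as a vertical cocartesian arrow, which is an isomorphism by \Cref{lem:cocart-arrows-isos}(\ref{it:cocart-arrows-over-ids-are-isos}); this parallels \Cref{prop:char-disj-stable}(\ref{it:disj-pre-moens-iii}). The cocartesianness of $k^*P_!(!_a,\zeta_a)$ for vertical $k$ is just the ``pullback $\Rightarrow$ cocartesian'' half of the extensivity square pulled back along $k$. Finally, for $\ref{it:ext-sums-transp}\Rightarrow\ref{it:ext-sums-stable}$ I would first recover stability of cocartesian arrows along vertical maps from the second clause of \ref{it:ext-sums-transp}, comparing an arbitrary vertical pullback to a standard one over $z$ by means of the terminal transport functor $\omega$ and functoriality of cocartesian transport (\Cref{prop:cocart-functoriality}); then upgrade to full stability via the Beck--Chevalley condition as above, so that $P$ becomes pre-Moens; and lastly translate reflection of isomorphisms into conservativity of cocartesian transport, \Cref{prop:char-disj-stable}(\ref{it:disj-pre-moens-iii}), which yields disjointness and hence that $P$ is a Moens family.

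The hard part will be the last implication, and more generally the bookkeeping needed to pass between the ``general'' square formulations \ref{it:ext-sums-stable}--\ref{it:ext-sums-ext} and the ``universal'' formulations \ref{it:ext-sums-lawvere}--\ref{it:ext-sums-transp} phrased only in terms of the terminal transport functor $(!_a)_!$ and the canonical cocartesian lifts $P_!(!_a,\zeta_a)$. Showing that these standard extensive squares over the terminal object $z$ genuinely \emph{detect} stability and disjointness for arbitrary squares requires combining functoriality of cocartesian transport with the Beck--Chevalley condition to transport the local information along an arbitrary base arrow $u\colon a\to b$, and I expect the careful identification of the relevant gap maps and their vertical/cocartesian components (keeping the strictly-over-the-base extension-type bookkeeping honest) to be the main technical obstacle.
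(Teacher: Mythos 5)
Your chain matches the paper's for the forward implications: the paper likewise proves \ref{it:ext-sums-stable}$\Rightarrow$\ref{it:ext-sums-ext} by splitting the biconditional (though it re-derives the ``cocartesian $\Rightarrow$ pullback'' half by an explicit factorization through the pullback rather than quoting \Cref{prop:char-disj-stable}(\ref{it:disj-pre-moens-iv}) as you do --- your shortcut is legitimate and a little cleaner, since a Moens family is in particular pre-Moens), obtains \ref{it:ext-sums-ext}$\Rightarrow$\ref{it:ext-sums-lawvere} as a specialization, and proves \ref{it:ext-sums-lawvere}$\Leftrightarrow$\ref{it:ext-sums-transp} (for conservativity it uses a cube of cocartesian lifts together with homotopy invariance of pullbacks where you propose a gap-map argument; both work). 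Where you genuinely diverge is in closing the cycle: the paper's written proof stops after ``\ref{it:ext-sums-lawvere}$\Rightarrow$\ref{it:ext-sums-ext}, analogous to Streicher'' and never returns to \ref{it:ext-sums-stable}, whereas you attempt a direct \ref{it:ext-sums-transp}$\Rightarrow$\ref{it:ext-sums-stable}. Your instinct that this is the hard part is right, but the route you sketch is harder than necessary: extracting vertical stability of \emph{arbitrary} cocartesian arrows from the second clause of \ref{it:ext-sums-transp} ``by terminal transport and functoriality'' essentially amounts to re-proving \ref{it:ext-sums-transp}$\Rightarrow$\ref{it:ext-sums-lawvere}$\Rightarrow$\ref{it:ext-sums-ext}, and as stated it is the one step of your proposal that is not actually carried out. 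It is cleaner to route the return through \ref{it:ext-sums-ext}: given a cocartesian $g$ and a vertical $k$ into its codomain, the pullback square of $g$ along $k$ has vertical legs (verticals are pullback-stable in a bifibration), so the ``pullback $\Rightarrow$ cocartesian'' half of \ref{it:ext-sums-ext} gives stability along vertical arrows; your observation that \Cref{def:bcc} already yields stability along cartesian arrows, combined with vertical/cartesian factorization, upgrades this to full stability, so $P$ is pre-Moens; and then the ``cocartesian $\Rightarrow$ pullback'' half of \ref{it:ext-sums-ext} is precisely \Cref{prop:char-disj-stable}(\ref{it:disj-pre-moens-iv}), which yields disjointness. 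With that substitution your argument is complete and in fact supplies the one implication the paper leaves implicit.
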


Again, as remarked by Streicher, the equivalences between all but the first statement hold already in the case that cocartesian arrows are only stable under pullback along vertical arrows.
\begin{proof}
	\begin{description}
		\item[$(\ref{it:ext-sums-stable}) \implies (\ref{it:ext-sums-ext})$:] Consider a square as given in (\ref{it:ext-sums-ext}). If it is a pullback we have an identification $h= k^*g$, and by stability $h$ is cocartesian, too. Conversely, given such a square where $h$ is cocartesian, consider the factorization:
		\[\begin{tikzcd}
			d \\
			& {d''} && e \\
			& {d'} && {e'}
			\arrow["k'", from=2-2, to=3-2, squiggly]
			\arrow["g"', from=3-2, to=3-4, cocart]
			\arrow["{g'}"', from=2-2, to=2-4, cocart]
			\arrow["k", from=2-4, to=3-4, squiggly]
			\arrow["h", curve={height=-12pt}, from=1-1, to=2-4, cocart]
			\arrow["f"', curve={height=12pt}, squiggly, from=1-1, to=3-2]
			\arrow["{f'}"', dashed, from=1-1, to=2-2]
			\arrow["\lrcorner"{anchor=center, pos=0.125}, draw=none, from=2-2, to=3-4]
		\end{tikzcd}\]
	The arrow $g' = k^*g$ is cocartesian by stability of sums. The arrow $k' = g^*k$ is vertical since $k$ is. By the same reason, they are left cancelable, hence $f'$ is vertical. But since $P$ is a Moens family, by~\Cref{prop:char-disj-stable}, cocartesian arrows also satisfy left cancelation, hence $f'$ is cocartesian, too, and thus an equivalence.
	\item[$(\ref{it:ext-sums-ext}) \implies (\ref{it:ext-sums-lawvere})$:] The latter is an instance of the former.
	\item[$(\ref{it:ext-sums-lawvere}) \implies (\ref{it:ext-sums-transp})$:] By assumption, for any vertical arrow $f: d \vertarr d'$ in $P\,a$, the induced arrow $(!_a)_!(f):(!_a)_!(d) \to (!_a)_!(d')$ is a path:
	\[\begin{tikzcd}
		d && {(!_a)_!(d)} \\
		{d'} && {(!_a)_!(d')}
		\arrow["f"', squiggly, from=1-1, to=2-1]
		\arrow[from=1-1, to=1-3,  cocart]
		\arrow[from=2-1, to=2-3,  cocart]
		\arrow["{(!_a)_!(f)}", Rightarrow, no head, from=1-3, to=2-3]
	\end{tikzcd}\]
	Consider the cube induced by cocartesian filling \wrt~$P_!(!_a,\zeta_a) \circ !_d$ and $P_!(!_a,\zeta_a) \circ !_d'$, resp.:
	\[\begin{tikzcd}
		d && {(!_a)_!(d)} \\
		& {\zeta_a} && {\omega'(a)} \\
		{d'} && {(!_a)_!(d')} \\
		& {\zeta_a} && {\omega'(a)}
		\arrow["{!_{d'}}", squiggly, from=3-1, to=4-2, swap]
		\arrow[from=4-2, to=4-4,  cocart, near end]
		\arrow[from=3-1, to=3-3,  cocart, near end]
		\arrow[squiggly, from=3-3, to=4-4, "\tyfill"]
		\arrow[from=1-1, to=3-1, "f", swap, squiggly]
		\arrow[from=1-1, to=1-3,  cocart, near end]
		\arrow[Rightarrow, no head, from=1-3, to=3-3]
		\arrow[Rightarrow, no head, from=2-2, to=4-2, curve={height=15pt}]
		\arrow["{!_d}", squiggly, from=1-1, to=2-2, swap]
		\arrow[squiggly, from=1-3, to=2-4, "\tyfill"]
		\arrow["\lrcorner"{anchor=center, pos=0.125, rotate=45}, draw=none, from=3-1, to=4-4]
		\arrow["\lrcorner"{anchor=center, pos=0.125, rotate=45}, draw=none, from=1-1, to=2-4]
		\arrow[Rightarrow, no head, from=2-4, to=4-4, crossing over]
		\arrow[from=2-2, to=2-4,  cocart, near end, crossing over]
\end{tikzcd}\]
The bottom and top squares are pullbacks by Lawvere extensivity. Then, by~\Cref{prop:htopy-inv-of-pb}, the map $f$ is an identity as well.

For $a:B$ and a vertical arrow $k: e \vertarr_z \omega'(a)$ consider the pullback square:
\[\begin{tikzcd}
	{e'} && e \\
	{\zeta_a} && {\omega'(a)}
	\arrow["{k'}"', from=1-1, to=2-1]
	\arrow["{P_!(!_a,\zeta_a)}"', from=2-1, to=2-3, cocart]
	\arrow["f", from=1-1, to=1-3]
	\arrow["k", squiggly, from=1-3, to=2-3]
	\arrow["\lrcorner"{anchor=center, pos=0.125}, draw=none, from=1-1, to=2-3]
\end{tikzcd}\]
Since $P$ is a bifibration, $k'$ is vertical. By Lawvere extensivity, $f$ is cocartesian.
\item[$(\ref{it:ext-sums-transp}) \implies (\ref{it:ext-sums-lawvere})$:] Given a square as in $(\ref{it:ext-sums-lawvere})$, we see that the arrow $h$ is cocartesian if and only if it is a pullback, by the second condition in $(\ref{it:ext-sums-lawvere})$.
\item[$(\ref{it:ext-sums-lawvere}) \implies (\ref{it:ext-sums-ext})$:] This is mostly analogous to~\Cite[Lemma~15.3]{streicher2020fibered}.
	\end{description}
\end{proof}

\begin{corollary}[\protect{\cite[Corollary~15.4]{streicher2020fibered}}]
	Let $B$ be a Rezk type and $P:B \to \UU$ a Moens family.
	
	Then, for all arrows $u:a \to b$ in $B$ and points $d:P\,a$, the functor
	\[ \comma{u_!}{d}: \comma{P\,a}{d} \to \comma{P\,b}{u_!d}\]
	is an equivalence.
	In particular, for $P\,a \equiv \comma{P\,a}{\zeta_a}$ we have equivalences
	\[ \comma{u_!}{\zeta_a} : \comma{P\,a}{\zeta_a} \equiv \comma{P\,b}{u_!\zeta_a}, \quad \comma{(!_a)_!}{\zeta_a}:\comma{P\,a}{\zeta_a} \simeq \comma{P\,z}{\omega'(a)}. \]
\end{corollary}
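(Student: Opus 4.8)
The plan is to exhibit an explicit quasi-inverse to $\comma{u_!}{d}$ given by pulling back along the cocartesian lift of $u$ at $d$. Write $g \defeq P_!(u,d) : d \cocartarr_u u_!d$ for that lift inside the total type $E \defeq \totalty{P}$. A point of $\comma{P\,b}{u_!d}$ is a vertical arrow $k : e \rightsquigarrow u_!d$ over $b$, and I would send it to the vertical arrow $g^*k : e' \rightsquigarrow d$ obtained by forming the pullback of $k$ along $g$ in $E$. Two facts make this well defined. First, since $P$ is in particular a bifibration, vertical arrows are stable under pullback along arbitrary arrows, so $g^*k$ is again vertical and thus a genuine point of $\comma{P\,a}{d}$. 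Second, because $P$ is a Moens family, \Cref{prop:ext-sums}(\ref{it:ext-sums-ext}) applies to the resulting square: its base $g$ is cocartesian and its two legs are vertical, so---the square being a pullback---its top edge $g' : e' \cocartarr e$ is cocartesian. As functors between Rezk types are determined by their action on points, this assignment extends to a functor $\comma{P\,a}{d} \leftarrow \comma{P\,b}{u_!d}$.

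For the first round-trip I would start from a point $\phi : x \rightsquigarrow d$ of $\comma{P\,a}{d}$, apply $u_!$ to obtain $u_!\phi : u_!x \rightsquigarrow u_!d$, and then pull back along $g$. The relevant square is the naturality square of cocartesian transport, whose horizontal edges are the cocartesian lifts $P_!(u,x)$ and $P_!(u,d) \jdeq g$ and whose legs are $\phi$ and $u_!\phi$; it commutes by the very construction of the action of $u_!$ on arrows as a cocartesian filler. Its top edge is cocartesian, so by the ``cocartesian top $\Rightarrow$ pullback'' direction of \Cref{prop:ext-sums}(\ref{it:ext-sums-ext}) the square is a pullback, whence $g^*(u_!\phi)$ is canonically identified with $\phi$ and the composite is homotopic to the identity on $\comma{P\,a}{d}$.

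For the second round-trip I would start from $k : e \rightsquigarrow u_!d$, form the pullback $(g^*k : e' \rightsquigarrow d,\ g' : e' \cocartarr e)$ as above, and apply $u_!$ to $g^*k$. By the well-definedness step, $g'$ is a cocartesian arrow over $u$ with source $e'$, so by uniqueness of cocartesian lifts (\Cref{prop:cocart-lifts-unique-in-isoinner-fams}) it is identified with $P_!(u,e')$ and its target $e$ with $u_!e'$. The naturality square for $g^*k$ then presents $u_!(g^*k)$ as a vertical arrow over $b$ sitting underneath the cocartesian edges $g'$ and $g$; since $k$ already fits into exactly that square by construction, the universal property of the cocartesian edge $g'$ identifies $u_!(g^*k)$ with $k$, so this composite is the identity too. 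This shows $\comma{u_!}{d}$ is an equivalence, and the two displayed ``in particular'' equivalences are the instances $d \jdeq \zeta_a$ (using $P\,a \simeq \comma{P\,a}{\zeta_a}$, the slice over a terminal object) and $u \jdeq {!_a}$ with $u_!\zeta_a \jdeq \omega'(a)$.

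I expect the main obstacle to be the homotopy-coherent bookkeeping rather than any single conceptual step: I must check that pulling back and applying $u_!$ assemble into honest functors between the comma Rezk types and that both round-trips agree up to coherent homotopy and not merely on objects. The engine powering both directions is the biconditional in \Cref{prop:ext-sums}(\ref{it:ext-sums-ext})---read as ``cocartesian top $\Rightarrow$ pullback'' in the first round-trip and as ``pullback $\Rightarrow$ cocartesian top'' when setting up the quasi-inverse---so the real care lies in verifying that each square I invoke genuinely has vertical legs and a cocartesian base, which is exactly where stability of vertical arrows under pullback and uniqueness of cocartesian lifts are applied.
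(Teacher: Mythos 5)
Your proof is correct and is exactly the intended argument: the paper leaves this corollary unproved, but deploys precisely your quasi-inverse (pullback along the cocartesian lift, with both directions of the extensivity biconditional from \Cref{prop:ext-sums} powering the round-trips) in the special case $d \jdeq \zeta_a$, $u \jdeq\, !_a$ inside the proof of \Cref{thm:moens-thm}. The coherence bookkeeping you flag is handled the same way the paper handles it elsewhere, namely by uniqueness of pullbacks and of cocartesian lifts in Rezk types, so there is no gap.
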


\begin{corollary}[Left exactness of covariant transport in Moens families, \protect{\cite[Corollary~15.5]{streicher2020fibered}}]\label{cor:lex-transp-moens}
	Let $B$ be a Rezk type and $P:B \to \UU$ a Moens family.
	
	Then for all $u:a \to b$ in $B$, the covariant transport functor
	\[ u_!:P\,a \to P\,b\]
	is left exact.
\end{corollary}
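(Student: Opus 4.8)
The plan is to deduce the claim from the slice equivalences established in the immediately preceding corollary (\cite[Corollary~15.4]{streicher2020fibered}), which for every $d:P\,a$ provide an equivalence $\comma{u_!}{d}:\comma{P\,a}{d}\to\comma{P\,b}{u_!d}$ acting as $u_!$ on underlying objects and morphisms. The conceptual idea is the standard one from $1$-category theory: a pullback in a Rezk type with pullbacks is the same datum as a binary product in a suitable slice, an equivalence of Rezk types preserves binary products, and $\comma{u_!}{d}$ is such an equivalence; reading the preserved product back as a pullback then shows that $u_!$ preserves pullbacks.

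Concretely, I would proceed as follows. First, since $P$ is lex, each fibre $P\,a$ is a Rezk type with terminal object and all pullbacks, so for any $z:P\,a$ the slice Rezk type $\comma{P\,a}{z}$ (built via the co-/slice and sliced-comma constructions of \Cref{ssec:commas-cones}) has binary products, computed as pullbacks in $P\,a$: the product of $\pair{x}{f:x\to z}$ and $\pair{y}{g:y\to z}$ is $x\times_z y$ with its canonical map to $z$. Next, given a pullback square in $P\,a$ over a cospan $f:x\to z$, $g:y\to z$ with vertex $w$, I would regard $w$ as the binary product of $\pair{x}{f}$ and $\pair{y}{g}$ in $\comma{P\,a}{z}$. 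Applying the equivalence $\comma{u_!}{z}$, and using that equivalences of Rezk types preserve binary products, the image $\pair{u_!w}{\cdot}$ is the product of $\pair{u_!x}{u_!f}$ and $\pair{u_!y}{u_!g}$ in $\comma{P\,b}{u_!z}$, i.e. the pullback $u_!x\times_{u_!z}u_!y$ in $P\,b$. Since $\comma{u_!}{z}$ acts as $u_!$ on underlying data, this identifies $u_!w$ with $u_!x\times_{u_!z}u_!y$ compatibly with the projections, so $u_!$ carries the original pullback square to a pullback square.

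The main work is synthetic bookkeeping rather than new ideas, and I expect two points to require care: (i) establishing that binary products in $\comma{P\,a}{z}$ are computed as pullbacks in $P\,a$ with the expected universal property, and that a type-theoretic equivalence of Rezk types preserves such products (the latter being automatic, since products are characterised by a terminal-cone property invariant under equivalence); and (ii) matching the gap map produced by $u_!$ with the product comparison in the target slice, so that the abstract identification $u_!w\simeq u_!x\times_{u_!z}u_!y$ is realised by the canonical comparison arrow. I expect (ii) to be the most delicate step, as it requires tracking the explicit action of $\comma{u_!}{z}$ on morphisms against the construction of pullbacks in $P\,b$. Finally, a word on scope: by the preceding corollary $u_!$ exhibits $P\,a$ as the slice $\comma{P\,b}{u_!\zeta_a}$, so $u_!$ sends the terminal object $\zeta_a$ to $u_!\zeta_a$, which is terminal only when $u$ is invertible. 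Hence \emph{left-exactness of covariant transport} is to be read as preservation of pullbacks (finite connected limits), exactly as in \cite[Section~15]{streicher2020fibered}, and this is precisely what the argument above delivers.
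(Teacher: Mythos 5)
The paper states this corollary without proof, deferring to \cite[Corollary~15.5]{streicher2020fibered}, and your argument is exactly the intended one and is correct: pullbacks in $P\,a$ over a vertex $z$ are binary products in $\comma{P\,a}{z}$, the equivalence $\comma{u_!}{z}$ from the preceding corollary preserves them (being an equivalence commuting with the projections to $P\,a$ and $P\,b$), and reading the preserved product back as a pullback over $u_!z$ gives the claim --- note that your worry (ii) largely dissolves, since a square whose cone is carried to a terminal cone by an equivalence already \emph{has} the universal property, with no separate gap-map comparison needed. Your closing caveat about scope is also right and consistent with the paper's usage: $u_!\zeta_a$ is terminal only when $u$ is invertible, and the sole place the corollary is invoked (\Cref{prop:ttrans-lex}) uses only preservation of pullbacks, so ``left exact'' must indeed be read as pullback-preservation rather than as the paper's stronger notion of lex functor.
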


As one crucial ingredient for Moens' Theorem, the gluing of pullback-preserving functors always is a Moens fibration.
\begin{prop}\label{prop:gluing-moens}
Let $B$ and $C$ be a lex Rezk types and $F:B \to C$ be a pullback-preserving functor. Then $\gl(F): \comma{C}{F} \fibarr B$ is a Moens fibration.	
\end{prop}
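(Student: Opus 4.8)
The plan is to verify the three defining ingredients of a Moens family for $\gl(F)$ in turn: the lex Beck--Chevalley structure, stability of internal sums, and disjointness.

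First, the Beck--Chevalley structure comes essentially for free. Since $C$ has all pullbacks, $\gl(F):\comma{C}{F} \fibarr B$ is a bifibration, being a pullback of the bifibration $\partial_1:C^{\Delta^1}\fibarr C$; and since $F$ preserves pullbacks, \Cref{prop:bcc-for-pb-pres} yields that $\gl(F)$ satisfies the BCC, hence is a Beck--Chevalley fibration. For lexness I would note that the fiber of $\gl(F)$ over $b:B$ is the slice $C/F\,b$, which has a terminal object (namely $\id_{F\,b}$) and all pullbacks because $C$ is lex; and that the reindexing functor $u^*$ along $u:b\to_B b'$ is pullback along $F\,u:F\,b\to F\,b'$ (see \Cref{fig:lifts-gluing}), which as a pullback functor preserves finite limits. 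By \Cref{prop:term-obj-fib,prop:pb-fib} this makes $\gl(F)$ a lex fibration, so altogether $\gl(F)$ is a lex Beck--Chevalley family.

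The key computational observation, which I would isolate next, is a description of the cocartesian arrows and of pullbacks in $\comma{C}{F}$. By \Cref{fig:lifts-gluing}, a dependent arrow over $u:b\to_B b'$ is a commuting square with vertical legs $v:c\to F\,b$, $v':c'\to F\,b'$ and top edge $w:c\to c'$ in $C$, and it is cocartesian precisely when $w$ is an isomorphism (the canonical cocartesian lift being the case $w\jdeq \id_c$). Moreover, since $\gl(F)$ is lex, pullbacks in $\comma{C}{F}$ are computed componentwise in $B$ and in $C$: a pullback lies over the pullback of the underlying cospan in $B$, and its $C$-component is the corresponding pullback in $C$, the comparison arrow into $F(-)$ being well-defined exactly because $F$ preserves pullbacks, so that $F(b_1\times_{b_0}b_2)\simeq F\,b_1\times_{F\,b_0}F\,b_2$.

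With this in hand, stability is immediate: given a cocartesian $g:e\cocartarr e'$ and an arbitrary $h:e''\to e'$, the $C$-component of the pullback $h^*g$ is the pullback of the isomorphism underlying $g$, hence again an isomorphism, so $h^*g$ is cocartesian; thus $\gl(F)$ has stable internal sums and is a pre-Moens family. Disjointness follows the same way: for a cocartesian $f:d\cocartarr e$, the $C$-component of the fibered diagonal $\delta_f:d\to d\times_e d$ is the diagonal of an isomorphism, hence an isomorphism, so $\delta_f$ is cocartesian. Combining, $\gl(F)$ is a Moens fibration. I expect the main obstacle to be making precise, in the synthetic setting, the two structural facts of the middle paragraph—reading off that cocartesian arrows are exactly those with invertible fiber component, and that pullbacks in $\comma{C}{F}$ are computed componentwise with the comparison controlled by $F$ preserving pullbacks—since these require tracking the extension-type descriptions of the lifts and invoking Rezk-completeness (e.g.\ to replace the comparison equivalence appearing in the proof of \Cref{prop:bcc-for-pb-pres} by an identity). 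Once these are set up, stability and disjointness are one-line consequences, and one may alternatively deduce disjointness from \Cref{prop:char-disj-stable} instead of the direct diagonal argument.
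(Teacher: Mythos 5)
Your proof is correct, and it reaches the conclusion by a slightly different route than the paper. The paper also starts from \Cref{prop:bcc-for-pb-pres} to get the Beck--Chevalley structure, but then invokes the equivalence of \Cref{prop:ext-sums} to reduce everything to the single \emph{extensivity} criterion (item (2) there: $h$ is cocartesian iff the square of verticals over a pullback is a pullback), which it verifies by inspecting one dependent cube and observing that the top component $k:c\to c'$ is invertible iff the top square is a pullback. You instead verify the definition of a Moens family directly --- stability and disjointness separately --- from the same two structural facts you isolate (cocartesian arrows in $\gl(F)$ are exactly the squares with invertible $C$-component, and pullbacks in $\comma{C}{F}$ are computed componentwise because $F$ preserves pullbacks). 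The trade-off: the paper's reduction via \Cref{prop:ext-sums} compresses the verification to one condition and one cube, at the cost of leaning on that characterization theorem; your version is more self-contained at the level of this proposition and makes the lexness of $\gl(F)$ explicit (fibers are slices $C/F\,b$, reindexing is pullback along $F\,u$), a point the paper's proof passes over silently. Both arguments ultimately rest on the same observation about invertible fiber components, so the difference is one of packaging rather than substance; your closing remark that disjointness could instead be routed through \Cref{prop:char-disj-stable} is also consistent with the paper's framework.
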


\begin{proof}
 Since $F$ preserves pullback, $\gl(F)$ is a Beck--Chevalley fibration by~\Cref{prop:bcc-for-pb-pres}. Then, by~\Cref{prop:ext-sums}, it suffices to prove that the internal sums in $\gl(F)$ are extensive. But this follows from considering a dependent cube as given in~\Cref{fig:gluing-moens} and the fact that $k:c \to c'$ is an isomorphism if and only if the top square is a pullback.\footnote{The identities in the cube are part of the prerequisites to prove extensivity.}
\end{proof}

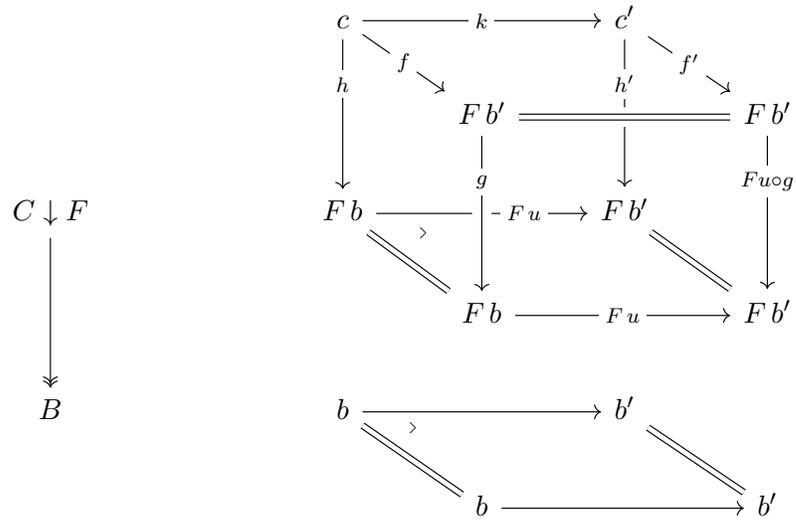
\begin{figure}
\[\begin{tikzcd}
	&&& c && {c'} \\
	&&&& {F\,b'} && {F\,b'} \\
	{C \downarrow F} &&& {F\,b} && {F\,b'} \\
	&&&& {F\,b} && {F\,b'} \\
	B &&& b && {b'} \\
	&&&& b && {b'}
	\arrow[Rightarrow, no head, from=5-4, to=6-5]
	\arrow[from=5-4, to=5-6]
	\arrow[Rightarrow, no head, from=5-6, to=6-7]
	\arrow[from=6-5, to=6-7]
	\arrow["h"{description, pos=0.3}, from=1-4, to=3-4]
	\arrow[Rightarrow, no head, from=3-4, to=4-5]
	\arrow["{F\,u}"{description}, from=4-5, to=4-7]
	\arrow[Rightarrow, no head, from=3-6, to=4-7]
	\arrow["{F\,u}"{description, pos=0.7}, from=3-4, to=3-6]
	\arrow["f"{description}, from=1-4, to=2-5]
	\arrow["k"{description}, from=1-4, to=1-6]
	\arrow["{f'}"{description}, from=1-6, to=2-7]
	\arrow["{Fu\circ g}"{description, pos=0.3}, from=2-7, to=4-7]
	\arrow["\lrcorner"{anchor=center, pos=0.125, rotate=45}, draw=none, from=5-4, to=6-7]
	\arrow["\lrcorner"{anchor=center, pos=0.125, rotate=45}, draw=none, from=3-4, to=4-7]
	\arrow["{h'}"{description, pos=0.3}, from=1-6, to=3-6]
	\arrow[two heads, from=3-1, to=5-1]
	\arrow["g"{description, pos=0.3}, from=2-5, to=4-5, crossing over]
	\arrow[Rightarrow, no head, from=2-5, to=2-7, crossing over]
\end{tikzcd}\]
\caption{Extensivity of sums in $\gl(F)$}
\label{fig:gluing-moens}
\end{figure}

\begin{lemma}[\cf~\protect{\cite[Lemma~15.6]{streicher2020fibered}}]\label{lem:gap-to-vert-cocart}
	Let $B$ be a lex Rezk type and $P:B \to \UU$ be a Moens fibration. Then the gap arrow in any diagram of the form
	\[\begin{tikzcd}
		d &&&& {d'} \\
		&& {e'''} && {e'} \\
		{d''} && e && {e''}
		\arrow["{f'}"', from=1-1, to=3-1,cocart]
		\arrow["f", from=1-1, to=1-5,cocart]
		\arrow["\lrcorner"{anchor=center, pos=0.125}, draw=none, from=2-3, to=3-5]
		\arrow["{h'}"', from=3-3, to=3-5, squiggly]
		\arrow["{g'}"', from=3-1, to=3-3,cocart]
		\arrow["g", from=1-5, to=2-5, cocart]
		\arrow["h", squiggly, from=2-5, to=3-5]
		\arrow["k", from=2-3, to=2-5, squiggly]
		\arrow["m"', squiggly, from=2-3, to=3-3]
		\arrow["\lrcorner"{anchor=center, pos=0.125}, draw=none, from=1-1, to=2-3]
		\arrow["{f''}"', curve={height=12pt}, dashed, from=1-1, to=2-3, cocart]
	\end{tikzcd}\]
	is cocartesian as well.
\end{lemma}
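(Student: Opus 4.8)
The plan is to deduce cocartesianness of $f''$ from the stability and disjointness of the internal sums of the Moens family, rather than by a direct lifting argument. First I would record the two bookkeeping facts that the diagram provides. Since $h$ and $h'$ are vertical and the inner square exhibits $e''' = e' \times_{e''} e$ as a pullback, the projections $k$ and $m$ are vertical as well, because in a bifibration vertical arrows are stable under pullback. Moreover, as $f,g$ and $f',g'$ are cocartesian, the composites $g \circ f \colon d \to e'$ and $g' \circ f' \colon d \to e$ are cocartesian by closure of cocartesian arrows under composition (\Cref{prop:cocart-arr-closure}), and by construction of the gap arrow we have $k \circ f'' = g \circ f$ and $m \circ f'' = g' \circ f'$.

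The core of the argument is to realise $f''$ as a gap map of a pullback one of whose legs is cocartesian, so that \Cref{lem:cocart-leg-pb} applies. The corner at $d$ in the diagram exhibits $d$ as a pullback along the cospan with cocartesian leg $g$ (equivalently $g'$), whose remaining projection onto $e'''$ is exactly $f''$; since the leg being pulled back is cocartesian, stability of internal sums together with \Cref{lem:cocart-leg-pb} yields that $f''$ is cocartesian. To turn this into a clean formal proof I would run it through the cocartesian--vertical factorisation $f'' = v \circ c$, with $c$ cocartesian and $v$ vertical, and show $v$ is invertible. From $g \circ f = k \circ f'' = (k \circ v) \circ c$ with $c$ and $g \circ f$ cocartesian, right cancellation (\Cref{prop:cocart-arr-closure}) makes $k \circ v$ cocartesian, hence an isomorphism as it is also vertical (\Cref{lem:cocart-arrows-isos}); symmetrically $m \circ v$ is an isomorphism. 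Feeding this, together with the pullback witnessed at $d$, into the extensivity characterisation of Moens families (\Cref{prop:ext-sums}) and the left-cancellation form of disjointness (\Cref{prop:char-disj-stable}) forces $v$ to be an isomorphism, whence $f'' = v \circ c$ is cocartesian.

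The main obstacle is exactly this last step. Knowing only that $k \circ v$ and $m \circ v$ are isomorphisms does \emph{not} suffice to conclude $v$ is invertible --- the fibrewise diagonal into $e' \times_{e''} e$ is a counterexample already in $\mathbf{Set}$ --- so the proof must genuinely use the pullback structure at $d$ and the disjointness (not merely the stability) of the internal sums. Concretely, the delicate part is matching the cone data of the cube to the hypotheses of \Cref{lem:cocart-leg-pb} and \Cref{prop:ext-sums}, and checking that the vertical arrow $v$ produced by the factorisation is cancelled by a genuinely cocartesian, rather than merely vertical, arrow. Once that alignment is set up, the remaining manipulations are the formal cancellation and stability arguments already available from \Cref{prop:cocart-arr-closure,prop:char-disj-stable,prop:ext-sums}.
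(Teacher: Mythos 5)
There is a genuine gap, and you in effect concede it yourself: the entire weight of the proof rests on showing that the vertical part $v$ of the factorisation $f'' = v \circ c$ is invertible, and your argument stops at the observation that $k\circ v$ and $m\circ v$ are isomorphisms, which (as you correctly note with the diagonal counterexample) does not suffice. The promised rescue --- ``feeding this, together with the pullback witnessed at $d$, into \Cref{prop:ext-sums} and \Cref{prop:char-disj-stable}'' --- is not an argument; neither of those statements has a hypothesis matching your situation, and \Cref{lem:cocart-leg-pb} cannot be applied either, because its gap map has the identity of $d$ as one component of the cone, whereas the cone defining $f''$ has components $g\circ f$ and $g'\circ f'$, neither of which is an identity. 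So the one sentence in your second paragraph that would constitute the proof (``the corner at $d$ exhibits $d$ as a pullback along the cospan with cocartesian leg $g$, whose remaining projection onto $e'''$ is exactly $f''$'') is unjustified as stated.

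The step you are missing is a pullback-pasting decomposition, and it makes the lemma easier than you expect: disjointness is not needed at all, only stability. Since the inner square exhibits $e''' = e'\times_{e''}e$ and the outer one exhibits $d = d'\times_{e''}d''$, the Pullback Lemma lets you interpolate: form $w := d'\times_{e''}e$; then the induced map $d \to w$ is the pullback of the cocartesian arrow $g'\colon d''\to e$ along the projection $w \to e$, and the induced map $w \to e'''$ is the pullback of the cocartesian arrow $g\colon d'\to e'$ along $k\colon e'''\to e'$. Both are cocartesian by stability of internal sums, and their composite is (up to the identification $d \simeq d'\times_{e''}e\times_{e}d''$, etc.) exactly $f''$, which is therefore cocartesian by \Cref{prop:cocart-arr-closure}. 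This is precisely what the paper's proof does, via a $2\times 2$ grid of pullbacks over $e'''$. Your preliminary observations (verticality of $k,m$; cocartesianness of $g\circ f$ and $g'\circ f'$) are correct but are not what the argument turns on.
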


\begin{proof}
	Since $P$ is a Moens fibration, cocartesian arrows and (vertical arrows, too) are stable under pullback along arbitrary arrows. By the Pullback Lemma, this gives rise to the following diagram:
	\[\begin{tikzcd}
		d \\
		& {y'} && x && {d'} \\
		& y && {e'''} && {e'} \\
		& {d''} && e && {e''}
		\arrow["{g'}"', from=4-2, to=4-4, cocart]
		\arrow["{h'}"', squiggly, from=4-4, to=4-6]
		\arrow["h", squiggly, from=3-6, to=4-6]
		\arrow["g", from=2-6, to=3-6, cocart]
		\arrow["m"', squiggly, from=3-4, to=4-4]
		\arrow["{m''}"', from=2-4, to=3-4, cocart, swap]
		\arrow["{k'}"', squiggly, from=2-4, to=2-6]
		\arrow["k"', squiggly, from=3-4, to=3-6]
		\arrow["\lrcorner"{anchor=center, pos=0.125}, draw=none, from=3-4, to=4-6]
		\arrow["\lrcorner"{anchor=center, pos=0.125}, draw=none, from=2-4, to=3-6]
		\arrow["{m'}"', squiggly, from=3-2, to=4-2]
		\arrow["{g''}"', from=3-2, to=3-4, cocart]
		\arrow["{m'''}"', from=2-2, to=3-2, cocart]
		\arrow["{g'''}"', from=2-2, to=2-4, cocart]
		\arrow["\lrcorner"{anchor=center, pos=0.125}, draw=none, from=2-2, to=3-4]
		\arrow["\lrcorner"{anchor=center, pos=0.125}, draw=none, from=3-2, to=4-4]
		\arrow["f", curve={height=-18pt}, from=1-1, to=2-6, cocart]
		\arrow["{f'}"', curve={height=18pt}, from=1-1, to=4-2, cocart]
		\arrow["\cong"{description}, dashed, from=1-1, to=2-2]
		\arrow[curve={height=6pt}, from=2-2, to=3-4]
		\arrow["{f''}", from=1-1, to=3-4, crossing over,curve={height=-27pt}]
	\end{tikzcd}\]
	Hence, $f''$ is (up to identification) the composition of two cocartesian arrows, hence cocartesian itself.
\end{proof}

\begin{proposition}[Left exactness of terminal transport, \protect{\cite[Lemma~15.16]{streicher2020fibered}}]\label{prop:ttrans-lex}
Let $B$ be a lex Rezk type and $P:B \to \UU$ be a Moens family. Then the terminal transport functor $\omega: \widetilde{P} \to P\,z$ is lex.
\end{proposition}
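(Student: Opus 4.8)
The plan is to show $\omega$ is left exact by verifying the two generating cases, preservation of the terminal object and of pullbacks. Throughout I write $\pi:\widetilde{P}\fibarr B$ for the unstraightening and recall that, since $P$ is in particular lex, \Cref{prop:term-obj-fib,prop:pb-fib} guarantee that $\widetilde{P}$ has a terminal object and all pullbacks, both preserved by $\pi$. Preservation of the terminal object is immediate: by \Cref{prop:term-obj-fib} the terminal object of $\widetilde{P}$ is $\pair{z}{\zeta_z}$ with $\zeta_z$ terminal in $P\,z$, and $\omega\pair{z}{\zeta_z} \jdeq (!_z)_!\,\zeta_z$. As $z$ is terminal the endomorphism $!_z$ is homotopic to $\id_z$, so \Cref{prop:cocart-functoriality} identifies $(!_z)_!$ with the identity, whence $\omega\pair{z}{\zeta_z}=\zeta_z$ is terminal in $P\,z$.

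For pullbacks I would first isolate two elementary features of $\omega$. First, a $P$-cocartesian arrow $c:e\cocartarr_u u_!e$ over $u:a\to b$ is sent by $\omega$ to an isomorphism: since $z$ is terminal we have $!_a=!_b\circ u$, so by functoriality (\Cref{prop:cocart-functoriality}) the endpoints agree, $(!_a)_!e = (!_b)_!(u_!e)$, and $\omega(c)$ is exactly this identification. Second, on a vertical arrow $v$ over a point $b$, $\omega$ restricts to the reindexing functor $(!_b)_!$, which is left exact by \Cref{cor:lex-transp-moens}. Now let $Q$ be a pullback square in $\widetilde{P}$; applying $\pi$ yields a pullback in $B$, say with cospan $b_1\xrightarrow{u_1}b_0\xleftarrow{u_2}b_2$ and apex $c\defeq b_1\times_{b_0}b_2$. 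Factoring each cospan leg of $Q$ as a cocartesian arrow followed by a vertical arrow lands the comparison data in the single fibre $P\,b_0$, and cocartesian-transporting all four vertices of $Q$ into $P\,b_0$ produces a square $\overline{Q}$ of \emph{vertical} arrows in $P\,b_0$. Using the two features above together with the functoriality identities $(!_{b_0})_!\circ(u_i)_!\simeq(!_{b_i})_!$, one checks that the $(!_{b_0})_!$-image of $\overline{Q}$ is precisely $\omega(Q)$.

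The whole argument therefore reduces to the Claim that $\overline{Q}$ is a pullback in $P\,b_0$; granting it, left exactness of $(!_{b_0})_!$ (\Cref{cor:lex-transp-moens}) turns $\omega(Q)=(!_{b_0})_!(\overline{Q})$ into a pullback in $P\,z$, finishing the proof. I would establish the Claim by pasting, computing the pullback of $Q$ in stages via the fibrewise pullback of the vertical parts (a pullback already in $\widetilde{P}$ by \Cref{lem:loc-pb-is-pb}) together with the pullbacks of the cocartesian parts along vertical maps, controlled by stability of cocartesian arrows and by the cocartesianness of the relevant gap maps supplied by \Cref{lem:cocart-leg-pb,lem:gap-to-vert-cocart}.

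The hard part will be exactly this Claim, since it is where the full Moens hypotheses (disjointness, not merely pre-Moens) enter. Concretely, one must show that the comparison arrow $p_!x\to (u_1)_!e_1\times_{e_0}(u_2)_!e_2$ is an isomorphism, i.e.\ that cocartesian transport into the common fibre $P\,b_0$ carries the global pullback $Q$ to the fibrewise pullback $\overline{Q}$. I expect this to follow from the extensive-sums characterization (\Cref{prop:ext-sums}) and the conservativity/left-cancellation properties of cocartesian transport recorded in \Cref{prop:char-disj-stable}, which together guarantee that the cocartesian arrows relating the vertices of $Q$ to those of $\overline{Q}$ reflect the pullback property. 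Making this bookkeeping fully precise, keeping track of which induced arrows are vertical and which are cocartesian, is the genuine technical obstacle of the proof.
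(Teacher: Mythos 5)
Your proposal is correct and follows essentially the same route as the paper: terminal object by the trivial computation $\omega(z,\zeta_z)\jdeq(!_z)_!\zeta_z=\zeta_z$, and pullbacks by factoring the cospan legs into cocartesian-followed-by-vertical, reducing to a square of vertical arrows in $P\,b_0$, and applying left exactness of $(!_{b_0})_!$ from \Cref{cor:lex-transp-moens}. The ``hard Claim'' you isolate is exactly the content of \Cref{lem:gap-to-vert-cocart} already available in the text --- the gap map $e'''\to d$ into the fibrewise pullback is cocartesian over $p$, so by uniqueness of cocartesian lifts $d\cong p_!e'''$ and your $\overline{Q}$ is a pullback --- so no further appeal to \Cref{prop:ext-sums} or \Cref{prop:char-disj-stable} is needed at that point.
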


\begin{proof}
	Preservation of the terminal object follows from 
	\[ \omega(z,\zeta_z) \jdeq (!_z)_!(\zeta_z) = \id_{\zeta_z}.\]
	Consider a pullback
	\[\begin{tikzcd}
		{e'''} && {e''} \\
		{e'} && e
		\arrow["{f'}", from=1-1, to=1-3]
		\arrow["{g'}"', from=1-1, to=2-1]
		\arrow["f"', from=2-1, to=2-3]
		\arrow["g", from=1-3, to=2-3]
		\arrow["\lrcorner"{anchor=center, pos=0.125}, draw=none, from=1-1, to=2-3]
	\end{tikzcd}\]
	in $E$, where $f$ lies over an arrow $u$, and $g$ over an arrow $v$ in $B$. Considering the induced diagram
	\[\begin{tikzcd}
		{e'''} &&&& {e''} \\
		&& d && {v_!\,e''} \\
		{e'} && {u_!\,e'} && e
		\arrow["m", squiggly, from=3-3, to=3-5]
		\arrow["k", from=3-1, to=3-3, cocart]
		\arrow["{r'}"', squiggly, from=2-3, to=3-3]
		\arrow["{m'}", squiggly, from=2-3, to=2-5]
		\arrow["\ell"', from=1-5, to=2-5, cocart]
		\arrow["{f'}", from=1-1, to=1-5]
		\arrow["{g'}"', from=1-1, to=3-1]
		\arrow["\lrcorner"{anchor=center, pos=0.125}, draw=none, from=2-3, to=3-5]
		\arrow["r"', squiggly, from=2-5, to=3-5]
		\arrow["h", from=1-1, to=2-3]
		\arrow["\lrcorner"{anchor=center, pos=0.125}, shift right=4, draw=none, from=1-1, to=2-3]
		\arrow["f"{description}, curve={height=14pt}, from=3-1, to=3-5]
		\arrow["g"{description}, curve={height=-18pt}, from=1-5, to=3-5]
		\arrow["\sigma"{description}, draw=none, from=2-3, to=3-5]
	\end{tikzcd}\]
	by~\Cref{lem:gap-to-vert-cocart}, we find that the gap map $h$ is cocartesian. By naturality, the image of the outer diagram under $\omega$ is identifiable with the image of $\sigma$ under cocartesian transport $(!_b)_!$, which preserves pullbacks by~\Cref{cor:lex-transp-moens}.
\end{proof}

\subsection{Moens'~Theorem}

We are now ready to prove a version of Moens' Theorem,\footnote{Note that in the absence of categorical universes, we have to consider the ``naive'' $\Sigma$-types instead of the respective Rezk types of fibrations or lex functors, resp. Thus, our version of the theorem should be considered as a statement ``on the level of objects'' but not arrows.} chracterizing the type of Moens fibrations over a fixed lex base as the type of lex functors from this type into some other lex type.

\begin{theorem}[Moens'~Theorem, \protect{\cite[Theorem~5.18]{streicher2020fibered}}]\label{thm:moens-thm}
For a small lex Rezk type $B:\UU$ the type
\[ \MoensFam(B) \defeq \sum_{P:B \to \UU} \isMoensFam \,P\]
of $\UU$-small Moens families is equivalent to the type
\[ B \downarrow^\lex \mathrm{LexRezk} \defeq\sum_{C:\LexRezk} (B \to^\lex C)\]
of lex functors from $B$ into the type $\LexRezk$ of $\UU$-small lex Rezk types.
\end{theorem}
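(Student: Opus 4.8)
The plan is to exhibit mutually inverse maps between the two types, built from the Artin gluing on one side and terminal transport on the other. Define $\Psi: (B\downarrow^\lex \LexRezk) \to \MoensFam(B)$ by sending a lex functor $F: B \to C$ to the family of fibers $\St_B(\gl(F))$ of its gluing fibration $\gl(F): \comma{C}{F}\fibarr B$; this is well defined by \Cref{prop:gluing-moens}, which guarantees that $\gl(F)$ is a Moens fibration. Conversely, define $\Phi: \MoensFam(B) \to (B\downarrow^\lex\LexRezk)$ by sending a Moens family $P$ to the pair $(P\,z, \omega'_P)$, where $z:B$ is the terminal object and $\omega'_P(b)\defeq (!_b)_!(\zeta_b)$ is the terminal transport of the fiberwise terminal section. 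Here $P\,z$ is a lex Rezk type, being the fiber of a lex family over a terminal object, and $\omega'_P = \omega_{P,z}\circ\zeta_P$ is lex: the fiberwise terminal section $\zeta_P$ preserves finite limits (its terminal-object preservation being \Cref{prop:term-obj-fib}), while $\omega_{P,z}$ is lex by \Cref{prop:ttrans-lex}.

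The crucial simplification is that all the predicates involved---being a Moens family, being a lex Rezk type, and being a lex functor---are propositions over Rezk bases. Consequently a path in $\MoensFam(B)$ is merely a path of underlying families $B\to\UU$, and by function extensionality together with univalence this is nothing but a pointwise family of equivalences of fibers; similarly a path in $B\downarrow^\lex\LexRezk$ amounts to an equivalence of the codomain types together with a pointwise compatibility of the two functors. This is exactly the sense in which the statement holds ``on the level of objects'', and it lets us avoid verifying any higher naturality or coherence of the fibrational structure.

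For the round trip $\Phi\circ\Psi$, fix $(C,F)$. The fiber of $\gl(F)$ over $b$ is the slice $\comma{C}{F\,b}$; since $F$ is lex, $F\,z$ is terminal in $C$, so the fiber over $z$ is $\comma{C}{F\,z}\simeq C$ via the domain projection. Using the explicit description of the cocartesian lifts in the gluing fibration (\Cref{fig:lifts-gluing}), the terminal transport computes as $\omega'_{\gl(F)}(b) = \langle F\,b, !_{F\,b}\rangle \in \comma{C}{F\,z}$, which the equivalence $\comma{C}{F\,z}\simeq C$ carries to $F\,b$. Thus $\Phi(\Psi(C,F))$ agrees with $(C,F)$ pointwise, yielding the required path after funext. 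For the round trip $\Psi\circ\Phi$, fix a Moens family $P$. The fiber of $\gl(\omega'_P)$ over $b$ is $\comma{P\,z}{\omega'(b)}$, and the corollary preceding \Cref{cor:lex-transp-moens} provides an equivalence $\comma{P\,z}{\omega'(b)}\simeq\comma{P\,b}{\zeta_b}$, while $\comma{P\,b}{\zeta_b}\simeq P\,b$ because $\zeta_b$ is terminal in $P\,b$. This exhibits the pointwise equivalence $\St_B(\gl(\omega'_P))\,b\simeq P\,b$, which by funext, univalence, and propositionality of $\isMoensFam$ upgrades to the desired identification in $\MoensFam(B)$.

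The main obstacle is the round trip $\Psi\circ\Phi$: reconstructing the Moens family $P$ as the gluing of its terminal transport rests entirely on the slice equivalence $\comma{P\,b}{\zeta_b}\simeq\comma{P\,z}{\omega'(b)}$, whose content is the left exactness and extensivity of covariant transport established in \Cref{cor:lex-transp-moens} and \Cref{prop:ext-sums}. One must also take care to identify the straightening of $\gl(\omega'_P)$ with the bare family $b\mapsto\comma{P\,z}{\omega'(b)}$ and to confirm that the reduction to pointwise data is legitimate---i.e.\ that each structural predicate is indeed a proposition over the Rezk base---so that no higher coherence of the fibrational structure need be checked by hand.
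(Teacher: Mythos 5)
Your proposal is correct and follows essentially the same route as the paper: the same pair of mutually inverse maps (Artin gluing in one direction, terminal transport $\omega'_P$ in the other), justified by \Cref{prop:gluing-moens} and \Cref{prop:ttrans-lex}, with the round trips reduced to pointwise fiber equivalences. The only cosmetic difference is that for $\Psi\circ\Phi$ you invoke the slice equivalence $\comma{P\,b}{\zeta_b}\simeq\comma{P\,z}{\omega'(b)}$ from the corollary directly, whereas the paper re-derives it by exhibiting the explicit quasi-inverses $\mu_e$ and $\zeta_b\times_{\omega'(b)}(-)$ and checking the round trips via \Cref{prop:ext-sums}; the underlying content is identical.
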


\begin{proof}
	We define a pair of quasi-inverses
	\[\begin{tikzcd}
		{\mathrm{MoensFam}(B)} &&& {B \downarrow^\lex \mathrm{LexRezk}}
		\arrow[""{name=0, anchor=center, inner sep=0}, "\Phi", curve={height=-18pt}, from=1-1, to=1-4]
		\arrow[""{name=1, anchor=center, inner sep=0}, "\Psi", curve={height=-18pt}, from=1-4, to=1-1]
		\arrow["\simeq"{description}, Rightarrow, draw=none, from=0, to=1]
	\end{tikzcd}\]
by setting
\begin{align*} 
	\Phi(P:B \to \UU) & \defeq \omega_P' \jdeq \lambda b.(!_b)_!(\zeta_b): B \to P\,z, \\
	\Psi(F:B \to C)&  \defeq \big(\gl(F):\comma{C}{F} \to B\big).
\end{align*} 
The values of $\Phi$ and $\Psi$, resp., are indeed terms of the respective types due to~\Cref{prop:ttrans-lex,prop:gluing-moens}.

For the first roundtrip, let $P:B \to \UU$ be a Moens family. We have $\Phi(P) \jdeq \omega_P':B \to P\,z$ and $\Psi(\Phi(P)) \jdeq \gl(\omega'): \comma{P\,z}{\omega'} \fibarr B$. We want to give an identification $P =  \gl(\omega'_P)$ in the type of Moens families over $B$ which amounts to a fiberwise equivalence $\widetilde{P} \simeq_B \comma{P\,z}{\omega_P'}$. We are to define a pair of fibered quasi-inverses:
\[\begin{tikzcd}
	{\widetilde{P}} && {P\,z \downarrow \omega'} \\
	& B
	\arrow[two heads, from=1-1, to=2-2]
	\arrow[two heads, from=1-3, to=2-2]
	\arrow[""{name=0, anchor=center, inner sep=0}, "\varphi"{description}, curve={height=-12pt}, from=1-1, to=1-3]
	\arrow[""{name=1, anchor=center, inner sep=0}, "\psi"{description}, curve={height=-6pt}, from=1-3, to=1-1]
	\arrow["\simeq"{description}, Rightarrow, draw=none, from=0, to=1]
\end{tikzcd}\]

We introduce the following notation. For $b:B$, $e:P\,b$, consider the following canonical square:
\[\begin{tikzcd}
	{\widetilde{P}} && e && {(!_b)_!e} \\
	&& {\zeta_b} && {\omega'(b)} \\
	B && b && z
	\arrow["{!_b}", dashed, from=3-3, to=3-5]
	\arrow["{\nu_b}"', from=2-3, to=2-5, cocart]
	\arrow[from=1-1, to=3-1, two heads]
	\arrow["{!_e^b}"', dashed, from=1-3, to=2-3]
	\arrow["{\kappa_e}", from=1-3, to=1-5, cocart]
	\arrow["{\mu_e}", dashed, from=1-5, to=2-5]
\end{tikzcd}\]
We denote by $!_e^b:e \to \zeta_b$ the terminal map of $e$ in $P\,b$, and by $\nu_b: \zeta_b \cocartarr_{!_b} \omega'(b)$ the cocartesian lift of $!_b$ \wrt~$\zeta_b:P\,b$.

By $\kappa_e$, we denote the cocartesian lift of $!_b$ \wrt~$e:P\,b$. We abbreviate by $\mu_e:(!_b)_!e \to \omega'(b)$ the filler
\[ \mu_e \defeq \tyfill_{\kappa_e}(\nu_e \circ !_e^b).\]
Then, we define 
\[ \varphi:\prod_{b:B} P\,b \to \comma{P\,z}{\omega'\,b}, \quad \varphi_b(e)\defeq \mu_{e}: (!_b)_!e \vertarr_z \omega'(b)  \]
and
\[ \psi:\prod_{b:B}\comma{P\,z}{\omega'\,b} \to P\,b, \quad \psi_b(f:e\vertarr_z \omega'(b)) \defeq \zeta_b \times_{\omega'(b)} e : P\,b. \]
For the first part of the round trip, we take $e:P\,b$ which gets mapped to $\varphi_b(e) = \mu_e: (!_b)_!e \vertarr_z \omega'(b)$. Computing the pullback $\psi_b(\mu_e)$ recovers $e$ by~\Cref{prop:ext-sums}(\ref{it:ext-sums-lawvere}) (or (\ref{it:ext-sums-ext})).
The reverse direction is established as follows. Starting with a (vertical) arrow $f:e \vertarr_z \omega'(b)$, we consider the dependent pullback, with a pasted identity of arrows:
\[\begin{tikzcd}
	&& {(!_b)_!e} \\
	{e'} && e \\
	{\zeta_b} && {\omega'(b)}
	\arrow[squiggly, from=2-1, to=3-1, "!_e^b", swap]
	\arrow["{\nu_b}"', from=3-1, to=3-3, cocart]
	\arrow[from=2-1, to=2-3, cocart]
	\arrow[squiggly, from=2-3, to=3-3, "f"]
	\arrow["\lrcorner"{anchor=center, pos=0.125}, draw=none, from=2-1, to=3-3]
	\arrow[from=2-1, to=1-3, cocart]
	\arrow[Rightarrow, no head, from=1-3, to=2-3]
\end{tikzcd}\]
Then, the arrow $\psi_b(\varphi_b(e))$ is given by the composite
\[ (!_b)_!e  \xlongequal{~~} e \xlongrightarrow{f} \omega'(b) \]
which can be identified with $f$. In sum, we have proven $\Psi \circ \Phi = \id$. We are left with the other direction.

Let $C$ be a lex Rezk type and $F:B \to C$ a lex functor. Then $\Psi(f) = \gl(F): \comma{C}{F} \fibarr B$. The section chosing the terminal elements is given by
\[ \zeta \defeq \lambda b.\angled{b,F\,b, \id_{F\,b}}:\prod_{b:B} \sum_{c:C} c \to F\,b.\]
Since $F$ is lex we have an identification $F(z) = y$ where $y:C$ is terminal. Then the terminal transport functor of the fibration $\gl(F)$ yields
\[ \omega' \defeq \pair{F\,b}{F(!_z):F\,b \to y}: B \to \sum_{c:C} c \to y. \]
We have $\Phi(\Psi(F)) = \omega'_{\St_B(\gl(F))}$, and since $y$ is terminal, we can identify this map with $F$. 
\end{proof}		
			\chapter{Two-sided cartesian families of synthetic \texorpdfstring{$\inftyone$}{(∞,1)}-categories}
			\label{ch:2scart}
			Two-sided cartesian families are type families $P:A \to B \to \UU$ which fibrationally are presented by \emph{spans}
\[\begin{tikzcd}
	& E \\
	A && B
	\arrow["\xi"', two heads, from=1-2, to=2-1]
	\arrow["\pi", two heads, from=1-2, to=2-3]
\end{tikzcd}\]
where $\xi$ is cocartesian, $\pi$ is cartesian, and some compatibility conditions between the two respective liftings are satisfied. An instructive example is given by the ``hom span'' $\partial_1: A \leftarrow A^{\Delta^1} \to A:\partial_0$ of a Rezk type $A$, and from ensuing properties one also obtains comma spans $g:C \leftarrow A \to B:f$.\footnote{In fact, these are even \emph{discrete} two-sided fibrations,~\cf~\cite[Section~8.6]{RS17}, \cite[Section~7.2]{RV21}, \cite[Theorem~2.3.3]{LorRieCatFib}.} Semantically, two-sided families correspond to \emph{categorical $\inftyone$-distributors},~\ie~bifunctors $A^{\Op} \times B \to \Cat$ into the $\inftyone$-category of small $\inftyone$-categories.\footnote{Even though this cannot be expressed in our theory yet,~\cf~\Cref{sec:outlook}.} The significance for $\infty$-cosmos theory is that the discrete variant,~\ie~the $\inftyone$-distributors or \emph{modules}, form a \emph{virtual equipment}~\cite{CruShuMulticat}, a rich double-categorical structure that presents the formal $\infty$-category theory of an $\infty$-cosmos. The Model Independence theorem stats that a biequivalence between $\infty$-cosmoses lifts to a biequivalence of the associated virtual equipments.\footnote{Note the parallel to axiomatic homotopy theory where a Quillen equivalence between ``homotopy theories'' presented through model categories lifts to an equivalence of their associated homotopy categories.} In this thesis, however we will deal with the categorical two-sided case. Namely, we will provide a structured analysis, leading up to characterizations and closure properties generalizing the one-sided case. This follows the thread of of~\cite[Section~7.1]{RV21}, but with a more explicit accounts of various (auxiliary) notions of fibered (or \emph{sliced}) fibrations, owed to the lack of categorical universes in the present theory. Our treatise nevertheless often times make use of techniques from ``formal'' category theory, by reasoning about the various conditions in terms of statements about (fibered) adjunctions, and their closure properties. We view this as a fruitful pratical effect of the $\infty$-cosmological philosophy on the synthetic theory formulated in simplicial type theory.

Our treatise ends with a two-sided Yoneda Lemma, and a (very brief) note on discrete two-sided families.
				\section{Sliced cocartesian families}
			\subsection{Sliced cocartesian families}

\begin{definition}[Sliced cocartesian families]
Let $B$ be a Rezk type.  A \emph{sliced cocartesian family over $B$} is given by the following data:
	\begin{itemize}
		\item an isoinner family $P: B \to \UU$,
		\item an isoinner family $K: \totalty{P} \to \UU$,
		\item and, writing $Q \defeq \Sigma_P K$, a witness for the proposition\footnote{\cf~\Cref{prop:cocart-lifts-unique-in-isoinner-fams}}
		\[ \prod_{b:B} \prod_{\substack{f:\Delta^1 \to P\,b \\ x:Q(b,f0)}} \sum_{\substack{x':Q(b,f1) \\ k:x \to^Q_{b,f} x'}} \isCocartArr^Q_f(k).\]
	\end{itemize}
	We call $K$ a \emph{cocartesian family sliced over $B$ with base $P$}, and denote the ensuing cocartesian lifts as
	\[ K_!^b(f,x) \defeq K_!(b,f,x):x \cocartarr^K_{\pair{b}{f}} f_!^{K,b}\,x.\]
\end{definition}

Perhaps more familiarly, in fibrational terms, a \emph{cocartesian fibration sliced over $B$ with base $\pi$} is given by a fibered functor $\varphi: \xi \to_B \pi$, where $\xi: F \fibarr B$ and $\pi:E \fibarr B$ are isoinner fibrations over $B$, visualized through
\[\begin{tikzcd}
	F && E \\
	& B
	\arrow["\varphi", from=1-1, to=1-3]
	\arrow["\xi"', two heads, from=1-1, to=2-2]
	\arrow["\pi", two heads, from=1-3, to=2-2]
\end{tikzcd}\]
moreover satisfying the analogous lifting property: any $\pi$-vertical arrow has a $\varphi$-cocartesian lift.

As previously with ordinary cocartesian families, we will often bring in the fibrational viewpoint and reason diagrammatically.

This is a type-theoretic formulation of what, more generally in $\infty$-cosmos theory, defines for any $\infty$-cosmos $\mathcal K$ and an object $B \in \mathcal K$ a cocartesian family in the slice-$\infty$-cosmos $\mathcal K/B$. This is captured internally by the following theorem, which shows that the above condition precisely amounts to the sliced version of the familiar LARI condition for cocartesian families.


\begin{theorem}[Characterization of sliced cocartesian families]\label{thm:sl-cocart-fam-char}
	Given a Rezk type $B$, let $P:B \to \UU$ and $K: \totalty{P} \to \UU$ be isoinner families. We write $Q \jdeq \Sigma_{P}K:B \to \UU$, and denote
	\[ \pi \defeq \Un_B(P) : E \defeq \totalty{P} \fibarr B, \quad \xi \defeq \Un_B(Q) : F \defeq \totalty{Q} \fibarr B, \quad \varphi \defeq \Un_E(K): F \to E: \]
\[\begin{tikzcd}
	F && E && {\widetilde{Q} \simeq \widetilde{K}} && {\widetilde{P}} \\
	& B & {} && {} & B
	\arrow["\varphi", from=1-1, to=1-3]
	\arrow["\xi"', two heads, from=1-1, to=2-2]
	\arrow["\pi", two heads, from=1-3, to=2-2]
	\arrow["{\pi_K}", from=1-5, to=1-7]
	\arrow["{\pi_Q \jdeq \pi_{\Sigma_P\,K}}"{description}, two heads, from=1-5, to=2-6]
	\arrow["{\pi_P}", two heads, from=1-7, to=2-6]
	\arrow[squiggly, tail reversed, from=1-3, to=1-5]
\end{tikzcd}\]
	Then the following are equivalent propositions:
	\begin{enumerate}
		\item\label{it:sl-cocart-fam-char-i} The family $K: E \to \UU$ is a cocartesian family sliced over $B$.
		\item\label{it:sl-cocart-fam-char-ii} The sliced Leibniz cotensor~$i_0 \cotens_B \varphi: \VertArr_{\xi}(F) \to_B \relcomma{B}{\varphi}{E}$ has a fibered LARI:
		\[\begin{tikzcd}
			{\VertArr_{\xi}(F) } &&  {\varphi \downarrow_B E} \\
			\\
			& B
			\arrow[""{name=0, anchor=center, inner sep=0}, "{i_0 \widehat{\pitchfork}_B \varphi}"', curve={height=6pt}, from=1-1, to=1-3]
			\arrow[two heads, from=1-1, to=3-2]
			\arrow[two heads, from=1-3, to=3-2]
			\arrow[""{name=1, anchor=center, inner sep=0}, "{\chi_B}"', curve={height=6pt}, dashed, from=1-3, to=1-1]
			\arrow["\dashv"{anchor=center, rotate=-90}, draw=none, from=1, to=0]
		\end{tikzcd}\]
		\item\label{it:sl-cocart-fam-char-iii} The fibered inclusion map~$\iota_\varphi: F \to_E \relcomma{B}{\varphi}{E}$ has a fibered left adjoint:
	\[\begin{tikzcd}
		F &&&& {\varphi \downarrow_B E} \\
		&& E \\
		\\
		&& B
		\arrow["\varphi"{description}, from=1-1, to=2-3]
		\arrow["{\partial_1}"{description}, from=1-5, to=2-3]
		\arrow["\pi"{description}, two heads, from=2-3, to=4-3]
		\arrow["\xi"{description}, two heads, from=1-1, to=4-3]
		\arrow["{\partial_1'}"{description}, two heads, from=1-5, to=4-3]
		\arrow[""{name=0, anchor=center, inner sep=0}, "{\iota_\varphi}"{description}, from=1-1, to=1-5]
		\arrow[""{name=1, anchor=center, inner sep=0}, "{\tau_\varphi}"{description}, curve={height=18pt}, dashed, from=1-5, to=1-1]
		\arrow["\dashv"{anchor=center, rotate=-91}, draw=none, from=1, to=0]
	\end{tikzcd}\]
	\end{enumerate}

\end{theorem}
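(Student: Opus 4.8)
The plan is to treat \Cref{thm:sl-cocart-fam-char} as the fibered (i.e.\ relative-over-$B$) analogue of the two unsliced Chevalley criteria, \Cref{thm:cocart-fams-intl-char} and \Cref{thm:cocartfams-via-transp}, and to replay both of those proofs internally to the slice over $B$. The guiding observation is that a sliced cocartesian family is nothing but a cocartesian family in the slice $\infty$-cosmos over $B$: its defining lifting property asks exactly that every $\pi$-vertical arrow admit a $\varphi$-cocartesian lift, which is the condition ``$K$ has cocartesian lifts'' read relative to $B$ rather than absolutely. With this in mind, the three conditions are precisely the sliced counterparts of ``$K$ has cocartesian lifts'', ``$i_0 \cotens \pi$ has a LARI'', and ``$\iota$ has a fibered left adjoint''.

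For the equivalence \ref{it:sl-cocart-fam-char-i}~$\Leftrightarrow$~\ref{it:sl-cocart-fam-char-ii}, I would argue that the sliced cotensor $i_0 \cotens_B \varphi \colon \VertArr_{\xi}(F) \to_B \varphi \downarrow_B E$ encodes the lifting of $\pi$-vertical arrows in exactly the way the absolute cotensor $i_0 \cotens \pi$ encodes the lifting of arbitrary arrows. Hence \ref{it:sl-cocart-fam-char-i}~$\Leftrightarrow$~\ref{it:sl-cocart-fam-char-ii} follows formally from the sliced instance of the LARI-families-lifting result \Cref{thm:lari-fams-lifting}, just as \Cref{thm:cocart-fams-intl-char} did in the unsliced case; the only additional bookkeeping is that the de-/strictification and the principle of choice \Cref{thm:choice} must be applied to the sliced vertical-arrow and comma objects, which is legitimate because all the relevant extension types are computed fiberwise over $B$. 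For \ref{it:sl-cocart-fam-char-i}~$\Leftrightarrow$~\ref{it:sl-cocart-fam-char-iii} I would mirror the transport proof of \Cref{thm:cocartfams-via-transp}. Assuming \ref{it:sl-cocart-fam-char-i}, define the candidate fibered left adjoint $\tau_\varphi \colon \varphi \downarrow_B E \to_B F$ by sliced cocartesian transport, sending a cone to the codomain of its chosen $\varphi$-cocartesian lift $K_!(b,f,x)$, and check that the transposing maps $\Phi,\Psi$ assembled from cocartesian filling form a quasi-equivalence by the universal property of the sliced lifts (the sliced analogues of \Cref{cor:cocart-trivfill}), exactly as illustrated in \Cref{fig:transp-fibadj}. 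The unit $\eta$ then has an identity second component, so it is vertical for $\partial_1' \colon \varphi \downarrow_B E \fibarr B$, which is what upgrades the adjunction to a \emph{fibered} one. Conversely, given a fibered left adjoint $\tau_\varphi$, reading off the components of its unit recovers, for each $\pi$-vertical arrow and starting point, a $\varphi$-cocartesian lift, yielding \ref{it:sl-cocart-fam-char-i}.

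The main obstacle I anticipate is the bookkeeping of the two stacked fibrations, namely $\xi \colon F \fibarr B$ together with $\varphi \colon F \to E$ sitting over $\pi \colon E \fibarr B$ (equivalently, the tower $K$ over $P$ over $B$), and in particular keeping straight which arrows are $\pi$-vertical, which are $\varphi$-cocartesian, and how these interact. The delicate point is verticality \emph{relative to the outer base $B$}: one must verify that the unit of the transport adjunction lives in the vertical arrows of $\partial_1'$ over $B$, so that the adjunction is genuinely fibered over $B$ and not merely over $E$. This is precisely the place where relative function extensionality \Cref{ax:relfunext} and \Cref{thm:choice} are needed, to identify the sliced comma $\varphi \downarrow_B E$ with the appropriate $\Sigma$-type of extension types and thereby to manipulate the lifts and fillers coherently across both fibration layers.
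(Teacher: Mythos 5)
Your proposal is correct and takes essentially the same route as the paper: both equivalences are established by replaying the unsliced Chevalley criteria (\Cref{thm:cocart-fams-intl-char} for \ref{it:sl-cocart-fam-char-i}$\Leftrightarrow$\ref{it:sl-cocart-fam-char-ii} and \Cref{thm:cocartfams-via-transp} for \ref{it:sl-cocart-fam-char-i}$\Leftrightarrow$\ref{it:sl-cocart-fam-char-iii}) relative to $B$, with explicit transposing maps assembled from the sliced cocartesian lifts and fillers, and with the verticality of the unit (its second component being an identity) securing that the adjunctions are genuinely fibered. The only caveat is that no sliced instance of \Cref{thm:lari-fams-lifting} is available off the shelf to invoke ``formally'' for \ref{it:sl-cocart-fam-char-i}$\Leftrightarrow$\ref{it:sl-cocart-fam-char-ii}; the paper instead carries out that verification by hand, which is precisely the fibrant-replacement and transposition bookkeeping you describe.
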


\begin{proof}
\begin{description}
	\item[$\ref{it:sl-cocart-fam-char-ii} \implies \ref{it:sl-cocart-fam-char-i}$:]
	We abbreviate $r\defeq i_0 \widehat{\pitchfork}_B \varphi$. After the usual fibrant replacement, we can identify it as the fiberwise map with components
	\[ r_b \big( f:e \to_{P\,b} e', k:x \to^Q_f x' \big) \defeq \langle e, f, x \rangle \jdeq \langle \partial_0 \,f, f, \partial_0 k \rangle \]
	for $b:B$.
	
	Assume, the stated fibered LARI condition is satisfied.
	First we note that the invertible unit, for every $b:B$, exhibits $\chi_{B,b}$ as a (strict) section of $r_b$, \ie~given $e:P\,b$, $f:\comma{e}{P\,b}$ and $x:Q(b,e)$, we can assume $\chi_{B,b}(e,f,x):x \to^Q_f x'$ for some $x':Q(b,\partial_1\,f)$.
	We have a fibered equivalence
	\[  \prod_{\substack{b,b':B \\ u:b \to_B b'}} \prod_{\substack{e,e':P\,b \\ f:e \to_{P\,b} e' \\ x:Q(b,e)}} \prod_{\substack{d,d':P(b') \\ g:d \to_{P\,b'} d' \\ y:Q(b',d), y':Q(b',d') \\ m:y \to^Q_g y'}} ( \chi_B(e,f,x) \to_u \langle g,m \rangle) \stackrel{\simeq}{\longrightarrow} ( \langle e,f,x \rangle \to_u \langle d, g,y \rangle).  \]
	Just as in the second part of the proof of \Cref{thm:cocart-fams-intl-char}, by specializing to the case that $m= \id_{x''}$ and $g=\id_{e''}$ for some $e'':P(b')$ and $x'':Q(b',e'')$, we find that the lift $\chi_{B,b}(e,f,x)$ is a $K$-cocartesian lift of the \emph{$P$-vertical} arrow $f:e \to_{P\,b} e'$.
	
	\item[$\ref{it:sl-cocart-fam-char-i} \implies \ref{it:sl-cocart-fam-char-ii}$:]
	On the other hand, suppose that $K$-cocartesian lifts of all $P$-cocartesian maps exist, \wrt~to a given initial vertex. Accordingly, we define $\chi_{B,b}(e,f,x)\defeq K^b_!(f,x) : x \cocartarr_f^K x'$. Again, analogously to the first part of the proof, we define a pair of maps
	\[\begin{tikzcd}
		{\left( \chi_{B,b}(e,f,x) \to_u \langle g,m\rangle\right)} && {\left( \langle e,f,x\rangle \to_u \langle d,g,y\rangle\right)}
		\arrow[""{name=0, anchor=center, inner sep=0}, "\Phi", shift left=2, from=1-1, to=1-3]
		\arrow[""{name=1, anchor=center, inner sep=0}, "\Psi", shift left=2, from=1-3, to=1-1]
		\arrow["\simeq"{description}, shorten <=1pt, shorten >=1pt, Rightarrow, from=0, to=1]
	\end{tikzcd}\]
	by
	\begin{align*}
	 & \Phi(h:e \to_u^P d, h': e' \to^P_u d', k: x \to^K_h y, k': x' \to^K_{h'} y') \defeq \langle h, h', k \rangle  \\
	& \Psi(h:e \to_u^P d, h': e' \to^P_u d', k:x \to_h^K y) \defeq \langle h, h', k, \tyfill^K_{\chi_{B,b}(e,f,x)}(m \circ k) \rangle.
	\end{align*}
	Due to cocartesianness of $\chi_{B,b}(e,f,x)$ these are quasi-inverse to each other. In particular, the components of $\Phi$ are defined by applying the right adjoint $i_0 \cotens_B \varphi$. For the unit of the adjunction, we take reflexivity, and taken together this defines a fibered LARI adjunction.
	
	\item[$\ref{it:sl-cocart-fam-char-i} \implies \ref{it:sl-cocart-fam-char-iii}$:] The fiberwise map $\iota_\varphi: F \to_E \relcomma{B}{\varphi}{E}$ is given by
	\[ \iota_\varphi(b,e,x)\defeq \angled{b,e,e,\id_e,x}. \]
	Because of the preconditions we can define the candidate fibered left adjoint $\tau_\varphi: \relcomma{B}{\varphi}{E} \to_E F$ by
	\[ \tau_\varphi(b,e',e,f:e \to_{P\,b} e', x:Q(b,e)) \defeq \angled{b,e',f^Q_!(x):Q(b,e')}, \]
	as we would expect analogously to~\Cref{thm:cocartfams-via-transp}.
	To obtain a fibered adjunction as desired, recalling~\Cref{thm:char-fib-adj},~\Cref{it:fib-ladj-sliced}, we want to define a family of equivalences
	\[ \Phi: \prod_{\substack{b:B \\ e':P\,b}} \prod_{\substack{e:P\,b, x:K\,b\,e\\ f:e \to_{P\,b} e'}} \prod_{x':K\,b\,e'} \Big( \underbrace{\tau_\varphi(e,f,x)}_{\jdeq f_!x} \longrightarrow_{K(b,e')} x' \Big) \stackrel{\simeq}{\to} \Big( x \longrightarrow^{Q}_{b,f} x'\Big) : \Psi, \]
	generalizing~\Cref{thm:cocartfams-via-transp}, by\footnote{Note that for the codomain of the equivalence we have identified the type of morphisms $\Big(\angled{e,f,x} \longrightarrow \angled{e',\id_{e'}, x'} \Big)$ (in the fiber $\sum_{g:\comma{P\,b}{e'}} Q(b,\partial_0\,g)$) with simply $(x \to^{b^*Q}_f x')$.}
	\[ \Phi_{b,e'}\big( m:f_!\,x \to_{Q\,b\,e'} x'\big) \jdeq m \circ Q^b_!(f,e), \quad \Psi_{b,e'}\big( k:x \to_f^{b^*Q} x\big) \defeq \tyfill_{Q^b_!(f,x)}^\varphi(k),\]
	see~\Cref{fig:sl-cocart-transp} for an illustration. By \emph{$Q$-cocartesianness} of the lifts of $P$-vertical arrows in $K$ one can show---analogously to the proof of~\Cref{thm:cocartfams-via-transp}---that the maps are quasi-inverse to one another.
	\item[$\ref{it:sl-cocart-fam-char-iii} \implies \ref{it:sl-cocart-fam-char-i}$:] By assumption, there exists a fibered functor $\tau_\varphi: \relcomma{B}{\varphi}{E} \to_E F$ and a fibered natural transformation
	\[ \eta:\big( \id_{\relcomma{B}{\varphi}{E}}\Rightarrow^K_E \iota_\varphi \circ \tau_\varphi \big) \simeq \prod_{\substack{b:B \\e:P\,b}} \prod_{\substack{d:P\,b \\ f:d \to_{P\,b}e \\ x:Q(b,d)}} \angled{d,f,x} \longrightarrow^{\relcomma{B}{K}{E}}_{\pair{b}{e}} \angled{e,\id_e,f_!x},  \]
	where we write $f_!x$ for the respective component, for the sake of foreshadowing.
	Here,
	\[ \relcomma{B}{K}{E} \defeq \lambda b,e.\sum_{\substack{d:P\,b \\ f:d \to_{P\,b}e}} Q(b,d):E \to \UU \]
	is the straightening of $\partial_1: \relcomma{B}{\varphi}{E} \fibarr E$. Note that there is an equivalence
	\[ \Big( \angled{d,f,x} \to_{(\relcomma{B}{K}{E})(b,e)} \angled{e,\id_e,f_!\,x}  \Big) \simeq \Big( x \to^{b^*Q}_{\pair{b}{f}} f_!\,x\Big) ,\]
	as illustrated by:
	\[\begin{tikzcd}
		F & x && {f_!\,x} \\
		& d && e \\
		E & e && e \\
		B && b
		\arrow["f"', from=2-2, to=3-2]
		\arrow[Rightarrow, no head, from=3-2, to=3-4]
		\arrow["f", from=2-2, to=2-4]
		\arrow[Rightarrow, no head, from=2-4, to=3-4]
		\arrow["{\eta_x}", from=1-2, to=1-4, cocart]
		\arrow[Rightarrow, dotted, no head, from=1-2, to=2-2]
		\arrow[Rightarrow, dotted, no head, from=1-4, to=2-4]
		\arrow[Rightarrow, dotted, no head, from=3-2, to=4-3]
		\arrow[Rightarrow, dotted, no head, from=3-4, to=4-3]
		\arrow[two heads, from=3-1, to=4-1]
		\arrow[two heads, from=1-1, to=3-1]
	\end{tikzcd}\]
 	Furthermore, by the assumption, the induced transposing map\footnote{again, identifying $\angled{d,f,x} \longrightarrow_{\pair{v}{g}} \angled{e',\id_{e'}, x'}$ with $x \to^K_{\pair{v}{gf}} x'$} is a family of equivalences:
	\begin{align*}
		& \Phi : \prod_{\substack{b,b':B \\e:P\,b \\ e':P\,b'}} \prod_{\substack{v:b \to_B b' \\ g:e \to^P_v e'}} \prod_{\substack{d:P\,b \\ f:d \to_{P\,b} e \\ x:Q(b,d)}} \prod_{x':Q(b',e')} \Big( f_!\,x \longrightarrow_{\pair{v}{g}}^K x' \Big) \stackrel{\simeq}{\to} \Big( x \longrightarrow^K_{\pair{v}{gf}} x' \Big), \\
		 & \Phi\big(m: f_!\,x \to^K_{\pair{v}{g}} x' \big) \defeq \big( m \circ \eta_x: x \to_{Q\,b\,e}  f_!\,x \to^K_{\pair{v}{g}} x' \big)
	\end{align*}
	Now, $\Phi$ being a fiberwise equivalence means the proposition
	\[ \prod_{\substack{b,b':B \\e:P\,b \\ e':P\,b'}} \prod_{\substack{v:b \to_B b' \\ g:e \to^P_v e'}} \prod_{\substack{d:P\,b \\ f:d \to_{P\,b} e \\ x:Q(b,d)}} \prod_{x':Q(b',e')} \prod_{k: x \to^K_{\angled{v,gf}} x'} \isContr \Big( \sum_{m:f_!\,x \to^K_{\pair{v}{g}} x'} m \circ \eta_x = k \Big) \]
	is satisfied, cf.~\Cref{fig:sl-vert-fill}. This exhibits $\eta_x: x \to^K_{\pair{b}{f}} \tau_\varphi(f,x)$ as $K$-cocartesian lift of the $P$-vertical arrow $f:d \to_{P\,b} e$ (starting at $x:Q(b,d)$), as claimed.
\end{description}
\end{proof}

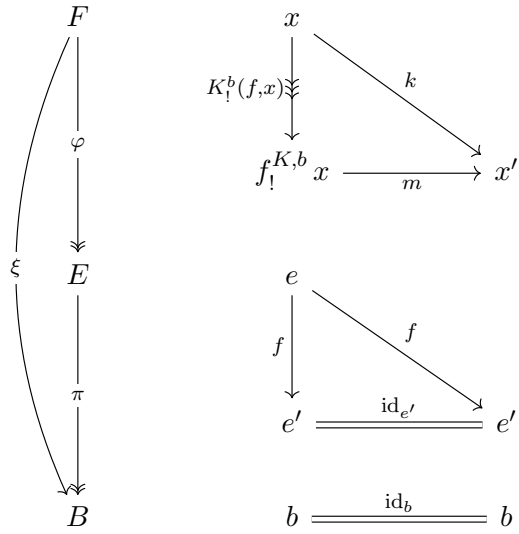
\begin{figure}
	\centering
\[\begin{tikzcd}
	F && x \\
	\\
	&& {f_!^{K,b}\,x} && {x'} \\
	E && e \\
	\\
	&& {e'} && {e'} \\
	B && b && b
	\arrow["k", from=1-3, to=3-5]
	\arrow["f"', from=4-3, to=6-3]
	\arrow["{\id_{e'}}", Rightarrow, no head, from=6-3, to=6-5]
	\arrow["f", from=4-3, to=6-5]
	\arrow["\pi"{description}, two heads, from=4-1, to=7-1]
	\arrow["\varphi"{description}, two heads, from=1-1, to=4-1]
	\arrow["\xi"{description}, curve={height=30pt}, from=1-1, to=7-1]
	\arrow["{K^b_!(f,x)}"', from=1-3, to=3-3, cocart]
	\arrow["m"', from=3-3, to=3-5]
	\arrow["{\id_b}", Rightarrow, no head, from=7-3, to=7-5]
\end{tikzcd}\]
	\caption{Cocartesian transport for sliced cocartesian families}
	\label{fig:sl-cocart-transp}
\end{figure}

\begin{figure}
	\[\begin{tikzcd}
		&& x \\
		F && {f_!\,x} && {x'} \\
		&& d \\
		E && e && {e'} \\
		B && b && {b'}
		\arrow["{\eta_x}"', from=1-3, to=2-3, cocart]
		\arrow["{\exists!\, m}"', dashed, from=2-3, to=2-5]
		\arrow["{\forall\,k}", from=1-3, to=2-5]
		\arrow["f"', from=3-3, to=4-3]
		\arrow["g"', from=4-3, to=4-5]
		\arrow["gf", from=3-3, to=4-5]
		\arrow["v", from=5-3, to=5-5]
		\arrow[from=4-1, to=5-1, two heads]
		\arrow[from=2-1, to=4-1, two heads]
	\end{tikzcd}\]
	\caption{Cocartesian filling in sliced cocartesian families}
	\label{fig:sl-vert-fill}
\end{figure}
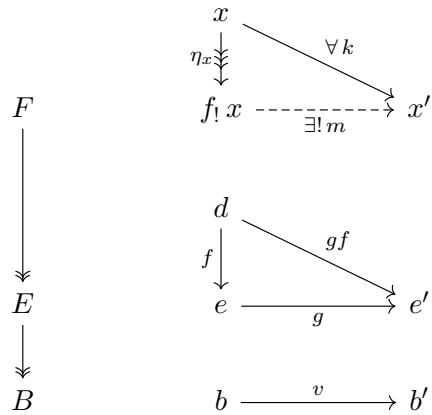

\begin{figure}
		\[\begin{tikzcd}
			x & {x'} & x & {x'} \\
			{g'_!\,x} & {f_!\,x'} &&& {g'_!\,x} & {f_!\,x'} \\
			e & {e'} & e & {e'} & e & {e'} \\
			{e''} & {e'''} & {e''} & {e'''} & {e''} & {e'''} \\
			b & {b'} & b & {b'} & b & {b'} \\
			{\VertArr_\xi(F)} && {\relcomma{B}{\varphi}{E}} && {} & F
			\arrow["v", from=5-3, to=5-4]
			\arrow["{f'}"', from=4-3, to=4-4]
			\arrow["{g'}"', from=3-3, to=4-3]
			\arrow["f", from=3-4, to=4-4]
			\arrow["v", from=5-5, to=5-6]
			\arrow["{g'}"', from=3-5, to=4-5]
			\arrow["{f'}"', from=4-5, to=4-6]
			\arrow["g", from=3-5, to=3-6]
			\arrow["f", from=3-6, to=4-6]
			\arrow["k", from=1-3, to=1-4]
			\arrow["{k'}", dashed, from=2-5, to=2-6]
			\arrow[Rightarrow, dotted, no head, from=1-3, to=3-3]
			\arrow[Rightarrow, dotted, no head, from=1-4, to=3-4]
			\arrow["g", from=3-3, to=3-4]
			\arrow["{\tau_B}", curve={height=-12pt}, maps to, from=3-4, to=3-5]
			\arrow["g", from=3-1, to=3-2]
			\arrow["{g'}"', from=3-1, to=4-1]
			\arrow["f", from=3-2, to=4-2]
			\arrow["{f'}"', from=4-1, to=4-2]
			\arrow["v", from=5-1, to=5-2]
			\arrow["{\chi_B}"', curve={height=12pt}, maps to, from=3-3, to=3-2]
			\arrow["{k'}", dashed, from=2-1, to=2-2]
			\arrow["B", two heads, from=6-3, to=6-1]
			\arrow["{m'}"', from=1-1, to=2-1, cocart]
			\arrow["k", from=1-1, to=1-2]
			\arrow["m", from=1-2, to=2-2, cocart]
			\arrow[curve={height=-18pt}, Rightarrow, dotted, no head, from=2-6, to=4-6]
			\arrow[curve={height=18pt}, Rightarrow, dotted, no head, from=2-5, to=4-5]
			\arrow["E"', two heads, from=6-3, to=6-6]
		\end{tikzcd}\]
	\caption{Action on arrows of lifting and transport of sliced cocartesian families}
	\label{fig:actn-lift-transp-sl-cocart}
\end{figure}
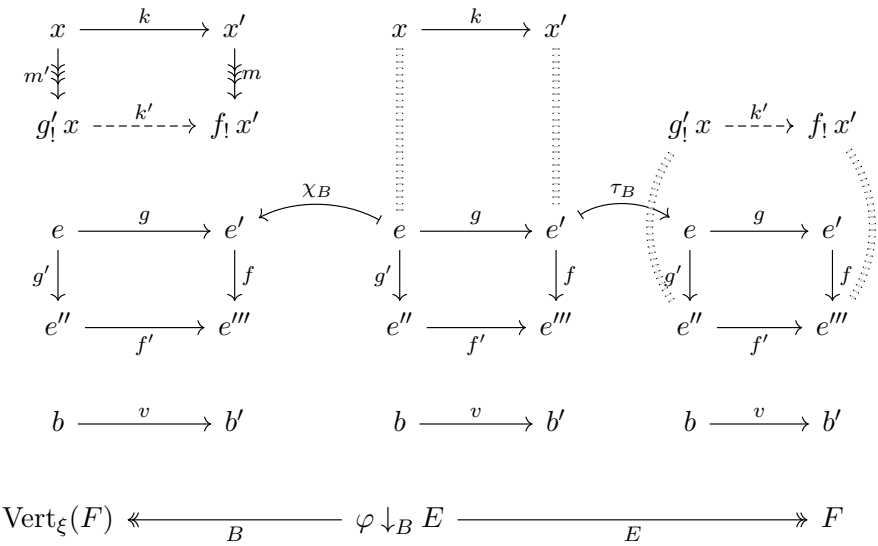

\begin{remark}[Sliced cocartesian families: actions on arrows of the induced functors]\label{rem:sl-cocart-fam-actn-on-arrs}
	For future reference, we record here the actions on arrows of the induced (fibered) lifting and transport functors as established in the proof of~\Cref{thm:sl-cocart-fam-char}. Informally, these can be described as follows. An arrow in $\relcomma{B}{\varphi}{E}$ is given by a dependent square $\sigma$ in $P$ whose vertical edges $f \defeq \lambda t.\sigma(1,t)$, $g' \defeq \lambda t.\sigma(0,t)$ are $P$-vertical, and, moreover, an arrow $k$ in $Q$ over the edge $\pair{t}{0}$. The lifting functor $\chi_B$ maps this to the ensuing dependent square in $Q$, produced by adding the $Q$-cocartesian lifts of the $P$-vertical arrows and the induced filling edge. The functor $\tau_B$ has yields only the filling edge of this square.
	
	Formally, this reads as\footnote{suppressing the repeated data in the lower layers}
	\begin{align}
		\chi_{B,v}(\angled{g,f,g',f'}, k:x \to_g x') & \defeq \angled{k,m:x' \cartarr_{f}^Q f_!\,x', m': x \cartarr_{f}^Q g'_!\,x, k': g_!'x \to_{f'}^Q f_!\,x},\label{eq:lift-sl-cocart}\\
		\tau_{B,f'}(\angled{g,f,g',f'}, k:x \to_g x') & \defeq (k':g_!\,x \to_{f'}^Q f_!\,x'). \label{eq:trans-sl-cocart}
	\end{align}
\end{remark}

The following proposition reflects the known fact that, given a cocartesian functor between cocartesian fibrations, it is a sliced cocartesian fibration if and only if it is a cocartesian fibration in the usual sense.
\begin{proposition}
	Let $B$ be a Rezk type. Assume $\xi:F \fibarr B$ and $\pi:E \fibarr B$ are cocartesian fibrations, and $\varphi:F \to_B E$ is a cocartesian functor:
	\[\begin{tikzcd}
		F && E \\
		& B
		\arrow["\varphi", from=1-1, to=1-3]
		\arrow["\xi"', two heads, from=1-1, to=2-2]
		\arrow["\pi", two heads, from=1-3, to=2-2]
	\end{tikzcd}\]
	Then $\varphi:F \to_B E$ is a cocartesian fibration sliced over $B$ if and only if it is a cocartesian fibration $F \fibarr E$ in the usual sense.
\end{proposition}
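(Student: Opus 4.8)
The plan is to prove the two implications separately, the backward one being immediate and the forward one resting on a factorization argument together with a cancellation lemma for cocartesian arrows. Throughout, note that $P$ and $K$ are isoinner because their bases $B$ and $E$ are Rezk, so in each direction it suffices to compare the two lifting conditions: the sliced condition asks for $\varphi$-cocartesian (i.e. $K$-cocartesian) lifts of $\pi$-\emph{vertical} arrows of $E$, whereas being a cocartesian fibration $\varphi: F \fibarr E$ in the usual sense asks for $\varphi$-cocartesian lifts of \emph{all} arrows of $E$. For the direction where $\varphi$ is a cocartesian fibration over $E$ in the usual sense, there is nothing to do: $\pi$-vertical arrows are a special case of arbitrary arrows of $E$, so their $\varphi$-cocartesian lifts are supplied by hypothesis, and this is precisely the sliced condition of the relevant definition.

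For the converse, suppose $\varphi$ is a sliced cocartesian fibration. I must produce, for an arbitrary arrow $g: e \to e'$ of $E$ and a point $x$ in the fibre $\fib_e(\varphi)$, a $\varphi$-cocartesian lift of $g$ starting at $x$; since $K$ is isoinner over the Rezk type $E$, exhibiting all such lifts shows $K$ is a cocartesian family, which is the assertion. Writing $u \defeq \pi g$ and $b' \defeq \partial_1 u$, the cocartesianness of $\pi$ yields a factorization $g = m \circ c$ with $c \defeq P_!(u,e): e \cocartarr_u u_! e$ the $\pi$-cocartesian lift and $m: u_! e \to e'$ the residual $\pi$-vertical arrow, which is an honest arrow of the fibre $P\,b'$. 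I lift the two factors as follows. Let $\tilde c$ be the $\xi$-cocartesian lift of $u$ starting at $x$; since $\varphi$ is a cocartesian functor (\Cref{def:cocart-fun}), $\varphi \tilde c$ is $\pi$-cocartesian over $u$ with source $e$, hence identified with $c$ by uniqueness of $\pi$-cocartesian lifts (\Cref{prop:cocart-lifts-unique-in-isoinner-fams}), so we may assume $\varphi \tilde c = c$ and $\varphi x'' = u_! e$, where $x''$ is the target of $\tilde c$. As $m$ is $\pi$-vertical, the sliced hypothesis provides a $\varphi$-cocartesian lift $\tilde m: x'' \cocartarr_\varphi x'$ of $m$ with $\varphi x' = e'$. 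Setting $\tilde g \defeq \tilde m \circ \tilde c$, we get $\varphi \tilde g = \varphi \tilde m \circ \varphi \tilde c = m \circ c = g$.

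Two claims remain, the first of which I expect to be the main obstacle. The \emph{cancellation lemma}: an arrow of $F$ that is $\xi$-cocartesian and whose $\varphi$-image is $\pi$-cocartesian is itself $\varphi$-cocartesian; applied to $\tilde c$ (which is $\xi$-cocartesian with $\pi$-cocartesian image $c$) this gives that $\tilde c$ is $\varphi$-cocartesian. I would prove it via the pullback criterion for cocartesian arrows (the hom-square characterization established earlier). Fixing $y: F$ and using that $\xi = \pi \circ \varphi$ on hom-types, stack three hom-squares vertically: the top square encoding $\varphi$-cocartesianness of $\tilde c$ over $E$ (rows $\hom_F$ and $\hom_E$, vertical maps $\varphi$, horizontals precomposition with $\tilde c$ and $c$), the bottom square encoding $\pi$-cocartesianness of $c$ over $B$ (rows $\hom_E$ and $\hom_B$, vertical maps $\pi$), and the outer rectangle encoding $\xi$-cocartesianness of $\tilde c$ over $B$. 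The bottom square and the outer rectangle are pullbacks by hypothesis, so by the pasting (cancellation) law for pullbacks the top square is a pullback, which is exactly $\varphi$-cocartesianness of $\tilde c$.

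The second claim is that $\tilde g = \tilde m \circ \tilde c$, a composite of two $\varphi$-cocartesian arrows, is again $\varphi$-cocartesian; this is closure of cocartesian arrows under composition (\Cref{prop:cocart-arr-closure}(\ref{it:cocart-arr-comp})), whose proof only invokes the universal properties of the two given arrows and therefore applies verbatim in the isoinner family $K$ without presupposing that all lifts exist. Hence $\tilde g$ is a $\varphi$-cocartesian lift of $g$ starting at $x$, completing the forward direction and the proof.
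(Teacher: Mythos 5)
Your proof is correct, and its skeleton is the same as the paper's: factor the arbitrary arrow of $E$ as a $\pi$-cocartesian arrow followed by a $\pi$-vertical one, lift the first factor to the $\xi$-cocartesian lift (whose $\varphi$-image is identified with the chosen $\pi$-cocartesian arrow because $\varphi$ is a cocartesian functor), lift the second factor via the sliced hypothesis, and compose. Where you genuinely diverge is in how $\varphi$-cocartesianness of the result is certified. The paper stays over $B$: it notes that both lifts are $\xi$-cocartesian, concludes the composite is $\xi$-cocartesian, and then asserts that this arrow, regarded over its image in $E$, is ``in particular'' $K$-cocartesian --- a descent step it does not spell out, and which is applied to an arrow whose $\varphi$-image is not $\pi$-cocartesian. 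You instead verify $\varphi$-cocartesianness factor by factor: the vertical factor's lift is $\varphi$-cocartesian directly from the sliced hypothesis, and for the first factor you isolate and prove the cancellation lemma ($\xi$-cocartesian with $\pi$-cocartesian image implies $\varphi$-cocartesian) by stacking the hom-square pullback criteria for $\varphi$, $\pi$ and $\xi$ and using pullback cancellation; closure of cocartesian arrows under composition then finishes. Your caveat that \Cref{prop:cocart-arr-closure} only uses the universal properties of the two given arrows, and hence does not presuppose that $\varphi$ already has all lifts, is well taken, since that is precisely what is being proved. The two arguments buy the same theorem, but yours makes explicit the key lemma the paper's ``in particular'' leaves implicit, and applies it exactly where its hypotheses hold.
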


\begin{proof}
	In case $\varphi: F \to E$ is a cocartesian fibration it is also a cocartesian fibration sliced over $B$ since it automatically satisfies the weaker existence condition for lifts.
	
	For the converse, we fibrantly replace the given diagram, considering the straightenings $P \defeq \St_B(\pi): B \to \UU$, $Q \defeq \St_B(\pi): B \to \UU$, $K \defeq  \St_E(\varphi): E \to \UU$.
	
	We assume $K$ to be a cocartesian family sliced over $B$, and want to show that it is also a cocartesian family in the usual sense. For an illustration of what follows, \cf~\Cref{fig:abs-from-sliced-cocart-fibs}. Consider an arrow $\pair{u:b \to_B b'}{f:e \to_u^\pi e'}$ in $E$ together with a point $x:K(b,e)\jdeq Q(b,e)$. First, consider the $P$-cocartesian lift of $u:b\to_B b'$ \wrt~$e:P\,b$, given by~$g \defeq P_!(u,e) : e \cocartarr^P_{u} u^P_!\,x$. This induces a filler $h \defeq \tyfill^P_g(f): u^P_!\,x \to_{P\,b'} e'$ that is in particular vertical. Since $K:E \to \UU$ is a sliced cocartesian family we have a lift \wrt~to the \emph{$Q$-cocartesian} transport of the point $x:Q(b,e)$, namely an arrow $m \defeq K_!(h,u_!^Q\,x): u_!^Q\,x \to^K_{\pair{u}{f}} x'$ to some point $x':Q(b,e')$. But by assumption, $m$ (together with its $\varphi$-image $h$) is also a $Q$-cocartesian arrow, hence so is the composite
	\[ k \defeq m \circ Q_!(u,x) : x \to^K_{g \circ \varphi(Q_!(u,x))} x'.\]
	The functor $\varphi$ being cocartesian means $\varphi(Q_!(u,x))$ is identified with $P_!(u,e) \jdeq g$, so up to homotopy, the dependent arrow $k:x \to^Q_u x'$ lies over the composite $h \circ g = f$---hence we can assume it does so strictly. Now, $k$ (together with its projection $f$) being a $Q$-cocartesian arrow means it is in particular $K$-cocartesian (cf.~\Cref{fig:abs-from-sliced-cocart-fibs} for illustration).
\end{proof}

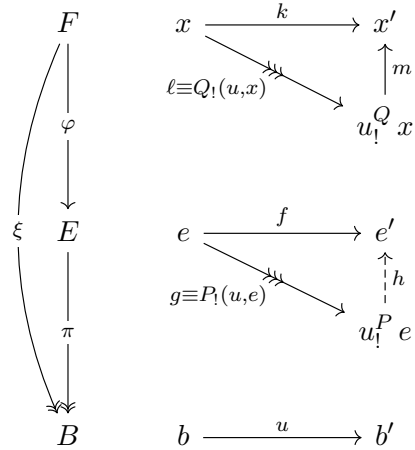
\begin{figure}
	\centering
	\[\begin{tikzcd}
		F & x && {x'} \\
		&&& {u_!^Q\,x} \\
		E & e && {e'} \\
		&&& {u^P_!\,e} \\
		B & b && {b'}
		\arrow["k", from=1-2, to=1-4]
		\arrow["m"', from=2-4, to=1-4]
		\arrow["f", from=3-2, to=3-4]
		\arrow["{g \jdeq P_!(u,e)}"', from=3-2, to=4-4, cocart]
		\arrow["h"', dashed, from=4-4, to=3-4]
		\arrow["u", from=5-2, to=5-4]
		\arrow["\varphi"{description}, from=1-1, to=3-1]
		\arrow["\pi"{description}, two heads, from=3-1, to=5-1]
		\arrow["{\ell \jdeq Q_!(u,x)}"', from=1-2, to=2-4, cocart]
		\arrow["\xi"{description}, curve={height=24pt}, two heads, from=1-1, to=5-1]
	\end{tikzcd}\]
\caption{Absolute from sliced cocartesian fibrations}\label{fig:abs-from-sliced-cocart-fibs}
\end{figure}

\begin{proposition}\label{prop:sliced-comma-is-cocart}
Consider a cospan $\psi:F \to_B G \leftarrow_B E: \varphi$ of fibered functors between isoinner fibrations over a Rezk type $B$, giving rise to the sliced comma type:
\[\begin{tikzcd}
	{\varphi \downarrow_B \psi} &&&& E \\
	\\
	F &&&& G & {} \\
	\\
	&& B
	\arrow[from=1-1, to=3-1]
	\arrow[two heads, from=3-1, to=5-3]
	\arrow[two heads, from=3-5, to=5-3]
	\arrow[from=1-1, to=1-5]
	\arrow["\varphi", from=1-5, to=3-5]
	\arrow[two heads, from=1-1, to=5-3]
	\arrow[two heads, from=1-5, to=5-3]
	\arrow[shorten <=35pt, shorten >=35pt, Rightarrow, from=1-5, to=3-1]
	\arrow["\psi"', from=3-1, to=3-5, crossing over]
\end{tikzcd}\]
The codomain projection
\[ \partial_1: \relcomma{B}{\varphi}{\psi} \fibarr F \]
is a cocartesian fibration.
\end{proposition}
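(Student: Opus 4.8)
The plan is to read the claim through the lens of \Cref{thm:sl-cocart-fam-char}: since $E$ and $G$ are only assumed isoinner, the assertion cannot be that $\partial_1$ is a cocartesian fibration in the absolute sense, but rather that it is a \emph{cocartesian fibration sliced over $B$} with base $\xi : F \fibarr B$. Thus, after the usual fibrant replacement---writing $K \defeq \St_F(\partial_1) : F \to \UU$ and recalling that a point of $\relcomma{B}{\varphi}{\psi}$ over $(b,f)$ is a pair $(e : E_b, \gamma : \varphi e \to \psi f)$ with $\gamma$ a $B$-vertical arrow (over $b$)---the only thing to establish is that every $\xi$-vertical arrow $\bar u : f \to f'$ in $F$, i.e.\ one lying over a degenerate arrow of $B$, admits a $\partial_1$-cocartesian lift. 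It is exactly this restriction to $\xi$-vertical arrows that collapses the potential cross-fibre bookkeeping to a purely fibrewise problem, which is why no cocartesian transport in $E$ or $G$ is required.

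First I would give the candidate lift directly. Fix $(e,\gamma)$ over $(b,f)$ and a $\xi$-vertical $\bar u : f \to f'$. Since $\psi$ is a fibered functor it preserves verticality, so $\psi\bar u : \psi f \to \psi f'$ is again $B$-vertical, and as $G_b$ is Rezk the composite $\gamma' \defeq \psi\bar u \circ \gamma : \varphi e \to \psi f'$ is a well-defined $B$-vertical arrow. As cocartesian lift I propose the arrow of $\relcomma{B}{\varphi}{\psi}$ whose $E$-component is the degeneracy $\id_e$ and whose $G$-component is the commuting square witnessing $\gamma' \circ \varphi(\id_e) = \psi\bar u \circ \gamma$; its $\partial_1$-image is $\bar u$ by construction. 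Conceptually this is just post-composition in the codomain comma taken inside the fibre $G_b$, so I would alternatively package the whole argument by exhibiting $\relcomma{B}{\varphi}{\psi}$ as the pullback of the sliced codomain comma $\relcomma{B}{\varphi}{G} \fibarr G$ along $\psi : F \to_B G$, and then deduce the claim from the corresponding sliced codomain opfibration (the fibrewise form of \Cref{prop:cod-cocartfam}) together with stability of sliced cocartesian families under pullback along fibered functors.

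To verify cocartesianness I would unfold the defining contractibility condition $\isCocartArr$ for the proposed lift and check that, for an arbitrary $\xi$-vertical extension together with a filler downstairs, the induced filling arrow in the comma is determined uniquely up to homotopy by $\tyfill$ against $\gamma'$; since everything takes place over the single base point $b$, this is precisely the universal property of post-composition by $\psi\bar u$ in the Rezk type $G_b$, i.e.\ the fibrewise instance of the (non-sliced) fact that the codomain projection of a comma object is cocartesian. The unit of the resulting adjunction is then $\xi$-vertical by construction, which is what upgrades the bare lifting property to the fibered LARI criterion of \Cref{thm:sl-cocart-fam-char}. The main obstacle I anticipate is bookkeeping rather than conceptual: one must keep the $B$-, $F$-, $E$- and $G$-layers strictly aligned through the extension-type manipulations, confirm that restricting to $\xi$-vertical arrows is both necessary (isoinnerness of $E$ and $G$ precludes absolute lifts) and sufficient, and make the pullback-stability step precise in the sliced setting, where the ambient closure properties are only available in the fibered form developed in the appendix.
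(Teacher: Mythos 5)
Your reading of the statement diverges from the paper's at the very first step, and this changes what you end up proving. The paper asserts, and its proof establishes, that $\partial_1 \colon \relcomma{B}{\varphi}{\psi} \fibarr F$ is a cocartesian fibration in the \emph{absolute} sense: the proof takes an \emph{arbitrary} arrow of $F$, given by $u \colon b \to_B b'$ together with $m \colon x' \to_u x''$ in $G$ and $f \colon d \to_m d'$, and posits as cocartesian lift the ``tautological extension'' whose $E$-component is the degenerate arrow $\id_e$ and whose comma component is the square $\pair{\id_x}{m}\colon k \rightrightarrows mk$; cocartesianness is then verified directly, the filler being obtained by ``repeating the missing data.'' Your premise that isoinnerness of $E$ and $G$ ``precludes absolute lifts'' mislocates the difficulty: the lift never moves the $E$-side at all (its $E$-component is an identity), so no cocartesian transport in $E$, in $G$, or along $u$ is invoked anywhere --- exactly as in the classical fact that the codomain projection of a comma is an opfibration with no hypotheses on the cospan. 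By restricting to $\xi$-vertical arrows you prove only the sliced statement, which is strictly weaker than the proposition as stated (it agrees with it only when $B \jdeq \unit$, which is all the stated corollary needs but not all the proposition claims). Your fibrewise construction and the alternative packaging via pullback of the sliced codomain comma are fine as far as they go, but they go only as far as the sliced statement.

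That said, your caution does latch onto a real wrinkle, just not the one you name. Since $\relcomma{B}{\varphi}{\psi}$ is defined via $\VertArr(G)$, the comma arrow of a point must be vertical in $G$, whereas the comma arrow $mk$ of the paper's tautological target lies over $u$; the paper's proof is therefore implicitly computing in the absolute comma $\comma{\varphi}{\psi}$, built from $G^{\Delta^1}$, rather than in the object literally defined by the sliced-comma pullback. If you want to press an objection to the absolute claim, that verticality constraint --- not isoinnerness --- is where it would bite. As submitted, however, a proof that only constructs lifts of $\xi$-vertical arrows, justified by an incorrect appeal to isoinnerness, does not establish the proposition the paper states and proves.
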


\begin{proof}
Consider the families $R:B \to \UU$, $P:G \to \UU$, and $Q:G \to \UU$ associated to $G \fibarr B$, $\varphi: E \fibarr G$, and $\psi: F \fibarr G$, resp. By fibrant replacement we have
\[ \partial_1: \relcomma{B}{\varphi}{\psi} \simeq \sum_{b:B} \sum_{\stackrel{x,x':R\,b}{k:x \to^R_u x'}} P\,b\,x \times Q\, b\,x' \fibarr F \simeq \sum_{b:B} \sum_{x':R\,b} Q\,b\,x'. \]
For an arrow in $F$, given by data $u:b \to_B b'$, $m:x' \to^R_u x''$, $f:d \to^Q_m d'$, we posit the cocartesian lift w.r.t.~the starting vertex~$\angled{b,k:x \to_{P\,b} x', d,e}$ to be the ``tautological extension''
\[ \angled{\pair{\id_x}{m}: k \rightrightarrows^R_{u} mk, \id_e: e =_{P\,x} e,f:d \to^Q_m d'} \]
as illustrated:
\[\begin{tikzcd}
	{\varphi \downarrow_B \psi} && e && e \\
	&& d && {d'} &&&& d && {d'} \\
	&& x && x & {} && {} \\
	&& {x'} && {x''} &&&& {x'} && {x''} \\
	B && b && {b'} &&& F & b && {b'}
	\arrow[Rightarrow, no head, from=1-3, to=1-5]
	\arrow["f", from=2-3, to=2-5]
	\arrow["k"', from=3-3, to=4-3]
	\arrow["m", from=4-3, to=4-5]
	\arrow[Rightarrow, no head, from=3-3, to=3-5]
	\arrow["mk", from=3-5, to=4-5, swap]
	\arrow["u", from=5-3, to=5-5]
	\arrow[two heads, from=1-1, to=5-1]
	\arrow["{\partial_1}", two heads, from=3-6, to=3-8]
	\arrow["u", from=5-9, to=5-11]
	\arrow["m", from=4-9, to=4-11]
	\arrow["f", from=2-9, to=2-11]
	\arrow[curve={height=18pt}, Rightarrow, dotted, no head, from=2-3, to=4-3]
	\arrow[curve={height=-18pt}, Rightarrow, dotted, no head, from=2-5, to=4-5]
	\arrow[curve={height=18pt}, Rightarrow, dashed, no head, from=1-3, to=3-3]
	\arrow[curve={height=-18pt}, Rightarrow, dashed, no head, from=1-5, to=3-5]
	\arrow[Rightarrow, dotted, no head, from=2-9, to=4-9]
	\arrow[Rightarrow, dotted, no head, from=2-11, to=4-11]
\end{tikzcd}\]
In the picture the right hand side indicates the action of the projection $\partial_1 : \relcomma{B}{\varphi}{\psi} \fibarr F$
That this arrow in the sliced comma $\relcomma{B}{\varphi}{\psi}$ is in fact cocartesian is seen as follows. A postcomposing arrow in $F$ consists of data $v:b' \to b''$, $\ell: x'' \to_v^R x'''$, $g:d' \to^Q_\ell d''$. A dependent arrow in $\relcomma{B}{\varphi}{\psi}$ over the composite arrow in $F$ is given by
\[ \angled{\pair{\ell'}{\ell m}: k \rightrightarrows^R_{vu} k', r:e \to^P_{\ell'} e', g \circ f: d \to^Q_{\ell \circ m} d''}\]
where $\ell':x \to_v^R y$, $k':y \to_{R\,b''} x'''$:
\[\begin{tikzcd}
	{\varphi \downarrow_B \psi} && e && e && {e'} \\
	&& d && {d'} && {d''} \\
	&& x && x && y & {} \\
	&& {x'} && {x''} && {x'''} \\
	B && b && {b'} && {b''}
	\arrow[Rightarrow, no head, from=1-3, to=1-5]
	\arrow["f", from=2-3, to=2-5]
	\arrow["k"', from=3-3, to=4-3]
	\arrow["m", from=4-3, to=4-5]
	\arrow[Rightarrow, no head, from=3-3, to=3-5]
	\arrow["mk", from=3-5, to=4-5]
	\arrow["u", from=5-3, to=5-5]
	\arrow[two heads, from=1-1, to=5-1]
	\arrow["{\ell'}", from=3-5, to=3-7]
	\arrow["\ell", from=4-5, to=4-7]
	\arrow["r", from=1-5, to=1-7]
	\arrow["g", from=2-5, to=2-7]
	\arrow["{k'}", from=3-7, to=4-7]
	\arrow["v", from=5-5, to=5-7]
\end{tikzcd}\]
The filler is constructed just by repeating the missing data $\ell':x \to y$, $r:e \to e'$ which also shows uniqueness up to contractibility~\wrt~the given data.
\end{proof}

\begin{corollary}
	For a cospan of maps between Rezk types $g: C \to A \leftarrow B:f$, the codomain projection $\partial_1: \comma{f}{g} \fibarr C$ is a cocartesian fibration.
\end{corollary}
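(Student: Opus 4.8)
The plan is to recognize this as the special case of~\Cref{prop:sliced-comma-is-cocart} in which the slicing base is taken to be the terminal type $\unit$. Since the cospan vertex is itself named $B$ in the corollary, the main bookkeeping point is to keep this $B$ distinct from the base of the slicing, which I take to be $\unit$.

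First I would set $G \defeq A$ and view the cospan $g: C \to A \leftarrow B: f$ as a cospan $g: C \to_\unit A \leftarrow_\unit B: f$ of fibered functors over $\unit$, i.e.\ simply as maps whose (unique) triangles over $\unit$ commute trivially. Because $A$, $B$, and $C$ are Rezk types, the terminal projection $A \fibarr \unit$ and the maps $f, g$ (as functors between Rezk types) are automatically isoinner, so all the hypotheses of~\Cref{prop:sliced-comma-is-cocart} are satisfied with slicing base $\unit$.

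Next I would verify that over $\unit$ the sliced comma degenerates to the ordinary comma of~\Cref{def:comma-obj}. The sliced cotensor $\Delta^1 \boxtimes A$ over the base $\unit$ reduces to the unsliced vertical-arrow object $\VertArr(A) = A^{\Delta^1}$, since the sliced exponential $X \boxtimes E \simeq \sum_{b:\unit} (X \to E)$ over $\unit$ is just $E^X$; similarly the sliced product over $\unit$ is the ordinary product. Consequently the defining pullback of $\relcomma{\unit}{f}{g}$ coincides with the defining pullback of $\comma{f}{g}$, and under this identification the codomain projection $\partial_1: \relcomma{\unit}{f}{g} \to C$ agrees with $\partial_1: \comma{f}{g} \to C$.

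Applying~\Cref{prop:sliced-comma-is-cocart} with base $\unit$ then yields directly that $\partial_1: \relcomma{\unit}{f}{g} \fibarr C$ is a cocartesian fibration, and by the identification above this is exactly $\partial_1: \comma{f}{g} \fibarr C$. The one subtlety is tracking the orientation of the mediating arrow (so that $\varphi \leftrightarrow f$, $\psi \leftrightarrow g$, and the projection lands in the domain $C$ of $g$) together with the notational overlap between the slicing base and the cospan vertex $B$; once the sliced constructions are unfolded over $\unit$, no genuine computation remains, so I do not expect a substantial obstacle.
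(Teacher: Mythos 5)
Your proposal is correct and matches the paper's intent exactly: the corollary is stated without proof precisely because it is the specialization of the sliced-comma proposition to the slicing base $\unit$, under which the sliced cotensor, product, and comma all reduce to their ordinary unsliced counterparts. Your bookkeeping of the orientation ($\varphi \leftrightarrow f$, $\psi \leftrightarrow g$, projection landing in $C$) and of the hypothesis that terminal projections of Rezk types are automatically isoinner is exactly what is needed.
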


\subsection{Cocartesian families in cartesian families}

The following proposition will be of relevance for the characterization of two-sided cartesian fibrations in~\Cref{thm:char-two-sid}.

\begin{proposition}
	\label{prop:char-cocart-fib-in-cart-fib}
Let $\varphi:F \to_B E$ be a cartesian functor between cartesian fibrations, as well as a \emph{cocartesian fibration} sliced over $B$:
\[\begin{tikzcd}
	F && E \\
	& B
	\arrow["\varphi", from=1-1, to=1-3]
	\arrow["\xi"', two heads, from=1-1, to=2-2]
	\arrow["\pi", two heads, from=1-3, to=2-2]
\end{tikzcd}\]
Then the following are equivalent:
\begin{enumerate}
	\item\label{it:gen-lift-comm-i}
	The sliced cocartesian lifting map, \ie~the fibered LARI
	\[\begin{tikzcd}
		{\mathrm{Vert}_\xi(F)} && {\varphi \downarrow_B E } \\
		& B
		\arrow["{\overline{\xi}}"', two heads, from=1-1, to=2-2]
		\arrow["{\overline{\varphi}}", two heads, from=1-3, to=2-2]
		\arrow[""{name=0, anchor=center, inner sep=0}, "{\chi_B}"', curve={height=12pt}, dotted, from=1-3, to=1-1]
		\arrow[""{name=1, anchor=center, inner sep=0}, "{i_0 \cotens \pi}"', curve={height=6pt}, from=1-1, to=1-3]
		\arrow["\dashv"{anchor=center, rotate=-88}, draw=none, from=0, to=1]
	\end{tikzcd}\]
	is a \emph{cartesian functor} between cartesian fibrations over $B$
	\item\label{it:gen-lift-comm-ii} 
	The sliced cocartesian transport map, \ie~the fibered left adjoint
	\[\begin{tikzcd}
		F && {\varphi \downarrow_B E} \\
		& E \\
		& B
		\arrow["\varphi"{description}, from=1-1, to=2-2]
		\arrow[""{name=0, anchor=center, inner sep=0}, "{\iota_B}"{description}, curve={height=6pt}, from=1-1, to=1-3]
		\arrow["{\partial_1}"{description}, from=1-3, to=2-2]
		\arrow["\xi"{description}, two heads, from=1-1, to=3-2]
		\arrow["{\overline{\varphi}}"{description}, two heads, from=1-3, to=3-2]
		\arrow["\pi"{description}, two heads, from=2-2, to=3-2]
		\arrow[""{name=1, anchor=center, inner sep=0}, "{\tau_B}"{description}, curve={height=12pt}, dashed, from=1-3, to=1-1]
		\arrow["\dashv"{anchor=center, rotate=-93}, draw=none, from=1, to=0]
	\end{tikzcd}\]
	is a cartesian functor (from $\overline{\varphi}$ to $\xi$).
	\item\label{it:gen-lift-comm-iii} For all elements $b,b':B$, arrows $v:b' \to_B v$, vertical arrows $f:e' \to_{P\,b}e$ and $x:Q(b,e')$, let us make the following abbreviations:\footnote{Note that all the cocartesian lifts exist because they are over $P$-vertical arrows.}
	\begin{align}\label{eq:abbrv-cocart-in-cart-gen}
		g 	& \defeq P^*(v,e'): v^*(b'e') \cartarr^P_{v} e', & f' & \defeq P^*(v,e'): v^*(b,e) \cartarr^P_{v} e, \\
		g ' & \defeq \cartFill_{f'}^P(fg), & k & \defeq Q^*(g,x): x \cartarr^Q_g g^*x, \\
		k' & \defeq \cocartFill^Q_{m'}(mk): g'_!g^*x \to^Q_{f'} f_!\,x, & k'' & \defeq Q^*(f',f_!\,x): (f')^*f_!\,x \cartarr^Q_{f'} f_!x, \\
		m 	& \defeq Q_!(f,x): x \cocartarr^Q_f f_!\,x. & m' & \defeq Q_!(g',g^*x): g^*x \cocartarr^Q_{g'} g_!'g^*x, \\
		m'' & \defeq \cartFill^Q_{k''}(mk): g^*x \to (f')^*f_!\,x. && 
	\end{align}
	Then there is a homotopy $r$ such that:
	\[\begin{tikzcd}
		{(f')^*f_!\,x} && {f_!\,x} \\
		& {g'_!g^*\,x}
		\arrow["{k'}", from=1-1, to=1-3]
		\arrow["r"', Rightarrow, no head, from=1-1, to=2-2]
		\arrow["{k''}"', from=2-2, to=1-3, cart]
	\end{tikzcd}\]
	\item\label{it:gen-lift-comm-iv} With the notation from~\Cref{it:gen-lift-comm-iii} there is a homotopy $r$ such that:
	\[\begin{tikzcd}
		{g^*x\,} && {g_!'g^*\,x} \\
		& {(f')^*f_!\,x}
		\arrow["{m''}", from=1-1, to=1-3, cocart]
		\arrow["{m'}"', from=1-1, to=2-2]
		\arrow["r"', Rightarrow, no head, from=2-2, to=1-3]
	\end{tikzcd}\]
\end{enumerate}

\end{proposition}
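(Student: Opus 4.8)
The plan is to organise the four-way equivalence into a purely formal core, $\ref{it:gen-lift-comm-iii}\Leftrightarrow\ref{it:gen-lift-comm-iv}$, established by uniqueness of co-/cartesian lifts, followed by two reductions $\ref{it:gen-lift-comm-ii}\Leftrightarrow\ref{it:gen-lift-comm-iii}$ and $\ref{it:gen-lift-comm-i}\Leftrightarrow\ref{it:gen-lift-comm-iii}$ that unfold the explicit descriptions of the transport functor $\tau_B$ and the lifting functor $\chi_B$. The first observation is that the arrows $m',k',m'',k''$ from \eqref{eq:abbrv-cocart-in-cart-gen} assemble into a commutative square lying over the square $[g',f',f,g]$ in $P$, with $k'\circ m' = m\circ k = k''\circ m''$ holding by the very definitions of the fillers $k'\defeq\cocartFill^Q_{m'}(mk)$ and $m''\defeq\cartFill^Q_{k''}(mk)$. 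In this square $k''$ is $Q$-cartesian over the $P$-cartesian $f'$ and $m'$ is $Q$-cocartesian over the $P$-vertical $g'$, both by construction. Assuming $\ref{it:gen-lift-comm-iii}$, the edge $k'$ is $Q$-cartesian as well, so $k'$ and $k''$ are two cartesian lifts of $f'$ with common target $f_!\,x$; by the cartesian dual of \Cref{prop:cocart-lifts-unique-in-isoinner-fams} there is an invertible identification $r$ of their sources with $k'=k''\circ r$, and cancelling the cartesian $k''$ in $k''\circ(r\circ m')=k'\circ m'=k''\circ m''$ gives $m''=r\circ m'$, so $m''$ is $Q$-cocartesian, which is $\ref{it:gen-lift-comm-iv}$. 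The reverse implication is entirely dual, cancelling the cocartesian $m'$, and produces the same homotopy $r$.

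For $\ref{it:gen-lift-comm-ii}\Leftrightarrow\ref{it:gen-lift-comm-iii}$ I would fibrantly replace and invoke the action of $\tau_B$ recorded in \Cref{rem:sl-cocart-fam-actn-on-arrs}, formula \eqref{eq:trans-sl-cocart}. A $B$-cartesian arrow of $\overline{\varphi}\colon\relcomma{B}{\varphi}{E}\fibarr B$ over $v\colon b'\to_B b$ is, by the (cartesian dual of the) description of lifts in sliced commas from \Cref{prop:sliced-comma-is-cocart}, the reindexing of the comma datum $\angled{e',e,f,x}$ to $\angled{v^*e',v^*e,g',g^*x}$, with $g,f',g',k$ precisely the abbreviations in \eqref{eq:abbrv-cocart-in-cart-gen}. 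Applying $\tau_B$ to source and target yields the points $g'_!\,g^*x$ and $f_!\,x$, and $\tau_B$ of the cartesian arrow is exactly the filler $k'\colon g'_!\,g^*x\to^Q_{f'}f_!\,x$. Since $\varphi$ is a cartesian functor between cartesian fibrations, the $\xi$-cartesian lift of $f_!\,x$ along $v$ exists and equals $k''\colon(f')^*f_!\,x\cartarr_{f'}f_!\,x$. Thus $\tau_B$ preserves this cartesian arrow if and only if $k'$ agrees with $k''$ up to the identification $r$, i.e.\ if and only if $\ref{it:gen-lift-comm-iii}$ holds.

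The implication $\ref{it:gen-lift-comm-i}\Leftrightarrow\ref{it:gen-lift-comm-iii}$ is obtained in the same manner from the lifting functor $\chi_B$, whose action on arrows is \eqref{eq:lift-sl-cocart}: $\chi_B$ sends a $B$-cartesian arrow of $\overline{\varphi}$ to the whole square $\angled{k,m,m',k'}$, and as $k,m,m'$ are assembled from the already-preserved cartesian and cocartesian lifts of the input, this is a $B$-cartesian arrow of $\overline{\xi}\colon\VertArr_\xi(F)\fibarr B$ exactly when its remaining edge $k'$ is $Q$-cartesian, i.e.\ $\ref{it:gen-lift-comm-iii}$ once more. (Alternatively one may argue $\ref{it:gen-lift-comm-i}\Leftrightarrow\ref{it:gen-lift-comm-ii}$ directly, since $\tau_B$ is the filling-edge component of $\chi_B$ and the other components are cartesian-lift data, so that the two preservation conditions coincide.) Chaining the three reductions with the formal $\ref{it:gen-lift-comm-iii}\Leftrightarrow\ref{it:gen-lift-comm-iv}$ closes the cycle and proves all four statements equivalent.

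The main obstacle I anticipate is organisational rather than conceptual: correctly identifying the $B$-cartesian arrows inside $\relcomma{B}{\varphi}{E}$ and $\VertArr_\xi(F)$, and matching—on the nose, up to the coherent identification $r$—the filler edges produced by $\chi_B$ and $\tau_B$ against the cartesian lift $k''$ and the cocartesian lift $m''$. Keeping the reindexed objects $v^*e,v^*e',g^*x,(f')^*f_!\,x$ and the arrows of \eqref{eq:abbrv-cocart-in-cart-gen} aligned throughout the unfolding is the delicate part; the genuine homotopical input is only uniqueness of co-/cartesian lifts (\Cref{prop:cocart-lifts-unique-in-isoinner-fams} and its dual) together with the cartesian-functoriality of $\varphi$.
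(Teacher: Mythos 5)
Your proposal is correct and follows essentially the same route as the paper: both arguments unfold the fiberwise descriptions of $\chi_B$ and $\tau_B$ on the cartesian lifts of $\overline{\varphi}$ and $\overline{\xi}$, reduce everything to comparing the tuples $\angled{k,m,m',k'}$ and $\angled{k,m,m'',k''}$, and observe that the canonical filler $r$ with $m''=rm'$ and $k'=k''r$ is invertible precisely when any one of the four conditions holds. The only difference is organizational --- you isolate $\ref{it:gen-lift-comm-iii}\Leftrightarrow\ref{it:gen-lift-comm-iv}$ as a separate formal step, whereas the paper derives all equivalences at once from explicating condition~\ref{it:gen-lift-comm-i} --- which does not change the substance of the argument.
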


\begin{proof}
We prove the equivalence of these four conditions by first explicating~\Cref{it:gen-lift-comm-i}. We will readily see that it is equivalent to either of the three remaining condition.

Recall the action of the fibered lifting $\chi_B$ and transport functor $\tau_B$, resp, \Cref{thm:sl-cocart-fam-char,rem:sl-cocart-fam-actn-on-arrs}. In the first case, assume we have an identity
\begin{align}\label{eq:cart-sl-transp} 
	\chi_B(\overline{\varphi}^*(v,\pair{f}{x})) = \overline{\xi}^*(v,\chi_B(f,x))
\end{align}
for all $v:b'\to_B b$, $f:e' \to_{P\,b} e$, $x:Q(b,e')$. Consider the abbreivations from~(\ref{eq:abbrv-cocart-in-cart-gen}).
Specifically, the case for $\chi_B$ will involve
\begin{align}\label{eq:abbrv-cocart-in-cart-chi}
	m' & \jdeq Q_!(g',g^*x): g^*x \cocartarr^Q_{g'} g_!'g^*x, & & k' \jdeq \cocartFill^Q_{m'}(mk): g'_!g^*x \to^Q_{f'} f_!\,x,
\end{align}
whereas for $\tau_B$ we will need:
\begin{align}\label{eq:abbrv-cocart-in-tau}
	k'' & \jdeq Q^*(f',f_!\,x): (f')^*f_!\,x \cartarr^Q_{f'} f_!x, & & m'' \jdeq \cartFill^Q_{k''}(mk): g^*x \to (f')^*f_!\,x.	
\end{align}
As detailed in~\cite[Subsection~5.2.3]{BW21}, lifts of co-/cartesian families are fiberwise. Hence, we find for the left hand side in~(\ref{eq:cart-sl-transp}):
\begin{align}
		& \chi_B(\overline{\varphi}^*(v,\pair{f}{x})) = \chi_B(v,\angled{g,f,g',f'},k:g^*x \cartarr_{g}^Q x) \\
	= & \angled{v,\angled{g,f,g',f'},\angled{k,m,m',k'}} \label{eq:abbrv-cocart-in-cart-lhs}
\end{align}
where, and for the right hand side
\begin{align}
		& \overline{\xi}^*(v,\chi_B(f,x)) = \overline{\xi}^*(v,\angled{g,f,g',f'},\angled{k,m,m'',k''}) = \\
	=	& \angled{v,\angled{g,f,g',f'},\angled{k,m,m'',k''}}. \label{eq:abbrv-cocart-in-cart-rhs}
\end{align}
Recall that a functor being cartesian is a proposition.
A path between (\ref{eq:abbrv-cocart-in-cart-lhs}) and (\ref{eq:abbrv-cocart-in-cart-rhs}) amounts to an isomorphism $r:(f')^*f_!\,x =_{v^*(b,e)} g'_!g^*x$ such that the entire following diagram commutes:
\[\begin{tikzcd}
	{g^*\,x} && x \\
	{(f')^*f_!\,x} && {f_!\,x} \\
	& {g_!'g^*x}
	\arrow["{m'}"', from=1-1, to=2-1, cocart]
	\arrow["k"{pos=0.6}, from=1-1, to=1-3]
	\arrow["m", from=1-3, to=2-3, cocart]
	\arrow["r"', Rightarrow, no head, from=2-1, to=3-2]
	\arrow["{k''}"', from=3-2, to=2-3, cart]
	\arrow["{k'}"'{pos=0.6}, from=2-1, to=2-3]
	\arrow["{m''}"{pos=0.3}, from=1-1, to=3-2, crossing over]
\end{tikzcd}\]
More generally, it can be shown that there exists a filler $r:(f')^*f_!\,x \to_{v^*(b,e)} g_!'g^*x$ s.t.~$m'' = rm'$ and~$k' = k''r$. Hence, this propositional condition is equivalent to this induced arrow being invertible. But moreover, we can see by universality that this is equivalent to the existence of either identification $m'=m''$ or $k'=k''$. In particular, the action by the transport functor $\tau_B$ yields just the latter. Hence, all the four conditions claimed are equivalent.
\end{proof}

\begin{definition}[Cocartesian fibrations in cartesian fibrations]\label{def:cocart-fibs-in-cart-fibs}
	Given $\varphi: \xi \to_B \pi$ as in~\Cref{prop:char-cocart-fib-in-cart-fib}, if in addition $\varphi$ is also a cartesian functor, we call $\varphi$ a \emph{cocartesian fibration in cartesian fibrations}.\footnote{For the official naming we prefer the fibrational variant since it is closer to its semantic counterpart, but of course by the typal Grothendieck construction there exists an indexed variant as well.}
\end{definition}

\begin{proposition}[Closure of sliced cocartesian fibrations under product]\label{prop:clos-sl-cocart-fib-prod}
	For a small indexing type $I:\UU$, let $B:I \to \UU$ be a family of Rezk types. Let $P:\prod_{i:I}(B_i \to \UU)$ be a family and $K: \prod_{i:I} (\totalty{P}_i \to \UU)$ be another family. We define $Q\defeq \lambda i.\Sigma_{P_i}K_i.\prod_{i:I} (B_i \to \UU)$. For every $i:I$, we denote
	\[ \pi_i \defeq \Un_{B_i}(P_i) : E_i \defeq \totalty{P_i} \fibarr B_i, \xi_i \defeq \Un_{B_i}(Q_i) : F_i \defeq \totalty{Q_i} \fibarr B_i, \varphi \defeq \Un_{E_i}(K_i): F_i \to E_i \]
	giving rise to diagrams:
	\[\begin{tikzcd}
		{F_i} && {E_i} \\
		& {B_i} & {} && {}
		\arrow["{\xi_i}"', two heads, from=1-1, to=2-2]
		\arrow["{\pi_i}", two heads, from=1-3, to=2-2]
		\arrow["{\varphi_i}", from=1-1, to=1-3]
	\end{tikzcd}\]
	If each $\varphi_i$ is a sliced cocartesian fibration, then so is the product:
	\[\begin{tikzcd}
		{\prod_{i:I} F_i} && {\prod_{i:I} E_i} \\
		& {\prod_{i:I} B_i} & {} && {}
		\arrow["{\prod_{i:I} \xi_i}"', two heads, from=1-1, to=2-2]
		\arrow["{\prod_{i:I} \pi_i}", two heads, from=1-3, to=2-2]
		\arrow["{\prod_{i:I} \varphi_i}", from=1-1, to=1-3]
	\end{tikzcd}\]
	Moreover, if $\varphi_i$ is a cocartesian fibration in cartesian fibrations in the sense of~\ref{def:cocart-fibs-in-cart-fibs}, then so is $\prod_{i:I} \varphi_i$.
\end{proposition}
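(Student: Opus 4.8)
The plan is to reduce everything to the fibered-adjoint characterisation of \Cref{thm:sl-cocart-fam-char} and to exploit that every construction occurring there commutes with $I$-indexed products. After fibrant replacement set $B \defeq \prod_{i:I} B_i$, $\pi \defeq \prod_{i:I}\pi_i$, $\xi \defeq \prod_{i:I}\xi_i$, and $\varphi \defeq \prod_{i:I}\varphi_i$. Since Rezk types form an exponential ideal and are closed under arbitrary $\Pi$-types, $B$ is again a Rezk type, and as isoinner families are closed under dependent products the families $\prod_{i:I}P_i$ and $\prod_{i:I}Q_i$ are isoinner over $B$. By \Cref{thm:sl-cocart-fam-char}(\ref{it:sl-cocart-fam-char-ii}) each $\varphi_i$ being a sliced cocartesian fibration is equivalent to the sliced Leibniz cotensor $i_0 \widehat{\pitchfork}_{B_i}\varphi_i$ admitting a fibered LARI $\chi_{B_i}$; the goal is therefore to produce such a fibered LARI for $i_0\widehat{\pitchfork}_B\varphi$.

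The key observation is that the building blocks---cotensoring by the fixed shape $\Delta^1$, the sliced comma, and the sliced vertical-arrow object---are all assembled from exponentials by $\Delta^1$, pullbacks, and $\Sigma$-/$\Pi$-types over the base, each of which commutes with the $I$-indexed product. I would first record the resulting fibered equivalences over $B$, namely $\VertArr_\xi\big(\prod_{i:I}F_i\big) \simeq_B \prod_{i:I}\VertArr_{\xi_i}(F_i)$ and $\relcomma{B}{\varphi}{\prod_{i:I}E_i} \simeq_B \prod_{i:I}\relcomma{B_i}{\varphi_i}{E_i}$, and then check that under these identifications the map $i_0\widehat{\pitchfork}_B\varphi$ corresponds to $\prod_{i:I}\big(i_0\widehat{\pitchfork}_{B_i}\varphi_i\big)$. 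This is essentially bookkeeping: the gap map defining a Leibniz cotensor is natural in its data, so the product of the gap maps is the gap map of the product.

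With these identifications in hand, the first claim follows from the closure of $j$-LARI maps and of fibered adjunctions under dependent products (\Cref{sec:orth-lari}, together with the fibered-adjunction toolkit of \Cref{app:fibconstr}): the product $\prod_{i:I}\chi_{B_i}$ is a fibered LARI of $\prod_{i:I}(i_0\widehat{\pitchfork}_{B_i}\varphi_i)$, the unit being the product of the individual units---whose $B$-components are componentwise identities and hence again an identity, securing fiberedness. Transporting along the equivalences above yields a fibered LARI $\chi_B$ of $i_0\widehat{\pitchfork}_B\varphi$, so $\varphi$ is a sliced cocartesian fibration by \Cref{thm:sl-cocart-fam-char}. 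For the \emph{moreover} part, each $\varphi_i$ being a cocartesian fibration in cartesian fibrations means in addition that $\pi_i,\xi_i$ are cartesian, $\varphi_i$ is a cartesian functor, and the lifting map $\chi_{B_i}$ is itself a cartesian functor (\Cref{prop:char-cocart-fib-in-cart-fib}(\ref{it:gen-lift-comm-i})). Cartesian fibrations and cartesian functors are closed under products (by the dual of \Cref{prop:cocart-cosm-closure}), so $\pi,\xi$ are cartesian and $\varphi$ is a cartesian functor; and since the identifications of the previous paragraph carry $\prod_{i:I}\chi_{B_i}$ to $\chi_B$, the product lifting map is cartesian as well. By \Cref{def:cocart-fibs-in-cart-fibs} this exhibits $\varphi$ as a cocartesian fibration in cartesian fibrations.

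The main obstacle will be the second paragraph: verifying, with enough coherence, that the product-commutation equivalences are genuinely \emph{fibered} over $B = \prod_{i:I}B_i$ and that they identify the product Leibniz cotensor with the Leibniz cotensor of the product, while simultaneously tracking that the fibered LARI data (in particular the unit, whose verticality over $B$ encodes fiberedness) is preserved under transport. None of this is conceptually difficult, but it requires care to phrase the commutation of $\Pi$-types, pullbacks, and shape-cotensoring with the indexed product so that both the fiberedness and the cartesianness of the lifting maps survive.
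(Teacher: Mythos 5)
Your proposal is correct and follows essentially the same route as the paper: identify $\VertArr_{\prod\xi_i}(\prod F_i)$ and $\relcomma{\prod B_i}{\prod\varphi_i}{\prod E_i}$ with the products of the corresponding sliced commas (the paper cites \Cref{prop:dep-prod-comm-sl-commas} for this), invoke preservation of fibered LARIs under the product (\Cref{prop:fib-lari-pres-by-sl-prod}) to conclude via \Cref{thm:sl-cocart-fam-char}, and dispose of the \emph{moreover} clause by closure of cartesian fibrations and co-/cartesian functors under dependent products. Your second paragraph merely makes explicit the bookkeeping the paper leaves implicit.
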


\begin{proof}
	Since dependent products commute with sliced commas by~\Cref{prop:dep-prod-comm-sl-commas} we find
	\begin{align}
		\prod_{i:I} \VertArr_{\xi_i}(F_i) & \equiv \prod_{i:I} \relcomma{B_i}{F_i}{F_i} \equiv   \relcomma{\prod_{i:I} B_i}{\big(\prod_{i:I}F_i\big)}{\big(\prod_{i:I}F_i\big)} \\
		\prod_{i:I} \relcomma{B_i}{\varphi_i}{E_i} & \equiv \relcomma{\prod_{i:I} B_i}{\big(\prod_{i:I} \varphi_i\big)}{\big(\prod_{i:I} E_i\big)}.
	\end{align}
	Since sliced LARIs are preserved by the dependent product~\Cref{prop:fib-lari-pres-by-sl-prod} we obtain an induced fibered LARI between these commas, establishing that $\prod_{i:I} \varphi$ is sliced cocartesian by~\Cref{thm:sl-cocart-fam-char}.
	
	Moreover, since cartesian fibrations and co-/cartesian functors are preserved under the dependent product the analogous closure statement for the stronger notion of cocartesian fibration in cartesian fibrations follows readily.
\end{proof}

\begin{proposition}[Closure of sliced cocartesian fibrations under composition]\label{prop:clos-sl-cocart-fib-comp}
	Let $P,Q,R:B \to \UU$ be isoinner families over a Rezk type $B$ with unstraightenings
	\[ \xi \defeq \Un_B(Q) : F \fibarr B, \pi \defeq \Un_B(P) : E \fibarr B, \rho \defeq \Un_B(R) : G \fibarr B. \]
	Furthermore, assume we have fibered functors $\varphi:F \to_B E$, $\psi: E \to_B G$ that are sliced cocartesian over $B$. Then, so is their composite $\kappa:F \to_B G$:
	\[\begin{tikzcd}
		F && E && G \\
		\\
		&& B
		\arrow["\varphi", from=1-1, to=1-3]
		\arrow["\pi"{description}, two heads, from=1-3, to=3-3]
		\arrow["\psi", from=1-3, to=1-5]
		\arrow["\xi"{description}, two heads, from=1-1, to=3-3]
		\arrow["\rho"{description}, two heads, from=1-5, to=3-3]
		\arrow["\kappa"{description}, curve={height=-24pt}, from=1-1, to=1-5]
	\end{tikzcd}\]
	Moreover, if $\varphi$ and $\psi$ are cocartesian fibrations in cartesian fibrations, then so is $\psi \circ \varphi$.
\end{proposition}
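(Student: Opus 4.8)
The plan is to verify the sliced cocartesian lifting property for $\kappa \defeq \psi \circ \varphi$ directly at the fibrational level, and then to invoke \Cref{thm:sl-cocart-fam-char}. Since $\varphi$ and $\psi$ are fibered over $B$ we have $\rho \circ \kappa = \rho \circ \psi \circ \varphi = \pi \circ \varphi = \xi$, so $\kappa$ is a fibered functor over $B$ with the correct base $\rho$. Now fix a $\rho$-vertical arrow $w$ in $G$ together with a point $y : F$ over its source, so that $\kappa(y) = \psi(\varphi(y))$ is the source of $w$. First I would produce the $\psi$-cocartesian lift $v$ of $w$ starting at $\varphi(y) : E$, which exists because $\psi$ is sliced cocartesian over $B$ and $w$ is $\rho$-vertical. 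The crucial observation is that $v$ is automatically $\pi$-vertical: since $\psi$ is fibered, $\pi(v) = \rho(\psi(v)) = \rho(w)$, and the latter is an isomorphism. Hence $v$ is an admissible arrow for sliced cocartesian lifting along $\varphi$, and I obtain a $\varphi$-cocartesian lift $u$ of $v$ starting at $y$. By construction $\kappa(u) = \psi(\varphi(u)) = \psi(v) = w$, so $u$ lies over $w$.

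It remains to show that the composite lift $u$ is $\kappa$-cocartesian. This is the type-theoretic analogue of the classical pasting fact that an arrow which is $\varphi$-cocartesian and whose $\varphi$-image is $\psi$-cocartesian is $(\psi\varphi)$-cocartesian. I would prove it by chaining the two universal properties: given a postcomposable arrow $w'$ in $G$ and a map $h$ in $F$ with $\kappa(h) = w' \circ w$, first use $\psi$-cocartesianness of $v$ to factor $\varphi(h)$ uniquely as $v' \circ v$ with $\psi(v') = w'$, and then use $\varphi$-cocartesianness of $u$ to factor $h$ uniquely as $g \circ u$ with $\varphi(g) = v'$; one checks $\kappa(g) = \psi(v') = w'$, and uniqueness of $g$ follows by applying $\varphi$ and invoking the two uniqueness clauses in turn. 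This establishes the contractibility condition defining $\kappa$-cocartesianness, so $\kappa$ satisfies the defining sliced cocartesian lifting property and is a sliced cocartesian fibration by \Cref{thm:sl-cocart-fam-char}. The verticality transfer above is exactly what makes the two lifting steps composable; it is the sliced counterpart of the closure of cocartesian arrows under composition recorded in \Cref{prop:cocart-arr-closure}.

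For the \emph{Moreover} part I would argue as in the product case \Cref{prop:clos-sl-cocart-fib-prod}, exploiting that the ambient structures compose. By hypothesis $\xi, \pi, \rho$ are cartesian fibrations and $\varphi, \psi$ are cartesian functors; hence $\kappa = \psi \circ \varphi$ is a cartesian functor, cartesian functors being closed under composition (dually to \Cref{prop:cocart-cosm-closure}). It then suffices to check one of the equivalent cartesianness conditions of \Cref{prop:char-cocart-fib-in-cart-fib} for $\kappa$. Here I would use that the $\kappa$-cocartesian lift was built as the $\varphi$-lift of the $\psi$-lift, so that the sliced lifting, resp.\ transport, map for $\kappa$ factors through those for $\varphi$ and $\psi$ (\Cref{rem:sl-cocart-fam-actn-on-arrs}); since, by the cocartesian-in-cartesian hypothesis, each of these commutes with cartesian reindexing, and since cartesian squares paste, the composite commutes with cartesian reindexing as well. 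Equivalently, condition \ref{it:gen-lift-comm-iii}, resp.\ \ref{it:gen-lift-comm-iv}, of \Cref{prop:char-cocart-fib-in-cart-fib} is inherited by the two-step lift, so $\kappa$ is a cocartesian fibration in cartesian fibrations in the sense of \Cref{def:cocart-fibs-in-cart-fibs}.

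The main obstacle I anticipate is the bookkeeping in the \emph{Moreover} part: making the factorization of the $\kappa$-lifting map through the $\varphi$- and $\psi$-lifting maps precise enough that the two ``commutation with cartesian transport'' witnesses can be pasted into a single one. The pasting lemma for cocartesianness, while notationally involved, is conceptually routine once the verticality transfer is in place.
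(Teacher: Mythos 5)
Your argument is correct, but it takes a genuinely different route from the paper. You verify the sliced lifting condition (\Cref{thm:sl-cocart-fam-char}, item~(i)) directly: lift the $\rho$-vertical arrow $w$ to a $\psi$-cocartesian $v$, observe that $v$ is $\pi$-vertical because $\pi(v)=\rho(w)$ is invertible (by Rezk-completeness this is as good as fiberwise, which is how the definition is literally stated), lift again to a $\varphi$-cocartesian $u$, and conclude by the pasting lemma for cocartesian arrows in composite fibrations. The paper instead works with the Chevalley formulation (item~(ii)): it fibrantly replaces everything, pulls back the fibered LARI adjunction for $\psi$ along $\relcomma{B}{\varphi}{E} \to \VertArr_\pi(E)$, and composes it with the fibered LARI for $\varphi$ to manufacture the LARI for $\kappa$ in one formal step, following the absolute case in~\cite[Proposition~2.3.7]{BW21}; the \emph{Moreover} clause then comes for free from closure of cartesian functors under composition and pullback stability of fibered cartesian sections (the dual of \Cref{prop:cocart-sect-pb}), rather than from re-deriving condition~\ref{it:gen-lift-comm-iii} of \Cref{prop:char-cocart-fib-in-cart-fib} by pasting the two commutation witnesses as you propose. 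Your route is more elementary and makes the lift explicit (which is occasionally useful downstream), at the cost of the bookkeeping you flag — in particular, in the \emph{Moreover} part you must use cartesianness of $\varphi$ as a functor to align the middle terms of the two commutation squares before pasting, a step currently left implicit. The paper's route hides all of this inside the formal calculus of fibered adjunctions and their stability under pullback and composition, which is exactly what the LARI characterization is for.
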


\begin{proof}
	This proof works analogously to the one for the absolute situation in~\cite[Proposition~2.3.7]{BW21}. First of all, we fibrantly replace the objects at play (with some abbreviation for the term declariations):
	\begin{align*}
		G & \equiv \sum_{b:B} Q\,b, & E & \equiv \sum_{\substack{b:B \\ x:R\,b}} P\,b\,x, \\
		F & \equiv \sum_{\substack{b:B \\ x:R\,b \\ e:P\,b\,x}} Q\,b\,\,x\,e, & \relcomma{B}{\psi}{G} & \equiv \sum_{b,x,e} \sum_{x':R\,b} (x \to_{R\,b} x'), \\
		\relcomma{B}{\varphi \psi}{G} & \equiv \sum_{b,x,x',e} \sum_{d:Q\,b\,x\,e} (x \to_{R\,b} x'), & \relcomma{B}{\varphi}{E} & \equiv \sum_{b,x,x',e,d} \sum_{u:x \to_{R\,b} x'} (e \to_u^P e'), \\
		\VertArr_\pi(E) & \equiv \sum_{b,x,x',u,e,e'} (e \to_u^P e'). & 	& & 
	\end{align*}
	We are to construct from the given fibered LARI adjunctions
	\[\begin{tikzcd}
		{\VertArr(F)} && {\relcomma{B}{\varphi}{E}} && {\VertArr(E)} && {\relcomma{B}{\psi}{G}} \\
		& B &&&& B
		\arrow[""{name=0, anchor=center, inner sep=0}, "r"{description}, curve={height=12pt}, from=1-1, to=1-3]
		\arrow[""{name=1, anchor=center, inner sep=0}, "{r'}"{description}, curve={height=12pt}, from=1-5, to=1-7]
		\arrow[""{name=2, anchor=center, inner sep=0}, "\ell"{description}, curve={height=18pt}, dotted, from=1-3, to=1-1]
		\arrow[""{name=3, anchor=center, inner sep=0}, "{\ell'}"{description}, curve={height=18pt}, dotted, from=1-7, to=1-5]
		\arrow[two heads, from=1-1, to=2-2]
		\arrow[two heads, from=1-3, to=2-2]
		\arrow[two heads, from=1-5, to=2-6]
		\arrow[two heads, from=1-7, to=2-6]
		\arrow["\dashv"{anchor=center, rotate=-90}, draw=none, from=2, to=0]
		\arrow["\dashv"{anchor=center, rotate=-90}, draw=none, from=3, to=1]
	\end{tikzcd}\]
	a fibered LARI adjunction:
	\[\begin{tikzcd}
		{\VertArr(E)} && {\relcomma{B}{\varphi \psi}{G}} \\
		& B
		\arrow[""{name=0, anchor=center, inner sep=0}, "{r''}"{description}, curve={height=12pt}, from=1-1, to=1-3]
		\arrow[""{name=1, anchor=center, inner sep=0}, "{\ell''}"{description}, curve={height=18pt}, dotted, from=1-3, to=1-1]
		\arrow[two heads, from=1-1, to=2-2]
		\arrow[two heads, from=1-3, to=2-2]
		\arrow["\dashv"{anchor=center, rotate=-90}, draw=none, from=1, to=0]
	\end{tikzcd}\]
	Using the fibrant replacements, indeed we find the diagram analogous to the proof in~\cite[Proposition~2.3.7]{BW21}, all fibered over $B$:
	\[\begin{tikzcd}
		{\VertArr_\xi(F)} \\
		&& {\relcomma{B}{\varphi}{E}} && {\relcomma{B}{\varphi \psi}{G}} && F \\
		&& {\VertArr_\pi(E)} && {\relcomma{B}{\psi}{G}} && E \\
		&&&& B
		\arrow[from=2-7, to=3-7]
		\arrow[from=3-5, to=3-7]
		\arrow[from=2-5, to=3-5]
		\arrow[from=2-5, to=2-7]
		\arrow[""{name=0, anchor=center, inner sep=0}, curve={height=6pt}, from=2-3, to=2-5]
		\arrow[from=2-3, to=3-3]
		\arrow[""{name=1, anchor=center, inner sep=0}, "r"', curve={height=6pt}, from=3-3, to=3-5]
		\arrow[from=3-3, to=4-5]
		\arrow[from=3-5, to=4-5]
		\arrow[from=3-7, to=4-5]
		\arrow["\lrcorner"{anchor=center, pos=0.125}, draw=none, from=2-5, to=3-7]
		\arrow["\lrcorner"{anchor=center, pos=0.125}, draw=none, from=2-3, to=3-5]
		\arrow[curve={height=-18pt}, from=1-1, to=2-7]
		\arrow[curve={height=12pt}, from=1-1, to=3-3]
		\arrow[""{name=2, anchor=center, inner sep=0}, "\ell"{description}, curve={height=12pt}, dotted, from=3-5, to=3-3]
		\arrow[""{name=3, anchor=center, inner sep=0}, curve={height=12pt}, dotted, from=2-5, to=2-3]
		\arrow[""{name=4, anchor=center, inner sep=0}, "{r'}"{description}, curve={height=6pt}, from=1-1, to=2-3]
		\arrow[""{name=5, anchor=center, inner sep=0}, "{\ell'}"{description}, curve={height=12pt}, dotted, from=2-3, to=1-1]
		\arrow["\dashv"{anchor=center, rotate=-90}, draw=none, from=2, to=1]
		\arrow["\dashv"{anchor=center, rotate=-90}, draw=none, from=3, to=0]
		\arrow["\dashv"{anchor=center, rotate=-144}, draw=none, from=5, to=4]
	\end{tikzcd}\]
	As before, the proclaimed fibered LARI arises by pulling back and then composing.
	
	The closure property descends to cocartesian fibrations in cartesian fibrations by closedness under composition of cartesian functors, and pullback stability of fibered cartesian sections by the dual of~\Cref{prop:cocart-sect-pb}.
\end{proof}

\begin{proposition}[Closure of sliced cocartesian fibrations under pullback]\label{prop:clos-sl-cocart-fib-pb}
	Let $\varphi:F \to_B E$ be a sliced cocartesian fibration over a Rezk type $B$. For any map $k:A \to B$ consider the pullback:
	\[\begin{tikzcd}
		{k^*F} && F \\
		& {k^*E} && E \\
		A && B
		\arrow["\xi"{description, pos=0.25}, two heads, from=1-3, to=3-3]
		\arrow["\varphi", curve={height=-6pt}, from=1-3, to=2-4]
		\arrow["\pi"{description}, two heads, from=2-4, to=3-3]
		\arrow[from=1-1, to=1-3]
		\arrow["{k^*\varphi}"{pos=0.8}, curve={height=-6pt}, dashed, from=1-1, to=2-2]
		\arrow["{k^*\xi}"{description, pos=0.3}, two heads, from=1-1, to=3-1]
		\arrow["{k^*\pi}"{description}, two heads, from=2-2, to=3-1]
		\arrow["k"', from=3-1, to=3-3]
		\arrow["\lrcorner"{anchor=center, pos=0.125}, draw=none, from=2-2, to=3-3]
		\arrow["\lrcorner"{anchor=center, pos=0.125}, shift right=2, draw=none, from=1-1, to=3-3]
		\arrow[crossing over, from=2-2, to=2-4]
	\end{tikzcd}\]
	Then the induced fibered functor $k^*\varphi: k^*F \to_A k^*E$ is a sliced cocartesian fibration over $A$.
	
	In particular, the analogous statement is true if $\varphi$ is assumed to be a cocartesian fibration in cartesian fibrations.
\end{proposition}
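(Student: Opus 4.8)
The plan is to reduce the statement to the Leibniz-cotensor characterization of \Cref{thm:sl-cocart-fam-char} and then exploit the stability of fibered LARI adjunctions under base change. After the usual fibrant replacement, write $\pi \defeq \Un_B(P)$, $\xi \defeq \Un_B(Q)$ and recall that by \Cref{thm:sl-cocart-fam-char}(\ref{it:sl-cocart-fam-char-ii}) the hypothesis that $\varphi$ is a sliced cocartesian fibration is equivalent to the existence of a fibered LARI $\chi_B$ to the sliced Leibniz cotensor $i_0 \cotens_B \varphi : \VertArr_\xi(F) \to_B \relcomma{B}{\varphi}{E}$. So it suffices to produce the corresponding fibered LARI for $i_0 \cotens_A k^*\varphi$ over $A$.

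The first key step is to verify that base change along $k:A \to B$ is compatible with the two constructions entering this characterization. Since the vertical-arrow object $\VertArr_\xi(F)$ and the sliced comma $\relcomma{B}{\varphi}{E}$ are each built as iterated pullbacks over the base $B$ (via the sliced cotensor and sliced comma of the preliminaries), and since cotensoring by $\Delta^1$ preserves pullbacks, the pasting lemma for pullbacks yields canonical fibered equivalences $k^*\VertArr_\xi(F) \simeq_A \VertArr_{k^*\xi}(k^*F)$ and $k^*\relcomma{B}{\varphi}{E} \simeq_A \relcomma{A}{k^*\varphi}{k^*E}$. I would carry this out by a direct diagram chase, pasting the defining pullback squares of the sliced constructions onto the outer base-change square along $k$, and I expect the main obstacle to lie exactly here: making these identifications precise and coherent enough that the pulled-back map $k^*(i_0 \cotens_B \varphi)$ is genuinely identified with the sliced Leibniz cotensor $i_0 \cotens_A k^*\varphi$ of the induced functor over $A$.

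The second step is then formal. Pulling back the fibered LARI adjunction $\chi_B \dashv (i_0 \cotens_B \varphi)$ along $k$ produces, through the identification of the previous step, a fibered LARI $\chi_A$ to $i_0 \cotens_A k^*\varphi$; here I would invoke the stability of fibered $(\mathrm{LARI})$ adjunctions under pullback developed in the appendix on fibered adjunctions (\Cref{app:fibconstr}), noting that the right inverse and the invertible unit restrict along $k$. Feeding $\chi_A$ back into \Cref{thm:sl-cocart-fam-char}(\ref{it:sl-cocart-fam-char-ii}) shows that $k^*\varphi$ is a sliced cocartesian fibration over $A$.

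For the final clause, suppose $\varphi$ is a cocartesian fibration in cartesian fibrations in the sense of \Cref{def:cocart-fibs-in-cart-fibs}, so that in addition $\varphi$ is a cartesian functor between cartesian fibrations whose sliced cocartesian lifting is itself cartesian. Cartesian fibrations and cartesian functors are stable under pullback, so $k^*\varphi$ is again a cartesian functor between cartesian fibrations; and the pulled-back lifting LARI $\chi_A$ remains a cartesian functor by the pullback-stability of fibered cartesian sections, namely the dual of \Cref{prop:cocart-sect-pb} (exactly as in the closure-under-composition argument). By the equivalence of \Cref{prop:char-cocart-fib-in-cart-fib} this is precisely the condition for $k^*\varphi$ to be a cocartesian fibration in cartesian fibrations, which completes the plan. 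Once the base-change compatibility of step one is established, both the transport of the adjunction and the cartesian refinement are routine.
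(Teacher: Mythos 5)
Your proposal follows essentially the same route as the paper's own proof: both reduce to the fibered-LARI characterization of \Cref{thm:sl-cocart-fam-char}, observe that pullback along $k$ commutes with the sliced comma and vertical-arrow constructions, pull back the fibered LARI, and handle the cartesian refinement via pullback-stability of cartesian functors together with the dual of \Cref{prop:cocart-sect-pb}. The argument is correct as planned.
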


\begin{proof}
	Since pullback commutes with sliced commas\footnote{as can \eg~be checked by fibrant replacement} we get the following square:
	\[\begin{tikzcd}
		{k^*\VertArr_{k^*\xi}(F)} && {\VertArr_\xi(F)} \\
		& {\relcomma{A}{k^*\varphi}{k^*E}} && {\relcomma{B}{\varphi}{E}} \\
		A && B
		\arrow[two heads, from=1-3, to=3-3]
		\arrow[""{name=0, anchor=center, inner sep=0}, "{\varphi'}", curve={height=-6pt}, from=1-3, to=2-4]
		\arrow[two heads, from=2-4, to=3-3]
		\arrow[from=1-1, to=1-3]
		\arrow["{k^*\varphi'}"{pos=0.8}, curve={height=-6pt}, dashed, from=1-1, to=2-2]
		\arrow[two heads, from=1-1, to=3-1]
		\arrow[two heads, from=2-2, to=3-1]
		\arrow["k"', from=3-1, to=3-3]
		\arrow["\lrcorner"{anchor=center, pos=0.125}, draw=none, from=2-2, to=3-3]
		\arrow["\lrcorner"{anchor=center, pos=0.125}, shift right=2, draw=none, from=1-1, to=3-3]
		\arrow[""{name=1, anchor=center, inner sep=0}, curve={height=-6pt}, dotted, from=2-4, to=1-3]
		\arrow["\dashv"{anchor=center, rotate=12}, draw=none, from=1, to=0]
		\arrow[from=2-2, to=2-4, crossing over]
	\end{tikzcd}\]
	The fibered LARI on the right gets pulled back to define a fibered LARI on the left, as desired, hence the pulled back functor $k^*\varphi$ is sliced cocartesian as well.
	
	If $\varphi:F \to_B E$ is a cartesian functor between cartesian functors, then $\varphi'$ pulls back to define a cartesian functor between cartesian fibrations by (the dual of)~\cite[Proposition~5.3.21]{BW21}, \cf~also \cite[Lemma~5.3.5]{RV21}. In case the fibered LARI is cartesian, the induced fibered LARI is as well, as one can see by the dual of~\Cref{prop:cocart-sect-pb}.

\end{proof}
		
			\section{Two-sided cartesian families}
			\subsection{Two-variable families and bifibers}\label{ssec:two-var}

	For types $A,B: \UU$ consider a family
	\[ P:A \to B \to \UU.\]

	For $a:A$ and $b:B$, the type $P(a,b)$ is called the \emph{bifiber} of $P$ at $a$ and $b$. Fixing one of the elements gives rise to the definitions
	\[ P_b \defeq \lambda a.P(a,b): A \to \UU, \quad P^a \defeq \lambda b.P(a,b): B \to \UU.\]
	The two ``legs'' of the family $P$ are given by the families
	\[ P_B \defeq \lambda a.\sum_{b:B} P_b(a):A \to \UU, \quad  P^A \defeq \lambda b.\sum_{a:B} P^a(b): B \to \UU.\]

We obtain the following version of the typal Grothendieck construction. By transposition we have a chain of fiberwise equivalences:
\[\begin{tikzcd}
	{\sum_{A,B:\UU} A \to B \to \UU} && {\sum_{A,B:\UU} (A \times B) \to \UU} && {\sum_{A,B:\UU} \Fib_\UU(A \times B)} \\
	&& {\UU \times \UU}
	\arrow[from=1-1, to=2-3]
	\arrow[from=1-3, to=2-3]
	\arrow["\simeq", from=1-3, to=1-5]
	\arrow[from=1-5, to=2-3]
	\arrow["\simeq", from=1-1, to=1-3]
\end{tikzcd}\]
Hence two-sided families $P:A \to B \to \UU$ correspond to maps over $\pair{\xi}{\pi}:\widetilde{P} \to A \times B$.

Diagrammatically, this manifests as follows:
\[\begin{tikzcd}
	& {\sum_{\substack{a:A \\ b:B}} P\,a\,b } \\
	& {A \times B} \\
	A && B
	\arrow["\varphi"', two heads, from=1-2, to=2-2]
	\arrow[two heads, from=2-2, to=3-1]
	\arrow[two heads, from=2-2, to=3-3]
	\arrow["{\xi \defeq \Un_A(P_B)}"{description}, curve={height=18pt}, two heads, from=1-2, to=3-1]
	\arrow["{ \Un_B(P^A) \defeq \pi}"{description}, curve={height=-18pt}, two heads, from=1-2, to=3-3]
\end{tikzcd}\]

The fibers at $a:A$ or $b:B$, resp., are obtained as follows
\[\begin{tikzcd}
	{E_b} && E && {E^a} && E \\
	A && {A \times B} && B && {A \times B}
	\arrow["{\xi_b}"', from=1-1, to=2-1]
	\arrow["{\langle \id_A,b\rangle}"', from=2-1, to=2-3]
	\arrow[from=1-1, to=1-3]
	\arrow["{\langle\xi, \pi\rangle}", from=1-3, to=2-3]
	\arrow["{\pi^a}"', from=1-5, to=2-5]
	\arrow["{\langle a,\id_B\rangle}"', from=2-5, to=2-7]
	\arrow[from=1-5, to=1-7]
	\arrow["{\langle\xi, \pi\rangle}", from=1-7, to=2-7]
	\arrow["\lrcorner"{anchor=center, pos=0.125}, draw=none, from=1-5, to=2-7]
	\arrow["\lrcorner"{anchor=center, pos=0.125}, draw=none, from=1-1, to=2-3]
\end{tikzcd}\]
where the projections arise as unstraightenings
\[ \xi_b \defeq \Un_A(P_b): E_b \to A, \quad \pi^a \defeq \Un_B(P^a): E^a \to B. \]
The notation comes from the convention of denoting the components of the projection $E \to A \times B$ conceived as a fibered functor in two different ways
\[\begin{tikzcd}
	E && {A \times B} && E && {A \times B} \\
	& B &&&& A
	\arrow["{\langle \xi,\pi\rangle}", from=1-1, to=1-3]
	\arrow["\pi"', from=1-1, to=2-2]
	\arrow["q", from=1-3, to=2-2]
	\arrow["{\langle \xi,\pi\rangle}", from=1-5, to=1-7]
	\arrow["\xi"', from=1-5, to=2-6]
	\arrow["p", from=1-7, to=2-6]
\end{tikzcd}\]
as
\[ \xi_b \defeq \pair{\xi}{\pi}_b :E_b \to (A\times B)_b \simeq A, \quad  \pi^a \defeq  \pair{\xi}{\pi}^a : E^a \to (A\times B)^a \simeq B. \]

\subsection{Cocartesianness on the left}

\begin{definition}[Cocartesian on the left]
	A two-sided family $P:A \to B \to \UU$ is \emph{cocartesian on the left} if the family $P_B:A \to \UU$ is cocartesian, and every $P_B$-cocartesian arrow in $\totalty{P}$ is $P^A$-vertical.
\end{definition}

\begin{example}[Cocartesian families]\label{ex:cocart-fams-cocart-left}
	A family $P:A \to \UU$ is cocartesian if and only if it is cocartesian on the left, seen as a family $P: (A \to \UU) \simeq (A \to 1 \to \UU)$.
\end{example}

\begin{proposition}[Characterizations of cocartesianness on the left, {\protect\cite[Lemma~7.1.1]{RV21}}]\label{prop:char-fib-cocart-left}
	For a two-sided family $P:A \to B \to \UU$, corresponding to $\pair{\xi}{\pi}:E \to A \times B$, the following are equivalent:
	\begin{enumerate}
		\item\label{it:coc-left-i} The fibered functor
		\[\begin{tikzcd}
			E && {A \times B} \\
			& B
			\arrow["{\langle \xi, \pi \rangle}", two heads, from=1-1, to=1-3]
			\arrow["\pi"', from=1-1, to=2-2]
			\arrow["q", from=1-3, to=2-2]
		\end{tikzcd}\]
		is a cocartesian fibration sliced over $B$.
		\item\label{it:coc-left-ii} The fibered functor
		\[\begin{tikzcd}
			E && {A \times B} \\
			& A
			\arrow["{\langle \xi, \pi \rangle}", from=1-1, to=1-3]
			\arrow["\xi"', two heads, from=1-1, to=2-2]
			\arrow["p", two heads, from=1-3, to=2-2]
		\end{tikzcd}\]
		is a cocartesian functor between cocartesian fibrations.
		\item\label{it:coc-left-iii} The fibered functor given by
		\[ \iota_\xi: E \to_{A \times B} \comma{\xi}{A}, \quad \iota_\xi(a,b,e) \defeq \angled{a,a,\id_a,b,e} \]
		has a fibered left adjoint $\tau_\xi$:
		\[\begin{tikzcd}
			E &&&& {\xi \downarrow A} \\
			&& {A \times B}
			\arrow[""{name=0, anchor=center, inner sep=0}, "{\iota_\xi}"{description}, from=1-1, to=1-5]
			\arrow["{\langle \xi,\pi \rangle}"', from=1-1, to=2-3]
			\arrow["{\langle \partial_1,\pi \circ \partial_0\rangle}", from=1-5, to=2-3]
			\arrow[""{name=1, anchor=center, inner sep=0}, "{\tau_\xi}"{description}, curve={height=18pt}, dashed, from=1-5, to=1-1]
			\arrow["\dashv"{anchor=center, rotate=-90}, draw=none, from=1, to=0]
		\end{tikzcd}\]
		\item\label{it:coc-left-iv} The two-sided family $P:A \to B \to \UU$ is cocartesian on the left.
	\end{enumerate}
\end{proposition}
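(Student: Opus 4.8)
The plan is to establish the cyclic chain of implications
$\ref{it:coc-left-i} \Leftrightarrow \ref{it:coc-left-iv}$, $\ref{it:coc-left-i} \Leftrightarrow \ref{it:coc-left-ii}$, and $\ref{it:coc-left-i} \Leftrightarrow \ref{it:coc-left-iii}$, reducing everything to the already-proven characterization of sliced cocartesian families, \Cref{thm:sl-cocart-fam-char}. The starting observation is that the leg structure of a two-sided family $P:A \to B \to \UU$ is exactly a family $P^A : B \to \UU$ together with the family $K \defeq \lambda (a,b,?).\,(\text{bifiber data})$ sliced over the total type of $\xi$, once we organize $\pair{\xi}{\pi}:E \to A\times B$ through the two projections $p,q$ onto $A$ and $B$, respectively. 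The whole point is that ``cocartesian on the left'' is a condition phrased purely in terms of $P_B$-cocartesian arrows being $P^A$-vertical, and the sliced LARI machinery translates exactly this into the language of fibered adjunctions over $B$.

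First I would prove $\ref{it:coc-left-iv} \Leftrightarrow \ref{it:coc-left-i}$. After fibrant replacement, the fibered functor $\langle \xi,\pi\rangle : E \to_B A \times B$, viewed over $B$ via $q$, is precisely an instance of the setup of \Cref{thm:sl-cocart-fam-char} with base family $\St_B(q) \simeq \lambda b.\,A$ (the constant family, since $q$ is the projection $A\times B \fibarr B$) and slice family given by $\St_{A\times B}(\langle\xi,\pi\rangle)$, whose straightening recovers $P$. Unwinding \Cref{thm:sl-cocart-fam-char}\ref{it:sl-cocart-fam-char-i}, being a sliced cocartesian family over $B$ says that every $q$-vertical arrow in $A \times B$—that is, every arrow of the form $\pair{u}{\id_b}$ with $u$ an arrow in $A$—admits a $\langle\xi,\pi\rangle$-cocartesian lift with prescribed source. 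But a $\langle\xi,\pi\rangle$-cocartesian lift over such a $q$-vertical arrow is exactly a $P_B$-cocartesian arrow that is $P^A$-vertical (its $\pi$-projection is the identity $\id_b$, hence an isomorphism). This is precisely the definition of cocartesianness on the left, so the two conditions coincide.

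Next I would handle $\ref{it:coc-left-i} \Leftrightarrow \ref{it:coc-left-iii}$ and $\ref{it:coc-left-i} \Leftrightarrow \ref{it:coc-left-ii}$. The equivalence with \ref{it:coc-left-iii} is immediate from \Cref{thm:sl-cocart-fam-char}\ref{it:sl-cocart-fam-char-iii}: the fibered inclusion $\iota_\varphi$ and its left adjoint $\tau_\varphi$ from that theorem specialize, under the identification of $\langle\xi,\pi\rangle$ over $B$ with the sliced setup, to the maps $\iota_\xi$ and $\tau_\xi$ appearing in \ref{it:coc-left-iii}; I would only need to check that the displayed formula $\iota_\xi(a,b,e) \defeq \angled{a,a,\id_a,b,e}$ agrees with the general $\iota_\varphi$ after fibrant replacement, which is a direct computation. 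For \ref{it:coc-left-ii}, I would invoke the characterization \Cref{prop:char-cocart-fib-in-cart-fib} together with \Cref{thm:char-cocart-fun}: the assertion that $\langle\xi,\pi\rangle : E \to A\times B$ over $A$ (via $p$) is a cocartesian functor between cocartesian fibrations unwinds to saying that $\xi$-cocartesian lifts are compatible with the $A$-fibration structure in a way matching the Chevalley criterion, and this is exactly the sliced LARI condition transported to the $A$-side. Concretely, the codomain projection $p:A\times B \fibarr A$ is a cocartesian fibration (being a product projection, by \Cref{prop:cocart-cosm-closure}), and $\langle\xi,\pi\rangle$ preserving its cocartesian arrows is the mate condition of \Cref{thm:char-cocart-fun}, which holds iff the same sliced lifting data exists.

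The main obstacle I anticipate is the bookkeeping in $\ref{it:coc-left-i} \Leftrightarrow \ref{it:coc-left-ii}$: one must carefully match the \emph{verticality} condition (that $P_B$-cocartesian arrows are $P^A$-vertical) against the statement that $\langle\xi,\pi\rangle$ is a cocartesian functor over $A$, since these involve the two different legs and it is easy to conflate which projection is playing the role of base and which of fiber. The cleanest route is probably to note that a $\langle\xi,\pi\rangle$-cocartesian arrow projecting to a $q$-vertical arrow in $A\times B$ is the same as a $p$-cocartesian arrow in $A\times B$ whose chosen $\langle\xi,\pi\rangle$-lift is again cocartesian, so that \ref{it:coc-left-ii} and \ref{it:coc-left-i} are two readings of the same lifting diagram through the symmetric description of $A\times B \to A\times B$ via $p$ and $q$. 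Once this dictionary is fixed, all four conditions collapse onto \Cref{thm:sl-cocart-fam-char}, and no genuinely new content beyond that theorem and \Cref{prop:char-cocart-fib-in-cart-fib} is required.
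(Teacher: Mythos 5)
Your proposal is correct in substance and reaches all four equivalences, but it differs from the paper's proof in one step and contains one misplaced citation. For $\ref{it:coc-left-i}\iff\ref{it:coc-left-iv}$ you argue exactly as the paper does: unwind the sliced lifting condition, note that $q$-vertical arrows in $A\times B$ are those of the form $\pair{u}{\id_b}$, and identify $\pair{\xi}{\pi}$-cocartesian lifts of such arrows with $\xi$-cocartesian, $\pi$-vertical arrows. For $\ref{it:coc-left-iii}$ your route is genuinely different: you reduce to \Cref{thm:sl-cocart-fam-char}\ref{it:sl-cocart-fam-char-iii} via the identification $\relcomma{B}{\pair{\xi}{\pi}}{(A\times B)}\simeq\comma{\xi}{A}$ (compatibly with the projections to $A\times B$), whereas the paper re-runs the explicit unit/transposing-map computation of \Cref{thm:cocartfams-via-transp} by hand, writing out $\eta_{a',b,a,u,e}$ and checking that invertibility of $\Phi_\eta$ unfolds to the cocartesian lifting property with $\pi$-vertical lifts. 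Your reduction is cleaner and buys uniformity (everything funnels through one theorem), at the cost of having to verify the comma identification and that the displayed $\iota_\xi$ matches the general $\iota_\varphi$ after fibrant replacement; the paper's computation is longer but self-contained and makes the unit explicit, which it then reuses in the proof of \Cref{thm:char-two-sid}.

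One correction: your appeal to \Cref{prop:char-cocart-fib-in-cart-fib} for $\ref{it:coc-left-i}\iff\ref{it:coc-left-ii}$ is off-target. That proposition presupposes a cartesian structure and characterizes when the sliced cocartesian lifting functor is a \emph{cartesian} functor; it is the ingredient for the two-sidedness compatibility in \Cref{thm:char-two-sid}, not for cocartesianness on the left. Fortunately the rest of your paragraph supplies the argument the paper actually uses: $p:A\times B\fibarr A$ is always cocartesian, its cocartesian arrows are exactly those with identity $B$-component, and so $\pair{\xi}{\pi}$ being a cocartesian functor over $A$ says precisely that $\xi$ is cocartesian and every $\xi$-cocartesian arrow is $\pi$-vertical. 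You do not even need the mate criterion of \Cref{thm:char-cocart-fun} here; the definition of cocartesian functor as preservation of cocartesian arrows suffices. With that citation removed, the proposal is sound.
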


\begin{proof} We abbreviate $\varphi \defeq \pair{\xi}{\pi}: E \to_B A \times B$.
	\begin{description}
		\item[$\ref{it:coc-left-i} \iff \ref{it:coc-left-iv}$:] The fibered functor $\varphi$ being a cocartesian fibration sliced over $B$ is equivalent to the condition that for all $q$-vertical maps exist a $\varphi$-cocartesian lift (\wrt~to a given initial vertex). By Rezk-completeness, this is equivalent to any arrow $\pair{u:a \to a'}{\id_b}$ having a $\varphi$-cocartesian lift $f$ with prescribed initial vertex $e:P(a,b)$. Note that $f$ is $\pi$-vertical. Projecting away the $B$-component, we obtain that $f$ is $\xi$-cocartesian. 
		\item[$\ref{it:coc-left-ii} \iff \ref{it:coc-left-iv}$:] Since $p:A \times B \fibarr A$ is a cocartesian fibration in any case, the assumption is equivalent to $\xi:E \fibarr A$ being a cocartesian fibration and every $\xi$-cocartesian arrow being mapped to $p$-cocartesian arrows under $\varphi$. But $\varphi$ is just the projection pairing $\pair{\xi}{\pi}$, and $p$-cocartesian arrows are exactly given by arrows whose $B$-component is an identity. Projecting down to $B$ this means exactly that the $\xi$-cocartesian arrows are $\pi$-vertical.
		\item[$\ref{it:coc-left-iii} \implies\ref{it:coc-left-iv}$:] We denote
		\[ \tau_\xi(a',b,u:a\to a',e) \defeq  \angled{a',b,\widehat{\tau}_{\xi,a',b}(u,e)}. \]
		Again, similarly to the considerations in the proof of~\Cref{thm:cocartfams-via-transp}, the unit is a family of arrows $\eta:\prod_{\substack{a':A \\ b:A}} \prod_{\substack{a:A \\ u:a \to_A a' \\ e:P(a,b)}} \angled{a',b,a,u,e} \to \angled{a',b,a',\id_{a'}, \widehat{\tau}_{\xi,a',b}(u,e)}$, illustrated as follows:
		\[\begin{tikzcd}
			e && {\widehat{\tau}_{\xi,a',b}(u,e)} \\
			a && {a'} \\
			{a'} && {a'} \\
			{a'} && {a'} \\
			b && b
			\arrow["{\eta_{a',b,a,u,e}}", from=1-1, to=1-3, cocart]
			\arrow["u"', from=2-1, to=3-1]
			\arrow["u", from=2-1, to=2-3]
			\arrow["{\id_{a'}}", Rightarrow, no head, from=3-1, to=3-3]
			\arrow["{\id_{a'}}", Rightarrow, no head, from=2-3, to=3-3]
			\arrow["{\id_{a'}}", Rightarrow, no head, from=4-1, to=4-3]
			\arrow["{\id_{b}}", Rightarrow, no head, from=5-1, to=5-3]
			\arrow[curve={height=12pt}, Rightarrow, dotted, no head, from=1-1, to=2-1]
			\arrow[curve={height=-12pt}, Rightarrow, dotted, no head, from=1-3, to=2-3]
			\arrow[curve={height=12pt}, Rightarrow, dotted, no head, from=3-1, to=4-1]
			\arrow[curve={height=-12pt}, Rightarrow, dotted, no head, from=3-3, to=4-3]
		\end{tikzcd}\]
		By assumption, the transposing map induced by $\eta$ is an equivalence:
		\begin{align*}
			\Phi_\eta & : \prod_{\substack{a',a'':A \\ b,b':B}} \prod_{\substack{u':a' \to_A a'' \\ v:b \to_B b'}} \prod_{\substack{a:A \\ u:a \to_A a' \\ e:P(a,b)}} \prod_{\substack{a'':A \\ b':B \\ d:P(a'',b')}} \left( \angled{a',b,\widehat{\tau}_{\xi,a',b}(u,e):P(a',b)} \to_{\pair{u'}{v}} \angled{a'',b',d} \right) \\
					\stackrel{\simeq}{\longrightarrow} & \left( \angled{a',b,a,u:a \to_A a', e:P(a,b)} \to_{\pair{u'}{v}}  \angled{a'',b',a'', \id_{a''}, d:P(a'',b')} \right), \\
			\Phi_\eta & \defeq \lambda u',v,e,d,g.\iota_\xi(g) \circ \eta_{u,e} : \big(e \to_{\pair{v}{g}}^P d\big)
		\end{align*}
		After contracting away redundant data, this is equivalent to the proposition
		\[ \prod_{h:e \to^P_{\pair{u'u}{v}} d} \isContr\Big( \sum_{g:\widehat{\tau}_{\xi,a',b}(u,e) \to^P_{\pair{u'}{v}} d} g \circ^P_{\eta_{u,e}} = h \Big), \]
		\cf~\Cref{fig:lift-cocart-left-fib-adj} for illustration. But this precisely means that $\xi:E \fibarr A$ is a cocartesian fibration whose cocartesian lifts all are $\pi$-vertical, namely the components of the fibered unit $\eta$.
		\item[$\ref{it:coc-left-iv} \implies\ref{it:coc-left-iii}$:] We can strictify the diagram as follows, including the fibered left adjoint to-be-defined:
		\[\begin{tikzcd}
			{E \simeq \sum_{\substack{a':A \\ b:B}} P(a',b)} &&&& {\sum_{\substack{a,a':A \\ b:B}} (a \to_A a') \times P(a,b) \simeq \xi \downarrow A} \\
			\\
			&& {A \times B}
			\arrow["{\langle \partial_1, \pi \circ \partial_0\rangle}", two heads, from=1-5, to=3-3]
			\arrow["{\langle \xi,\pi \rangle}"', two heads, from=1-1, to=3-3]
			\arrow[""{name=0, anchor=center, inner sep=0}, "{\iota_\xi}"{description}, curve={height=12pt}, from=1-1, to=1-5]
			\arrow[""{name=1, anchor=center, inner sep=0}, "{\tau_\xi}"', curve={height=6pt}, from=1-5, to=1-1]
			\arrow["\dashv"{anchor=center,rotate=-90}, draw=none, from=1, to=0]
		\end{tikzcd}\]
		The fibered ``inclusion'' map is defined as
		\[ \iota_\xi(a',b,e) \defeq \angled{a',b,a',\id_{a'},e}.\]
		By the preconditions from~\Cref{it:coc-left-iii}, the map $\xi:E \fibarr A$ is a cocartesian fibration, moreover whose cocartesian lifts are all $\pi$-vertical. We let
		\[ \tau_\xi(a',b,a,u:a \to a',e) \defeq \angled{a',b,u_!\,e}.\]
		Now, similarly, as in the proof of~\Cref{thm:cocartfams-via-transp}, we exhibit the fibered adjunction as given by a fiberwise equivalence\footnote{Here, we write $\xi_b \defeq (\id_A \times b)^*\pair{\xi}{\phi}: E_b \simeq \sum_{a:A} P(a,b) \fibarr A$, giving rise to the comma object $\sum_{a:A} (a \to_A a') \times P(a,b)$} 
		\[ 
		\prod_{\substack{a':A \\ b:B}} \prod_{\substack{a:A \\ u:a \to_A a' \\ e:P(a,b)}} \prod_{e':P(a',b)} \big( \tau_{\xi,a',b}(a,u,u_!\,e) \to_{P(a',b)} e'\big) \stackrel{\simeq}{\longrightarrow} \big( \angled{a,u,e} \longrightarrow_{\comma{\xi_b}{A}}  \angled{a',\id_{a'}, e'} \big)
		\]
		as follows: Over a point $\pair{a'}{b}:A \times B$ in the base, fix $\angled{a:A, u:a \to_A a', e:P(a,b)}$, $e':P(a',b)$, and define maps between the transposing hom types, in opposite directions,
		\begin{align*}
		& \Phi(g:u_!e \to_{P(a',b)} e') \defeq g \circ \xi_!(u,e), \\
		& \Psi(u:a \to_A a', u:u \to_{\comma{A}{a}} \id_{a'}, h:e \to_u^P e') \defeq \tyfill_{\xi_!(u,e)}^\xi(h). 
		\end{align*}
	Again, by the universal property of cocartesian lifts it can be checked that the two maps are quasi-inverses. Note that by assumption, the $\xi$-cocartesian lifts are $\pi$-vertical, so everything stays in ``the fibers over $b$'', or, more precisely, in the pullback type $(\id_A \times b)^*E$.

	\end{description}

\begin{figure}
	\centering
	\[\begin{tikzcd}
		e && d \\
		{\widehat{\eta}(u,e)} && d \\
		a && {a''} \\
		{a'} && {a''} \\
		{a'} && {a''} \\
		b && {b'}
		\arrow["{\eta_{u,e}}"', swap, description, from=1-1, to=2-1, cocart]
		\arrow["u"', from=3-1, to=4-1]
		\arrow["{u'u}", from=3-1, to=3-3]
		\arrow["{u'}", from=4-1, to=4-3]
		\arrow["{\id_{a''}}"', Rightarrow, no head, from=3-3, to=4-3]
		\arrow["{u'}", from=5-1, to=5-3]
		\arrow["v", from=6-1, to=6-3]
		\arrow["{\forall \,h}", from=1-1, to=1-3]
		\arrow[Rightarrow, no head, from=1-3, to=2-3]
		\arrow[curve={height=30pt}, Rightarrow, dotted, no head, from=1-1, to=3-1]
		\arrow[curve={height=-30pt}, Rightarrow, dotted, no head, from=1-3, to=3-3]
		\arrow["{\exists! \, g}", from=2-1, to=2-3, dashed]
		\arrow[curve={height=30pt}, Rightarrow, dotted, no head, from=2-1, to=4-1, crossing over]
		\arrow[curve={height=-30pt}, Rightarrow, dotted, no head, from=2-3, to=4-3, crossing over]
	\end{tikzcd}\]
	\caption{Fibered adjunction criterion for cocartesian-on-the-left families}
	\label{fig:lift-cocart-left-fib-adj}
\end{figure}
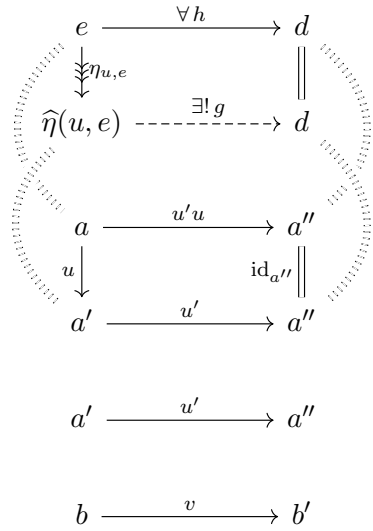

		\begin{figure}
			\centering
			\[\begin{tikzcd}
				&& e && {e'} \\
				E && {u_!'e} \\
				&& {a''} && {a'} \\
				&& a && {a'} \\
				{A \times B} && b && {b'}
				\arrow["{\xi_!(u',e)}"', from=1-3, to=2-3, cocart]
				\arrow["h", from=1-3, to=1-5]
				\arrow["g"', dashed, from=2-3, to=1-5]
				\arrow["{u'}"', from=3-3, to=4-3]
				\arrow["u", from=4-3, to=4-5]
				\arrow["{uu'}", dashed, from=3-3, to=3-5]
				\arrow[Rightarrow, no head, from=3-5, to=4-5]
				\arrow["v", from=5-3, to=5-5]
				\arrow[two heads, from=2-1, to=5-1]
			\end{tikzcd}\]
			\caption{Lifts in a family that is cocartesian on the left}
			\label{fig:lift-cocart-left}
		\end{figure}
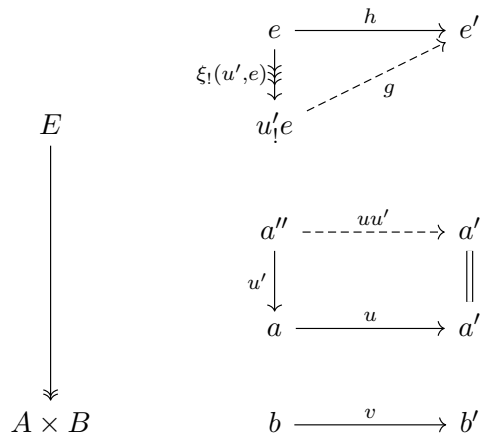
	
\end{proof}

By dualization, one obtains the notion of a two-sided family \emph{cartesian on the right}. As a corollary, we obtain a characterization of the conjunction of both properties.

\begin{corollary}[{\protect\cite[Corollary~7.1.3]{RV21}}]\label{cor:cocart-left-and-cart-right}
	A two-variable family $P:A \to B \to \UU$ is cocartesian on the left and cartesian on the right if and only if the following equivalent propositions are true.
	\begin{enumerate}
		\item The fibered functor
		\[\begin{tikzcd}
			E && {A \times B} \\
			& B
			\arrow["{\langle \xi,\pi \rangle}", two heads, from=1-1, to=1-3]
			\arrow["\pi"', two heads, from=1-1, to=2-2]
			\arrow["q", two heads, from=1-3, to=2-2]
		\end{tikzcd}\]
		is both a cartesian functor between cartesian fibrations and a relative cocartesian fibration over $B$.
		\item The fibered functor
		\[\begin{tikzcd}
			E && {A \times B} \\
			& A
			\arrow["{\langle \xi,\pi \rangle}", two heads, from=1-1, to=1-3]
			\arrow["\xi"', two heads, from=1-1, to=2-2]
			\arrow["p", two heads, from=1-3, to=2-2]
		\end{tikzcd}\]
		is both a cocartesian functor between cocartesian fibrations and a relative cartesian fibration over $B$.
	\end{enumerate}
\end{corollary}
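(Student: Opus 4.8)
The plan is to derive this corollary purely formally from \Cref{prop:char-fib-cocart-left} together with its dual, without running any fresh lifting argument. The guiding observation is that each of the two displayed propositions is \emph{literally} the conjunction of one characterization of cocartesianness-on-the-left with one (dualized) characterization of cartesianness-on-the-right; since both conjunctions unwind to the same pair of properties, they are automatically equivalent to one another and to the stated hypothesis.

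First I would record the dual of \Cref{prop:char-fib-cocart-left}. By manually dualizing (exchanging the roles of $A$ and $B$, replacing cocartesian by cartesian, the initial-vertex inclusion by the terminal-vertex inclusion, LARI by RARI, and left adjoints by right adjoints) one obtains that for a two-sided family $P:A \to B \to \UU$ corresponding to $\langle \xi,\pi\rangle: E \to A\times B$, the family is \emph{cartesian on the right} if and only if any of the following equivalent conditions holds: (i$'$) the fibered functor $\langle \xi,\pi\rangle$ viewed over $A$ (via $\xi$ and $p$) is a cartesian fibration sliced over $A$; (ii$'$) the fibered functor $\langle \xi,\pi\rangle$ viewed over $B$ (via $\pi$ and $q$) is a cartesian functor between cartesian fibrations; (iii$'$) the inclusion $\iota_\pi$ admits a fibered right adjoint. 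Next I assemble the equivalences. Condition~(\ref{it:coc-left-i}) of \Cref{prop:char-fib-cocart-left} says $\langle\xi,\pi\rangle$ over $B$ is a relative cocartesian fibration, while the dual condition~(ii$'$) says $\langle\xi,\pi\rangle$ over $B$ is a cartesian functor between cartesian fibrations; their conjunction is exactly the first displayed proposition of the corollary. Dually, condition~(\ref{it:coc-left-ii}) says $\langle\xi,\pi\rangle$ over $A$ is a cocartesian functor between cocartesian fibrations, and the dual condition~(i$'$) says $\langle\xi,\pi\rangle$ over $A$ is a relative cartesian fibration; their conjunction is exactly the second displayed proposition. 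Since \Cref{prop:char-fib-cocart-left} gives $(\ref{it:coc-left-i}) \iff (\ref{it:coc-left-ii}) \iff$ cocartesian-on-the-left, and its dual gives $(\mathrm{i}') \iff (\mathrm{ii}') \iff$ cartesian-on-the-right, both displayed propositions coincide with the single conjunction ``cocartesian on the left and cartesian on the right'', hence with each other.

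The main obstacle is bookkeeping rather than genuine mathematics: because the theory has no opposite-type former, the dual of \Cref{prop:char-fib-cocart-left} is not obtained by a purely syntactic duality but must be reproven by replaying the same argument with the variances exchanged, and one must keep careful track of which leg ($\xi$ or $\pi$) carries the co-/cartesian structure and over which base ($A$ or $B$) each functor is sliced. In particular, verifying that ``cartesian on the right'' really unwinds to conditions~(i$'$)/(ii$'$)---that is, that $P^A:B \to \UU$ is cartesian with every $P^A$-cartesian arrow $P_B$-vertical---requires the same compatibility-of-lifts check as in the proof of \Cref{prop:char-fib-cocart-left}, only mirrored.
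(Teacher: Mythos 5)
Your proposal is correct and is exactly the argument the paper intends: the corollary is stated as an immediate consequence of \Cref{prop:char-fib-cocart-left} and its (manually obtained) dual, with each displayed condition being the conjunction of item~(\ref{it:coc-left-i}) or~(\ref{it:coc-left-ii}) with the corresponding dual item, so both reduce to ``cocartesian on the left and cartesian on the right.'' Your remark that the dual must be replayed by hand (rather than obtained by an opposite-type former) matches the paper's own convention on dualization.
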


In the case that $P:A \to B \to \UU$ is cocartesian on the left and cartesian on the right, we denote the ensuing lifting operations as follows. Given $a:A$, $b:B$, $e:P\,a\,b$, for arrows $u:a \downarrow A$, $v:B \downarrow b$, there are lifts
\[ P_!(u,e): e \cocartarr_{\pair{u}{b}} u_!\,e, \quad P^*(v,e): v^*\,e \cartarr_{\pair{a}{v}} e,\]
where in the notation we identify elements and identity maps.

The notion of \emph{two-sided cartesian fibration} adds on top a certain compatibility condition between the lifts of different variance. Before defining two-sided cartesian fibrations in~\Cref{ssec:2s-cart-fams}, we first investigate the compatibility condition in the following lemma.

\begin{lemma}[Comparing cartesian and cocartesian transport]\label{lem:comp-cart-cocart-transp}
	For Rezk types $A$ and $B$, let $P: A \to B \to \UU$ be a two-variable family which is cocartesian on the left and cartesian on the right. Denoting its unstraightening as~$\varphi\jdeq \pair{\xi}{\pi}:E \fibarr A \times B$, for arrows $u:a \to_A a'$, $v:b' \to_B ba$ and a point $e:P\,a\,b$ we abbreviate:
	\begin{align*}
		k & \defeq \xi_!(u,e):e \cocartarr^\xi_{\pair{u}{b}} u_!\,e  &  k' & \defeq \pi^*(v,e):v^*\,e \cartarr^\pi_{\pair{a}{v}} e \\
		f & \defeq \xi_!(u,v^*\,e) : v^*\,e \cocartarr^\xi_{\pair{u}{b'}} u_!\,v^*\,e & f' & \defeq \pi^*(v,u_! \,e) : v^*\,u_!\,e \cartarr^\pi_{\pair{a'}{v}} u_!\,e \\
		g & \defeq \tyfill_f^\xi(k'k): u_!\,v^*\,e \to_{\pair{a'}{v}}^\xi u_!\,e & g' & \defeq \tyfill_{f'}^\pi(k'k): v^*\,e \to^\pi_{\pair{u}{b'}} v^*\, u_!\,e
	\end{align*}
	We claim that there is an identification between the following two induced morphisms $h,h':u_!\,v^*\,e \to_{P(a',b')} v^*\,u_!\,e$~ (\cf~\ref{fig:comp-cart-cocart-transport}):
	\[\begin{tikzcd}
		{v^*\,e} && e && {u_!\,e} && {v^*\,e} && e && {u_!\,e} \\
		{u_!\,v^*\,e} &&&& {v^*\,u_!\,e} && {u_!\,v^*\,e} &&&& {v^*\,u_!\,e}
		\arrow["f"', from=1-1, to=2-1, cocart]
		\arrow["{k'}", from=1-1, to=1-3, cart]
		\arrow["k", from=1-3, to=1-5, cocart]
		\arrow["{f'}", from=2-5, to=1-5, cart]
		\arrow["{h \defeq \tyfill^\pi_{f'}(g)}"', dashed, from=2-1, to=2-5]
		\arrow["g"{description}, dashed, from=2-1, to=1-5]
		\arrow["f"', from=1-7, to=2-7, cocart]
		\arrow[from=1-7, to=1-9, cart, "k'"]
		\arrow[from=1-9, to=1-11, cocart, "k"]
		\arrow["{h' \defeq \tyfill^\xi_{f}(g')}"', dashed, from=2-7, to=2-11]
		\arrow["{f'}", from=2-11, to=1-11, cart]
		\arrow["{g'}"{description}, dashed, from=1-7, to=2-11]
	\end{tikzcd}\]
\end{lemma}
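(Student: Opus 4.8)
The plan is to notice that $h$ and $h'$ are a pair of parallel $P(a',b')$-vertical arrows $u_!\,v^*\,e \to v^*\,u_!\,e$, and to identify them by a double whiskering: precompose by the $\xi$-cocartesian arrow $f$ and postcompose by the $\pi$-cartesian arrow $f'$, then cancel both. The conceptual content is that the operation $\phi \mapsto f' \circ \phi \circ f$, which sends a vertical arrow $u_!\,v^*\,e \to v^*\,u_!\,e$ to an arrow $v^*\,e \to u_!\,e$ lying over $\langle u, v\rangle$, is injective (indeed an equivalence of hom-types), since precomposition with the cocartesian $f$ and postcomposition with the cartesian $f'$ are each equivalences on the relevant mapping types by their universal properties. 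Thus it suffices to check that $h$ and $h'$ have the same image under this operation.

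First I would record the four defining equations. By construction of the cocartesian filler $g$ we have $g \circ f = k'k$, and by construction of the cartesian filler $g'$ we have $f' \circ g' = k'k$, where $k'k$ denotes the composite $v^*\,e \to u_!\,e$ over $\langle u, v\rangle$; by construction of $h$ as a cartesian filler of $f'$ and of $h'$ as a cocartesian filler of $f$ we have $f' \circ h = g$ and $h' \circ f = g'$. Chaining these gives
\[ f' \circ h \circ f = g \circ f = k'k = f' \circ g' = f' \circ (h' \circ f), \]
so that $f' \circ (h \circ f)$ and $f' \circ (h' \circ f)$ agree, both being $k'k$. Now the two arrows $h \circ f$ and $h' \circ f$ are maps $v^*\,e \to v^*\,u_!\,e$ over $\langle u, \id_{b'}\rangle$, and postcomposing either with the $\pi$-cartesian arrow $f'$ (over $\langle a', v\rangle$) yields $k'k$ over $\langle u, v\rangle = \langle a', v\rangle \circ \langle u, \id_{b'}\rangle$; by the uniqueness part of the cartesian universal property of $f'$ (the contractibility of the cartesian filler type), this forces $h \circ f = h' \circ f$. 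Since $h$ and $h'$ are then vertical arrows out of $u_!\,v^*\,e$ becoming equal after precomposition with the $\xi$-cocartesian arrow $f$ (over $\langle u, \id_{b'}\rangle$), the dual uniqueness for cocartesian fillers gives the desired identification $r : h = h'$.

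The only real care needed is the bookkeeping of base arrows in $A \times B$: at each cancellation step one must verify that the two arrows compared lie strictly over the same base arrow, so that the contractibility conditions defining $\xi$-cocartesianness of $f$ and $\pi$-cartesianness of $f'$ apply. This is where Rezk-completeness of $B$ (hence of $A \times B$) enters, ensuring that the lifts $f,f',k,k'$ are determined up to homotopy and that the relevant factorizations $\langle u,v\rangle = \langle a',v\rangle \circ \langle u,\id_{b'}\rangle = \langle u,\id_b\rangle \circ \langle \id_a,v\rangle$ agree up to the required identifications; see \Cref{fig:comp-cart-cocart-transport} for the cube. Granting this bookkeeping, the argument is purely formal, consisting of the displayed chain of equalities followed by the two cancellations, and no computation in the fibers is required.
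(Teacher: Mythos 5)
Your argument is correct and is essentially the same as the paper's: both proofs use only the four defining filler equations $f'h = g$, $gf = k'k$, $f'g' = k'k$, $h'f = g'$ together with the uniqueness clauses of the cartesian universal property of $f'$ and the cocartesian universal property of $f$. The paper merely arranges the two cancellations in the opposite order (reducing $h = h'$ to $f'h' = g$ and then to $f'(h'f) = k'k$), whereas you compute the double whiskering first; the content is identical.
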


\begin{proof}
	It is sufficient to provide an identification $g=f'h'$. For this, it is sufficient---and necessary---to provide a witness for $f'(h'f) = k'k$. But this follows from $f'h=g$, since $gf=k'k$.
\end{proof}

\begin{figure}
	\centering
	\[\begin{tikzcd}
		E && {v^*e} && e && {u_!e} \\
		&&& {u_!v^*e} &&&& {v^*u_!e} \\
		{A \times B} && {\langle a,b'\rangle} && {\langle a,b \rangle} && {\langle a',b\rangle} \\
		{} &&& {\langle a',b' \rangle} &&&& {\langle a',b'\rangle}
		\arrow["{\xi_!(u,v^*e)}"'{pos=0.2}, from=1-3, to=2-4, cocart]
		\arrow["{\pi^*(v,e)}", from=1-3, to=1-5, cart]
		\arrow["{\xi_!(u,e)}", from=1-5, to=1-7, cocart]
		\arrow["{\pi^*(v,u_!e)}"'{pos=0.1}, from=2-8, to=1-7, cart]
		\arrow[dashed, from=2-4, to=2-8]
		\arrow["{\langle u,b' \rangle}"', from=3-3, to=4-4]
		\arrow["{\langle a,v \rangle}", from=3-3, to=3-5]
		\arrow["{\langle u,b \rangle}", from=3-5, to=3-7]
		\arrow[two heads, from=1-1, to=3-1]
		\arrow["{\langle a',v \rangle}"', from=4-8, to=3-7]
		\arrow[Rightarrow, no head, from=4-4, to=4-8]
	\end{tikzcd}\]
	\caption{Comparing cartesian and cocartesian transport}\label{fig:comp-cart-cocart-transport}
\end{figure}

\subsection{Two-sided cartesian families}\label{ssec:2s-cart-fams}

\begin{definition}[Two-sided cartesian families]\label{def:2s-cart}
	Let $P:A \to B \to \UU$ be a (two-sided) family, where $A$ and $B$ are Rezk types. We call $P$ a \emph{two-sided family} if
	\begin{enumerate}
		\item $P$ is cocartesian on the left and cartesian on the right,
		\item\label{it:lifts-commute} and $P$ satisfies the condition that \emph{cocartesian and cartesian lifts commute} : for any $a:A$, $b:B$, $e:P\,a\,b$ and arrows $u:a \to_A a'$, $v:b' \to_B b$, the filler $\kappa:u_!(v^*e) \to v^*(u_!e)$ from \Cref{lem:comp-cart-cocart-transp} is an isomorphism, hence there is an identity $u_!v^*e =_{P(a',b')} v^*u_!e$.
	\end{enumerate}
\end{definition}

\begin{proposition}[Commutation of cocartesian and cartesian lifts]\label{prop:comm-lifts}
	Let $P:A \to B \to \UU$ be a family with both $A$ and $B$ Rezk which is cocartesian on the left and cartesian on the right. We denote by $\xi:F \fibarr A$ and $\pi:E \fibarr B$, resp., the unstraightenings.
	Then cocartesian and cartesian lifts commute if and only if the following property is satisfied:
	Given $u:a \to_A a'$, $v:b' \to_B b$, $e:P\,a\,b$, and a diagram
\[\begin{tikzcd}
	{v^*\,e} && e \\
	d && {u_!\,e}
	\arrow["f"', from=1-1, to=2-1]
	\arrow["{k'\defeq \pi^*(v,e)}", from=1-1, to=1-3, cart]
	\arrow["g"', from=2-1, to=2-3]
	\arrow["{k \defeq \xi_!(u,e)}", from=1-3, to=2-3, cocart]
\end{tikzcd}\]
	where $g$ (and necessarily $k'$) is $\xi$-vertical and $f$ (and necessarily $k$) is $\pi$-vertical. Then $f$ is $\xi$-cocartesian if and only if $g$ is $\pi$-cartesian.
\end{proposition}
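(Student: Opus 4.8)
The plan is to show that the commutation-of-lifts condition from \Cref{def:2s-cart}\ref{it:lifts-commute} --- namely that the comparison map $\kappa: u_!v^*e \to v^*u_!e$ of \Cref{lem:comp-cart-cocart-transp} is an isomorphism --- is logically equivalent to the stated ``local'' property about the square with sides $f, k', g, k$. Throughout I work over the fixed data $u:a \to_A a'$, $v:b' \to_B b$, $e:P\,a\,b$, with $\xi:F \fibarr A$ cocartesian and $\pi:E \fibarr B$ cartesian (after fibrant replacement, so lifts and fillers are available strictly). The key observation is that both the commutation condition and the local property are really statements about the \emph{same} filler arrows $g$ and $g'$ produced in \Cref{lem:comp-cart-cocart-transp}, just read off along the two different variances.

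First I would unwind the data. In the square of the proposition, $k' \defeq \pi^*(v,e)$ is the cartesian lift of $v$ (so $\pi$-vertical's complement) and $k \defeq \xi_!(u,e)$ is the cocartesian lift of $u$. Given the diagram commutes, $f$ is forced to be $\pi$-vertical (it lies over an identity in $B$, since $d \defeq u_!v^*e$ and $v^*e$ both sit over $b'$) and dually $g$ is forced to be $\xi$-vertical. I would then identify $f$ with the cocartesian lift $\xi_!(u,v^*e)$ and $g$ with the induced map $u_!v^*e \to u_!e$ exactly as in \Cref{lem:comp-cart-cocart-transp}; the point $d$ is canonically $u_!v^*e$ up to the homotopically-unique cocartesian lift, and the filler $g = \tyfill^\xi_f(k'k)$ is precisely the horizontal map appearing there. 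Symmetrically, reading the configuration along the cartesian direction, the claim ``$g$ is $\pi$-cartesian'' is to be compared with $f'\defeq \pi^*(v,u_!e)$ being the cartesian lift, and the dual filler $g' = \tyfill^\pi_{f'}(k'k)$.

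The heart of the argument is the following chain of equivalences, which I would spell out in both directions. The comparison isomorphism $\kappa$ of \Cref{def:2s-cart}\ref{it:lifts-commute} is an isomorphism if and only if the factorization $g = f'\circ h'$ from \Cref{lem:comp-cart-cocart-transp} (where $h' = \tyfill^\xi_f(g')$) exhibits $g$ as a composite of the cartesian $f'$ with an isomorphism $h'$. Since $f'$ is cartesian, $g$ is then cartesian precisely when $h'$ is an isomorphism --- by the closure of cartesian arrows under composition and by left cancellation (the dual of \Cref{prop:cocart-arr-closure}\ref{it:cocart-arr-cancel}). Thus ``$g$ is $\pi$-cartesian'' $\iff$ ``$h'$ is an isomorphism'' $\iff$ ``$\kappa$ is an isomorphism''. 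Dually, feeding the same data through the cocartesian variance shows ``$f$ is $\xi$-cocartesian'' $\iff$ ``$h$ is an isomorphism'', and by \Cref{lem:comp-cart-cocart-transp} there is an identification $h = h'$ (both are the common comparison map $u_!v^*e \to v^*u_!e$). Therefore $f$ is $\xi$-cocartesian iff $g$ is $\pi$-cartesian, iff the lifts commute.

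The main obstacle I anticipate is the careful bookkeeping needed to verify that the filler $f$ appearing in the proposition's square genuinely coincides (up to homotopy, hence strictly after replacement) with the cocartesian lift $\xi_!(u,v^*e)$ of \Cref{lem:comp-cart-cocart-transp}, and correspondingly that $g$ matches the filler there; this requires using uniqueness of cocartesian lifts (\Cref{prop:cocart-lifts-unique-in-isoinner-fams}) together with the fact that $f$ is $\pi$-vertical to pin down its identity. A secondary subtlety is confirming that ``vertical and (co)cartesian implies isomorphism'' applies at the right place --- this is exactly \Cref{lem:cocart-arrows-isos}\ref{it:cocart-arrows-over-ids-are-isos} and its cartesian dual --- so that the equivalence ``$h'$ iso $\iff$ $g$ cartesian'' goes through cleanly rather than only in one direction. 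Once these identifications are in place, the equivalence is formal, resting entirely on the composition and cancellation closure properties of co-/cartesian arrows already established.
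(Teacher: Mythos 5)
Your proposal is correct and follows essentially the same route as the paper: reduce the commutation condition via \Cref{lem:comp-cart-cocart-transp} to the invertibility of the comparison filler $h:u_!v^*e \to v^*u_!e$, then translate that into the local criterion using the factorizations $g = f'\circ h$ and $g' = h\circ f$ together with composition/cancellation of co-/cartesian arrows and the fact that vertical co-/cartesian arrows are isomorphisms. The identification of an arbitrary square's $d$, $f$, $g$ with the canonical configuration via uniqueness of lifts, which you flag as the main bookkeeping obstacle, is exactly the step the paper also leaves implicit ("up to identification").
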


\begin{proof}
	In light of~\Cref{lem:comp-cart-cocart-transp}, the commutation condition is equivalent to the type of paths $h:u_!\,v^*\,e =_{P(a',b')} v^*\,u_!\,e$ together with witnesses, necessarily propositional, that the following diagram commutes:
	\[\begin{tikzcd}
		&& {v^*\,e} &&&& e \\
		\\
		&& {u_!\,v^*\,e} &&&& {u_!\,e} \\
		\\
		{v^*\,u_!\,e}
		\arrow["{f \defeq \xi_!(u,v^*\,e)}", from=1-3, to=3-3, cocart]
		\arrow["{g \defeq \tyfill_f^\xi(kk')}", dashed, from=3-3, to=3-7]
		\arrow["{k' \defeq \pi^*(v,e)}", from=1-3, to=1-7, cart]
		\arrow["{k \defeq \xi_!(u,e)}", from=1-7, to=3-7, cocart]
		\arrow["{g' \defeq \tyfill_{f'}^\pi(kk')}"', dashed, from=1-3, to=5-1]
		\arrow["{f' \defeq \pi^*(v,u_!\,e)}"', from=5-1, to=3-7, cart]
		\arrow["h"{description}, Rightarrow, no head, from=5-1, to=3-3]
	\end{tikzcd}\]
	But since these diagrams commute in any case by the assumptions (recall~\Cref{lem:comp-cart-cocart-transp}) said type is equivalent to the proposition that the filler $h$ is an isomorphism:
	\[\begin{tikzcd}
		&& {u_!\,v^*\,e} \\
		{v^*\,e} && {v^*\,u_!\,e} && {u_!\,e}
		\arrow["h"', dashed, from=1-3, to=2-3]
		\arrow["{f'}"', from=2-3, to=2-5, cart]
		\arrow["g", from=1-3, to=2-5]
		\arrow["{g'}"', from=2-1, to=2-3]
		\arrow["f", from=2-1, to=1-3, cocart]
	\end{tikzcd}\]
	Finally due to the commutation of both of the ``completed squares'' above, this is equivalent to the new alternative criterion: up to identification, $f$ is $\xi$-cocartesian if and only if $g$ is $\pi$-cartesian.
\end{proof}

\begin{figure}
	\centering
\[\begin{tikzcd}
	{\mathrm{Vert}_\pi(E)} & {v^*\,e} && e & {E \times_{A \times B} \mathrlap{\mathrm{Vert}_q(A \times B)}} \\
	{} & {v^*\,e'} && {e'} && {v^*e} & e & {} & {} \\
	& a && a && a & a && {} \\
	& {a'} && {a'} && {a'} & {a'} && {} \\
	B & b && {b'} & B & b & {b'}
	\arrow["{\pi^*(v,e)}", from=1-2, to=1-4, cocart]
	\arrow["{\mathrm{fill}}"', dashed, from=1-2, to=2-2]
	\arrow["{\pi^*(v,e')}"', from=2-2, to=2-4, cocart]
	\arrow["f", from=1-4, to=2-4]
	\arrow["u"', from=3-2, to=4-2]
	\arrow[Rightarrow, no head, from=3-2, to=3-4]
	\arrow["u", from=3-4, to=4-4]
	\arrow[Rightarrow, no head, from=4-2, to=4-4]
	\arrow["v", from=5-2, to=5-4]
	\arrow["u"', from=3-6, to=4-6]
	\arrow["\psi"{description}, from=1-5, to=5-5]
	\arrow["{\pi'}"{description}, two heads, from=1-1, to=5-1]
	\arrow["{\pi^*(v,e)}", from=2-6, to=2-7, cocart]
	\arrow[Rightarrow, no head, from=3-6, to=3-7]
	\arrow["u", from=3-7, to=4-7]
	\arrow[Rightarrow, no head, from=4-6, to=4-7]
	\arrow["v", from=5-6, to=5-7]
\end{tikzcd}\]
	\label{fig:lifts-two-sided-fib-lari}\caption{Cartesian lifts in the induced fibrations~$\pi'$~and~$\psi$}
\end{figure}
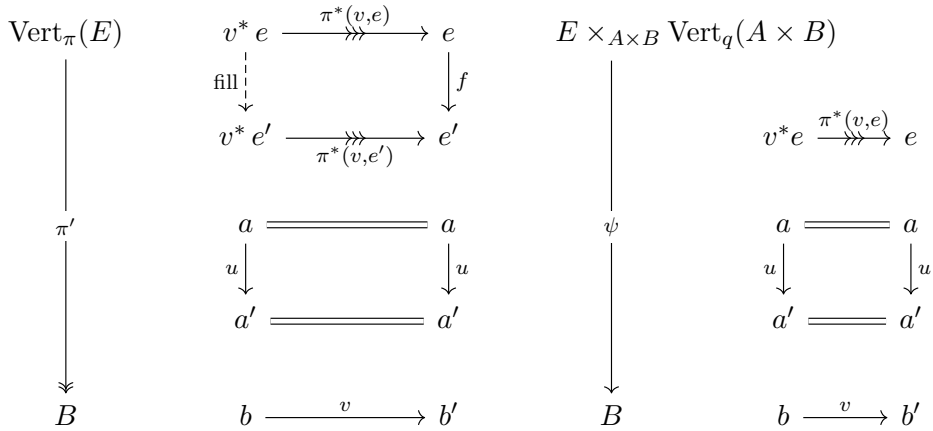

The following theorem finally contains several characterizations of a two-variable family being two-sided cartesian.\footnote{I am indebted to Emily Riehl for helpful explanations and discussions about~\cite[Thm.~7.1.4]{RV21}.} This consists in several sliced Chevalley/fibered adjoint criteria and a criterion formulated directly on the level of two-variable families.

\begin{theorem}[Characterizations of two-sided families, {\protect\cite[Thm.~7.1.4]{RV21}}]\label{thm:char-two-sid}
For a family $P:A \to B \to \UU$, corresponding to $\varphi \defeq \pair{\xi}{\pi}: E \defeq \sum_{a:A,b:B} P(a,b) \to A \times B$, the following are equivalent:
	\begin{enumerate}
		\item\label{it:char-two-sid-i} The two-variable family $P$ is two-sided.
		\item\label{it:char-two-sid-ii} Considering
		\[\begin{tikzcd}
			E && {A \times B} \\
			& B
			\arrow["\varphi", from=1-1, to=1-3]
			\arrow["\pi"', two heads, from=1-1, to=2-2]
			\arrow["q", two heads, from=1-3, to=2-2]
		\end{tikzcd}\]
		the map $\pi$ 
		is a cartesian fibration, the functor $\varphi$ is cartesian and a \emph{cocartesian} fibration sliced over $B$. Furthermore, the fibered LARI
		\[ \chi_B: \VertArr_q(A \times B) \times_{A\times B} E \to_B \VertArr_\pi(E)\]
		is a \emph{cartesian} functor.
		\item\label{it:char-two-sid-iii'} Considering
		\[\begin{tikzcd}
			E && {A \times B} \\
			& B
			\arrow["\varphi", from=1-1, to=1-3]
			\arrow["\pi"', two heads, from=1-1, to=2-2]
			\arrow["q", two heads, from=1-3, to=2-2]
		\end{tikzcd}\]
		the map $\pi$ 
		is a cartesian fibration, the functor $\varphi$ is cartesian and a \emph{cocartesian} fibration sliced over $B$. Furthermore, the fibered LARI
		\[ \tau_B: \VertArr_q(A \times B) \times_{A\times B} E \to_B E\]
		is a \emph{cartesian} functor.
		\item\label{it:char-two-sid-iii} Considering
		\[\begin{tikzcd}
			E && {A \times B} \\
			& A
			\arrow["\varphi", from=1-1, to=1-3]
			\arrow["\xi"', two heads, from=1-1, to=2-2]
			\arrow["p", two heads, from=1-3, to=2-2]
		\end{tikzcd}\]
	the map $\xi$
	is a cocartesian fibration, the functor $\varphi$ is cocartesian and a \emph{cartesian} fibration sliced over $A$. Furthermore, the fibered RARI
	\[ \chi^A: \VertArr_p(A \times B) \times_{A\times B} E \to_A \VertArr_\xi(E)\]
	is a \emph{cocartesian} functor.
	\item\label{it:char-two-sid-iv}
	The fibered adjoints in the following diagram exist:
\[\begin{tikzcd}
	E &&& {B \downarrow \pi} \\
	\\
	&& {\xi \downarrow A} &&& {\xi \downarrow A \times_E B \downarrow \pi} \\
	\\
	&&& {A\times B}
	\arrow[""{name=0, anchor=center, inner sep=0}, "{\iota^\pi}"{description}, from=1-1, to=1-4]
	\arrow[""{name=1, anchor=center, inner sep=0}, "{\iota_\xi}"{description}, from=1-1, to=3-3]
	\arrow[""{name=3, anchor=center, inner sep=0}, "{\langle \iota_\xi \circ \partial_1',\id \rangle}"{description, pos=0.25}, from=1-4, to=3-6]
	\arrow["{\langle \partial_1, \pi \, \partial_0 \rangle}"{description}, two heads, from=3-3, to=5-4]
	\arrow["{\langle \partial_1, \partial_0\rangle}"{description}, two heads, curve={height=-15pt}, from=3-6, to=5-4]
	\arrow["{\langle \xi\,\partial_1, \partial_0 \rangle}"{description, pos=0.3}, two heads, from=1-4, to=5-4]
	\arrow["\varphi"{description}, shift right=2, curve={height=30pt}, two heads, from=1-1, to=5-4]
	\arrow[""{name=4, anchor=center, inner sep=0}, "{\tau_\xi}"{description}, curve={height=-18pt}, dashed, from=3-3, to=1-1]
	\arrow[""{name=5, anchor=center, inner sep=0}, "\ell"{description}, curve={height=18pt}, dashed, from=3-6, to=1-4]
	\arrow[""{name=7, anchor=center, inner sep=0}, "{\tau^\pi}"{description}, curve={height=18pt}, dashed, from=1-4, to=1-1]
	\arrow["\dashv"{anchor=center, rotate=-91}, draw=none, from=7, to=0]
	\arrow["\dashv"{anchor=center, rotate=-133}, draw=none, from=5, to=3]
	\arrow["\dashv"{anchor=center, rotate=44}, draw=none, from=4, to=1]
	\arrow[""{name=6, anchor=center, inner sep=0}, "r"{description}, curve={height=-24pt}, dashed, from=3-6, to=3-3, crossing over]
	\arrow[""{name=2, anchor=center, inner sep=0}, "{\langle \id, \iota^\pi \circ \partial_0' \rangle}"{description, pos=0.6}, from=3-3, to=3-6, crossing over]
	\arrow["\dashv"{anchor=center, rotate=108}, draw=none, from=6, to=2]
\end{tikzcd}\]
where the pullback type is given by:
\[\begin{tikzcd}
	{\comma{\xi}{A} \times_E \comma{B}{\pi}} && {\comma{\xi}{A}} \\
	{\comma{B}{\pi}} && E
	\arrow[from=1-1, to=2-1]
	\arrow["{\partial_1}"', from=2-1, to=2-3]
	\arrow[from=1-1, to=1-3]
	\arrow["{\partial_0'}", from=1-3, to=2-3]
	\arrow["\lrcorner"{anchor=center, pos=0.125}, draw=none, from=1-1, to=2-3]
\end{tikzcd}\]
Moreover, the mate of the identity $2$-cell defines a fibered isomorphism
\[ \prod_{a:A, b:B} (\tau_\xi \circ r)_{a,b} =_{Q(a,b) \to P(a,b)} (\tau^\pi \circ \ell)_{a,b},\]
where
\[ Q: A \times B \to \UU, \quad Q(a,b) \simeq \comma{a}{A} \times \comma{B}{b} \times P(a,b)\]
is the straightening of the map
\[ F \defeq \comma{\xi}{A} \times_E \comma{B}{\pi} \fibarr A \times B.\]
	\end{enumerate}
\end{theorem}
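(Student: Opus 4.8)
The plan is to prove the five conditions equivalent by first extracting the ``structure'' clauses common to \ref{it:char-two-sid-ii}, \ref{it:char-two-sid-iii'} and \ref{it:char-two-sid-iii} from the one-variable theory of \Cref{prop:char-fib-cocart-left}, and then identifying the one remaining compatibility clause in each formulation with the commutation of lifts of \Cref{def:2s-cart}. Throughout I write $\varphi \defeq \pair{\xi}{\pi}$, viewed either as a fibered functor over $B$ to the projection $q : A \times B \fibarr B$, or over $A$ to $p : A \times B \fibarr A$. For the implication \ref{it:char-two-sid-i} $\iff$ \ref{it:char-two-sid-ii} I would first note that the first three clauses of \ref{it:char-two-sid-ii} recover exactly the first condition of \Cref{def:2s-cart}: by \Cref{prop:char-fib-cocart-left}\ref{it:coc-left-i} the statement that $\varphi$ is a cocartesian fibration sliced over $B$ is equivalent to $P$ being cocartesian on the left, while by the dual of \Cref{prop:char-fib-cocart-left}\ref{it:coc-left-ii} the conjunction ``$\pi$ is cartesian and $\varphi$ is a cartesian functor over $B$'' is equivalent to $P$ being cartesian on the right, recalling that $q$ is always a cartesian fibration.

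Assuming this first condition, it then remains to match the last clause of \ref{it:char-two-sid-ii}, that $\chi_B$ be a cartesian functor, with the commutation condition \ref{it:lifts-commute} of \Cref{def:2s-cart}. Here I would invoke \Cref{prop:char-cocart-fib-in-cart-fib}, applied to $\varphi$ as a cartesian functor between the cartesian fibrations $\pi$ and $q$ which is simultaneously a sliced cocartesian fibration over $B$: its condition \ref{it:gen-lift-comm-i} is precisely ``$\chi_B$ is cartesian'', and its equivalent condition \ref{it:gen-lift-comm-iii} (or \ref{it:gen-lift-comm-iv}) produces invertibility of the comparison filler between the two iterated transports. I would then check that under the present identifications---cartesian transport along $B$ being $v^*$, and cocartesian transport along $A$ (that is, along $q$-vertical arrows) being $u_!$---this filler is exactly the map $\kappa : u_!\,v^*e \to v^*u_!\,e$ of \Cref{lem:comp-cart-cocart-transp}, so that its invertibility is the commutation of lifts, with \Cref{prop:comm-lifts} providing the cross-check in the ``$f$ is $\xi$-cocartesian iff $g$ is $\pi$-cartesian'' form. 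The equivalence \ref{it:char-two-sid-ii} $\iff$ \ref{it:char-two-sid-iii'} is then immediate, since the two conditions share their first three clauses and \Cref{prop:char-cocart-fib-in-cart-fib} states \ref{it:gen-lift-comm-i} $\iff$ \ref{it:gen-lift-comm-ii}; and \ref{it:char-two-sid-i} $\iff$ \ref{it:char-two-sid-iii} follows by rerunning the argument with the roles of $A$ and $B$, and of cocartesian and cartesian, interchanged, the first condition of \Cref{def:2s-cart} being symmetric under this exchange.

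For \ref{it:char-two-sid-i} $\iff$ \ref{it:char-two-sid-iv} I would unpack the adjoints one at a time. Existence of the fibered left adjoint $\tau_\xi \dashv \iota_\xi$ is, by \Cref{prop:char-fib-cocart-left}\ref{it:coc-left-iii}, equivalent to $P$ being cocartesian on the left, and dually $\iota^\pi$ admitting the fibered (right) adjoint $\tau^\pi$ is equivalent to $P$ being cartesian on the right. The adjoints $\ell$ and $r$ over the pullback $\comma{\xi}{A} \times_E \comma{B}{\pi}$ are then assembled from $\tau_\xi$ and $\tau^\pi$ by pulling back and composing, exactly as in the closure arguments of \Cref{prop:clos-sl-cocart-fib-pb,prop:clos-sl-cocart-fib-comp}, so their existence carries no further content. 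The remaining datum, the fibered isomorphism $(\tau_\xi \circ r)_{a,b} = (\tau^\pi \circ \ell)_{a,b}$ on $Q(a,b) \simeq \comma{a}{A} \times \comma{B}{b} \times P(a,b)$, evaluates at $\pair{u}{\pair{v}{e}}$ to a comparison between the two composite transports $u_!\,v^*e$ and $v^*u_!\,e$; I would verify that this mate cell is the comparison $\kappa$ of \Cref{lem:comp-cart-cocart-transp}, so that its invertibility is once more the commutation clause \ref{it:lifts-commute} of \Cref{def:2s-cart}.

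The main obstacle I anticipate is the bookkeeping inside \ref{it:char-two-sid-i} $\iff$ \ref{it:char-two-sid-ii}: concretely, showing that the abstract filler supplied by \Cref{prop:char-cocart-fib-in-cart-fib}\ref{it:gen-lift-comm-iii} (phrased there through the many auxiliary arrows $g, f', g', k, k', k'', m, m', m''$) coincides, under the substitution ``base fibration $= q$, vertical $= u_!$, cartesian $= v^*$'', with the single comparison map $\kappa$ of \Cref{lem:comp-cart-cocart-transp}. This is a diagram identification rather than a genuinely new idea, but it demands care to line up the cube faces correctly and to keep track of which lifts are cocartesian and which are cartesian; the same care is required to confirm that the mate cell appearing in \ref{it:char-two-sid-iv} is literally this comparison, and not merely a map sharing its source and target.
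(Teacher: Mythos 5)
Your proposal is correct and follows the same overall skeleton as the paper's proof: the same decomposition into implications, the same use of \Cref{cor:cocart-left-and-cart-right} (equivalently, \Cref{prop:char-fib-cocart-left} and its dual) for the first clauses, \Cref{prop:char-cocart-fib-in-cart-fib} for $\ref{it:char-two-sid-ii} \iff \ref{it:char-two-sid-iii'}$, duality for $\ref{it:char-two-sid-iii}$, and pulling back the fibered adjunctions $\tau_\xi \dashv \iota_\xi$ and $\tau^\pi \dashv \iota^\pi$ to assemble $\ell$ and $r$ in $\ref{it:char-two-sid-iv}$. You deviate in two places, both soundly. First, for the commutation clause in $\ref{it:char-two-sid-i} \iff \ref{it:char-two-sid-ii}$ the paper computes the $\psi$- and $\pi'$-cartesian lifts explicitly (via the fibered LARI $\mu$ and the formulas from~\cite[Propositions~5.2.9, 5.3.9, 5.3.10]{BW21}) and compares $\mu_v(\psi^*(v,\pair{u}{e}))$ with $(\pi')^*(v,\mu_b(u,e))$ by hand before invoking \Cref{prop:comm-lifts}; you instead route this through \Cref{prop:char-cocart-fib-in-cart-fib}\ref{it:gen-lift-comm-iii}, which packages the same computation, at the cost of the diagram identification you correctly flag as the delicate step. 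Second, for $\ref{it:char-two-sid-iv}$ the paper reduces the mate condition to $\ref{it:char-two-sid-ii}$ via \Cref{thm:char-cocart-fun}, whereas you identify the mate $(\tau_\xi \circ r) \Rightarrow (\tau^\pi \circ \ell)$ directly with the comparison $\kappa : u_!\,v^*e \to v^*u_!\,e$ of \Cref{lem:comp-cart-cocart-transp} and so land on $\ref{it:char-two-sid-i}$; this is arguably more direct, and the computation $(\tau_\xi \circ r)(u,v,e) = u_!\,v^*e$, $(\tau^\pi \circ \ell)(u,v,e) = v^*\,u_!\,e$ confirms it is the same cell. Neither deviation introduces a gap.
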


\begin{proof}

	\begin{description}
		\item[$\ref{it:char-two-sid-i} \iff \ref{it:char-two-sid-ii}$:] By \Cref{cor:cocart-left-and-cart-right}, the map $P: A \to B \to \UU$ is cocartesian on the left and cartesian on the right if and only if $\varphi: E \to_B A \times B$ is both a cartesian functor between cartesian fibrations and a cocartesian fibration sliced over $B$.
		
		In the following, we assume this is satisfied for $P$.
		
		Thus, we are left to show that, under this assumption---$P$ being cocartesian on the left and cartesian on the right---the following holds:
		\begin{align*} 
			& \text{``The fibered LARI $\chi_B: \VertArr_q(A \times B) \times_{A\times B} E \to_B \VertArr_\pi(E)$ is a cocartesian functor.''} \\
		& \iff  \text{``In $P$, cocartesian and cartesian lifts commute.''}
		\end{align*}
		We write $F \defeq \mathrm{Vert}_q(A \times B) \times_{A \times B} E \fibarr B$, so by fibrant replacement, we consider the projection
		\[ \psi : \sum_{b:B} \sum_{a:A} (\comma{a}{A} \times P(a,b)) \simeq F \fibarr B. \]		
		The induced sliced Leibniz cotensor is given by
		\[ \kappa : \VertArr_\pi(E) \to_B F,\quad \kappa_b\big(u:\Delta^1 \to A,f:\prod_{t:\Delta^1} P(u(t),b) \big) \defeq \angled{\partial_0\,u,u, \partial_0\,f}. \]
		It  has a fibered LARI $\mu: F \to_B \VertArr_\pi(E)$ as indicated in: 
		\[\begin{tikzcd}
			{\mathrm{Vert}_\pi(E)} && {\mathrm{Vert}_q(A \times B) \times_{A \times B} E} \\
			\\
			& B
			\arrow[""{name=0, anchor=center, inner sep=0}, "\kappa"{description}, curve={height=12pt}, from=1-1, to=1-3]
			\arrow[""{name=1, anchor=center, inner sep=0}, "\mu"{description}, curve={height=18pt}, dashed, from=1-3, to=1-1]
			\arrow["{\pi'}"{description}, two heads, from=1-1, to=3-2]
			\arrow["\psi"{description}, two heads, from=1-3, to=3-2]
			\arrow["\dashv"{anchor=center, rotate=-94}, draw=none, from=1, to=0]
		\end{tikzcd}\]
	
		\begin{figure}
		\centering
		\[\begin{tikzcd}
			{e''} & e & {e''} & e && {e'} & e & {e'} & e \\
			{e'''} & {e'} &&&&&& {u_!\,e'} & {u_!\,e} \\
			{a''} & a & {a''} & a && {a''} & a & {a''} & a \\
			{a'''} & {a'} & {a'''} & {a'} && {a'''} & {a'} & {a'''} & {a'} \\
			{b'} & b & {b'} & b && {b'} & b & {b'} & b
			\arrow["g", from=1-1, to=1-2]
			\arrow["{f'}"', from=1-1, to=2-1]
			\arrow["f", from=1-2, to=2-2]
			\arrow["{g'}"', from=2-1, to=2-2]
			\arrow["m", from=3-1, to=3-2]
			\arrow["{u'}", from=3-1, to=4-1]
			\arrow[""{name=0, anchor=center, inner sep=0}, "u"', from=3-2, to=4-2]
			\arrow["v", from=5-1, to=5-2]
			\arrow["g", from=1-3, to=1-4]
			\arrow[""{name=1, anchor=center, inner sep=0}, "{u'}", from=3-3, to=4-3]
			\arrow[Rightarrow, dotted, no head, from=1-3, to=3-3]
			\arrow[Rightarrow, dotted, no head, from=1-4, to=3-4]
			\arrow["u"', from=3-4, to=4-4]
			\arrow["m", from=3-3, to=3-4]
			\arrow["{m'}"', from=4-3, to=4-4]
			\arrow["v", from=5-3, to=5-4]
			\arrow["{m'}"', from=4-1, to=4-2]
			\arrow["v", from=5-8, to=5-9]
			\arrow["u", from=3-6, to=4-6]
			\arrow[from=3-6, to=3-7]
			\arrow[from=4-6, to=4-7]
			\arrow[""{name=2, anchor=center, inner sep=0}, "u"', from=3-7, to=4-7]
			\arrow["f", from=1-6, to=1-7]
			\arrow[Rightarrow, dotted, no head, from=1-6, to=3-6]
			\arrow[Rightarrow, dotted, no head, from=1-7, to=3-7]
			\arrow["{\xi_!(u,e')}"', from=1-8, to=2-8, cocart]
			\arrow["f", from=1-8, to=1-9]
			\arrow["{\xi_!(u,e)}", from=1-9, to=2-9, cocart]
			\arrow["{\mathrm{fill}}"', dashed, from=2-8, to=2-9]
			\arrow[""{name=3, anchor=center, inner sep=0}, "u", from=3-8, to=4-8]
			\arrow[from=3-8, to=3-9]
			\arrow["u"', from=3-9, to=4-9]
			\arrow[from=4-8, to=4-9]
			\arrow["v", from=5-6, to=5-7]
			\arrow["\kappa", shorten <=6pt, shorten >=6pt, maps to, from=0, to=1]
			\arrow["\mu", shorten <=6pt, shorten >=6pt, maps to, from=2, to=3]
		\end{tikzcd}\]
		\caption{Action on arrows of the fibered functors $\kappa: F\to_B \VertArr_\pi(E)  : \mu$}\label{fig:kappa-act-arrows}
	\end{figure}

		By our discussion of cocartesian-on-the-left fibrations, \cf~\Cref{prop:char-fib-cocart-left}, the fibered LARI $\mu:F \to \VertArr_\pi(E)$ at~$b:B$ is given by
		\[ \mu_b(a:A,u:\comma{a}{A},e:P(a,b)) \defeq \pair{u:\comma{a}{A}} {\xi_!(u,e):e \to_{\pair{u}{b}}^P u_!\,e}. \]
		By the closure properties of cartesian fibrations~\cite[Corollary~5.2.10, Proposition~5.2.14]{BW21} we obtain that the pulled back maps $\pi' \defeq \cst^*\pi^{\Delta^1}: \VertArr_\pi(E) \fibarr B$ and $q' \defeq \cst^*q:\VertArr_q(A \times B) \fibarr B$ are cartesian fibrations. Moreover, by \emph{op.~cit.}, Proposition~5.3.10, so is $\psi \defeq q' \times_q \pi : F \fibarr B$. By the computations of lifts, as elobarated in \emph{op.~cit.}, Propositions~5.2.9, 5.3.9,~and 5.3.10, the cartesian lifts in $\pi': \VertArr_\pi(E) \fibarr B$ and, resp.~$\psi: F \fibarr B$ are given by as follows (\cf~\Cref{fig:cart-ness-of-mu} for illustration):
		\begin{align*}
			& (\pi')^*\left( v:b \to_B b', \pair{u:a \to_A a'}{f:e \to^P_{\pair{u}{a}} e'}\right) \\
			& = \left \langle v, \pair{\id_a}{\id_a'} : u \rightrightarrows_A u, \pair{\pi^*(v,e)}{\pi^*(v,e')}: \tyfill \rightrightarrows^P f \right \rangle \\
			& \\
			& \psi^*\left( v:b\to_B b', \pair{u:a \to_A a'}{e:P(a,b)} \right) \\
			& = \left \langle v, \langle \pair{\id_a}{\id_a'}: u \rightrightarrows_A u, \pi^*(v,e): v^* \cartarr^\pi_v e \right \rangle
		\end{align*}
		Note that, instead of using the formulas for the lifts, one can also directly verify that the given maps are indeed cartesian. Since for $\psi$ this is straightforward to see we only discuss the case of $\pi': \VertArr_\pi(E) \fibarr B$. Consider probing maps as indicated in:
		\[\begin{tikzcd}
			d && {v^*\,e} && e \\
			{d'} && {v^*\,e'} && {e'} \\
			{a''} && a && a \\
			{a'''} && {a'} && {a'} \\
			b && b && b
			\arrow["v"{description}, from=5-3, to=5-5]
			\arrow["{v'}"{description}, from=5-1, to=5-3]
			\arrow["{v'v}"{description}, curve={height=18pt}, from=5-1, to=5-5]
			\arrow[Rightarrow, no head, from=4-3, to=4-5, shorten <=9pt, shorten >=9pt]
			\arrow["{m'}"{description}, dashed, from=4-1, to=4-3]
			\arrow["m"{description, pos=0.7}, curve={height=18pt}, from=4-1, to=4-5]
			\arrow["{u'}"', from=3-1, to=4-1]
			\arrow["u"', from=3-3, to=4-3]
			\arrow[Rightarrow, no head, from=3-3, to=3-5, shorten <=9pt, shorten >=9pt]
			\arrow["u", from=3-5, to=4-5]
			\arrow["{f'}"', from=1-1, to=2-1]
			\arrow["{g'}"{description}, from=2-1, to=2-3]
			\arrow[""{name=0, anchor=center, inner sep=0}, "g"{description}, from=1-1, to=1-3]
			\arrow["{k :\jdeq \mathrm{fill}}"', dashed, from=1-3, to=2-3]
			\arrow["\ell", from=1-3, to=1-5]
			\arrow[""{name=1, anchor=center, inner sep=0}, "{\ell'}", from=2-3, to=2-5]
			\arrow["f", from=1-5, to=2-5]
			\arrow[""{name=2, anchor=center, inner sep=0}, "{h'}"{description}, curve={height=18pt}, from=2-1, to=2-5]
			\arrow[""{name=3, anchor=center, inner sep=0}, "h"{description}, curve={height=-18pt}, from=1-1, to=1-5]
			\arrow[Rightarrow, no head, from=2-3, to=1-5, shorten <=9pt, shorten >=9pt]
			\arrow["{(?)}"{description}, draw=none, from=2-1, to=1-3]
			\arrow[shorten <=9pt, shorten >=9pt, Rightarrow, no head, from=2, to=1]
			\arrow[shorten <=8pt, shorten >=8pt, Rightarrow, no head, from=0, to=3]
			\arrow["m"{description}, dashed, from=3-1, to=3-3]
			\arrow["m"{description, pos=0.7}, curve={height=-16pt}, from=3-1, to=3-5, crossing over]
		\end{tikzcd}\]
	By the property of the $\pi$-cartesian lifts the two triangles and the right hand square commute as indicated. For the square in question on the left hand side we employ a line of reasoning familiar from fibered $1$-category theory: to give a homotopy $kg = g'f'$ it suffices to show that the $\pi$-cartesian arrow $\ell'$ equalizes both composite arrows. Note that from a path $\ell'k = f\ell$ we obtain a chain of homotopies
	\[ (\ell' k)g = (f\ell)g = fh = h'f' = (\ell' g')f'\]
	as desired. Hence the whole diagram $(?)$ commutes.
	
	Now, the fibered transport functor $\mu:F \to \VertArr_\pi(E)$ is cartesian if and only if it maps $\psi$-cartesian arrows to $\pi'$-cartesian arrows. Its action on $\psi$-cartesian arrows is given by
	\begin{align*}
			& \mu_v(\psi^*(v,\pair{u}{e}))  = \mu_v\big(\pair{\id_a}{\id_{a'}}:u \rightrightarrows_A u, \pi^*(v,e)\big) \\
		= & \left \langle \pair{\id_a}{\id_{a'}}:u \rightrightarrows_A u, \big \langle \pi^*(v,e), \tyfill^\xi_{\xi_!(u,v^*e)}(\xi_!(u,e) \circ \pi^*(v,e))\big \rangle :\xi_!(u,v^*e) \rightrightarrows^P \xi_!(u,e) \right \rangle,
	\end{align*}
	for $u:a \to_A a'$, $v:b \to_B  b'$, $e:P(a,b)$.
	Conversely, $\pi'$-cartesian lifts of $\mu$-images are of the form
	\begin{align*}
			& (\pi')^*(v,\mu_b(u,e))  = (\pi')^*(v, \pair{u}{\xi_!(u,e)}) \\
		= &	\left \langle \pair{\id_a}{\id_{a'}} : u \rightrightarrows_A u, \pair{\pi^*(v,e)}{\pi^*(v,u_!\,e)}: \tyfill^\pi_{\pi^*(v,u_!\,e)}(\xi_!(u,e) \circ \pi^*(v,e)) \rightrightarrows^P \xi_!(u,e)\right \rangle.
	\end{align*}
	But having an identification between those squares is exactly equivalent to the commutation condition, by~\Cref{prop:comm-lifts}.

	\begin{figure}
		\[\begin{tikzcd}
			{v^*\,e} & e & {v^*\,e} & e &&& e & {v^*\,e} & e \\
			&& {u_!\,v^*\,e} & {u_!\,e} &&& {u_!\,e} & {v^*\,u_!\,e} & {u_!\,e} \\
			a & a & a & a &&& a & a & a \\
			{a'} & {a'} & {a'} & {a'} &&& {a'} & {a'} & {a'} \\
			b & {b'} & b & {b'} && b & {b'} & b & {b'}
			\arrow["u", from=3-1, to=4-1]
			\arrow[Rightarrow, no head, from=4-1, to=4-2]
			\arrow[Rightarrow, no head, from=3-1, to=3-2]
			\arrow[""{name=0, anchor=center, inner sep=0}, "u"', from=3-2, to=4-2]
			\arrow["v", from=5-1, to=5-2]
			\arrow["{\pi^*(v,e)}", from=1-1, to=1-2, cart]
			\arrow["{\pi^*(v,e)}", from=1-3, to=1-4, cart]
			\arrow["{\pi_!(u,v^*\,e)}"' swap, from=1-3, to=2-3, cocart]
			\arrow["{\xi_!(u,e)}", from=1-4, to=2-4, cocart]
			\arrow["{\mathrm{fill}}", dashed, from=2-3, to=2-4]
			\arrow["v", from=5-3, to=5-4]
			\arrow[""{name=1, anchor=center, inner sep=0}, "u", from=3-3, to=4-3]
			\arrow[Rightarrow, no head, from=4-3, to=4-4]
			\arrow[Rightarrow, no head, from=3-3, to=3-4]
			\arrow["u"', from=3-4, to=4-4]
			\arrow[Rightarrow, dotted, no head, from=1-1, to=3-1]
			\arrow[Rightarrow, dotted, no head, from=1-2, to=3-2]
			\arrow["v", from=5-6, to=5-7]
			\arrow[""{name=2, anchor=center, inner sep=0}, "u", from=3-8, to=4-8]
			\arrow[Rightarrow, no head, from=4-8, to=4-9]
			\arrow[Rightarrow, no head, from=3-8, to=3-9]
			\arrow["u"', from=3-9, to=4-9]
			\arrow["{\mathrm{fill}}"', dashed, from=1-8, to=2-8]
			\arrow["{\pi^*(v,u_!\,e)}", from=2-8, to=2-9, cart]
			\arrow["{\xi_!(u,e)}" swap, from=1-9, to=2-9, cocart]
			\arrow["{\pi^*(v,e)}", from=1-8, to=1-9, cart]
			\arrow["v", from=5-8, to=5-9]
			\arrow[""{name=3, anchor=center, inner sep=0}, "u"', from=3-7, to=4-7]
			\arrow["{\xi_!(u,e)}"', from=1-7, to=2-7, cocart]
			\arrow["{\mu_v}", shorten <=7pt, shorten >=7pt, maps to, from=0, to=1]
			\arrow["{(\pi')^*(v,-)}", shorten <=7pt, shorten >=7pt, maps to, from=3, to=2]
		\end{tikzcd}\]
		\caption{Cartesianness of fibered lifting functor}\label{fig:cart-ness-of-mu}
	\end{figure}
	\item[$\ref{it:char-two-sid-ii} \iff \ref{it:char-two-sid-iii'}$:] This follows from the characterization~\Cref{prop:char-cocart-fib-in-cart-fib} of cocartesian families in cartesian families, namely the equivalence of the conditions from~\Cref{it:gen-lift-comm-i} and~\Cref{it:gen-lift-comm-ii}.
	\item[$\ref{it:char-two-sid-i} \iff \ref{it:char-two-sid-iii}$:] This is dual to the previous case.
	\item[$\ref{it:char-two-sid-ii} \iff \ref{it:char-two-sid-iv}$:] We adapt the proof of~\cite[Theorem~7.1.4]{RV21}.
	First, observe that we have
	\[ \comma{\xi}{A} \times_E \comma{B}{\pi} \simeq \sum_{\substack{a:A \\ b:B}} \comma{a}{A} \times \comma{B}{b} \times P(a,b).\]
	Now, the assumption of $\varphi$ being cocartesian on the left and cartesian on the right is equivalent to the existence of the fibered adjoints $\tau_\xi$ and $\tau^\pi$. We will only write down the steps starting with $\tau_\xi$, since the case of $\tau^\pi$ is dual. By pulling back the fibered LARI adjunction $\tau_\xi \dashv_{A \times B} \iota_\xi$, we obtain
	\[\begin{tikzcd}
		{B \downarrow \pi} & {} &&& E \\
		& {\xi \downarrow A \times_E B \downarrow \pi} &&&& {\xi \downarrow A} \\
		{A \times B^{\Delta^1}} &&&& {A \times B}
		\arrow[from=1-1, to=1-5]
		\arrow["{\lambda u,v,e.\langle u1,v\rangle}"{description}, from=2-2, to=3-1]
		\arrow[""{name=0, anchor=center, inner sep=0}, "{\iota_\xi}"{description}, from=1-5, to=2-6]
		\arrow["\varphi"{description, pos=0.3}, from=1-5, to=3-5]
		\arrow["{\langle \partial_1, \pi \partial_0\rangle}"{description}, from=2-6, to=3-5]
		\arrow["{\id_A \times \partial_1}"{description}, from=3-1, to=3-5]
		\arrow["{\lambda a',v,e.\langle a',v,e \rangle}"{description}, from=1-1, to=3-1]
		\arrow[""{name=1, anchor=center, inner sep=0}, "{\tau_\xi}"{description}, curve={height=18pt}, dotted, from=2-6, to=1-5]
		\arrow["\lrcorner"{anchor=center, pos=0.125}, draw=none, from=2-2, to=3-5]
		\arrow[""{name=2, anchor=center, inner sep=0}, "{\langle \tau_\xi \circ \partial_1, \id \rangle}"{description}, from=1-1, to=2-2]
		\arrow[""{name=3, anchor=center, inner sep=0}, "\ell"{description}, curve={height=24pt}, dotted, from=2-2, to=1-1]
		\arrow["\lrcorner"{anchor=center, pos=0.125}, shift right=3, draw=none, from=1-2, to=3-5]
		\arrow["\dashv"{anchor=center, rotate=-136}, draw=none, from=1, to=0]
		\arrow["\dashv"{anchor=center, rotate=-127}, draw=none, from=3, to=2]
		\arrow[from=2-2, to=2-6, crossing over]
	\end{tikzcd}\]
	with fibrant replacements
	\begin{align*}
		\xi \downarrow A & \simeq \sum_{\substack{a':A \\ b:B}} \sum_{u:A \downarrow a'} P(u0,b), &  E & \simeq \sum_{\substack{a':A \\ b:B}} P(a',b), \\
		A \times B^{\Delta^1} & \simeq \sum_{\substack{a':A \\ b:B}} B\downarrow b, &  B \downarrow \pi & \simeq \sum_{\substack{a':A \\ b:B}} B \downarrow b \times P(a',b). 
	\end{align*}
	Postcomposition with $\id_A : A \times B^{\Delta^1} \to A \times B$ preserves the fibered adjunction, yielding as desired:
	\[\begin{tikzcd}
		{B \downarrow \pi} &&&& {\xi \downarrow A \times_E B \downarrow \pi} \\
		\\
		&& {A \times B^{\Delta^1}} \\
		\\
		&& {A \times B}
		\arrow[""{name=0, anchor=center, inner sep=0}, "{\langle \tau^\xi \circ \partial_1 ,\id \rangle}"{description}, curve={height=12pt}, from=1-1, to=1-5]
		\arrow[from=1-1, to=3-3]
		\arrow[from=1-5, to=3-3]
		\arrow["{\mathrm{id}_A \times \partial_0}"{description}, two heads, from=3-3, to=5-3]
		\arrow["{\langle \xi \partial_1, \partial_0\rangle}"{description}, two heads, from=1-1, to=5-3]
		\arrow["{\langle \partial_1, \partial_0\rangle}"{description}, two heads, from=1-5, to=5-3]
		\arrow[""{name=1, anchor=center, inner sep=0}, "\ell"{description}, curve={height=12pt}, dashed, from=1-5, to=1-1]
		\arrow["\dashv"{anchor=center, rotate=-90}, draw=none, from=1, to=0]
	\end{tikzcd}\]
	This, together with the dual case, yields the claimed adjoints in the fibered square of~\Cref{it:char-two-sid-iv}.
	
	In sum, this is equivalent to $\pi:E \fibarr B$ being a cartesian fibration, the fibered functor
	\[\begin{tikzcd}
		{\comma{\xi}{A}} && E \\
		& B
		\arrow["{\tau_\xi}", from=1-1, to=1-3]
		\arrow["{\pi\partial_0}"', two heads, from=1-1, to=2-2]
		\arrow["\pi", two heads, from=1-3, to=2-2]
	\end{tikzcd}\]
	being a cartesian functor and a cocartesian fibration sliced over $B$. The invertibility of the mate is then equivalent to the functor
	\[\begin{tikzcd}
		E && {A \times B} \\
		& B
		\arrow["\varphi", from=1-1, to=1-3]
		\arrow["\pi"', two heads, from=1-1, to=2-2]
		\arrow["q", two heads, from=1-3, to=2-2]
	\end{tikzcd}\]
	being cartesian.
	
	By~\Cref{thm:char-cocart-fun}, this is equivalent to the mate of the identity of
	\[\begin{tikzcd}
		{\comma{\xi}{A}} && E \\
		{\comma{\xi}{A} \times_E \comma{B}{\pi}} && {\comma{B}{\pi}}
		\arrow[from=1-1, to=1-3]
		\arrow[from=1-1, to=2-1]
		\arrow[from=2-1, to=2-3]
		\arrow[from=1-3, to=2-3]
	\end{tikzcd}\]
	being invertible, as
	\[ \comma{B}{\pi\partial_0} \simeq \comma{\xi}{A} \times_E \comma{B}{\pi}.\]
	\end{description}

\end{proof}

			\section{Two-sided cartesian functors and closure properties}
			\begin{definition}[Two-sided cartesian functors]
Let $P,Q:A \to B \to \UU$ be two-sided cartesian families. A fibered map $h: P \to_{A \times B} Q$ is called \emph{two-sided cartesian functor} (or simply \emph{cartesian}) if it constitutes a cocartesian functor $h: P_B \to_A Q_B$ and a cartesian functor $h:P_A \to_B Q_A$.
\end{definition}

An immediate reformulation is that, for all $u:a \to_A a'$ and $v:b' \to_B v$, $e,d:P(a,b)$, we have identities\footnote{In particular, again the types of each of these identities is a proposition, hence so is their product.}
\[ h_{u,b}(P_!(u,b,e)) = Q_!(u,b,h_{a,b}(e)), \quad h_{a,v}(P^*(a,v,e)) = Q^*(a,v,h_{a,b}(e)). \]

We will state versions of the closure properties \wrt~to different bases as well as the sliced or relative versions where the base stays fixed throughout.

\subsection{Composition and whiskering}

\begin{proposition}[Composition stability of two-sided cartesian functors]\label{prop:2s-fun-comp}
	Let $A,B,C,D,S$, and $T$ be Rezk types. Assume given two-sided cartesian families $P: A \to B \to \UU$, $Q: C \to D \to \UU$, and $R: S \to T \to \UU$, as well as two-sided cartesian functors $h:P \to_{A \times B,C \times D} Q$, $k:Q \to_{C \times D,S \times T} R$. Then the composite fibered functor $k \circ h: P \to_{A \times B,S \times T} R$ is a two-sided cartesian functor as well.
\end{proposition}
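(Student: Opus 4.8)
The plan is to reduce the statement to the already-established composition stability of one-sided co-/cartesian functors, applied separately to the two legs of the two-sided families. Recall that a two-sided cartesian functor is, \emph{by definition}, a pair of conditions: it is a cocartesian functor on the left legs and a cartesian functor on the right legs. So I would first unfold the definition for both $h$ and $k$: the hypothesis that $h: P \to_{A\times B, C\times D} Q$ is two-sided cartesian means that $h$ restricts to a cocartesian functor $h: P_B \to_{A,C} Q_B$ between the (cocartesian) left-leg families and to a cartesian functor $h: P^A \to_{B,D} Q^A$ between the (cartesian) right-leg families, and similarly for $k$.

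The key observation is that the formation of legs is functorial and compatible with composition of fibered maps: the composite $k \circ h$ has left-leg component $(k\circ h)_B = k_B \circ h_B$ and right-leg component $(k\circ h)^A = k^A \circ h^A$, since the leg projections simply forget the $B$- resp.\ $A$-coordinate and composition is computed coordinatewise after fibrant replacement. Granting this, the left-leg component of $k\circ h$ is the composite of two cocartesian functors, and the right-leg component is the composite of two cartesian functors. I would then invoke the closure-under-composition property for cocartesian functors (the horizontal/vertical composition closure recorded in \Cref{prop:cocart-cosm-closure}, itself a consequence of the Chevalley characterization \Cref{thm:char-cocart-fun}) to conclude that $k_B \circ h_B$ is cocartesian, and the dual statement for cartesian functors to conclude that $k^A \circ h^A$ is cartesian. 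Putting these two facts together and re-reading the definition gives exactly that $k\circ h$ is a two-sided cartesian functor.

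Concretely, I would verify the functoriality-of-legs claim at the level of the explicit identities characterizing preservation of lifts, using the reformulation given just after the definition: $h$ being two-sided cartesian amounts to the propositional identities $h_{u,b}(P_!(u,b,e)) = Q_!(u,b,h_{a,b}(e))$ and $h_{a,v}(P^*(a,v,e)) = Q^*(a,v,h_{a,b}(e))$, and likewise for $k$. Chaining these identities — applying $k$'s preservation identity to the image under $h$, then $h$'s preservation identity — yields the corresponding identities for $k\circ h$, for both the cocartesian lifts $(\cdot)_!$ and the cartesian lifts $(\cdot)^*$. This is the cleanest route because it avoids any appeal to the sliced/fibered machinery and works directly with the transport operations.

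The main obstacle I anticipate is purely bookkeeping rather than conceptual: one must be careful that the leg families of the \emph{composite} genuinely decompose as the composite of leg families, which requires tracking the base-change coordinates through three different pairs of Rezk types $A\times B$, $C\times D$, $S\times T$ and checking that the various fibrant replacements are compatible. Since all of the conditions in play (being a cocartesian arrow, being a cartesian arrow, and hence being a two-sided cartesian functor) are propositions, there are no coherence obligations to discharge beyond exhibiting the required identities, so once the decomposition of legs is set up correctly the argument closes immediately by the two one-sided composition-closure results.
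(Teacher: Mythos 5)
Your proposal is correct and matches the paper's argument, which is a one-line appeal to the fact that cocartesian and cartesian functors are each closed under composition (citing \cite[Proposition~5.3.6]{BW21}); your additional verification via the explicit lift-preservation identities is just a spelled-out version of the same reduction.
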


\begin{proof}
	This follows since cartesian and cocartesian functors are both closed under composition, \cf~\cite[Proposition~5.3.6, Item 1]{BW21}.
\end{proof}

\begin{corollary}[Composition stability of two-sided cartesian functors in a slice]\label{prop:2s-fun-comp-sl}
	Let $P,Q,R:A \to B \to \UU$ be two-sided cartesian families, and $h:P \to_{A \times B} Q$, $k:Q \to_{A \times B} R$ cartesian functors. Then the composite fibered functor $k \circ h: P \to_{A \times B} R$ is a two-sided cartesian functor as well.
\end{corollary}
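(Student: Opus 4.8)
The plan is to reduce this corollary directly to the preceding \Cref{prop:2s-fun-comp} by taking all the base Rezk types to coincide. The key observation is that \Cref{prop:2s-fun-comp} was already stated for the \emph{non-sliced} situation, where the three families live over possibly distinct products $A \times B$, $C \times D$, and $S \times T$, and the functors are allowed to move between these bases. The sliced version here is simply the special case in which $A \jdeq C \jdeq S$ and $B \jdeq D \jdeq T$, so that all three families $P$, $Q$, $R$ are fibered over one fixed product $A \times B$, and the functors $h, k$ are fibered over the identity of $A \times B$.

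First I would note that a cartesian (in the two-sided sense) functor $h : P \to_{A \times B} Q$ over a fixed base is by definition simultaneously a cocartesian functor $h : P_B \to_A Q_B$ and a cartesian functor $h : P_A \to_B Q_A$; likewise for $k$. Composing the two given fibered functors over the fixed base $A \times B$ yields a fibered functor $k \circ h : P \to_{A \times B} R$, and I must check that this composite is again two-sided cartesian. By the defining condition, it suffices to verify that $k \circ h$ is a cocartesian functor on the left legs, i.e.\ $k \circ h : P_B \to_A R_B$, and a cartesian functor on the right legs, i.e.\ $k \circ h : P_A \to_B R_A$. Each of these follows immediately from the closure of cocartesian functors (respectively cartesian functors) under composition, exactly as invoked in the proof of \Cref{prop:2s-fun-comp}, which cites \cite[Proposition~5.3.6]{BW21}.

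Concretely, since $h$ and $k$ are cocartesian on the left legs, their composite preserves $P_B$-cocartesian arrows (first $h$ sends them to $Q_B$-cocartesian arrows, then $k$ sends those to $R_B$-cocartesian arrows); dually, since $h$ and $k$ are cartesian on the right legs, their composite preserves $P_A$-cartesian arrows by the same two-step argument. These two preservation properties are precisely the two clauses in the definition of a two-sided cartesian functor, so $k \circ h$ qualifies. Alternatively, one can phrase the whole thing as the reflexivity instance of \Cref{prop:2s-fun-comp}: instantiate that proposition at $A \jdeq C \jdeq S$, $B \jdeq D \jdeq T$, with all base maps taken to be identities, and the conclusion is verbatim the statement of the corollary.

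I do not expect any real obstacle here; the corollary is a genuine specialization, and the only thing to be careful about is bookkeeping of which leg is cocartesian and which is cartesian, and confirming that preservation of the relevant class of arrows is indeed closed under composition in both variances. Since being a two-sided cartesian functor is a propositional condition (each identity appearing in the reformulation lives in a type that is a proposition, as remarked just after the definition), there is no coherence data to track, and the proof is complete once the two composite-preservation facts are assembled.
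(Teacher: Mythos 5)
Your proposal is correct and matches the paper's intent: the corollary is stated without proof precisely because it is the instance of \Cref{prop:2s-fun-comp} with $A \jdeq C \jdeq S$, $B \jdeq D \jdeq T$ and identity base maps, resting on closure of co-/cartesian functors under composition as in \cite[Proposition~5.3.6]{BW21}. Nothing further is needed.
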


\begin{proposition}[Whiskering with co-/cartesian fibrations~\protect{\Cite[Lemma~7.2.5]{RV21}}]\label{prop:2s-cart-comp-prod-fib}
	Let $A,B,C,D$ be Rezk types. Assume $\varphi \defeq \pair{\xi}{\pi}:E \fibarr A \times B$ is a two-sided cartesian fibration. If $k:A \fibarr C$ is a cocartesian fibration and $m:B \fibarr D$ is a cartesian fibration, then the composite
	\[\begin{tikzcd}
		E && {A \times B} && {C \times D}
		\arrow["{\langle \xi,\pi \rangle}", two heads, from=1-1, to=1-3]
		\arrow["{k \times m}", two heads, from=1-3, to=1-5]
	\end{tikzcd}\]	
	is a two-sided cartesian fibration as well.
\end{proposition}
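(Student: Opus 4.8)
The plan is to verify the two defining conditions of a two-sided cartesian fibration for the composite $(k \times m) \circ \langle \xi, \pi \rangle$, reducing each to closure properties of co-/cartesian fibrations and functors that have already been established. By \Cref{cor:cocart-left-and-cart-right}, the two-sidedness of $\varphi = \langle \xi, \pi \rangle$ splits into: $\xi : E \fibarr A$ is cocartesian, $\pi : E \fibarr B$ is cartesian, together with the compatibility that cocartesian and cartesian lifts commute (\Cref{def:2s-cart}, \Cref{it:lifts-commute}). First I would handle the two legs of the composite span separately. The composite on the left is $k \circ \xi : E \fibarr C$; since both $k$ and $\xi$ are cocartesian fibrations, this is a cocartesian fibration by closure of cocartesian fibrations under composition (\Cref{prop:cocart-cosm-closure}). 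Dually, the composite on the right is $m \circ \pi : E \fibarr D$, a cartesian fibration by the dual closure property. This establishes that the composite two-sided family is cocartesian on the left and cartesian on the right.

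The key subtlety is that the cocartesian lifts for $k \circ \xi$ are \emph{not} the same as the $\xi$-cocartesian lifts: a $(k \circ \xi)$-cocartesian arrow factors as a $\xi$-cocartesian lift followed by a lift of a $k$-cocartesian arrow in $A$, and analogously on the cartesian side. Thus the main step is to show that these new composite lifts still commute. Concretely, fix $e : P(a,b)$ and arrows in the base $C \times D$; I would pull these back through $k$ and $m$ to arrows in $A \times B$ (using the cocartesian/cartesian transport of $k$ and $m$), lift them into $E$ using the already-commuting $\xi$- and $\pi$-lifts, and then reassemble. The commutation square for the composite decomposes into a $3 \times 3$ grid of commutation squares: the central one is the commutation hypothesis for $\varphi$ itself (\Cref{def:2s-cart}, \Cref{it:lifts-commute}), while the surrounding squares express that $k$-cocartesian transport in $A$ and $m$-cartesian transport in $B$ are themselves compatible with the $\xi$- and $\pi$-lifts. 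These surrounding compatibilities follow because $\xi$ is a cartesian fibration sliced appropriately (via \Cref{prop:char-cocart-fib-in-cart-fib}) and because cocartesian functors commute with cocartesian lifts up to homotopy (\Cref{prop:nat-cocartlift-arr}).

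Rather than grinding through this grid directly, the cleaner route — and the one I would actually carry out — is to invoke the fibered-adjoint characterization from \Cref{thm:char-two-sid}, specifically the equivalence $\ref{it:char-two-sid-i} \iff \ref{it:char-two-sid-ii}$. Two-sidedness of $\varphi$ gives that the map $\pi$ is cartesian, $\varphi$ is a cartesian functor and a sliced cocartesian fibration over $B$, and the fibered LARI $\chi_B$ is a cartesian functor. I would transport each of these three conditions along $k \times m$: the sliced cocartesian fibration structure is preserved because sliced cocartesian fibrations are closed under the relevant whiskering (combining \Cref{prop:clos-sl-cocart-fib-comp} and \Cref{prop:clos-sl-cocart-fib-pb}, after rewriting the whiskering as a composite of a pullback and a sliced composition), and the cartesianness of the fibered functors $\varphi$ and $\chi_B$ is preserved under composition with $k \times m$ by closure of cartesian functors under composition (\Cref{prop:cocart-cosm-closure}, dualized).

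\textbf{Main obstacle.} The hard part will be bookkeeping the interaction between the \emph{sliced} structure over $B$ and the change of base induced by $m : B \fibarr D$: the characterization in \Cref{thm:char-two-sid} is phrased relative to a fixed base, whereas whiskering by $m$ changes that base from $B$ to $D$. I expect the crux to be showing that the fibered LARI $\chi_B$, when whiskered, still witnesses sliced cocartesianness \emph{over $D$} and remains a cartesian functor — this is where the commutation of the composite lifts is really encoded, and where one must carefully track that $m$-cartesian transport does not disturb the LARI adjunction. Once this compatibility of the sliced LARI with the base change along $m$ is secured, the remaining verifications are routine applications of the cited closure properties.
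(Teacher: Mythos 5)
Your proposal correctly identifies the right characterization theorem (\Cref{thm:char-two-sid}, the ``cocartesian fibration in cartesian fibrations'' criterion) and the right closure lemmas, but it stops exactly at the point where the actual work happens. You name the crux yourself: the sliced characterization is relative to a fixed base $B$, while whiskering with $m:B\fibarr D$ changes that base, and you defer showing that the fibered LARI survives this base change. That deferred step is not routine bookkeeping --- it is the content of the proposition --- so as written the proposal has a genuine gap.

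The paper dissolves this obstacle with one idea you are missing: factor $k\times m$ as $(\id_C\times m)\circ(k\times\id_B)$ and whisker in two stages, each of which keeps the relevant slicing base fixed. For the first stage, observe that $k\times\id_B:A\times B\fibarr C\times B$ is itself a two-sided cartesian fibration (it is the product of the two-sided fibrations $k$ and $\id_B$, by \Cref{prop:2s-cart-closed-pi}); hence both $\varphi:E\to_B A\times B$ and $k\times\id_B:A\times B\to_B C\times B$ are cocartesian fibrations in cartesian fibrations \emph{over the same base $B$}, and their composite $E\fibarr C\times B$ is one too by \Cref{prop:clos-sl-cocart-fib-comp}. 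No base change occurs in this step. The second stage with $\id_C\times m$ is handled dually, slicing over $C$ (via \Cref{it:char-two-sid-iii} rather than \Cref{it:char-two-sid-ii}), and again the slicing base stays fixed. Your alternative suggestion of rewriting the whiskering as ``a pullback followed by a sliced composition'' does not obviously work here --- postcomposition with $k\times m$ is not a pullback construction --- whereas the two-stage factorization reduces everything to closure properties you already cite. Also note that your opening step (composing the legs $k\circ\xi$ and $m\circ\pi$ separately) only yields cocartesianness on the left and cartesianness on the right; it says nothing about the commutation condition, which is precisely what the factorization argument delivers for free.
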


\begin{proof}
	We argue as in~\cite[Lemma~7.2.5]{RV21}. By the characterization of two-sided cartesian fibrations via cocartesian fibrations in cartesian fibrations~ \Cref{thm:char-two-sid}, \Cref{it:char-two-sid-ii}, we reason as follows. Since $k:A \fibarr C$ and $\id_B: B \fibarr B$ both are two-sided cartesian fibrations as well also their cartesian product is, by~\Cref{prop:2s-cart-closed-pi}. Hence, the following fibered maps are cocartesian fibrations in cartesian fibrations
	\[\begin{tikzcd}
		E && {A \times B} & {A \times B} && {C \times B} \\
		& B &&& B
		\arrow["\varphi", from=1-1, to=1-3]
		\arrow["\pi"', two heads, from=1-1, to=2-2]
		\arrow["q", two heads, from=1-3, to=2-2]
		\arrow["{k \times \id_B}", from=1-4, to=1-6]
		\arrow["q"', two heads, from=1-4, to=2-5]
		\arrow[two heads, from=1-6, to=2-5]
	\end{tikzcd}\]
	and so is their horizontal composite $(k \times \id_B) \circ \varphi: E \fibarr_B C \times B$.
	
	One can argue similarly for the case $\id_C: C \fibarr C$ and $m:B \fibarr D$, which establishes the claim.
\end{proof}

\begin{figure}
	\[\begin{tikzcd}
		P & e & {u_*\,e} & {e'} && d & d & {d'} & Q \\
		&& B & b & b & {b'} \\
		A & a & {a'} & {a''} && c & c & {c'} & C
		\arrow["u", from=3-2, to=3-3]
		\arrow["{\forall \,u'}", from=3-3, to=3-4]
		\arrow["g"', dashed, from=1-3, to=1-4]
		\arrow["{\id_d}"', Rightarrow, no head, from=1-6, to=1-7]
		\arrow["r"', dashed, from=1-7, to=1-8]
		\arrow["{\id_b}", Rightarrow, no head, from=2-4, to=2-5]
		\arrow["v", dashed, from=2-5, to=2-6]
		\arrow["{\id_c}", Rightarrow, no head, from=3-6, to=3-7]
		\arrow["{\forall \,w}", from=3-7, to=3-8]
		\arrow["{u'u}"{description}, curve={height=18pt}, from=3-2, to=3-4]
		\arrow["{\forall\,v}"{description}, curve={height=18pt}, from=2-4, to=2-6]
		\arrow["{\forall \,r}"{description}, curve={height=-18pt}, from=1-6, to=1-8]
		\arrow["{\forall \,h}"{description}, curve={height=-18pt}, from=1-2, to=1-4]
		\arrow["f"', from=1-2, to=1-3, cocart]
		\arrow["w"{description}, curve={height=18pt}, from=3-6, to=3-8]
	\end{tikzcd}\]
	\caption{Cocartesian lift and universal property in the span composite}
	\label{fig:cocart-lift-spcomp}
\end{figure}

\begin{proposition}[Span composition of two-sided cartesian fibrations, \cf~\protect{\cite[Proposition~7.2.6]{RV21}}]
	Let $P:A\to B \to \UU$, $Q:B \to C \to \UU$ be two-sided cartesian families over Rezk types $A$, $B$, $C$. Then the family defined by \emph{span composition}
	\[ Q \spancomp P \defeq \lambda a,c.\sum_{b:B} P\,a\,b \times Q\,b\,c: A \to C \to \UU \]
	is also two-sided cartesian.
	
	In particular, the cocartesian and cartesian lifts, resp., are given as follows: For $u:a \to_A a'$, $w:c'\to_C c$, $b:B$, $e:P(a,b)$, and $d:Q(b,c)$ we have
	\begin{align*}
		(Q \spancomp P)_!(u:a \to_A a', \id_c,\angled{b,e,d}) & \defeq \angled{u,\id_c,\id_b, P_!(u,b,e), \id_d}, \\
		(Q \spancomp P)^*(\id_a, w,\angled{b,e,d}) & \defeq \angled{\id_a,w,\id_b, \id_e, Q^*(b,w,d)}.
	\end{align*}
\end{proposition}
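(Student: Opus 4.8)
The plan is to verify \Cref{def:2s-cart} directly, using the explicit formulas for the lifts stated in the proposition. After fibrant replacement I identify the total type of $Q \spancomp P$ with $\sum_{a:A,c:C}\sum_{b:B} P(a,b) \times Q(b,c)$, so that a dependent arrow in $Q\spancomp P$ from $\angled{a,c,b,e,d}$ to $\angled{a',c',b',e',d'}$ unfolds into a quintuple $\angled{\alpha,\gamma,\beta,f,g}$ consisting of arrows $\alpha:a\to_A a'$, $\gamma:c\to_C c'$, $\beta:b\to_B b'$ together with dependent arrows $f:e\to^P_{\pair{\alpha}{\beta}} e'$ and $g:d\to^Q_{\pair{\beta}{\gamma}} d'$. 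The three things to check are then that $Q\spancomp P$ is cocartesian on the left, cartesian on the right, and that cocartesian and cartesian lifts commute; this parallels~\cite[Proposition~7.2.6]{RV21}.

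For cocartesianness on the left I would take the candidate lift of $u:a\to_A a'$ with respect to $\angled{b,e,d}$ to be $\ell \defeq \angled{u,\id_c,\id_b,P_!(u,b,e),\id_d}$, which is legitimate because $P$ is cocartesian on the left and hence $P_!(u,b,e):e\cocartarr_{\pair{u}{b}} u_!\,e$ keeps the $B$-coordinate fixed. To establish the universal property, given $u':a'\to_A a''$ and an arrow $H\defeq\angled{u'u,\gamma,\beta,h,r}$ out of $\angled{a,c,b,e,d}$ lying over $u'u$, I decompose the space of fillers componentwise: the $C$- and $B$-components are forced to be $\gamma$ and $\beta$ (composing with $\id_c$, $\id_b$), and the $Q$-component is forced to be $r$ (composing with $\id_d$), each contributing a based-path space which is contractible, while the $P$-component contributes precisely the filler space of $P_!(u,b,e)$, contractible by cocartesianness of the left leg of $P$. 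As a product of contractible types the total filler space is contractible, cf.~\Cref{fig:cocart-lift-spcomp}. Verticality of these lifts over the right leg is immediate, since the $C$-component $\id_c$ is an isomorphism.

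The case of cartesianness on the right is entirely dual, using $Q^*(b,w,d):w^*\,d\cartarr_{\pair{b}{w}} d$ for $w:c'\to_C c$ and the candidate $\angled{\id_a,w,\id_b,\id_e,Q^*(b,w,d)}$. For the commutation condition I would observe that cocartesian transport along $u:a\to_A a'$ modifies only the $P$-coordinate while fixing $b$, and cartesian transport along $w:c'\to_C c$ modifies only the $Q$-coordinate while fixing $b$; explicitly $u_!\angled{b,e,d}\jdeq\angled{b,u_!\,e,d}$ and $w^*\angled{b,e,d}\jdeq\angled{b,e,w^*d}$, whence both $u_!w^*\angled{b,e,d}$ and $w^*u_!\angled{b,e,d}$ compute to $\angled{b,u_!\,e,w^*d}$ on the nose. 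Thus the comparison arrow $\kappa$ of \Cref{lem:comp-cart-cocart-transp} is an identity, the lifts commute in the sense of \Cref{def:2s-cart}, and by \Cref{prop:comm-lifts} the compatibility criterion is met.

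The main obstacle is organizational rather than conceptual: the careful unfolding of the span total type's hom-types into their five components, and the verification that in each case the universal property reduces to contractibility in the single active ($P$- resp.~$Q$-) coordinate with the remaining coordinates contributing only based-path spaces. The crucial structural input making this decomposition clean is that the two-sidedness of $P$ and $Q$ forces the relevant cocartesian and cartesian lifts to be vertical over the shared middle type $B$ (keeping $b$ fixed); without this verticality the $P$- and $Q$-transports would interact through $B$, and neither the lift formulas nor the commutation would hold as stated.
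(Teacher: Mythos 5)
Your proof is correct, but it takes a different route from the one the paper actually follows. The paper's primary argument is formal and modular: it exhibits the unstraightening of $Q \spancomp P$ as the pullback of $\varphi: E \fibarr A \times B$ along $\id_A \times \kappa$ followed by postcomposition with $\id_A \times \mu : A \times F \to A \times C$, and then simply invokes the closure of two-sided cartesian fibrations under pullback along products of maps (\Cref{prop:2s-cart-closed-pb}) and under whiskering with a product of a cocartesian and a cartesian fibration (\Cref{prop:2s-cart-comp-prod-fib}); the explicit lift formulas are then read off from the lifts in pullback and composite fibrations. You instead verify \Cref{def:2s-cart} by hand, decomposing the filler space of the candidate lift into based-path spaces in the inactive coordinates and the genuine filler space in the active one, and checking the commutation condition by computing that both iterated transports land on $\angled{b,u_!\,e,w^*d}$. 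The paper does sketch exactly your direct verification of the cocartesian universal property as an explicitly offered alternative (\cf~\Cref{fig:cocart-lift-spcomp}), so your argument is a completion of that sketch rather than a departure in substance. The trade-off: the formal route is shorter, reuses established closure properties, and gets the commutation condition for free because those closure properties preserve two-sidedness as a package; your direct route makes the lift formulas and the essential role of $B$-verticality transparent, at the cost of the componentwise contractibility bookkeeping and a separate (though easy) check that the comparison cell of \Cref{lem:comp-cart-cocart-transp} is invertible.
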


\begin{proof}
	We can argue on the level of fibrations just as in \cite[Proposition~7.2.6]{RV21}. Let $\varphi \defeq \pair{\xi}{\pi} \defeq \Un_{A,B}(P): E \fibarr A \times B$, $\psi \defeq \pair{\kappa}{\mu} \Un_{B,C}(Q): E \fibarr B \times C$. The unstraightening of $Q \spancomp P$ corresponds to the composite
	\[\begin{tikzcd}
		{E \times _B F} && {E \times F} && {A \times C}
		\arrow["{\langle q,p\rangle}", from=1-1, to=1-3]
		\arrow["{\langle \xi,\mu\rangle}", from=1-3, to=1-5]
		\arrow["{\psi \boxdot \varphi}"{description}, curve={height=-24pt}, from=1-1, to=1-5]
	\end{tikzcd}\]
	where $q:E \times_B F \to E$ and $p:E \times_B F \to F$ are the projections from the pullback object. Now, the map $\psi \spancomp \varphi:A \to C \to \UU$ is constructed by first taking the pullback
	\[\begin{tikzcd}
		{E \times_B F} && E \\
		{A \times F} && {A \times B}
		\arrow["{\id_A \times \kappa}"', from=2-1, to=2-3]
		\arrow[from=1-1, to=1-3]
		\arrow["\varphi", two heads, from=1-3, to=2-3]
		\arrow["{\langle \xi \circ q,p \rangle}"', two heads, from=1-1, to=2-1]
		\arrow["\lrcorner"{anchor=center, pos=0.125}, draw=none, from=1-1, to=2-3]
	\end{tikzcd}\]
	and then postcomposing the map on $\pair{\xi q}{p}:{E \times_B F} \fibarr A \times F$ with $\id_A \times \mu: A \times F \to A \times C$. Pullback along products of maps preserves two-sided cartesian fibrations by~\Cref{prop:2s-cart-closed-pb}, and so does postcomposition with the cartesian product of a cocartesian and a cartesian fibration by~\Cref{prop:2s-cart-comp-prod-fib}. Hence, the resulting map $\psi \spancomp \varphi: E \times_B F \fibarr A \times C$ is two-sided cartesian as well.
	
	The proclaimed description of the co-/cartesian lifts comes out of this construction, using the descriptions of the lifts from the constructions in~\cite[Subsections~3.2.4 and~5.3.3]{BW21}. Alternatively, one can verify the universal property directly, \cf~\Cref{fig:cocart-lift-spcomp}. \Eg, for the cocartesian case, given any $u':a'\to_A a''$, $w:c \to_C c'$, an arrow lying over the (component-wise) composite with domain $\angled{b,e,d}$ consists of some arrow $v:b \to b'$ and dependent arrows $f:e \to^P_{\pair{u'u}{v}} e'$, $r:d \to^Q_{\pair{v}{w}} d'$. By initiality, $v:\id_b \to v$ is the unique filler in the comma object $\comma{b}{B}$, and so is $r:\id_d \to r$, lying over $v:\id_b \to v$ and $w:\id_c \to c'$. By cocartesianness, we also find~\wrt~the data given the unique filler $g \defeq \cocartFill_{P_*(u,b,e)}(f)$ with $g \circ f = h$, for $f \defeq P_*(u,b,e):e \cocartarr^P_{\pair{u}{b}}u_*\,e$ in $P$ as desired. 
\end{proof}

\subsection{Pullback and reindexing}

\begin{proposition}[Pullback stability of two-sided cartesian families, \protect{\cf~\cite[Proposition~7.2.4]{RV21}}]\label{prop:2s-cart-closed-pb}
Let $P:A \to B \to \UU$ be a two-sided cartesian family over Rezk types $A$ and $B$. Then for any pair of maps $k:C \to A$, $m:D \to B$, the pullback family
\[ (k \times m)^*P: C \to D \to \UU \]
is two-sided as well.
Diagrammatically, if the two-sided fibration $\varphi: E \fibarr A \times B$ denotes the unstraightening of $P$, this means that the map $\psi$ in the following diagram is a two-sided fibration:
\[\begin{tikzcd}
	{(k \times m)^*E} && E \\
	{C \times D} && {A \times B}
	\arrow["{(k \times m)^*\varphi}"', two heads, from=1-1, to=2-1]
	\arrow["{k \times m}"', from=2-1, to=2-3]
	\arrow[from=1-1, to=1-3]
	\arrow["\varphi", two heads, from=1-3, to=2-3]
	\arrow["\lrcorner"{anchor=center, pos=0.125}, draw=none, from=1-1, to=2-3]
\end{tikzcd}\]
Furthermore, we claim that this square is a two-sided cartesian functor.
\end{proposition}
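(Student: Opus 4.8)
The plan is to deduce pullback-stability from the fibred Chevalley characterisations of \Cref{thm:char-two-sid}, by factoring the pullback along $k \times m$ into two successive \emph{base changes} and invoking the closure result \Cref{prop:clos-sl-cocart-fib-pb} (together with its dual) for each. Concretely, I would write $k \times m = (k \times \id_B) \circ (\id_C \times m)$ and treat the two factors with the two complementary presentations of a two-sided family: the cocartesian leg is handled in the over-$A$ picture \ref{it:char-two-sid-iii}, and the cartesian leg in the over-$B$ picture \ref{it:char-two-sid-ii}.

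First I would pull $P$ back along $k \times \id_B$ to obtain $Q \defeq (k \times \id_B)^* P : C \to B \to \UU$. After fibrant replacement one checks that this is precisely the base change of $\varphi = \pair{\xi}{\pi}$, viewed over $A$ via $\xi : E \fibarr A$, along $k : C \to A$: the codomain $A \times B$ pulls back to $C \times B$ and $E$ to $\totalty{Q}$, since the relevant pullbacks commute with the sliced comma $\comma{\xi}{A}$ exactly as in the proof of \Cref{prop:clos-sl-cocart-fib-pb}. By \ref{it:char-two-sid-iii} the family $P$ exhibits $\varphi$ as the dual of a cocartesian fibration in cartesian fibrations (a cartesian fibration in cocartesian fibrations, in the sense dual to \Cref{def:cocart-fibs-in-cart-fibs}) over $A$, with $\xi$ itself cocartesian. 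Applying the dual of \Cref{prop:clos-sl-cocart-fib-pb} along $k$ shows that $(k \times \id_B)^* \varphi$ is again such a structure over $C$ (with the pullback of $\xi$ still cocartesian), so by \ref{it:char-two-sid-iii} the family $Q$ is two-sided.

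Next I would pull $Q$ back along $\id_C \times m$, giving $(\id_C \times m)^* Q \equiv (k \times m)^* P$. This time I use the over-$B$ presentation: since $Q$ is two-sided, \ref{it:char-two-sid-ii} exhibits $\varphi_Q = \pair{\xi_Q}{\pi_Q}$ as a cocartesian fibration in cartesian fibrations over $B$ with $\pi_Q$ cartesian, and the pullback along $\id_C \times m$ is the base change of this datum along $m : D \to B$ (again after identifying the pullback with the base-change operation, using that pullback commutes with $\VertArr$ and the sliced commas). Now \Cref{prop:clos-sl-cocart-fib-pb} applies verbatim, yielding a cocartesian fibration in cartesian fibrations over $D$ with the pulled-back $\pi$ still cartesian. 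By \ref{it:char-two-sid-ii} this says exactly that $(k \times m)^* P$ is two-sided, which is the first assertion.

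For the final claim that the canonical square is a two-sided cartesian functor, I would argue that the comparison map $(k \times m)^* E \to E$ preserves both kinds of lifts. This is because in a pullback of fibrations the co-/cartesian lifts upstairs are computed from the lifts downstairs together with the universal property of the pullback, as recorded in the lift descriptions of \cite[Subsections~3.2.4 and~5.3.3]{BW21}; hence the comparison map carries $\xi$-cocartesian arrows to $\xi$-cocartesian arrows and $\pi$-cartesian arrows to $\pi$-cartesian arrows, which is precisely the defining condition of a two-sided cartesian functor. The main obstacle throughout is bookkeeping rather than new fibrational input: one must verify carefully that pulling back along $k \times \id_B$ and along $\id_C \times m$ really coincides, after fibrant replacement, with the base-change operations to which \Cref{prop:clos-sl-cocart-fib-pb} and its dual apply, i.e.\ that these pullbacks commute with the constructions $\VertArr$ and $\comma{\xi}{A}$ appearing in the characterisations.
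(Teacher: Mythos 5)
Your proposal is correct and follows essentially the same route as the paper: both rest on the Chevalley characterizations of \Cref{thm:char-two-sid} combined with the pullback-closure of sliced cocartesian fibrations (\Cref{prop:clos-sl-cocart-fib-pb} and its dual), and both establish the functoriality of the square by reducing to the one-sided fact that pullback squares of co-/cartesian fibrations are co-/cartesian functors. Your explicit factorization $k \times m = (k \times \id_B) \circ (\id_C \times m)$ merely spells out what the paper leaves implicit in the phrase ``letting either of the maps $k,m$ be an identity''.
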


Recalling the notation from~\Cref{ssec:two-var}, we write
\begin{align*}
	& P_m:A \to \UU, & P_m(a) & \defeq \sum_{b':B'} P_{m\,b'}(a) \jdeq \sum_{b':B'} P(a,m\,b'), \\
	& P^k:B \to \UU, & P^k(b) & \defeq \sum_{a':A'} P^{k\,a'}(b) \jdeq \sum_{a':A'} P(k\,a',b), \\
\end{align*}
for the families induced from reindexing on just one side, and then projection down,~\ie: $P_m$ and $P^k$ arise fibrationally as follows:
\[\begin{tikzcd}
	{m^*\widetilde{P}} && {\widetilde{P}} && {k^*\widetilde{P}} && {\widetilde{P}} \\
	{A \times B'} && {A \times B} && {A' \times B} && {A \times B} \\
	A &&&& B
	\arrow[two heads, from=1-5, to=2-5]
	\arrow["{k \times \id_B}"', from=2-5, to=2-7]
	\arrow[two heads, from=1-7, to=2-7]
	\arrow[two heads, from=2-5, to=3-5]
	\arrow[from=1-5, to=1-7]
	\arrow["\lrcorner"{anchor=center, pos=0.125}, draw=none, from=1-5, to=2-7]
	\arrow["{\Un_B(P^k)}"'{pos=0.2}, curve={height=30pt}, from=1-5, to=3-5]
	\arrow[two heads, from=1-3, to=2-3]
	\arrow[from=1-1, to=1-3]
	\arrow[two heads, from=1-1, to=2-1]
	\arrow[two heads, from=2-1, to=3-1]
	\arrow["{\Un_A(P_m)}"'{pos=0.2}, curve={height=30pt}, two heads, from=1-1, to=3-1]
	\arrow["{\id_A \times m}"', from=2-1, to=2-3]
	\arrow["\lrcorner"{anchor=center, pos=0.125}, draw=none, from=1-1, to=2-3]
\end{tikzcd}\]
In particular, for the case of $k= \id_A: A \to A$ and $m=\id_B : B \to B$ we have $P^A = P^{\id_A}$ and $P_B = P_{\id_B}$, \cf~\Cref{ssec:two-var}.

\begin{proof}
This follows by employing the characterization~\Cref{thm:char-two-sid}, \Cref{,it:char-two-sid-iii}, and then the closure property~\Cref{prop:clos-sl-cocart-fib-pb}. In particular, letting either of the maps $k,m$ be an identity, we can conclude that $P^k$ is cocartesian, and $P_m$ is cartesian.

That the square is a two-sided cartesian functor follows by separately projecting to the factors in the base, and then using~\Cite[Proposition~5.3.9]{BW21} or its dual. Namely, \eg~since $P_m:A \to \UU$ is cocartesian, so is its pullback along $k:A' \to A$ which arises as
\[\begin{tikzcd}
	{(k \times m)^*\widetilde{P} \simeq k^*\widetilde{P_m}} && {\widetilde{P_m}} \\
	{A'} && A
	\arrow[two heads, from=1-3, to=2-3]
	\arrow["k"', from=2-1, to=2-3]
	\arrow[two heads, from=1-1, to=2-1]
	\arrow[from=1-1, to=1-3]
	\arrow["\lrcorner"{anchor=center, pos=0.125}, draw=none, from=1-1, to=2-3]
\end{tikzcd}\]
and the pullback square is known to be a cocartesian functor.
\end{proof}

\begin{proposition}[Pullback stability of two-sided cartesian functors]\label{prop:2s-cart-fun-pb}
	In the following, let all types be Rezk. Consider two-sided cartesian families $P,Q:A \to B\to \UU$ with unstraightenings $E \fibarr \to A \times B$ of $P$, and $F \fibarr A \times B$ of $Q$, resp. Let $\kappa:P\to_{A \times B} Q$ be a two-sided cartesian functor. Given maps $k:A' \to A$, $m:B' \to B$, then the functor $\kappa': P' \to_{A' \times B'} Q'$ induced by pullback along $k \times m$ is two-sided cartesian as well:
\[\begin{tikzcd}
	{E'} &&& E \\
	& {Q'} &&& Q \\
	{A' \times B'} &&& {A \times B}
	\arrow["\kappa'", dashed, from=1-1, to=2-2]
	\arrow[from=1-1, to=1-4]
	\arrow["\kappa", from=1-4, to=2-5]
	\arrow[two heads, from=1-4, to=3-4]
	\arrow[two heads, from=2-5, to=3-4]
	\arrow[two heads, from=1-1, to=3-1]
	\arrow[two heads, from=2-2, to=3-1]
	\arrow["{k \times m}"{description}, from=3-1, to=3-4]
	\arrow["\lrcorner"{anchor=center, pos=0.125}, draw=none, from=2-2, to=3-4]
	\arrow["\lrcorner"{anchor=center, pos=0.125}, shift right=4, draw=none, from=1-1, to=3-4]
	\arrow[from=2-2, to=2-5, crossing over]
\end{tikzcd}\]
\end{proposition}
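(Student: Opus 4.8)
The plan is to reduce the claim to the one-sided closure properties for co- and cartesian functors under pullback, mirroring the argument for the family version in \Cref{prop:2s-cart-closed-pb}. First I would apply \Cref{prop:2s-cart-closed-pb} to the families $P$ and $Q$ separately, which guarantees that $P' \defeq (k \times m)^*P$ and $Q' \defeq (k \times m)^*Q$ are again two-sided cartesian; this makes the assertion well-posed. The functor $\kappa'$ is the map on total spaces induced by the universal property of the pullbacks $\widetilde{P'} = (k\times m)^*\widetilde P$ and $\widetilde{Q'} = (k\times m)^*\widetilde Q$, so that $\kappa'$ sits in a commuting cube over $k \times m$ with $\kappa$ and the two pullback squares. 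By the definition of a two-sided cartesian functor it then suffices to verify that $\kappa'$ restricts to a cocartesian functor between the left legs over $A'$ and to a cartesian functor between the right legs over $B'$.

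These two verifications are formally dual — interchanging $A$ and $B$, $k$ and $m$, left and right, and cocartesian and cartesian — so I would carry out only the left, cocartesian case and obtain the other by dualization. Projecting the cube to the $A'$-factor, the left leg of $P'$ over $A'$ is $(P')_{B'} = k^*(P_m)$, where $P_m \defeq \lambda a. \sum_{b':B'} P(a,m\,b')$ is the left leg reindexed on the $B$-side, and similarly for $Q'$; the functor induced by $\kappa'$ is the pullback along $k$ of the corresponding functor $\kappa_m \colon P_m \to_A Q_m$.

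The key point is that, for a family cocartesian on the left, the left-leg cocartesian arrows are $\pi$-vertical, i.e. they lie inside a single $B$-fiber (\Cref{def:2s-cart}, \Cref{prop:char-fib-cocart-left}). Hence the reindexing by $m$ is harmless for the left-leg analysis: the cocartesian arrows of $P_m$ over $A$ are exactly those cocartesian arrows of $P_B$ that happen to lie over a point of the form $m\,b'$, and $\kappa_m$ agrees with $\kappa$ on them; since $\kappa$ is a cocartesian functor on the left legs by hypothesis, $\kappa_m \colon P_m \to_A Q_m$ is cocartesian. It then remains to pull back $\kappa_m$ along $k \colon A' \to A$, and the pullback of a cocartesian functor along a base map is again a cocartesian functor by \cite[Proposition~5.3.21]{BW21} and its dual, together with the pullback-stability of fibered cocartesian sections (\Cref{prop:cocart-sect-pb}); compare also the sliced version \Cref{prop:clos-sl-cocart-fib-pb}. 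This shows that $\kappa'$ is a cocartesian functor on the left legs, and dually a cartesian functor on the right legs.

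The main obstacle I anticipate is the book-keeping of the double reindexing: making rigorous that the left-leg data of $P'$ and of $\kappa'$ are obtained from those of $P$ and $\kappa$ purely by pullback along $k$, with the $m$-reindexing contributing only a relabelling of the selected $B$-fibers. This identification rests on the explicit fiberwise descriptions of co- and cartesian lifts in pullback and product fibrations from \cite[Subsection~5.3.3]{BW21}; once it is in place, the conclusion is formal, and the conjunction of the cocartesian left leg and the cartesian right leg is precisely the statement that $\kappa'$ is a two-sided cartesian functor.
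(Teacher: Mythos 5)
Your argument is correct, but it takes a more structural route than the paper's, which is a three-line direct computation: writing $P'(a'_0,b')\simeq P(k\,a'_0,m\,b')$, the paper calculates
$\kappa'_{u',b'}(P'_!(u',b',e)) = \kappa_{k u', m b'}(P_!(k u', m b', e)) = Q_!(k u', m b', \kappa(e)) = Q'_!(u',b',\kappa'(e))$,
where the outer identifications come from the fiberwise description of lifts in a pullback family and the middle one from cocartesianness of $\kappa$, and then dualizes for the cartesian side. You instead factor $(k\times m)^*$ into the $m$-reindexing followed by $k^*$, observe that the $m$-reindexing does not disturb the left-leg analysis because left-leg cocartesian arrows are $\pi$-vertical, and then invoke the one-sided pullback-stability of cocartesian functors from \cite{BW21}. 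Both routes ultimately rest on the same two facts --- lifts in pullback families are computed from lifts in the original family, and $\kappa$ preserves them --- so your version buys an explicit reduction to the already-established one-sided closure properties at the cost of the double-reindexing bookkeeping you flag, whereas the paper's version avoids that bookkeeping by never separating the two reindexings. One small correction: \Cref{prop:cocart-sect-pb} concerns pullback of cocartesian \emph{sections} along a base map and is not the lemma needed to pull back $\kappa_m$ along $k$; the relevant statement is the pullback-stability of cocartesian \emph{functors} (\cite[Propositions~5.3.18, 5.3.21]{BW21}), which you also cite, so the argument goes through once that citation is dropped.
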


\begin{proof}
	Let $u':a_0' \to_{A'} a_1'$, $b':B'$, $e:P'(a_0',b') \simeq P(k\,a_0',m\,b')$. Straightforward calculation gives
	\begin{align*}
		& \kappa'_{u',b'}(P_!'(u',b',e)) = \kappa_{k\,u',m\,'}(P_!(k\,u',m\,b',e)) \\
		& = Q_!(k\,u',m\,b',\kappa_{k\,u',m\,b'}(e)) = Q_!'(u',b',\kappa'_{u',b'}(e)), 
	\end{align*}
	where the second identity is given by $\kappa$ being cocartesian. The dual case for cartesian lifts works similarly (\cf~also~\cite[Proposition~5.3.18]{BW21}).
\end{proof}

\subsection{Dependent and sliced product}

\begin{proposition}[Product stability of two-sided cartesian families]\label{prop:2s-cart-closed-pi}
Let $A,B:I \to \UU$ be families of Rezk types for a small type $I$. Consider a two-sided family $P:\prod_{i:I} A_i \to B_i \to \UU$. Then the induced product family
\[ \prod_{i:I} P_i : \prod_{i:I} A_i \to \prod_{i:I} B_i \to \UU\]
is two-sided cartesian as well.

Moreover, denoting the unstraightenings of the $P_i$ by $\varphi_i: E_i \fibarr A_i \times B_i$, the squares
\[\begin{tikzcd}
	{\prod_{i:I} E_i} && {E_k} \\
	{\prod_{i:I} A_i \times \prod_{i:I} B_i} && {A_k \times B_k}
	\arrow[two heads, from=1-1, to=2-1]
	\arrow[from=2-1, to=2-3]
	\arrow[from=1-1, to=1-3]
	\arrow[two heads, from=1-3, to=2-3]
\end{tikzcd}\]
are two-sided cartesian functors. Furthermore, these product cones are terminal \wrt~two-sided cartesian functors.\footnote{Here, and in the following we will not formally spell out the universal properties, but they are analogous to the respective propositions in~\Cite[Subsection~5.3.3]{BW21}. The addition/generalization is that the base types are binary products, and the fibrations and functors are \emph{two-sided} cartesian.}
\end{proposition}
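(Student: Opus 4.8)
The plan is to reduce the statement to the previously established closure results for sliced cocartesian fibrations and for co-/cartesian functors, rather than verifying the universal property of the lifts by hand. First I would invoke the characterization of two-sided cartesian families via ``cocartesian fibrations in cartesian fibrations'' from \Cref{thm:char-two-sid}, specifically the equivalence with \Cref{it:char-two-sid-ii}: each $\varphi_i = \pair{\xi_i}{\pi_i} : E_i \fibarr A_i \times B_i$ is two-sided cartesian if and only if, viewing $\varphi_i$ over $B_i$, the map $\pi_i$ is a cartesian fibration, $\varphi_i$ is a cartesian functor and a sliced cocartesian fibration over $B_i$, and the associated fibered LARI $\chi_{B_i}$ is a cartesian functor. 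The goal is then to show that forming the dependent product preserves each of these ingredients.

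Next I would assemble the pieces. Cartesian fibrations are closed under dependent products (the dual of the cocartesian closure, \cite[Proposition~5.3.17]{BW21}/\Cref{prop:cocart-cosm-closure}), so $\prod_{i:I}\pi_i$ is again a cartesian fibration. Sliced cocartesian fibrations and the stronger notion of a cocartesian fibration in cartesian fibrations are closed under products by \Cref{prop:clos-sl-cocart-fib-prod}, which moreover already records that the induced fibered LARI on the product of sliced commas is the product of the fibered LARIs, using that dependent products commute with sliced commas (\Cref{prop:dep-prod-comm-sl-commas}) and that sliced LARIs are preserved by dependent products (\Cref{prop:fib-lari-pres-by-sl-prod}). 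Since each $\chi_{B_i}$ is a cartesian functor and cartesian functors are closed under dependent products (\Cref{prop:cocart-cosm-closure}, dualized), the product LARI $\prod_{i:I}\chi_{B_i}$ is again cartesian. Feeding all of this back through \Cref{it:char-two-sid-ii} of \Cref{thm:char-two-sid} yields that $\prod_{i:I}\varphi_i$ is two-sided cartesian, where one identifies the base $\big(\prod_i A_i\big)\times\big(\prod_i B_i\big) \simeq \prod_i (A_i\times B_i)$ and the total space $\prod_i E_i$ in the obvious way.

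For the second assertion, that the projection squares onto the $k$-th factor are two-sided cartesian functors, I would argue by separately projecting to the two legs. Projecting $\prod_{i:I}\xi_i$ to $\xi_k$ realizes the top square as (a pullback/projection of) a product of cocartesian fibrations onto one factor, which is a cocartesian functor by the product universal property in \cite[Subsection~5.3.3]{BW21}/\Cref{prop:cocart-cosm-closure}; dually, projecting the $\pi$-legs gives a cartesian functor. A map that is simultaneously cocartesian on the left legs and cartesian on the right legs is by definition a two-sided cartesian functor, so the square qualifies. The terminality of these product cones with respect to two-sided cartesian functors then follows from the corresponding terminality statements for the one-sided product cones, applied leg by leg, since a two-sided cartesian functor into $\prod_i \varphi_i$ is precisely a compatible pair of a cocartesian functor into $\prod_i \xi_i$ and a cartesian functor into $\prod_i \pi_i$.

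The main obstacle I anticipate is purely bookkeeping rather than conceptual: one must carefully match the sliced commas, vertical-arrow objects, and fibered LARIs for the product against the products of the corresponding data for the factors, so that the hypotheses of \Cref{prop:clos-sl-cocart-fib-prod} apply on the nose and the identification of total spaces and base types is strictly compatible with the straightening/unstraightening. Concretely, the delicate point is verifying that the cartesianness of the product LARI $\prod_i\chi_{B_i}$ is exactly the cartesianness required in \Cref{it:char-two-sid-ii} for the product family---this hinges on \Cref{prop:fib-lari-pres-by-sl-prod} delivering not just a fibered LARI but one whose cartesian-functor status is inherited factorwise, which is where I would spend the most care.
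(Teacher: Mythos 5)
Your proposal is correct and follows essentially the same route as the paper: the paper likewise reduces the claim to the characterization of two-sided cartesian families as cocartesian fibrations in cartesian fibrations (\Cref{thm:char-two-sid}, using item~(\ref{it:char-two-sid-iii}) rather than your dual item~(\ref{it:char-two-sid-ii}), an immaterial difference) combined with \Cref{prop:clos-sl-cocart-fib-prod}, and handles the projection squares and terminality leg by leg via the one-sided results of \cite[Subsection~5.3.3]{BW21} and their duals. The point you flag as delicate—that the product LARI inherits cartesianness factorwise—is exactly what the ``moreover'' clause of \Cref{prop:clos-sl-cocart-fib-prod} already records.
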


Fibrationally, the proposition says that given a family of two-sided fibrations $\varphi_i: E_i \fibarr A_i \times B_i$ for $i:I$, the product fibration $\prod_{i:I} \varphi_i : \prod_{i:I} E_i \fibarr \prod_{i:I} A_i \times \prod_{i:I} B_i$ is also two-sided cartesian.

\begin{proof}
	This is a consequence of the characterization~\Cref{thm:char-two-sid}, \Cref{it:char-two-sid-iii}, in combination with the closure property~\Cref{prop:clos-sl-cocart-fib-prod}. Two-sided cartesianness of the projection squares follows upon postcomposition with the respective projection, and then employing either~\Cite[Proposition~5.3.7]{BW21} or its dual. Similarly, one argues for the universal property for two-sided cartesian functors, using~\Cite[Proposition~5.3.8]{BW21} or its dual, resp.
\end{proof}

\begin{corollary}[Sliced product stability of two-sided cartesian families]\label{prop:2s-cart-closed-pi-loc}
	Let $A,B$ be small Rezk types. Consider a two-sided family $P:\prod_{i:I} A \to B \to \UU$. Then the induced fiberweise product family
	\[ \times_{i:I}^{A \times B} P_i : A \to B \to \UU\]
	is two-sided cartesian as well.
	
	Moreover, for every $k:I$ there is an induced canonical commutative triangle
	\[\begin{tikzcd}
		{\times_{i:I}^{A \times B} E_i} && {E_k} \\
		& {A \times B}
		\arrow[from=1-1, to=1-3]
		\arrow["{\times_{i:I}^{A \times B} \varphi_i}"', two heads, from=1-1, to=2-2]
		\arrow["{\varphi_k}", two heads, from=1-3, to=2-2]
	\end{tikzcd}\]
	which is a two-sided functor. The two-sided fibration $\prod_{i:I} E_i \fibarr A \times B$ is the terminal cone over the $\varphi_k: E_k \fibarr A \times B$~\wrt~(triangle-shaped) cones into the $\varphi_k$ whose horizontal map is two-sided cartesian.
\end{corollary}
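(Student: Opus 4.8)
The plan is to realize the sliced product as a pullback of the ordinary (non-sliced) product and then invoke the closure properties already established, mirroring the proof of the absolute case \Cref{prop:2s-cart-closed-pi}. First I would observe that, since all base factors coincide, \Cref{prop:2s-cart-closed-pi} applies with $A_i \jdeq A$ and $B_i \jdeq B$ for all $i:I$, so that the product family $\prod_{i:I} P_i : A^I \to B^I \to \UU$ is two-sided cartesian, with unstraightening $\prod_{i:I} \varphi_i : \prod_{i:I} E_i \fibarr A^I \times B^I$. Under the canonical identification $(A \times B)^I \simeq A^I \times B^I$, the defining pullback of the sliced product exhibits $\times_{i:I}^{A\times B} E_i$ as the pullback of $\prod_{i:I} \varphi_i$ along the diagonal $\cst_A \times \cst_B : A \times B \to A^I \times B^I$, which is precisely the cartesian product of the two constant maps.

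Next I would apply \Cref{prop:2s-cart-closed-pb} to this pullback: since two-sided cartesian families are stable under reindexing along a product of maps, the pullback family $(\cst_A \times \cst_B)^* \prod_{i:I} P_i \simeq \times_{i:I}^{A\times B} P_i$ is two-sided cartesian, which settles the first claim. Concretely, the co-/cartesian lifts in the sliced product are computed fibrewise as the tuple of the respective lifts in each $E_k$, exactly as in the absolute case.

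For the projection triangles, I would use that each product cone projection $\prod_{i:I} E_i \to E_k$ is a two-sided cartesian functor over the $k$-th coordinate projection $A^I \times B^I \to A \times B$ by \Cref{prop:2s-cart-closed-pi}. Since the $k$-th projection precomposed with the diagonal is the identity on $A \times B$, pulling this functor back along $\cst_A \times \cst_B$ produces exactly the sliced projection $\times_{i:I}^{A\times B} E_i \to E_k$ as a fibered functor over $A \times B$; by pullback stability of two-sided cartesian functors, \Cref{prop:2s-cart-fun-pb}, this projection is again two-sided cartesian. The terminal cone property then follows by transporting the universal property of the ordinary product cone through the reindexing, just as in the absolute sliced constructions of~\cite[Subsection~5.3.3]{BW21}: a cone of two-sided cartesian functors into the $\varphi_k$ with apex over $A \times B$ corresponds, via the pullback description, to a cone into the product, which factors uniquely through $\prod_{i:I} \varphi_i$ and hence through its reindexing.

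The main obstacle I anticipate is purely bookkeeping rather than conceptual: making the equivalence $(A\times B)^I \simeq A^I \times B^I$ compatible simultaneously with the two ``legs'' of the two-sided structure and with the diagonal, so that the reindexing map genuinely splits as the product $\cst_A \times \cst_B$ of two independent constant maps, to which the \emph{product-form} hypotheses of \Cref{prop:2s-cart-closed-pb,prop:2s-cart-fun-pb} apply. Once this compatibility is pinned down, every remaining verification is immediate from the cited closure results, and no new lifting arguments are required.
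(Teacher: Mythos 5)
Your proposal is correct and follows essentially the same route as the paper: realize the sliced product as the pullback of the ordinary dependent product $\prod_{i:I}\varphi_i \fibarr (A\times B)^I \simeq A^I \times B^I$ along the constant map, invoke pullback stability (\Cref{prop:2s-cart-closed-pb}), and obtain the projection cones and their universal property by factoring through the ordinary product cone (the paper composes the pullback square with the evaluation functors via \Cref{prop:2s-fun-comp}, which is the same content as your pullback of the projection functor). The bookkeeping issue you flag about $(A\times B)^I \simeq A^I \times B^I$ is handled in the paper simply by fibrant replacement and is harmless.
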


\begin{proof}
	Recall that we have equivalences
	\begin{align*}
		\prod_i E_i & \simeq \sum_{\substack{\alpha:I \to A \\ \beta:I \to B}} \prod_{i:I} P_i(\alpha_i,\beta_i) \fibarr A^I \times B^I, \\
		\times_{i:I}^{A \times B} E_i & \simeq \sum_{\substack{a:A \\ b:B}} \prod_{i:I} P_i(a,b) \fibarr A \times B.
	\end{align*}

	Denote by $E_i \fibarr A \times B$ the unstraightening of the family $P_i$. By~\Cref{prop:2s-cart-closed-pb}, the induced map $\prod_{i:I} E_i \fibarr (A \times B)^I$ is two-sided cartesian:
	\[\begin{tikzcd}
		{\times_{i:I}^{A \times B} E_i} && {\prod_{i:I} E_i} \\
		{A \times B} && {(A \times B)^I}
		\arrow[two heads, from=1-1, to=2-1]
		\arrow["{\mathrm{cst}}"', from=2-1, to=2-3]
		\arrow[from=1-1, to=1-3]
		\arrow[two heads, from=1-3, to=2-3]
		\arrow["\lrcorner"{anchor=center, pos=0.125}, draw=none, from=1-1, to=2-3]
	\end{tikzcd}\]
	Invoking pullback-stability, and then considering the straightening of this map to recover a type family establishes the claim.
	
	Now, by the above description via fibrant replacement, we have evaluation maps yielding the desired cones $\ev_k: {\times_{i:I}^{A \times B} E_i} \to_{A \times B} E_k$ for $k:I$. But by the universal property of the standard dependent product~\Cref{prop:2s-cart-closed-pi} (\cf~\Cite[Proposition~5.3.8]{BW21}), these factor as follows
	\[\begin{tikzcd}
		{\times_{i:I}^{A \times B} E_i} && {\prod_{i:I} E_i} && {E_k} \\
		{A \times B} && {A^I \times B^I} && {A \times B}
		\arrow["{\mathrm{ev}_k}"{description}, from=1-3, to=1-5]
		\arrow[from=1-3, to=2-3]
		\arrow[from=2-3, to=2-5]
		\arrow[from=1-5, to=2-5]
		\arrow[from=1-1, to=2-1]
		\arrow["{\mathrm{cst}}"', dashed, from=2-1, to=2-3]
		\arrow[dashed, from=1-1, to=1-3]
		\arrow["{\mathrm{ev}_k}"{description}, curve={height=-18pt}, from=1-1, to=1-5]
		\arrow["\lrcorner"{anchor=center, pos=0.125}, draw=none, from=1-1, to=2-3]
	\end{tikzcd}\]
	where the upper horizontal induced functor is two-sided cartesian, as are the evaluations from the standard dependent product. Hence, so is their composite, as desired, by~\Cref{prop:2s-fun-comp}.
\end{proof}

\begin{proposition}[Pullback cones are two-sided cartesian functors]\label{prop:2s-cart-fun-pb-cones}
	Consider two-sided families over Rezk types
	\[ P:A \to B \to \UU, \quad  P':A' \to B' \to \UU, \quad  P:A'' \to B'' \to \UU. \]
	Furthermore, assume there are maps
	\[ \alpha:A' \to A, \quad \alpha'':A'' \to A, \quad \beta':B' \to B, \quad \beta'':B'' \to B \]
	and two-sided cartesian functors
	\[ \kappa':P' \to_{A'\times B', A \times B} P, \quad  \kappa'':P'' \to_{A''\times B'', A \times B} P.\]
	Denote by
	\[ \varphi:E \fibarr A \times B, \quad \varphi': E' \fibarr A' \times B', \quad \varphi'': E'' \fibarr A'' \times B'' \]
	the unstraightenings of $P$, $P'$, and $P''$, resp. Consider the induced pullback:
	\[\begin{tikzcd}
		{E' \times_E E''} &&& {E''} \\
		& {E'} &&& E \\
		{(A' \times B') \times_{A \times B} (A'' \times B'')} &&& {A'' \times B''} \\
		& {A' \times B'} &&& {A \times B}
		\arrow["{\varphi'\times_\varphi \varphi''}"{description}, dashed, two heads, from=1-1, to=3-1]
		\arrow[from=3-1, to=3-4]
		\arrow[from=1-1, to=1-4]
		\arrow["{\varphi''}"{description, pos=0.2}, two heads, from=1-4, to=3-4]
		\arrow[from=1-1, to=2-2]
		\arrow["\varphi"{description, pos=0.3}, two heads, from=2-5, to=4-5]
		\arrow[from=3-1, to=4-2]
		\arrow["{\alpha '' \times \beta''}"{description}, from=3-4, to=4-5]
		\arrow["{\kappa''}"{description}, from=1-4, to=2-5]
		\arrow["{\alpha' \times \beta'}"{description}, from=4-2, to=4-5]
		\arrow["\lrcorner"{anchor=center, pos=0.125, rotate=45}, draw=none, from=3-1, to=4-5]
		\arrow["\lrcorner"{anchor=center, pos=0.125, rotate=45}, draw=none, from=1-1, to=2-5]
		\arrow["{\varphi'}"{description, pos=0.3}, two heads, from=2-2, to=4-2, crossing over]
		\arrow["{\kappa'}"{description}, from=2-2, to=2-5, crossing over]
	\end{tikzcd}\]
	Then the mediating map
	\[ \varphi''' \defeq \varphi'\times_\varphi \varphi'': E'''\defeq {E' \times_E E''} \fibarr A''' \times B'''\]
	where
	\[ A''' \defeq A' \times_A A'', \quad B''' \defeq B' \times_B B'' \]
	is a two-sided cartesian fibration.\footnote{Note in particular that we have an equivalence $A''' \times B''' \equiv (A' \times B') \times_{A \times B} (A'' \times B'')$}
	
	Moreover, each of the projection squares from $\varphi'''$ is a two-sided cartesian functor, and $\varphi''':E''' \to A''' \times B'''$ satisfies the expected terminal universal property for cones which are two-sided cartesian functors (analogous to~\Cite[Propositions~5.3.10,11]{BW21}).
\end{proposition}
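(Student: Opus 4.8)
The plan is to reduce this ``pullback cone'' statement entirely to the two closure properties already established, namely pullback stability of two-sided cartesian families (\Cref{prop:2s-cart-closed-pb}) and pullback stability of two-sided cartesian functors (\Cref{prop:2s-cart-fun-pb}), together with the span/product closure properties and the characterization theorem \Cref{thm:char-two-sid}. The key observation to record first is the identification $A''' \times B''' \equiv (A' \times B') \times_{A \times B} (A'' \times B'')$, which follows because binary products commute with pullbacks; this lets us view the mediating map $\varphi'''$ as living over the genuine pullback of the two base products. Thus the outer square of the diagram is literally a pullback of $A' \times B'$ and $A'' \times B''$ over $A \times B$, and $E''' = E' \times_E E''$ is the induced pullback of total spaces.

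First I would prove that $\varphi'''$ is two-sided cartesian. Using the identification above, I would factor the formation of $E'''$ as a pullback: pull back $\varphi'$ along the map $(A' \times B') \times_{A \times B}(A'' \times B'') \to A' \times B'$, and observe that, after fibrant replacement, the resulting family is the pullback of $P'$ along the projection $A''' \times B''' \to A' \times B'$. By \Cref{prop:2s-cart-closed-pb} this pullback family is two-sided cartesian, since the projection is of the form $k \times m$ for the two coordinate projections $A''' \to A'$ and $B''' \to B'$ (again using that products commute with pullbacks so that the projection factors coordinatewise). This immediately gives two-sided cartesianness of $\varphi'''$, with the co-/cartesian lifts computed coordinatewise as in the cited proof.

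Next I would verify that each projection square out of $\varphi'''$ is a two-sided cartesian functor. The two structural projections $E''' \to E'$ and $E''' \to E''$ are pullback squares of the two-sided cartesian functors $\kappa'$ and $\kappa''$ (respectively of the canonical projections), so \Cref{prop:2s-cart-fun-pb} applies directly: pullback of a two-sided cartesian functor along a map of the form $k \times m$ is again two-sided cartesian. Concretely, one checks as in that proof that the induced functor commutes strictly with the coordinatewise cocartesian and cartesian lifts, which is a routine calculation identical in form to the one in \Cref{prop:2s-cart-fun-pb}; I would not spell it out but merely cite it. The terminal universal property with respect to two-sided cartesian cones then follows exactly as in the absolute (one-sided) case treated in \cite[Propositions~5.3.10,11]{BW21}, the only difference being that bases are binary products and all fibrations and functors carry the two-sided structure; the mapping-in property of a pullback is preserved because the forgetful passage to the factors detects two-sided cartesianness by the coordinatewise description of the lifts.

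The main obstacle I anticipate is purely bookkeeping rather than conceptual: making the identification $A''' \times B''' \equiv (A' \times B') \times_{A \times B} (A'' \times B'')$ fully precise and ensuring that under this equivalence the relevant projection maps really do factor as cartesian products $k \times m$ of coordinate maps, so that the pullback-stability lemmas apply verbatim. Once this coherence is checked---most cleanly by fibrant replacement, writing $A''' \times B'''$ as $\sum$-types and matching the two descriptions of the pullback---everything else is a direct appeal to the already-proven closure properties and to \Cref{thm:char-two-sid}. The universal-property verification is the only part requiring genuine (though standard) argument, and it mirrors the established one-sided proofs closely enough that it can be handled by analogy.
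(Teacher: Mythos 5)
Your reduction breaks down at the very first step. You propose to obtain $\varphi'''$ by pulling back $\varphi'$ (equivalently, re-indexing $P'$) along the base projection $A'''\times B''' \to A'\times B'$ and then invoking \Cref{prop:2s-cart-closed-pb}. But a base change only re-indexes fibers: the family you construct that way has fiber $P'(a',b')$ over a point $\angled{a',a'',b',b''}$, whereas the fiber of $\varphi'''=\varphi'\times_\varphi\varphi''$ over that point is the genuine fiber product $P'(a',b')\times_{P(a,b)}P''(a'',b'')$. No pullback of $P'$ or $P''$ along maps of bases can produce this, because the limit here is taken over the total space $E$, not over $A\times B$. For the same reason the projection squares $E'''\to E'$ and $E'''\to E''$ are not pullback squares over $A'''\times B'''\to A'\times B'$ (resp.\ $A''\times B''$), so \Cref{prop:2s-cart-fun-pb} does not apply to them either.

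What is actually needed---and what the paper does---is a direct, component-wise construction: after fibrant replacement the fibers of $P'''$ are $P'(a',b')\times_{P(a,b)}P''(a'',b'')$, and the co-/cartesian lifts are defined as triples $\angled{P_!(u,b,e),P'_!(u',b',e'),P''_!(u'',b'',e'')}$, which form a well-typed element of the fiber product precisely because $\kappa'$ and $\kappa''$ are two-sided cartesian and hence carry the lifts of $P'$, $P''$ to those of $P$. Cocartesianness, cartesianness, and the commutation condition of \Cref{prop:comm-lifts} are then checked component-wise; the projection squares preserve lifts by inspection of this explicit description; and only the terminal universal property is delegated to the one-sided results of \cite{BW21} by projecting to the two factors of the base. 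Your identification $A'''\times B'''\equiv (A'\times B')\times_{A\times B}(A''\times B'')$ and your treatment of the universal property are fine, but the heart of the proposition---the existence and compatibility of lifts in the fiber product of total spaces---cannot be outsourced to base-change stability.
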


\begin{proof}
	We use fibrant replacement so that we can take the fibers of $\varphi'''$ to be\footnote{Where $a:A$, $a':A'$ lies strictly over $a$ via $k'$~\etc.}
	\[ P'''(a,a',a'',b,b',b'') \jdeq P'(a',b') \times_{P(a,b)} P''(a'',b'').\]
	We claim that the cocartesian lifts in $P'''$ are then given by
	\[ P_!'''(u,u',u'',b,b',b'',\angled{e,e',e''}) \jdeq \angled{P_!(u,b,e), P_!'(u',b',e'), P_!''(u'',b'',e'')},\]
	which can be checked to be cocartesian since the conditions are validated fiberwise. In particular, the cocartesian lifts in $P'$ and $P''$ indeed lie over the ones in $P$ by two-sided cartesian-ness of $\kappa'$ and $\kappa''$. So far, this is analogous to~\Cite[Proposition~5.3.10]{BW21}, but we have the additional triple of points $\angled{b,b',b''}$ as data.
	
	The argument for the cartesian lifts works dually.
	Now, the compatibility condition from~\Cref{prop:comm-lifts} have to be checked. But by the fibrant replacement above, the ensuing proposition is just witnessing that the condition is satisfied component-wise for triples $\angled{\sigma,\sigma',\sigma''}$ where $\sigma$ is a square of the form
	\[\begin{tikzcd}
		E && \bullet & \bullet \\
		&& \bullet & \bullet \\
		& \bullet & \bullet & \bullet & \bullet \\
		{A \times B} & \bullet & \bullet & \bullet & \bullet
		\arrow[cocart, from=1-4, to=2-4]
		\arrow[from=2-3, to=2-4]
		\arrow[from=1-3, to=2-3]
		\arrow[cart, from=1-3, to=1-4]
		\arrow[Rightarrow, no head, from=3-2, to=3-3]
		\arrow[from=3-3, to=4-3]
		\arrow[from=3-2, to=4-2]
		\arrow[Rightarrow, no head, from=4-2, to=4-3]
		\arrow[Rightarrow, no head, from=3-4, to=4-4]
		\arrow[from=4-4, to=4-5]
		\arrow[from=3-4, to=3-5]
		\arrow[Rightarrow, no head, from=3-5, to=4-5]
		\arrow[from=1-1, to=4-1]
	\end{tikzcd}\]
	and $\sigma',\sigma''$ are of the same shape, lying above. Since the compatbility condition is satisfied for each of those, we are done. This shows that $\varphi'''$ is a two-sided cartesian fibration, as desired.
	
	From the discussion of the lifts, it is also clear that both the projection squares are two-sided cartesian functors, since we just project to the respective coordinates. Furthermore, the universal property is established, again, by postcomposing separately with the projections to either $A'''$ or $B'''$, then applying either~\Cite[Proposition~5.3.11]{BW21} for the one-sided cocartesian case, or its dual for the cartesian case.
\end{proof}

\begin{corollary}[Pullback cones in a slice are two-sided cartesian functors]\label{prop:2s-cart-fun-pb-cones-sliced}
	Consider two-sided families over Rezk types $P,P',P'':A \to B \to \UU$ with unstraightenings $\varphi:E \fibarr A \times B$, $\varphi':E' \fibarr A' \times B'$, and $\varphi'':E'' \fibarr A'' \times B''$. Given two-sided cartesian functors $\kappa':P' \to_{A \times B} P$ and $\kappa'':P'' \to P$,
	consider the induced pullback over $A \times B$:
	\[\begin{tikzcd}
		{E' \times_E E''} &&& {E''} \\
		& {E'} &&& E \\
		&& {A \times B}
		\arrow[from=1-1, to=1-4]
		\arrow[from=1-1, to=2-2]
		\arrow["{\kappa''}"{description}, from=1-4, to=2-5]
		\arrow["\lrcorner"{anchor=center, pos=0.125, rotate=45}, draw=none, from=1-1, to=2-5]
		\arrow[two heads, from=2-5, to=3-3]
		\arrow[two heads, from=1-4, to=3-3]
		\arrow[two heads, from=2-2, to=3-3]
		\arrow[curve={height=30pt}, from=1-1, to=3-3]
		\arrow["{\kappa'}"{description, pos=0.3}, from=2-2, to=2-5, crossing over]
	\end{tikzcd}\]
	Then the mediating map
	\[ \varphi''' \defeq \varphi'\times_\varphi \varphi'': E'''\defeq {E' \times_E E''} \fibarr A \times B\]
	is a two-sided cartesian fibration.
	
	Moreover, each of the projection squares from $\varphi'''$ is a two-sided cartesian functor, and $\varphi''':E''' \to A \times B$ satisfies the expected terminal universal property for cones which are two-sided cartesian functors over $A \times B$.
\end{corollary}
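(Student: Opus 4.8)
The plan is to deduce this corollary directly from the non-sliced version, \Cref{prop:2s-cart-fun-pb-cones}, by specializing to the case in which the three base types coincide and the base comparison maps are identities. Concretely, I would set $A' \defeq A'' \defeq A$ and $B' \defeq B'' \defeq B$, and take $\alpha \defeq \alpha'' \defeq \id_A$ and $\beta' \defeq \beta'' \defeq \id_B$ in the hypotheses of \Cref{prop:2s-cart-fun-pb-cones}. With these choices the induced base types become
\[ A''' \defeq A \times_A A \simeq A, \quad B''' \defeq B \times_B B \simeq B, \]
the equivalences being witnessed by the respective diagonals, so that the base of the pullback simplifies as $A''' \times B''' \simeq (A \times B) \times_{A \times B} (A \times B) \simeq A \times B$, in agreement with the footnoted identification in \Cref{prop:2s-cart-fun-pb-cones}.

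First I would observe that, under these identifications, the pullback square defining $\varphi''' = \varphi' \times_\varphi \varphi''$ in \Cref{prop:2s-cart-fun-pb-cones} is equivalent to the sliced pullback square in the statement at hand, now living over $A \times B$ rather than over the a priori larger type $(A \times B) \times_{A \times B} (A \times B)$. The conclusion that $\varphi''' \colon E''' \fibarr A \times B$ is a two-sided cartesian fibration then follows immediately, as does the claim that the two projection squares are two-sided cartesian functors, since these are precisely the corresponding conclusions of \Cref{prop:2s-cart-fun-pb-cones} transported along the above equivalences. For the universal property, I would note that a cone over $\varphi'$ and $\varphi''$ in the slice over $A \times B$ is the same datum as a cone in the sense of \Cref{prop:2s-cart-fun-pb-cones} once the base maps are taken to be identities, so the terminal universal property with respect to two-sided cartesian functors transfers verbatim; as before, the cleanest verification postcomposes separately with the projections to $A$ and to $B$ and invokes the one-sided sliced universal property from \cite[Proposition~5.3.11]{BW21} and its dual.

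The only point genuinely requiring care — and the step I expect to be the main, if modest, obstacle — is checking that the diagonal equivalences $A \times_A A \simeq A$ and $B \times_B B \simeq B$ are compatible with the fibrational data, i.e.\ that transporting $\varphi'''$ along them does not disturb the commutation condition for cocartesian and cartesian lifts of \Cref{prop:comm-lifts}. Since that condition was already verified component-wise in the proof of \Cref{prop:2s-cart-fun-pb-cones}, and the diagonal equivalences act essentially as identities on the relevant bifibers, this compatibility holds automatically; nonetheless it is the place where one should argue explicitly rather than treat the preceding proposition as a black box.
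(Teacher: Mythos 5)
Your proposal is correct and matches the paper's intended argument: the paper states this corollary without an explicit proof, but its analogous sliced corollaries (e.g.\ the sequential-limit version, \Cref{prop:cocart-fun-seqlim-sl}) are proved by exactly this specialization of the general statement to identity base maps, using $A \times_A A \simeq A$ and $B \times_B B \simeq B$. Your closing remark on compatibility of the diagonal equivalences with the fibrational data is also well taken and is covered by the paper's homotopy invariance of propositionally defined notions of fibration.
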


\subsection{Sequential limit}

\begin{prop}[Sequential limit cones are cocartesian functors]\label{prop:cocart-fun-seqlim}
	Consider an inverse diagram of two-sided cartesian fibrations as below where all of the connecting squares are two-sided cartesian functors:
	\[\begin{tikzcd}
		& \ldots &&&& {E_\infty} \\
		\cdots && {E_2} && {E_1} && {E_0} \\
		& \ldots &&&& {A_\infty \times B_\infty} \\
		\cdots && {A_2 \times B_2} && {A_1 \times B_1} && {A_0 \times B_0}
		\arrow["{\pi_\infty}"{description, pos=0.3}, dashed, from=1-6, to=3-6]
		\arrow["{f_0}"{description, pos=0.7}, from=4-5, to=4-7]
		\arrow["{\kappa_0}"{description}, dashed, from=1-6, to=2-7]
		\arrow["{\langle \alpha_0,\beta_0 \rangle}"{description}, dashed, from=3-6, to=4-7]
		\arrow["{f_1}"{description, pos=0.7}, from=4-3, to=4-5]
		\arrow["{g_1}"{description, pos=0.7}, from=2-3, to=2-5]
		\arrow["{g_2}"{description, pos=0.7}, from=2-1, to=2-3]
		\arrow["{f_2}"{description, pos=0.7}, from=4-1, to=4-3]
		\arrow["{\kappa_1}"{description}, dashed, from=1-6, to=2-5]
		\arrow["{\kappa_2}"{description}, dashed, from=1-6, to=2-3]
		\arrow["{\langle \alpha_2,\beta_2 \rangle}"{description, pos=0.7}, curve={height=6pt}, dashed, from=3-6, to=4-3]
		\arrow["{\langle \alpha_1,\beta_1 \rangle}"{description}, dashed, from=3-6, to=4-5]
		\arrow["{\kappa_3}"{description}, curve={height=12pt}, dashed, from=1-6, to=2-1]
		\arrow["{\langle \alpha_3,\beta_3 \rangle}"{description}, curve={height=12pt}, dashed, from=3-6, to=4-1]
		\arrow["{g_0}"{description, pos=0.7}, from=2-5, to=2-7, crossing over]
		\arrow["{\varphi_2}"{description, pos=0.3}, two heads, from=2-3, to=4-3, crossing over]
		\arrow["{\varphi_0}"{description, pos=0.3}, two heads, from=2-7, to=4-7, crossing over]
		\arrow["{\varphi_1}"{description, pos=0.3}, two heads, from=2-5, to=4-5, crossing over]
	\end{tikzcd}\]
	Then the induced map $\pi_\infty: E_\infty \to B_\infty$ between the limit types is a two-sided cartesian fibration, and the projection squares constitute two-sided cartesian functors.
	
	Furthermore, $\pi_\infty: E_\infty \to B_\infty$ together with the projection squares satisfies the universal property of a sequential limit~\wrt~to cones of two-sided cartesian functors.
\end{prop}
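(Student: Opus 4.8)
The plan is to reduce the statement to the one-sided closure results already available, by exploiting the characterization of two-sided cartesian fibrations in terms of cocartesian fibrations in cartesian fibrations. Concretely, I would first invoke \Cref{thm:char-two-sid}, \Cref{it:char-two-sid-iii}, which presents a two-sided cartesian fibration $\varphi_n \defeq \pair{\xi_n}{\pi_n}: E_n \fibarr A_n \times B_n$ as the data of a cocartesian fibration $\xi_n$, with $\varphi_n$ cocartesian and sliced cartesian over $A_n$, satisfying the appropriate fibered RARI condition. This repackaging is essential because the relevant closure property under sequential limits has already been established for \emph{sliced} co-/cartesian fibrations and for co-/cartesian functors in the preceding chapters (the appendix results on fibered (LARI) adjunctions and the cosmological closure properties recorded in \Cref{prop:cocart-cosm-closure}), but not directly for the two-sided notion as a whole.

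The key steps, in order, are as follows. First I would form the limit types $A_\infty \defeq \lim_n A_n$, $B_\infty \defeq \lim_n B_n$, and $E_\infty \defeq \lim_n E_n$, using fibrant replacement so that the sequential limits are computed as iterated pullbacks of the connecting maps; the induced map $\pi_\infty: E_\infty \to B_\infty$ (and likewise $\xi_\infty: E_\infty \to A_\infty$) then arises canonically by the universal property. Second, since each connecting square is a two-sided cartesian functor, projecting separately onto the $A$- and $B$-legs shows that the towers $(\xi_n)$ and $(\pi_n)$ are towers of cocartesian (resp.\ cartesian) fibrations with cocartesian (resp.\ cartesian) connecting functors; applying the one-sided sequential-limit closure from \cite[Subsection~5.3.3]{BW21} (together with the fact that $j$-orthogonal maps, and hence the fibered LARI/RARI structures, are closed under sequential limits as stated in \Cref{sec:orth-lari}) yields that $\xi_\infty$ is cocartesian and $\pi_\infty$ is cartesian, and that $\varphi_\infty \defeq \pair{\xi_\infty}{\pi_\infty}$ is a cocartesian functor which is sliced cartesian. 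Third, I would verify the compatibility condition of \Cref{prop:comm-lifts}, namely that cocartesian and cartesian lifts commute in $E_\infty$: because all lifts in a sequential limit are computed componentwise (the co-/cartesian lift in $E_\infty$ of an arrow is the limit of the corresponding lifts in the $E_n$, by the universal property), the comparison isomorphism $u_!\,v^*\,e \cong v^*\,u_!\,e$ in $E_\infty$ is the limit of the componentwise isomorphisms, each of which exists by two-sidedness of $\varphi_n$. Hence $\varphi_\infty$ is two-sided cartesian by \Cref{def:2s-cart}.

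For the projection squares, I would observe that each is obtained by pairing the one-sided projection squares, which are cocartesian (resp.\ cartesian) functors by the one-sided sequential-limit statement, so that each projection square is two-sided cartesian by the reformulation of two-sided cartesian functors via their two legs. Finally, the universal property of $\pi_\infty$ (and the whole cone) with respect to cones of two-sided cartesian functors follows by assembling the corresponding one-sided universal properties along both legs, exactly as in the analogous one-sided statement, since a two-sided cartesian functor into the cone is equivalently a pair of a cocartesian and a cartesian functor on the two legs subject to the compatibility condition, and both of these transfer through the limit.

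The main obstacle I expect is the third step: confirming that the commutation-of-lifts condition is genuinely preserved under the sequential limit. The subtlety is that \Cref{prop:comm-lifts} and \Cref{lem:comp-cart-cocart-transp} phrase commutation as the invertibility of a filler $\kappa: u_!\,v^*\,e \to v^*\,u_!\,e$ built from nested fillers of co-/cartesian arrows, and one must check that this filler in $E_\infty$ really is the limit of the fillers in the $E_n$ rather than merely a map admitting a comparison to them. This requires care that the fibered LARI/RARI structures (which produce the lifts) are stable under the limit in a way compatible with the filling operations $\tyfill$; I would handle it by working at the level of the sliced Leibniz-cotensor adjunctions of \Cref{thm:sl-cocart-fam-char} and using that fibered adjoints, units, and the induced transposition equivalences are all preserved by sequential limits (again via the orthogonality closure of \Cref{sec:orth-lari}), so that the componentwise isomorphisms glue to an isomorphism in $E_\infty$.
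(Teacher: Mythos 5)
Your route is genuinely different from the paper's, and the difference matters. The paper does not decompose the two-sided structure into its legs and does not reason about componentwise lifts in the limit at all: it presents the sequential limit via the pullback
\[
E_\infty \;\simeq\; \Bigl(\textstyle\prod_{n:\mathbb N} E_{2n}\Bigr) \times_{\prod_{n:\mathbb N} E_n} \Bigl(\textstyle\prod_{n:\mathbb N} E_{2n+1}\Bigr),
\]
and then simply invokes the already-established closure of two-sided cartesian fibrations under dependent products (\Cref{prop:2s-cart-closed-pi}) and under pullbacks along cospans of two-sided cartesian functors (\Cref{prop:2s-cart-fun-pb-cones}), together with the corresponding statements for the projection cones and the universal property from \cite[Propositions~5.3.12--13]{BW21} and their duals. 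The point of this presentation, as the paper notes explicitly, is precisely to circumvent the coherence data attached to a direct description of the sequential limit.

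That is also where your proposal is weakest. Your third step asserts that co-/cartesian lifts in $E_\infty$ are "computed componentwise \ldots by the universal property," but the universal property of the limit does not give this for free: the limit type carries identification data between the images under the connecting maps, and to conclude that the fibered LARI/RARI adjunctions (and hence the filling operations entering the commutation isomorphism of \Cref{lem:comp-cart-cocart-transp}) pass to the limit you would have to show that the Leibniz cotensors and their adjoints commute with this particular limit presentation, coherently. You flag this obstacle yourself, but your proposed fix is only a sketch, and it is exactly the "unwieldy coherence data" the paper's pullback-of-products argument is designed to avoid. So your strategy is not wrong in principle, but closing it would require substantially more work than the paper's essentially formal reduction; if you want a short proof, rewrite the sequential limit as a pullback of products and cite the two closure properties already proved for those constructions.
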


\begin{proof}
	Using the closure properties from this chapter, we can argue along the lines of~\Cite[Proposition~5.3.12]{BW21}.\footnote{I thank Ulrik Buchholtz for initially suggesting this proof in~\cite{BW21} because it circumvents dealing with unwieldy coherence data that would occur in different presentations of the sequential limit.} This means, the limit fibration, again is constructed via the pullback
	\[\begin{tikzcd}
		{E_\infty} & {} & {\prod_{n:\mathbb N} E_{2n}} & {} \\
		& {\prod_{n:\mathbb N} E_{2n+1}} & {} & {\prod_{n:\mathbb N} E_{2n}} & {} \\
		{A_\infty \times B_\infty} && {\mathllap{\prod_{n:\mathbb N} A_{2n}} \times \prod_{n:\mathbb N} B_{2n}} \\
		& {\prod_{n:\mathbb N} A_{2n+1} \times \prod_{n:\mathbb N} B_{2n+1}} && {\mathllap{\prod_{n:\mathbb N} A_n} \times \prod_{n:\mathbb N} B_n} & {}
		\arrow[dashed, two heads, from=1-1, to=3-1]
		\arrow[two heads, from=1-3, to=3-3]
		\arrow[from=3-1, to=4-2]
		\arrow[shift right=2, dashed, from=1-1, to=2-2]
		\arrow[shorten >=41pt, from=4-2, to=4-4]
		\arrow[shorten >=45pt, from=3-1, to=3-3]
		\arrow[from=3-3, to=4-4]
		\arrow[two heads, from=2-4, to=4-4]
		\arrow[from=1-1, to=1-3]
		\arrow[from=1-3, to=2-4]
		\arrow["\lrcorner"{anchor=center, pos=0.125, rotate=45},draw=none, from=1-1, to=4-4, shift left=3]
		\arrow["\lrcorner"{anchor=center, pos=0.125, rotate=45}, draw=none, from=3-1, to=4-4]
		\arrow[from=2-2, to=2-4, crossing over]
		\arrow[two heads, from=2-2, to=4-2, crossing over]
	\end{tikzcd}\]
	and is two-sided cartesian due to~\Cref{prop:2s-cart-fun-pb-cones}. From~\Cite[Proposition~5.3.12]{BW21} and its dual we get that the projection squares are two-sided cartesian functors. The universal property is established using~\Cite[Proposition~5.3.13]{BW21} and its dual.
\end{proof}

\begin{corollary}[Sequential limit cones in a slice are cocartesian functors]\label{prop:cocart-fun-seqlim-sl}
	Consider an inverse diagram of two-sided cartesian fibrations as below where all of the connecting squares are two-sided cartesian functors:
	\[\begin{tikzcd}
		&& \cdots && {E_\infty} \\
		\cdots & {E_2} && {E_1} && {E_0} \\
		& \cdots \\
		& {} && {A \times B}
		\arrow["{\kappa_0}"{description}, dashed, from=1-5, to=2-6]
		\arrow["{g_1}"{description, pos=0.7}, from=2-2, to=2-4]
		\arrow["{\kappa_1}"{description}, dashed, from=1-5, to=2-4]
		\arrow["{\kappa_2}"{description}, dashed, from=1-5, to=2-2]
		\arrow["{\varphi_1}"{description}, from=2-4, to=4-4]
		\arrow["{\varphi_2}"{description}, from=2-2, to=4-4]
		\arrow["{\varphi_0}"{description}, from=2-6, to=4-4]
		\arrow[dashed, from=1-5, to=4-4, "\varphi_\infty"]
		\arrow[from=2-1, to=2-2]
		\arrow["{g_0}"{description, pos=0.7}, from=2-4, to=2-6, crossing over]
	\end{tikzcd}\]
	Then the induced map $\pi_\infty: E_\infty \to A \times B$ between the limit types is a two-sided cartesian fibration, and the projection squares constitute two-sided cartesian functors.
	
	Furthermore, $\pi_\infty: E_\infty \fibarr A \times B$ together with the projection squares satisfies the universal property of a sequential limit, relativized to the basis $A \times B$.
\end{corollary}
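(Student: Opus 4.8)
The plan is to mirror the proof of the absolute version, \Cref{prop:cocart-fun-seqlim}, replacing each closure property invoked there by its sliced counterpart, so that everything takes place over the fixed base $A \times B$. Concretely, I would present the sequential limit $E_\infty$ not as a literal limit over $\mathbb N$ (which would force me to manage unwieldy coherence data for the tower) but via the standard Bousfield--Kan style encoding as a pullback of two sliced products indexed by the even and odd stages. That is, writing $\times_{n:\mathbb N}^{A \times B}$ for the fiberwise product over $A \times B$ from \Cref{prop:2s-cart-closed-pi-loc}, I would form the square
\[
\begin{tikzcd}
	{E_\infty} && {\times_{n}^{A \times B} E_{2n}} \\
	{\times_{n}^{A \times B} E_{2n+1}} && {\times_{n}^{A \times B} E_{n}}
	\arrow[dashed, from=1-1, to=1-3]
	\arrow[dashed, from=1-1, to=2-1]
	\arrow[from=1-3, to=2-3]
	\arrow[from=2-1, to=2-3]
	\arrow["\lrcorner"{anchor=center, pos=0.125}, draw=none, from=1-1, to=2-3]
\end{tikzcd}
\]
where the two maps into $\times_{n}^{A \times B} E_n$ are induced, respectively, by the even/odd inclusions and by the connecting functors $g_n$, exactly as in \cite[Proposition~5.3.12]{BW21}.

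First, I would invoke \Cref{prop:2s-cart-closed-pi-loc} to conclude that each of the three sliced products $\times_n^{A \times B} E_{2n}$, $\times_n^{A \times B} E_{2n+1}$, and $\times_n^{A \times B} E_n$ is a two-sided cartesian fibration over $A \times B$, and that the induced comparison maps assembling the square are two-sided cartesian functors over $A \times B$. Next, since $E_\infty$ is obtained as a pullback in the slice over $A \times B$ of such maps, I would apply the sliced pullback-cone result \Cref{prop:2s-cart-fun-pb-cones-sliced} to deduce that $\varphi_\infty : E_\infty \fibarr A \times B$ is again two-sided cartesian and that the two projection squares out of $E_\infty$ are two-sided cartesian functors over $A \times B$. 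Composing these projections with the evaluation maps out of the sliced products (themselves two-sided cartesian by the universal property in \Cref{prop:2s-cart-closed-pi-loc}) and using composition stability, \Cref{prop:2s-fun-comp-sl}, then yields that each individual projection $\kappa_n : E_\infty \to_{A \times B} E_n$ is a two-sided cartesian functor.

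Finally, for the relativized universal property I would argue as in \cite[Proposition~5.3.13]{BW21} and its dual: a cone of two-sided cartesian functors over the tower, with vertex $\varphi : D \fibarr A \times B$, corresponds under the even/odd splitting to a pair of two-sided cartesian maps into the sliced products which agree after postcomposition into $\times_n^{A \times B} E_n$, hence factors uniquely through the pullback $E_\infty$; the sliced product universal property then guarantees that the comparison map is itself two-sided cartesian over $A \times B$. The only genuine subtlety is to check that the two legs assembling the pullback square are two-sided cartesian functors over $A \times B$, so that \Cref{prop:2s-cart-fun-pb-cones-sliced} applies; this is precisely where the even/odd presentation pays off, since it reduces the verification to the already-established sliced product and composition closure properties rather than to any explicit manipulation of the tower's coherence data. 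I expect this bookkeeping---confirming the two legs are sliced two-sided cartesian functors---to be the main, albeit routine, obstacle.
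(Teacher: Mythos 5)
Your argument is correct, but it takes a genuinely different route from the paper's. The paper's proof of this corollary is a one-liner: it observes that the statement is literally an instance of the absolute version, \Cref{prop:cocart-fun-seqlim}, applied to the inverse diagram whose base tower is constant at $A \times B$ with identity connecting maps, since the sequential limit of a tower of identities is the object itself, $\seqlim_n \pair{A \times B}{\id} \simeq A \times B$. You instead re-run the entire construction inside the slice over $A \times B$: the even/odd Bousfield--Kan presentation of $E_\infty$ as a pullback of sliced products, with \Cref{prop:2s-cart-closed-pi-loc} supplying two-sidedness of the sliced products and of the comparison legs, \Cref{prop:2s-cart-fun-pb-cones-sliced} handling the pullback, and \Cref{prop:2s-fun-comp-sl} assembling the projections. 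This is sound --- all the sliced closure properties you invoke are established in the paper, and the legs of your pullback square are indeed two-sided cartesian over $A \times B$ because they are induced, via the sliced product's universal property, from families of two-sided cartesian functors (evaluations composed with the $g_n$). What the paper's approach buys is economy and reuse: the absolute proposition has already done the coherence-avoiding bookkeeping, and specializing the base diagram costs only the observation about limits of identity towers. What your approach buys is self-containedness within the slice --- you never need to identify $\seqlim_n \pair{A \times B}{\id}$ with $A \times B$, and the argument exhibits directly which sliced closure properties are responsible for the result; the price is that you essentially re-prove \Cref{prop:cocart-fun-seqlim} rather than cite it.
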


\begin{proof}
	The sequential limit of a diagram of identity maps is the object itself, \eg~
	\[ \seqlim_n \pair{B}{\id_B} \simeq \sum_{\sigma:\N \to B} \prod_{n,k:\N} \sigma(n) = \sigma(n+k) \simeq B.\]
	Thus, the claim follows from~\Cref{prop:cocart-fun-seqlim}.
\end{proof}

\subsection{Cotensors}

\begin{prop}[Cocartesian fibrations are cotensored over maps/shape inclusions]\label{prop:2scart-fun-cotensor-maps}
	Let $P: A \to B \to \UU$ be a two-sided cartesian family with associated projection $\varphi \defeq \pair{\xi}{\pi}:E \fibarr A \times B$. For any type map or shape inclusion $j:Y \to X$, the maps $\varphi^X$ and $\varphi^Y$ are two-sided cartesian fibrations, and moreover the square
	\[\begin{tikzcd}
		{E^X} && {E^Y} \\
		{A^X \times B^X} && {A^Y \times B^Y}
		\arrow["{\varphi^X}"', two heads, from=1-1, to=2-1]
		\arrow[from=2-1, to=2-3]
		\arrow[from=1-1, to=1-3]
		\arrow["{\varphi^Y}", two heads, from=1-3, to=2-3]
	\end{tikzcd}\]
	is a two-sided cartesian functor.
\end{prop}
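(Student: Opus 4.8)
The plan is to reduce this cotensor-stability statement to the corresponding one-sided closure results already established, via the characterization of two-sided cartesian fibrations as cocartesian fibrations in cartesian fibrations from~\Cref{thm:char-two-sid}, specifically~\Cref{it:char-two-sid-ii}. First I would recall that, by~\Cref{thm:char-two-sid}, $\varphi \defeq \pair{\xi}{\pi}$ being two-sided cartesian means that $\pi:E \fibarr B$ is a cartesian fibration, $\varphi$ is a cartesian functor and a cocartesian fibration sliced over $B$, and moreover the fibered lifting $\chi_B$ is a cartesian functor. The strategy is to verify that each of these constituent pieces is preserved under cotensoring with $j$, using the known closure results.

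The key steps proceed by inspecting the induced square on cotensors. First I would use that cartesian fibrations are cotensored over maps and shape inclusions; this is part of the dual of the cosmological closure properties, e.g.\ the cartesian analogue of~\Cref{prop:cocart-cosm-closure} (as developed in~\cite{BW21}), which tells us that $\pi^X:E^X \fibarr B^X$ and $\pi^Y$ are again cartesian fibrations, and that the naturality square for $j$ is a cartesian functor. Next I would argue that the sliced cocartesian structure is preserved: applying the exponential $(-)^X$ to the sliced Leibniz cotensor data for $\varphi$ and using that Leibniz cotensors commute appropriately, the exponentiated functor $\varphi^X:E^X \to_{B^X} A^X \times B^X$ remains a cocartesian fibration sliced over $B^X$. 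Here I would appeal to the closure of sliced cocartesian fibrations under cotensoring, which follows from the fact that the defining fibered LARI (cf.~\Cref{thm:sl-cocart-fam-char}) is preserved by the cotensor functor, analogously to how fibered LARIs are preserved by sliced products in~\Cref{prop:fib-lari-pres-by-sl-prod}. Finally, since cartesian functors and the fibered LARI being cartesian are both conditions preserved under cotensoring (again by the one-sided cosmological closure properties applied in the slice), the hypotheses of~\Cref{it:char-two-sid-ii} hold for $\varphi^X$, so $\varphi^X$ is two-sided cartesian; the same applies to $\varphi^Y$, and the square being a two-sided cartesian functor follows by checking it separately after projecting to the two factors $A$ and $B$ of the base, using the one-sided statements for cocartesian and cartesian functors respectively.

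The main obstacle I anticipate is bookkeeping the interaction between the cotensor functor $(-)^X$ and the \emph{sliced} constructions---in particular confirming that $\VertArr$, the sliced comma $\relcomma{B}{\varphi}{E}$, and the fibered LARI $\chi_B$ all commute with $(-)^X$ in a way that is strictly compatible (up to the appropriate fibered equivalence) with the structure needed to reapply~\Cref{thm:char-two-sid}. The cleanest route is to note that cotensoring is itself a Leibniz cotensor operation, so that it commutes with the pullbacks defining the sliced comma and vertical-arrow objects, and that LARI adjunctions (being defined by an adjoint/equivalence condition on a gap map) are stable under applying a right-adjoint-like functor such as $(-)^X$. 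Once this commutation is recorded---appealing to the relevant lemmas on fibered adjunctions in~\Cref{app:fibconstr} and the cotensor closure in~\cite{BW21}---the verification of each clause is essentially formal, and the universal property of the square as a two-sided cartesian functor reduces to the separate one-sided universal properties exactly as in the proofs of~\Cref{prop:2s-cart-closed-pi,prop:2s-cart-fun-pb-cones}.
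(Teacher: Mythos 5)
Your proposal is correct, but for the first claim it takes a more unfolded route than the paper. The paper's own proof simply observes that the cotensor $\varphi^X$ is the dependent product $\prod_{x:X}\varphi$ over the constant $X$-indexed family, so the fact that $\varphi^X$ and $\varphi^Y$ are two-sided cartesian is an immediate instance of the already-established product stability (\Cref{prop:2s-cart-closed-pi}); you instead re-run the argument behind that proposition, going through \Cref{thm:char-two-sid}\Cref{it:char-two-sid-ii} and checking that each constituent of the ``cocartesian fibration in cartesian fibrations'' data is preserved by $(-)^X$. That works, and your worry about the commutation of $(-)^X$ with $\VertArr$, the sliced comma, and the fibered LARI is legitimate, but it is exactly the bookkeeping already packaged in \Cref{prop:clos-sl-cocart-fib-prod} and \Cref{prop:dep-prod-comm-sl-commas}, so citing product stability directly is the cleaner move. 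For the second claim your argument coincides with the paper's: compose with the projections to the $A$- and $B$-legs and invoke the one-sided statement that cotensoring a cocartesian (resp.\ cartesian) fibration yields a cocartesian (resp.\ cartesian) functor square, i.e.\ \cite[Proposition~5.3.15]{BW21} and its dual.
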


\begin{proof}
	By closedness under products, the maps $\varphi^X$, $\varphi^Y$ are two-sided cartesian fibrations. From~\cite[Proposition~5.3.15]{BW21}, we know that the square formed by the composites
	\[\begin{tikzcd}
		{E^X} && {E^Y} \\
		{A^X \times B^X} && {A^Y \times B^Y} \\
		{A^X} && {A^Y}
		\arrow["{\varphi^X}"', two heads, from=1-1, to=2-1]
		\arrow[from=2-1, to=2-3]
		\arrow[from=1-1, to=1-3]
		\arrow["{\varphi^Y}", two heads, from=1-3, to=2-3]
		\arrow[two heads, from=2-1, to=3-1]
		\arrow[two heads, from=2-3, to=3-3]
		\arrow["{\xi^X}"{description}, curve={height=40pt}, two heads, from=1-1, to=3-1]
		\arrow["{\xi^Y}"{description}, curve={height=-40pt}, two heads, from=1-3, to=3-3]
		\arrow[from=3-1, to=3-3]
	\end{tikzcd}\]
	is a cocartesian functor (since by precondition $\xi:E \fibarr A$ is a cocartesian fibration). The cartesian case over $B$ works the same.
\end{proof}

\begin{corollary}[Cocartesian fibrations in a slice are cotensored over maps/shape inclusions]\label{prop:2scart-fun-cotensor-maps-sl}
	Let $P: A \to B \to \UU$ be a two-sided cartesian family with associated projection $\varphi \defeq \pair{\xi}{\pi}:E \fibarr A \times B$. For any type map or shape inclusion $j:Y \to X$, the maps $X \iexp \varphi$ and $Y \iexp \varphi$ are two-sided cartesian fibrations, and moreover the triangle
	\[\begin{tikzcd}
		{X \iexp E} && {Y \iexp E} \\
		& {A \times B}
		\arrow["j \iexp \varphi", from=1-1, to=1-3]
		\arrow["{X \iexp \varphi}"', two heads, from=1-1, to=2-2]
		\arrow["{Y \iexp \varphi}", two heads, from=1-3, to=2-2]
	\end{tikzcd}\]
	is a two-sided cartesian functor.
\end{corollary}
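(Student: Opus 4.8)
The plan is to deduce this sliced statement from its unsliced counterpart \Cref{prop:2scart-fun-cotensor-maps} by pullback, exactly as the preceding sliced corollaries (\eg~\Cref{prop:2s-cart-closed-pi-loc}) are deduced from their absolute versions. The starting observation is that the sliced cotensor $X \iexp \varphi : X \iexp E \fibarr A \times B$ is, by its very definition, the pullback of the cotensor $\varphi^X : E^X \fibarr (A \times B)^X$ along the constant-diagram map $\cst : A \times B \to (A \times B)^X$, and likewise $Y \iexp \varphi$ is the pullback of $\varphi^Y$ along $\cst : A \times B \to (A \times B)^Y$. Since $(A \times B)^X \simeq A^X \times B^X$ and the constant map on a product is the product of the constant maps, this map factors as $\cst_A \times \cst_B$ with $\cst_A : A \to A^X$, $\cst_B : B \to B^X$, so the pullbacks are precisely of the form treated by the two-sided closure results.

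First I would invoke \Cref{prop:2scart-fun-cotensor-maps} to record that $\varphi^X$ and $\varphi^Y$ are two-sided cartesian fibrations and that the restriction square relating them over $A^X \times B^X \to A^Y \times B^Y$ is a two-sided cartesian functor. Pulling $\varphi^X$ and $\varphi^Y$ back along $\cst_A \times \cst_B$, the pullback stability \Cref{prop:2s-cart-closed-pb} then immediately yields that $X \iexp \varphi$ and $Y \iexp \varphi$ are two-sided cartesian fibrations over $A \times B$, settling the first half of the claim.

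For the functor part I would exhibit the sliced triangle as the pullback of the unsliced restriction functor along the constant maps. The one compatibility to check is that restricting a constant diagram along $j : Y \to X$ returns the corresponding constant diagram, \ie~$j^* \circ \cst_A = \cst_A$ and $j^* \circ \cst_B = \cst_B$ (as maps into $A^Y$, resp.~$B^Y$); this makes the two defining pullback squares fit together into a single pasting over $A \times B$, with the induced top map identified with $j \iexp \varphi$. Applying the pullback stability of two-sided cartesian functors \Cref{prop:2s-cart-fun-pb} to the restriction functor and the pullback along $\cst_A \times \cst_B$ then shows that $j \iexp \varphi$ is a two-sided cartesian functor over $A \times B$, as desired.

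The main obstacle is purely bookkeeping: verifying, after the usual fibrant replacement, that the iterated pullbacks really do present $j \iexp \varphi$ as a pullback of $E^X \to E^Y$ in the precise format demanded by \Cref{prop:2s-cart-fun-pb}, and that all the constant-map triangles commute strictly. As with the earlier sliced corollaries, these are routine pullback-pasting and exponential-commutes-with-pullback checks, carrying no genuinely new cartesian or cocartesian content beyond what \Cref{prop:2scart-fun-cotensor-maps,prop:2s-cart-closed-pb,prop:2s-cart-fun-pb} already supply.
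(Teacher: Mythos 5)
Your proposal is correct and is exactly the argument the paper intends: the corollary is stated without an explicit proof precisely because, with the sliced cotensor defined as the pullback of $\varphi^X$ along $\cst : A \times B \to (A\times B)^X$, it follows from \Cref{prop:2scart-fun-cotensor-maps} together with pullback stability of two-sided cartesian fibrations and functors, in the same pattern used for \Cref{prop:2s-cart-closed-pi-loc}. Your observation that $j^*\circ\cst = \cst$ (so the two defining pullback squares paste over $A \times B$ and the induced top map is $j \iexp \varphi$) is the only compatibility needed, and it holds on the nose.
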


\begin{prop}[Cocartesian functors are closed under Leibniz cotensors]\label{prop:2scart-fun-leibniz}
	Let $j: Y \to X$ be a type map or shape inclusion. Then, given two-sided cartesian fibrations $\psi: F \fibarr A \times B$, $\varphi: E \fibarr C \times D$, and a cocartesian functor
	\[\begin{tikzcd}
		F && E \\
		{A \times B} && {C \times D}
		\arrow["\psi"', two heads, from=1-1, to=2-1]
		\arrow["{\pair{k}{m}}"', from=2-1, to=2-3]
		\arrow["\mu", from=1-1, to=1-3]
		\arrow["\varphi", two heads, from=1-3, to=2-3]
	\end{tikzcd}\]
	the square induced between the Leibniz cotensors
	\[\begin{tikzcd}
		{F^X} && {F^Y\times_{E^Y} E^X} \\
		{(A \times B)^X} && {(A \times B)^X \times_{(A \times B)^Y} (C \times D)^X}
		\arrow[two heads, from=1-3, to=2-3]
		\arrow["{j \cotens \mu}", from=1-1, to=1-3]
		\arrow["{j \cotens \langle k,m \rangle }"', from=2-1, to=2-3]
		\arrow["{\psi^X}"', two heads, from=1-1, to=2-1]
	\end{tikzcd}\]
	is a cocartesian functor.
\end{prop}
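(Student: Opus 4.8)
The plan is to mirror the proof of~\Cref{prop:2scart-fun-cotensor-maps}: I first check that the two vertical legs of the induced square are two-sided cartesian fibrations, and then verify the two-sided cartesian functor property by projecting the square separately onto the $A/C$-factor and the $B/D$-factor of its base, reducing each half to the one-sided closure of co-/cartesian functors under Leibniz cotensors~(\Cref{prop:cocart-cosm-closure} and its dual). Recall from~\Cref{def:2s-cart} that a fibered map is a two-sided cartesian functor exactly when its left projection is a cocartesian functor between cocartesian fibrations and its right projection is a cartesian functor between cartesian fibrations, so it suffices to establish these two one-sided statements.

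First I would unravel the bases and legs. Since exponentiation by a fixed $X$ preserves products, we have $(A \times B)^X \simeq A^X \times B^X$, and likewise for $Y$, $C \times D$, \etc. Because pullbacks commute with products, the codomain base of the induced square splits as a product of a ``$C/A$''-factor and a ``$D/B$''-factor, each of which is exactly the codomain of the corresponding one-sided Leibniz cotensor; in particular both bases are genuine binary products. The left leg $\psi^X$ is two-sided cartesian by~\Cref{prop:2scart-fun-cotensor-maps}; for the right leg, the same proposition makes $\psi^Y$ and $\varphi^X$ two-sided cartesian fibrations, and the right-hand vertex arises from them by a pullback, so two-sidedness is inherited by~\Cref{prop:2s-cart-closed-pb}.

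It then remains to check that the horizontal maps form a two-sided cartesian functor. Here I would postcompose the induced square with the projection of its base onto the $A/C$-factor. Because cotensoring and pullback commute with the base projection $A \times B \to A$ (equivalently, by the product splitting above), this postcomposition returns precisely the one-sided Leibniz cotensor $j \cotens \mu$ of the underlying \emph{cocartesian} functor between the left legs $\xi: F \fibarr A$ and the corresponding cocartesian fibration over $C$. By~\Cref{prop:cocart-cosm-closure} this is again a cocartesian functor, so the square is cocartesian on the left; dually, projecting onto the $B/D$-factor and invoking the dual statement shows it is cartesian on the right. By~\Cref{def:2s-cart} the square is therefore a two-sided cartesian (in particular cocartesian) functor, as claimed.

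The main obstacle is the bookkeeping behind the reduction to a single side: one must check that projecting the two-sided Leibniz cotensor square down to one base factor genuinely recovers the one-sided Leibniz cotensor of the projected functor. This rests on the identification of the pullback-of-products with the product-of-pullbacks used above, together with the fact that, once the base is split as a product, the Leibniz gap map is computed factorwise; after the usual fibrant replacement these identifications are routine but must be spelled out so that the cocartesian (resp.\ cartesian) lifts tracked in $F^X$ are exactly those governed by the one-sided results.
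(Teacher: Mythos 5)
Your argument is correct, but it takes a different route from the paper's for the key step. The paper's proof stays entirely in the two-sided world: it applies \Cref{prop:2scart-fun-cotensor-maps} to get that the cotensor squares $F^X \to F^Y$, $E^X \to E^Y$, and $\mu^Y$ are two-sided cartesian functors, and then feeds the resulting cospan $F^Y \to E^Y \leftarrow E^X$ into \Cref{prop:2s-cart-fun-pb-cones} so that the gap map into the pullback is again a two-sided cartesian functor---mirroring how the one-sided Leibniz closure is itself proved in \cite{BW21}. You instead treat the one-sided Leibniz closure as a black box and reduce to it by postcomposing with the projections $A \times B \to A$ and $A\times B \to B$ of the base. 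That reduction is legitimate: by definition, being a two-sided cartesian functor \emph{is} being a cocartesian functor on the composites with the left projection and a cartesian functor on the composites with the right projection, and your identification of the projected square with the one-sided Leibniz cotensor square of $\mu : (F \fibarr A) \to (E \fibarr C)$ is exactly the same bookkeeping the paper carries out in the proof of \Cref{prop:2scart-fun-cotensor-maps}. Your approach buys reuse of the already-established one-sided result; the paper's buys uniformity with the other two-sided closure proofs and produces the two-sided structure on the Leibniz codomain directly rather than one side at a time.

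Two small citation corrections. For the two-sidedness of the right-hand vertex $F^Y \times_{E^Y} E^X$, the relevant result is not \Cref{prop:2s-cart-closed-pb} (pullback of a single fibration along a map of bases) but \Cref{prop:2s-cart-fun-pb-cones} (pullback of a cospan of two-sided fibrations along two-sided cartesian functors); the cospan here is the one produced by \Cref{prop:2scart-fun-cotensor-maps}. Also, the characterization of two-sided cartesian \emph{functors} you invoke is the definition opening the section on two-sided cartesian functors, not \Cref{def:2s-cart}, which defines two-sided cartesian families.
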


\begin{proof}
	This works, again, analogously to~\cite[Proposition~5.3.16]{BW21}, using~\Cref{prop:2scart-fun-cotensor-maps}, and then~\Cref{prop:2s-cart-fun-pb-cones}.
\end{proof}

\begin{corollary}[Cocartesian functors in a slice are closed under Leibniz cotensors]\label{prop:2scart-fun-leibniz-sl}
	Let $j: Y \to X$ be a type map or shape inclusion. Then, given two-sided cartesian fibrations $\psi: F \fibarr A \times B$, $\varphi: E \fibarr A \times B$, and a two-sided cartesian functor
	\[\begin{tikzcd}
		F && E \\
		& {A \times B}
		\arrow["\kappa", from=1-1, to=1-3]
		\arrow["\psi"', two heads, from=1-1, to=2-2]
		\arrow["\varphi", two heads, from=1-3, to=2-2]
	\end{tikzcd}\]
	the square induced between the Leibniz cotensors
	\[\begin{tikzcd}
		{X \iexp F} &&&& {Y \iexp F \times_{Y \iexp E} X \iexp F} \\
		&& {A \times B}
		\arrow["{j \cotens_{A \times B} \kappa}", from=1-1, to=1-5]
		\arrow[two heads, from=1-1, to=2-3]
		\arrow[two heads, from=1-5, to=2-3]
	\end{tikzcd}\]
	is a two-sided functor.
\end{corollary}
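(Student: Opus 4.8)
The plan is to mirror the proof of the non-sliced version \Cref{prop:2scart-fun-leibniz}, replacing its two auxiliary inputs by their sliced counterparts. Recall that the Leibniz cotensor $j \cotens_{A \times B} \kappa$ is, by definition, the gap map into the sliced pullback hom assembled from the sliced cotensors $X \iexp (-)$ and $Y \iexp (-)$ applied to $\psi$, $\varphi$, and $\kappa$. First I would apply \Cref{prop:2scart-fun-cotensor-maps-sl} to each of $\psi$ and $\varphi$: this shows that $X \iexp \psi$, $Y \iexp \psi$, $X \iexp \varphi$, and $Y \iexp \varphi$ are all two-sided cartesian fibrations over $A \times B$, and that the triangles induced by $j$ are two-sided cartesian functors. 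Applying the same corollary functorially along $\kappa$ yields that $X \iexp \kappa$ and $Y \iexp \kappa$ are two-sided cartesian functors over $A \times B$ as well.

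Next I would recognise the codomain of the Leibniz cotensor as a sliced pullback of these cotensors over $A \times B$. Invoking \Cref{prop:2s-cart-fun-pb-cones-sliced}, this pullback is itself a two-sided cartesian fibration over $A \times B$, its projection squares are two-sided cartesian functors, and, crucially, it enjoys the expected terminal universal property with respect to cones of two-sided cartesian functors over $A \times B$. The map $j \cotens_{A \times B} \kappa$ is precisely the mediating map into this pullback cone induced by the two-sided cartesian functors $X \iexp \kappa$ and the composite through $Y \iexp \psi$; hence, by that universal property, it is a two-sided cartesian functor, as claimed.

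The main obstacle I anticipate is bookkeeping rather than conceptual: one must carefully match the sliced pullback-hom construction defining $j \cotens_{A \times B} \kappa$ with the sliced pullback appearing in \Cref{prop:2s-cart-fun-pb-cones-sliced}, tracking the fibrant replacements so that the relevant sliced commas and pullbacks are identified and all maps genuinely live over $A \times B$. Once this identification is in place, the argument runs completely parallel to \cite[Proposition~5.3.16]{BW21} and to the non-sliced \Cref{prop:2scart-fun-leibniz}, with every appeal to a cotensor or pullback closure property replaced by its sliced analogue.
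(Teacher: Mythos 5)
Your proposal is correct and follows essentially the same route the paper intends: the corollary is proved exactly as the non-sliced \Cref{prop:2scart-fun-leibniz}, substituting \Cref{prop:2scart-fun-cotensor-maps-sl} and \Cref{prop:2s-cart-fun-pb-cones-sliced} for their absolute counterparts and using the terminal universal property of the sliced pullback cone to see that the gap map is two-sided cartesian. The only point worth making explicit is that $X \iexp \kappa$ and $Y \iexp \kappa$ are two-sided cartesian functors because the sliced cotensors arise by pulling back $\kappa^X$ and $\kappa^Y$ along $\cst$, so this follows from pullback stability of two-sided cartesian functors rather than directly from the cotensor corollary.
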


In sum, we obtain a synthetic analogue of the cosmological closure properties of two-sided cartesian fibrations, \wrt~varying as well as a fixed base:
\begin{theorem}[(Sliced) cosmological closure properties of two-sided cartesian families]\label{thm:2scart-cosm-closure}
	Over Rezk bases, it holds that:
	
	Two-sided cartesian families are closed under composition, dependent products, pullback along arbitrary maps, and cotensoring with maps/shape inclusions. Families corresponding to equivalences or terminal projections are always cocartesian.
	
	Between two-sided cartesian families over Rezk bases, it holds that:
	Two-sided cartesian functors are closed under (both horizontal and vertical) composition, dependent products, pullback, sequential limits,\footnote{all three objectwise limit notions satisfying the expected universal properties \wrt~to cocartesian functors} and Leibniz cotensors.
	
	Fibered equivalences and fibered functors into the identity of $\unit$ are always cocartesian.
	
	Furthermore, all of this is analogously true~\wrt~two-sided cartesian families over the same base and applying sliced versions of the constructions,
\end{theorem}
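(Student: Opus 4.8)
The plan is to assemble the theorem as a summary collating the individual closure results established throughout this section, rather than to prove anything new. The statement is a compilation, so the proof will proceed by matching each asserted closure property to the corresponding proposition or corollary, in both the varying-base and fixed-base (sliced) variants. First I would handle the closure of two-sided cartesian \emph{families}: composition is \Cref{prop:2s-fun-comp} together with span composition, dependent products are \Cref{prop:2s-cart-closed-pi} with the sliced version \Cref{prop:2s-cart-closed-pi-loc}, pullback along arbitrary maps is \Cref{prop:2s-cart-closed-pb}, and cotensoring with maps or shape inclusions is \Cref{prop:2scart-fun-cotensor-maps} (sliced: \Cref{prop:2scart-fun-cotensor-maps-sl}). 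The degenerate cases---families corresponding to equivalences or terminal projections---follow since these are trivially cocartesian on the left and cartesian on the right with vacuously satisfied commutation, exactly as in the one-sided situation recorded in \cite{BW21}.

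Next I would treat closure of two-sided cartesian \emph{functors}. Horizontal and vertical composition come from \Cref{prop:2s-fun-comp} and its sliced counterpart \Cref{prop:2s-fun-comp-sl}; dependent products use the product-cone statements \Cref{prop:2s-cart-closed-pi} and \Cref{prop:2s-cart-closed-pi-loc}; pullback uses \Cref{prop:2s-cart-fun-pb}, \Cref{prop:2s-cart-fun-pb-cones}, and the sliced \Cref{prop:2s-cart-fun-pb-cones-sliced}; sequential limits are \Cref{prop:cocart-fun-seqlim} with the sliced \Cref{prop:cocart-fun-seqlim-sl}; and Leibniz cotensors are \Cref{prop:2scart-fun-leibniz} with the sliced \Cref{prop:2scart-fun-leibniz-sl}. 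Throughout, the key technical engine is the characterization \Cref{thm:char-two-sid}, which reduces two-sidedness to sliced cocartesian fibrations inside cartesian fibrations, so that each closure property can be verified by separately projecting to the two factors of the base and invoking the one-sided $\infty$-cosmological closure properties of \cite{BW21} (and their duals) together with the sliced closure results \Cref{prop:clos-sl-cocart-fib-prod,prop:clos-sl-cocart-fib-comp,prop:clos-sl-cocart-fib-pb}.

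The main subtlety---and the one genuinely new ingredient not already fully spelled out in the cited individual propositions---is the verification that the commutation condition of \Cref{def:2s-cart}\ref{it:lifts-commute} is preserved by each construction. For products, pullbacks, and sequential limits this is transparent because, after fibrant replacement, the condition is checked fiberwise on the constituent data and the limit operations act componentwise, as noted in the proofs of \Cref{prop:2s-cart-fun-pb-cones} and \Cref{prop:cocart-fun-seqlim}. For composition and cotensoring one relies on \Cref{prop:comm-lifts} to reexpress commutation as an orthogonality-style condition stable under the relevant functorial operation. Thus the proof amounts to threading the commutation condition through each already-established one-sided closure result; I expect no difficulty beyond bookkeeping, since all the heavy lifting is done by \Cref{thm:char-two-sid} and the sliced lemmas.

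Accordingly, the proof is simply a dispatch to the preceding results.

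\begin{proof}
	This is a summary of the closure properties established in this section. The statements about two-sided cartesian families follow from \Cref{prop:2s-fun-comp} (composition), \Cref{prop:2s-cart-closed-pi} and \Cref{prop:2s-cart-closed-pi-loc} (dependent and sliced products), \Cref{prop:2s-cart-closed-pb} (pullback), and \Cref{prop:2scart-fun-cotensor-maps} together with \Cref{prop:2scart-fun-cotensor-maps-sl} (cotensoring). The degenerate cases reduce to the one-sided situation of \cite{BW21}. The statements about two-sided cartesian functors follow from \Cref{prop:2s-fun-comp,prop:2s-fun-comp-sl} (composition), \Cref{prop:2s-cart-fun-pb,prop:2s-cart-fun-pb-cones,prop:2s-cart-fun-pb-cones-sliced} (pullback and product cones), \Cref{prop:cocart-fun-seqlim,prop:cocart-fun-seqlim-sl} (sequential limits), and \Cref{prop:2scart-fun-leibniz,prop:2scart-fun-leibniz-sl} (Leibniz cotensors). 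In each case the characterization \Cref{thm:char-two-sid} reduces the claim to the corresponding sliced closure property \Cref{prop:clos-sl-cocart-fib-prod,prop:clos-sl-cocart-fib-comp,prop:clos-sl-cocart-fib-pb} and its dual, while preservation of the commutation condition of \Cref{def:2s-cart} is verified fiberwise via \Cref{prop:comm-lifts} after fibrant replacement.
\end{proof}
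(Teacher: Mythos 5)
Your proposal is correct and matches the paper's approach exactly: the theorem is introduced with ``In sum, we obtain\ldots'' and carries no separate proof, being precisely a collation of the section's individual closure results (composition, span composition, pullback, dependent/sliced products, sequential limits, Leibniz cotensors), all ultimately resting on \Cref{thm:char-two-sid} and the sliced closure lemmas. Your dispatch to the numbered propositions, including the sliced variants and the fiberwise verification of the commutation condition, is exactly the intended argument.
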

						
			\section{Two-sided Yoneda Lemma}
			\subsection{Two-sided cartesian sections}

\begin{definition}[Two-sided cartesian sections]
Let $P:A \times B \to \UU$ be a two-sided family with associated cocartesian fibration $\xi: E \fibarr A$ and cartesian fibration $\pi: E \fibarr B$, resp.

A section $\sigma: \prod_{\substack{a:A \\ b:B}} P(a,b)$ is \emph{two-sided cartesian} if it maps pairs $\pair{u}{\id_b}$ to $\xi$-cocartesian sections and $\pair{\id_a}{v}$ to $\pi$-cartesian sections, \ie:~for all arrows $u:\Delta^1 \to A$, $v:\Delta^1 \to B$ and elements $a:A$, $b:B$ the dependent arrow $\sigma(u,\id_b) : \prod_{t:\Delta^1} P(u(t),b)$ is $\xi$-cocartesian while $\sigma(\id_a,v): \prod_{t:\Delta^1} P(a,v(t))$ is $\pi$-cartesian.

Note that this yields a proposition, and the (sub-)type of such sections is denoted by
\[ \prod_{\substack{a:A\\b:B}}^\tscart P(a,b) \cofibarr  \prod_{\substack{a:A\\b:B}} P(a,b) . \]
\end{definition}

Of central importance will be the following map. We fix a $P:A \times B \to \UU$ a two-sided family, and assume $a:A$ to be initial and $b:B$ to be terminal. We then define
\[ \yon: P(a,b) \to \prod_{A \times B} P, \quad \yon \defeq \lambda d,x,y. (\emptyset_x)_!((!_y)^* d). \]
Note that by two-sidedness of $P$ we have a path
\[ \yon(d)(x,y) = (\emptyset_x)_!((!_y)^* b) = (!_y)^*((\emptyset_x)_! d). \]

In the following, we first show that $\yon$ is, in fact, valued in two-sided cartesian sections. We then conclude that it is a quasi-inverse of the evaluation map, constituting a quasi-equivalence:
\[\begin{tikzcd}
	{\prod_{A \times B}^\tscart P} && {P(a,b)}
	\arrow[""{name=0, anchor=center, inner sep=0}, "{\ev_{\langle a,b \rangle}}"', curve={height=12pt}, from=1-1, to=1-3]
	\arrow[""{name=1, anchor=center, inner sep=0}, "\yon"', curve={height=12pt}, from=1-3, to=1-1]
	\arrow["\simeq"{description}, Rightarrow, draw=none, from=1, to=0]
\end{tikzcd}\]
Finally, the Yoneda Lemmas will follow as instances from this.

\begin{proposition}
Let $P:A \times B \to \UU$ be a two-sided family. Assume $a:A$ is initial and $b:B$ is terminal. Then for all $d:P(a,b)$, the section $\yon\,d:\prod_{A \times B} P$ is two-sided cartesian. 
\end{proposition}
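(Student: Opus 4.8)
The plan is to prove that $\yon\,d$ is two-sided cartesian by verifying the two defining conditions directly: that $\yon\,d$ sends arrows of the form $\pair{u}{\id_y}$ in $A \times B$ to $\xi$-cocartesian arrows, and arrows of the form $\pair{\id_x}{v}$ to $\pi$-cartesian arrows. By the formula
\[ \yon(d)(x,y) = (\emptyset_x)_!((!_y)^* d), \]
the value at a point is obtained by first applying cartesian transport along the terminal map $!_y : y \to b$ and then cocartesian transport along the initial map $\emptyset_x : a \to x$. The crucial structural input is that $P$ is two-sided, so by~\Cref{def:2s-cart}(\ref{it:lifts-commute}) and~\Cref{prop:comm-lifts} these two transports commute up to a canonical identification, which is exactly what the displayed path in the statement records.

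First I would treat the \emph{cocartesian} direction. Fix an arrow $u : x \to_A x'$ and an element $y:B$; I must show that the dependent arrow $\yon(d)(u,\id_y)$ is $\xi$-cocartesian. The point is that, since $a$ is initial, the arrow $u \circ \emptyset_x : a \to x'$ is canonically identified with $\emptyset_{x'}$ (initiality gives contractibility of the relevant hom-type). Hence, by functoriality of cocartesian transport~(\Cref{prop:cocart-functoriality}), the composite cocartesian lift $(\emptyset_{x'})_!$ factors through $u_! \circ (\emptyset_x)_!$, and the mediating arrow over $u$ is precisely the cocartesian lift $\xi_!(u, (\emptyset_x)_!((!_y)^*d))$. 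Reading off~\Cref{lem:cocart-arrows-isos} and the closure property~\Cref{prop:cocart-arr-closure}(\ref{it:cocart-arr-comp}), this mediating arrow is $\xi$-cocartesian, as required. I would draw the short diagram making the factorization visible rather than grinding the path algebra.

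The \emph{cartesian} direction is dual: fix $x:A$ and $v : y' \to_B y$; since $b$ is terminal, the composite $!_y \circ v : y' \to b$ is identified with $!_{y'}$, so by contravariant functoriality of cartesian transport the composite $(!_{y'})^*$ factors as $v^* \circ (!_y)^*$, exhibiting $\yon(d)(\id_x, v)$ as the cartesian lift $\pi^*(v, (\emptyset_x)_!((!_y)^*d))$, which is $\pi$-cartesian by the dual of~\Cref{prop:cocart-arr-closure}. The one genuine subtlety — and the step I expect to be the main obstacle — is the bookkeeping needed to ensure that when I restrict to an arrow that is ``pure'' in one variable (say $\pair{u}{\id_y}$), the transport in the \emph{other} variable is genuinely constant up to a coherent identity, so that the commutation square of~\Cref{lem:comp-cart-cocart-transp} degenerates appropriately. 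Concretely, I must check that applying $(!_y)^*$ commutes strictly enough with the cocartesian lift along $u$; this is exactly where two-sidedness (the isomorphism $u_! v^* e \simeq v^* u_! e$) is invoked to move the fixed-variable transport past the varying-variable lift without obstruction. Once this coherence is in place, both verifications reduce to the functoriality and closure lemmas already established, and the proposition follows.
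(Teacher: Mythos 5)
Your proposal is correct and follows essentially the same route as the paper: the cocartesian half is established by factoring $\emptyset_{x'}$ as $u \circ \emptyset_x$ (the paper packages this as the naturality square of the $2$-cell of cocartesian lifts $\xi_!(\emptyset_x,(!_y)^*d)$ against the constant map $\lambda x,y.(!_y)^*d$) followed by cancellation, the cartesian half is dual, and the two-sidedness identification $u_!v^*e = v^*u_!e$ is exactly what lets you read $\yon\,d$ in whichever order makes the relevant lift visible. One small correction: the step that shows the mediating arrow is cocartesian is right cancellation, \Cref{prop:cocart-arr-closure}(\ref{it:cocart-arr-cancel}), not closure under composition.
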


\begin{proof}
This is an extension of \cite[Proposition~7.1.3]{BW21} to the two-sided case. We write $E \defeq \totalty{P}$ and fix an element $d:P(a,b)$. Let $\xi:E \fibarr A$ denote the associated cocartesian fibration. Again, we will only establish one of the two dual parts of the statement, namely that $\yon d(u,\id_y)$ is $\xi$-cocartesian for any $u:\Delta^1 \to A$ and $y:B$

From this, we define the map that yields the cartesian transport along the terminal maps in $B$, \ie
\[ \tau \jdeq \lambda x,y.(!_y)^*d : A \times B \to E. \]
Next, consider the family of cocartesian lifts over the initial maps in $A$, starting at the points given by $\tau$. This is realized by the $2$-cell $\tau:\hom_{A \times B \to E}(\tau\,d, \yon \,d)$ defined by
\[ \tau(x,y) \defeq \xi_!(\emptyset_x,(!_y)^*\,d): (!_y)^*d \cocartarr_{\pair{\emptyset_x}{y}} (\emptyset_x)_!(!_y)^*d. \]
The action of the $2$-cell $\chi$ on a pair $\pair{u}{\id_y}$ for $u:\Delta^1 \to A$ and $y:B$ is given by the following dependent square:
\[\begin{tikzcd}
	E & {\tau d(x,y)} && {\tau d(x',y)} \\
	& {\yon d(x,y)} && {\yon d(x',y)} \\
	{A \times B} & a && a \\
	& x && {x'} \\
	& y && y \\
	& y && y
	\arrow["{\id_{(!_y)^*d}}", Rightarrow, no head, from=1-2, to=1-4]
	\arrow[from=1-2, to=2-2, cocart]
	\arrow["{\yon d(u,\id_y)}"', from=2-2, to=2-4]
	\arrow[from=1-4, to=2-4, cocart]
	\arrow["{\id_a}", Rightarrow, no head, from=3-2, to=3-4]
	\arrow["u"', from=4-2, to=4-4]
	\arrow[dashed, from=3-4, to=4-4, "\emptyset_x"]
	\arrow[dashed, from=3-2, to=4-2, "\emptyset_{x'}", swap]
	\arrow["{\id_y}"', Rightarrow, no head, from=5-2, to=6-2]
	\arrow["{\id_y}", Rightarrow, no head, from=5-2, to=5-4]
	\arrow["{\id_y}"', Rightarrow, no head, from=6-2, to=6-4]
	\arrow[two heads, from=1-1, to=3-1]
	\arrow["{\id_y}", Rightarrow, no head, from=5-4, to=6-4]
\end{tikzcd}\]
By right cancelation of cocartesian arrows, $\yon d(u,\id_y)$ is cocartesian, too.
\end{proof}

We need one more lemma before we are ready to prove the main theorem of this subsection, which in turn will specialize to the desired versions of the Yoneda Lemma. The lemma gives canonical identities in the presence of inital and terminal elements, resp., in the base types.

\begin{lemma}[Coherence of terminal transport with two-sided cartesian sections]\label{lem:coh-ttransp-2scart}
Let $A$ and $B$ types with an initial element $a:A$ and terminal element $b:B$. Furthermore, consider a two-sided family $P:A \times B \to \UU$ with associated cocartesian fibration $\xi: E \fibarr A$ and cartesian fibration $\pi: E \fibarr B$, resp. Given a section $\sigma:\prod_{\substack{a:A\\b:B}}^\tscart P$, for any $x:A$, $y:B$  there are identifications
\[ (!_y)^*\sigma(a,b) = \sigma(a,y) \quad \text{and} \quad (\emptyset_x)_!\sigma(a,b) = \sigma(x,b). \]
\end{lemma}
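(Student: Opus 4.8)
The plan is to prove the two identifications separately, exploiting the defining property of two-sided cartesian sections together with the universal properties of initial and terminal objects in the base types. Since the situation is self-dual (swapping the roles of $A$ and $B$, covariance and contravariance, initial and terminal), I would establish one identity in full and then obtain the other by the same argument run through the dual constructions. I would focus on the first identity $(!_y)^*\sigma(a,b) = \sigma(a,y)$, which concerns the cartesian side over $B$.

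First I would consider the terminal map $!_y : y \to b$ in $B$ and its $\pi$-cartesian lift. Evaluating the section $\sigma$ along the pair $\pair{\id_a}{!_y}$, the definition of two-sided cartesian section tells us that the dependent arrow $\sigma(\id_a, !_y) : \prod_{t:\Delta^1} P(a, !_y(t))$ is $\pi$-cartesian. This dependent arrow runs from $\sigma(a,y)$ to $\sigma(a,b)$ and lies strictly over $!_y : y \to_B b$. On the other hand, by definition of the reindexing $(!_y)^*$, the canonical cartesian lift $P^*(!_y, \sigma(a,b)) : (!_y)^*\sigma(a,b) \cartarr_{\pair{a}{!_y}} \sigma(a,b)$ is the homotopically unique $\pi$-cartesian arrow over $!_y$ with target $\sigma(a,b)$. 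By the uniqueness of cartesian lifts over Rezk types (the dual of~\Cref{prop:cocart-lifts-unique-in-isoinner-fams}), these two cartesian arrows with the same target must be identified; comparing their sources yields the desired path $(!_y)^*\sigma(a,b) = \sigma(a,y)$.

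For the second identity $(\emptyset_x)_!\sigma(a,b) = \sigma(x,b)$ I would run the dual argument on the cocartesian side. Here $\emptyset_x : a \to x$ is the (homotopically unique) arrow out of the initial object $a:A$, and the section property gives that $\sigma(\emptyset_x, \id_b) : \prod_{t:\Delta^1} P(\emptyset_x(t), b)$ is $\xi$-cocartesian, running from $\sigma(a,b)$ to $\sigma(x,b)$ over $\emptyset_x$. Comparing this with the canonical cocartesian lift $P_!(\emptyset_x, \sigma(a,b))$ and invoking uniqueness of cocartesian lifts (\Cref{prop:cocart-lifts-unique-in-isoinner-fams}) identifies their targets, giving $(\emptyset_x)_!\sigma(a,b) = \sigma(x,b)$.

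The main subtlety to handle carefully is the interaction between the two base variables: when I evaluate $\sigma$ along $\pair{\id_a}{!_y}$ I am implicitly fixing the first coordinate at the initial element $a$, so I should confirm that the $\pi$-cartesianness claimed by the two-sided section property is exactly cartesianness of the total arrow in $\pi : E \fibarr B$ and not merely some fiberwise condition. Since the definition of two-sided cartesian section requires $\sigma(\id_a, v)$ to be $\pi$-cartesian for \emph{all} such $v$ and elements, this is immediate, and no compatibility between the cocartesian and cartesian lifts (the content of~\Cref{lem:comp-cart-cocart-transp}) is needed for this statement—each identity lives purely on one side. Consequently the only real work is the appeal to uniqueness of lifts, which is legitimate because $A$ and $B$ are Rezk and $P$ is two-sided cartesian, hence in particular isoinner on each side.
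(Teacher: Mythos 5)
Your proposal is correct and follows essentially the same route as the paper: the paper also compares the canonical cartesian lift $P^*(!_y,\sigma(a,b))$ with the cartesian arrow $\sigma(\id_a,!_y)$ supplied by the two-sided section property, only it spells out the uniqueness step by constructing the mediating filler $g:\sigma(a,y)\to(!_y)^*\sigma(a,b)$, observing it is cartesian by left cancelation and vertical over $\id_y$, hence an isomorphism. Your direct appeal to the (dual of the) uniqueness of co-/cartesian lifts, and your handling of the second identity by duality, match the paper's argument.
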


\begin{proof}
We only treat the first named case since the second named one is completely dual.

Consider on the one hand the cartesian lift of the terminal map $!_y: y \to b$ \wrt~$\sigma(a,b)$ (over the identity $\id_a: a = a$), \ie~the dependent arrow $f \defeq (!_y)^*\sigma(a,b) \cartarr \sigma(a,b)$. On the other hand, consider the action of $\sigma$ on the pair $\pair{\id_a}{!_y}$, namely $h\defeq \sigma(\id_a,!_y) : \sigma(a,y) \cartarr \sigma(a,b)$, which is a cartesian arrow since the section $\sigma$ is two-sided cartesian. Then the mediating induced arrow $g \defeq \tyfill_h(f): \sigma(a,y) \cartarr (!_y)^*\sigma(a,b)$ is cartesian by left cancelation. But since it also $\pi$-vertical, lying over $\id_y$, it is an isomorphism, \cf~\Cref{fig:coh-2ssec}.
\end{proof}
\begin{figure}
\[\begin{tikzcd}
	& {\sigma(a,y)} \\
	E & {(!_y)^*\sigma(a,b)} && {\sigma(a,b)} \\
	& a && a \\
	{A \times B} & y && b
	\arrow["{!_y}", from=4-2, to=4-4]
	\arrow[Rightarrow, no head, from=3-2, to=3-4]
	\arrow["f"', cart, from=2-2, to=2-4]
	\arrow["g"', dashed, "\simeq", from=1-2, to=2-2]
	\arrow["h", cart, from=1-2, to=2-4]
	\arrow[two heads, from=2-1, to=4-1]
\end{tikzcd}\]
\caption{Coherence of terminal transport with two-sided cartesian sections}\label{fig:coh-2ssec}
\end{figure}

In analogy with \cite[Proposition~7.1.4]{BW21}, \cite[Theorem~5.7.18]{RV21}, \cite[Theorem~9.7]{RS17}, the map $\yon: P(a,b) \to \prod_{A \times B}^{\tscart} P$ mediates an equivalence between these two types:
\begin{proposition}\label{prop:fiber-init-term-2s}
Let $A,B$ be Rezk types with an initial element $a:A$ and a terminal element $b:B$. For a two-sided family $P:A \times B \to \UU$, evaluation at $\pair{a}{b}$ given by
\[ \ev_{\pair{a}{b}}: \Big( \prod_{A \times B}^\tscart P\Big) \to P(a,b) \]
is an equivalence.
\end{proposition}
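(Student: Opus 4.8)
The plan is to exhibit $\ev_{\pair{a}{b}}$ as one half of a quasi-equivalence whose quasi-inverse is the map $\yon$. By the preceding proposition, $\yon\,d$ is a two-sided cartesian section for every $d:P(a,b)$, so $\yon$ corestricts to a map $\yon: P(a,b) \to \prod_{A\times B}^\tscart P$. Since being a two-sided cartesian section is a proposition, it suffices to construct the two roundtrip homotopies in the underlying section types; these then descend automatically to the subtype, and a quasi-equivalence induces an equivalence, giving the claim.

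First I would treat the roundtrip $\ev_{\pair{a}{b}} \circ \yon \simeq \id_{P(a,b)}$. Here one computes $\ev_{\pair{a}{b}}(\yon\,d) \jdeq (\emptyset_a)_!((!_b)^*d)$. Since $a$ is initial the endo-arrow $\emptyset_a:a\to a$ is identified with $\id_a$ (the type $a \to a$ being contractible), and dually $!_b$ is identified with $\id_b$ as $b$ is terminal. By functoriality of cocartesian transport, \Cref{prop:cocart-functoriality}, and its cartesian dual, transport along an identity arrow is the identity up to a path, so $(\emptyset_a)_!((!_b)^*d) = d$, as desired.

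For the roundtrip $\yon \circ \ev_{\pair{a}{b}} \simeq \id$, I would start from a section $\sigma:\prod_{A\times B}^\tscart P$ and compute pointwise
\[ \yon(\sigma(a,b))(x,y) \jdeq (\emptyset_x)_!((!_y)^*\sigma(a,b)) = (\emptyset_x)_!(\sigma(a,y)) = \sigma(x,y), \]
where the first identity is the first clause of \Cref{lem:coh-ttransp-2scart} and the second is its dual clause, applied with the terminal element $b$ replaced by an arbitrary $y:B$. Relative function extensionality then assembles these pointwise identifications into a single identification of sections.

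The delicate point, which I expect to be the main obstacle, is the second identification: \Cref{lem:coh-ttransp-2scart} is stated with $b$ in the second slot, whereas here I need $(\emptyset_x)_!(\sigma(a,y)) = \sigma(x,y)$ for general $y$. I would resolve this by observing that $P^y \defeq \lambda x.\,P(x,y): A \to \UU$ is a cocartesian family over $A$ — its cocartesian lifts are precisely the $\pi$-vertical lifts $P_!(u,e)$ furnished by cocartesianness on the left — and that $\lambda x.\,\sigma(x,y)$ is a cocartesian section of $P^y$. The needed identity is then exactly the one-sided coherence between cocartesian transport and a cocartesian section over a base with initial object $a$, i.e.\ the dual argument of \Cref{lem:coh-ttransp-2scart} run inside $P^y$; that argument uses only initiality of $a$ (the cocartesian lift of $\emptyset_x:a\to x$ with respect to $\sigma(a,y)$ over $\id_y$, together with the action of $\sigma$ on $\pair{\emptyset_x}{\id_y}$), so it applies verbatim. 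With both roundtrips established, $\ev_{\pair{a}{b}}$ and $\yon$ are mutually quasi-inverse, and hence $\ev_{\pair{a}{b}}$ is an equivalence.
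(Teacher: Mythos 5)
Your proposal is correct and follows essentially the same route as the paper: both roundtrips are handled identically, with the second one reduced via \Cref{lem:coh-ttransp-2scart} to the uniqueness of cocartesian lifts (the paper packages your ``run the dual clause inside $P^y$'' step as an explicit dependent square whose filler is cocartesian by right cancelation and vertical, hence an identity). Your explicit flagging and resolution of the fact that the lemma's second clause must be applied with an arbitrary $y:B$ in place of $b$ is a sound reading of what the paper does implicitly.
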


\begin{proof}
We show that for the map $\yon$ as defined above we have identifications $\ev_{\pair{a}{b}} \circ \yon = \id_{P(a,b)}$ and $\yon \circ \ev_{\pair{a}{b}} = \id_{\prod_{A \times B}P}$. The first case is easy: the initial map into the initial element $a$ is just the identity, whose cocartesian lift is an identity as well, and the same holds analogously for the terminal element $b$, \ie
\[ \yon(d)(\sigma)(a,b) = (\emptyset_a)_!(!_b)^*(d) = (\id_a)_!(\id_b)^*(d) = d.\]
For the other round-trip, we have to give an identification
\[ (\yon \circ \ev_{\pair{a}{b}})(\sigma)(x,y) = \sigma(x,y) \]
Fix elements $x:A$ and $y:B$.
 Note that by \Cref{lem:coh-ttransp-2scart} there is a path
 \[ q:\sigma(a,y) =_{P(a,y)} (!_y)^*(\sigma(a,b)).\]
Since $\sigma$ is two-sided cartesian, we obtain the following dependent square:
\[\begin{tikzcd}
	E && {\sigma(a,y)} && {\sigma(x,y)} \\
	&& {(!_y)^*\sigma(a,b)} && {\yon d(\sigma(a,b))(x,y)} \\
	{A \times B} && a && x \\
	&& a && x \\
	&& y && y \\
	&& y && y
	\arrow["{\sigma(\emptyset_x, \id_y)}", from=1-3, to=1-5, cocart]
	\arrow["q"', Rightarrow, no head, from=1-3, to=2-3]
	\arrow["{\xi_!(\emptyset_x,(!_y)^* \sigma(a,b))}"', from=2-3, to=2-5, cocart]
	\arrow["{\emptyset_x}", from=3-3, to=3-5]
	\arrow["{\id_a}"', Rightarrow, no head, from=3-3, to=4-3]
	\arrow["{\id_x}", Rightarrow, no head, from=3-5, to=4-5]
	\arrow["{\id_y}"', Rightarrow, no head, from=5-3, to=6-3]
	\arrow["{\id_y}", Rightarrow, no head, from=5-3, to=5-5]
	\arrow["{\id_y}"', Rightarrow, no head, from=6-3, to=6-5]
	\arrow["{\id_y}", Rightarrow, no head, from=5-5, to=6-5]
	\arrow[two heads, from=1-1, to=3-1]
	\arrow["{\emptyset_x}"', from=4-3, to=4-5]
	\arrow[dashed, "g", from=1-5, to=2-5]
\end{tikzcd}\]
Since the filler $g$ is cocartesian by right cancelation, and vertical at the same time it is an isomorphism, hence an identity $\sigma(x,y) = \yon d(\sigma(a,b))(x,y)$.
\end{proof}

\subsection{Dependent and absolute two-sided Yoneda Lemma}

Following~\cite[Theorem~7.2.3]{BW21}, we obtain the \emph{dependent Yoneda Lemma for two-sided families} by~\Cref{prop:fiber-init-term-2s}, and again this will in turn imply the absolute version.

\begin{theorem}[Dependent Yoneda Lemma for two-sided families]\label{thm:dep-yon-2s}
Let $Q: \comma{a}{A} \times \comma{B}{b} \to \UU$ be a two-sided family over Rezk types $A$ and $B$. For any $a:A$ and $b:$, the evaluation map
\[ \ev_{\id_{\pair{a}{b}}} : \Big( \prod_{\comma{a}{A} \times \comma{B}{b}}^\tscart Q\Big) \rightarrow Q(\id_a,\id_b) \]
is an equivalence.
\end{theorem}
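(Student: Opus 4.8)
The plan is to reduce the \emph{dependent} two-sided Yoneda Lemma to the already-established evaluation equivalence~\Cref{prop:fiber-init-term-2s}. The crucial observation is that the slice types $\comma{a}{A}$ and $\comma{B}{b}$ come with canonical initial and terminal elements, respectively: the identity $\id_a : \comma{a}{A}$ is initial (since $\comma{a}{A} \equiv \comma{a}{A}$ is the coslice under $a$, whose terminal-in-the-coslice, equivalently initial object, is the identity arrow at $a$), and dually $\id_b : \comma{B}{b}$ is terminal in the slice over $b$. These are the synthetic $\inftyone$-categorical analogues of the standard fact that in an under-category the identity is initial. I would state and use these two facts first, citing the relevant slice/coslice discussion (\Cref{ssec:commas-cones} and the one-sided input~\cite[Proposition~7.1.3]{BW21}, \cite[Theorem~9.7]{RS17}).

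With those two facts in hand, the theorem becomes a direct instance of~\Cref{prop:fiber-init-term-2s}. First I would set $A' \defeq \comma{a}{A}$ and $B' \defeq \comma{B}{b}$, note that both are Rezk types (slices of Rezk types are Rezk), and that $Q : A' \times B' \to \UU$ is by hypothesis a two-sided family. Then I would invoke~\Cref{prop:fiber-init-term-2s} with the initial element $\id_a : A'$ and terminal element $\id_b : B'$, which immediately yields that
\[ \ev_{\pair{\id_a}{\id_b}} : \Big( \prod_{A' \times B'}^\tscart Q\Big) \to Q(\id_a,\id_b) \]
is an equivalence. Unwinding the definitions $A' = \comma{a}{A}$, $B' = \comma{B}{b}$ gives exactly the claimed map $\ev_{\id_{\pair{a}{b}}}$, completing the argument.

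The only genuine content to check is that $\id_a$ is initial in $\comma{a}{A}$ and $\id_b$ is terminal in $\comma{B}{b}$ for Rezk types $A$ and $B$; everything else is bookkeeping about identifying the evaluation maps. The main obstacle I expect is therefore verifying this initiality/terminality \emph{synthetically}, \ie~showing $\prod_{w:\comma{a}{A}} \isContr\big(\hom_{\comma{a}{A}}(\id_a, w)\big)$ and dually. I would handle this by recalling that $\comma{a}{A} \equiv \comma{a}{A}$ is itself a comma type (\Cref{def:comma-obj}), whose hom-types between $\id_a$ and an arrow $u : a \to x$ unfold, via the extension-type description and~\Cref{thm:choice}, to a contractible type of fillers of a suitable horn in $A$—this is precisely the one-sided coslice computation already available in the cited literature. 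Once initiality and terminality are recorded as a short lemma (or quoted), the two-sided statement follows formally, and no further calculation is required.
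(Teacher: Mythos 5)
Your proposal is correct and follows exactly the paper's route: the paper likewise observes that $\id_a$ is initial in $\comma{a}{A}$ and $\id_b$ is terminal in $\comma{B}{b}$ (citing \cite[Lemma~8.9]{RS17} rather than reproving it) and then concludes as an instance of~\Cref{prop:fiber-init-term-2s}. No gaps.
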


\begin{proof}
Recall that by~\cite[Lemma~8.9]{RS17}, the identity map $\id_a$ is an initial object of the comma type $\comma{a}{A}$, while analogously the identity map $\id_b$ is a terminal object of the cocomma type $\comma{B}{b}$. Thus, the claim follows as an instance of~\Cref{prop:fiber-init-term-2s}. 
\end{proof}

\begin{theorem}[Absolute Yoneda Lemma for two-sided families, \protect{\cite[Theorem~7.3.2]{RV21}}]\label{thm:abs-yon-2s}
	Let $P: A \times B \to \UU$ be a two-sided family over Rezk types $A$ and $B$. For any $a:A$ and $b:B$, the evaluation map
	\[ \ev_{\pair{\id_a}{\id_b}} : \Big( \prod_{\substack{u:\comma{a}{A} \\ v:\comma{B}{b}}}^\tscart P(\partial_1 \, u, \partial_0 \, v)\Big) \rightarrow P(a,b) \]
	is an equivalence.
\end{theorem}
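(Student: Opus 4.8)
The plan is to derive the absolute two-sided Yoneda Lemma (\Cref{thm:abs-yon-2s}) from the dependent version (\Cref{thm:dep-yon-2s}) by applying the latter to a suitable two-sided family over the comma types, exactly paralleling the one-sided pattern where \cite[Theorem~9.7]{RS17} follows from its dependent counterpart. The key observation is that the indexing type $\sum_{u:\comma{a}{A}}\sum_{v:\comma{B}{b}} P(\partial_1\,u,\partial_0\,v)$ appearing in the absolute statement is precisely of the form handled by the dependent Yoneda Lemma once we recognize the reindexed family as a two-sided family over $\comma{a}{A} \times \comma{B}{b}$.

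First I would define the two-sided family
\[ Q: \comma{a}{A} \times \comma{B}{b} \to \UU, \quad Q(u,v) \defeq P(\partial_1\,u,\partial_0\,v). \]
The main substantive step is to verify that $Q$ is indeed two-sided cartesian. This amounts to checking that $Q$ is cocartesian on the left and cartesian on the right, together with the commutation condition from \Cref{def:2s-cart}. Here I would invoke the closure properties established in \Cref{thm:2scart-cosm-closure}: the family $Q$ arises from $P$ by pullback along the map $\pair{\partial_1}{\partial_0}: \comma{a}{A} \times \comma{B}{b} \to A \times B$, and two-sided cartesian families are closed under pullback along arbitrary maps by \Cref{prop:2s-cart-closed-pb}. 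Since $\partial_1:\comma{a}{A} \to A$ and $\partial_0:\comma{B}{b} \to B$ are the relevant projections (the codomain projection on the left and the domain projection on the right), the pulled-back family $Q = \pair{\partial_1}{\partial_0}^*P$ inherits two-sidedness directly.

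Next I would apply \Cref{thm:dep-yon-2s} to this $Q$. By that theorem, the evaluation map
\[ \ev_{\id_{\pair{a}{b}}}: \Big( \prod_{\comma{a}{A} \times \comma{B}{b}}^\tscart Q \Big) \to Q(\id_a, \id_b) \]
is an equivalence. It remains to identify the source and target of this map with those in the absolute statement. On the target side, we have a judgmental equality $Q(\id_a,\id_b) \jdeq P(\partial_1\,\id_a, \partial_0\,\id_b) \jdeq P(a,b)$, using that $\partial_1\,\id_a \jdeq a$ and $\partial_0\,\id_b \jdeq b$. On the source side, the type $\prod^\tscart_{\comma{a}{A}\times\comma{B}{b}} Q$ unfolds by definition of $Q$ to $\prod^\tscart_{u:\comma{a}{A},\,v:\comma{B}{b}} P(\partial_1\,u, \partial_0\,v)$, and one checks that a section is two-sided cartesian for $Q$ precisely when it is two-sided cartesian in the sense of the absolute statement, since the cocartesian and cartesian arrows of $Q$ are exactly the $\partial_1$- and $\partial_0$-reindexed ones. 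Under these identifications the evaluation maps agree.

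The main obstacle I anticipate is the bookkeeping in the source-side identification: one must confirm that the notion of ``two-sided cartesian section'' transports correctly under the reindexing, i.e.\ that $Q$-cocartesianness of $\sigma(u,\id_v)$ corresponds to $P$-cocartesianness after projecting, and likewise on the cartesian side. This hinges on the fact, already used in \Cref{thm:dep-yon-2s}, that $\id_a$ is initial in $\comma{a}{A}$ and $\id_b$ is terminal in $\comma{B}{b}$ by \cite[Lemma~8.9]{RS17}, so that the cocartesian/cartesian transport structure computed in $Q$ matches the expected one under the equivalence. Once the two families are matched, the result is a formal consequence of the dependent version, so no further universal-property arguments are needed.
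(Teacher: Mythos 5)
Your proposal is correct and matches the paper's own proof exactly: the paper likewise sets $Q \jdeq \pair{\partial_1}{\partial_0}^* P$, invokes pullback stability of two-sided cartesian families to see that $Q$ is two-sided, and then applies the dependent version (\Cref{thm:dep-yon-2s}). The additional bookkeeping you spell out for identifying the source and target of the evaluation map is left implicit in the paper but is the right thing to check.
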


\begin{proof}
	The claim follows by setting $Q \jdeq \pair{\partial_1}{\partial_0}^* P : \comma{a}{A} \times \comma{B}{b} \to \UU$ in~\Cref{thm:dep-yon-2s}. In particular, $Q$ is a two-sided fibration again by pullback stability.	
\end{proof}

			\section{Discrete two-sided families}
			\subsection{Definition and characterization}

\begin{definition}[Two-sided discrete families, \protect{\cite[Definition~8.28]{RS17}}]
	Let $P:A \to B \to \UU$ be a two-variable family over Rezk types $A$ and $B$. Then $P$ is a \emph{two-sided discrete family} if for all $a:A$, $b:B$ the family $P_b:A \to \UU$ is covariant and $P^a:B \to \UU$ is contravariant.
\end{definition}

\begin{proposition}[Two-sided discrete families as discrete objects, \cf~\protect{\cite[Prop.~7.2.4]{RV21}}]\label{prop:2s-disc}
	Given $P:A \to B \to \UU$ over Rezk types, the following are equivalent:
	\begin{enumerate}
		\item\label{it:2s-disc-i} The family $P$ is two-sided discrete.
		\item\label{it:2s-disc-ii} The family $P$ is cocartesian on the left and cartesian on the right. Additionally, every bifiber $P(a,b)$ is discrete, for $a:A$, $b:B$.
	\end{enumerate}
\end{proposition}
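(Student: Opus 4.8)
The plan is to reduce everything to the basic fact that a family over a Rezk type is covariant precisely when it is cocartesian with discrete fibers, and dually contravariant precisely when cartesian with discrete fibers. One direction of this fact is immediate: a covariant family is a discrete cocartesian family, its fibers being discrete by \cite{RS17} and its unique lifts being cocartesian. For the converse I would argue that, in a cocartesian family $R$ over a Rezk type, the reindexing equivalence $(x \to^R_u x') \simeq (u_! x \to^{R} x')$ together with discreteness of the fiber $R\,a'$ (so that $\hom_{R\,a'}(u_! x, x') \simeq (u_! x = x')$) exhibits $\sum_{x'}(x\to^R_u x')$ as a based path space, hence contractible; this is exactly covariance. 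With this lemma in hand, the proposition becomes a matter of matching the fiberwise covariant/contravariant conditions against the sliced conditions defining cocartesianness on the left and cartesianness on the right.

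For $\ref{it:2s-disc-ii}\implies\ref{it:2s-disc-i}$ (the easy direction) I would fix $b:B$ and restrict the cocartesian-on-the-left structure to the fiber $E_b$. Since every cocartesian lift is $\pi$-vertical, it projects to an identity in $B$ and so lands in $E_b$; its universal property, tested against arrows of $E_b$ (all of which are $\pi$-vertical over $b$), yields a fill that is again $\pi$-vertical, hence in $E_b$. Thus $P_b$ is a cocartesian family, and as the bifibers $P(a,b)$ are discrete by hypothesis, the lemma gives that $P_b$ is covariant. The dual argument, restricting the cartesian-on-the-right structure to $E^a$, shows $P^a$ is contravariant, so $P$ is two-sided discrete.

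For $\ref{it:2s-disc-i}\implies\ref{it:2s-disc-ii}$ (the substantive direction), discreteness of the bifibers is immediate since $P(a,b)$ is a fiber of the covariant family $P_b$, \cf~\cite{RS17}. To establish cocartesianness on the left I would invoke the Chevalley-style characterization \Cref{prop:char-fib-cocart-left}, specifically \Cref{it:coc-left-i}: it suffices to produce, for each $q$-vertical base arrow $\pair{u:a\to a'}{\id_b}$ and each starting vertex $x:P(a,b)$, a $\varphi$-cocartesian lift. The candidate is the $P_b$-covariant lift $h: x \cocartarr_u u_! x$, which is manifestly $\pi$-vertical. The task is then to verify that $h$ is cocartesian with respect to $\varphi = \pair{\xi}{\pi}$, \ie~against test arrows $H: x \to^P_{\pair{w}{v}} x''$ lying over composites $\pair{w}{v} = \pair{u'}{v}\circ\pair{u}{\id_b}$ in $A\times B$.

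The main obstacle — and the place where both variances are genuinely needed — is precisely this verification, since the test arrow $H$ may move in the $B$-direction via $v:b\to b''$, whereas $h$ only sees the fixed fiber over $b$. Here I would use contravariance of $P^{a''}$: factor $H$ through the cartesian lift $k: v^* x'' \cartarr_v x''$ as $H = k\circ H'$, where $H'$ is now $\pi$-vertical over $b$. Covariance of $P_b$ then produces a unique fill $\gamma': u_!x \to^{P_b}_{u'} v^* x''$ with $\gamma' h = H'$, and $\gamma \defeq k\circ\gamma'$ is the desired fill of $H$ through $h$; uniqueness follows by factoring any competitor through $k$ and appealing to the universal properties of $k$ and of $h$ within $P_b$. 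The $B$-component of any fill is forced to be $v$ because $h$ is an identity in $B$, so the fill is determined completely. By \Cref{prop:char-fib-cocart-left} this shows that $P$ is cocartesian on the left, and the dual argument (factoring through a covariant lift to reduce to the contravariant fiber $P^a$) shows that $P$ is cartesian on the right, completing the equivalence.
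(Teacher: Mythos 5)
Your overall strategy --- reduce everything to ``covariant $=$ cocartesian with discrete fibers'' and then match the fiberwise variance conditions against the sliced ones --- is the same as the paper's, and your treatment of $\ref{it:2s-disc-ii}\implies\ref{it:2s-disc-i}$ is correct: the paper reaches the same conclusion by pulling back the fibered adjunction of \Cref{prop:char-fib-cocart-left} along $\id_A \times b$ rather than by restricting the lifts to the fiber by hand, but the content is identical, and your preliminary lemma is exactly the cited equivalence from the companion paper.

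The gap is in $\ref{it:2s-disc-i}\implies\ref{it:2s-disc-ii}$, at the step ``factor $H$ through the cartesian lift $k : v^*x'' \to x''$ as $H = k \circ H'$.'' At this point the only universal property you possess for $k$ is its cartesianness \emph{inside the fiber} $E^{a''} = \sum_{b:B} P(a'',b)$, coming from contravariance of $P^{a''}$; that property factors arrows whose source already lies over $a''$ and whose $A$-component is constant. Your test arrow $H$ has source $x:P(a,b)$ and lies over $\pair{w}{v}$ with $w:a\to a''$ nontrivial, so it is not an arrow of $E^{a''}$ and the universal property of $k$ does not apply to it; the same problem recurs in your uniqueness argument, where the competitor $\gamma : u_!x \to x''$ has source over $a'$. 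What you are implicitly assuming is that $k$ is cartesian for the whole projection $\pi:E\fibarr B$ (i.e.\ $P^A$-cartesian) --- but that is essentially the corollary the paper \emph{derives from} this proposition (``an arrow is $P^A$-cartesian iff it is $P_B$-vertical''), so one cannot invoke it here without circularity. The step can be repaired by first establishing the mixed-variance hom computation, e.g.\ an equivalence $\dhom_{\pair{w}{v}}(x,x'') \simeq \big(w_!\,x =_{P(a'',b)} v^*x''\big)$ obtained from Segal-ness of $E$ together with the contractibility of $\sum_{y} \dhom_{\pair{w}{\id_b}}(x,y)$ (covariance of $P_b$) and of $\sum_{y}\dhom_{\pair{\id_{a''}}{v}}(y,x'')$ (contravariance of $P^{a''}$); both the factorization and its uniqueness then follow. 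In fairness, the paper's own proof of this direction is a bare assertion that covariance of each $P_b$ makes $P_B$ cocartesian with $P^A$-vertical lifts, so you have at least located where the real work lies --- but as written your argument does not close it.
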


\begin{proof}
	\begin{description}
		\item[$\ref{it:2s-disc-ii} \implies \ref{it:2s-disc-i}$:] 
		By \Cref{prop:char-fib-cocart-left}, \Cref{it:coc-left-iii}, there is a fibered adjunction which pulls back as follows by~\cite[Proposition~B.2.3]{BW21}:\footnote{In contrast to the current version of~\cite[Proposition~B.2.3]{BW21} one only needs the fibrations involved to be isoinner, and not cocartesian, which in any case becomes clear from the given proof.}
		\[\begin{tikzcd}
			{E_b} &&& E \\
			& {\xi_b\downarrow A} &&& \comma{\xi}{A} \\
			{A \times \mathbf{1}} &&& {A \times B}
			\arrow[from=1-1, to=1-4, shorten <=22pt]
			\arrow[""{name=0, anchor=center, inner sep=0}, dashed, from=1-1, to=2-2]
			\arrow[two heads, from=1-1, to=3-1]
			\arrow[two heads, from=2-2, to=3-1]
			\arrow["{\id_A \times b}"{description}, from=3-1, to=3-4]
			\arrow[two heads, from=1-4, to=3-4]
			\arrow[two heads, from=2-5, to=3-4]
			\arrow[""{name=1, anchor=center, inner sep=0}, "\iota"{description}, from=1-4, to=2-5]
			\arrow[""{name=2, anchor=center, inner sep=0}, "\tau"{description}, curve={height=18pt}, dashed, from=2-5, to=1-4]
			\arrow[""{name=3, anchor=center, inner sep=0}, curve={height=12pt}, dashed, from=2-2, to=1-1, curve={height=18pt}]
			\arrow["\lrcorner"{anchor=center, pos=0.125}, draw=none, from=2-2, to=3-4]
			\arrow["\lrcorner"{anchor=center, pos=0.125}, shift right=5, draw=none, from=1-1, to=3-4]
			\arrow["\dashv"{anchor=center, rotate=-137}, draw=none, from=3, to=0]
			\arrow["\dashv"{anchor=center, rotate=-133}, draw=none, from=2, to=1]
			\arrow[from=2-2, to=2-5, crossing over]
		\end{tikzcd}\]
		This means exactly that $P^a$ is covariant. The analogous reasoning establishes the claim for $P_b$ being covariant since $P$ is cartesian on the right, for any $b:B$.
		Now, since any $P(a,b)$ is discrete, and the fibers of $P_b$ are given by $P(a,b)$ for any $a:A$, we obtain that $P^a:B \to \UU$ is a cocartesian family with discrete fibers, which is equivalent to $P^a$ being covariant by~\cite[Corollary~6.1.4]{BW21}.
		\item[$\ref{it:2s-disc-i} \implies \ref{it:2s-disc-ii}$:] The fibrations $P_a$ and $P^b$, resp., being contra- and covariant, resp., imply that all bifibers $P(a,b)$ are discrete, for all $a:A$, $b:B$.
		
		Furthermore, for any $b:B$, the family $P^b: A \to \UU$ being cocartesian means that $P^B :A \to \UU$ is cocartesian, and in addition all $P^B$-cocartesian lifts are $P_A$-vertical, \ie~lie over an identity in $B$.\footnote{This could have also been used as a more direct argument to prove ``$\ref{it:2s-disc-ii} \implies \ref{it:2s-disc-i}$'' as well.}
	\end{description}
\end{proof}

\begin{corollary}[Co-/cart.~arrows and two-sided cart.~functors, \cf~\protect{\cite[Lemma~7.4.3]{RV21}}]
	In a two-sided discrete family $P:A \to B \to \UU$ an arrow is $P_B$-cocartesian if and only if it is $P^A$-vertical. Similarly, an arrow is $P^A$-cartesian if and only if it is $P_B$-vertical.
	In particular, any two-sided discrete cartesian family is two-sided cartesian, and any fibered functor between two-sided discrete families is two-sided cartesian.
\end{corollary}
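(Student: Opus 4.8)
The plan is to deduce the whole corollary from two basic equivalences about lifts in a two-sided discrete family, after which the two ``in particular'' claims become purely formal. By \Cref{prop:2s-disc}, a two-sided discrete $P:A \to B \to \UU$ is cocartesian on the left, cartesian on the right, and has discrete bifibers $P(a,b)$. Writing $\varphi \defeq \pair{\xi}{\pi}: E \fibarr A \times B$ for the unstraightening, the forward implications are immediate from the definitions: being cocartesian on the left means precisely that every $\xi$-cocartesian ($=P_B$-cocartesian) arrow is $\pi$-vertical ($=P^A$-vertical), and dually being cartesian on the right gives that every $\pi$-cartesian arrow is $\xi$-vertical. So I would concentrate on the two converses, and by symmetry it suffices to treat one of them.

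For the converse of the first equivalence, I would start from a $\pi$-vertical arrow $f$ in $E$, with $v \defeq \pi\,f$ an isomorphism. Since $P_B$ is cocartesian, $f$ factors as a $\xi$-cocartesian arrow followed by a $\xi$-vertical arrow, $f = g \circ f_0$. The factor $f_0$ is $\pi$-vertical (cocartesianness on the left), whence $\pi\,g$ is an isomorphism; thus $g$ is $\xi$-vertical and lies over an isomorphism in $B$. Factoring $g$ with respect to the cartesian leg $\pi$ as a $\pi$-vertical arrow followed by a $\pi$-cartesian arrow, the cartesian factor lies over an isomorphism and is therefore itself invertible (a co-/cartesian lift of an isomorphism is invertible, by functoriality \Cref{prop:cocart-functoriality} and its dual together with uniqueness of lifts \Cref{prop:cocart-lifts-unique-in-isoinner-fams}), while the remaining factor is $\pi$-vertical and, by a short bookkeeping of the $\xi$-projections, also $\xi$-vertical, hence an arrow in the discrete bifiber and so invertible. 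Consequently $g$ is an isomorphism; by \Cref{lem:cocart-arrows-isos}(\ref{it:depisos-are-cocart}) it is $\xi$-cocartesian, and then $f = g \circ f_0$ is $\xi$-cocartesian as a composite of $\xi$-cocartesian arrows via \Cref{prop:cocart-arr-closure}(\ref{it:cocart-arr-comp}). The second equivalence is proved dually.

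Granting both equivalences, the two ``in particular'' statements follow formally. For families, by \Cref{def:2s-cart} it remains only to check that cocartesian and cartesian lifts commute, which by \Cref{prop:comm-lifts} reduces to verifying, in its distinguished square, that the arrow $f$ is $\xi$-cocartesian if and only if the arrow $g$ is $\pi$-cartesian; but there $f$ is $\pi$-vertical and $g$ is $\xi$-vertical, so both sides hold automatically by the two equivalences, and the condition is satisfied, making $P$ two-sided cartesian. For functors, a fibered map $h: P \to_{A \times B} Q$ commutes with the projections $\xi$ and $\pi$, hence preserves $\xi$-vertical and $\pi$-vertical arrows; applying the equivalences to both $P$ and $Q$ converts this into preservation of $P_B$-cocartesian and $P^A$-cartesian arrows, \ie~$h$ is a cocartesian functor on the cocartesian leg and a cartesian functor on the cartesian leg, so it is two-sided cartesian.

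The main obstacle is exactly the converse of the first (equivalently second) equivalence: the forward directions and both corollaries are essentially definitional once the equivalences are available, but the converse requires threading a $\pi$-vertical arrow through its $\xi$-factorization and then its $\pi$-factorization, using discreteness of the bifibers and invertibility of co-/cartesian lifts of isomorphisms. The delicate point is keeping careful track of which leg each intermediate factor is vertical or co-/cartesian over, since this is what collapses the various factors to isomorphisms.
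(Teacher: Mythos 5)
Your proof is correct and follows essentially the same route as the paper: establish the two vertical/co-cartesian equivalences, then observe that the commutation condition of \Cref{prop:comm-lifts} holds trivially because verticality already forces co-/cartesianness, and that fibered functors preserve vertical arrows by naturality. The only real difference is in the converse of the first equivalence, where the paper simply cites \cite[Proposition~6.1.5]{BW21} (the characterization of cocartesian arrows in covariant families) while you re-derive it inline via the cocartesian--vertical factorization and discreteness of the bifibers; your extra detour through the $\pi$-factorization of the vertical factor $g$ is harmless but unnecessary, since $g$ is already both $\xi$- and $\pi$-vertical and hence, by Rezk completeness, an arrow in a discrete bifiber.
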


\begin{proof}
	The first statement follows from the inspection in the proof of~\Cref{prop:2s-disc}, together with~\cite[Proposition~6.1.5]{BW21}.
	
	This also establishes that any two-sided discrete cartesian family is, in fact, two-sided cartesian: the commutation condition~\Cref{prop:comm-lifts} is readily verified, because for dependent arrows being vertical (in the respective appropriate sense) is already sufficient for being co-/cartesian, resp.
	
	By naturality, this implies that any fibered functor between two-sided discrete families is two-sided cartesian.
\end{proof}

	\chapter{On the semantics of simplicial homotopy type theory}
	\label{ch:sem}
			\section{Simplicial diagram models}\label{sec:sdiag-models}
			
			In \cite{Shu19}, Shulman provided the final missing part to establish the far-reaching conjecture that any Grothendieck $\inftyone$-topos gives rise to a model of homotopy type theory. Indeed, he proved that any such higher topos can be presented by a model structure in which provides enough univalent universes, validating propositional resizing, and which are \emph{strictly} closed under all desired type formers.\footnote{with more general kinds of higher inductive types being work in progress} This had previously been established for certain classes of special cases.~\cite{ShuInv,ShuReedy,ShuInvEI,CisUniv} More generally, work connected to ``internal languages'' of higher categories is found in~\cite{AW05,GepnerKock,KLHtypTT,KapulkinSzumiloIntLang,KapulkinLCCQcat}.

Given a type-theoretic model topos $\mathscr E$, the internal presheaf category $\sE := [{\Simplex}^{\Op}, \mathscr E]$ of simplicial objects in $\mathscr E$ is again a type-theoretic model topos.\footnote{with the injective model structure~\cite[Corollary~8.29]{Shu19}} These are the ``standard models'' for simplicial homotopy type theory. Inside any such $\sE$ we find a copy of the TTMT of spaces $\mathscr S$, which is embedded fibrantly and spatially-discretely via the constant diagram functor
\[\begin{tikzcd}
	{\mathbf{Set}} && {\mathscr E} & \rightsquigarrow & {\mathscr S} && {\sE  \mathrlap{=\mathscr E^{\Simplex^{\Op}}}}
	\arrow[""{name=0, anchor=center, inner sep=0}, "{\Delta_\mathscr E}", curve={height=-12pt}, hook, from=1-1, to=1-3]
	\arrow[""{name=1, anchor=center, inner sep=0}, "{\Gamma_{\mathscr E}}", curve={height=-12pt}, from=1-3, to=1-1]
	\arrow[""{name=2, anchor=center, inner sep=0}, "{\Delta_{\sE}}", curve={height=-12pt}, hook, from=1-5, to=1-7]
	\arrow[""{name=3, anchor=center, inner sep=0}, "{\Gamma_{\sE}}", curve={height=-12pt}, from=1-7, to=1-5]
	\arrow["\dashv"{anchor=center, rotate=-90}, draw=none, from=2, to=3]
	\arrow["\dashv"{anchor=center, rotate=-90}, draw=none, from=0, to=1]
\end{tikzcd}\]
\ie~the map given by
\[ \Delta_{s\mathscr E}: \mathscr S \hookrightarrow \mathscr E, \quad X \mapsto \Big([n] \mapsto \coprod_{X_n} \mathbf 1\Big).\]

We adapt the coherence construction used in \cite{KL18,LW15,Awo18,StrRep,StrSimp,LS-HIT}, based on \cite{VVTySys,VVPi}, so that in the model extension types \`{a} la \cite{RS17}, going to back to earlier work by Lumsdaine and Shulman, can be chosen in a way that is strictly stable under pullback.

Our presentation is inspired by \cite{KL18,Awo18} and also \cite{LW15} in method and style. We define the type formers and the data required from the rules by performing the corresponding constructions in suitable \emph{generic contexts}. Substitution then corresponds to precomposition with the reindexing map, and applying type formers corresponds to postcomposition with an ensuing map between universes. This implies strict substitution stability on the nose, in particular avoiding choices of pullbacks (and dealing with the coherences that would ensue).

This is done with respect to just a single universe as~\eg~in \cite{Awo18,StrSimp,StrRep}. Because extension types are similar to $\Pi$-types, the splitting is also analogous to the method for $\Pi$-types from the aforementioned sources.\footnote{Since the same ideas have successfully generalized from \emph{global} to \emph{local universes} \cite{LW15,Shu19}, we believe the same generalization would also work for strict extension types (which we do not consider in the present text, however). In particular, a formulation in (an appropriate extension of) Shulman's setting~\cite[Appendix~A]{Shu19} should be possible.}

We briefly recall Shulman's setting and results that we are building on, although we will rarely need to be explicit about the machinery under the hood from here on.

A \emph{type-theoretic model topos (TTMT)} $\mathscr E$ is given by a Grothendieck $1$-topos $\mathscr E$, together with the structure of a right proper simplicial Cisinski model category, which is in addition simplicially cartesian closed (\ie~reindexing preserves simplicial copowers) and is also equipped with an appropriate notion of ``type-theoretic'' fibration.\footnote{in Shulman's terminology, a \emph{locally presentable} and \emph{relatively acyclic} \emph{notion of fibration structure}, satisfying in addition a ``fullness'' condition} An immediate example is given by the prime model of homotopy type theory, the type-theoretic model topos of spaces $\mathscr S$, but there are many more examples. In particular, the class of TTMTs is closed under typical operations, such as slicing and small products, as well as passage to diagram categories and internal localizations. Moreover, any Grothendieck--Rezk--Lurie-$\inftyone$-topos can be presented by a type-theoretic model topos. As a consequence, the study of Rezk types in simplicial HoTT can be understood as a synthetic version of internal $\infty$-category theory in an arbitrary given $\inftyone$-topos.

Crucially, any TTMT hosts enough\footnote{bounded from below by cardinal} strict fibrant univalent universes~\cite[Theorem~5.22, Theorem~11.2]{Shu19}.\footnote{as usual, in a strong enough meta-theory such as ZFC with inaccessibles} Abstractly from the given fibration structure, the universes can be constructed using a version of the small object argument~\cite[Theorem~5.9]{Shu19}. Then a splitting method due to Voevodsky~\cite{VVTySys,KL18} akin to Giraud's~\emph{left adjoint splitting}~\cite{Gir74,LW15,StrRep} is applied to obtain \emph{strict} type formers for these universes.

			\section{Strictly stable extension types}\label{sec:coh-ext}

\subsection{General ideas}

We adapt the coherence construction presented in \cite{KL18,LW15,Awo18}, based on \cite{VVTySys,VVPi}, so that in the model extension types \`{a} la \cite{RS17} can be chosen in a way that is strictly stable under pullback.

Our presentation is close to \cite{KL18,Awo18} in style. We define the type formers and the data required from the rules by performing the corresponding constructions in suitable \emph{generic contexts}. Substitution then corresponds to precomposition with the reindexing map. Applying type formers/operations corresponds to postcomposition with an ensuing map between universes. This implies strict substitution stability on the nose.

We work \wrt~just a single universe, as is also done in \cite{Awo18}. The extension types of \cite[Figure~4]{RS17} are both syntactically and semantically similar to $\Pi$-types: Intuitively, the extension type $\exten{\psi}{A}{\varphi}{a}$ consists of sections $b:\prod_\psi A$ that on the subshape $\varphi \cofibarr \psi$ coincide with the given section $a$ in the sense of \emph{judgmental} equality. Hence, they can be seen as a kind of ``$\Pi$-types with side conditions'' (for functions whose domain is a tope, \emph{a priori} a \emph{pre}-type).

Since in the case of ordinary $\Pi$-types the splitting method has been generalized from \emph{global} to \emph{local universes} \cite{LW15,Shu19}, we believe the same should also work for strict extension types (which we do not consider in the present text, however). This has also been claimed in~\cite[Appendix~A.2]{RS17}.

In particular, a formulation in (an appropriate extension of) Shulman's setting~\cite[Appendix~A]{Shu19} should then be possible.

In contrast to most of~\cite[Appendix~A]{RS17} we will work completely internally to the topos, disregarding the extra ``non-fibrant'' structure made explicit in a \emph{comprehension category with shapes}~\cite[Definition~A.5]{RS17}.

This is justified, because shapes are reflected into types by a newly added rule, \cf~\Cref{ssec:fib-shapes}. In the model, this is validated because the simplicial shapes turn up in the model as (spatially-discrete) fibrant objects, \cf~\Cref{sec:sdiag-models}.

In sum, this allows us to focus the presentation on the splitting of the extension types, without having to deal with also splitting the extra layers, introducing universes for ``cofibrations'' or similar.\footnote{Even though all of this should be possible if desired, \cf~also the remarks about strict stability in~\cite[Appendix~A.2]{RS17}.}

We will work in a classical meta-theory with global choice and universes. Sometimes we use the internal extensional type theory (ETT) of the topos for an easy-to-parse denotation of the objects presenting generic contexts. This is along the lines of the presentation in~\cite{Awo18} and also~\cite{KL18,LW15}.

As in~\cite[Theorem~A.16]{RS17}, in the (injective) model structure of the ttmt presenting $\sE$ we interpret the extension types as ordinary $1$-categorical pullbacks of Leibniz cotensors. Since the model structure at hand is type-theoretic~\cite[Definition~2.12]{ShuInv} the Leibniz cotensors are fibrations, so are all their pullbacks, which in particular includes the (dependent) extension types.\footnote{In the case of non-fibrant shapes, the Leibniz cotensor is still a fibration because the model structure is cartesian monoidal, \cf~\cite[Section~1, Section~A.2]{RS17}.}

Specifically, in the non-dependent case the extension type $\ndexten{\psi}{A}{\varphi}{a}$ is given simply by (ordinary, $1$-categorical) pullback:
\[\begin{tikzcd}
	{\ndexten{\psi}{A}{\varphi}{a}} && {A^\psi} \\
	\unit && {A^\varphi}
	\arrow[from=1-1, to=2-1]
	\arrow["a"', from=2-1, to=2-3]
	\arrow[from=1-1, to=1-3]
	\arrow[from=1-3, to=2-3]
	\arrow["\lrcorner"{anchor=center, pos=0.125}, draw=none, from=1-1, to=2-3]
\end{tikzcd}\]
For the dependent case the construction will have to be relativized as we will see later on.

We want to show that the interpretation of the extension types can be chosen in a strictly pullback-stable way. In particular, after~\cite[Definition~A.10, Theorem~A.16]{RS17} given a type family $A \fibarr \Gamma \times \psi$ and a partial section $a:\Gamma \times \varphi \to_{\Gamma \times \psi} A$ substitution of the extension type\footnote{Here and in the following, notation such as $\Pi(A,B)$ or $\ccexten{\varphi}{A}{\varphi}{a}$ instead of $\prod_A B$, $\exten{\psi}{A}{\varphi}{a}$  designates the (strictly stable) type formers (to be) defined in the interpretation, in line with~\cite{RS17,KL18,KL18,Awo18}.} $\ccexten{\varphi}{A}{\varphi}{a}$ is only considered along (type) context morphisms $\sigma: \Delta \to \Gamma$, leaving the shape inclusion $j: \varphi \cofibarr \psi$ fixed. Hence, we will understand the type former of the extension type as a \emph{family} of type formers $\ExtAt{j}$, given by
\[ \ExtAt{j}(A,a) := \ccexten{\psi}{A}{\varphi}{a}\]
indexed by the shape inclusions $j$.\footnote{Note that, \emph{a posteriori} this also yields stability \wrt~to reindexing along cube context morphisms $\tau:J \to I$ pulling back shape inclusions $j:\varphi \to_I \psi$: By their defining universal property, which only involves a condition on reindexings of the type context $\Gamma$, the (pseudo-stable) extension types from~\cite[Definition~A.10]{RS17} are determined uniquely up to isomorphism. The splitting then yields uniqueness up to equality. Then, considering reindexings along shape maps does not lead out of this class, so the choice remains strictly stable.}

\begin{theorem}[Strict stability of extension types]\label{thm:coh-ext}
	Let $\E$ be a type-theoretic model topos, and consider the interpretation of simplicial homotopy type theory \`{a} la \cite{RS17,Shu19} in the type-theoretic model topos $\sE$. Then, a fixed splitting of the standard type formers induced by the universal fibration $\pi: \widetilde{\UU} \to \UU$ for small fibrations yields an interpretation of the extension types $\ExtAt{j}$ (where $j:\varphi \to_I \psi$ is a fixed shape inclusion relative to some cube $I$) that is strictly stable under pullback. This means, for families $A: \Gamma \times \psi \to \UU$ and partial sections $a:\Gamma \times \varphi \to_{\Gamma \times \psi} A$ we have: 
	\begin{align*}
		\sigma^*\big(\ExtAt{j}(A,a)\big) & = \ExtAt{j}(\sigma^*A,\sigma^*a) \\
		\sigma^*\big(\LambdaAt{j}_{A,a}(b)\big) & = \LambdaAt{j}_{\sigma^*A,\sigma^*a}(\sigma^*b) \\
		\sigma^*\big(\appAt{j}_{A,a}(f,s)\big) & = \appAt{j}_{\sigma^*A,\sigma^*a}(\sigma^*f,\sigma^*s)
	\end{align*}
\end{theorem}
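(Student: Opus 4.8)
The plan is to adapt Voevodsky's splitting method, exactly as it is used for $\Pi$-types in the cited sources \cite{KL18,LW15,Awo18}, to the family of extension-type formers $\ExtAt{j}$. The key conceptual point is that since the shape inclusion $j:\varphi \to_I \psi$ is held \emph{fixed} throughout (substitution is only considered along type-context morphisms $\sigma:\Delta \to \Gamma$), the extension type behaves essentially like a $\Pi$-type with a side condition, and the established coherence technology applies almost verbatim. First I would fix, once and for all, the splitting of the standard type formers induced by the universal fibration $\pi:\widetilde{\UU} \to \UU$, as guaranteed by Shulman's results \cite{Shu19}. This gives a strictly stable interpretation of $\Sigma$, $\Pi$, $\Id$, and in particular a strictly stable code for the exponential/cotensor $A \mapsto A^\psi$ and $A \mapsto A^\varphi$ relative to the fixed fibrant shapes $\psi,\varphi$ (here using that shapes are coerced to fibrant types, cf.~\Cref{ssec:fib-shapes}).

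The heart of the construction is to define $\ExtAt{j}$ by performing the pullback construction of \cite[Theorem~A.16]{RS17} \emph{in a single generic context}, rather than context-by-context. Concretely, I would work over the generic family: take the universe $\UU$ and form the classifying object for a pair $(A,a)$ consisting of a family $A:\Gamma\times\psi\to\UU$ together with a partial section $a$ over $\Gamma\times\varphi$. Over this generic base there is a tautological such pair, and I would construct the dependent extension type as the $1$-categorical pullback of the Leibniz cotensor $j\cotens\pi$ against the map classifying $a$, relativized to $\Gamma$ as indicated in the non-dependent display in the excerpt. Because the ambient model structure is type-theoretic \cite{ShuInv}, the Leibniz cotensor is a fibration and this pullback is again a small fibration, hence classified by a map into $\UU$; call the classifying map $\mathsf{ext}_j$. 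I would then \emph{define}
\[
\ExtAt{j}(A,a) := \mathsf{ext}_j \circ \langle A,a\rangle,
\]
so that \emph{applying} the type former is postcomposition with the fixed universe map $\mathsf{ext}_j$, while \emph{substitution} is precomposition with $\sigma$. The introduction and application terms $\LambdaAt{j}$ and $\appAt{j}$ are defined the same way: perform the $\lambda$-abstraction and evaluation constructions once over the generic context to obtain canonical universe-level maps, and define the operations by precomposition with the data.

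With this setup the three required equalities are immediate by associativity of composition, which is the whole point of the generic-context method. For instance,
\begin{align*}
\sigma^*\big(\ExtAt{j}(A,a)\big) &= \big(\mathsf{ext}_j\circ\langle A,a\rangle\big)\circ\sigma = \mathsf{ext}_j\circ\langle A\circ\sigma,\,a\circ\sigma\rangle = \ExtAt{j}(\sigma^*A,\sigma^*a),
\end{align*}
and the analogous chains for $\LambdaAt{j}$ and $\appAt{j}$ follow by the same bookkeeping, using that $\langle A,a\rangle\circ\sigma = \langle\sigma^*A,\sigma^*a\rangle$ holds on the nose since pairing and substitution are strict. The stability under reindexing of the \emph{shape} along cube-context morphisms $\tau:J\to I$ is then recovered from the pseudo-stability and the uniqueness-up-to-isomorphism of the extension types of \cite[Definition~A.10]{RS17}, which the splitting promotes to uniqueness up to equality, as noted in the footnote preceding the theorem.

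The main obstacle I anticipate is verifying that the generic extension-type construction genuinely lands among \emph{small} fibrations classified by the chosen universe $\UU$, so that $\mathsf{ext}_j$ exists as an honest map of universes respecting the fixed splitting. This requires care that the Leibniz cotensor by the fixed shape $\psi$ (and its pullbacks) stays within the cardinality bound of $\UU$, which holds because $\psi$ is a fixed small fibrant object and the universes are closed under the relevant operations \cite[Theorem~5.22, Theorem~11.2]{Shu19}; and it requires checking that the generic $\lambda$-abstraction and evaluation really are inverse up to the judgmental $\beta\eta$-behaviour demanded by \cite[Figure~4]{RS17}, so that $\LambdaAt{j}$ and $\appAt{j}$ satisfy the conversion rules and not merely the stability equations. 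Once the generic construction is correctly typed and the $\beta\eta$-laws are confirmed in the generic context, strict substitution stability is a formal consequence, so the real work is entirely in the bookkeeping of the generic pullback and in confirming smallness.
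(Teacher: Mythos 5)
Your proposal is correct and follows essentially the same route as the paper: both define generic contexts $\UU^{\ExtAt{j}} = \sum_{A:\UU^\psi} j^*A$ and $\UU^{\LambdaAt{j}}$, realize the type former as a fixed map $\ExtAt{j}:\UU^{\ExtAt{j}}\to\UU$ (your $\mathsf{ext}_j$) via the pullback of the Leibniz cotensor from~\cite[Theorem~A.16]{RS17}, interpret $\LambdaAt{j}$ as the gap map and $\appAt{j}$ via the chosen evaluation map, and then obtain all three stability equations from associativity of composition against the pre-split universe. The obstacles you flag (smallness of the generic construction and the $\beta$/$\eta$ conversions) are exactly the residual checks the paper carries out.
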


The rest of this section lays out the splitting of the extension type formers, assuming all the necessary and the pre-established logical and model-categorical present in the background.

\subsection{Global universe splitting}

Depending on the precise setup---in particular with extra shape layers---one might have to use~\eg~the local universe method~\cite{LW15} to strictify the \emph{pseudo}-stable tope logic beforehand~\cite[Definition~A.7, Remark~A.8]{RS17}. But we will neglect this here, as mentioned before, since we are staying inside the given model structure on the topos, so it is enough just to a consider a splitting of this.

A family in simplicial type theory is modeled by a composition
\[ A \fibarr \Gamma \fibarr \Phi \cofibarr \Xi, \]
where $\Phi \cofibarr \Xi$ plays the role of a shape inclusion. Let $\pi:\totalty{\UU} \to \UU$ be the type-theoretic universe in $\sE$ (strictly \`{a} la Tarski, closed under all standard type formers) which classifies small Reedy fibrations, which exists by~\cite[Corollary~8.29]{Shu19}. This means in particular we are considering the injective model structure on internal presheaves, which in the case of the base $\Simplex$ coincides with the Reedy model structure, \cf~also \cite{CisUniv,ShuReedy}.

We just have to split \wrt~the universe $\pi:\totalty{\UU} \to \UU$. This is done after Streicher~\cite[Appendix~C]{StrRep} and Voevodsky~\cite{VVTySys,VVPi} in the following way:

Using meta-theoretic global choice, for each family $A : \Gamma \to \UU$, we select a distinguished square
\[\begin{tikzcd}
	{\Gamma.A} && {\widetilde{\UU}} \\
	\Gamma && \UU
	\arrow["{p_A}"', from=1-1, to=2-1]
	\arrow["A"', from=2-1, to=2-3]
	\arrow["{q_A}", from=1-1, to=1-3]
	\arrow["\pi", from=1-3, to=2-3]
	\arrow["\lrcorner"{anchor=center, pos=0.125}, draw=none, from=1-1, to=2-3]
\end{tikzcd}\]

A map from $B: \Delta \to \UU$ and $A: \Gamma: \Gamma \to \UU$ is given by a square:
\[\begin{tikzcd}
	{\Delta.B} && {\Gamma.A} \\
	\Delta && \Gamma
	\arrow["{p_B}"', from=1-1, to=2-1]
	\arrow["\sigma"', from=2-1, to=2-3]
	\arrow["{q_\sigma}", from=1-1, to=1-3]
	\arrow["{p_A}", from=1-3, to=2-3]
\end{tikzcd}\]
which is split cartesian if and only if $B = A \circ \sigma$ and $q_B = q_A \circ q_\sigma$:
\[\begin{tikzcd}
	{\Delta.B} & {} & {\Gamma.A} && {\widetilde{\UU}} \\
	\Delta && \Gamma && \UU
	\arrow["{p_B}", from=1-1, to=2-1]
	\arrow["\sigma", from=2-1, to=2-3]
	\arrow["A", from=2-3, to=2-5]
	\arrow["{q_\sigma}", dashed, from=1-1, to=1-3]
	\arrow["{p_A}", from=1-3, to=2-3]
	\arrow["{q_A}", from=1-3, to=1-5]
	\arrow["\pi", from=1-5, to=2-5]
	\arrow["\lrcorner"{anchor=center, pos=0.125}, draw=none, from=1-3, to=2-5]
	\arrow["{q_B}", curve={height=-18pt}, from=1-1, to=1-5]
	\arrow["B"', curve={height=18pt}, from=2-1, to=2-5]
	\arrow["\lrcorner"{anchor=center, pos=0.125}, shift left=5, draw=none, from=1-2, to=2-5]
\end{tikzcd}\]
This then necessarily implies that the left-hand side is a pullback as well.
Since the identites involved are strict equalities between objects and morphisms involving distinguished squares, this models substitution up to equality on the nose by defining $\sigma^*A := A[\sigma] := A \circ \sigma$.

When defining the generic contexts \`{a} la~\cite{KL18}, we often define them in the internal extensional type theory (ETT) of the topos, as done in~\cite{Awo18,LW15}. In particular, given a map $f:B \to A$ the endofunctor defined via the adjoint triple
\[ \sum_f \dashv f^* \dashv \prod_f\]
as
\[ P_f(X) = \sum_{A \to 1} \Big(\prod_f (B^*(X))\Big)  \]
in the internal language ETT reads as
\[ P_f(X) = \sum_{a:A} X^{B_a},\]
where $B_a = f^*a$.

Moreover, for $A: \Gamma \to \UU$ we allow ourselves to abbreviate $A := \Gamma.A = \widetilde{\UU}_A = \pi^*A$, as commonly done.

For the universal fibration $\pi: \widetilde{\UU} \to \UU$, the total object denotes the generic context
\[ \widetilde{\UU} = \sum_{a:\UU} A =  \sem{A:\UU, a:A}.\]

We show how to define a strictly stable choice of extension types \`{a} la~\cite[Proposition~2.4]{Awo18}. To illustrate the analogy, we give a brief recollection how to define the generic contexts for the formation and introduction rule.

\subsection{Recollection: Strictly stable $\Pi$-types}

The generic context for the $\Pi$-formation rule is given, in the internal language, by the type
\[ \UU^\Pi := \sum_{A:\UU} \UU^A = \sem{A:\UU, B:A \to \UU},\]
and the generic context for the introduction rule is given by
\[ \UU^\lambdaAt{j} := \sum_{A: \UU } \sum_{B:A \to \UU} \prod_{a:A} B(a) = \sem{A:\UU, B:A \to \UU, t:(a:A) \to B(a)},\]
which captures the \emph{generic dependent term}.

The idea is that a map $\pair{A}{B}:\Gamma \to \UU^\Pi$ precisely captures the input data for the $\Pi$-type former.

The universe then admits $\Pi$-types (\cf~\emph{$\Pi$-structure}, \cf~\cite[Definition~1.4.2, Theorem~1.4.15]{KL18}) if there exist maps $\Pi: \UU^\Pi \to \UU$, $\lambda: \UU^\lambda \to \UU$ 
making the square
\[\begin{tikzcd}
	{\UU^\lambda} && {\widetilde{\UU}} \\
	{\UU^\Pi} && {\UU}
	\arrow[from=1-1, to=2-1]
	\arrow["\Pi"', from=2-1, to=2-3]
	\arrow["\lambda", from=1-1, to=1-3]
	\arrow[from=1-3, to=2-3]
\end{tikzcd}\]
commute, and moreover rendering as a pullback.
These maps implement the formation and introduction rule, hence we take the maps
\[ \Pi:\UU^\Pi \to \UU, ~ \Pi(A,B) := \prod_A B, \quad  \lambda:\UU^\Pi \to \UU, ~ \lambda(A,B,t) :=\Big \langle \prod_A(B), t \Big \rangle\]
induced from the structure of the ambient category as a type-theoretic model category.\footnote{Concretely, for fibrations $p_A: \Gamma.A \to \Gamma$, $p_B: \Gamma.A.B \to \Gamma.A$ we define $\Pi(A,B) := \prod_{p_A}(p_B)$ via the pushforward functor $\prod_{p_A} \vdash (p_A)^*$ which in this setting preserves fibrations.}

In particular, for the formation rule, strict stability under pullback follows from strict commutation of the diagram
\[\begin{tikzcd}
	&& \Delta \\
	\\
	\Gamma && {\UU^\Pi} && \UU
	\arrow["{\pair{A}{B}}"{description}, from=3-1, to=3-3]
	\arrow["\Pi"{description}, from=3-3, to=3-5]
	\arrow["\sigma"', from=1-3, to=3-1]
	\arrow["{\prod_{\sigma^*A}\sigma^*B}", from=1-3, to=3-5]
	\arrow["{\pair{\sigma^*A}{\sigma^*B}}"{description}, from=1-3, to=3-3]
	\arrow["{\prod_A B}"{description}, curve={height=24pt}, from=3-1, to=3-5]
\end{tikzcd}\]
as elaborated in~\cite{Awo18}.

We now treat the extension types from \cite{RS17} in a similar fashion.

\subsection{Strictly stable extension types: Generic contexts}
Recall the rules from~\cite[Figure~4]{RS17}.

To form an extension type, we start with a context fibered-over-shapes
\[ \Gamma \fibarr \Phi \cofibarr \Xi\]
and a \emph{separate}\footnote{The semantic reason for this is explained right before~\cite[Theorem~A.17]{RS17}. \Cf~also the explanation about the rules at the beginning of~\cite[Section~2.2]{RS17}} shape inclusion
\[ \psi \cofibarr \varphi \cofibarr I.\]
The extension type ist then formed for a pair $\pair{A}{a}$ of a type and a partial section
\begin{equation}\label{eq:input-ext-form}
	A: \Gamma \times \psi \fibarr \Phi \times \psi \cofibarr \Xi \times I, \quad a: \Gamma \times \varphi \to_{\Gamma \times \psi} A
\end{equation}
In fact, the defining universal property for the extension types \cite[Definition~A.10]{RS17} asks for substitution (pseudo-)stability of $A$ and $a$ along morphisms $\sigma:\Delta \to \Gamma$. Hence, we in fact consider a \emph{family} of type formers $(\ExtAt{j})_{j:\varphi \cofibarr_I \psi}$ indexed externally by the shape inclusions $j:\varphi \to \psi$, so that
\[ (\ExtAt{j})(A,a) = \ccexten{\psi}{A}{\varphi}{a}.\]
Thus, the input data to form the extension type $\ExtAt{j}$ should be represented using a suitable generic context $\UU^{\ExtAt{j}}$ as a morphism
\[ \pair{A}{a}: \Gamma \to \UU^{\ExtAt{j}} \]
with $\pair{A}{a}$ as in~\eqref{eq:input-ext-form}.\footnote{Note that we are suppress the further structure $\Gamma \fibarr \Phi \cofibarr \Xi$ here, in line with~\cite[Deftinition~A.10]{RS17}.}
In analogy to the case of $\Pi$-types, we form the generic contexts by
\begin{align}
	\UU^{\ExtAt{j}} &:= \sum_{A:\UU^\psi} j^*A, \\
	\UU^{\lambdaAt{j}} & := \sum_{A:\UU^\psi} \sum_{a:j^*A} a^*(A^\psi).
\end{align}

\subsection{Strictly stable extension types: Formation and introduction}

The formation and introduction rule are, top to bottom, Rule 1 and 2, resp., in~\cite[Figure~4]{RS17}.

Then, at stage $\Gamma$ the object $\UU^{\LambdaAt{j}}$ consists of triples $\angled{A,a,b}$ as in
\[\begin{tikzcd}
	&& A \\
	\\
	{\Gamma \times \varphi} && {\Gamma \times \psi}
	\arrow["{\Gamma \times j}"', from=3-1, to=3-3]
	\arrow[two heads, from=1-3, to=3-3]
	\arrow["a", dotted, from=3-1, to=1-3]
	\arrow["b"', curve={height=18pt}, dotted, from=3-3, to=1-3]
\end{tikzcd}\]
\ie~strictly $a = b \circ (\Gamma \times j)$. Considering the projection $\UU^{\LambdaAt{j}} \to \UU^{\ExtAt{j}}$ the fiber at an instance $\pair{A}{a}$ is exactly the semantic extension type
\[ \ExtAt{j}(A,a):=\ccexten{\psi}{A}{\varphi}{a} \]
defined by the \emph{split}\footnote{\ie~$\ccexten{\psi}{A}{\varphi}{a}$ and its projection to $\Gamma$ have been \emph{chosen}} cartesian square, after~\cite[Theorem~A.16]{RS17}:
\[\begin{tikzcd}
	\ccexten{\psi}{A}{\varphi}{a}  && {A^\psi} \\
	\\
	\Gamma && {A^\varphi \times_{(\Gamma \times \psi)^\varphi} (\Gamma \times \psi)^\psi}
	\arrow["{\pair{a}{\eta} }", from=3-1, to=3-3]
	\arrow["\lrcorner"{anchor=center, pos=0.125}, draw=none, from=1-1, to=3-3]
	\arrow[two heads, from=1-1, to=3-1]
	\arrow[two heads, from=1-3, to=3-3]
	\arrow[from=1-1, to=1-3]
\end{tikzcd}\]
In particular, the right vertical map is a fibration,\footnote{If shapes are taken to be fibrant, this already follows from type-theoretic-ness of the model structure. Otherwise one would have to use that the model structure is cartesian monoidal as in~\cite[Lemma~A.4]{RS17}.} so the object $\ExtAt{j}(A,a)$ is fibrant.

Note that in the extensional type theory of the ambient topos one could describe the extension type also as the $\Sigma$-type
\[ \sum_{b:\psi \to A} (b\circ j \jdeq a),\]
where $(-_1) \jdeq (-_2)$ stands for the extensional identity type.

Now, in analogy to \cite[Theorem~1.4.15]{KV18}, we define the map $\ExtAt{j}:\UU^{\lambdaAt{j}} \to \widetilde{\UU}$ by the universal property of the extension type. The generic context $\UU^{\lambdaAt{j}}$ consists of $\angled{A,a,b}$~s.t.
\[\begin{tikzcd}
	&& {A^\psi} \\
	\\
	\Gamma && {A^\varphi \times_{{\Gamma \times \psi}^\varphi} (\Gamma \times \psi)^\psi}
	\arrow["b", from=3-1, to=1-3]
	\arrow["{\pair{a}{\eta} }"', from=3-1, to=3-3]
	\arrow[two heads, from=1-3, to=3-3]
\end{tikzcd}\]
where $\eta: \Gamma \to (\Gamma \times \psi)^\psi$ denotes the transpose of the identity of $\Gamma \times \psi$. Now, we define the generic $\lambda$-term of the extension type as the gap map $(\LambdaAt{j})(b) := \widehat{b}$ of the pullback:
\[\begin{tikzcd}
	\Gamma \\
	&& \ccexten{\psi}{A}{\varphi}{a} && {A^\psi} \\
	\\
	&& \Gamma && {A^\varphi \times_{{\Gamma \times \psi}^\varphi} (\Gamma \times \psi)^\psi}
	\arrow[two heads, from=2-5, to=4-5]
	\arrow[from=2-3, to=2-5]
	\arrow[two heads, from=2-3, to=4-3]
	\arrow["{\pair{a}{\eta} }", from=4-3, to=4-5]
	\arrow["b", curve={height=-18pt}, from=1-1, to=2-5]
	\arrow["{\widehat{b}}"{description}, dashed, from=1-1, to=2-3]
	\arrow[curve={height=18pt}, Rightarrow, no head, from=1-1, to=4-3]
	\arrow["\lrcorner"{anchor=center, pos=0.125}, draw=none, from=2-3, to=4-5]
\end{tikzcd}\]
Then, the square
\[\begin{tikzcd}
	{\UU^\LambdaAt{j}} &&& \widetilde{\UU} \\
	\\
	{\UU^\ExtAt{j}} &&& \UU
	\arrow[from=1-1, to=3-1]
	\arrow["{\ExtAt{j}}"', from=3-1, to=3-4]
	\arrow["{\LambdaAt{j}}", from=1-1, to=1-4]
	\arrow[from=1-4, to=3-4]
	\arrow["\lrcorner"{anchor=center, pos=0.125}, draw=none, from=1-1, to=3-4]
	\arrow["\lrcorner"{anchor=center, pos=0.125}, draw=none, from=1-1, to=3-4]
\end{tikzcd}\]
being a pullback precisely captures the universal property as dicussed in~\cite[Definition~A.10, Theorem~A.16]{RS17}. In the terminology of \cite[Theorem~2.3.4]{KL18} this means that the projection $\UU^{\LambdaAt{j}} \to \UU^{\ExtAt{j}}$ is the \emph{universal dependent extension type} over $\UU^{\ExtAt{j}}$. In particular, in presence of the splitting it is a \emph{chosen} small fibration. Thus, as illustrated \eg~in \cite[Remark~2.6]{Awo18} we obtain strict pullback stability: everything in
\[\begin{tikzcd}
	&& \Delta \\
	\\
	\Gamma && {\UU^{\ExtAt{j}}} && \UU
	\arrow["{\pair{A}{a}}"{description}, from=3-1, to=3-3]
	\arrow["{\ExtAt{j}}"{description}, from=3-3, to=3-5]
	\arrow["\sigma"', from=1-3, to=3-1]
	\arrow["{(\ExtAt{j})(\sigma^*A,\sigma^*a)}", from=1-3, to=3-5]
	\arrow["{\pair{\sigma^*A}{\sigma^*a}}"{description}, from=1-3, to=3-3]
	\arrow["{(\ExtAt{j})(A,a)}"{description}, curve={height=24pt}, from=3-1, to=3-5]
\end{tikzcd}\]
commutes strictly on the nose because we have split the model structure from the get-go, yielding as desired
\[ \ExtAt{j}(A,a) \circ \sigma = \ExtAt{j}(A\sigma, a\sigma). \]

Similarly, and using that the generic lifts $b$ are given by gap maps of (strict) pullbacks we obtain
\[ \lambdaAt{j}(A,a,b) \circ \sigma = \ExtAt{j}(A\sigma, a\sigma, b\sigma). \]

\subsection{Strictly stable extension types: Elimination and computation}

The elimination and computation rule are Rule 3 and 4, resp., in~\cite[Figure~4]{RS17}.

To interpet the elimination rule, note first that the exponential $A^\psi$ comes with a (chosen) evaluation map:
\[\begin{tikzcd}
	{A^\psi \times_\Gamma (\Gamma \times \psi)} && A \\
	& \Gamma
	\arrow["{\mathrm{ev}}", from=1-1, to=1-3]
	\arrow[from=1-1, to=2-2]
	\arrow[from=1-3, to=2-2]
\end{tikzcd}\]
By pulling this back along the (chosen) map from the extension type, we obtain a map
\[ \mathrm{ev}: \ExtAt{j}(A,a) \times_\Gamma (\Gamma \times \psi) \to_\Gamma A,\]
fibered over $\Gamma$. Now,we want to define application $\appAt{j}$ of a function $f:\Gamma \to_\Gamma \ExtAt{j}(A,a)$ to a section $s: \Gamma \to_\Gamma \Gamma \times \psi$. Analogously to~\cite[Theorem~1.4.15]{KL18}, we take the composition:
\[\begin{tikzcd}
	\Gamma && {\ExtAt{j}(A,a) \times_\Gamma (\Gamma \times \psi)} && A \\
	&& \Gamma
	\arrow[from=1-3, to=2-3]
	\arrow["{\langle f,s \rangle}", from=1-1, to=1-3]
	\arrow["{\mathrm{ev}}", from=1-3, to=1-5]
	\arrow[Rightarrow, no head, from=1-1, to=2-3]
	\arrow[from=1-5, to=2-3]
	\arrow["{\appAt{j}(f,s)}", curve={height=-40pt}, from=1-1, to=1-5]
\end{tikzcd}\]
This validates the rule, since $\appAt{j}(f,s):\Gamma \to_\Gamma A$, as desired.
For substitution along $\sigma: \Delta \to \Gamma$, consider the following diagram involving chosen fibrations and split cartesian squares:
\[\begin{tikzcd}
	{\ExtAt{j} (\sigma^*A,\sigma^*a) \mathrlap{\times_\Delta(\Delta \times \psi)}} &&& {\ExtAt{j} (A,a) \times_\Gamma(\Gamma \times \psi)} \\
	& {\sigma^*A} &&& A \\
	\Delta &&& \Gamma & {}
	\arrow[from=1-1, to=3-1]
	\arrow["{\appAt{j}(\sigma^*f,\sigma^*s)}"{pos=0.65}, shorten <=4pt, dashed, from=1-1, to=2-2,]
	\arrow[shorten <=46pt, from=1-1, to=1-4]
	\arrow[from=2-2, to=3-1]
	\arrow["{\pair{\sigma^*f}{\sigma^*a}}", curve={height=-18pt}, dotted, from=3-1, to=1-1]
	\arrow[from=3-1, to=3-4, ]
	\arrow[from=1-4, to=3-4]
	\arrow["{\appAt{j}(f,s)}"{description}, from=1-4, to=2-5]
	\arrow[from=3-4, to=2-5]
	\arrow["\lrcorner"{anchor=center, pos=0.125}, shift left=2, draw=none, from=1-1, to=3-4]
	\arrow["\lrcorner"{anchor=center, pos=0.125}, draw=none, from=2-2, to=3-4]
	\arrow["{\pair{f}{a}}"{description, pos=0.7}, curve={height=-18pt}, dotted, from=3-4, to=1-4]
	\arrow[from=2-2, to=2-5, crossing over]
\end{tikzcd}\]
This uniquely defines the map $\appAt{j}(\sigma^*f,\sigma^*s)$, and by construction
\[ \sigma^*\appAt{j}(f,s) = \appAt{j}(\sigma^*f,\sigma^*s). \]
Furthermore the desired computation rule holds, saying that for a term $s$ in the smaller tope $\varphi$, one judgmentally has $\appAt{j}(f,s) \jdeq a[s]$. This is established by the commutation of the diagram:
\[\begin{tikzcd}
	&& A \\
	\\
	{\Gamma \times \varphi} && {\Gamma \times \psi} \\
	& \Gamma
	\arrow["{\Gamma \times j}"{description}, from=3-1, to=3-3]
	\arrow[from=1-3, to=3-3]
	\arrow["a"{description}, from=3-1, to=1-3]
	\arrow["s"{description}, from=4-2, to=3-1]
	\arrow[from=4-2, to=3-3]
	\arrow["f"{description}, curve={height=-12pt}, dotted, from=3-3, to=1-3]
	\arrow["{\appAt{j}(f,s)}"', curve={height=70pt}, dotted, from=4-2, to=1-3]
\end{tikzcd}\]

The $\beta$- and $\eta$-rule (\cf~\cite[Figure~4]{RS17}) can be proven to hold similarl (Rule 5 and 6, resp., in~\cite[Figure~4]{RS17}).

	\chapter{Conclusion and Outlook}\label{ch:concl-outlook}
	
			\section{Conclusion}\label{sec:concl}
			\subsection{Synthetic cocartesian fibrations}

We have developed, also in previous joint work with Ulrik Buchholtz~\cite{BW21}, a theory of co-/cartesian fibrations of synthetic $\inftyone$-categories in simplicial HoTT due to~\cite{RS17}. This generalizes concepts and results from~\emph{op.~cit.} to the non-discrete setting.

Our account follows~\cite[Chapter~5]{RV21} and indeed includes characterization theorems for cocartesian arrows, fibrations, and functors, in terms of left adjoint right inverse \emph{aka} Chevalley conditions. These serve to prove several closure properties, resembling the axioms of an $\infty$-cosmos. Thanks to the Chevalley criteria, a portion of these generalizes to Leibniz cotensors \wrt~to \emph{arbitrary} type maps or shape inclusions.\footnote{One should also compare this to the $\infty$-cosmoses of LARI and RARI adjunctions in~\cite{RV21}.}

\subsection{Synthetic bicartesian fibrations and Moens' Theorem}

We have given a basic account of Beck--Chevalley bifibrations, leading up to Moens' Theorem, translating Streicher's developments and proofs~\cite[Chapter~15]{streicher2020fibered} from the analytical $1$-dimensional to the synthetic $\inftyone$-categorical setting. This constitutes an application of our synthetic theory to prove a technically more involved theorem of fibered category theory \`{a} la B\'{e}nabou. To our knowledge, Moens' Theorem for $\inftyone$-categories has not been considered previously (in the analytical setting).

\subsection{Synthetic two-sided cartesian fibrations}

We have introduced synthetic two-sided cartesian fibrations, again in the spirit of~\cite[Chapter~5]{RV21}. This includes a systematic development of the notion, proving characterizations and closure properties in a modular way. Along the way, we have considered various (auxiliary) of ``fibered'' or ``sliced fibrations''. Our study leads up to a synthetic version of Riehl--Verity's two-sided Yoneda Lemma.

One might argue that this treatment is unwieldy at times, and might be more elegantly done in a setting with the appropriate categorical universes at hand, \cf~\Cref{sec:outlook}. However, it seems to be a general principle of simplicial HoTT that to be order to be build up the universe inside the theory in the first place, one has to understand the respective fibrations before.

\subsection{Semantics}

We have proved a coherence theorem for the extension types of~\cite{RS17}, relative to the interpretation of simplicial HoTT in any $\infty$-topos of simplicial objects,~\cite[Section~8]{Shu19}. The method is the well-known ``global universe splitting'' related to Giraud's left adjoint splitting, used previously in works on the semantics of HoTT, such as~\cite{KL18,LW15,Awo18}. This was hinted at but left out in~\cite{RS17}. The coherence construction added on top of the groundwork from~\cite[Appendix~A]{RS17} and\cite[Section~8]{Shu19} implies that indeed simplicial HoTT has models in simplicial objects internal to an arbitrary Grothendieck--Rezk--Lurie $\infty$-topos.

\subsection{sHoTT as a synthetic language for internal $\inftyone$-categories}

Consequently, the Rezk types get interpreted as Rezk objects in the respective $\inftyone$-topos. In particular, those also form an $\infty$-cosmos, capturing the formal category theory of (internal) $\inftyone$-categories. The cosmological notions in \cite{RV21} are formulated in terms of certain  constructions and notions, such as (relative) adjunctions/absolute lifting diagrams, and (fibered) equivalences of modules. Importantly, these notions are invariant under equivalences of $\infty$-cosmoses, hence are suitable to capture the respective internal formal $\infty$-category. Now, by means of the preestablished standard interpretation of sHoTT, one sees that our internal notions translate exactly to the desired analytical counterparts. \Eg, cocartesian fibrations are expressed through a LARI condition, and (LARI) adjunctions can be defined as a fibered equivalence of modules/hom objects. This also matches up with previous model-categorical investigations, \eg~in~\cite{rasekh2021cartesian}.

Therefore, sHoTT provides an expressive and convenient tool for reasoning about internal $\inftyone$-categories synthetically, at least with regard to the fibrational theory. It notably profits from the relations to $\infty$-cosmos theory, both for the internal and external theory.

However, there are also principle difficulties and obstructions at play. Homotopy-invariance is forced,~\ie~``discrete'' constructions are forbidden. In the face of countermodels, a native development of categorical universes and the ensuing Grothendieck construction does not seem possible. In general, it seems rather tricky in this theory to define concrete Rezk types as opposed to abstract ones, constructed out of anonymous given ones.

Also notably, the elementary feature of \emph{opposite categories} seems subtle to implement.

We present a few of our perspectives to remedy these shortcomings in the next section.
			\section{Outlook}\label{sec:outlook}
			\subsection{Discrete two-sided fibrations}

Natural follow-up work for this thesis includes a dedicated discussion of discrete two-sided fibrations, \emph{aka}~$\infty$-profunctors or (bi-)modules. We expect to be able to prove an analogous characterization as discrete fibrations of fibrations. This should, in the synthetic setting, imply the expected cosmological closure properties, and also additional operations, known from the \emph{virtual equipment} of modules due to~\cite{RV21}. Finally, we make some considerations on the closure properties of synthetic $\infty$-distributors in particular. Externally, the distributors internal to an arbitrary $\infty$-cosmos form a kind of double-category, encompassing the (formal) $\infty$-category theory of the $\infty$-cosmos.\footnote{This even has an associated internal language \`{a} la Makkai's FOLDS~\cite[Section~11.2]{RV21}.} This philosophy is a cornerstone of $\infty$-cosmos theory and the Model Independence Theorem. Then, also together with a first consideration of relative adjunctions in this thesis, possible applications could notably include a theory of Kan extensions. It would also be conceivable to develop, a calculus of modules in sHoTT, after~\cite{RVKan}, even though for this purpose working in one categorical dimension up might be more suitable.

\subsection{Categorical universes and modalities}
The next step in developing this synthetic theory of fibered $(\infty,1)$-categories is the treatment of categorical \emph{universes}, or \emph{object classifiers}, and the analogue of the \emph{completion operation} introduced by Rezk in \cite{rez01}, which completes a general Segal type to a Rezk type. In unpublished joint work with Ulrik Buchholtz we have shown that there are models in which simply restricting the universe to the canonical subtype of small $\infty$-groupoids, or small Segal types, resp., does not yield a Segal type again, cf.~ \cite{BW18a,BW21}. The goal is to establish a hierarchy of categorical universes within simplicial type theory, as previously done in various settings, notably by Ayala--Francis \cite{AFfib} and Rasekh \cite{Ras18model}.

We expect the following construction to work. In the first step, using techniques from \emph{Cubical Homotopy Type Theory} \cite{LOPS18,CCHM2018} we can, for suitable synthetic notions $F$ of fibration, define a type classifying small fibrations of flavor $F$. This requires a further extension of sHoTT, which is however justified by the semantics in simplicial spaces. Mainly, one internalizes the structure of $\sSpace$ as a \emph{cohesive $(\infty,1)$-topos} over $\Space$, by adding new type formers (\emph{modalities}) \cite{Shu18} for (synthetic) higher versions of Lawvere's \emph{axiomatic cohesion} \cite{LawvereCohesion}. It also requires to embed the interpreting $\infty$-topos of simplicial spaces into the $\infty$-topos of \emph{cubical spaces}, as discussed (for the $1$-topos case) in my joint note \cite{SW21} with Thomas Streicher, building upon work by Sattler \cite{Sat18} and Kapulkin--Voevodsky \cite{KV18}.

An analogous method has also been used in~\cite{WL19} ultimately constructing universes satisfying directed univalence, but with a different approach to the structural analysis along the way.

In our setting, we expect connections to a synthetic version of~\emph{flat} or \emph{$\inftyone$-Conduch\'{e}} fibrations.\footnote{I am grateful to David Ayala, Aaron Mazel-Gee, Emily Riehl, and Jay Shah for pointing me to the flat fibrations and discussions revolving around them.}

One should note that despite extending sHoTT even further, the additions still preserve the intrinsic character of the theory because the cohesive structure is present in the standard model $\sSpace$. Ultimately, we are aiming for a description of the categorical structure of the universes, and a version of the $\infty$-Grothendieck construction (\aka~straightening/unstraightening) in terms of the cohesive structure. This should be contrasted to \eg~the works of Lurie, Riehl--Verity, and Rezk which at times import the machinery of simplicially enriched categories, or Cisinski--Nguyen, who use model structures on \emph{marked} simplicial sets.

This has been joint ongoing work with Ulrik Buchholtz~\cite{BW18c,BW19}.

Combining this with the theory of two-sided cartesian fibrations from this thesis, one should also aim for synthetic higher categorical universes of spans.

\subsection{Opposites, twisted arrows, and classical Yoneda Lemma}

It is natural to ask for an operation giving for any Rezk type $A$ its \emph{opposite} Rezk type $A^{\Op}$. It has turned to be formally very delicate to introduce opposites into a dependent type theory. However, based on \emph{multimodal type theory} due to Licata--Riley--Shulman \cite{Licata19} and Gratzer--Kavvos--Nuyts--Birkedal \cite{GKNB20,MDTT} we have outlined rules and a semantics, more generally, capturing fibered modalities induced by operations on simplices. This in particular includes a modal operator yielding the type $\tw_A$ of twisted arrows.

Combining the work on fibrations and universes with the new modal extension is expected to yield as a prime application the more ``traditional'' Yoneda Lemma and Yoneda embedding $A \to (A^{\Op} \to \mathrm{Space})$, using the twisted arrow fibrations, after Kazhdan--Varshavsky~\cite{KV14}. This has been joint ongoing work with Ulrik Buchholtz~\cite{BW18b}.

\subsection{Synthetic higher algebra}

In the longer run, with most of the theory described in the previous paragraph established, we can turn to more advanced topics in synthetic fibered $(\infty,1)$-category theory. Specifically, we have in mind symmetric monoidal $(\infty,1)$-categories and $\infty$-operads, both of which can be defined in fibrational language, as done \eg~by Lurie \cite{Lur17}.

Another direction is to extend our notions of synthetic $\infty$-categories and cocartesian fibrations to an equivariant setting, hence providing an analogue to Barwick \etal's program \cite{barwick2016} of equivariant or \emph{parametrized $\infty$-category theory}. The parts of higher group theory already developed in HoTT to this date will be crucial here, notably \cite{BDR18,BBDG}. 

\subsection{Synthetic $(\infty,2)$- and $(\infty,n)$-categories and fibrations}

In light of the remarkable recent developments of two-dimensional higher category theory, we might hope to extend the syntax and semantics to the $\inftytwo$-level, working~\eg~in $2$-fold complete Segal spaces or $\Theta_2$-spaces. Since then objects of our theory would then be $\inftytwo$-categories we might hope to develop some parts of $\infty$-comos theory inside the theory. However, to get a reasonable grip on this, it seems instructive to achieve the aforementioned constructions of categorical universes first in the $\inftyone$-dimensional case. \Eg,~a two-dimensional theory should support a general pasting principle (both strictly operationally, and internally for the respective $\inftytwo$-categorical universe types) \`{a} la~\cite{infty2pasting}, and for this purpose directed univalence seems indispensable. Another interesting direction would be ``more natively'' develop a calculus of modules and other pieces of formal $\infty$-category theory in this two-dimensional setting. Thus, there are several reasons to be interested specifically in a version of sHoTT for synthetic $\inftytwo$-categories. As shown in the work of the notions of cartesian fibrations also become more intricate. We hope that our ``shape-independent'' account of LARI fibrations and LARI cells could provide useful here, \cf~\cite{BW21} and~\Cref{ssec:lari-cells}.

More generally, as suggested by~\cite{RS17} another direction of study would be a generalization to a type theory of $(\infty,n)$-categories, \eg~inside $\Theta_n$-spaces using an appropriate shape theory, \cf~also the recent work by Rasekh on fibrations of $(\infty,n)$-categories~\cite{RasYonDSp}.

In the light of close connections of simplicial type theory with cubical type theory, and the recent analytic results about cubical notions of weak higher categories, one could also hope for mutual new insights and connections between the analytic and type-theoretic side.

\subsection{Higher topos theory}

In \cite{streicher2020fibered,StrFVGM} Streicher lays out an analysis of geometric morphisms in terms of fibered $1$-category theory, recalling and extending work by B\'{e}nabou, Moens, and Jibladze. One of the central results is Jibladze's Theorem which says that locally small, cocomplete fibered toposes over a fixed base (elementary) topos $\mathscr B$ are given up to equivalence as \emph{Artin gluings} $F^* \partial_1: \mathscr E/F \to \mathscr B$ of some geometric morphism $F: \mathscr B \to \E$ (identified with its inverse image part).

The notion of fibered topos considered here implies that the reindexings are \emph{logical} functors, \ie~preserve the subobject classifiers.

It would be intriguing to analyze this and related results for higher toposes. First, generalizing this notion of fibered topos naturally exhibits the homotopy level as a parameter, since in a higher topos we are dealing with \emph{object} classifiers for arbitrary $n$-types, for $-1 \leq n \leq \infty$ (with $n=-1$ recovering the subobject classifier, which is the universe of propositions).

Second, it could be worthwile to consider more general notions than sheaf $(\infty,1)$-toposes, such as the proposed \emph{elementary $(\infty,1)$-toposes} after Shulman \cite{Shu18} and Rasekh \cite{RasPhd,RasEHT}.

A first step in this direction has been taken in this thesis. We have established a form of Moens' Lemma for synthetic $\inftyone$-categories, hence it holds in the intended models as well. This constitutes one of several examples of how the synthetic theory can be a helpful tool for the analytical theory as well, \cf~\eg~\cite{ABFJ}.

Furthermore, our treatment of BCC and Moens fibrations suggests that some more fibered category theory \`{a} la~\cite{streicher2020fibered} can be developed inside our synthetic setting, including~\eg~geometric fibrations. This could nicely complement the analytical treatment.

\subsection{Other type theories and implementations}

A different but related overall approach to higher categories in type theory is given by \emph{2-level type theory (2LTT)}~\cite{2ltt,CapriottiPhD}, based on Voevodsky's Higher~Type~System. In contrast, 2LTT seems closer to a (possible) foundational theory, whereas simplicial HoTT could be seen rather as a domain-specific language (DSL). Thus, it would be natural to provide an interpretation of sHoTT in a suitable 2-level type theory, also opening up possibilities of software-implementation.

On the other hand, there is an ongoing development of a new proof-assistant called \texttt{rzk} by Nikolai Kudasov~\cite{KudRzk} that supports simplicial type theory. Since we are seeking to extend our type theory by (multi-)modalities, which also (in part) have implementations, it would be most desirable to (eventually) achieve a high degree of modularity on both the theory as well as the practical implementation of the type theories at play.

Another approach, as sketched by Buchholtz~\cite{BucHoTTEST}, is to investigate the connections between sHoTT and ``HoTT with a simplicial interval'' as a less strict, more intensional replacement. One could ask how far along one gets with the latter, probably in the presence of additional induction principles but with no strict extension types.

	\appendix
		
	\chapter{Relative adjunctions}
	\label{app:reladj}
	
	\section{Relative adjunctions}\label{ssec:reladj}

Taking up a suggestion by Emily Riehl, we provide here a brief treatment of \emph{relative adjunctions} in the sense of Ulmer~\cite{UlmDense}, \cf~also~\cite{MasarykFormal}. The purpose is to provide a more formal account to cocartesian arrows, or more generally, LARI cells. As a payoff, we will see that the Chevalley condition defining the LARI cells implies the Chevalley condition for LARI fibrations in the sense of~\cite{BW21}, and likewise for LARI functors. 

\subsection{Definition and characterization}

\begin{definition}[Transposing relative adjunction]
 Let $A,B,C$ be Rezk types and $(g: C \to A \leftarrow B: f)$ a cospan. A \emph{(transposing) left relative adjunction} consists of a functor $\ell:C \to B$ together with a fibered equivalence
 \[ \big(\comma{\ell}{B} \equiv_{C \times B} \comma{g}{f}\big) \simeq \prod_{\substack{c:C \\ b:B}} \hom_B(\ell\,c, b) \simeq \hom_A(g\,c, f\,b). \]
 Given such data, we call $\ell$ a \emph{(transposing) left adjoint of $f$ relative to $g$} or \emph{(transposing) $g$-left adjoint of $f$}.
\end{definition}

In case $C \jdeq A$ and $g \jdeq \id_A$ one obtains the usual notion of (transposing) adjunction. There also exists a relative analogue of the units. We might occasionally drop the predicate ``left'' in our discussion since we will only consider the left case. But beware that relative adjunctions are a genuinely asymmetric notion.

\begin{definition}[Relative adjunction via units]\label{def:reladj-units}
	Let $A,B,C$ be Rezk types and $(g: C \to A \leftarrow B: f)$ a cospan. A \emph{(transposing) left relative adjunction} consists of a functor $\ell:C \to B$ together with a natural transformation $\eta: g \Rightarrow_{C \to A} f\ell$, called \emph{relative unit}, such that transposition map
	\[ \Phi_\eta \defeq \lambda b,c,k.fk \circ \eta_c : \comma{\ell}{B} \to_{C \times B} \comma{g}{f} \]
	is a fiberwise equivalence.  
\end{definition}

By the characterizations about type-theoretic weak equivalences, \Cref{def:reladj-units} translates to:
\begin{align}\label{eq:reladj-units}
	\prod_{\substack{c:C \\b:B}} \prod_{m:gc \to_A fb} \isContr\Big( \sum_{k:\ell\,c \to_B b} \Phi_\eta(k) = m \Big)
\end{align}
Diagrammatically, this can be depicted as follows, demonstrating once more the generalization from the usual notion of adjunction:
\[\begin{tikzcd}
	{g\,c} && {f\,b} & b \\
	{f\,\ell c} &&& {\ell\,c}
	\arrow["{\forall\,m}", from=1-1, to=1-3]
	\arrow["{\eta_c}"', from=1-1, to=2-1]
	\arrow["{f\,k}"', from=2-1, to=1-3]
	\arrow["{\exists! \,k}", dashed, from=2-4, to=1-4]
\end{tikzcd}\]
It turns out that also in the synthetic setting we recover the equivalence of relative adjunctions with \emph{absolute left lifting diagrams (ALLD)}, whose universal property in terms of pasting diagrams can be (informally or analytically) visualized as:
A lax diagram
\[\begin{tikzcd}
	&& B \\
	C && A
	\arrow[""{name=0, anchor=center, inner sep=0}, "g"', from=2-1, to=2-3]
	\arrow["f", from=1-3, to=2-3]
	\arrow[""{name=1, anchor=center, inner sep=0}, "\ell", from=2-1, to=1-3]
	\arrow["\eta"', shorten <=2pt, shorten >=2pt, Rightarrow, from=0, to=1]
\end{tikzcd}\]
is an absolute lifting diagram if and only if any given lax square on the left factors uniquely as a pasting diagram as demonstrated below left:
\[\begin{tikzcd}
	X && B & {} & X && B \\
	C && A & {} & C && A
	\arrow["\gamma"', from=1-1, to=2-1]
	\arrow["g"', from=2-1, to=2-3]
	\arrow["\beta", from=1-1, to=1-3]
	\arrow["f", from=1-3, to=2-3]
	\arrow["{=}"{description}, draw=none, from=1-4, to=2-4]
	\arrow["\gamma"', from=1-5, to=2-5]
	\arrow[""{name=0, anchor=center, inner sep=0}, "g"', from=2-5, to=2-7]
	\arrow[""{name=1, anchor=center, inner sep=0}, "\beta", from=1-5, to=1-7]
	\arrow["f", from=1-7, to=2-7]
	\arrow["{\forall\,\mu}"{description}, shorten <=10pt, shorten >=10pt, Rightarrow, from=2-1, to=1-3]
	\arrow[""{name=2, anchor=center, inner sep=0}, from=2-5, to=1-7]
	\arrow["{\exists!\,\mu'}", shorten <=2pt, shorten >=2pt, Rightarrow, from=2, to=1]
	\arrow["\eta", shift right=5, shorten <=2pt, shorten >=2pt, Rightarrow, from=0, to=2]
\end{tikzcd}\]
Accordingly we define this type-theoretically\footnote{For a first discussion about lax squares and pasting diagrams in sHoTT~\cf~\cite[Appendix~A]{BW21}. We do currently not have a systematic account to these. \Eg~certainly at some point a pasting theorem \`{a} la~\cite{infty2pasting} would be most desirable. This would presumably require a categorical universes validating a directed univalence principle, and possibly also modalities from cohesion.} as follows:
\begin{definition}[Absolute left lifting diagram]
	A diagram
	\[\begin{tikzcd}
		&& B \\
		C && A
		\arrow[""{name=0, anchor=center, inner sep=0}, "g"', from=2-1, to=2-3]
		\arrow["f", from=1-3, to=2-3]
		\arrow[""{name=1, anchor=center, inner sep=0}, "\ell", from=2-1, to=1-3]
		\arrow["\eta"', shorten <=2pt, shorten >=2pt, Rightarrow, from=0, to=1]
	\end{tikzcd}\]
	is an \emph{absolute lifting diagram (ALLD)} if the following proposition is satisfied:
	\[ \isALLD_{\ell,f,g}(\eta) \defeq \prod_{\substack{X:\UU \\ \beta:X \to B \\ \gamma: X \to C}} \prod_{\mu:g\gamma \Rightarrow f\beta} \isContr \Big(\sum_{\mu': \ell \gamma \Rightarrow \beta} \prod_{x:X} (f\mu_x' \circ \eta_{\gamma\,x} =_{g\gamma\,x \to f\beta\,x} \mu_x)  \Big)\] 
\end{definition}
Note that one can infer the data $\angled{\ell,f,g}$ from $\eta$ alone. We might also speak of $\eta$ as an \emph{absolute left lifting cell}.

Diagrammatically, the demanded identity of morphisms reads:
\[\begin{tikzcd}
	{g(\gamma\,x)} && {f\ell(\gamma\,x)} && {f(\beta\,x)}
	\arrow["{\eta_{\gamma\,x}}", from=1-1, to=1-3]
	\arrow["{f\mu_x'}", from=1-3, to=1-5]
	\arrow[""{name=0, anchor=center, inner sep=0}, "{\mu_x}"', curve={height=24pt}, from=1-1, to=1-5]
	\arrow[shorten >=2pt, Rightarrow, no head, from=1-3, to=0]
\end{tikzcd}\]

The above definitions can be dualized to obtain relative \emph{right} adjoints and absolute \emph{right} lifting diagrams. Because of the inherent asymmetry some analogies to the case of ordinary adjunctions are missing (such as the presence of \emph{both} units and counits). However, we can still provide a characterization and some closure results.

\begin{theorem}[Characterizations of relative left adjunctions, \cf~\protect{\cite[Thm.~3.5.8/3]{RV21}}, \protect{\cite[Thm.~11.23]{RS17}}, \protect{\cite[Thm.~B.1.4]{BW21}}]\label{thm:reladj-char}
	Let $A,B,C$ be Rezk types and $(g: C \to A \leftarrow B: f)$ a cospan. Then the following types are equivalent propositions:
	\begin{enumerate}
		\item\label{it:reladj-transp} The type $\sum_{\ell:C \to B} \comma{\ell}{B} \equiv_{C \times B} \comma{g}{f}$ of (transposing) $g$-left adjoints of $f$.
		\item\label{it:reladj-unit} The type $\sum_{\ell:C \to B} \sum_{\eta:g \Rightarrow f\ell} \isEquiv\big( \Phi_\eta \big)$ with $\Phi$ as in~\Cref{def:reladj-units}.
		\item\label{it:reladj-alld} The type $\sum_{\ell:C \to B} \sum_{\eta:g \Rightarrow f\ell} \isALLD_{\ell,f,g}(\eta)$ of completions of the cospan consisting of $f$ and $g$ to an ALLD.
	\end{enumerate}
\end{theorem}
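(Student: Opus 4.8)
The plan is to prove the three-way equivalence of propositions by establishing equivalences between the underlying total types and then observing that all three are propositions, so that logical equivalence upgrades to type equivalence. Since each type is a $\Sigma$-type of the form $\sum_{\ell:C \to B} R(\ell)$, I would work fiberwise over a fixed functor $\ell : C \to B$ and show that the three predicates $R_{\mathrm{transp}}(\ell)$, $R_{\mathrm{unit}}(\ell)$, and $R_{\mathrm{alld}}(\ell)$ are equivalent. The cleanest structure is a cycle: first relate the transposing formulation~\ref{it:reladj-transp} to the unit formulation~\ref{it:reladj-unit}, and then relate the unit formulation~\ref{it:reladj-unit} to the absolute-left-lifting formulation~\ref{it:reladj-alld}.

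First I would treat \ref{it:reladj-transp} $\iff$ \ref{it:reladj-unit}. Given a fibered equivalence $\comma{\ell}{B} \equiv_{C \times B} \comma{g}{f}$, I would extract a relative unit by evaluating the equivalence at the family of identity arrows: the component $\eta_c$ arises as the image of $\id_{\ell c}$ under the fiberwise map, exactly as in the non-relative case treated in~\cite[Thm.~11.23]{RS17} and~\cite[Thm.~B.1.4]{BW21}. Conversely, $\Phi_\eta$ from~\Cref{def:reladj-units} is by definition a fibered functor $\comma{\ell}{B} \to_{C \times B} \comma{g}{f}$, and requiring it to be a fiberwise equivalence is precisely condition~\ref{it:reladj-unit}. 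The content is that the equivalence data in~\ref{it:reladj-transp} is, up to homotopy, uniquely determined by its action on identities, so that packaging it as $\pair{\eta}{\isEquiv(\Phi_\eta)}$ loses no information. This is a relative version of the standard Yoneda-style argument: naturality of the equivalence with respect to postcomposition forces $\Phi(k) = fk \circ \eta_c$, which is the formula already built into~\Cref{def:reladj-units}.

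Next I would treat \ref{it:reladj-unit} $\iff$ \ref{it:reladj-alld}. The key observation is that both conditions are contractibility statements, and they are related by the type-theoretic principle of choice for extension types,~\Cref{thm:choice}, together with the de-/strictification of extension types. Unfolding $\isALLD_{\ell,f,g}(\eta)$ quantifies over all probes $\beta : X \to B$, $\gamma : X \to C$ and $2$-cells $\mu : g\gamma \Rightarrow f\beta$, asking for a contractible space of factorizations $\mu'$ with $f\mu'_x \circ \eta_{\gamma x} = \mu_x$. Specializing the probe to $X \jdeq \unit$ recovers exactly the pointwise contractibility~\eqref{eq:reladj-units} defining $\isEquiv(\Phi_\eta)$, giving the implication \ref{it:reladj-alld} $\implies$ \ref{it:reladj-unit} immediately. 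For the converse I would promote the pointwise statement to the family-indexed one: given $\isEquiv(\Phi_\eta)$, the fibered equivalence $\comma{\ell}{B} \equiv_{C \times B} \comma{g}{f}$ can be cotensored with an arbitrary probe $X$, and the factorization $\mu'$ is assembled using~\Cref{thm:choice} to commute the $\Sigma$ past the extension type, exactly as absolute lifting diagrams are shown to correspond to fibered equivalences of comma objects in the non-relative setting.

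The final bookkeeping step is to check that all three types are propositions, which is what lets me conclude the stated equivalence of \emph{propositions} rather than merely an equivalence of types. For~\ref{it:reladj-transp} and~\ref{it:reladj-unit} this follows because being an equivalence is a proposition and because a $g$-left adjoint, when it exists, is determined up to a contractible space of choices (the relative analogue of uniqueness of adjoints, which I would deduce from the Rezk-completeness of $B$ via the usual argument that left adjoints are unique up to unique isomorphism); for~\ref{it:reladj-alld}, $\isALLD$ is a product of contractibility propositions and hence a proposition. I expect the main obstacle to be the converse direction of \ref{it:reladj-unit} $\implies$ \ref{it:reladj-alld}, namely upgrading the pointwise universal property to the probe-indexed one uniformly in $X$; this is where one must carefully apply relative function extensionality (\Cref{ax:relfunext}) and the choice principle to transport the fiberwise contractibility through the cotensor by $X$, since the naive pointwise data does not automatically glue into a natural $2$-cell $\mu'$ without invoking naturality of $\Phi_\eta$ with respect to precomposition in $X$.
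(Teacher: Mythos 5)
Your proposal is correct and follows essentially the same route as the paper's proof: the equivalence of~\ref{it:reladj-transp} and~\ref{it:reladj-unit} via a Yoneda-style extraction of the unit as in~\cite[Theorem~11.23]{RS17}, the implication \ref{it:reladj-alld}~$\implies$~\ref{it:reladj-unit} by specializing the probe to $X \jdeq \unit$, the converse by reindexing the pointwise contractibility statement along an arbitrary span (where you correctly and somewhat more explicitly than the paper identify the need for the choice principle and function extensionality to commute the $\Pi$ over $X$ past the $\Sigma$ in $\isALLD$), and propositionality argued as in \emph{loc.~cit.}
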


\begin{proof}
	In parts we can work analogously as in the proof of \cite[Theorem~11.23]{RS17}. In particular, an equivalence between the types from~\Cref{it:reladj-transp} and~\Cref{it:reladj-unit} follows\footnote{The classical version is due to~\cite[Lemma~2.7]{UlmDense}, and it works by the analogous argument.} just as in \emph{loc.~cit.} by using the (covariant discrete) Yoneda Lemma~\cite[Section~9, and (11.9)]{RS17}. Next, analogously as in the proof of \cite[Theorem~11.23]{RS17} one also shows that, given $\ell:C \to B$, the type $\sum_{\eta:g \Rightarrow f\ell} \isEquiv\big( \Phi_\eta \big)$ is a proposition.
	
	We now fix $\ell:C \to B$ and $\eta:g \Rightarrow f\ell$. Recall~\eqref{eq:reladj-units}. The direction from~\Cref{it:reladj-alld} to~\Cref{it:reladj-unit} follows by setting $X \jdeq \unit$. Conversely, we see that we get from~\Cref{it:reladj-unit} to~\Cref{it:reladj-alld} by ``reindexing'' Condition~\eqref{eq:reladj-units} along any given span $(\gamma : C \leftarrow X \rightarrow B:\beta)$.\footnote{More precisely, we use the fact that, given a family of propositions $P:A \to \Prop$, there is an equivalence $\Phi:\prod_{a:A} P(a) \simeq \prod_{\substack{X:\UU \\ \alpha:X \to A}} \prod_{x:X} P(\alpha,x):\Psi$. We can take $\Phi(\sigma) \defeq \lambda X,\alpha,x.\sigma(\alpha\,x)$ and $\Psi(\tau) \defeq \lambda a.\tau(\unit)(a)(\pt)$.}
\end{proof}

\begin{corollary}\label{cor:unique-left-rel-adj}
Given a cospan $(g:C \to A \leftarrow B:f)$, if both $\ell, \ell':C \to B$ are left adjoints to $f$ relative to $g$, then there is an identity $\ell = \ell'$:
\[\begin{tikzcd}
	&& B \\
	C && A
	\arrow[""{name=0, anchor=center, inner sep=0}, "g"', from=2-1, to=2-3]
	\arrow[""{name=1, anchor=center, inner sep=0}, "f", from=1-3, to=2-3]
	\arrow[""{name=2, anchor=center, inner sep=0}, "\ell"{description}, curve={height=6pt}, from=2-1, to=1-3]
	\arrow[""{name=3, anchor=center, inner sep=0}, "{\ell'}"{description}, curve={height=-12pt}, from=2-1, to=1-3]
	\arrow[shorten <=3pt, shorten >=3pt, Rightarrow, no head, from=2, to=3]
	\arrow["\eta"{description}, shorten <=6pt, shorten >=6pt, Rightarrow, from=0, to=1]
\end{tikzcd}\]
\end{corollary}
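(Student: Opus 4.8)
The plan is to deduce this uniqueness statement directly from the characterization theorem~\Cref{thm:reladj-char}, exactly as uniqueness of ordinary adjoints follows from the Yoneda Lemma. The key observation is that~\Cref{thm:reladj-char} asserts that each of the three displayed types is a \emph{proposition}. In particular, the type from~\Cref{it:reladj-transp},
\[ \sum_{\ell:C \to B} \big(\comma{\ell}{B} \equiv_{C \times B} \comma{g}{f}\big), \]
is a proposition. Hence any two inhabitants are identified, and by taking first projections this immediately yields an identification of the underlying functors.

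First I would make precise why being a proposition is what we need. Given two relative left adjoints $\ell, \ell' : C \to B$ of $f$ relative to $g$, we have two inhabitants $\pair{\ell}{e}$ and $\pair{\ell'}{e'}$ of the $\Sigma$-type in~\Cref{it:reladj-transp}, where $e, e'$ are the respective witnessing fibered equivalences. Since the total $\Sigma$-type is a proposition, there is a path $\pair{\ell}{e} = \pair{\ell'}{e'}$. Applying the first projection $\pr_1 : \big(\sum_{\ell:C \to B} \ldots\big) \to (C \to B)$ and using that $\pr_1$ respects paths (\ie~$\ap_{\pr_1}$), we obtain the desired identification $\ell = \ell'$ in the function type $C \to B$. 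Since $C$ and $B$ are Rezk types, $C \to B$ is again a Rezk (hence Segal) type, so this propositional equality is exactly an identification of functors, and by Rezk-completeness corresponds to a natural isomorphism $\ell \cong \ell'$, as depicted in the diagram of the statement.

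The only subtlety worth spelling out is that~\Cref{cor:unique-left-rel-adj} is phrased in terms of the \emph{underlying} functors being identified, whereas~\Cref{thm:reladj-char} concerns the full structure (functor together with equivalence data or unit). This is why it is essential that the theorem establishes propositionality of the \emph{total} type rather than merely asserting an equivalence of types: propositionality of $\sum_{\ell} P(\ell)$ is strictly stronger than each $P(\ell)$ being a proposition, and it is precisely the former that transports along $\pr_1$ to give uniqueness of $\ell$ itself. I would therefore point explicitly to the proof of~\Cref{thm:reladj-char}, where (mirroring the argument of~\cite[Theorem~11.23]{RS17}) one shows via the covariant Yoneda Lemma that for fixed $\ell$ the type of compatible unit data is a proposition, and moreover that the total space is a proposition.

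I do not anticipate a genuine obstacle here, since the real work has already been done in establishing~\Cref{thm:reladj-char}; the corollary is a formal consequence. The one place requiring a small amount of care is the interpretation of the resulting path as the natural isomorphism $\eta$ drawn in the statement's diagram: one invokes Rezk-completeness of $C \to B$ to pass from the identification $\ell = \ell'$ to an invertible natural transformation, and then observes that the unit $\eta$ of either relative adjunction is compatible with this comparison. This last compatibility is again a propositional condition, automatically satisfied, so no additional coherence data needs to be constructed by hand.
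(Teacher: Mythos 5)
Your proof is correct and matches the paper's (implicit) derivation: the corollary is stated without proof immediately after Theorem~\ref{thm:reladj-char} precisely because it is the formal consequence you describe — the total type of transposing $g$-left adjoints is a proposition, so any two inhabitants are identified and $\ap_{\pr_1}$ yields $\ell = \ell'$. Your remark that it is propositionality of the \emph{total} $\Sigma$-type (not merely of each fiber) that is needed is exactly the right point of care.
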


We write $\relAdj{\ell}{g}{f}$ if it exists.

\begin{definition}[Relative LARI adjunction]
	A relative left adjunction is called \emph{relative LARI adjunction} if its relative unit is invertible.
\end{definition}

\section{LARI cells, fibrations, and functors}\label{ssec:lari-stuff}

\subsection{LARI cells}\label{ssec:lari-cells}

Let $j: \Phi \hookrightarrow \Psi$ be a shape inclusion. Let $B$ be a Rezk type and $P:B \to \UU$ be an isoinner family. For its unstraightening $\pi: E \fibarr B$ the diagram induced by expoentiation is given through: 
\[\begin{tikzcd}
	{\mathllap{E^\Psi \equiv \sum_{\pair{u}{f}:\Phi^E}} \sum_{v:\ndexten{\Psi}{B}{\Phi}{u}} \exten{\Psi}{v^*P}{\Phi}{f}} &&&& {E^\Phi \simeq \sum_{u:\Phi \to B} \mathrlap{\prod_\Phi u^*P}} \\
	\\
	{\mathllap{B^\Psi \equiv \sum_{u:\Phi \to B}} \ndexten{\Psi}{B}{\Phi}{u}} &&&& {B^\Phi}
	\arrow[from=1-1, to=3-1]
	\arrow[from=3-1, to=3-5]
	\arrow[from=1-1, to=1-5]
	\arrow[from=1-5, to=3-5]
\end{tikzcd}\]
An element $v:\Psi \to B$ is to be understood as \emph{$\Psi$-shaped cell} (or \emph{diagram}) in the type $B$. A section $g:\prod_{t:\Psi} P(v(t))$ is a \emph{dependent $\Psi$-shaped cell (over $v$)} in the family $P$.

\begin{definition}[$j$-LARI cell]
Let $g: \Psi \to E$ be a $\Psi$-shaped cell in $E$, lying over $\angled{u,v,f}$ with $u:\Phi \to B$, $v:\Psi \to B$, and $f:\Phi \to E$ (both the latter lying over $u$). We call $g$ a \emph{$j$-LARI cell} if the ensuing canonical commutative diagram\footnote{By some slight abuse of notation $g$ really stands for the whole tuple $\angled{u,v,f,g}$, and the homotopy is reflexivity. This is a valid reduction due to fibrant replacement.}
\[ 
\begin{tikzcd}
	&& {E^\Psi} \\
	\unit && {B^\Psi \times_{B^\Phi} E^\Psi}
	\arrow["g", from=2-1, to=1-3]
	\arrow[""{name=0, anchor=center, inner sep=0}, "{\angled{u,v,f}}"', from=2-1, to=2-3]
	\arrow["{\pi' \mathrlap{\defeq j \cotens \pi}}", from=1-3, to=2-3]
	\arrow[shorten <=15pt, shorten >=15pt, Rightarrow, no head, from=1-3, to=0]
\end{tikzcd}
\]
is an absolute left lifting diagram,~\ie~there is a relative adjunction as encoded by the fibered equivalence
\[ \comma{g}{E^\Psi} \simeq_{E^\Psi} \comma{\angled{u,v,f}}{\pi'},\]
or, equivalently by~\Cref{thm:reladj-char}
\begin{align}\label{eq:lari-cell}
	\isLARICell_{j}^P(g) \defeq \isEquiv\big( \Phi_{\refl} \big)
\end{align}
\end{definition}
where the transposition map $\Phi_\refl$ simply projects the data of a morphism $\beta$ in $E^\Psi$ onto its part in $E^\Phi \times_{B^\Phi} B^\Psi$.

Unpacking this, after contracting away redundant data, yields
\begin{align}\label{eq:lari-cell-explicit} \isLARICell_{j}(g) \equiv \prod_{\substack{\angled{r,w,k,m}:\Psi \to E \\ \alpha:\angled{u,v,f} \to \angled{r,w,k}}} \isContr\Big(\exten{\pair{t}{s}:\Delta^1 \times \Psi} {P\big(\alpha_1(t,s)\big)} {b_1 \poprod j}{[\pair{g}{m},\alpha_2]}\Big),
\end{align}
where we denote by $b_1:\partial \Delta^1 \hookrightarrow \Delta^1$ the boundary inclusion, and $\alpha \jdeq \angled{\alpha_1, \alpha_2, \alpha_3}$ consists of morphisms:
\begin{align*}
	\alpha_1 & : \hom_{\Phi \to B}(u,r), \\
	\alpha_2 & : \ndexten{(\Delta^1 \times \Psi)}{B}{b_1 \poprod j}{[\pair{v}{w},\alpha_1]}, \\
	\alpha_3 & : \prod_{\pair{t}{s}:\Delta^1 \times \Phi} P\big(\alpha_1\,t\,s \big)
\end{align*}
Here, $(-_1)\poprod(-_2)$ denotes the pushout product,~\Cref{def:po-prod}. Intuitively, this means that the given data $\angled{\alpha,g}$ can be uniquely lifted as indicated in~\Cref{fig:lari-cell}.

\begin{figure}
\[\begin{tikzcd}
	& \cdot &&& \cdot \\
	& {} &&& {} \\
	E &&&&&& \Phi & {(\cdot} & {\cdot)} \\
	& \cdot &&& \cdot \\
	& \cdot &&& \cdot && \Psi & {(\cdot} &&& {\cdot)} \\
	B & {} &&& {} \\
	\\
	& \cdot &&& \cdot \\
	{} \\
	& {} &&& {}
	\arrow[from=5-2, to=5-5]
	\arrow[from=1-2, to=1-5]
	\arrow[dashed, from=4-2, to=4-5]
	\arrow[""{name=0, anchor=center, inner sep=0}, "k"{pos=0.3}, from=1-5, to=2-5]
	\arrow[""{name=1, anchor=center, inner sep=0}, "r"{pos=0.3}, from=5-5, to=6-5]
	\arrow[from=8-2, to=8-5]
	\arrow[""{name=2, anchor=center, inner sep=0}, "f"'{pos=0.3}, from=1-2, to=2-2]
	\arrow[shift left=4, from=2-2, to=2-5]
	\arrow["j"', hook, from=3-7, to=5-7]
	\arrow[from=3-8, to=3-9]
	\arrow[two heads, from=3-1, to=6-1]
	\arrow[from=5-8, to=5-11]
	\arrow[""{name=3, anchor=center, inner sep=0}, "v"'{pos=0.3}, from=5-2, to=6-2]
	\arrow[shift left=4, from=6-2, to=6-5]
	\arrow["\jdeq"{description}, draw=none, from=3-7, to=3-8]
	\arrow["\jdeq"{description}, draw=none, from=5-7, to=5-8]
	\arrow[""{name=4, anchor=center, inner sep=0}, "g"'{pos=0.6}, shorten <=5pt, from=2, to=4-2]
	\arrow[""{name=5, anchor=center, inner sep=0}, "m"{pos=0.6}, shorten <=5pt, from=0, to=4-5]
	\arrow[""{name=6, anchor=center, inner sep=0}, "w"{pos=0.6}, shorten <=5pt, from=1, to=8-5]
	\arrow["{\forall\,\alpha_3}"{description}, shift left=3, shorten <=19pt, shorten >=19pt, Rightarrow, from=2, to=0]
	\arrow["{\forall\, \alpha_1}"{description}, shift left=3, shorten <=19pt, shorten >=19pt, Rightarrow, from=3, to=1]
	\arrow[""{name=7, anchor=center, inner sep=0}, "v"'{pos=0.6}, shorten <=5pt, from=3, to=8-2]
	\arrow["{\forall\, \alpha_2}"{description}, shift right=4, shorten <=19pt, shorten >=19pt, Rightarrow, from=7, to=6]
	\arrow["{\exists! \, \beta}"{description}, shift right=4, shorten <=19pt, shorten >=19pt, Rightarrow, dashed, from=4, to=5]
\end{tikzcd}\]
	\caption{Universal property of $j$-LARI cells (schematic illustration)}
	\label{fig:lari-cell}
\end{figure}
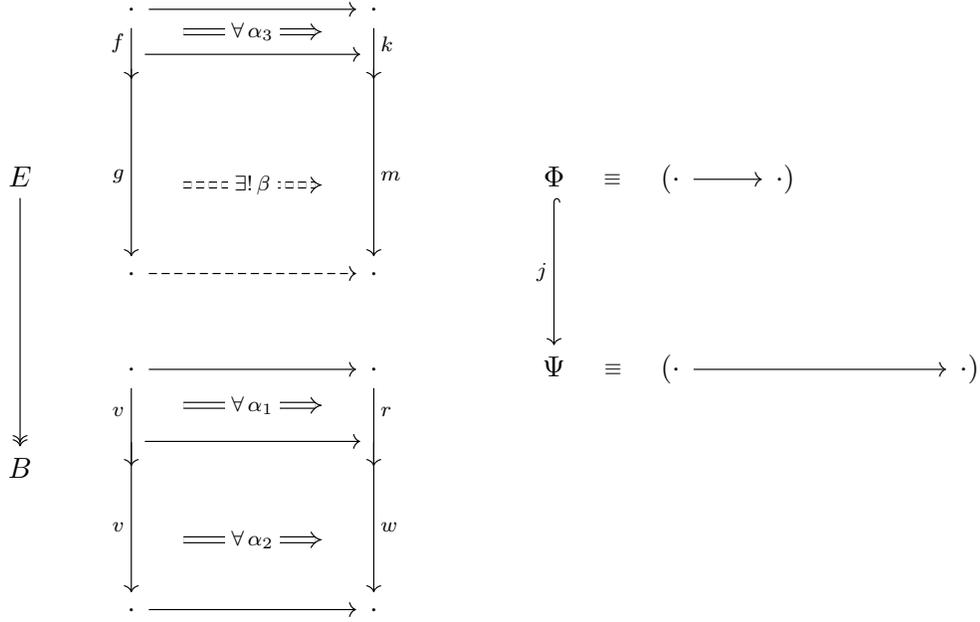

\subsection{LARI fibrations}

\begin{definition}[Enough $j$-LARI lifts]\label{def:enough-lari-lifts}
Let $P:B \to \UU$ be an isoinner family over a Rezk type $B$, and $j:\Phi \hookrightarrow \Psi$ a $j$-LARI lift. Then $P$ is said to \emph{have all} or \emph{enough $j$-LARI lifts} if and only if the type
\begin{align}\label{eq:enough-lari-cell}
\prod_{\angled{u,v,f}:E^\Phi \times_{B^\Phi} B^\Psi} \sum_{g:\exten{\Psi}{v^*P}{\Phi}{f}} \isLARICell_{j}(g)
\end{align}
is inhabited.
\end{definition}

By~\Cref{thm:reladj-char}, in fact $\sum_{g:\ldots} \isLARICell_{j}(g)$ is a proposition. Given $\angled{u,v,f}$, we denote the arrow part from the center of contraction of this type, occuring in~(\ref{eq:enough-lari-cell}), as
\[ P_!(u,v,f) \defeq g_{u,v,f} :\exten{\Psi}{v^*P}{\Phi}{f}, \]
generalizing from cocartesian families.\footnote{We could also add the inclusion $j:\Phi \hookrightarrow \Psi$ as an annotation, but this is not necessary here since we will only deal with such inclusions one at a time.}
Similarly, for $g\jdeq P_!(u,v,f)$ we denote the ensuing ``filling'' data from~\Cref{eq:lari-cell-explicit} by
\[ \tyfill_g(\alpha,m) : \exten{\pair{t}{s}:\Delta^1 \times \Psi} {P\big(\alpha_1(t,s)\big)} {b_1 \poprod j}{[\pair{g}{m},\alpha_2]}. \]

\begin{theorem}[$j$-LARI families via enough $j$-LARI lifts]\label{thm:lari-fams-lifting}
	Let $B$ be a Rezk type, $P: B \to \UU$ be an isoinner family, and denote by $\pi: E \to B$ the associated projection map. Then $P$ has enough $j$-LARI lifts if and only if it is a $j$-LARI family,~\ie~the Leibniz cotensor map $\pi' \defeq i_0 \cotens \pi: E^\Psi \to E^\Phi \to_{B^\Phi} B^\Psi$ has a left adjoint right inverse:
	\[\begin{tikzcd}
	{E^{\Psi}} & {} \\
	&& {E^\Phi \times_{B^\Phi} B^\Psi} && {E^\Phi} \\
	&& {B^{\Psi}} && {B^\Phi}
	\arrow[two heads, from=2-3, to=3-3]
	\arrow["{B^j}"', from=3-3, to=3-5]
	\arrow[from=2-3, to=2-5]
	\arrow["{\pi^\Phi}", two heads, from=2-5, to=3-5]
	\arrow["\lrcorner"{anchor=center, pos=0.125}, draw=none, from=2-3, to=3-5]
	\arrow["{E^j}", shift left=2, curve={height=-18pt}, from=1-1, to=2-5]
	\arrow[""{name=0, anchor=center, inner sep=0}, "\chi"', curve={height=12pt}, dotted, from=2-3, to=1-1]
	\arrow[""{name=1, anchor=center, inner sep=0}, "{\pi'}"', curve={height=12pt}, from=1-1, to=2-3]
	\arrow["{\pi^\Psi}"', shift right=2, curve={height=18pt}, two heads, from=1-1, to=3-3]
	\arrow["\dashv"{anchor=center, rotate=-117}, draw=none, from=0, to=1]
	\end{tikzcd}\]
\end{theorem}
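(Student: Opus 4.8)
The plan is to prove the equivalence of the two conditions by reducing both to the same adjointness datum, exploiting the characterization of relative left adjunctions established in \Cref{thm:reladj-char}. The key observation is that the map $\pi' \defeq i_0 \cotens \pi$ having a LARI means exactly that the identity cell exhibits a section $\chi$ of $\pi'$ as an absolute left lifting of $E^j$ through $\pi'$, together with invertibility of the relative unit. First I would unfold the definition of a $j$-LARI family: by~\Cref{ssec:orth-lari} (generalizing the discussion for $i_0$) this is precisely the existence of a LARI to the Leibniz cotensor gap map $\pi': E^\Psi \to E^\Phi \times_{B^\Phi} B^\Psi$. So one direction of the statement is essentially definitional unwinding; the genuine content is in matching this global LARI condition against the pointwise ``enough $j$-LARI lifts'' condition from~\Cref{def:enough-lari-lifts}.

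The central device will be the fibered Yoneda-style reduction: a fibered left adjoint $\chi \dashv \pi'$ over the base $E^\Phi \times_{B^\Phi} B^\Psi$ is determined, by~\Cref{thm:reladj-char}\ref{it:reladj-alld} and~\ref{it:reladj-unit}, by a fiberwise family of absolute left lifting cells, one for each point $\angled{u,v,f}$ of the base. Concretely, I would argue that specializing the relative adjunction datum at each base point $\angled{u,v,f}: E^\Phi \times_{B^\Phi} B^\Psi$ recovers exactly a $j$-LARI cell $g$ over that point, in the sense of equation~\eqref{eq:lari-cell}. This is the crux: the ALLD condition $\isALLD_{\chi,\pi',\mathrm{cst}}(\eta)$ evaluated at the generic point is, after contracting redundant data and applying the principle of choice for extension types~(\Cref{thm:choice}), literally the contractibility statement~\eqref{eq:lari-cell-explicit} defining a $j$-LARI cell. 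The passage between ``a single fibered adjunction over the whole base'' and ``a pointwise family of lifts'' is exactly the content of the reindexing equivalence used at the end of the proof of~\Cref{thm:reladj-char} (reindexing a family of propositions along spans), so I would invoke that directly rather than re-deriving it.

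The concrete steps, in order, would be: (1) unfold $j$-LARI family to the LARI condition on $\pi'$, and use~\Cref{thm:reladj-char} to replace ``LARI of $\pi'$'' by ``a section $\chi$ of $\pi'$ together with a relative unit $\eta$ that is an identity (since it is a \emph{right inverse}, the unit is invertible) and that is an absolute left lifting cell''; (2) observe, via the reindexing equivalence for families of propositions, that such a fibered ALLD cell is equivalent to a pointwise family, indexed by $\angled{u,v,f}: E^\Phi \times_{B^\Phi} B^\Psi$, of absolute left lifting cells in the individual fibers; (3) identify each such pointwise cell, after applying~\Cref{thm:choice} and fibrant replacement, with a $j$-LARI cell $g$ satisfying $\isLARICell_j(g)$ over $\angled{u,v,f}$, with the right-inverse condition forcing $g$ to strictly extend $f$ along $j$, i.e.\ $g: \exten{\Psi}{v^*P}{\Phi}{f}$; (4) conclude that the total datum is exactly an inhabitant of $\prod_{\angled{u,v,f}} \sum_{g} \isLARICell_j(g)$, which is the ``enough $j$-LARI lifts'' condition~\eqref{eq:enough-lari-cell}. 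Since both sides are propositions (the LARI datum by~\Cref{thm:reladj-char}, the lifting datum by the remark following~\Cref{def:enough-lari-lifts}), establishing logical equivalence suffices.

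The main obstacle I expect is step~(3): carefully checking that the geometry of the pushout-product boundary $b_1 \poprod j$ appearing in~\eqref{eq:lari-cell-explicit} matches the shape of the naturality/lifting data produced by unfolding the absolute left lifting universal property of the fibered unit. This is the bookkeeping that translates the ``lax square factors uniquely through $\eta$'' picture into the extension-type contractibility statement, and it requires keeping the three components $\angled{\alpha_1,\alpha_2,\alpha_3}$ of a morphism in $E^\Psi$ aligned with the base/fiber decomposition. In the body of the proof this is handled purely formally---the statement is proved by appeal to the characterizations rather than by direct computation---so I would simply write that the claim ``follows formally from~\Cref{thm:reladj-char}'' once the base-point reindexing in step~(2) is in place, deferring the diagrammatic boundary check to the already-established equivalences between strict and weak extension types and the pushout-product formula recalled after~\Cref{def:po-prod}.
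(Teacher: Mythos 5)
Your proposal is correct and follows essentially the same route as the paper: both directions come down to identifying the global LARI datum for $\pi'$ with the pointwise family of absolute left lifting cells at each $\angled{u,v,f}$, which is by definition the $j$-LARI cell condition, using that $\sum_g \isLARICell_j(g)$ is a proposition so the pointwise lifts assemble into the section $\chi$. The only real difference is cosmetic: where you invoke \Cref{thm:reladj-char} plus the reindexing of propositional families to get the forward direction formally, the paper constructs the quasi-inverse transposing maps explicitly from the fillers $\tyfill_{P_!(u,v,f)}(\alpha,m)$ and verifies the roundtrips by hand --- exactly the bookkeeping you propose to defer to the already-established equivalences.
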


\begin{proof}
	Assume $P:B \to \UU$ is an isoinner family with enough $j$-LARI lifts. The gap map can be taken as the strict projection
	\[ \pi' \defeq \lambda u,v,f,g.\angled{u,v,f}: E^{\Psi} \to E^\Phi \times_{B^\Phi} B^\Psi.\]
	For the candidate LARI we take the map that produces the $j$-LARI lift, \ie
	\[ \chi \defeq \lambda u,v,f.\angled{u,v,f,P_!(u,v,f)} : E^\Phi \times_{B^\Phi} B^\Psi \to E^\Psi.\]
	This is by definition a (strict) section of $\pi'$.
	
	For $\angled{u,v,f}:E^\Phi \times_{B^\Phi} B^\Psi$ and $\angled{r,w,k,m} : E^{\Psi}$ we define the maps
	\[\begin{tikzcd}
		{\hom_{E^\Psi}(\chi(u,v,f), \angled{r,w,k,m})} && {\hom_{E^\Phi \times_{B^\Phi} B^\Psi}(\angled{u,v,f},\angled{r,w,k})}.
		\arrow["F_{\angled{u,v,f},\angled{r,w,k,m}}", shift left=2, from=1-1, to=1-3]
		\arrow["G_{\angled{u,v,f},\angled{r,w,k,m}}", shift left=2, from=1-3, to=1-1]
	\end{tikzcd}\]
	defined by\footnote{We decompose morphisms in $E^\Psi$ as pairs $\pair{\alpha}{\beta}$ where $\alpha$ denotes the part in $E^\Phi \times_{B^\Phi} B^\Psi$ and $\beta$ is the given $\Psi$-shaped cell in $P$ lying over.}
	\[ F_{\angled{u,v,f},\angled{r,w,k,m}}(\alpha,\beta) \defeq \alpha, \quad G_{\angled{u,v,f},\angled{r,w,k,m}}(\gamma) \defeq \pair{\gamma}{\tyfill_{P_!(u,v,f)}(\gamma,m)} \]
	are quasi-inverse to one another.\footnote{For brevity, we shall henceforth leave the fixed parameters $\angled{u,v,f},\angled{r,w,k,m}$ implicit.}
	
	Clearly, $G$ is a section of the projection $F$ since for a morphism $\gamma$ we find
	\[ F(G(\gamma)) = F(\gamma,\tyfill_{P_!(u,v,f)}(\gamma,m)) = \gamma.\]

	For a morphism $\pair{\alpha}{\beta}$ in the opposing transposing morphism space we obtain
	\[ G(F(\alpha,\beta)) = G(\alpha) = \pair{\alpha}{\tyfill_{P_!(u,v,f)}(\alpha,m)}\]
	where we obtain an identification $\beta = \tyfill_{P_!(u,v,f)}(\alpha,m)$ (over $\refl_\alpha$) because of the universal property~\Cref{eq:lari-cell-explicit}. 
	
	This suffices to show that $\chi \dashv \pi'$ is a LARI adjunction as claimed.
	
	Conversely, suppose $\chi$ is a given LARI of $\pi'$, \wlogg~a strict section. This gives, for any data $\angled{u,v,f}:E^\Phi \times_{B^\Phi} B^\Psi$ we obtain a (strictly) commutative triangle:
	\[\begin{tikzcd}
		&& {E^\Psi} \\
		\unit && {B^\Psi \times_{B^\Phi} E^\Psi}
		\arrow["{\chi(u,v,f)}", from=2-1, to=1-3]
		\arrow[""{name=0, anchor=center, inner sep=0}, "{\angled{u,v,f}}"', from=2-1, to=2-3]
		\arrow["{\pi' }", from=1-3, to=2-3]
		\arrow[shorten <=15pt, shorten >=15pt, Rightarrow, no head, from=1-3, to=0]
	\end{tikzcd}\]
	
	Moreover, for all $\angled{r,w,k,m} : E^\Psi$, the map
	\[\begin{tikzcd}
		{\hom_{E^\Psi}(\chi(u,v,f),\langle r,w,k,m\rangle)} && {\hom_{E^\Phi \times_{B^\Phi} B^\Psi}(\langle u,v,f\rangle,\langle r,w,k\rangle)}
		\arrow["F_{\angled{r,w,k,m}}", from=1-1, to=1-3]
	\end{tikzcd}\]
	defined by
	\[ F(\alpha,\beta) \jdeq \alpha\]
	is an equivalence. Finally, contractibility of the fibers amounts to the universal property~\Cref{eq:lari-cell-explicit}, but this precisely means that $\chi(u,v,f)$ is a $j$-LARI cell.
\end{proof}

\subsection{LARI functors}

Let $j:\Phi \hookrightarrow \Psi$ be a shape inclusion.
\begin{definition}[$j$-LARI functors]\label{def:lari-fun}
	Over Rezk types $A$ and $B$, resp., consider $j$-LARI families $Q:A \to \UU$ and $P:E\to \UU$, resp., with
	\[ \xi \defeq \Un_A(Q): F \fibarr A, \quad \pi \defeq \Un_E(P): E \fibarr B. \]
	Assume there is a fibered functor from $\xi$ to $\pi$ given by a commutative square:
	\[\begin{tikzcd}
		F && E \\
		A && B
		\arrow["\xi"', two heads, from=1-1, to=2-1]
		\arrow["\alpha"', from=2-1, to=2-3]
		\arrow["\pi", two heads, from=1-3, to=2-3]
		\arrow["\varphi", from=1-1, to=1-3]
	\end{tikzcd}\]
	This defines a \emph{$j$-LARI functor} if and only if the following proposition is satisfied:
	\[ \prod_{m:\Psi \to F} \isLARICell_{j}^Q(m) \to \isLARICell_{j}^P(\varphi\,m)\]
\end{definition}

\begin{prop}[Naturality of cocartesian liftings]\label{prop:nat-larifun}
	Let $B$ be a Rezk type, and $P:B \to \UU$, $Q:C \to \UU$, resp. be $j$-LARI families with associated fibrations $\xi:F \fibarr A$ and $\xi:E \fibarr B$, resp. Then the proposition that $\pair{\alpha}{\varphi}$ be a $j$-LARI functor from $\xi$ to $\pi$ is logically equivalent to $\pair{\alpha}{\varphi}$ commuting with cocartesian lifts: \ie~for any $\angled{r,w,k,m} : F^\Phi \times_{A^\Phi} A^\Psi$ there exists an identification\footnote{suppressing the ``lower'' data which can be taken to consist of identities anyway} of $\Psi$-cells
	\[ \varphi\big(Q_!(r,w,k)\big) = P_!(\alpha \,r, \alpha\,w, \varphi\,k, \varphi\,d). \]
\end{prop}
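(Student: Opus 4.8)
The plan is to reduce the claim to the characterization of $j$-LARI functors via the equivalence established by \Cref{thm:reladj-char} together with the uniqueness result \Cref{cor:unique-left-rel-adj}. Recall that by \Cref{thm:lari-fams-lifting} the families $Q$ and $P$ having enough $j$-LARI lifts is equivalent to the Leibniz cotensor maps $\xi' \defeq j \cotens \xi$ and $\pi' \defeq j \cotens \pi$ having left adjoint right inverses, with the LARI sections $\chi^Q$ and $\chi^P$ producing the designated lifts $Q_!(r,w,k)$ and $P_!(-)$. So the statement to be proven is that $j$-LARI functoriality of $\pair{\alpha}{\varphi}$---namely that $\varphi$ sends $j$-LARI cells to $j$-LARI cells---is logically equivalent to $\varphi$ commuting with these chosen lifts up to identification.

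First I would spell out, by fibrant replacement, the induced commuting square between the cotensor data, i.e.\ the square relating $F^\Psi \to F^\Phi \times_{A^\Phi} A^\Psi$ and $E^\Psi \to E^\Phi \times_{B^\Phi} B^\Psi$ induced by $\pair{\alpha}{\varphi}$. For the forward direction, assume $\pair{\alpha}{\varphi}$ is a $j$-LARI functor. Fix $\angled{r,w,k,m} : F^\Phi \times_{A^\Phi} A^\Psi$. The designated lift $Q_!(r,w,k)$ is a $j$-LARI cell by \Cref{thm:lari-fams-lifting}, hence so is its image $\varphi\big(Q_!(r,w,k)\big)$ by the defining condition of a $j$-LARI functor (\Cref{def:lari-fun}). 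On the other hand, $P_!(\alpha\,r, \alpha\,w, \varphi\,k, \varphi\,d)$ is by construction \emph{the} designated $j$-LARI lift over the data $\angled{\alpha\,r, \alpha\,w, \varphi\,k}$. Both cells thus solve the same absolute left lifting problem, so by the uniqueness of relative left adjoints \Cref{cor:unique-left-rel-adj} (applied fiberwise, i.e.\ to the relative adjunction cell encoding a single $j$-LARI lift) they are identified as $\Psi$-cells. This yields the desired identification $\varphi\big(Q_!(r,w,k)\big) = P_!(\alpha\,r, \alpha\,w, \varphi\,k, \varphi\,d)$.

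For the converse, suppose $\varphi$ commutes with the designated lifts up to the stated identification. Let $m : \Psi \to F$ be an arbitrary $j$-LARI cell; I must show $\varphi\,m$ is a $j$-LARI cell. The key observation is that, by the uniqueness part of \Cref{thm:reladj-char}, an arbitrary $j$-LARI cell over given boundary data $\angled{r,w,k}$ is identified with the designated lift $Q_!(r,w,k)$ (both being absolute left lifting cells over the same cospan). Transporting along this identification and applying $\varphi$, the cell $\varphi\,m$ is identified with $\varphi\big(Q_!(r,w,k)\big)$, which by hypothesis equals the designated lift $P_!(\alpha\,r,\alpha\,w,\varphi\,k,\varphi\,d)$; the latter is a $j$-LARI cell by \Cref{thm:lari-fams-lifting}. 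Since being a $j$-LARI cell is a proposition and is invariant under identification of cells, $\varphi\,m$ is a $j$-LARI cell as well. The main obstacle I anticipate is the bookkeeping of the boundary data $\angled{r,w,k}$ under the Leibniz-cotensor formalism---ensuring that the ``lower'' data consisting of images of $\alpha$ and $\varphi$ on $u,v,f$ genuinely match up strictly (or up to the homotopies afforded by fibrant replacement) so that $\varphi\big(Q_!(r,w,k)\big)$ and the designated $P$-lift sit over identified cospans, which is precisely what licenses the appeal to \Cref{cor:unique-left-rel-adj}. Once the cospans are seen to agree, the rest is a formal consequence of the propositional uniqueness of relative left adjoints.
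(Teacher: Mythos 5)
Your proposal is correct and follows essentially the same route as the paper's own proof: the forward direction identifies $\varphi\big(Q_!(r,w,k)\big)$ and $P_!(\alpha\,r,\alpha\,w,\varphi\,k,\varphi\,d)$ as two relative left adjoints over the same data via \Cref{cor:unique-left-rel-adj}, and the converse uses that every $j$-LARI cell arises (up to identification) as the designated lift of its projected boundary data. Your write-up is somewhat more explicit than the paper's, particularly in the converse direction, but the argument is the same.
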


\begin{proof}
	Since $\varphi$ is a $j$-LARI functor by assumption $g \defeq \varphi\big(Q_!(r,w,k)\big)$ is a $j$-LARi cell,~\ie~$\relAdj{g}{\angled{u,v,f}}{\pi'}$. But also for the cell $g' \defeq P_!(\alpha \,r, \alpha\,w, \varphi\,k, \varphi d)$ by construction we have \\ $\relAdj{g'}{\angled{u,v,f}}{\pi'}$, hence there is a homotopy $g=g'$, by uniqueness of relative left adjoints.
	
	Conversely, since any $j$-LARI arrow in a family occurs as a $j$-LARI lift (of the data given by projection and restriction), the assumed identifications yield the desired implication.
\end{proof}

\begin{theorem}[Chevalley criterion for $j$-LARI functors, \cf~\protect{\cite[Theorem~5.3.19]{BW21}}, {\protect\cite[Theorem~5.3.4]{RV21}}]\label{thm:char-lari-fun}
	Given data as in~\Cref{def:lari-fun}, the following are equivalent:
	\begin{enumerate}
		\item The fiberwise map $\pair{\alpha}{\varphi}$ is a $j$-LARI functor.
		\item The mate of the induced canonical natural isomorphism is invertible, too:
		\[\begin{tikzcd}
			{F^{\Psi}} && {E^\Psi} & {F^\Psi} && {E^{\Delta^1}} & {} \\
			{F^\Phi \times_{A^\Phi} A^\Psi} && {E^\Phi \times_{B^\Phi} B^\Psi} & {F^\Phi \times_{A^\Phi} A^\Psi} && {E^\Phi \times_{B^\Phi} B^\Psi} & {}
			\arrow["{\xi'}"', from=1-1, to=2-1]
			\arrow["{\varphi^\Phi \times_{\alpha^\Phi} \alpha^\Psi}"', from=2-1, to=2-3]
			\arrow["{\varphi^{\Delta^1}}", from=1-1, to=1-3]
			\arrow[""{name=0, anchor=center, inner sep=0}, "{r'}", from=1-3, to=2-3]
			\arrow["{=}", shorten <=9pt, shorten >=9pt, Rightarrow, from=2-1, to=1-3]
			\arrow[""{name=1, anchor=center, inner sep=0}, "\ell", from=2-4, to=1-4]
			\arrow["{\varphi^\Phi \times_{\alpha^\Phi} \alpha^\Psi}"', from=2-4, to=2-6]
			\arrow["{\varphi^{\Delta^1}}", from=1-4, to=1-6]
			\arrow["{\ell'}"', from=2-6, to=1-6]
			\arrow["{=}"', shorten <=12pt, shorten >=23pt, Rightarrow, from=2-6, to=1-4]
			\arrow["\rightsquigarrow", Rightarrow, draw=none, from=0, to=1]
		\end{tikzcd}\]
	\end{enumerate}
\end{theorem}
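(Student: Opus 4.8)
The plan is to prove the Chevalley criterion for $j$-LARI functors (\Cref{thm:char-lari-fun}) by reducing it, via mate correspondence, to the naturality characterization already established in~\Cref{prop:nat-larifun}. The strategy mirrors the proof of~\Cref{thm:char-cocart-fun}, except that now the relevant adjunction is the LARI adjunction $\chi \dashv \pi'$ of~\Cref{thm:lari-fams-lifting} (for the shape inclusion $j:\Phi \hookrightarrow \Psi$), rather than the special case $i_0:\unit \hookrightarrow \Delta^1$. First I would set up the two fibered LARI adjunctions $\chi_Q \dashv \xi'$ and $\chi_P \dashv \pi'$ associated to the $j$-LARI families $Q$ and $P$ by~\Cref{thm:lari-fams-lifting}, where $\xi' \defeq j \cotens \xi$ and $\pi' \defeq j \cotens \pi$. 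The induced square of right adjoints — the outer square of exponentials $\varphi^\Psi$ over $\varphi^\Phi \times_{\alpha^\Phi}\alpha^\Psi$ — commutes strictly (up to the canonical identification coming from fibrant replacement), so it admits a mate $2$-cell $\ell$ with respect to the two LARI left adjoints. The content of the theorem is then that $\pair{\alpha}{\varphi}$ is a $j$-LARI functor if and only if this mate is invertible.

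The key steps, in order, are as follows. First I would construct the mate $2$-cell explicitly as a pasting diagram, exactly as in the first paragraph of the proof of~\Cref{thm:char-cocart-fun}: whisker the unit $\eta$ of the adjunction $\chi_Q \dashv \xi'$ against $\varphi^\Psi$ and against the commuting square of right adjoints, obtaining a natural transformation whose components are induced fillers. Second, I would identify these components concretely. Evaluating at a datum $\angled{r,w,k}:F^\Phi \times_{A^\Phi} A^\Psi$, the unit $\eta$ is (by~\Cref{thm:lari-fams-lifting}) the $j$-LARI lift $Q_!(r,w,k)$, and the mate component becomes the comparison cell between $\varphi\big(Q_!(r,w,k)\big)$ and the $P$-lift $P_!(\alpha\,r,\alpha\,w,\varphi\,k)$. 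Third, I would invoke~\Cref{prop:nat-larifun}: the mate component is an identity precisely when $\varphi$ commutes with $j$-LARI lifts, and by that proposition this commutation is logically equivalent to $\pair{\alpha}{\varphi}$ being a $j$-LARI functor. Since ``being a $j$-LARI functor'' and ``the mate is invertible'' are both propositions (the former by~\Cref{def:lari-fun}, the latter since invertibility of a cell is a proposition), the logical equivalence upgrades to an equivalence of types, closing the argument in both directions.

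The forward direction is the cleaner half: assuming $\varphi$ is a $j$-LARI functor, $\varphi\big(Q_!(r,w,k)\big)$ is a $j$-LARI cell, and since $P_!(\alpha\,r,\alpha\,w,\varphi\,k)$ is the $j$-LARI lift over the same data, uniqueness of relative left adjoints (\Cref{cor:unique-left-rel-adj}, transported through~\Cref{thm:reladj-char}) forces the comparison cell to be an identity, hence invertible. For the converse I would argue that every $j$-LARI arrow in $Q$ arises as a $j$-LARI lift of its projection-and-restriction data (the defining feature of~\Cref{def:enough-lari-lifts}), so invertibility of the mate at all such data yields the identifications $\varphi\big(Q_!(r,w,k)\big) = P_!(\alpha\,r,\alpha\,w,\varphi\,k,\varphi\,d)$ required by~\Cref{prop:nat-larifun}, whence $\varphi$ preserves $j$-LARI cells.

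The main obstacle I anticipate is bookkeeping the mate calculus at the level of exponentials by the shape $\Psi$: unlike the $i_0$-case where the cells collapse to honest arrows, here the components of the unit and of the mate are full dependent $\Psi$-shaped cells constrained on $\Phi$, so I must track the pushout-product conditions $b_1 \poprod j$ from~\eqref{eq:lari-cell-explicit} carefully to ensure that ``the mate is invertible pointwise'' really does unpack to the lift-preservation condition of~\Cref{prop:nat-larifun}, rather than to some a priori weaker statement. Establishing that the whiskered pasting cell is genuinely the mate — i.e.\ that the canonical identification of the commuting square of right adjoints is the correct one — is the delicate point; once that identification is pinned down, the remaining work is the routine application of~\Cref{prop:nat-larifun} and~\Cref{cor:unique-left-rel-adj} together with the propositionality of both sides.
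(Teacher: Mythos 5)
Your overall strategy coincides with the paper's: construct the mate of the strictly commuting square of Leibniz cotensors with respect to the two LARI adjunctions supplied by~\Cref{thm:lari-fams-lifting}, identify its component at $\angled{r,w,k}$ with the canonical comparison cell between $\varphi\big(Q_!(r,w,k)\big)$ and $P_!(\alpha\,r,\alpha\,w,\varphi\,k)$, and conclude via~\Cref{prop:nat-larifun} together with uniqueness of relative left adjoints. The endpoint and the reduction are exactly right.

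There is, however, one concrete misstep in how you build the mate. You locate the nontrivial content in the \emph{unit} of $\chi_Q \dashv \xi'$ and assert that this unit ``is the $j$-LARI lift $Q_!(r,w,k)$.'' For the exponential (Leibniz cotensor) adjunctions at play here that is not the case: $\chi_Q$ is a strict section of $\xi'$, so the unit $\id \Rightarrow \xi'\chi_Q$ is an identity and whiskering it produces nothing. You appear to have imported the pattern from the comma-object/transport form of the Chevalley criterion (the first adjunction in the proof of~\Cref{thm:char-cocart-fun}), where the unit genuinely is the cocartesian lift. In the present setting the content of the mate sits entirely in the \emph{counit} of the codomain adjunction $\chi_P \dashv \pi'$, whose component at $\angled{u,v,f,g}$ is $\angled{\id_u,\id_v,\id_f,\tyfill_{P_!(u,v,f)}(\id_u,\id_v,\id_f,g)}$; evaluating the pasting at $\varphi\big(Q_!(r,w,k)\big)$ is what collapses the mate to the comparison filler $P_!(\alpha\,r,\alpha\,w,\varphi\,k) \to \varphi\big(Q_!(r,w,k)\big)$. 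Since you already name the correct component, this is a local repair rather than a change of route, but as written the whiskering you describe would yield an identity cell and the argument would stall before~\Cref{prop:nat-larifun} can be applied.
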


\begin{proof}
	The counit of the adjunction exhibiting $\pi:E \fibarr B$ as a $j$-LARI fibration, at stage $\angled{u,v,f,g}:E^\Psi$, can be taken to be
	\begin{align}\label{eq:counit-lari-fib}
		\varepsilon_{u,v,f,g} \jdeq \angled{\id_u,\id_v,\id_f,\tyfill_{P_!(u,v,f)}(\id_u,\id_v,\id_f,g)},
	\end{align}
	as one sees by the usual construction from the transposing map, \cf~the proof of~\Cref{thm:lari-fams-lifting}. Now, from the proof of~\cite[Propositon~A.1.2]{BW21} \footnote{\cf~\cite[Theorem~5.3.19]{BW21} for the cocartesian case} we see that the pasting cell constructed from the diagram
	\[\begin{tikzcd}
		{F^\Phi \times_{A^\Phi} A^\Psi} & {F^\Psi} && {E^\Psi} \\
		& {F^\Phi \times_{A^\Phi} A^\Psi} && {E^\Phi \times_{B^\Phi} B^\Psi} && {E^\Psi}
		\arrow["\mu", from=1-1, to=1-2]
		\arrow["{\xi'}", from=1-2, to=2-2]
		\arrow[""{name=0, anchor=center, inner sep=0}, Rightarrow, no head, from=1-1, to=2-2]
		\arrow["{\varphi^\Phi \times_{\alpha^\Phi} \alpha^\Psi}"', from=2-2, to=2-4]
		\arrow["{\varphi^\Psi}", from=1-2, to=1-4]
		\arrow["{\pi'}"', from=1-4, to=2-4]
		\arrow[shorten <=18pt, shorten >=18pt, Rightarrow, no head, from=2-2, to=1-4]
		\arrow["\chi"', from=2-4, to=2-6]
		\arrow[""{name=1, anchor=center, inner sep=0}, Rightarrow, no head, from=1-4, to=2-6]
		\arrow[shorten <=4pt, shorten >=4pt, Rightarrow, no head, from=0, to=1-2]
		\arrow[shorten <=4pt, shorten >=4pt, Rightarrow, no head, from=2-4, to=1]
	\end{tikzcd}\]
	is given, at stage $\angled{r,w,k,m}:F^\Psi$ by
	\begin{align*} 
		& \widetilde{\varepsilon}_{r,w,k,m} = \varepsilon_{\alpha\,r,\alpha\,w,\varphi\,k,\varphi(Q_!(r,w,k))} \\
		& = \angled{\id_{\alpha\,r},\id_{\alpha\,w},\id_{\varphi\,k},\tyfill_{P_!(\alpha\,r,\alpha\,w,\alpha\,k)}(\id_{\alpha\,r},\id_{\alpha\,w},\id_{\varphi\,k},\varphi Q_!(r,w,k))}.
	\end{align*}
	This collapses to just the comparison map $P_!(\alpha\,r,\alpha\,w,\alpha\,k) \to \varphi Q_!(r,w,k)$ given by filling. Now by~\Cref{prop:nat-larifun} the invertibility of this is equivalent to $\pair{\alpha}{\varphi}$ being a $j$-LARI functor. 
\end{proof}

	\chapter{Fibered and sliced constructions}
	\label{app:fibconstr}
	
	\section{Fibered equivalences}

We state some expected and useful closure properties of fibered equivalences.

\begin{lemma}[Right properness]\label{lem:rprop}
	Pullbacks of weak equivalences are weak equivalences again, \ie~given a pullback diagram
	\[\begin{tikzcd}
		{C \times_A B} && B \\
		C && A
		\arrow["\simeq" swap, from=1-3, to=2-3, "k"]
		\arrow["j", from=2-1, to=2-3]
		\arrow["\simeq", from=1-1, to=2-1, "k'" swap]
		\arrow[from=1-1, to=1-3]
		\arrow["\lrcorner"{anchor=center, pos=0.125}, draw=none, from=1-1, to=2-3]
	\end{tikzcd}\]
	then, as indicated if the right vertical map is a weak equivalence, then so is the left hand one.	
\end{lemma}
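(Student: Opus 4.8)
The plan is to argue entirely internally, using the fact that in the univalent setting a weak equivalence is precisely a map all of whose fibers are contractible, together with the description of a pullback as a $\Sigma$-type. This reduces the statement to a fiberwise contractibility computation.

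First I would replace the strict $1$-categorical pullback by its type-theoretic incarnation. After fibrant replacement the pullback $C \times_A B$ is equivalent to the type
\[ \sum_{c:C} \sum_{b:B} (k\,b =_A j\,c), \]
under which the left vertical map $k'$ becomes the first projection $\pr_1 : \sum_{c:C} \fib_k(j\,c) \to C$, where $\fib_k(a) \defeq \sum_{b:B}(k\,b =_A a)$. This is the description of the homotopy pullback internally, and it is legitimate here because the model structure presenting $\sE$ is right proper and type-theoretic, so the strict pullback of the fibration $k$ already computes the homotopy pullback.

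Next I would compute the fibers of $k'$. Over a point $c:C$ the fiber of a first projection $\pr_1 : \sum_{c:C} P(c) \to C$ is equivalent to $P(c)$; applied to $P \defeq \lambda c.\fib_k(j\,c)$ this yields a (natural) equivalence
\[ \fib_{k'}(c) \simeq \fib_k(j\,c). \]
Finally I would invoke the contractible-fiber criterion for equivalences. Since $k$ is a weak equivalence, $\fib_k(a)$ is contractible for every $a:A$; specializing to $a \jdeq j\,c$ shows $\fib_{k'}(c)$ is contractible for every $c:C$, whence $k'$ is a weak equivalence.

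The step requiring the most care — rather than a genuine obstacle — is the identification in the first paragraph: one must ensure that the strict pullback appearing in the diagram is correctly matched with the internal $\Sigma$-type, so that the subsequent fiber computation is valid. As noted, this is underwritten by right properness and type-theoretic-ness of the ambient model structure, so no choices of pullbacks or coherence bookkeeping are needed, and the remaining two steps are routine consequences of the standard HoTT toolkit.
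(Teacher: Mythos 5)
Your proof is correct and follows essentially the same route as the paper's: both replace the strict pullback by its fibrant replacement $\sum_{c:C}\fib_k(j\,c)$ (the paper via the straightening $P \defeq \St_A(k)$, you via the explicit fiber type) and then observe that contractibility of all fibers of $k$ transfers to the fibers of $k'$ by reindexing along $j$. The only cosmetic remark is that appealing to ``right properness of the ambient model structure'' to justify the fibrant replacement is slightly circular given that the lemma \emph{is} right properness; the identification you need is just the internal equivalence between maps and families (the typal Grothendieck construction), which the paper invokes implicitly and which requires no semantic input.
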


\begin{proof}
	Denote by $P: A \to \UU$ the straightening of $k:B \to A$, so that $B \simeq \sum_{a:A} P\,a$, and $D \simeq \sum_{c:C} P(j\,c)$. The map $k:B \to A$ (identified with its fibrant replacement) is a weak equivalence if and only if $\prod_{a:A} \isContr(P\,a)$. This implies $\prod_{c:C} P(j\,c)$ which is equivalent to $k' = j^*k$ being a weak equivalence, as desired.
\end{proof}

\begin{proposition}[Homotopy invariance of homotopy pullbacks]\label{prop:htopy-inv-of-pb}
	Given a map between cospans of types
	\[\begin{tikzcd}
		C && A && B \\
		{C'} && {A'} && {B'}
		\arrow["{\simeq~ r}"', from=1-1, to=2-1]
		\arrow["{g'}"', from=2-1, to=2-3]
		\arrow["g", from=1-1, to=1-3]
		\arrow["{\simeq~p}"', from=1-3, to=2-3]
		\arrow["f"', from=1-5, to=1-3]
		\arrow["{f'}", from=2-5, to=2-3]
		\arrow["{\simeq~q}", from=1-5, to=2-5]
	\end{tikzcd}\]
	where the vertical arrows are weak euivalences the induced map
	\[ B \times_A C \to B' \times_{A'} C'\]
	is an equivalence as well.
\end{proposition}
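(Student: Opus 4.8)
The plan is to prove \Cref{prop:htopy-inv-of-pb} by reducing it to \Cref{lem:rprop} (right properness) together with the two-out-of-three property of weak equivalences. The key observation is that a homotopy pullback can be computed by first pulling back along one leg and then along the other, so that the comparison map between the two pullback squares factors through a map that is manifestly obtained by pullback from one of the given vertical equivalences.

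Concretely, I would proceed as follows. First, using fibrant replacement I pass to the straightenings $P : A \to \UU$ of $f : B \to A$ and, via $p$, relate it to the straightening $P' : A' \to \UU$ of $f' : B' \to A'$. Since $p : A \to A'$ and $q : B \to B'$ fit into a commutative square over the cospan, and both are weak equivalences, the induced comparison on fibers $P\,a \to P'(p\,a)$ is an equivalence for each $a : A$ (this is where the hypothesis that $p$ and $q$ are weak equivalences enters, via two-out-of-three applied fiberwise). Then I would form the pullback $B \times_A C \simeq \sum_{c:C} P(g\,c)$ and likewise $B' \times_{A'} C' \simeq \sum_{c':C'} P'(g'\,c')$, and exhibit the induced map on total spaces as a composite of two maps: one induced by the fiberwise equivalence $P(g\,c) \simeq P'(p\,g\,c) = P'(g'\,r\,c)$, and one induced by reindexing along the equivalence $r : C \to C'$.

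The main steps in order are: (1) straighten $f$ and $f'$ and set up the fiberwise comparison; (2) verify that the fiberwise comparison maps are equivalences, using that $p$ and $q$ are weak equivalences and that the square over $A$ commutes, so that the fibers of $q$ match the fibers of the comparison; (3) reindex along $r$, invoking \Cref{lem:rprop} to see that pulling back the equivalence along $g' : C' \to A'$ (or equivalently using right properness on the relevant square) preserves equivalences; and (4) assemble these into an equivalence $\sum_{c:C} P(g\,c) \simeq \sum_{c':C'} P'(g'\,c')$ by the standard fact that a fiberwise equivalence over an equivalence of base types induces an equivalence on total types. Each of these is an application of homotopy-invariance principles already available in the ambient HoTT, so no genuinely new construction is required.

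The step I expect to be the main obstacle is step (2): carefully identifying the fibers of the vertical equivalence $q : B \to B'$ with the fibers of the fiberwise comparison $P\,a \to P'(p\,a)$, keeping track of the coherences coming from the commuting square $p \circ f = f' \circ q$ (up to the relevant homotopy). In a fully formal treatment one must ensure that the chosen identifications are compatible so that the resulting comparison really is a fiberwise equivalence over $r$, rather than merely a fiberwise map that is pointwise an equivalence up to unspecified coherence. Once the fiberwise equivalence is established coherently, the conclusion follows formally, and I would streamline the whole argument by citing \Cref{lem:rprop} for the pullback-stability of equivalences and the total-space characterization of fiberwise equivalences rather than re-deriving them.
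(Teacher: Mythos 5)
Your argument is correct, but it takes a genuinely different route from the one in the text. The paper stays entirely at the level of squares: it first shows, via \Cref{lem:rprop} and two-out-of-three, that the gap map $C \to C' \times_{A'} A$ is an equivalence (so the square on $C,A,C',A'$ is a homotopy pullback), then invokes the pullback pasting lemma in the resulting commutative cube to conclude that the face $C\times_A B \to C$, $C'\times_{A'}B' \to C'$ is itself a pullback, and finally applies \Cref{lem:rprop} once more to that face. You instead straighten $f$ and $f'$ to families $P:A\to\UU$, $P':A'\to\UU$, prove that the comparison $P\,a \to P'(p\,a)$ is a fiberwise equivalence (which, as you suspect, is most cleanly justified not by a literally ``fiberwise'' two-out-of-three but by factoring $q$ as $\sum_{a:A}P\,a \to \sum_{a:A}P'(p\,a) \to \sum_{a':A'}P'\,a'$, applying two-out-of-three to total maps, and then using the total-space characterization of fiberwise equivalences), and assemble $\sum_{c:C}P(g\,c) \to \sum_{c':C'}P'(g'\,c')$ as a fiberwise equivalence over the base equivalence $r$. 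What the paper's route buys is that it never has to choose straightenings or track the transport coherences coming from the homotopy $p\circ g \sim g'\circ r$ that you rightly flag as the delicate point; the price is reliance on the three-dimensional pullback pasting lemma. Your route is more explicit and arguably more elementary in its prerequisites, but a fully formal version must carry the commutativity homotopies of both squares through the fiber identifications $P'(p\,g\,c) \simeq P'(g'\,r\,c)$; once that bookkeeping is done, the conclusion follows exactly as you describe, so there is no gap.
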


\begin{proof}
	By right properness and $2$-out-of-$3$ the mediating map $C \to P \defeq C' \times_{A'} A$ is an equivalence as can be seen from the diagram:
	\[\begin{tikzcd}
		C &&& A \\
		& P \\
		{C'} &&& {A'}
		\arrow["\simeq"{description}, from=1-1, to=3-1]
		\arrow[from=3-1, to=3-4]
		\arrow[from=1-1, to=1-4]
		\arrow["\simeq", from=1-4, to=3-4]
		\arrow["\simeq"{description}, dashed, from=1-1, to=2-2]
		\arrow[from=2-2, to=1-4]
		\arrow["\simeq"{description}, from=2-2, to=3-1]
		\arrow["\lrcorner"{anchor=center, pos=0.125}, draw=none, from=2-2, to=3-4]
	\end{tikzcd}\]
	This gives the following cube
	\[\begin{tikzcd}
		{C \times_A B} && B \\
		& C && A \\
		{C' \times_{A'} B'} && {B'} \\
		& {C'} && {A'}
		\arrow[from=1-1, to=3-1]
		\arrow[from=3-3, to=4-4]
		\arrow[from=3-1, to=4-2]
		\arrow[from=1-1, to=2-2]
		\arrow[from=1-3, to=2-4]
		\arrow["\lrcorner"{anchor=center, pos=0.125, rotate=45}, draw=none, from=1-1, to=2-4]
		\arrow["\lrcorner"{anchor=center, pos=0.125}, draw=none, from=2-2, to=4-4]
		\arrow["\lrcorner"{anchor=center, pos=0.125, rotate=45}, draw=none, from=3-1, to=4-4]
		\arrow[from=4-2, to=4-4]
		\arrow[from=3-1, to=3-3]
		\arrow["\simeq"{description, pos=0.3}, from=1-3, to=3-3]
		\arrow["\simeq"{description, pos=0.3}, from=2-4, to=4-4]
		\arrow[from=1-1, to=1-3]
		\arrow["\simeq"{description, pos=0.3}, from=2-2, to=4-2, crossing over]
		\arrow[from=2-2, to=2-4, crossing over]
	\end{tikzcd}\]
	and by the Pullback Lemma we know that the back face is a pullback, too. Then again, by right properness
	\[B \times_A C \to B' \times_{A'} C' \]
	is an equivalence.
\end{proof}

\begin{proposition}
Given a fibered equivalence as below
\[\begin{tikzcd}
	F && E \\
	& G \\
	& B
	\arrow[two heads, from=1-1, to=2-2]
	\arrow[two heads, from=1-3, to=2-2]
	\arrow[two heads, from=2-2, to=3-2]
	\arrow[two heads, from=1-1, to=3-2]
	\arrow[two heads, from=1-3, to=3-2]
	\arrow["\simeq" swap, "\varphi", from=1-1, to=1-3]
\end{tikzcd}\]
and a map $k:A \to B$ the fibered equivalence $\varphi$ pulls back as shown below:
	\[\begin{tikzcd}
		{F'} &&&&& F \\
		&&& {E'} &&&&& E \\
		& {G'} &&&&& {G} \\
		\\
		A &&&&& B
		\arrow["\varphi", "\simeq" swap, from=1-6, to=2-9]
		\arrow[two heads, from=2-9, to=5-6]
		\arrow[two heads, from=1-6, to=3-7]
		\arrow[two heads, from=2-9, to=3-7]
		\arrow[from=1-1, to=1-6]
		\arrow["\varphi'", "\simeq" swap, dashed, from=1-1, to=2-4, shorten >=8pt]
		\arrow[two heads, from=2-4, to=3-2]
		\arrow[two heads, from=1-1, to=5-1]
		\arrow["k", from=5-1, to=5-6]
		\arrow[two heads, from=3-2, to=5-1]
		\arrow[two heads, from=3-7, to=5-6]
		\arrow["\lrcorner"{anchor=center, pos=0.125}, draw=none, from=3-2, to=5-6]
		\arrow["\lrcorner"{anchor=center, pos=0.125}, draw=none, from=2-4, to=5-6]
		\arrow["\lrcorner"{anchor=center, pos=0.125}, draw=none, from=1-1, to=5-6]
		\arrow[two heads, from=1-6, to=5-6]
		\arrow[two heads, from=1-1, to=3-2]
		\arrow[from=3-2, to=3-7, crossing over]
		\arrow[two heads, from=2-4, to=5-1, crossing over]
		\arrow[from=2-4, to=2-9, crossing over]
		\end{tikzcd}
		\]
\end{proposition}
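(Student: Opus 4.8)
The plan is to reduce the statement to the already-established fact that fibered equivalences are preserved under pullback when one pulls back the entire fibered structure along a base change. The final proposition asserts exactly this: given a fibered equivalence $\varphi: F \xrightarrow{\simeq} E$ over $G$ (itself fibered over $B$) and a map $k: A \to B$, the induced map $\varphi': F' \to E'$ on the pullbacks is again a fibered equivalence over $G' := k^* G$.

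First I would set up fibrant replacements to make all the objects explicit. Writing $P: G \to \UU$ for the straightening of $E \fibarr G$ and $Q: G \to \UU$ for that of $F \fibarr G$, the fibered functor $\varphi$ corresponds to a fiberwise map $Q \to P$ over $G$, and being a \emph{fibered equivalence} means that for every $g: G$ the induced map $Q\,g \to P\,g$ is an equivalence (this is~\Cref{def:fib-maps} together with the homotopy-invariance characterization of notions of fibration). The base change along $k$ restricts $G$ to $G' \simeq \sum_{a:A} G_A(a)$ where $G_A$ is the pullback of the straightening of $G \fibarr B$; concretely, every fiber of $F' \fibarr G'$ (resp.\ $E' \fibarr G'$) over a point $g' \in G'$ lying over $g \in G$ is identified with $Q\,g$ (resp.\ $P\,g$).

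The key step is then to observe that fiberwise equivalence is stable under this reindexing: since the fibers of the pulled-back fibrations are \emph{literally} the fibers $Q\,g, P\,g$ of the originals (via the identifications coming from the pullback squares), the component of $\varphi'$ over $g'$ is identified with the component of $\varphi$ over $g$, which is an equivalence by hypothesis. Hence $\varphi'$ is a fiberwise equivalence, i.e.\ a fibered equivalence over $G'$. Formally I would phrase this using~\Cref{lem:rprop} (right properness) and~\Cref{prop:htopy-inv-of-pb} (homotopy invariance of homotopy pullbacks): each of the nested squares in the large cube diagram is a pullback, so by the Pullback Lemma the composite squares are pullbacks, and then right properness propagates the equivalence $\varphi$ down to $\varphi'$. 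Alternatively, one can read off the conclusion directly from the fibrant replacements, since both $F' \times_{G'} \{g'\} \simeq Q\,g$ and $E' \times_{G'} \{g'\} \simeq P\,g$ and the map between them is $\varphi_g$.

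The only genuine subtlety — and what I expect to be the main obstacle — is bookkeeping the three simultaneous levels of the pullback ($F$, $E$, and the intermediate $G$, all over $B$) and verifying that the induced map on pullbacks really is the fiberwise one, i.e.\ that the coherences in the cube commute so that $\varphi'$ restricted to the fiber over $g'$ is genuinely $\varphi_g$ and not merely equivalent to it up to an uncontrolled path. This is handled cleanly by fibrant replacement, which turns all the pullbacks into strict $\Sigma$-type computations and makes the identification of fibers definitional; after that, the equivalence statement is immediate from the hypothesis that $\varphi$ was a fibered equivalence and from right properness applied fiberwise.
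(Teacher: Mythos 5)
Your proposal is correct and follows essentially the same route as the paper: fibrantly replace everything so that the three fibrations become iterated $\Sigma$-types with strict projections, express $\varphi$ as a family of fiberwise equivalences indexed over the points of $G$, and define $\varphi'$ by precomposing that family with $k$, so that each component of $\varphi'$ is definitionally a component of $\varphi$ and the commutativity of the cube is automatic. The extra appeal to right properness and homotopy invariance of pullbacks is a harmless alternative phrasing but is not needed once the fibrant replacement has been made.
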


\begin{proof}
	By fibrant replacement, we can consider families $R:B \to \UU$, $P: F \jdeq \totalty{R} \to \UU$, $Q : E \jdeq \totalty{P} \to \UU$, with $F \jdeq \totalty{Q}$. The fibered equivalence $\varphi$ is given by a family of equivalences
	\[ \varphi: \prod_{\substack{b:B \\ x:R\,b}} \big( \sum_{e:P\,b\,x} Q \, b \, x \, e \big) \stackrel{\simeq}{\longrightarrow} P\,b\,x. \]
	The induced family
	\[ \varphi' \defeq \lambda a,x.\varphi(k\,a,x): \prod_{\substack{b:B \\ x:R\,k(a)}} \big( \sum_{e:P\,k(a)\,x} Q \, k(a) \, x \, e \big) \stackrel{\simeq}{\longrightarrow} P\,k(a)\,x\]
	also constitutes a fibered equivalence. Commutation of all the diagrams is clear since, after fibrant replacement, all the vertical maps are given by projections.
\end{proof}

\begin{lemma}[Closedness of fibered equivalences under dependent products]\label{lem:fib-eq-closed-pi}
	Let $I$ be a type. Suppose given a family $B:I\to \UU$ and indexed families $P,Q:\prod_{i:I} B_i \to \UU$ together with a fiberwise equivalence $\varphi: \prod_{i:I} \prod_{b:B} P_i\,b \stackrel{\simeq}{\longrightarrow} Q_i\,b$. Then the map
	\[ \prod_{i:I} \varphi_i : \big(\prod_{i:I} \totalty{P_i} \big) \longrightarrow_{\prod_{i:I} B_i} \big(\prod_{i:I} \totalty{Q_i} \big) \]
	induced by taking the dependent product over $I$ is a fiberwise equivalence, too.
\end{lemma}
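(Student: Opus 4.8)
The plan is to reduce the statement to the standard fact that a pointwise family of equivalences induces an equivalence on dependent products, after identifying the fibration structure on the total spaces by distributivity of $\Pi$ over $\Sigma$. Write $\pi_i \defeq \pr_1 : \totalty{P_i} \to B_i$ for the unstraightening of $P_i$, so that the map under consideration is fibered over $\prod_{i:I} \pi_i : \prod_{i:I}\totalty{P_i} \to \prod_{i:I} B_i$, and symmetrically for $Q$. By the type-theoretic axiom of choice (distributivity, \cite[Theorem~2.15.7]{hottbook}) there is an equivalence
\[ \prod_{i:I} \totalty{P_i} \;\simeq\; \sum_{\beta : \prod_{i:I} B_i} \;\prod_{i:I} P_i(\beta_i) \]
over $\prod_{i:I} B_i$, exhibiting the straightening of $\prod_{i:I}\pi_i$ as the family $\lambda \beta.\prod_{i:I} P_i(\beta_i)$; the same holds for $Q$.

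First I would identify the action of $\prod_{i:I}\varphi_i$ under these equivalences. Since each unstraightening $\Un_{B_i}(\varphi_i):\totalty{P_i}\to\totalty{Q_i}$ acts by $\langle b,x\rangle \mapsto \langle b,\varphi_{i,b}(x)\rangle$ (where $b:B_i$), it preserves the $B_i$-component strictly, so the product map is fibered over the identity of $\prod_{i:I}B_i$, and its fiber over a point $\beta:\prod_{i:I}B_i$ is the map
\[ \Phi_\beta \;\defeq\; \lambda g.\,\lambda i.\,\varphi_{i,\beta_i}(g_i) \;:\; \prod_{i:I} P_i(\beta_i) \longrightarrow \prod_{i:I} Q_i(\beta_i). \]
By the definition of fiberwise equivalence (\Cref{def:fib-maps}), whose base component is here the identity, together with homotopy invariance of fiberwise equivalences along the displayed equivalences of total spaces, it then suffices to prove that $\Phi_\beta$ is an equivalence for every $\beta$.

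The remaining step is precisely the fact that a family of equivalences $\varphi_{i,\beta_i}:P_i(\beta_i)\simeq Q_i(\beta_i)$ induces an equivalence on $\Pi$-types. I would produce the inverse directly as $\Psi_\beta \defeq \lambda h.\,\lambda i.\,\varphi_{i,\beta_i}^{-1}(h_i)$, where $\varphi_{i,\beta_i}^{-1}$ is the chosen quasi-inverse. Applying function extensionality, the two round-trip homotopies $\Phi_\beta\circ\Psi_\beta = \id$ and $\Psi_\beta\circ\Phi_\beta = \id$ follow pointwise from the homotopies witnessing each $\varphi_{i,\beta_i}$ as an equivalence, since for $h:\prod_{i:I}Q_i(\beta_i)$ one has $\varphi_{i,\beta_i}(\varphi_{i,\beta_i}^{-1}(h_i)) = h_i$ for every $i$, and dually for $g:\prod_{i:I}P_i(\beta_i)$; this is a standard consequence of function extensionality, \cite{hottbook}. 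I do not expect a genuine obstacle here: the only point requiring care is the bookkeeping in the middle paragraph, namely verifying that $\prod_{i:I}\varphi_i$ really corresponds under distributivity to a fiberwise map over the identity with fiber component $\Phi_\beta$. Once this identification is pinned down, both the reduction and the function-extensionality argument are routine.
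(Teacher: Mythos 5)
Your proposal is correct, and its skeleton matches the paper's: both arguments hinge on the same distributivity/axiom-of-choice identification $\prod_{i:I}\totalty{P_i} \simeq \sum_{\beta:\prod_{i:I}B_i}\prod_{i:I}P_i(\beta_i)$ over $\prod_{i:I}B_i$, reducing everything to the fiber over a point $\beta$. The only real divergence is in how the pointwise step is discharged. The paper fibrantly replaces each $\varphi_i$ itself by a projection, so that ``$\varphi_i$ is an equivalence'' becomes ``all fibers of that projection are contractible'', and then concludes with \emph{weak} function extensionality (a dependent product of contractible types is contractible). You instead keep $\varphi$ as a map, build the explicit quasi-inverse $\Psi_\beta = \lambda h.\lambda i.\varphi_{i,\beta_i}^{-1}(h_i)$, and invoke full function extensionality for the two round-trip homotopies. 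Both are fine; the paper's version is marginally more economical in its funext usage and avoids choosing quasi-inverses, while yours is more explicit and makes the ``fibered over the identity'' bookkeeping visible, which the paper leaves implicit in its final remark about applying $\prod_{i:I}(-)$ to the diagram.
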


\begin{proof}
	For $i:I$, we fibrantly replace the given fiberwise equivalence $\varphi_i$ by projections, giving rise to (strictly) commutative diagrams:
	\[\begin{tikzcd}
		{\sum_{b:B} \sum_{x:Q_i(b)} P_i(b,x) \simeq E_i} && {F_i \simeq \sum_{b:B} P_i(b)} \\
		& {B_i}
		\arrow["{\pi_i}"', two heads, from=1-1, to=2-2]
		\arrow["{\xi_i}", two heads, from=1-3, to=2-2]
		\arrow["{\varphi_i}", from=1-1, to=1-3, "\simeq" swap]
	\end{tikzcd}\]
	Now, $\varphi$ being a fiberwise equivalence is equivalent to
	\begin{align*}
		\prod_{i:I} \isEquiv(\varphi_i)	&\simeq  \prod_{i:I} \prod_{\substack{b:B_i \\ x:Q_i(b)}} \isContr(P_i(b,x)) \\
		&	\simeq   \prod_{\beta:\prod_{i:I} B_i} \prod_{\sigma:\prod_{i:I} \beta^* P_i} \prod_{i:I} \isContr\left( P_i(\beta(i), \sigma(i))\right).
	\end{align*}
	By (weak) function extensionality,\footnote{\cf~\cite[Theorem~13.1.4(ii)]{RijIntro}, or the discussion at the beginning of~\cite[Section~4.4.]{RS17}} this implies
	\begin{align*}
		&\prod_{\beta:\prod_{i:I} B_i} \prod_{\sigma:\prod_{i:I} \beta^*Q_i} \isContr \left( \prod_{i:I} \pair{\beta}{\sigma}^* P_i\right) \\ 
		\simeq & \isEquiv\left( \prod_{i:I} \varphi_i \right)
	\end{align*}
	wich yields the desired statement. Note, that the latter equivalence follows by fibrant replacement of the diagram obtained by applying $\prod_{i:I}(-)$:
	\[\begin{tikzcd}
		{\sum_{\substack{\beta:\prod_{i:I} B_i \\ \sigma:\prod_{i:I} \beta^*Q_i}} \prod_{i:I} \langle\beta,\sigma\rangle^*P_i \simeq \prod_{i:I} E_i} && {\mathllap{\prod_{i:I} F_i \simeq} \sum_{\beta:\prod_{j:I} B_i} \prod_{i:I} \beta^*Q_i} \\
		& {\prod_{i:I} B_i}
		\arrow["{\prod_{i:I} \pi_i}"', from=1-1, to=2-2]
		\arrow["{\prod_{i:I} \varphi_i}"{pos=0.3}, shorten >=45pt, from=1-1, to=1-3]
		\arrow["{\prod_{i:I} \xi_i}", from=1-3, to=2-2]
	\end{tikzcd}\]
\end{proof}

\begin{lemma}[Closedness of fibered equivalences under sliced products]\label{prop:fib-eqs-sliced-pi}
	Given indexed families $P,Q:I \to B \to \UU$ and a family of fibered equivalences $\prod_{i:I} \prod_{b:B} P_i \,b  \stackrel{\simeq}{\longrightarrow} Q_i \,b$. Then the induced fibered functor
	\[ \times_{i:I}^B \varphi_i : \prod_{i:I} \prod_{b:B} \times_{i:I}^B P_i \,b \longrightarrow \times_{i:I}^B Q_i\,b \]
	between the sliced products over $B$ is also a fibered equivalence.
\end{lemma}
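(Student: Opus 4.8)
The plan is to reduce the sliced statement to the already-established unsliced one, \Cref{lem:fib-eq-closed-pi}, via the defining pullback of the sliced product. Recall that, after fibrant replacement, the sliced product $\times_{i:I}^B E_i$ arises as the pullback of the total product $\prod_{i:I} E_i \fibarr B^I$ along the constant-diagram map $\cst : B \to B^I$. Thus the fibered functor $\times_{i:I}^B \varphi_i$ is obtained from $\prod_{i:I}\varphi_i$ by pulling back along $\cst$, and the whole claim should follow by combining closure under dependent products with pullback stability of fibered equivalences.

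Concretely, first I would invoke \Cref{lem:fib-eq-closed-pi} applied to the families $P,Q$ regarded as indexed over the constant family $\lambda i.B$: this yields that $\prod_{i:I}\varphi_i : \prod_{i:I}\totalty{P_i} \to_{B^I} \prod_{i:I}\totalty{Q_i}$ is a fibered equivalence over $B^I$. Second, I would record that both sliced products arise as the pullbacks of the respective total products along $\cst : B \to B^I$, and that by functoriality of pullback the induced mediating map between these pullbacks is exactly $\times_{i:I}^B\varphi_i$. Third, since $\prod_{i:I}\varphi_i$ is a (fiberwise, hence total) weak equivalence, its pullback along $\cst$ is again a weak equivalence by right properness, \Cref{lem:rprop}, and this weak equivalence is manifestly fibered over $B$; together these give that $\times_{i:I}^B\varphi_i$ is a fibered equivalence, as desired. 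The pullback-compatibility here is the sliced analogue of the pullback-stability proposition for fibered equivalences established above, which itself rests on \Cref{lem:rprop} and \Cref{prop:htopy-inv-of-pb}.

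The main obstacle I anticipate is the bookkeeping in the second step: verifying that the canonical map between the two $\cst$-pullbacks genuinely coincides with the sliced functor $\times_{i:I}^B\varphi_i$, rather than merely an equivalent map, so that one is entitled to transport the conclusion. A cleaner and more self-contained route---paralleling the proof of \Cref{lem:fib-eq-closed-pi}---is to argue fiberwise: after fibrant replacement the fiber of $\times_{i:I}^B P_i$ over $b:B$ is $\prod_{i:I} P_i(b)$, the functor $\times_{i:I}^B\varphi_i$ acts on it as $\prod_{i:I}\varphi_{i,b}$, and being a fibered equivalence amounts to $\prod_{b:B}\isEquiv\big(\prod_{i:I}\varphi_{i,b}\big)$. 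By weak function extensionality this reduces to $\prod_{b:B}\prod_{i:I}\isEquiv(\varphi_{i,b})$, which holds since each $\varphi_i$ is a fibered equivalence. Either way the content is the closure of equivalences under $\Pi$-types restricted to a fixed fiber, so no genuinely new ingredient beyond \Cref{lem:fib-eq-closed-pi} is required.
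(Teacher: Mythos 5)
Your proposal is correct and follows essentially the same route as the paper's proof: apply closure of fibered equivalences under dependent products to obtain that $\prod_{i:I}\varphi_i$ is a weak equivalence over $B^I$, then pull back along $\cst\colon B \to B^I$ and invoke right properness (\Cref{lem:rprop}) to conclude that the mediating map between the sliced products is an equivalence. The bookkeeping worry you raise is harmless here, since the sliced product and the functor $\times_{i:I}^B\varphi_i$ are defined precisely as these $\cst$-pullbacks, and your fallback fiberwise argument is a fine (if redundant) alternative.
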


\begin{proof}
	As usual, denote for $i:$ by $\pi_i \defeq \Un_B(P_i) : E_i \to B$ and $\xi_i \defeq \Un_B(Q_i) : F_i \to B$ the unstraightenings of the given fibered families, giving rise to a (strict) diagram:
	\[\begin{tikzcd}
		{E_i} && {F_i} \\
		& B
		\arrow["{\pi_i}"', two heads, from=1-1, to=2-2]
		\arrow["{\xi_i}", two heads, from=1-3, to=2-2]
		\arrow["{\varphi_i}", from=1-1, to=1-3]
	\end{tikzcd}\]
	Since weak equivalences are closed under taking dependent products, the induced fibered map $\prod_{i:I} \varphi_i: \prod_{i:I} E_i \to_{I \to B} \prod_{i:I} F_i$ is also a weak equivalence, and by right properness \Cref{lem:rprop} the desired mediating map is as well:
	\[\begin{tikzcd}
		{\times^B_{i:I} E_i} && {\prod_{i:I} E_i} \\
		& {\times^B_{i:I} F_i} && {\prod_{i:I} F_i} \\
		& B && {B^I}
		\arrow["{\prod_{i:I} \varphi_i}", from=1-3, to=2-4]
		\arrow[from=1-1, to=1-3]
		\arrow["{\times_{i:I}^B \varphi_i}"{pos=0.2} description, dashed, from=1-1, to=2-2]
		\arrow[two heads, from=2-4, to=3-4]
		\arrow["\cst", from=3-2, to=3-4]
		\arrow[two heads, from=2-2, to=3-2]
		\arrow[curve={height=6pt}, two heads, from=1-1, to=3-2]
		\arrow["\lrcorner"{anchor=center, pos=0.125}, draw=none, from=2-2, to=3-4]
		\arrow["\lrcorner"{anchor=center, pos=0.125}, draw=none, from=1-1, to=3-4]
		\arrow[curve={height=6pt}, two heads, from=1-3, to=3-4]
		\arrow[from=2-2, to=2-4, crossing over]
	\end{tikzcd}\]
\end{proof}

\begin{proposition}
	For an indexing type $I$ and a base Rezk type $B$, families of fibered equivalences between Rezk types over $B$ are closed under taking sliced products, \ie: Given a family of isoinner fibrations over $B$ together with a fibered equivalence as below left, the induced maps on the right make up a fibered equivalence as well:
	\[\begin{tikzcd}
		{F_i} && {E_i} & {\times_{i:I}^B F_i} && {\times_{i:I}^B E_i} \\
		& {G_i} & {} & {} & {\times_{i:I}^B G_i} \\
		& B &&& B
		\arrow[""{name=0, anchor=center, inner sep=0}, "{\varphi_i}"', curve={height=6pt}, from=1-1, to=1-3]
		\arrow[""{name=1, anchor=center, inner sep=0}, "{\psi_i}"', curve={height=12pt}, from=1-3, to=1-1]
		\arrow[""{name=2, anchor=center, inner sep=0}, "{\times_{i:I}^B \varphi_i}"', curve={height=6pt}, from=1-4, to=1-6]
		\arrow[""{name=3, anchor=center, inner sep=0}, "{\times_{i:I}^B \psi_i}"', curve={height=12pt}, from=1-6, to=1-4]
		\arrow[two heads, from=1-1, to=2-2]
		\arrow[two heads, from=1-3, to=2-2]
		\arrow[curve={height=12pt}, two heads, from=1-1, to=3-2]
		\arrow[curve={height=-12pt}, two heads, from=1-3, to=3-2]
		\arrow[two heads, from=2-2, to=3-2]
		\arrow[curve={height=12pt}, two heads, from=1-4, to=3-5]
		\arrow[curve={height=-12pt}, two heads, from=1-6, to=3-5]
		\arrow[two heads, from=2-5, to=3-5]
		\arrow[two heads, from=1-6, to=2-5]
		\arrow[two heads, from=1-4, to=2-5]
		\arrow[squiggly, from=2-3, to=2-4]
		\arrow["\dashv"{anchor=center, rotate=-90}, draw=none, from=1, to=0]
		\arrow["\dashv"{anchor=center, rotate=-90}, draw=none, from=3, to=2]
	\end{tikzcd}\]
\end{proposition}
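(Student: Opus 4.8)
The final proposition states that families of fibered equivalences between Rezk types over a base $B$ are closed under sliced products. The plan is to reduce this to the already-established closure properties of fibered equivalences, in particular \Cref{prop:fib-eqs-sliced-pi}, which handles closedness of fibered equivalences under sliced products in the straightforward (one-legged) setting. The key observation is that a fibered equivalence $\varphi_i \dashv \psi_i$ between isoinner fibrations $F_i \fibarr G_i$ and $E_i \fibarr G_i$ over a common intermediate Rezk type $G_i$ (itself fibered over $B$) is, after fibrant replacement, just a family of genuine type-theoretic equivalences on bifibers. Once this is unwound, the sliced product can be taken and the equivalence property is preserved.

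First I would fibrantly replace the given data, writing $R_i : B \to \UU$ for the straightening of $G_i \fibarr B$, and then $P_i, Q_i : G_i \to \UU$ for the straightenings of $F_i \fibarr G_i$ and $E_i \fibarr G_i$, respectively. The fibered equivalence $\varphi_i : F_i \to_{G_i} E_i$ then amounts to a family of type-theoretic equivalences $\prod_{g:G_i} P_i\,g \stackrel{\simeq}{\to} Q_i\,g$, with $\psi_i$ its fiberwise quasi-inverse. The sliced product $\times_{i:I}^B F_i$ is computed, as in \Cref{prop:fib-eqs-sliced-pi}, via the pullback of $\prod_{i:I} F_i \fibarr B^I$ along the constant-diagram map $\cst : B \to B^I$, and similarly for the other legs. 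Since weak equivalences are closed under dependent products (\Cref{lem:fib-eq-closed-pi}), the induced map $\prod_{i:I} \varphi_i : \prod_{i:I} F_i \to_{B^I} \prod_{i:I} E_i$ is a fibered equivalence; then right properness (\Cref{lem:rprop}) transfers this equivalence across the pullback to conclude that $\times_{i:I}^B \varphi_i$ is a fibered equivalence as well. The quasi-inverse $\times_{i:I}^B \psi_i$ arises from the same construction applied to the $\psi_i$, and the triangle identities descend by functoriality of sliced products.

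I expect the main obstacle to be purely bookkeeping: namely ensuring that, in the three-legged slice diagram involving the intermediate type $G_i$, all the constant-diagram and projection maps commute strictly after fibrant replacement so that the pullback squares used in \Cref{prop:fib-eqs-sliced-pi} genuinely compose. Concretely, one must verify that $\times_{i:I}^B (-)$ applied to the composite $F_i \fibarr G_i \fibarr B$ agrees, up to the relevant equivalences, with iterating the sliced product construction, which relies on the fact that sliced products commute with the requisite pullbacks (as used throughout \Cref{ssec:two-var} and in \Cref{prop:fib-eqs-sliced-pi}). Once this compatibility is in place, the argument is a direct application of \Cref{lem:fib-eq-closed-pi}, \Cref{lem:rprop}, and \Cref{prop:fib-eqs-sliced-pi}, with the adjointness (and hence the presence of quasi-inverses $\psi_i$) following formally since a fibered equivalence over $B$ is invariant under the constructions involved.
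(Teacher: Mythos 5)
Your proposal is correct and follows essentially the same route as the paper: the paper's proof likewise realizes the sliced products as pullbacks along $\cst : B \to B^I$ of the dependent products, applies closedness of fibered equivalences under dependent products, and transfers the equivalence across the pullback (right properness), exactly as you describe. The bookkeeping concern you raise about the intermediate types $G_i$ is handled in the paper only implicitly, via a single commuting cube diagram.
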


\begin{proof}
	This is a consequence of~\Cref{lem:fib-eq-closed-pi,prop:fib-eqs-sliced-pi}, considering the following diagram:
	\[\begin{tikzcd}
		{\times^B_{i:I} F_i} &&&&& {\prod_{i:I} E_i} \\
		& {} & {\times^B_{i:I} F_i} &&&&& {\prod_{i:I} F_i} \\
		& {\times^B_{i:I} G_i} &&&&& {\prod_{i:I} G_i} \\
		B &&&&& {B^I}
		\arrow[from=1-1, to=1-6]
		\arrow["\simeq"{description}, from=1-1, to=2-3]
		\arrow[two heads, from=1-1, to=4-1]
		\arrow[from=4-1, to=4-6]
		\arrow[two heads, from=1-6, to=4-6]
		\arrow[dashed, two heads, from=1-1, to=3-2]
		\arrow[two heads, from=3-2, to=4-1]
		\arrow[dashed, two heads, from=2-3, to=3-2]
		\arrow[two heads, from=1-6, to=3-7]
		\arrow["\simeq"{description}, from=1-6, to=2-8]
		\arrow[two heads, from=2-8, to=3-7]
		\arrow[curve={height=-22pt}, two heads, from=2-8, to=4-6]
		\arrow[two heads, from=3-7, to=4-6]
		\arrow["\lrcorner"{anchor=center, pos=0.125}, draw=none, from=2-3, to=3-7]
		\arrow["\lrcorner"{anchor=center, pos=0.125}, draw=none, from=3-2, to=4-6]
		\arrow["\lrcorner"{anchor=center, pos=0.125}, draw=none, from=1-1, to=2-8]
		\arrow[from=3-2, to=3-7]
		\arrow[from=2-3, to=2-8, crossing over]
		\arrow[curve={height=-22pt}, two heads, from=2-3, to=4-1, crossing over]
	\end{tikzcd}\]
\end{proof}

\section{Fibered (LARI) adjunctions}

Building on previous work \cite[Section~11]{RS17} and \cite[Appendix~B]{BW21} we provide a characterization of fibered LARI adjunctions along similar lines.

\begin{theorem}[Characterizations of fibered adjunctions, cf.~{\protect\cite[Theorem~11.23]{RS17}, \cite[Theorem~B.1.4]{BW21}}]\label{thm:char-fib-adj}
	Let $B$ be a Rezk type. For $P,Q:B \to \UU$ isoinner families we write $\pi \defeq \Un_B(P) : E \defeq \totalty{P} \fibarr B$ and $\xi\defeq \Un_B(Q):F \jdeq \totalty{Q} \fibarr B$. Given a fibered functor $\varphi: E \to_B F$ such that (strictly)
	\[\begin{tikzcd}
		E && F \\
		& B
		\arrow["\varphi", from=1-1, to=1-3]
		\arrow["\pi"', two heads, from=1-1, to=2-2]
		\arrow["\xi", two heads, from=1-3, to=2-2]
	\end{tikzcd}\]
	the following are equivalent propositions:
	\begin{enumerate}
		\item\label{it:fib-ladj-vert-i} The type of \emph{fibered left adjoints} of $\varphi$, \ie~fibered functors $\psi$ which are ordinary (transposing) left adjoints of $\varphi$ whose unit, moreover, is vertical.
		\item\label{it:fib-ladj-vert-ii} The type of fibered functors $\psi:F \to_B E$ together with a vertical $2$-cell $\eta: \id_F \Rightarrow_B \varphi \psi$ s.t.~$\Phi_\eta \defeq \lambda u,d,e,k.\varphi_u \,k \circ \eta_d: \prod_{\substack{a,b:B \\ u:a \to b}} \prod_{d:Q\,a,~  e:P\,b} \big(\psi_a \,d \to^P_u e \big) \to \big( d \to^Q_u \varphi_b \, e)$ is a fiberwise equivalence. 
		\item\label{it:fib-ladj-sliced} The type of \emph{sliced} (or \emph{fiberwise}) \emph{left adjoints (over $B$)} to $\varphi$, \ie~fibered functors $\psi:F \to_B E$ together with a fibered equivalence $\relcomma{B}{\psi}{E} \simeq_{F \times_B E} \relcomma{B}{F}{\varphi}$.
		\item\label{it:fib-ladj-bi-diag-i} The type of \emph{bi-diagrammatic fibered} (or \emph{fiberwise}) \emph{left adjoints}, \ie~fibered functors $\psi$ together with:
					\begin{itemize}
						\item a vertical natural transformation $\eta : \id_F \Rightarrow_B \varphi \psi$
						\item two \emph{vertical} natural transformations $\varepsilon, \varepsilon': \psi \varphi \Rightarrow_B \id_E$
						\item homotopies\footnote{by Segal-ness, the witnesses for the triangle identities are actually unique up to contractibility} $\alpha : \varphi \varepsilon \circ \eta \varphi =_{E \to F} \id_\varphi, ~\beta : \varepsilon' \psi \circ \psi \eta =_{F \to E} \id_\psi$
					\end{itemize}
		\item\label{it:fib-ladj-bi-diag-ii} The type of fibered functors $\psi$ together with:
					\begin{itemize}
					\item a vertical natural transformation $\eta : \id_F \Rightarrow_B \varphi \psi$
					\item two natural transformations $\varepsilon, \varepsilon': \psi \varphi \Rightarrow \id_E$
					\item homotopies $\alpha : \varphi \varepsilon \circ \eta \varphi =_{E \to F} \id_\varphi, ~\beta : \varepsilon' \psi \circ \psi \eta =_{F \to E} \id_\psi$
				\end{itemize}
	\end{enumerate}
\end{theorem}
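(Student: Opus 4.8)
The plan is to adapt the non-fibered characterization of adjunctions \cite[Theorem~11.23]{RS17} and its fibered precursor \cite[Theorem~B.1.4]{BW21} to the present setting, threading the verticality conditions through each equivalence. Since all five types are propositions --- which I would record first, using uniqueness of (fibered) adjoints together with the fact that being vertical and being an equivalence are properties --- it suffices to establish a cycle of implications rather than genuine biconditionals. The single recurring tool is the covariant discrete Yoneda Lemma (as in \cite[Section~9]{RS17}), applied to the dependent hom-types exactly as in the proof of \cite[Theorem~11.23]{RS17}.

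First I would treat \ref{it:fib-ladj-vert-i} $\iff$ \ref{it:fib-ladj-vert-ii}. The content here is the identification of ``fibered left adjoint with vertical unit'' with the explicit fiberwise transposition datum $\pair{\psi}{\eta}$ together with $\isEquiv(\Phi_\eta)$. One direction recalls from \cite[Appendix~B]{BW21} that an ordinary left adjoint $\psi \dashv \varphi$ (presented by its unit) is fibered precisely when its unit is vertical. The remaining point is that, because $\varphi$ is a fibered functor and $\eta$ is vertical, the global transposition $\hom_E(\psi\,d, e) \simeq \hom_F(d, \varphi\,e)$ lies over $\hom_B$ and therefore decomposes, over each $u:a \to b$, into the dependent transposition $\Phi_\eta$; conversely the family of dependent equivalences reassembles into the global one. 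Being an equivalence is detectable on total spaces and fiberwise over $\hom_B$, so the two formulations coincide.

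Next, \ref{it:fib-ladj-vert-ii} $\iff$ \ref{it:fib-ladj-sliced} unwinds the sliced commas. After fibrant replacement and an application of the type-theoretic principle of choice for extension types (\Cref{thm:choice}), the sliced comma $\relcomma{B}{\psi}{E}$ has fiber $\hom_{P\,b}(\psi_b\,d, e)$ and $\relcomma{B}{F}{\varphi}$ has fiber $\hom_{Q\,b}(d, \varphi_b\,e)$ over a point $\pair{d}{e}$ of the base $F \times_B E$. A fibered equivalence between them over $F \times_B E$ is exactly the vertical ($u \jdeq \id$) instance of $\Phi_\eta$; the upgrade to arbitrary $u:a \to b$ comes for free, since the sliced commas are functorial in the Rezk type $F \times_B E$ and a fibered equivalence is natural with respect to this functoriality, whose arrows project onto precisely the base arrows $u$ along which the dependent transposition is indexed. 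This naturality step is, I expect, the main obstacle: one must check that the functoriality of the sliced-comma equivalence over $F \times_B E$ really reconstructs the full $\Phi_\eta$ --- equivalently, that the fiberwise adjoints are automatically compatible with reindexing --- rather than only its vertical part.

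Finally I would close the cycle through the bidiagrammatic formulations. For \ref{it:fib-ladj-vert-ii} $\implies$ \ref{it:fib-ladj-bi-diag-i} the vertical counits $\varepsilon, \varepsilon'$ are produced as transposes of identities under $\Phi_\eta$, with the triangle identities and the verticality of the counits verified by Yoneda exactly as in \cite[Theorem~11.23]{RS17}; the two-counit hedge keeps the datum a proposition and avoids having to coherentify a single counit. The implication \ref{it:fib-ladj-bi-diag-i} $\implies$ \ref{it:fib-ladj-bi-diag-ii} simply forgets the verticality of $\varepsilon, \varepsilon'$, and \ref{it:fib-ladj-bi-diag-ii} $\implies$ \ref{it:fib-ladj-vert-ii} recovers the transposition equivalence from the bidiagrammatic data, since the triangle identities exhibit postcomposition with $\varepsilon'$ (and precomposition with $\eta$) as a two-sided inverse to $\Phi_\eta$, while verticality of $\eta$ --- retained in both \ref{it:fib-ladj-bi-diag-i} and \ref{it:fib-ladj-bi-diag-ii} --- guarantees the resulting adjunction is fibered. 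Throughout, the only new bookkeeping beyond the non-fibered argument is confirming that each constructed cell lands among the vertical arrows, which follows because $\varphi$ is fibered and the defining transpositions are taken over identities in $B$.
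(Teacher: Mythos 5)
Your overall architecture is sound and the propositionality reduction, the use of the discrete Yoneda Lemma, and the closing cycle through the bidiagrammatic forms all match the paper's strategy. The one genuine gap is exactly the step you flag yourself: the direction from \ref{it:fib-ladj-sliced} to \ref{it:fib-ladj-vert-ii}. The fibered equivalence $\relcomma{B}{\psi}{E} \simeq_{F \times_B E} \relcomma{B}{F}{\varphi}$ has total types built only from the \emph{vertical} hom-types $\hom_{P\,b}(\psi_b\,d,e)$ and $\hom_{Q\,b}(d,\varphi_b\,e)$; the dependent hom-types $\psi_a\,d \to^P_u e$ over a non-identity $u:a\to b$ simply do not occur as fibers of these sliced commas, so no amount of ``functoriality of the sliced commas over $F \times_B E$'' will reconstruct $\Phi_\eta$ over arbitrary $u$ directly. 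What is actually needed is the counit: the paper routes \ref{it:fib-ladj-sliced} $\iff$ \ref{it:fib-ladj-bi-diag-i} by observing that both are equivalent to a fiberwise adjunction $\psi_b \dashv \varphi_b$ for every $b$ (by the non-fibered characterization applied in each fiber, with the fiberwise units and counits assembling into global vertical natural transformations because naturality is automatic for functors between Segal types), and then proves \ref{it:fib-ladj-bi-diag-i} $\implies$ \ref{it:fib-ladj-vert-ii} by an explicit computation: $\Psi \defeq \lambda m.\,\varepsilon_e \circ \psi_u\,m$ is a two-sided inverse to $\Phi_\eta$ over an arbitrary $u$, using one triangle identity and the naturality square of $\varepsilon$ along $(\psi\varphi)_u\,k$.

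The repair is available inside your own proposal: your leg \ref{it:fib-ladj-bi-diag-ii} $\implies$ \ref{it:fib-ladj-vert-ii} already contains this triangle-identity-plus-naturality computation, so you should connect \ref{it:fib-ladj-sliced} into the cycle via \ref{it:fib-ladj-bi-diag-i} (fiberwise Yoneda in each fiber $P\,b$, $Q\,b$) rather than asserting \ref{it:fib-ladj-sliced} $\implies$ \ref{it:fib-ladj-vert-ii} outright. Two smaller remarks: your derivation of verticality of $\varepsilon,\varepsilon'$ as transposes of identities is fine, but the paper gets \ref{it:fib-ladj-bi-diag-ii} $\implies$ \ref{it:fib-ladj-bi-diag-i} even more cheaply by projecting the triangle identity $\alpha$ down along $\xi$ to force the base component of $\varepsilon$ to be an identity; and your entry point \ref{it:fib-ladj-vert-i} $\iff$ \ref{it:fib-ladj-vert-ii} differs harmlessly from the paper's \ref{it:fib-ladj-vert-i} $\iff$ \ref{it:fib-ladj-bi-diag-ii} (both are direct applications of the non-fibered theorem).
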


\begin{proof} ~ \\
	At first, we prove that, given a \emph{fixed} and \emph{fibered} functor $\psi:F \to_B E$ the respective witnessing data are propositions.\footnote{This justifies the ensuing list of \emph{logical} equivalences.}
	\begin{description}
		\item[$\ref{it:fib-ladj-vert-i} \iff \ref{it:fib-ladj-bi-diag-ii}$:] This follows from the equivalence between transposing left adjoint and bi-diagrammatic left adjoint data, \cf~\cite[Theorem~11.23]{RS17}.
		\item[$\ref{it:fib-ladj-bi-diag-i} \implies \ref{it:fib-ladj-bi-diag-ii}$:] This is clear since the latter is a weakening of the former. 
		\item[$\ref{it:fib-ladj-bi-diag-ii} \implies \ref{it:fib-ladj-bi-diag-i}$:] Denoting the base component of $\varepsilon$ by $v:\Delta^1 \to (B \to B) \simeq B \to (\Delta^1 \to B)$, projecting down from $\alpha$ via $\xi$ we obtain the identity $\xi \alpha : v \circ \id_B = \id_B$. Thus $\varepsilon$ is vertical, and similarly one argues for $\varepsilon'$.
		\item[$\ref{it:fib-ladj-sliced} \iff \ref{it:fib-ladj-bi-diag-i}$:] Given the fibered functor $\psi$, both lists of data witness that for every $b:B$ the components $\psi_b \dashv \varphi_b: P\,b \to Q\,b$ define an adjunction between the fibers, again by~\cite[Theorem~11.23]{RS17}.
		\item[$\ref{it:fib-ladj-vert-ii} \implies \ref{it:fib-ladj-sliced}$:] The latter is an instance of the former.
		\item[$\ref{it:fib-ladj-bi-diag-i} \implies \ref{it:fib-ladj-vert-ii}$:] Using naturality and the triangle identities, we show that the fiberwise conditions (vertical case) can be lifted to the case of arbitrary arrows in the base.\footnote{I thank Ulrik Buchholtz for pointing out the subsequent argument.}
		Consider the transposing maps:
		\begin{align*}
			\Phi \defeq \lambda k.\varphi_u\,k \circ \eta_d & : \prod_{\substack{a,b:B \\ u:a \to b}} \prod_{\substack{d:Q\,a \\  e:P\,b}} \big(\psi_a \,d \to^P_u e \big) \to \big( d \to^Q_u \varphi_b \, e \big) \\
			\Psi \defeq \lambda m.\varepsilon_e \circ \psi_u\,m  & : \prod_{\substack{a,b:B \\ u:a \to b}} \prod_{\substack{d:Q\,a \\ e:P\,b}}  \big( d \to^Q_u \varphi_b \, e) \to \big(\psi_a \,d \to^P_u e \big) 
		\end{align*}
	The first roundtrip yields:
	\[\begin{tikzcd}
		{\big(k:\psi_a\,d} & {e\big)} & {\big(\varphi_u\,k \circ \eta_d: d} & {\varphi_b\,e \big)} & {} \\
		&& {\big( \varepsilon_e \circ \psi_a(\varphi_u\,k \circ \eta_d):\psi_a\,d} & e
		\arrow["P", from=1-1, to=1-2]
		\arrow["\Phi", maps to, from=1-2, to=1-3]
		\arrow[""{name=0, anchor=center, inner sep=0}, "Q", from=1-3, to=1-4]
		\arrow["P", from=2-3, to=2-4]
		\arrow["\Psi"{description}, shorten <=6pt, maps to, from=0, to=2-3]
	\end{tikzcd}\]
	The result yields back $k$ using a triangle identity in the triangle on the left, and naturality of $\varepsilon$ in the square on the right:
\[\begin{tikzcd}
	{\psi_a\,d} && {(\psi \varphi \psi)_a\,d} & {} & {(\psi\varphi)_b\,e} \\
	&& {\psi_a\,d} && e
	\arrow["{\psi_a\eta_d}", from=1-1, to=1-3]
	\arrow[""{name=0, anchor=center, inner sep=0}, "{\id_{\psi_a\,d}}"', curve={height=12pt}, Rightarrow, no head, from=1-1, to=2-3]
	\arrow["{\psi_a\varepsilon_d}", from=1-3, to=2-3]
	\arrow["{(\psi\varphi)_u\,k}", from=1-3, to=1-5]
	\arrow["k"', from=2-3, to=2-5]
	\arrow["{\varepsilon_e}", from=1-5, to=2-5]
	\arrow[Rightarrow, no head, from=2-3, to=1-5]
	\arrow[shorten <=6pt, Rightarrow, no head, from=0, to=1-3]
\end{tikzcd}\]
In addition, we have also used naturality of $\varepsilon$ for $\psi_a \varepsilon_d \jdeq \varepsilon_{\psi_a\,d}$. An analogous argument proves the other roundtrip.
	\end{description}
We have proven, that relative to a fixed fibered functor $\psi: F \to_B E$ the different kinds of witnesses that this is a fibered left adjoint to $\varphi$ are equivalent propositions, giving rise to a predicate $\isFibLAdj_\varphi : (F \to_B E) \to \Prop$. What about the $\Sigma$-type $\FibLAdj_\varphi \defeq \sum_{\psi:F \to_B E} \isFibLAdj_\varphi(\psi)$ as a whole? E.g.~using the data from \cref{it:fib-ladj-sliced} (after conversion via~\cite[Theorem~11.23]{RS17}), said type is equivalent to
\begin{align*}
	 \FibLAdj_\varphi(\psi) & \simeq \sum_{\psi:\prod_{b:B}  P\,b \to Q\,b} \sum_{\eta:\prod_{b:B} \prod_{d:Q\,b} \hom_{Q\,b}(d,(\varphi\,\psi)_b\,d)} \prod_{\substack{b:B \\ d:Q\,b \\e:P\,b}} \isEquiv\big( \lambda k.\varphi_b(k) \circ \eta_d\big) \\ 
	& \simeq  \prod_{\substack{b:B \\ d:Q\,b}} \sum_{\psi_b:P\,b} \sum_{\eta_d:d \to_{Q\,b} \varphi_b(\psi_b d)} \prod_{e:P\,b} \isEquiv(\lambda k.\varphi_b(k) \circ \eta_d).
\end{align*}
Finally, one shows that this is indeed a proposition, completely analogously to the argument given in the proof of \cite[Theorem~11.23]{RS17} for the non-dependent case.
\end{proof}

\begin{definition}[Fibered (LARI) adjunction]\label{def:fib-lari-adj}
	Let $B$ be a Rezk type and $\pi: E \fibarr B$, $\xi:F \fibarr B$ be isoinner fibrations, with $P \defeq \St_B(\pi)$ and $Q \defeq \St_B(\xi)$. Given a fibered functor $\varphi: E \to_B F$, the data of a \emph{fibered left adjoint right inverse (LARI) adjunction} is given by
	\begin{itemize}
		\item a fibered functor $\psi: F \to_B E$,
		\item and an equivalence $\Phi: \relcomma{B}{\psi}{E} \simeq_{F \times_B E} \relcomma{B}{F}{\varphi}$ s.t.~the fibered unit
		\[ \eta_\Phi \defeq \lambda b,d.\Phi_{b,d,\psi_b \, d}(\id_{\psi_b\,d}):\prod_{b:B} \prod_{d:Q\,b} d \to_{Q\,b} (\varphi \psi)_b(d) \]
		is a componentwise homotopy.
	\end{itemize} 
Together, this defines the data of a \emph{fibered LARI adjunction}. Diagrammatically, we represent this by:
\[\begin{tikzcd}
	E && F \\
	& B
	\arrow["\pi"', two heads, from=1-1, to=2-2]
	\arrow["\xi", two heads, from=1-3, to=2-2]
	\arrow[""{name=0, anchor=center, inner sep=0}, "\varphi"', from=1-1, to=1-3]
	\arrow[""{name=1, anchor=center, inner sep=0}, "\psi"', curve={height=12pt}, dotted, from=1-3, to=1-1]
	\arrow["\dashv"{anchor=center, rotate=-90}, draw=none, from=1, to=0]
\end{tikzcd}\]
\end{definition}

In fact, as established in the previous works of~\cite[Section~11]{RS17} the unit of a coherent adjunction is determined uniquely up to homotopy. Hence, using the characterizations of a (coherent) LARI adjunction, the type of fibered LARI adjunctions in the above sense is equivalent to the type of LARI adjunctions which are also fibered adjunctions. This implies the validity of the familiar closure properties for this restricted class as well.

\section{Sliced commas and products}

We record here explicitly some closure properties involving sliced commas and products that are often used, especially in the treatise of two-sided fibrations and related notions.

\begin{proposition}[Dependent products of sliced commas]\label{prop:dep-prod-comm-sl-commas}
For a type $I$ and $i:I$, given fibered cospans
\[ \psi_i: F_i \to_{B_i} G_i \leftarrow_{B_i} E_i : \varphi\]
of Rezk types, taking the dependent product fiberwisely commutes with forming sliced comma types:
\[\begin{tikzcd}
	& {\psi_i \downarrow_{B_i} \varphi_i} \\
	{F_i} && {E_i} & {\prod_{i:I} (\psi_i \downarrow_{B_i} \varphi_i)} & {\left(\prod_{i:I} \psi_i \right) \downarrow_{\left(\prod_{i:I} B_i \right)} \left(\prod_{i:I}\varphi_i\right)} \\
	& {G_i} & {} & {} & {} \\
	& {B_i} &&& {\prod_{i:I} B_i}
	\arrow["{\psi_i}"{description}, two heads, from=2-1, to=3-2]
	\arrow["{\varphi_i}"{description}, two heads, from=2-3, to=3-2]
	\arrow[two heads, from=3-2, to=4-2]
	\arrow[curve={height=12pt}, two heads, from=2-1, to=4-2]
	\arrow[curve={height=-12pt}, two heads, from=2-3, to=4-2]
	\arrow[from=1-2, to=2-1]
	\arrow[from=1-2, to=2-3]
	\arrow[shorten <=25pt, shorten >=25pt, Rightarrow, from=2-1, to=2-3]
	\arrow[two heads, from=2-4, to=4-5]
	\arrow[squiggly, from=3-3, to=3-4]
	\arrow[two heads, from=2-5, to=4-5]
	\arrow[""{name=0, anchor=center, inner sep=0}, shift left=2, curve={height=-6pt}, from=2-5, to=2-4]
	\arrow[""{name=1, anchor=center, inner sep=0}, shift left=2, curve={height=-6pt}, from=2-4, to=2-5]
	\arrow["\simeq"{description}, Rightarrow, draw=none, from=1, to=0]
\end{tikzcd}\]
\end{proposition}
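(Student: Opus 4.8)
The plan is to prove this by passing to fibrant replacements, so that both sides become explicit $\Sigma$-types, and then exhibiting a fiberwise equivalence directly. First I would fibrantly replace the given fibered cospan data. For each $i:I$, writing $R_i : B_i \to \UU$ for the straightening of $G_i \fibarr B_i$, together with families $S_i$ on $\totalty{R_i}$ straightening $\psi_i : F_i \to_{B_i} G_i$ and $T_i$ straightening $\varphi_i : E_i \to_{B_i} G_i$, the sliced comma admits the standard description
\[ \psi_i \downarrow_{B_i} \varphi_i \simeq \sum_{b:B_i} \sum_{\substack{x,y:R_i\,b \\ k:x \to^{R_i}_{b} y}} S_i\,b\,x \times T_i\,b\,y, \]
analogously to the fibrant replacement used in the proof of \Cref{prop:sliced-comma-is-cocart}. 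The key point is that the ``middle'' datum of a sliced comma element over a fixed base point is precisely an arrow $k$ in the fiber $R_i\,b$ together with a point in each of the two fibers lying over its endpoints.

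Next I would compute both sides of the claimed equivalence in these terms. On the left, $\prod_{i:I}(\psi_i \downarrow_{B_i} \varphi_i)$ unfolds, using the fibrant replacement above, to a $\Sigma$-type indexed by a section $\beta : \prod_{i:I} B_i$ together with, for each $i$, a pair of points $x_i,y_i:R_i\,\beta_i$, an arrow $k_i : x_i \to^{R_i}_{\beta_i} y_i$, and points of $S_i\,\beta_i\,x_i$ and $T_i\,\beta_i\,y_i$. On the right, the dependent product family $\prod_{i:I} R_i$ straightens $\prod_{i:I} G_i \fibarr \prod_{i:I} B_i$, and by the (type-theoretic) principle of choice together with the commutation of $\prod_{i:I}$ with the relevant pullbacks (and with $A^{\Delta^1}$ appearing in the comma construction via \Cref{def:comma-obj}), the comma $(\prod_i \psi_i) \downarrow_{(\prod_i B_i)} (\prod_i \varphi_i)$ unfolds to the \emph{same} $\Sigma$-type: an arrow in $\prod_{i:I} R_i$ over $\beta$ is exactly a family of arrows $k_i$, and a point in each product fiber is a family of points. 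The comparison map written in the diagram is then, after fibrant replacement, literally the identity rearrangement of this data, so it suffices to observe that it is an equivalence by unwinding.

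The main technical obstacle, and the step I would carry out most carefully, is justifying that $\prod_{i:I}$ commutes with the comma (pullback) construction on the nose up to equivalence. Concretely, the comma is built from a cotensor by $\Delta^1$ and a pullback along $\langle \partial_1,\partial_0\rangle$, so I would argue that dependent products commute with the relevant exponentials $(-)^{\Delta^1}$ (which is automatic, as $\prod_{i:I}$ and $(-)^{\Delta^1}$ are both right adjoints, hence preserve limits, and here can be checked directly via the equivalence $(\prod_i X_i)^{\Delta^1} \simeq \prod_i (X_i^{\Delta^1})$) and with pullbacks (by right properness, \Cref{lem:rprop}, together with \Cref{prop:htopy-inv-of-pb} for homotopy invariance, or simply by the universal property of $\prod_{i:I}$ as a limit). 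Assembling these, the equivalence of the two $\Sigma$-types follows from repeated application of \Cref{thm:choice} to pull the $\prod_{i:I}$ past the $\Sigma$'s and the extension/hom types, which is where the ``fiberwise'' aspect of the statement is verified. The commutation of all maps in the diagram is then immediate, since after fibrant replacement every structural map is a projection.
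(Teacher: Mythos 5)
Your proposal is correct and follows essentially the same route as the paper's proof: fibrantly replace the sliced commas as explicit $\Sigma$-types over the base and then commute $\prod_{i:I}$ past the $\Sigma$'s and the fiberwise hom-types via the type-theoretic principle of choice. The only cosmetic difference is that you unbundle the points of $F_i$ and $E_i$ through their images in $G_i$ (whereas the paper keeps the fibers $Q_i(b)$, $P_i(b)$ directly and records the arrow $(\psi_i)_b(d) \to_{R_i(b)} (\varphi_i)_b(e)$), and your extra paragraph on $\prod_{i:I}$ commuting with $(-)^{\Delta^1}$ and pullbacks is a sound but redundant double-check of what the choice-based computation already delivers.
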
 

\begin{proof}
We denote by $P_i, Q_i, R_i: B_i \to \UU$ the straightenings of the given maps $E_i \fibarr B_i$, $F_i \fibarr B_i$, and $G_i \fibarr B_i$, resp.
Using fibrant replacement, the sliced commas are computed as
\begin{align}\relcomma{B_i}{\psi_i}{\varphi_i} \simeq \sum_{b:B_i} \sum_{\substack{e:P_i(b) \\ d:Q_i(b)}} \big(\psi_i\big)_b(d) \longrightarrow_{R_i(b)} \big(\varphi_i\big)_b(e).
\end{align}\label{eq:fam-sliced-comma}
From this and the type-theoretic axiom of choice, we obtain as fibrant replacement for
\[ \prod_{i:I} (\psi_i \downarrow_{B_i} \varphi_i) \fibarr \prod_{i:I} B_i\]
the type
\begin{align*}
	\prod_{i:I} (\psi_i \downarrow_{B_i} \varphi_i)  & \stackrel{\text{(\ref{eq:fam-sliced-comma})} }{\simeq} \prod_{i:I}  \sum_{b:B_i}  \sum_{\substack{e:P_i(b) \\ d:Q_i(b)}} \big(\psi_i\big)_b(d) \longrightarrow_{R_i(b)} \big(\varphi_i\big)_b(e) \\
	&  \stackrel{\text{(AC)}}{\simeq} \sum_{\beta:\prod_{i:I} B_i} \prod_{i:I} \sum_{\substack{e:P_i(\beta(i)) \\ d:Q_i(\beta(i))}} \big(\psi_i\big)_{\beta(i)}(d) \longrightarrow_{R_i(\beta(i))} \big(\varphi_i\big)_{\beta(i)}(e) \\
	& \stackrel{\text{(\ref{eq:fam-sliced-comma})}}{\simeq} \sum_{\beta:\prod_{i:I} B_i} 
	\sum_{\substack{\sigma:\prod_{i:I} P_i(\beta(i)) \\	\tau:\prod_{i:I} Q_i(\beta(i))}} \big( \prod_{i:I} \psi(\tau) \big) \to_{ \big( \prod_{i:I} R_i(\beta(i))\big) }  \big( \prod_{i:I} \varphi_i(\sigma)\big) \\
	&  \stackrel{\text{(AC)}}{\simeq} \left(\prod_{i:I} \psi_i \right) \downarrow_{\left(\prod_{i:I} B_i \right)} \left(\prod_{i:I} \varphi_i \right)	
\end{align*}
This yields the desired fibered equivalence.
\end{proof}

\begin{corollary}[Products of commas in a slice]\label{lem:prod-comma-slice}
	Fix a base Rezk type $B$ be and an indexing type $I$. Given for $i:I$ an isoinner fibration $\pi_i:E_i \fibarr B$ consider a cospan of isoinner fibrations $\psi_i: F_i \to E_i \leftarrow G_i: \varphi$. Then we have a fibered equivalence:
	\[\begin{tikzcd}
		{ \times_{i:I}^B \big( \relcomma{B}{\psi_i}{\varphi_i}  \big)} && {\relcomma{B}{\left( \times_{i:I}^B \varphi_i \right)}{\left(\times_{i:I}^B \psi_i \right)}} \\
		{\prod_{i:I} F_i \times_B E_i} && {\left(\prod_{i:I} F_i\right) \times_{I \to B} \left(\prod_{i:I} E_i\right)} \\
		& B
		\arrow[two heads, from=1-1, to=2-1]
		\arrow["\simeq", from=1-1, to=1-3]
		\arrow["\simeq", from=2-1, to=2-3]
		\arrow[two heads, from=1-3, to=2-3]
		\arrow[two heads, from=2-1, to=3-2]
		\arrow[two heads, from=2-3, to=3-2]
	\end{tikzcd}\]
\end{corollary}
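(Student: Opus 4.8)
The plan is to derive Corollary~\ref{lem:prod-comma-slice} as a special case of the dependent-product statement for sliced commas, namely \Cref{prop:dep-prod-comm-sl-commas}, applied constantly in the index variable, combined with the pullback-square characterization of the sliced fiberwise product. First I would instantiate \Cref{prop:dep-prod-comm-sl-commas} with the \emph{constant} family $B_i \defeq B$ for all $i:I$, and with the given cospans $\psi_i: F_i \to_B E_i \leftarrow_B G_i : \varphi_i$. This yields a fibered equivalence
\[ \prod_{i:I} \big( \relcomma{B}{\psi_i}{\varphi_i} \big) \simeq \left(\prod_{i:I} \psi_i\right) \downarrow_{\left(\prod_{i:I} B\right)} \left(\prod_{i:I} \varphi_i\right), \]
living over $\prod_{i:I} B \simeq B^I$.

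Next I would pass from the \emph{total} products over $B^I$ to the \emph{sliced} (fiberwise) products over $B$. Recall from the definition of the sliced product (\cite[Proposition~1.2.22(vi)]{RV21}) that for a family of maps into $B$, the object $\times_{i:I}^B X_i$ is obtained by pulling back $\prod_{i:I} X_i \to B^I$ along the constant-diagram map $\cst: B \to B^I$. Hence the left-hand side $\times_{i:I}^B (\relcomma{B}{\psi_i}{\varphi_i})$ is the pullback of $\prod_{i:I}(\relcomma{B}{\psi_i}{\varphi_i}) \to B^I$ along $\cst$, and the right-hand side $\relcomma{B}{(\times_{i:I}^B \varphi_i)}{(\times_{i:I}^B \psi_i)}$ is likewise obtained by pulling back the total sliced comma along $\cst$, once one uses that sliced commas of sliced products are computed fiberwise. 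I would make this precise by observing that both the sliced product and the sliced comma are defined as pullbacks (\Cref{prop:sliced-comma-is-cocart} and the sliced-product definition), so forming them commutes up to canonical equivalence; pulling back the fibered equivalence from the first paragraph along $\cst: B \to B^I$ and invoking right properness (\Cref{lem:rprop}) and homotopy-invariance of pullbacks (\Cref{prop:htopy-inv-of-pb}) then transports it to the desired sliced equivalence over $B$.

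The key bookkeeping step, and the main obstacle I anticipate, is verifying that the bottom squares of the two pullback towers agree, i.e.\ that
\[ \left(\prod_{i:I} F_i\right) \times_{I \to B} \left(\prod_{i:I} E_i\right) \simeq \cst^*\!\left( \left(\prod_{i:I} F_i\right) \times_{B^I} \left(\prod_{i:I} E_i\right)\right), \]
and that this identification is compatible with the comma-construction over these base objects. This is essentially a $\Sigma/\Pi$-reindexing computation using the type-theoretic principle of choice (\Cref{thm:choice}) and fibrant replacement, exactly as in the proof of \Cref{prop:dep-prod-comm-sl-commas}, but one must track that the slicing over $B$ is compatible with the span legs $\psi_i, \varphi_i$ simultaneously. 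Once the base-level equivalence is established, the equivalence of total objects follows formally by the Pullback Lemma, since pulling the fibered equivalence of \Cref{prop:dep-prod-comm-sl-commas} back along $\cst$ preserves the property of being a fiberwise equivalence by right properness. The remaining diagrams (the vertical projections to $B$ and the triangle with the base span product) commute strictly after fibrant replacement, since all maps involved are projections, so no further coherence data need be produced by hand.
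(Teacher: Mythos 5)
Your proposal is correct and follows exactly the route the paper intends: the corollary is stated without explicit proof as an immediate consequence of \Cref{prop:dep-prod-comm-sl-commas} specialized to the constant family $B_i \defeq B$, followed by pullback along $\cst: B \to B^I$ — precisely the manoeuvre the paper carries out explicitly in the proof of \Cref{prop:fib-lari-pres-by-sl-prod}, where it invokes that ``sliced products canonically commute with both sliced commas and fiber products.'' Your additional care about right properness and the identification of the base objects is a sound way to make that implicit step precise.
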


\begin{proposition}[Fibered (LARI) adjunctions are preserved by sliced products]\label{prop:fib-lari-pres-by-sl-prod}
	For an indexing type $I$ and a base Rezk type $B$, families of fibered (LARI) adjunctions between Rezk types over $B$ are closed under taking sliced products, \ie: Given a family of isoinner fibrations over $B$ together with a fibered (LARI) adjunction as below left, the 
	induced maps on the right make up a fibered (LARI) adjunction as well:
		\[\begin{tikzcd}
			{E_i} && {F_i} & {\times_{i:I}^B E_i} && {\times_{i:I}^B F_i} \\
			& B &&& B
			\arrow[two heads, from=1-3, to=2-2]
			\arrow[two heads, from=1-1, to=2-2]
			\arrow[""{name=0, anchor=center, inner sep=0}, "{\varphi_i}"{description}, curve={height=8pt}, from=1-1, to=1-3]
			\arrow[""{name=1, anchor=center, inner sep=0}, "{\psi_i}"{description}, curve={height=14pt}, from=1-3, to=1-1]
			\arrow[squiggly, from=1-3, to=1-4]
			\arrow[two heads, from=1-6, to=2-5]
			\arrow[two heads, from=1-4, to=2-5]
			\arrow[""{name=2, anchor=center, inner sep=0}, "{\times_{i:I}^B \varphi_i}"{description}, curve={height=8pt}, from=1-4, to=1-6]
			\arrow[""{name=3, anchor=center, inner sep=0}, "{\times_{i:I}^B \psi_i}"{description}, curve={height=14pt}, from=1-6, to=1-4]
			\arrow["\dashv"{anchor=center, rotate=-90}, draw=none, from=1, to=0]
			\arrow["\dashv"{anchor=center, rotate=-90}, draw=none, from=3, to=2]
		\end{tikzcd}\]
\end{proposition}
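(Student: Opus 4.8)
The statement to prove is Proposition (\texttt{prop:fib-lari-pres-by-sl-prod}): that sliced products preserve fibered (LARI) adjunctions. The plan is to assemble this from the two immediately preceding results, namely the closure of fibered equivalences under sliced products (\Cref{prop:fib-eqs-sliced-pi}, and its packaged form for fibered equivalences) and the characterization of fibered (LARI) adjunctions via a fibered equivalence of sliced comma types (\Cref{thm:char-fib-adj}, \Cref{it:fib-ladj-sliced}, together with \Cref{def:fib-lari-adj}). The key structural input is \Cref{lem:prod-comma-slice} (products of commas in a slice), which supplies the fibered equivalences relating the sliced comma of the sliced products to the sliced product of the sliced commas.

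First I would invoke \Cref{thm:char-fib-adj} to replace each given fibered adjunction $\psi_i \dashv_B \varphi_i$ by its sliced-comma datum: a family of fibered equivalences
\[ \Phi_i : \relcomma{B}{\psi_i}{E_i} \simeq_{F_i \times_B E_i} \relcomma{B}{F_i}{\varphi_i}. \]
Next I would apply \Cref{prop:fib-eqs-sliced-pi} (closedness of fibered equivalences under sliced products) to the family $(\Phi_i)_{i:I}$, obtaining a fibered equivalence between the sliced products $\times_{i:I}^B \big( \relcomma{B}{\psi_i}{E_i}\big)$ and $\times_{i:I}^B \big( \relcomma{B}{F_i}{\varphi_i}\big)$. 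Then I would use \Cref{lem:prod-comma-slice} twice to rewrite each side: the left sliced product of commas is fiberwise-equivalent to $\relcomma{B}{\big(\times_{i:I}^B \psi_i\big)}{\big(\times_{i:I}^B E_i\big)}$, and the right to $\relcomma{B}{\big(\times_{i:I}^B F_i\big)}{\big(\times_{i:I}^B \varphi_i\big)}$. Composing these three fibered equivalences yields precisely the sliced-comma datum required by \Cref{thm:char-fib-adj} for the map $\times_{i:I}^B \varphi_i$, so that $\times_{i:I}^B \psi_i$ is a fibered left adjoint of $\times_{i:I}^B \varphi_i$.

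The main obstacle, and the step requiring the most care, is verifying that the induced transposing equivalence has the correct unit so that this is genuinely a fibered adjunction in the sense of \Cref{def:fib-lari-adj}, and—in the LARI case—that this unit remains invertible. For the unit, I would check that under the identifications from \Cref{lem:prod-comma-slice} the fibered unit $\eta_{\Phi}$ of the composite equivalence is computed componentwise as the product $\prod_{i:I}$ of the units $\eta_{\Phi_i}$ evaluated at the appropriate fibers; since each $\eta_{\Phi_i}$ is a componentwise homotopy by hypothesis, and since the sliced product is built by fibrant replacement from genuine products of the fibers (so that its directed arrows are computed componentwise), the resulting unit is again a componentwise homotopy. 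For the LARI refinement, invertibility of a product of invertible arrows is immediate, so the unit of the sliced-product adjunction is again invertible, upgrading the fibered adjunction to a fibered LARI adjunction. The verification that the sliced-product structure maps interact compatibly with the comma identifications of \Cref{lem:prod-comma-slice} is routine but should be spelled out via fibrant replacement, as in the cited lemmas.
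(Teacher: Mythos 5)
Your proposal is correct and follows essentially the same route as the paper: both reduce the statement via the sliced-comma characterization of fibered adjunctions (\Cref{thm:char-fib-adj}), push the family of fibered equivalences through the dependent product and the pullback along $\cst:B \to B^I$ (which is exactly the content of \Cref{prop:fib-eqs-sliced-pi}), and then identify the sliced product of commas with the comma of the sliced products. Your additional explicit verification that the unit of the composite equivalence is computed componentwise (and hence remains vertical, and invertible in the LARI case) is a point the paper leaves implicit, but it is not a different argument.
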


\begin{proof}
Given a family of fibered adjunctions as indicated amounts to a family of fibered equivalences, themselves fibered over $B$, for $i:I$:
\[\begin{tikzcd}
	{\psi_i \downarrow_B E_i} && {\varphi_i \downarrow_B E_i} \\
	& {F_i \times_B E_i} \\
	& B
	\arrow[two heads, from=1-1, to=2-2]
	\arrow[two heads, from=1-3, to=2-2]
	\arrow[two heads, from=1-1, to=3-2]
	\arrow[two heads, from=1-3, to=3-2]
	\arrow[""{name=0, anchor=center, inner sep=0}, curve={height=-12pt}, from=1-1, to=1-3]
	\arrow[""{name=1, anchor=center, inner sep=0}, curve={height=-6pt}, from=1-3, to=1-1]
	\arrow[two heads, from=2-2, to=3-2]
	\arrow["\simeq"{description}, draw=none, from=0, to=1]
\end{tikzcd}\]
Taking the dependent product over $i:I$ produces a fibered equivalence, itself fibered over $B^I$. Pullback along $\cst: B \to B^I$ yields the sliced products and again preserves the fibered equivalence:
\[\begin{tikzcd}
	{	{\times_{i:I}^B \psi_i \downarrow_B E_i}} &&& {\prod_{i:I} \psi_i \downarrow_B E_i } & {} \\
	& {{\times_{i:I}^B F_i \downarrow_B \varphi_i }} & {} && {\prod_{i:I} F_i \downarrow_B \varphi_i} & {} \\
	& {{\times_{i:I}^B F_i \times_B E_i}} &&& {\prod_{i:I} F_i \times_B E_i} \\
	B &&& {B^I}
	\arrow[two heads, from=1-1, to=4-1]
	\arrow[from=1-1, to=1-4]
	\arrow["\cst"', from=4-1, to=4-4]
	\arrow[two heads, from=1-1, to=3-2]
	\arrow[two heads, from=3-2, to=4-1]
	\arrow[two heads, from=3-5, to=4-4]
	\arrow["\simeq"{description}, dashed, from=1-1, to=2-2]
	\arrow[two heads, from=2-2, to=3-2]
	\arrow["\simeq"{description}, from=1-4, to=2-5]
	\arrow[two heads, from=2-5, to=3-5]
	\arrow[two heads, from=1-4, to=4-4]
	\arrow["\lrcorner"{anchor=center, pos=0.125}, draw=none, from=3-2, to=4-4]
	\arrow["\lrcorner"{anchor=center, pos=0.125}, draw=none, from=2-2, to=3-5]
	\arrow["\lrcorner"{anchor=center, pos=0.125}, draw=none, from=1-1, to=2-5]
	\arrow[two heads, from=1-4, to=3-5, crossing over]
	\arrow[from=3-2, to=3-5, crossing over]
	\arrow[from=2-2, to=2-5, crossing over]
\end{tikzcd}\]
Since sliced products canonically commute with both sliced commas and fiber products, this gives a fibered equivalence
\[\begin{tikzcd}
	{\big( \times_{i:I}^B \psi_i  \big) \downarrow_{I \to B} \big( \times_{i:I}^B  E_i \big)} && {\big(\times_{i:I}^B F_i \big) \downarrow_{I \to B} \big( \times_{i:I}^B  \varphi_i\big)} \\
	\\
	& {\big(\times_{i:I}^B F_i \big) \times_{I \to B} \big(\times_{i:I}^B E_i \big)}
	\arrow[""{name=0, anchor=center, inner sep=0}, curve={height=-12pt}, from=1-1, to=1-3]
	\arrow[""{name=1, anchor=center, inner sep=0}, curve={height=-12pt}, from=1-3, to=1-1]
	\arrow[two heads, from=1-1, to=3-2]
	\arrow[two heads, from=1-3, to=3-2]
	\arrow["\simeq"{description}, Rightarrow, draw=none, from=0, to=1]
\end{tikzcd}\]
which exactly yields the desired fibered adjunction of the sliced products.
\end{proof}

	\backmatter
	
	\phantomsection%
	\nocite{Ras18model,AFfib,GHT17,RV2cat,RVexp,RVscratch,RVyoneda,rasekh2021cartesian,RSSmod,LiBint,clementino2020lax,hermida1992fibred,rezk2017stuff,kock2013local,BorHandb2,BarwickShahFib,JoyQcat,JoyNotesQcat,B19,bergner_2018,joyal2007quasi,StrYon,MasarykFormal}
	
	\printbibliography[heading=bibintoc]

	\chapter*{Academic Curriculum Vitae}
	\label{sec:cv}
	\addcontentsline{toc}{chapter}{\nameref{sec:cv}}
	\begin{tabular}{rll}
	{\small \textbf{Oct~08--Apr~14}} & \emph{TU Darmstadt, Germany}& BSc.~Mathematics  \\
		& & (minor:~Philosophy) \\
	{\small \textbf{Dec~12}}		   & Thesis: \emph{Splitting the Classical} &   Advisor:~Prof.~Dr.~Thomas Streicher	\\
					   & \emph{Model Structure on Simplicial Sets} & \\ 
					   	& & \\
	{\small \textbf{Apr~14--Sep~16}} & \emph{TU Darmstadt, Germany} & MSc.~Mathematics  \\
		& & (minor:~Computer Science) \\
	{\small \textbf{Sep~16}}		   & Thesis: \emph{The Cubical Model} & Adv.:~Prof.~Dr.~Thomas Streicher\\
					   & \emph{of Type Theory}			&									\\	
					   	& & \\				
	{\small \textbf{Oct~16--Sep~21}} & \emph{TU Darmstadt, Germany}& Doctoral Cand. \& Scientific Assistant \\
			   & 					 & Dept.~of Mathematics, Logic Group.\\
			   	& & \\
	{\small \textbf{Oct~21}}		 	& Thesis: \emph{A Synthetic Perspective} & 	Adv.: 
	Prof.~Dr.~Thomas Streicher	\\
						& \emph{of $\inftyone$-Category Theory:} & 								\\
						& \emph{Fibrational and Semantic aspects} & 								\\
							& & \\
	{\small \textbf{Oct~21--Feb~22}} & \emph{University of Birmingham, UK} & Research Fellow \\
					   &									 & School of Computer Science,\\
					   &									 & Theory of Computation Group. \\
					   &									 & Adv.:~Dr.~Benedikt~Ahrens \\
					   							& & \\
	{\small \textbf{from Mar~22}} & \emph{Max Planck Institute for Mathematics,} & Postdoctoral Fellow \\
					   &	\emph{Bonn, Germany}								 & Adv.:~Dr.~Viktoriya~Ozornova
\end{tabular}

\end{document}